\documentclass[francais,12pt]{smfart}

\usepackage{amsmath,amssymb,amsfonts,amscd,footnote}
\usepackage{mathtools}
\usepackage{dsfont}
\usepackage[all]{xy}
\usepackage{pictexwd,dcpic}

\topmargin=0.1in
\oddsidemargin=0in
\evensidemargin=0in
\textwidth=6in
\textheight=8.5in
\setlength{\parskip}{3mm}

\newtheorem{prop0}{Proposition}[section]

\newtheorem{conj0}[prop0]{Conjecture}

\newtheorem{thm0}[prop0]{Th\'eor\`eme}

\newtheorem{prop}{Proposition}[subsection]

\newtheorem{conj}[prop]{Conjecture}

\newtheorem{lem}[prop]{Lemme}
\newtheorem{thm}[prop]{Th\'eor\`eme}
\newtheorem{ex}[prop]{Exemple}
\newtheorem{rem}[prop]{Remarque}
\newtheorem{cor}[prop]{Corollaire}

\newcommand{\norm}{\vert\cdot\vert}

\def\={\buildrel {\rm d\acute ef}\over =}
\newcommand{\smat}[1]{\left( \begin{smallmatrix} #1 \end{smallmatrix} \right)}
\def\ilim#1{\displaystyle \lim_{\stackrel{\longrightarrow}{#1}}}
\def\plim#1{\displaystyle \lim_{\stackrel{\longleftarrow}{#1}}}
\def\varddots{\mathinner{\raise7pt\vbox{\kern3pt\hbox{.}}\mkern1mu\smash{\raise4pt\hbox{.}}\mkern1mu\smash{\raise1pt\hbox{.}}}}

\DeclareMathOperator{\Id}{Id}
\DeclareMathOperator{\im}{im}
\DeclareMathOperator{\HT}{HT}
\DeclareMathOperator{\dR}{dR}
\DeclareMathOperator{\an}{an}
\DeclareMathOperator{\Res}{Res}

\DeclareMathOperator{\tor}{tor}

\DeclareMathOperator{\GL}{GL}
\DeclareMathOperator{\soc}{soc}
\DeclareMathOperator{\sep}{s\acute ep}
\DeclareMathOperator{\alg}{alg}
\DeclareMathOperator{\cInd}{c-Ind}
\DeclareMathOperator{\Frob}{Frob}
\DeclareMathOperator{\End}{End}
\DeclareMathOperator{\Ind}{Ind}

\DeclareMathOperator{\Fil}{Fil}
\DeclareMathOperator{\diag}{diag}
\DeclareMathOperator{\Ext}{Ext}

\DeclareMathOperator{\Hom}{Hom}
\DeclareMathOperator{\Gal}{Gal}

\DeclareMathOperator{\val}{val}

\DeclareMathOperator{\nr}{nr}

\newcommand{\C}{{\mathbb C}}
\newcommand{\Z}{{\mathbb Z}}

\newcommand{\Q}{{\mathbb Q}}
\newcommand{\Qp}{\Q_{p}}
\newcommand{\Zp}{\Z_{p}}

\newcommand{\Rep}{{\rm Rep}^{\rm an}_E}
\newcommand{\Repm}{{\rm Rep}^{\rm an}_{E_m}}

\newcommand{\Qpbar}{\overline\Qp}

\newcommand{\oE}{{\mathcal O}_E}

\newcommand{\G}{{\mathrm{GL}}_n}

\newcommand{\R}{{\mathcal R}_E}
\newcommand{\Rr}{{\mathcal R}^r_{E}}
\newcommand{\Rpr}{{\mathcal R}^{pr}_{E}}
\newcommand{\Rm}{{\mathcal R}_{E_m}}
\newcommand{\Rrm}{{\mathcal R}^r_{E_m}}
\newcommand{\Rprm}{{\mathcal R}^{pr}_{E_m}}

\newcommand{\cF}{\mathcal F}
\newcommand{\cV}{\mathcal V}
\newcommand{\cW}{\mathcal W}

\newcommand{\St}{{\mathrm{St}}^{\infty}}

\newcommand{\mrho}{{\mathfrak m}_\rho}
\newcommand{\gp}{{\Gal}(\Qpbar/\Qp)}

\newcommand{\glp}{{\Gal}(L/\Qp)}
\newcommand{\mg}{\mathfrak{g}}

\newcommand{\mnn}{{\mathfrak{n}}}

\newcommand{\mb}{\mathfrak{b}}

\newcommand{\mpp}{\mathfrak{p}}

\newcommand{\mq}{\mathfrak{q}}
\newcommand{\mt}{\mathfrak{t}}

\newcommand{\fn}{\mathfrak n}
\newcommand{\fh}{\mathfrak h}

\newcommand{\fl}{\mathfrak l}
\newcommand{\fp}{\mathfrak p}
\newcommand{\ug}{\mathfrak g}
\newcommand{\fx}{\mathfrak x}

\newcommand{\fz}{\mathfrak z}

\newcommand{\fc}{\mathfrak c}

\newcommand{\fy}{\mathfrak y}

\newcommand{\ra}{\rightarrow}
\newcommand{\lra}{\longrightarrow}

\author[C. Breuil]{Christophe Breuil}
\address{Laboratoire de Math\'ematique d'Orsay\\
C.N.R.S., Universit\'e Paris-Sud\\
Universit\'e Paris-Saclay\\
91405 Orsay\\
France}
\email{christophe.breuil@math.u-psud.fr}

\author[Y. Ding]{Yiwen Ding}
\address{Beijing Internat. Center for Math. Research\\
Peking University\\
No5 Yiheyuan Road Haidian District\\
Beijing\\
P.R. China 100871}
\email{yiwen.ding@bicmr.pku.edu.cn}

\thanks{Le premier auteur remercie pour leur soutien le CNRS, l'Universit\'e Paris-Sud et l'ANR CLap-CLap. Le second auteur a b\'en\'efici\'e de la Grant n\textsuperscript{o} \!7101500268 de l'Universit\'e de P\'ekin. Les auteurs remercient A. Abbes, L. Berger, G. Dospinescu, O. Fouquet, D. Harari, J. Hauseux, G. Henniart, R. Liu, Z. Qian et B. Schraen pour des discussions ou pour leurs r\'eponses \`a leurs questions.}

\title[Un probl\`eme de compatibilit\'e local-global localement analytique]{Sur un probl\`eme de compatibilit\'e local-global localement analytique}

\begin{document} 

\begin{abstract}
 On r\'einterpr\'ete et on pr\'ecise la conjecture du $\Ext^1$ localement analytique de \cite{Br1} de mani\`ere fonctorielle en utilisant les $(\varphi,\Gamma)$-modules sur l'anneau de Robba (avec \'eventuellement de la $t$-torsion). Puis on d\'emontre plusieurs cas particuliers ou partiels de cette conjecture ``am\'elior\'ee'', notamment pour ${\rm GL}_3(\Qp)$.
\end{abstract}

\begin{altabstract}
We reinterpret the main conjecture of \cite{Br1} on the locally analytic $\Ext^1$ in a functorial way using $(\varphi,\Gamma)$-modules (possibly with $t$-torsion) over the Robba ring, making it more accurate. Then we prove several special or partial cases of this ``improved'' conjecture, notably for ${\rm GL}_3(\Qp)$.
\end{altabstract}

\maketitle

\begin{flushright}
\it \`A Jean-Marc Fontaine et Jean-Pierre Wintenberger
\vspace*{0.5cm}
\end{flushright}

\setcounter{tocdepth}{2}

\tableofcontents

\section{Introduction}\label{intro}

L'id\'ee d'utiliser les $(\varphi,\Gamma)$-modules de Fontaine dans le programme de Langlands $p$-adique est due \`a Colmez. Elle lui a permis de construire un foncteur exact (qui porte son nom) associant un $(\varphi,\Gamma)$-module \'etale de $p^m$-torsion \`a une repr\'esentation de ${\rm GL}_2(\Qp)$ de longueur finie annul\'ee par $p^m$, puis par la suite de d\'emontrer la correspondance de Langlands locale $p$-adique pour ${\rm GL}_2(\Qp)$ (\cite{Co2}, \cite{CDP}).

Lorsque l'on s'attelle \`a d'autres groupes que ${\rm GL}_2(\Qp)$, par exemple ${\rm GL}_n(\Qp)$, les repr\'esentations localement analytiques (ou $p$-adiques) qui apparaissent dans les composantes Hecke-isotypiques des espaces de formes automorphes $p$-adiques sont {\it beaucoup plus} compliqu\'ees. Dans \cite{Br1} est formul\'ee une conjecture qui relie ce qui se passe ``juste apr\`es'' les vecteurs localement alg\'ebriques (dans ces repr\'esentations) au dernier cran de la filtration de Hodge sur les puissances altern\'ees du module filtr\'e de Fontaine sous-jacent. Le but principal de cet article est de r\'einterpr\'eter et de pr\'eciser cette conjecture de mani\`ere fonctorielle en utilisant les $(\varphi,\Gamma)$-modules sur l'anneau de Robba (avec \'eventuellement de la $t$-torsion), puis de d\'emontrer plusieurs cas particuliers ou partiels de cette conjecture ``am\'elior\'ee'', notamment pour ${\rm GL}_3(\Qp)$.

Avant de rentrer dans les d\'etails, rappelons de mani\`ere un peu moins vague la conjecture de \cite{Br1}. Fixons un corps des coefficients $E$ (une extension finie de $\Qp$) et une repr\'esentation de de Rham $\rho_p$ de $\gp$ de dimension $n\geq 2$ sur $E$ et de poids de Hodge-Tate $h_1>h_2>\cdots >h_n$ distincts. Notons $D_{\dR}(\rho_p)$ le module filtr\'e de $\rho_p$ et ${\rm alg}\otimes_E \pi^\infty$ la repr\'esentation localement alg\'ebrique de ${\rm GL}_n(\Qp)$ sur $E$ usuelle associ\'ee \`a $\rho_p$, qui ne d\'epend que des poids de Hodge-Tate et de la repr\'esentation de Weil-Deligne $W$ de $\rho_p$. On devrait pouvoir associer \`a $\rho_p$ une (au moins) repr\'esentation localement analytique admissible (au sens de \cite{ST2}) de ${\rm GL}_n(\Qp)$ sur $E$ contenant ${\rm alg}\otimes_E \pi^\infty$, que l'on note juste $\pi^{\rm an}$ dans cette introduction, soit en prenant une sp\'ecialisation (suppos\'ee non nulle) $V(\rho_p)$ comme dans \cite[\S~2.12]{CEGGPS} (voir aussi \cite{Py}), soit en supposant que $\rho_p$ se globalise en une repr\'esentation galoisienne automorphe $\rho$ (pour un groupe unitaire d\'eploy\'e en $p$ compact \`a l'infini) et en prenant les vecteurs localement analytiques de la composante $\rho$-isotypique dans l'espace des formes automorphes $p$-adiques. La conjecture de \cite{Br1} postule alors l'existence, pour toute racine simple $\alpha=e_j-e_{j+1}$ de ${\rm GL}_n$ ($j\in \{1,\dots,n-1\}$), d'une repr\'esentation localement analytique $\pi^\alpha$ admissible de longueur finie ne d\'ependant que des $h_i$, de $W$ et de $\alpha$, et d'un isomorphisme~:
\begin{equation}\label{isointro}
\Ext^1_{{\rm GL}_n(\Qp)}(\pi^\alpha,{\rm alg}\otimes_E \pi^\infty)\simeq \wedge_E^{n-j}D_{\dR}(\rho_p)
\end{equation}
(ne d\'ependant aussi que des $h_i$ et de $W$) tel que tout plongement ${\rm alg}\otimes_E \pi^\infty\hookrightarrow \pi^{\an}$ s'\'etende en un plongement $\!\begin{xy}(-60,0)*+{({\rm alg}\otimes_E \pi^\infty)}="a";(-40,0)*+{\pi^\alpha}="b";{\ar@{-}"a";"b"}\end{xy}\!\hookrightarrow \pi^{\an}$ o\`u $\!\begin{xy}(-60,0)*+{({\rm alg}\otimes_E \pi^\infty)}="a";(-40,0)*+{\pi^\alpha}="b";{\ar@{-}"a";"b"}\end{xy}\!$ est l'extension (non scind\'ee) donn\'ee par l'image inverse par l'isomorphisme (\ref{isointro}) de la droite $\Fil^{\rm max}_\alpha(\rho_p)$ de $\wedge_E^iD_{\dR}(\rho_p)$ produit altern\'e des $n-j$ derniers crans de la filtration de Hodge sur $D_{\dR}(\rho_p)$, i.e. (cf. (\ref{filmax}))~:
$${\rm Fil}^{\rm max}_\alpha(\rho_p)\={\rm Fil}^{-h_{j+1}}(D_{\dR}(\rho_p))\wedge {\rm Fil}^{-h_{j+2}}(D_{\dR}(\rho_p))\wedge \cdots \wedge {\rm Fil}^{-h_n}(D_{\dR}(\rho_p)).$$

Une des id\'ees \`a la base de cet article a \'et\'e de remarquer qu'il existe un groupe $\Ext^1$ c\^ot\'e $\gp$, ou plut\^ot c\^ot\'e $(\varphi,\Gamma)$-modules, naturellement isomorphe \`a $\wedge_E^{n-j}D_{\dR}(\rho_p)$. Rappelons que Berger dans \cite[Th.~A]{Be2} associe \`a {\it toute} filtration (d\'ecroissante exhaustive) sur $\wedge_E^{n-j}D_{\dR}(\rho_p)$ un certain $(\varphi,\Gamma)$-module (libre de rang fini) sur l'anneau de Robba $\R$ \`a coefficients dans $E$. Par exemple si l'on prend la filtration sur $\wedge_E^{n-j}D_{\dR}(\rho_p)$ induite par la filtration de Hodge sur $D_{\dR}(\rho_p)$, on retrouve le $(\varphi,\Gamma)$-module \'etale sur $\R$ associ\'e \`a $\wedge_E^{n-j}\rho_p$. Si l'on prend la filtration triviale ${\rm Fil}^0=$ tout, ${\rm Fil}^1=0$ on obtient l'\'equation diff\'erentielle $p$-adique $D(\wedge_E^{n-j}W)$ associ\'ee \`a $\wedge_E^{n-j}\rho_p$ (\cite{Be1}), qui ne d\'epend que de $\wedge_E^{n-j}W$.

\begin{prop0}[Proposition \ref{lienfilmax}]\label{filmaxintro}
Pour tout $j\in \{1,\dots,n-1\}$ on a un isomorphisme naturel (o\`u $t=\log(1+X)\in \R$)~:
\begin{equation}\label{filintro}
\Ext^1_{(\varphi,\Gamma)}\big(\R/(t^{h_j-h_{j+1}}),D(\wedge_E^{n-j}W)\otimes_{\R}\R(x^{h_j-h_{j+1}})\big)\buildrel\sim\over\longrightarrow \wedge_E^{n-j}D_{\dR}(\rho_p)
\end{equation}
tel que l'image inverse d'une droite $F\subseteq \wedge_E^{n-j}D_{\dR}(\rho_p)$ (vue \`a isomorphisme pr\`es comme $(\varphi,\Gamma)$-module) est le $(\varphi,\Gamma)$-module associ\'e \`a la filtration ${\rm Fil}^{-(h_j-h_{j+1})}=\wedge_E^{n-j}D_{\dR}(\rho_p)$, ${\rm Fil}^{-(h_j-h_{j+1})+1}=\cdots={\rm Fil}^0=F$, ${\rm Fil}^1=0$.
\end{prop0}

Il est alors naturel de penser que l'isomorphisme conjectural (\ref{isointro}) devrait se d\'ecomposer en deux isomorphismes, l'un donn\'e par (\ref{filintro}) et l'autre donn\'e comme suit (en notant $D(\wedge_E^{n-j}W)(x^{h_j-h_{j+1}})\=D(\wedge_E^{n-j}W)\otimes_{\R}\R(x^{h_j-h_{j+1}})$)~:
\begin{equation}\label{iso1intro}
\Ext^1_{{\rm GL}_n(\Qp)}(\pi^\alpha,{\rm alg}\otimes_E \pi^\infty)\buildrel\sim\over\longrightarrow \Ext^1_{(\varphi,\Gamma)}\big(\R/(t^{h_j-h_{j+1}}),D(\wedge_E^{n-j}W)(x^{h_j-h_{j+1}})\big).
\end{equation}
L'isomorphisme (\ref{iso1intro}) sugg\`ere alors l'existence d'un foncteur contravariant $D_\alpha$ tel que, au moins \`a torsion pr\`es, on ait $D_\alpha({\rm alg}\otimes_E \pi^\infty)\simeq \R/(t^{h_j-h_{j+1}})$ et $D_\alpha(\pi^\alpha)\simeq D(\wedge_E^{n-j}W)(x^{h_j-h_{j+1}})$, tel que $D_\alpha$ induise (\ref{iso1intro}), et tel qu'apparaisse dans $\pi^{\an}$ l'unique extension de $\pi^\alpha$ par ${\rm alg}\otimes_E \pi^\infty$ envoy\'ee par $D_\alpha$ vers le $(\varphi,\Gamma)$-module associ\'e par \cite[Th.~A]{Be2} \`a la filtration ${\rm Fil}^{-(h_j-h_{j+1})}=\wedge_E^{n-j}D_{\dR}(\rho_p)$, ${\rm Fil}^{-(h_j-h_{j+1})+1}=\cdots={\rm Fil}^0={\rm Fil}^{\rm max}_\alpha(\rho_p)$, ${\rm Fil}^1=0$.

Notre id\'ee de d\'epart pour essayer de construire $D_\alpha$ est d'adapter au cadre localement analytique le foncteur d\'efini dans \cite{Br2} \'etendant celui de Colmez \`a des repr\'esentations lisses de ${\rm GL}_n(\Qp)$ (ou de groupes plus g\'en\'eraux) en caract\'eristique $p$ (ou de $p^m$-torsion), plus pr\'ecis\'ement la variante de \cite{Br2} o\`u l'on ne consid\`ere qu'une seule racine simple $\alpha$ (\cite{EZ}, \cite{Za}). R\'esumons la construction de \cite{Br2}, \cite{EZ}. Notons $N^\alpha_0$ un sous-groupe ouvert compact du sous-groupe $N^\alpha(\Qp)$ des unipotents sup\'erieurs $N(\Qp)$ de ${\rm GL}_n(\Qp)$ o\`u l'on a ``enlev\'e'' la racine $\alpha$ (i.e. l'entr\'ee correspondante est nulle), si $\pi$ est une repr\'esentation lisse de ${\rm GL}_n(\Qp)$ en caract\'eristique $p$, on peut munir le dual $(\pi^{N^\alpha_0})^\vee$ d'une structure naturelle de $(\psi,\Gamma)$-module compact o\`u rappelons que l'op\'erateur $\psi$ (dans un $(\varphi,\Gamma)$-module \'etale en caract\'eristique $p$) est l'inverse \`a gauche de $\varphi$. On peut alors associer \`a $\pi$ le foncteur covariant $F_\alpha(\pi)$ sur la cat\'egorie ab\'elienne des $(\varphi,\Gamma)$-modules \'etales en caract\'eristique $p$ en envoyant un tel $(\varphi,\Gamma)$-module $D$ vers $F_\alpha(\pi)(D)\=\Hom_{\psi,\Gamma}((\pi^{N^\alpha_0})^\vee,D)$ (morphismes continus de $(\psi,\Gamma)$-modules). Il r\'esulte alors de \cite[Rem.~5.6(iii)]{Br2} avec \cite[Prop.~3.2(ii)]{Br2} que le foncteur $\pi\mapsto F_\alpha(\pi)$ est exact \`a gauche et que $F_\alpha(\pi)$ est pro-repr\'esentable par un pro-$(\varphi,\Gamma)$-module \'etale $D_\alpha(\pi)$. De plus $D_\alpha(\pi)$ est un vrai $(\varphi,\Gamma)$-module (i.e. de rang fini) au moins lorsque les constituants irr\'eductibles de $\pi$ sont en nombre fini et sous-quotients de s\'eries principales (\cite[Cor.~9.3]{Br2}).

La situation se complique lorsque l'on consid\`ere des repr\'esentations localement a\-nalytiques de ${\rm GL}_n(\Qp)$ (sur des $E$-espaces vectoriels de type compact) au lieu de repr\'esentations en caract\'eristique $p$. On peut penser associer \`a une telle repr\'esentation $\pi$ le foncteur $D\mapsto \Hom_{\psi,\Gamma}((\pi^{N^\alpha_0})^\vee,D)$ pour $D$ dans la cat\'egorie ab\'elienne des $(\varphi,\Gamma)$-modules g\'en\'eralis\'es sur $\R$ (\cite{Li}). Mais on tombe sur plusieurs probl\`emes techniques. Par exemple, \'ecrivant $D={\displaystyle \lim_{r\rightarrow +\infty}}D_r$ o\`u $D_r$ est un $(\psi,\Gamma)$-module sur $\Rr=$ le $E$-espace de Fr\'echet des fonctions rigides analytiques sur la couronne $p^{-1/r}\leq \norm <1$, il n'est d'abord pas clair que l'image d'un morphisme dans $\Hom_{\psi,\Gamma}((\pi^{N^\alpha_0})^\vee,D)$ tombe dans un $D_r$ pour $r\gg 0$, de sorte qu'il vaut mieux consid\'erer $D\mapsto {\displaystyle \lim_{r\rightarrow +\infty}}\Hom_{\psi,\Gamma}((\pi^{N^\alpha_0})^\vee,D_r)$. Par ailleurs, si $\pi$ est une repr\'esentation de Steinberg g\'en\'eralis\'ee localement analytique, alors $D\mapsto {\displaystyle \lim_{r\rightarrow +\infty}}\Hom_{\psi,\Gamma}((\pi^{N^\alpha_0})^\vee,D_r)$ n'est pas le foncteur nul, alors que l'on aimerait que la contribution de ces Steinberg g\'en\'eralis\'ees soit nulle. Dans la th\'eorie classique des repr\'esentations lisse (sur $E$), une mani\`ere d'annuler les Steinberg g\'en\'eralis\'ees est de consid\'erer, plut\^ot que le foncteur de Jacquet usuel $\pi_{N(\Qp)}$, la d\'eriv\'ee sup\'erieure de Bernstein $\pi(\eta^{-1})_{N(\Qp)}$ pour un caract\`ere (g\'en\'erique) non trivial $\eta:N(\Qp)\rightarrow E_\infty^\times$ o\`u $E_\infty=\cup_{m\in \Z_{\geq 0}}E_m$ avec $E_m=E(\sqrt[p^m]{1})$ (i.e. on tord l'action de $N(\Qp)$ par $\eta^{-1}$). Revenant \`a $\pi^{N^\alpha_0}$, on tombe alors sur le fait que $\eta$ peut \^etre tri\-vial sur $N^\alpha_0$, il faut donc remplacer $\pi^{N^\alpha_0}$ par autre chose. Un examen du cas o\`u $\pi$ est une repr\'esentation lisse de ${\rm GL}_n(\Qp)$ sur $E$ montre qu'en fixant une famille croissante $(N_m)_{m\in \Z_{\geq 0}}$ de sous-groupes ouverts compacts de $N(\Qp)$ telle que $\cup_{m}N_m=N(\Qp)$ et en consid\'erant $F_\alpha(\pi):D\mapsto {\displaystyle \lim_{r,m\rightarrow +\infty}}\Hom_{\psi,\Gamma}((\pi\otimes_EE_m)(\eta)_{N^\alpha_m})^\vee,D_r\otimes_EE_m)$ o\`u $N_m^\alpha\=N_m\cap N^\alpha(\Qp)$, on obtient un foncteur $\pi\mapsto F_\alpha(\pi)$ exact \`a gauche qui annule les Steinberg g\'en\'eralis\'ees (lisses). Pour $\pi$ localement analytique, on d\'efinit alors $F_\alpha(\pi)$ comme suit (cf. (\ref{falpha}))~:
\begin{equation}\label{falphaintro}
D\longmapsto {\displaystyle \lim_{r,m\rightarrow +\infty}}\Hom_{\psi,\Gamma}\big((\pi[\mnn^\alpha]\otimes_EE_m)(\eta^{-1})_{N^\alpha_m})^\vee,D_r\otimes_EE_m\big)
\end{equation}
o\`u $D$ est un $(\varphi,\Gamma)$-module g\'en\'eralis\'e sur $\R$ et $\mnn^\alpha$ est l'alg\`ebre de Lie de $N^\alpha(\Qp)$. Comme de plus $\pi$ est d\'efinie sur $E$, les $E_\infty$-espaces vectoriels (\ref{falphaintro}) sont naturellement munis d'une action $E_\infty$-semi-lin\'eaire de $\Gal(E_\infty/E)$. Noter que (\ref{falphaintro}) n'utilise que l'action du Borel $B(\Qp)$ des matrices triangulaires sup\'erieures dans ${\rm GL}_n(\Qp)$ et ne d\'epend pas, \`a isomorphisme pr\`es, du choix des $N_m$ (Proposition \ref{choix}). Noter aussi que $(\pi[\mnn^\alpha]\otimes_EE_m)(\eta^{-1})_{N^\alpha_m}\simeq (\pi\otimes_EE_m)(\eta^{-1})^{N^\alpha_m}$ (cf. Remarque \ref{fixe}), mais cette deuxi\`eme d\'efinition donne des fl\`eches de fonctorialit\'e dans le mauvais sens quand $m$ grandit. Noter enfin que si l'on consid\`ere, par analogie, le foncteur $\displaystyle \sigma \mapsto \lim_{m\rightarrow +\infty}\Hom_{D(T^+(\Qp),E)}((\pi[\mnn]_{N_m})^\vee,\sigma^\vee)$
o\`u $\sigma$ est une repr\'esentation localement analytique du tore $T(\Qp)$ dans la cat\'egorie ${\rm Rep}^z_{\rm la.c}(T(\Qp))$ de \cite{Em2}, $\mnn$ l'alg\`ebre de Lie de $N(\Qp)$ et $D(T(\Qp)^+,E)$ les distributions localement analytiques sur le tore ``positif'' $T(\Qp)^+$ (\cite[Def.~2.2.1]{Em2}), on peut montrer que ce foncteur est repr\'esentable par le dual $J_B(\pi)^\vee$ o\`u $J_B(-)$ est le foncteur de Jacquet-Emerton relativement \`a $B(\Qp)$ (\cite{Em1}, \cite{Em2}), de sorte que (\ref{falphaintro}) semble raisonnable.

\begin{thm0}[Proposition \ref{drex}~\&~Th\'eor\`eme \ref{encoreplusgeneral}]\label{resintro}
(i) Pour toute suite exacte $0\rightarrow \pi''\rightarrow \pi\rightarrow \pi'$ de repr\'esentations localement analytiques de $B(\Qp)$ sur des $E$-espaces vectoriels de type compact, on a une suite exacte~:
$$0\longrightarrow F_{\alpha}(\pi'')\longrightarrow F_{\alpha}(\pi)\longrightarrow F_{\alpha}(\pi')$$
(i.e. $0\longrightarrow F_{\alpha}(\pi'')(D)\longrightarrow F_{\alpha}(\pi)(D)\longrightarrow F_{\alpha}(\pi')(D)$ est exact pour tout $(\varphi,\Gamma)$-module g\'en\'eralis\'e $D$).\\
(ii) Soit $P(\Qp)\subseteq {\rm GL}_n(\Qp)$ un sous-groupe parabolique contenant $B(\Qp)$ tel que $\alpha$ est une racine simple du facteur de Levi $L_P(\Qp)$, $P^-(\Qp)$ le parabolique oppos\'e et $\pi_P$ une repr\'esentation localement analytique de $L_P(\Qp)$ sur un $E$-espace vectoriel de type compact. Alors on a un isomorphisme $F_\alpha\big(\big(\Ind_{P^-(\Qp)}^{{\rm GL}_n(\Qp)} \pi_P\big)^{\an}\big)\cong F_\alpha(\pi_P)$.
\end{thm0}

Le probl\`eme de la repr\'esentabilit\'e du foncteur $F_\alpha(\pi)$ pour, disons, $\pi$ une repr\'esentation de longueur finie de ${\rm GL}_n(\Qp)$ est beaucoup plus d\'elicat. Lorsque l'on se limite, dans un premier temps, aux repr\'esentations construites par Orlik et Strauch dans \cite{OS}, on peut montrer que $F_\alpha(\pi)$ est repr\'esentable dans beaucoup de cas. Rappelons que, si $P(\Qp)\subseteq {\rm GL}_n(\Qp)$ est un sous-groupe parabolique contenant $B(\Qp)$, $P^-(\Qp)$ le parabolique oppos\'e d'alg\`ebre de Lie $\mpp^-$ et ${\mathcal O}^{\mpp^-}_{\rm alg}$ la sous-cat\'egorie de la cat\'egorie ${\mathcal O}^{\mpp^-}$ (\cite{Hu}) des objets avec des poids entiers, alors pour $\pi_P^\infty$ une repr\'esentation lisse de longueur finie du Levi $L_P(\Qp)$ sur $E$, Orlik et Strauch cons\-truisent dans \cite{OS} un foncteur contravariant exact $M\longmapsto {\mathcal F}_{P^-}^G(M,\pi_P^\infty)$ de ${\mathcal O}^{\mpp^-}_{\rm alg}$ dans la cat\'egorie des repr\'esentations localement analytiques (admissibles) de longueur finie de ${\rm GL}_n(\Qp)$ sur $E$. Soit $i\in \{1,\dots,n-1\}$ tel que $\alpha=e_i-e_{i+1}$, on note dans la suite $\lambda_{\alpha^\vee}(x)\=\diag(\underbrace{x,\dots,x}_{i},\underbrace{1,\dots,1}_{n-i})$ (un cocaract\`ere alg\'ebrique de $T(\Qp)$). Si $\lambda$ est un caract\`ere alg\'ebrique de $T(\Qp)$, on peut consid\'erer le $(\varphi,\Gamma)$-module $\R(\lambda\circ \lambda_{\alpha^\vee})$.

\begin{thm0}[Corollaire \ref{generaljoli}~\&~Corollaire \ref{casparticuliers}]\label{resbisintro}
Soit $P(\Qp)$, $M\in {\mathcal O}^{\mpp^-}_{\rm alg}$, $\pi_P^\infty$ comme ci-dessus et $\pi\={\mathcal F}_{P^-}^G(M,\pi_P^\infty)$. On suppose que $\pi_P^\infty$ admet un caract\`ere central $\chi_{\pi_P^\infty}$ et on note $d_{\pi_P^\infty}=\dim_E(\pi_P^\infty\otimes_EE_\infty)(\eta^{-1})_{N_{L_{P}}\!(\Qp)}$ o\`u $N_{L_{P}}\!(\Qp)=N(\Qp)\cap L_P(\Qp)$. Lorsque $\alpha$ n'est pas une racine de $L_P(\Qp)$, on note $Q(\Qp)$ le plus petit sous-groupe parabolique de ${\rm GL}_n(\Qp)$ contenant $P(\Qp)$ tel que $\alpha$ est une racine de $L_Q(\Qp)$ et $\mq^-$ l'alg\`ebre de Lie de $Q^-(\Qp)$.\\
(i) Il existe des caract\`eres alg\'ebriques distincts $\chi_{\lambda_1},\dots,\chi_{\lambda_r}$ de $\Gal(E_\infty/E)$ (cf. (\ref{chilambda}) pour $\chi_{\lambda_i}$) et des $(\varphi,\Gamma)$-modules g\'en\'eralis\'es $D_{\alpha,1}(\pi),\dots ,D_{\alpha,r}(\pi)$ tels que~:
$$F_{\alpha}(\pi)\simeq \bigoplus_{i=1}^rE_\infty(\chi_{\lambda_i})\otimes_E\Hom_{(\varphi,\Gamma)}(D_{\alpha,i}(\pi),-).$$
(ii) Supposons de plus que $M$ est un quotient non nul de $U(\mg)\otimes_{U(\mpp^-)}L^-(\lambda)_{P}$ o\`u $L^-(\lambda)_{P}$ est une repr\'esentation alg\'ebrique irr\'eductible de $L_P(\Qp)$ (pour $\lambda$ un caract\`ere alg\'ebrique de $T(\Qp)$ dominant pour $B^-(\Qp)\cap L_P(\Qp)$). Alors si $\alpha$ n'est pas une racine de $L_P(\Qp)$ et si $M\notin {\mathcal O}^{\mq^-}_{\rm alg}$, on a~:
\begin{equation*}
F_\alpha(\pi)\simeq
E_\infty(\chi_{-\lambda})\otimes_{E}\Hom_{(\varphi,\Gamma)}\Big(\R\big((\lambda\circ \lambda_{\alpha^\vee})(\chi_{\pi_P^\infty}^{-1}\circ \lambda_{\alpha^{\!\vee}})\big)^{\oplus d_{\pi_P^\infty}},-\Big).
\end{equation*}
(iii) Avec les notations de (ii), si $\alpha$ n'est pas une racine de $L_P(\Qp)$ et si $M\in {\mathcal O}^{\mq^-}_{\rm alg}$, ou bien si $\alpha$ est une racine de $L_P(\Qp)$, on a~:
$$F_\alpha(\pi)\simeq E_\infty(\chi_{-\lambda})\otimes_E\Hom_{(\varphi,\Gamma)}\Big(\big(\R(\lambda\circ \lambda_{\alpha^{\!\vee}})/(t^{1-\langle \lambda,\alpha^\vee\rangle})\big)^{\oplus d_{\pi_P^\infty}},-\Big).$$
\end{thm0}

Le Th\'eor\`eme \ref{resintro} et le Th\'eor\`eme \ref{resbisintro} ne sont pas vrais seulement pour ${\rm GL}_n(\Qp)$, on les d\'emontre dans le texte pour $G(\Qp)$ o\`u $G$ est un groupe alg\'ebrique r\'eductif connexe d\'eploy\'e sur $\Qp$ de centre connexe. La preuve utilise comme ingr\'edients le Th\'eor\`eme \ref{resintro}, le cas $\pi$ localement alg\'ebrique (Th\'eor\`eme \ref{caslisse}, dont la preuve utilise de mani\`ere essentielle des r\'esultats de Bernstein et Zelevinsky \cite[\S~3.5]{BZ}), le Th\'eor\`eme \ref{liesocle} qui permet de se ramener \`a une induite parabolique localement analytique, et enfin des d\'evissages parfois techniques (cf. Proposition \ref{descgrdense}) pour montrer que seule la grosse cellule d'une telle induite parabolique a une contribution non nulle \`a $F_\alpha$ (cf. Proposition \ref{celluleouverte} et Proposition \ref{support}). Dans ces d\'evissages, on utilise en particulier \`a maintes reprises le fait technique suivant~: pour certains $\psi$-modules de Fr\'echet dont le $E$-espace vectoriel sous-jacent est de la forme $\prod_{i\geq 1}M_i$ pour des espaces de Fr\'echet $M_i$ (par exemples ceux tels que $\psi$ pr\'eserve chaque facteur $M_i$), tout morphisme $f: \prod_{i\geq 1}M_i\rightarrow D_r$ d'espaces de Fr\'echet commutant \`a $\psi$ est nul sur tous les $M_i$ sauf un nombre fini (voir par exemple la preuve du Lemme \ref{begin}). Noter que le foncteur $\pi\mapsto F_\alpha(\pi)$ n'est pas exact en g\'en\'eral, cf. par exemple le Th\'eor\`eme \ref{liesocle}.

Revenons maintenant au contexte du d\'ebut de l'introduction avec $\rho_p$, $(h_i)_{i\in \{1,\dots,n\}}$, $W$, ${\rm alg}\otimes_E \pi^\infty$, $\pi^{\an}$. Comme cas particulier du (iii) du Th\'eor\`eme \ref{resbisintro} (le cas $P(\Qp)={\rm GL}_n(\Qp)$), on a bien $F_\alpha({\rm alg}\otimes_E \pi^\infty)\simeq E_\infty(\chi_{-\lambda})\otimes_E\Hom_{(\varphi,\Gamma)}(\R(\chi)/(t^{h_j-h_{j+1}}),-)$ o\`u $\lambda\=(n-i-h_i)_{i\in \{1,\dots,n\}}$ et $\chi$ est un certain caract\`ere localement alg\'ebrique (plus pr\'ecis\'ement $x^{h_j-h_{j+1}}\chi$ est le caract\`ere apparaissant en (\ref{dalphaw})). Le foncteur $F_\alpha$ permet maintenant d'\'enoncer la conjecture principale de cet article (on renvoie au \S~\ref{locglob} pour plus de d\'etails).

\begin{conj0}[Conjecture \ref{extglobmieux}]\label{conjintro}
Pour toute racine simple $\alpha=e_j-e_{j+1}$ il existe une repr\'esentation localement analytique admissible de longueur finie $\pi^\alpha$ de ${\rm GL}_n(\Qp)$ sur $E$ ne d\'ependant que des poids de Hodge-Tate $(h_i)_{i\in \{1,\dots,n\}}$, de la repr\'esentation de Weil-Deligne $W$ et de $\alpha$ et v\'erifiant les propri\'et\'es suivantes~:

\noindent
(i) $F_\alpha(\pi^\alpha)\simeq E_\infty(\chi_{-\lambda})\!\otimes_{E}\!\Hom_{(\varphi,\Gamma)}\!\big(D(\wedge_E^{n-j}W)(x^{h_j-h_{j+1}})\otimes_{\R}\R(\chi),-\big)$;

\noindent
(ii) pour toute extension $\pi$ de $\pi^\alpha$ par ${\rm alg}\otimes_E \pi^\infty$ on a~:
$$F_\alpha(\pi)\simeq E_\infty(\chi_{-\lambda})\!\otimes_{E}\!\Hom_{(\varphi,\Gamma)}\!\big(D_\alpha(\pi),-\big)$$
pour un (unique) $(\varphi,\Gamma)$-module g\'en\'eralis\'e $D_\alpha(\pi)$ et le foncteur $\pi\mapsto D_\alpha(\pi)$ induit un isomorphisme~:
\begin{multline}\label{iso1bisintro}
{\rm Ext}^1_{{\rm GL}_n(\Qp)}\big(\pi^\alpha,{\rm alg}\otimes_E \pi^\infty\big)\\\buildrel\sim\over\longrightarrow {\rm Ext}^1_{(\varphi,\Gamma)}\big(\R(\chi)/(t^{h_j-h_{j+1}}),D(\wedge_E^{n-j}W)(x^{h_j-h_{j+1}})\otimes_{\R}\R(\chi)\big);
\end{multline}
(iii) tout plongement ${\rm alg}\otimes_E \pi^\infty\hookrightarrow \pi^{\an}$ s'\'etend en un plongement~:
$$\!\begin{xy}(-60,0)*+{({\rm alg}\otimes_E \pi^\infty)}="a";(-40,0)*+{\pi^\alpha}="b";{\ar@{-}"a";"b"}\end{xy}\!\hookrightarrow \pi^{\an}$$
o\`u $\!\begin{xy}(-60,0)*+{({\rm alg}\otimes_E \pi^\infty)}="a";(-40,0)*+{\pi^\alpha}="b";{\ar@{-}"a";"b"}\end{xy}\!$ est l'unique extension non scind\'ee de $\pi^\alpha$ par ${\rm alg}\otimes_E \pi^\infty$ dont la droite engendr\'ee dans ${\rm Ext}^1_{{\rm GL}_n(\Qp)}(\pi^\alpha,{\rm alg}\otimes_E \pi^\infty)$ s'envoie vers la droite ${\rm Fil}^{\rm max}_\alpha(\rho_p)$ via les isomorphismes (\ref{iso1bisintro}) puis (\ref{filintro}) (tordu par $\chi$).
\end{conj0}

On \'enonce maintenant nos r\'esultats partiels sur la Conjecture \ref{conjintro}. Le premier th\'eor\`eme est valable pour tout $n\geq 2$.

\begin{thm0}[Th\'eor\`eme \ref{localgenplus}, Corollaire \ref{cor05}~\&~Th\'eor\`eme \ref{ncris}]\label{resterintro}
\noindent
(i) Si $\pi^\alpha$ existe et v\'erifie le (i) de la Conjecture \ref{conjintro}, alors pour toute extension $\pi$ de $\pi^\alpha$ par ${\rm alg}\otimes_E \pi^\infty$ on a~:
$$F_\alpha(\pi)\simeq E_\infty(\chi_{-\lambda})\!\otimes_{E}\!\Hom_{(\varphi,\Gamma)}\!\big(D_\alpha(\pi),-\big)$$
et le foncteur $\pi\mapsto D_\alpha(\pi)$ induit un morphisme~:
\begin{multline}\label{iso1bisintrores}
{\rm Ext}^1_{{\rm GL}_n(\Qp)}\big(\pi^\alpha,{\rm alg}\otimes_E \pi^\infty\big)\\
\longrightarrow {\rm Ext}^1_{(\varphi,\Gamma)}\big(\R(\chi)/(t^{h_j-h_{j+1}}),D(\wedge_E^{n-j}W)(x^{h_j-h_{j+1}})\otimes_{\R}\R(\chi)\big).
\end{multline}
(ii) Supposons $\rho_p$ cristalline avec les ratios des valeurs propres du Frobenius sur $D_{\rm cris}(\rho_p)$ distincts de $1$, $p$. Alors il existe $\pi^\alpha$ (admissible de longueur finie) v\'erifiant les (i) et (ii) de la Conjecture \ref{conjintro}. De plus, sous les hypoth\`ese standard de Taylor-Wiles, il existe un plongement~:
$$\!\begin{xy}(-60,0)*+{({\rm alg}\otimes_E \pi^\infty)}="a";(-40,0)*+{\pi^\alpha}="b";{\ar@{-}"a";"b"}\end{xy}\!\hookrightarrow \pi^{\an}$$
avec $\!\begin{xy}(-60,0)*+{({\rm alg}\otimes_E \pi^\infty)}="a";(-40,0)*+{\pi^\alpha}="b";{\ar@{-}"a";"b"}\end{xy}\!$ comme dans le (iii) de la Conjecture \ref{conjintro}.
\end{thm0}

La premi\`ere assertion du (ii) du Th\'eor\`eme \ref{resterintro} requiert en particulier la preuve de \cite[Conj.~3.3.1]{Br1} pour $\GL_n(\Qp)$, ce qui est l'objet du \S~\ref{conjcris}. L'un des ingr\'edients cru\-ciaux de la deuxi\`eme assertion du (ii) du Th\'eor\`eme \ref{resterintro} est l'existence des constituants compagnons dans le cas cristallin (\cite[Th.~1.3]{BHS3} qui requiert les ``hypoth\`eses standard de Taylor-Wiles''). 

Le deuxi\`eme th\'eor\`eme consid\`ere le cas $n=2$. Pour $n=2$, la repr\'esentation $\pi^{\an}$ est bien d\'efinie par la correspondance de Langlands locale pour ${\rm GL}_2(\Qp)$ (et compatible avec la th\'eorie globale, au moins dans beaucoup de cas, cf. \cite{Em3}, \cite{CS1}, \cite{CS2}) et de plus $\pi^{\an}/({\rm alg}\otimes_E \pi^\infty)$ ne d\'epend que des $h_i$ et de $W$ par \cite[Th.~VI.6.43]{Co2}.

\begin{thm0}[Th\'eor\`eme \ref{n=2}]\label{n=2intro}
Supposons $n=2$.\\
(i) La Conjecture \ref{conjintro} est vraie lorsque $W$ est r\'eductible.\\
(ii) Si $W$ est irr\'eductible et si le (i) de la Conjecture \ref{conjintro} est vrai avec $\pi^\alpha\=\pi^{\an}/({\rm alg}\otimes_E \pi^\infty)$, alors on a $F_\alpha(\pi^{\an})\simeq E_\infty\otimes_E\Hom_{(\varphi,\Gamma)}(D_{\rm rig}(\check \rho_p),-)$ o\`u $D_{\rm rig}(\check \rho_p)$ est le $(\varphi,\Gamma)$-module \'etale sur $\R$ associ\'e au dual de Cartier $\check \rho_p$ de $\rho_p$.
\end{thm0}

La d\'emonstration du Th\'eor\`eme \ref{n=2intro} utilise comme ingr\'edients principaux plusieurs r\'esultats de Colmez et Dospinescu (\cite{Co}, \cite{CD}) combin\'es avec le Th\'eor\`eme \ref{resbisintro}, le (i) du Th\'eor\`eme \ref{resterintro} et un r\'esultat de repr\'esentabilit\'e et d'exactitude pour le foncteur $F_\alpha$ appliqu\'e \`a certaines extensions de repr\'esentations localement analytiques de ${\rm GL}_2(\Qp)$ (cf. Th\'eor\`eme \ref{gl3enplus}). Pour d\'emontrer compl\`etement la Conjecture \ref{conjintro} lorsque $n=2$ et $W$ est irr\'eductible, il semble qu'il faille aller plus loin que les r\'esultats \'enonc\'es dans \cite{Co}, \cite{Co2}, \cite{CD}.

Le troisi\`eme th\'eor\`eme, le plus d\'elicat, concerne le cas o\`u $n=3$ et $\pi^\infty$ est la repr\'esentation de Steinberg de ${\rm GL}_3(\Qp)$ \`a torsion pr\`es (en particulier $\rho_p$ est semi-stable non cristalline \`a torsion pr\`es). Ce cas est particuli\`erement int\'eressant car, d'une part la pr\'esence de l'op\'erateur de monodromie ``rigidifie'' la situation et implique que les deux droites ${\rm Fil}^{\rm max}_{e_1-e_2}(\rho_p)$ et ${\rm Fil}^{\rm max}_{e_2-e_3}(\rho_p)$ d\'eterminent la filtration de Hodge sur $D_{\dR}(\rho_p)$ (ce qui n'est pas vrai dans le cas cristallin par exemple), d'autre part on dispose de candidats explicites pour les repr\'esentations $\pi^\alpha$ de ${\rm GL}_3(\Qp)$, cf. \cite[\S~4.5]{Br1}.

\begin{thm0}[Th\'eor\`eme \ref{gl3loc}]\label{gl3locintro}
Supposons $n=3$, $\pi^\infty=$ la repr\'esentation de Steinberg (\`a torsion pr\`es) et soit $\pi^\alpha$ comme dans \cite[\S~4.5]{Br1} (aux notations pr\`es).\\
(i) On a $F_\alpha(\pi^\alpha)\simeq E_\infty(\chi_{-\lambda})\!\otimes_{E}\!\Hom_{(\varphi,\Gamma)}\!\big(D(\wedge_E^{3-j}W)(x^{h_j-h_{j+1}})\otimes_{\R}\R(\chi),-\big)$.\\
(ii) Le morphisme (\ref{iso1bisintrores}) (qui existe par le (i) ci-dessus et le (i) du Th\'eor\`eme \ref{resterintro}) est un isomorphisme~:
\begin{multline*}
{\rm Ext}^1_{{\rm GL}_n(\Qp)}\big(\pi^\alpha,{\rm alg}\otimes_E \pi^\infty\big)\\
\buildrel\sim\over\longrightarrow {\rm Ext}^1_{(\varphi,\Gamma)}\big(\R(\chi)/(t^{h_j-h_{j+1}}),D(\wedge_E^{3-j}W)(x^{h_j-h_{j+1}})\otimes_{\R}\R(\chi)\big).
\end{multline*}
\end{thm0}

La d\'emonstration du Th\'eor\`eme \ref{gl3locintro} utilise essentiellement tous les th\'eor\`emes pr\'ec\'edents, les r\'esultats de \cite{Br1} et le Th\'eor\`eme \ref{gl3enplus} dans le texte qui donne un \'enonc\'e crucial (mais d\'elicat) de repr\'esentabilit\'e et d'exactitude pour le foncteur $F_\alpha$ appliqu\'e \`a certaines extensions de repr\'esentations localement analytiques de ${\rm GL}_3(\Qp)$. La preuve que l'on donne de ce Th\'eor\`eme \ref{gl3enplus} est longue et tr\`es technique, elle utilise (cf. \S~\ref{devis1}) des r\'esultats dus \`a Schraen (\cite{Sc1}) et pas mal d'analyse fonctionnelle $p$-adique assez fastidieuse sur des $(\psi,\Gamma)$-modules de Fr\'echet o\`u l'op\'erateur $\psi$ et le ``fait technique'' de la preuve du Lemme \ref{begin} mentionn\'e pr\'ec\'edemment jouent un r\^ole important (cf. \S\S~\ref{technique1}~\&~\ref{premierscindage}). Noter que, pour d\'emontrer ce r\'esultat d'exactitude, on utilise aussi l'action de $\Gal(E_\infty/E)$ (cf. preuve de la Proposition \ref{pushtechnique}).

Ind\'ependamment du Th\'eor\`eme \ref{gl3locintro}, dans \cite{BD} sous des hypoth\`eses de g\'en\'ericit\'e faibles est associ\'ee \`a $\rho_p$ pour chaque $\alpha$ une extension non scind\'ee $\pi^\alpha(\rho_p)$ de $\pi^\alpha$ par ${\rm alg}\otimes_E \pi^\infty$ (qui d\'epend {\it a priori} de $\rho_p$ ``tout entier'') telle que, au moins lorsque les $h_i$ sont des entiers cons\'ecutifs, tout plongement ${\rm alg}\otimes_E \pi^\infty\hookrightarrow \pi^{\an}$ s'\'etend en un plongement $\pi^\alpha(\rho_p)\hookrightarrow \pi^{\an}$. Le (ii) du Th\'eor\`eme \ref{gl3locintro} permet par ailleurs d'associer \`a $(h_i)_{i\in \{1,2,3\}}$, $W$ et ${\rm Fil}^{\rm max}_{\alpha}(\rho_p)$ (via la Proposition \ref{filmaxintro}) une autre extension non scind\'ee de $\pi^\alpha$ par ${\rm alg}\otimes_E \pi^\infty$ que l'on note $\pi^\alpha({\rm Fil}^{\rm max}_{\alpha}(\rho_p))$. Le (iii) de la Conjecture \ref{conjintro} et \cite[Th.~1.1]{BD} impliquent que, au moins lorsque les $h_i$ sont cons\'ecutifs, ces deux extensions non scind\'ees devraient \^etre les m\^emes. La conjecture suivante (pour des $h_i$ quelconques) est donc naturelle.

\begin{conj0}[Conjecture \ref{deuxpialpha}]\label{frustrantintro}
Avec les notations pr\'ec\'edentes, on a $\pi^\alpha(\rho_p)\simeq \pi^\alpha({\rm Fil}^{\rm max}_{\alpha}(\rho_p))$.
\end{conj0}

Le dernier th\'eor\`eme de cette introduction montre que la Conjecture \ref{frustrantintro} est tr\`es vraisemblable.

\begin{thm0}[Th\'eor\`eme \ref{gl3glob}]\label{dependintro}
La repr\'esentation $\pi^\alpha(\rho_p)$ ne d\'epend que de $(h_i)_{i\in \{1,2,3\}}$, $W$ et ${\rm Fil}^{\rm max}_{\alpha}(\rho_p)$.
\end{thm0}

La preuve du Th\'eor\`eme \ref{dependintro} repose sur les r\'esultats de \cite{BD} et est ind\'ependante des th\'eor\`emes pr\'ec\'edents, mais a \'et\'e fortement inspir\'ee par la Conjecture \ref{frustrantintro}. Il s'agit d'un raffinement de la construction de $\pi^{\alpha}(\rho_p)$ dans \cite{BD} qui consiste \`a utiliser la correspondance localement analytique pour ${\rm GL}_2(\Qp)$ pour construire, par induction parabolique localement analytique et des calculs de cohomologie galoisienne, un accouplement parfait entre deux $\Ext^1$ de dimension $2$, l'un c\^ot\'e $(\varphi,\Gamma)$-modules (sans torsion), l'autre c\^ot\'e repr\'esentations de ${\rm GL}_3(\Qp)$, puis \`a d\'efinir $\pi^\alpha(\rho_p)$ (ou plut\^ot une sous-repr\'esentation $\pi^\alpha(\rho_p)^-$ qui d\'etermine $\pi^\alpha(\rho_p)$) comme l'orthogonal de la droite engendr\'ee par le $(\varphi,\Gamma)$-module associ\'e \`a ${\rm Fil}^{\rm max}_\alpha(\rho_p)$ dans la Proposition \ref{filmaxintro} (\`a torsion et dualit\'e pr\`es) vu comme \'el\'ement du $\Ext^1$ c\^ot\'e $(\varphi,\Gamma)$-modules. Ce raffinement repose sur la Proposition \ref{pLLht} (dont la preuve utilise des r\'esultats de Dospinescu \cite{Do}) et la Proposition \ref{split} (dont la preuve a \'et\'e rel\'egu\'ee en appendice car elle n'utilise que des techniques de \cite{BD} et des calculs d'alg\`ebre de Lie). L'id\'ee nouvelle par rapport \`a \cite{BD} dans ces deux propositions est de consid\'erer des repr\'esentations localement analytiques de ${\rm GL}_2(\Qp)$ admettant un caract\`ere infinit\'esimal.

Nous aurions aim\'e pouvoir montrer compl\`etement la Conjecture \ref{frustrantintro}, mais cela semble difficile sans avoir \`a d\'emontrer encore d'autres propri\'et\'es (redoutables) de repr\'esentablilit\'e et d'exactitude de $F_\alpha$ ni sans avoir \`a subir de nouveaux calculs dans de grosses induites paraboliques localement analytiques de ${\rm GL}_3(\Qp)$. On atteint pro\-bablement l\`a les limites des m\'ethodes relativement ``explicites'' de cet article (et de \cite{Br1}, \cite{BD}). Pour aller plus loin, par exemple pour traiter ${\rm GL}_n(\Qp)$ et $\rho_p$ g\'en\'erique semi-stable non cristalline de dimension $n\geq 4$, il faut probablement trouver de nouvelles m\'ethodes.

Nous nous sommes limit\'es au corps de base $\Qp$ dans cet article pour le confort de n'avoir \`a consid\'erer que des $(\varphi,\Gamma)$-modules sur $\R$. Plusieurs r\'esultats, par exemple la d\'efinition de $F_\alpha$, le Th\'eor\`eme \ref{resintro} et le Th\'eor\`eme \ref{resbisintro}, devraient s'\'etendre au cas de repr\'esentations localement $\sigma$-analytiques de $G(L)$ pour $L$ extension finie de $\Qp$, $\sigma:L\hookrightarrow E$ et $G$ d\'eploy\'e sur $L$ de centre connexe, en rempla\c cant les $(\varphi,\Gamma)$-modules sur $\R$ par la th\'eorie de \cite{BSX}.

Terminons cette introduction avec quelques notations et conventions g\'en\'erales.

Dans tout l'article $E$ d\'esigne un corps de coefficients pour les repr\'esentations, une extension finie de $\Qp$, et $(E_m)_{m\in \Z_{\geq 0}}$ une tour d'extensions galoisiennes de $E$ telle que $E_m=E[\sqrt[p^m]{1}]$ (on peut avoir $E=E_m$ pour $m$ petit). On d\'esigne par $\val$ la valuation $p$-adique normalis\'ee par $\val(p)=1$ et $\norm=1/p^{\val(\cdot)}$ la norme $p$-adique associ\'ee. On note $\Gamma=\Gal(\Qp(\!\sqrt[p^{\infty}]{1})/\Qp)$ et $\varepsilon:\gp\twoheadrightarrow \Gamma \buildrel\sim\over\rightarrow \Zp^\times$ le caract\`ere cyclotomique $p$-adique, qui permet d'identifier $\Gamma$ \`a $\Zp^\times$. On normalise la th\'eorie du corps de classe local de telle sorte que les Frobenius g\'eom\'etriques correspondent aux uniformisantes. Ainsi $\varepsilon(x)=x\vert x\vert$ pour tout $x\in \Qp^\times$. On convient que le poids de Hodge-Tate de $\varepsilon$ est $1$.

Si $G$ est un groupe alg\'ebrique on note $Z_G$ son centre. Si $\mg$ est une alg\`ebre de Lie (sur un corps), on note $U(\mg)$ son alg\`ebre enveloppante. Si $V$ est un $U(\mg)$-module et $X\subseteq \mg$ un sous-ensemble quelconque, on note $V[X]=\{v\in V,\ xv=0\ \forall\ x\in X\}$. Si $M$ est un module sur un anneau commutatif $A$ et $S\subseteq A$ un sous-ensemble quelconque, on note $M_{\tor}$ le sous-$A$-module de torsion de $M$ et $M[S]$ le sous-$A$-module $\{m\in M,\ sm=0\ \forall \ s\in S\}$.

Si $V$ est une repr\'esentation d'un groupe $G$ sur un $E$-espace vectoriel et $\eta:G\rightarrow E$ un ca\-ract\`ere, on note $V(\eta)$ la repr\'esentation de $V$ tordue par $\eta$. Si $V$ est un $\Qp$-espace vectoriel muni d'une topologie localement convexe, on note $V^{\sep}$ son quotient s\'epar\'e, c'est-\`a-dire $V/\overline{\{0\}}$ (o\`u $\overline{\{0\}}$ est l'adh\'erence de $\{0\}$ dans $V$) muni de la topologie quotient, et $V^\vee$ son dual continu muni de la topologie forte (\cite[p.~30]{Sch}). Si $V$ et $W$ sont deux $E$-espaces vectoriels localement convexes, on note $V\otimes_{E,\pi}W$, resp. $V\otimes_{E,\iota}W$, le produit tensoriel muni de la topologie projective, resp. injective, et simplement $V\otimes_{E}W$ lorsque ces deux topologies co\"\i ncident. On note $\widehat\otimes_{E,\pi}$, $\widehat\otimes_{E,\iota}$, $\widehat\otimes_{E}$ au lieu de $\otimes_{E,\pi}$, $\otimes_{E,\iota}$, $\otimes_{E}$ les compl\'et\'es respectifs (\cite[Prop.~7.5]{Sch}).

Si $M$ est une vari\'et\'e localement $\Qp$-analytique paracompacte (ou de mani\`ere \'equivalente strictement paracompacte) de dimension finie (cf. \cite[\S~II]{Sch2}), par exemple un groupe analytique $p$-adique, et $V$ un $E$-espace vectoriel muni d'une topologie localement convexe s\'epar\'ee, on note $C^{\an}(M,V)$ le $E$-espace vectoriel des fonctions localement analytiques de $M$ dans $V$ (\cite[\S~2]{ST1}), $D(M,E)$ l'alg\`ebre des distributions localement analytiques sur $M$ \`a valeurs dans $E$, c'est-\`a-dire le dual continu de $C^{\an}(M,E)$ avec sa topologie forte (cf. {\it loc.cit.}). Si $M$ est compacte et $V$ est un espace de type compact, alors $C^{\an}(M,V)$ est aussi un espace de type compact (\cite[Prop.~2.1.28]{Em}). On note $C^\infty(M,V)$ le sous-$E$-espace vectoriel des fonctions localement constantes.

Si $G$ est un groupe analytique $p$-adique, on note $\Rep(G)$ la cat\'egorie des repr\'esentations localement analytiques de $G$ sur des $E$-espaces vectoriels localement convexes de type compact (\cite[\S~3]{ST1}). La cat\'egorie $\Rep(G)$ n'est pas ab\'elienne, mais elle est munie d'une notion de suite exacte (rappelons qu'une suite exacte courte d'espaces de type compact est n\'ecessairement stricte, de m\^eme qu'une suite exacte courte d'espaces de Fr\'echet). Pour les objets de $\Rep(G)$, on abr\`ege ``absolument topologiquement irr\'eductible'' (i.e. qui reste topologiquement irr\'eductible apr\`es extension des scalaires \`a une extension finie arbitraire de $E$) en ``irr\'eductible''. On rappelle que l'alg\`ebre $D(G,E)$ contient l'alg\`ebre enveloppante $U(\mg)$ de la $\Qp$-alg\`ebre de Lie $\mg$ de $G$. Si $\pi$, $\pi'$ sont deux repr\'esentations localement analytiques admissibles de $G$ (\cite[\S~6]{ST2}), on note ${\rm Ext}^1_G(\pi,\pi')$ le $E$-espace vectoriel des extensions dans la cat\'egorie des repr\'esentations localement analytiques admissibles de $G$.

\section{Foncteurs $F_\alpha$ et $(\varphi,\Gamma)$-modules sur l'anneau de Robba}

On d\'efinit les foncteurs $F_\alpha$ et on montre leurs propri\'et\'es g\'en\'erales.

\subsection{Quelques pr\'eliminaires}\label{prel}

On introduit plusieurs notations et on d\'emontre quelques r\'esultats pr\'eliminaires, dont la Proposition \ref{exactdense}, sur des repr\'esentations de $B(\Qp)$.

On fixe un triplet $(G,B,T)$ o\`u $G$ est un groupe alg\'ebrique r\'eductif connexe d\'eploy\'e sur $\Qp$, $B\subset G$ un sous-groupe de Borel d\'efini sur $\Qp$ et $T\subset B$ un tore ma\-ximal d\'eploy\'e sur $\Qp$. On suppose $G\ne T$ (donc aussi $B\ne T$) et on note $W=N_G(T)/T\ne \{1\}$ le groupe de Weyl de $G$ et $N$ le radical unipotent de $B$. On note $X(T)\=\Hom_{\rm gr}(T,{\mathbb G}_{\rm m})$ le groupe des caract\`eres alg\'ebriques de $T$, $X^\vee(T)\=\Hom_{\rm gr}({\mathbb G}_{\rm m},T)\cong \Hom_{\Z}(X(T),\Z)$ le groupe des coca\-ract\`eres, $\big(X(T),R,X^{\vee}(T),R^{\vee}\big)$ la donn\'ee radicielle de $(G,T)$, $R^+\subset X(T)$ les racines positives relativement \`a $B$ et $S\subset R^+$ les racines simples. On note $s_\alpha\in W$ la r\'eflexion associ\'ee \`a la racine $\alpha\in R$ et $\rho\=\frac{1}{2}\sum_{\alpha\in R^+}\alpha\in \frac{1}{2}X(T)$. Pour $\alpha\in R^+$, on note $N_\alpha\subseteq N$ le sous-groupe radiciel (commutatif) associ\'e et on fixe un isomorphisme $\iota_\alpha:N_\alpha \buildrel\sim\over \rightarrow {\mathbb G}_{\rm a}$ de groupes alg\'ebriques sur $\Qp$ tel que (\cite[\S~II.1.2]{Ja}) :
\begin{equation}\label{alpha}
\iota_\alpha(tx_\alpha t^{-1})=\alpha(t)\iota_\alpha(x_\alpha)\ \ \forall\ t\in T,\ \ \forall\ x_\alpha\in N_\alpha.
\end{equation}
L'application produit donne un isomorphisme de vari\'et\'es alg\'ebriques sur $\Qp$ (pour un ordre quelconque sur les $\alpha\in R^+$) $\prod_{\alpha\in R^+}N_\alpha \buildrel\sim\over\rightarrow N$. On note (cf. \cite[\S~5]{SV}) $\ell:~N\twoheadrightarrow \prod_{\alpha\in S}N_\alpha\buildrel\sum_{\alpha\in S}\iota_\alpha\over\longrightarrow {\mathbb G}_{\rm a}$ ainsi que le morphisme de groupes induit $\ell:N(\Qp)\rightarrow \Qp$.

Pour $\alpha\in S$, on note $N^\alpha\simeq \prod_{\beta\in R^+\backslash\{\alpha\}}N_\beta$ le radical unipotent du sous-groupe parabolique $P_\alpha$ de $G$ contenant $B$ dont le groupe de Levi a pour racines simples $\{\alpha\}$. C'est un sous-groupe alg\'ebrique invariant de $N$ tel que $N_\alpha \buildrel\sim\over\rightarrow N/N^\alpha$. On note aussi $Q_\alpha$ le sous-groupe parabolique maximal de $G$ contenant $B$ dont le Levi a pour racines simples $S\backslash \{\alpha\}$. On d\'esigne avec un $-$ en exposant les sous-groupes paraboliques oppos\'es ainsi que leur radical unipotent : $B^-$, $N^-$, $P_\alpha^-$, $Q_\alpha^-$, etc.

Soit $E$ une extension finie de $\Qp$, $E_\infty=\cup_{m\geq 0}E_m$ et $\eta:\Qp\rightarrow E_\infty$ un caract\`ere additif localement constant non trivial. On rappelle que si $\eta'$ est un autre tel caract\`ere, par exemple $\eta'=g\circ\eta$ pour $g\in {\Gal}(E_\infty/E)$, alors on a $\eta'(-)=\eta(a(-))$ pour un $a\in \Qp^\times$, cf. par exemple \cite[\S~1.7]{BHe} (pour $\eta'=g\circ\eta$ on a $a\in \Zp^\times$). On note encore $\eta:N(\Qp)\rightarrow E_\infty$ le caract\`ere induit $N(\Qp)\buildrel\ell\over\rightarrow \Qp\buildrel\eta\over\rightarrow E_\infty$ ainsi que sa restriction \`a tous les sous-groupes de $N(\Qp)$.

Comme dans \cite[\S~0]{SV} on dit qu'un sous-groupe ouvert compact $N^0\subseteq N$ est totalement d\'ecompos\'e si $\prod_{\alpha\in R^+}N_\alpha \buildrel\sim\over\rightarrow N$ induit~:
\begin{equation}\label{decomp}
\prod_{\alpha\in R^+} (N_\alpha(\Qp) \cap N^0)\buildrel\sim\over\rightarrow N^0
\end{equation}
pour tout ordre sur les racines $\alpha$. On fixe une suite croissante de sous-groupes ouverts compacts de $N(\Qp)$ :
$$N_0\subseteq N_1\subseteq \cdots \subseteq N_m\subseteq N_{m+1}\subseteq \cdots$$
v\'erifiant les conditions suivantes $N_m$ est totalement d\'ecompos\'e pour tout $m\in \Z_{\geq 0}$, $\cup_{m\geq 0}N_m=N(\Qp)$ et $\eta(N_m)\subseteq E_m$ pour tout $m\in \Z_{\geq 0}$. Pour $\alpha\in S$ et $m\in \Z_{\geq 0}$ on note $N_m^\alpha\=N^\alpha(\Qp)\cap N_m\simeq \prod_{\beta\in R^+\backslash\{\alpha\}}N_{\beta}(\Qp)\cap N_m$ et on suppose de plus que $N(\Qp)/N^\alpha(\Qp)\simeq N_\alpha(\Qp)\buildrel\iota_\alpha\over\simeq \Qp$ induit par restriction pour tout $m\in \Z_{\geq 0}$ des isomorphismes~:
\begin{equation}\label{zp}
N_\alpha(\Qp)\cap N_m=N_m/N_m^\alpha\buildrel\substack{\iota_\alpha \\ \sim}\over\longrightarrow \frac{1}{p^{m}}\Zp\subset \Qp.
\end{equation}

On suppose enfin que le centre $Z_G$ est connexe. Pour tout $\alpha\in S$ il existe alors des cocaract\`eres fondamentaux $\lambda_{\alpha^{\!\vee}}\in X^\vee(T)$ v\'erifiant pour $\beta\in S$ (voir par exemple \cite[Prop.2.1.1(iii)]{BH}) :
\begin{equation}\label{cocar}
\langle\beta,\lambda_{\alpha^{\!\vee}}\rangle=\left\{\begin{array}{ccc}1&\ {\rm si}\ &\alpha=\beta\\ 0&\ {\rm si}\ &\alpha \ne \beta. \end{array}\right.
\end{equation}
Rappelons que les $\lambda_{\alpha^{\!\vee}}$ sont d\'efinis \`a addition pr\`es d'un \'el\'ement de $X^0(T)=\{\lambda\in X^\vee(T),\ \langle \beta,\lambda\rangle=0\ \forall\ \beta\in R\}=X^\vee(Z_G)$ (la derni\`ere \'egalit\'e d\'ecoule de $Z_G=\cap_{\beta\in R}\ker(\beta)$, cf. \cite[\S~II.1.6]{Ja}) et que $\lambda_{\alpha^{\!\vee}}:{\mathbb G}_{\rm m}\rightarrow T$ se factorise par le centre $Z_{L_{Q_\alpha}}$ de $L_{Q_\alpha}$, i.e. induit $\lambda_{\alpha^{\!\vee}}:{\mathbb G}_{\rm m}\rightarrow Z_{L_{Q_\alpha}}$. Pour $N^0\subseteq N$ sous-groupe ouvert compact totalement d\'ecompos\'e, comme $N_\alpha(\Qp) \cap N^0$ pour $\alpha\in R^+$ est isomorphe \`a un sous-$\Zp$-module libre de rang $1$ dans $N_\alpha(\Qp)\buildrel\iota_\alpha\over\simeq \Qp$, on d\'eduit de (\ref{alpha}), (\ref{cocar}) et (\ref{decomp}) que l'on a pour $\alpha\in S$ et $z\in \Zp\backslash\{0\}\subseteq \Qp^\times$~:
\begin{equation}\label{incl}
\lambda_{\alpha^{\!\vee}}(z)N^0\lambda_{\alpha^{\!\vee}}(z)^{-1}\subseteq N^0
\end{equation}
avec \'egalit\'e lorsque $z\in \Zp^\times$.

\begin{ex}\label{gln}
{\rm Le cas essentiel pour cet article et celui o\`u $G=\G$ ($n\geq 2$), $B=$ Borel des matrices triangulaires sup\'erieures, $T=$ tore diagonal, $N=$ matrices unipotentes sup\'erieures et $\ell:\smat{1&a_{1,2}&\cdots&\cdots\\0&1&a_{2,3}&\cdots\\\vdots& \cdots &\ddots&\ddots}\longmapsto \sum_{i=1}^{n-1}a_{i,i+1}$ (ce qui d\'etermine les $\iota_\alpha$). Pour $i\in \{1,\dots,n\}$ soit $e_i\in X(T)$ tel que $e_i(\diag(t_1,\dots,t_n))=t_i$, on a $S=\{e_{i}-e_{i+1},\ i\in \{1,\dots,n-1\}\}$ et $R\simeq R^\vee$. On prend $\eta$ tel que $\eta\vert_{\Zp}=1$, $\eta(1/p)\ne 1$ et $N_m\subseteq N(\Qp)$ pour $m\in \Z_{\geq 0}$ le sous-groupe ouvert compact des matrices telles que $a_{i,j}\in \frac{1}{p^{m(j-i)}}\Zp$ pour $1\leq i<j\leq n$. Enfin on prend $\lambda_{e_{i}-e_{i+1}}(x)\=\diag(\underbrace{x,\dots,x}_{i},\underbrace{1,\dots,1}_{n-i})$. Noter que $Z_G={\mathbb G}_{\rm m}$ et $X^\vee(Z_G)=\{x\mapsto \diag(x^i,\dots,x^i),\ i\in \Z\}$.}
\end{ex}

\begin{lem}\label{easy}
(i) Pour tout $\alpha\in S$ et tout $m\in \Z_{\geq 0}$, le sous-groupe $N_m^\alpha$ est normal dans $N_m$.\\
(ii) Pour tout $\alpha\in S$, tout $z\in \Qp^\times$, tout $x\in N(\Qp)$ et tout $x_\alpha\in N^\alpha(\Qp)$ on a $\eta(\lambda_{\alpha^{\!\vee}}(z)x_\alpha\lambda_{\alpha^{\!\vee}}(z)^{-1})=\eta(x_\alpha)=\eta(xx_\alpha x^{-1})$.
\end{lem}
\begin{proof}
(i) Le sous-groupe $N^\alpha(\Qp)$ est normal dans $N(\Qp)$. (ii) Par (\ref{alpha}) et (\ref{cocar}) on a $\ell(\lambda_{\alpha^{\!\vee}}(z)x_\alpha\lambda_{\alpha^{\!\vee}}(z)^{-1})=\ell(x_\alpha)$, et il est clair que $\ell(xx_\alpha x^{-1})=\ell(x_\alpha)$.
\end{proof}

On fixe $\alpha\in S$ jusqu'\`a la fin de ce paragraphe. On note $\mnn^\alpha$ la $\Qp$-alg\`ebre de Lie de $N^\alpha(\Qp)$, ou de mani\`ere \'equivalente de $N_m^\alpha$ pour $m\in \Z_{\geq 0}$. Si $V$ est un $U(\mnn^\alpha)$-module, on note $V_{\mnn^\alpha}\simeq V\otimes_{U(\mnn^\alpha)}\Qp$ le quotient de $V$ par le sous-$\Qp$-espace vectoriel de $V$ engendr\'e par les vecteurs $xv$ pour $(x,v)\in \mnn^\alpha\times V$.

On fixe de plus $m\in \Z_{\geq 0}$ jusqu'\`a la fin de ce paragraphe. Soit $\pi$ une repr\'esentation dans $\Repm(B(\Qp))$, on rappelle que $\pi^\vee$ est un espace de Fr\'echet (nucl\'eaire) r\'eflexif muni d'une structure de $D(B(\Qp),E_m)$-module (\`a gauche) s\'epar\'ement continu (cf. \cite[\S~3]{ST1}). En particulier $\pi$ et $\pi^\vee$ sont munis d'actions continues de $N^\alpha(\Qp)$ et $\mnn^\alpha$ pour tout $\alpha\in S$ qui commutent avec la structure de $E_m$-espace vectoriel. On note $\overline{\langle xv,x\in \mnn^\alpha, v\in \pi^\vee \rangle}$ l'adh\'erence dans $\pi^\vee$ du sous-$E_m$-espace vectoriel ${\langle xv,x\in \mnn^\alpha, v\in \pi^\vee \rangle}$ engendr\'e par $xv$ pour $(x,v)\in \mnn^\alpha\times \pi^\vee$. On a $(\pi^\vee)_{\mnn^\alpha}^{\sep}\=((\pi^\vee)_{\mnn^\alpha})^{\sep}\simeq \pi^\vee/\overline{\langle xv,x\in \mnn^\alpha, v\in \pi^\vee \rangle}$ et le $E_m$-espace vectoriel $(\pi^\vee)_{\mnn^\alpha}^{\sep}$ muni de la topologie quotient est encore un espace de Fr\'echet r\'eflexif (cf. \cite[\S~1]{ST1}). On d\'efinit de m\^eme l'espace de Fr\'echet r\'eflexif $(\pi^\vee)_{N_m^\alpha}^{\sep}\=((\pi^\vee)_{N_m^\alpha})^{\sep}$ en rempla\c cant ${\langle xv,x\in \mnn^\alpha, v\in \pi^\vee \rangle}$ par ${\langle gv-v,g\in N_m^\alpha, v\in \pi^\vee \rangle}$.

Les sous-$E_m$-espaces vectoriels $\pi[{\mnn^\alpha}]$ et $\pi^{N_m^\alpha}$ de $\pi$ sont ferm\'es, donc sont des espaces de type compact (pour la topologie induite) par \cite[Prop.~1.2]{ST1}. De plus $\pi[{\mnn^\alpha}]$ est stable sous l'action de $N_m^\alpha$ puisque $\mnn^\alpha$ est la $\Qp$-alg\`ebre de Lie de $N_m^\alpha$, et cette action de $N_m^\alpha$ sur $\pi[{\mnn^\alpha}]$ est lisse par l'argument de \cite[p.~114]{ST4}. On note $(-)(\eta)$, resp. $(-)(\eta^{-1})$, le tordu de $(-)$ par le caract\`ere $\eta$, resp. $\eta^{-1}$, pour l'action de $N^\alpha_m$.

\begin{lem}\label{gnouf}
(i) On a des isomorphismes topologiques $(\pi^\vee)_{\mnn^\alpha}^{\sep}\simeq \pi[{\mnn^\alpha}]^\vee$ et $(\pi^\vee)(\eta)_{N_m^\alpha}^{\sep}\simeq (\pi(\eta^{-1})^{N_m^\alpha})^\vee$.\\
(ii) Le sous-$E_m$-espace vectoriel de $\pi[{\mnn^\alpha}](\eta^{-1})$ engendr\'e par $xv-v$ pour $(x,v)\in N^\alpha_m\times \pi[\mnn^\alpha](\eta^{-1})$ est ferm\'e.
\end{lem}
\begin{proof}
(i) En proc\'edant par r\'ecurrence comme dans la preuve de \cite[Prop.~8.4]{Br2}, on filtre $N^\alpha$ par une suite croissante~:
$$0=N^{(0)}_{P_\alpha}\subsetneq N^{(1)}_{P_\alpha} \subsetneq \cdots \subsetneq N^{(d_\alpha-1)}_{P_\alpha}\subsetneq N^{(d_\alpha)}_{P_\alpha}=N^\alpha$$
de sous-groupes alg\'ebriques (sur $\Qp$) ferm\'es normaux tels que $\dim N^{(i)}_{P_\alpha}=i$. En notant $\mnn^{(i)}_{P_\alpha}$ l'alg\`ebre de Lie de $N^{(i)}_{P_\alpha}(\Qp)$ et $N_m^{\alpha,(i)}\=N^{(i)}_{P_\alpha}(\Qp)\cap N_m$, on a $\pi[\mnn^{(i)}_{P_\alpha}]\simeq (\pi[\mnn^{(i-1)}_{P_\alpha}])[\mnn^{(i)}_{P_\alpha}/\mnn^{(i-1)}_{P_\alpha}]$ et $\pi(\eta^{-1})^{N_m^{\alpha,(i)}}=(\pi(\eta^{-1})^{N_m^{\alpha,(i-1)}})^{N_m^{\alpha,(i)}/N_m^{\alpha,(i-1)}}$ pour $i\in \{1,\dots,d_\alpha\}$. Le premier isomorphisme du (i) d\'ecoule alors facilement par r\'ecurrence de l'isomorphisme de Fr\'echet r\'eflexifs $\pi[x]^\vee \simeq \pi^\vee/\overline{\langle xv,v\in \pi^\vee\rangle}$ (pour un \'el\'ement quelconque $x\in \mnn^\alpha$ et avec des notations \'evidentes) qui lui m\^eme d\'ecoule par r\'eflexivit\'e de $(\pi^\vee/\overline{\langle xv,v\in \pi^\vee\rangle})^\vee\simeq (\pi^\vee/{\langle xv,v\in \pi^\vee\rangle})^\vee\simeq \pi[x]$. Le deuxi\`eme se d\'eduit de mani\`ere analogue en rempla\c cant $x$ par $g-\Id$.\\
(ii) Notons $\langle xv-v,x\in N^\alpha_m, v\in \pi[\mnn^\alpha](\eta^{-1})\rangle$ le sous-$E_m$-espace vectoriel de $\pi[{\mnn^\alpha}](\eta^{-1})$ engendr\'e par $xv-v$ pour $(x,v)\in N^\alpha_m\times \pi[\mnn^\alpha](\eta^{-1})$ et $\pi[{\mnn^\alpha}](\eta^{-1})_{N_m^\alpha}$ le quotient $\pi[{\mnn^\alpha}](\eta^{-1})/\langle xv-v,x\in N^\alpha_m, v\in \pi[\mnn^\alpha](\eta^{-1})\rangle$. Comme l'action du groupe compact $N_m^\alpha$ sur $\pi[{\mnn^\alpha}]$, et donc sur la repr\'esentation tordue $\pi[{\mnn^\alpha}](\eta^{-1})$, est lisse, on dispose de l'application de projection usuelle~:
\begin{equation}\label{proj}
\pi[{\mnn^\alpha}](\eta^{-1})\twoheadrightarrow \pi[{\mnn^\alpha}](\eta^{-1})^{N_m^\alpha}
\end{equation}
dont le noyau est exactement le sous-espace $\langle xv-v,x\in N^\alpha_m, v\in \pi[\mnn^\alpha](\eta^{-1})\rangle$ (utiliser que le noyau contient ce sous-espace et que la surjection $\pi[{\mnn^\alpha}](\eta^{-1})\twoheadrightarrow \pi[{\mnn^\alpha}](\eta^{-1})_{N_m^\alpha}$ induit une surjection $\pi[{\mnn^\alpha}](\eta^{-1})^{N_m^\alpha}\twoheadrightarrow \pi[{\mnn^\alpha}](\eta^{-1})_{N_m^\alpha}$ par exactitude du foncteur $(-)^{N^\alpha_m}$ sur les repr\'esentations lisses de $N^\alpha_m$). Il suffit donc de montrer que l'application (\ref{proj}) est continue o\`u $\pi[{\mnn^\alpha}](\eta^{-1})^{N_m^\alpha}$ est muni de la topologie induite par $\pi[{\mnn^\alpha}](\eta^{-1})$. Mais cela d\'ecoule de \cite[Prop.~7.1.6]{Em} avec \cite[Cor.~7.1.4]{Em}.
\end{proof}

Par \ le \ (ii) \ du \ Lemme \ \ref{gnouf}, \ le \ $E_m$-espace \ vectoriel \ $\pi[{\mnn^\alpha}](\eta^{-1})_{N_m^\alpha}=\pi[{\mnn^\alpha}](\eta^{-1})/\langle xv-v,x\in N^\alpha_m, v\in \pi[\mnn^\alpha](\eta^{-1})\rangle$ avec la topologie quotient de $\pi[{\mnn^\alpha}](\eta^{-1})$ est (s\'epar\'e) de type compact. De plus l'action de $N_m$ sur $\pi$ pr\'eserve le sous-espace $\pi[\mnn^\alpha]$ (car, par le (i) du Lemme \ref{easy}, l'action adjointe de $N_m$ sur la $\Qp$-alg\`ebre de Lie de $N(\Qp)$ pr\'eserve la sous-alg\`ebre $\mnn^\alpha$) et, toujours par le Lemme \ref{easy}, fait du quotient $\pi[{\mnn^\alpha}](\eta^{-1})_{N_m^\alpha}$ une repr\'esentation localement analytique de $N_m/N_m^\alpha\simeq \frac{1}{p^m}\Zp$ (cf. (\ref{zp})). Pour $z\in \Qp^\times$, l'action de $\lambda_{\alpha^{\!\vee}}(z)$ sur $\pi$ pr\'eserve de m\^eme $\pi[\mnn^\alpha]$, et si de plus $\val(z)\geq 0$ elle passe au quotient $\pi[{\mnn^\alpha}](\eta^{-1})_{N_m^\alpha}$ par (\ref{incl}) (appliqu\'e \`a $N^0=N_m^\alpha$) et la premi\`ere \'egalit\'e du (ii) du Lemme \ref{easy}, et d\'efinit une action continue du mono\"\i de multiplicatif $\Zp\backslash\{0\}$ (vu dans $\Qp^\times$) sur $\pi[{\mnn^\alpha}](\eta^{-1})_{N_m^\alpha}$. De plus, si $x\in \frac{1}{p^m}\Zp$ et $z\in \Zp\backslash\{0\}\subset \Qp^\times$, on a par (\ref{alpha}) et (\ref{cocar})~:
\begin{equation}\label{comut}
z\circ x=zx\circ z\ {\rm sur}\ \pi[{\mnn^\alpha}](\eta^{-1})_{N_m^\alpha}
\end{equation}
o\`u $zx$ est vu dans $ \frac{1}{p^m}\Zp\simeq N_m/N_m^\alpha$.

Par le (i) du Lemme \ref{gnouf}, on a un isomorphisme topologique d'espaces de Fr\'echet r\'eflexifs~:
\begin{equation}\label{dualm}
\big(\pi[{\mnn^\alpha}](\eta^{-1})_{N_m^\alpha}\big)^{\!\vee}=(\pi[{\mnn^\alpha}]^\vee)(\eta)^{N_m^\alpha}\simeq (\pi^\vee)_{\mnn^\alpha}^{\sep}(\eta)^{N_m^\alpha}
\end{equation}
o\`u $(\pi^\vee)_{\mnn^\alpha}^{\sep}(\eta)^{N_m^\alpha}$ est muni de la topologie induite par $(\pi^\vee)_{\mnn^\alpha}^{\sep}(\eta)$ (noter que la premi\`ere \'egalit\'e est imm\'ediate). En particulier $(\pi^\vee)_{\mnn^\alpha}^{\sep}(\eta)^{N_m^\alpha}$ est un $D(\Zp,E_m)$-module continu via $N_0/N^\alpha_0\simeq \Zp$. Si $z\in \Qp^\times$, $\val(z)\leq 0$, on a aussi une action continue de $z$ sur $(\pi^\vee)_{\mnn^\alpha}^{\sep}(\eta)^{N_m^\alpha}$ avec la convention que~:
\begin{equation}\label{actionsurdual}
(zf)(-)\=f(z^{-1}(-))
\end{equation}
si $f\in (\pi[{\mnn^\alpha}](\eta^{-1})_{N_m^\alpha})^\vee$ et $z\in \Qp^\times$, $\val(z)\leq 0$. Si $x\in N_0/N_0^\alpha\simeq \Zp$ et $z\in \Qp^\times$, $\val(z)\leq 0$, on d\'eduit de (\ref{comut}) que l'on a sur $(\pi^\vee)_{\mnn^\alpha}^{\sep}(\eta)^{N_m^\alpha}$~:
\begin{equation}\label{comut2}
z\circ z^{-1}x=x\circ z
\end{equation}
o\`u $z^{-1}x$ est vu dans $\Zp\simeq N_0/N_0^\alpha$.

\begin{rem}\label{fixe}
{\rm La fin de la preuve du (ii) du Lemme \ref{gnouf} montre que pour $\pi$ dans $\Repm(B(\Qp))$ on a une bijection continue, donc un isomorphisme, entre espaces de type compact~:
$$\pi[{\mnn^\alpha}](\eta^{-1})_{N_m^\alpha}\buildrel\sim\over\longrightarrow \pi[{\mnn^\alpha}](\eta^{-1})^{N_m^\alpha}=\pi(\eta^{-1})^{N_m^\alpha}$$
dont on v\'erifie facilement qu'il est $N_m/N_m^\alpha$-\'equivariant et commute aux actions de $\lambda_{\alpha^{\!\vee}}(\Zp\backslash\{0\})$ en d\'efinissant l'action de $\lambda_{\alpha^{\!\vee}}(\Zp\backslash\{0\})$ sur le terme de droite via l'action de $G(\Qp)$ sur $\pi$ suivie de la projection sur $\pi(\eta^{-1})^{N_m^\alpha}$. On a donc aussi un isomorphisme compatible aux actions duales entre espaces de Fr\'echet r\'eflexifs par le (i) du Lemme \ref{gnouf} et (\ref{dualm})~:
$$(\pi^\vee)_{\mnn^\alpha}^{\sep}(\eta)^{N_m^\alpha}\simeq (\pi^\vee)(\eta)_{N_m^\alpha}^{\sep}.$$
On n'utilisera ces isomorphismes qu'\`a $m$ fix\'e (noter que les fl\`eches naturelles quand $m$ grandit ne vont pas dans le m\^eme sens des deux c\^ot\'es). Quand on fera varier $m$ (cf. par exemple le (iii) du Lemme \ref{m+1}) c'est $\pi[{\mnn^\alpha}](\eta^{-1})_{N_m^\alpha}$ et $(\pi^\vee)_{\mnn^\alpha}^{\sep}(\eta)^{N_m^\alpha}$ que l'on utilisera.}
\end{rem}

\begin{lem}\label{dense}
Soit $V'\hookrightarrow V$ un morphisme injectif continu de $E_m$-espaces vectoriels de type compact. Alors le morphisme (continu) $V^\vee\rightarrow {V'}^\vee$ est d'image dense.
\end{lem}
\begin{proof}
Notons $\overline{V'}$ l'adh\'erence de $V'$ dans $V$ (un espace de type compact par \cite[\S~1]{ST1}), on a $V^\vee\twoheadrightarrow \overline{V'}^\vee\hookrightarrow {V'}^\vee$ et il suffit de montrer que $\overline{V'}^\vee\hookrightarrow {V'}^\vee$ est d'image dense. Soit $W$ l'adh\'erence de $\overline{V'}^\vee$ dans ${V'}^\vee$ (un espace de Fr\'echet r\'eflexif par {\it loc.cit.}), par r\'eflexivit\'e des espaces de type compact on d\'eduit $V'\twoheadrightarrow W^\vee\hookrightarrow \overline{V'}$ d'o\`u $V'\buildrel\sim\over\rightarrow W^\vee$ puisque $V'\hookrightarrow \overline{V'}$. Par r\'eflexivit\'e de $W$ on obtient $W\buildrel\sim\over\rightarrow {V'}^\vee$ ce qui ach\`eve la preuve.
\end{proof}

\begin{prop}\label{exactdense}
Soit $0\rightarrow \pi''\rightarrow \pi\rightarrow \pi'$ une suite exacte de repr\'esentations dans $\Repm(B(\Qp))$. Alors on a un complexe d'espaces de Fr\'echet r\'eflexifs~:
$$((\pi')^\vee)_{\mnn^\alpha}^{\sep}(\eta)^{N_m^\alpha}\buildrel f\over \longrightarrow (\pi^\vee)_{\mnn^\alpha}^{\sep}(\eta)^{N_m^\alpha}\buildrel g \over\longrightarrow ((\pi'')^\vee)_{\mnn^\alpha}^{\sep}(\eta)^{N_m^\alpha}\longrightarrow 0$$
o\`u $g$ est une surjection topologique et o\`u l'image de $f$ est dense dans $\ker(g)$.
\end{prop}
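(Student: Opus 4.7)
Le plan est de dualiser le complexe pour se ramener au pr\'edual via le Lemme \ref{gnouf}(i) et l'isomorphisme (\ref{dualm}). J'utiliserais d'abord la Remarque \ref{fixe} pour identifier $\pi[\mnn^\alpha](\eta^{-1})_{N_m^\alpha}$ avec $\pi(\eta^{-1})^{N_m^\alpha}$, qui est un sous-espace ferm\'e de type compact de $\pi$. La suite exacte de d\'epart $0\rightarrow \pi''\rightarrow \pi\rightarrow \pi'$ est stricte (le morphisme injectif $\pi''\hookrightarrow \pi$ a pour image le noyau ferm\'e de $\pi\rightarrow \pi'$, et l'isomorphisme $\pi''\buildrel\sim\over\rightarrow \ker(\pi\rightarrow \pi')$ d\'ecoule du th\'eor\`eme de l'application ouverte pour les espaces de type compact). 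En intersectant avec le sous-espace ferm\'e $\pi(\eta^{-1})^{N_m^\alpha}\subseteq \pi$, on obtient alors une suite exacte d'espaces de type compact~:
$$0\longrightarrow \pi''(\eta^{-1})^{N_m^\alpha}\longrightarrow \pi(\eta^{-1})^{N_m^\alpha}\longrightarrow \pi'(\eta^{-1})^{N_m^\alpha}$$
dont la premi\`ere fl\`eche est un plongement ferm\'e.

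Pour la surjectivit\'e topologique de $g$, j'appliquerais le fait standard que pour tout plongement ferm\'e $V\hookrightarrow W$ d'espaces de type compact, le morphisme dual $W^\vee\twoheadrightarrow V^\vee$ est une surjection topologique d'espaces de Fr\'echet r\'eflexifs (par Hahn-Banach). Appliqu\'e au plongement ci-dessus (et apr\`es les identifications du Lemme \ref{gnouf}(i), de (\ref{dualm}) et de la Remarque \ref{fixe}), cela donne la surjectivit\'e topologique de $g$, et identifie $\ker(g)$ au dual $(\pi(\eta^{-1})^{N_m^\alpha}/\pi''(\eta^{-1})^{N_m^\alpha})^\vee$.

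Pour la densit\'e de l'image de $f$ dans $\ker(g)$, j'observerais que la suite exacte ci-dessus se factorise en une injection continue d'espaces de type compact~:
$$\pi(\eta^{-1})^{N_m^\alpha}/\pi''(\eta^{-1})^{N_m^\alpha}\hookrightarrow \pi'(\eta^{-1})^{N_m^\alpha}.$$
Le Lemme \ref{dense} entra\^\i ne alors que le morphisme dual est d'image dense dans $(\pi(\eta^{-1})^{N_m^\alpha}/\pi''(\eta^{-1})^{N_m^\alpha})^\vee\simeq \ker(g)$. Il suffit ensuite de v\'erifier que ce morphisme dual s'identifie bien \`a $f$ via toutes les identifications de dualit\'e pr\'ec\'edentes.

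Il n'y a pas d'obstacle majeur dans ce plan~: la difficult\'e est purement topologique et technique, et consiste \`a suivre soigneusement les isomorphismes de dualit\'e entre $E_m$-espaces vectoriels de type compact et $E_m$-espaces vectoriels de Fr\'echet r\'eflexifs, et \`a v\'erifier leur compatibilit\'e avec les fl\`eches fonctorielles induites par la suite exacte de d\'epart.
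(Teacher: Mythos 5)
Votre preuve est correcte et suit pour l'essentiel la m\^eme route que celle du texte~: on produit une suite exacte \`a gauche d'espaces de type compact dont la premi\`ere fl\`eche est une immersion ferm\'ee, on dualise (Hahn--Banach et th\'eor\`eme de l'application ouverte donnant la surjectivit\'e topologique de $g$), et le Lemme \ref{dense} appliqu\'e \`a l'injection continue du quotient dans le troisi\`eme terme donne la densit\'e de l'image de $f$ dans $\ker(g)$. La seule diff\'erence, sans cons\'equence, est que vous passez aux invariants $\pi(\eta^{-1})^{N_m^\alpha}$ via la Remarque \ref{fixe}, alors que le texte travaille directement avec les coinvariants $\pi[\mnn^\alpha](\eta^{-1})_{N_m^\alpha}$ en utilisant l'exactitude du foncteur $(-)_{N_m^\alpha}$ sur les repr\'esentations lisses du groupe compact $N_m^\alpha$, ce qui \'evite la v\'erification finale de compatibilit\'e des identifications de la Remarque \ref{fixe} avec les fl\`eches $f$ et $g$.
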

\begin{proof}
On a une suite exacte de repr\'esentations lisses de $N_m^\alpha$~:
$$0\longrightarrow \pi''[{\mnn^\alpha}](\eta^{-1})\longrightarrow \pi[{\mnn^\alpha}](\eta^{-1})\longrightarrow \pi'[{\mnn^\alpha}](\eta^{-1})$$
d'o\`u, par exactitude du foncteur des coinvariants $(-)_{N_m^\alpha}$ sur les repr\'esentations lisses de $N_m^\alpha$, une suite exacte de $E_m$-espaces vectoriels de type compact~:
\begin{equation}\label{exactgauche}
0\longrightarrow \pi''[{\mnn^\alpha}](\eta^{-1})_{N_m^\alpha}\longrightarrow \pi[{\mnn^\alpha}](\eta^{-1})_{N_m^\alpha}\longrightarrow \pi'[{\mnn^\alpha}](\eta^{-1})_{N_m^\alpha}
\end{equation}
L'injection de gauche est automatiquement une immersion ferm\'ee (rappelons l'argument : elle induit un isomorphisme continu entre $\pi''[{\mnn^\alpha}](\eta^{-1})_{N_m^\alpha}$ et le noyau de l'application de droite, donc un isomorphisme topologique car ce sont deux espaces de type compact). En dua\-lisant (\ref{exactgauche}), le r\'esultat d\'ecoule du Lemme \ref{dense} appliqu\'e \`a l'injection continue $\pi[{\mnn^\alpha}](\eta^{-1})_{N_m^\alpha}/\pi''[{\mnn^\alpha}](\eta^{-1})_{N_m^\alpha}\hookrightarrow \pi'[{\mnn^\alpha}](\eta^{-1})_{N_m^\alpha}$.
\end{proof}

\subsection{$(\varphi,\Gamma)$-modules g\'en\'eralis\'es}\label{gen}

On donne quelques rappels et r\'esultats simples sur les $(\varphi,\Gamma)$-modules g\'en\'eralis\'es (\cite[\S~4]{Li}).

On fixe une extension finie $E$ de $\Qp$. Pour $r$ dans $\Q_{>0}$, soit $\Rr$ le $E$-espace de Fr\'echet des fonctions rigides analytiques \`a coefficients dans $E$ sur la couronne $p^{-1/r}\leq \norm <1$. On a $\Rr\subseteq {\mathcal R}^{r'}_{E}$ si $r\leq r'$ et on note $\R\={\displaystyle \lim_{r\rightarrow +\infty}}\Rr$ l'anneau de Robba \`a coefficients dans $E$. On note \'egalement $\R^+$ le $E$-espace de Fr\'echet des fonctions rigides analytiques \`a coefficients dans $E$ sur le disque unit\'e ouvert $0\leq \norm <1$ (la ``partie positive'' de $\R$). Un \'el\'ement de $\Rr$ (resp. $\R^+$) est une s\'erie convergente $\sum_{i=-\infty}^{+\infty}a_iX^i$ avec $a_i\in E$ (resp. et $a_i=0$ pour $i<0$). On rappelle que tout sous-$\Rr$-module de type fini de $(\Rr)^d$ (pour un entier $d\geq 0$) est libre (de rang fini) et ferm\'e dans $(\Rr)^d$ pour la topologie de Fr\'echet (\cite[Prop.~1.1.1]{Be3}). Si $D_r$ est un $\Rr$-module de {\it pr\'esentation finie}, on le munit de sa topologie naturelle d'espace de Fr\'echet d\'efinie par la topologie quotient d'une pr\'esentation finie (on v\'erifie facilement que cela ne d\'epend pas du choix de la pr\'esentation). Tout sous-module ferm\'e d'un $\Rr$-module de pr\'esentation finie est de pr\'esentation finie (prendre l'image inverse du sous-module ferm\'e dans une pr\'esentation et utiliser ce qui pr\'ec\`ede).

\begin{lem}\label{plat}
Soit $r,r'\in \Q_{>0}$, $r\leq r'$, alors les injections d'anneaux $\R^+\hookrightarrow \Rr$ et $\Rr\hookrightarrow {\mathcal R}^{r'}_{E}$ sont plates.
\end{lem}
\begin{proof}
On d\'emontre le premier cas seulement, la preuve du deuxi\`eme \'etant strictement analogue. Comme $\R^+$ et $\Rr$ sont des anneaux de B\'ezout int\`egres (\cite[Prop.~4.12(2)]{Be1}), ce sont des anneaux de Pr\"ufer et par \cite[Cor.~5.3.6]{Gl} ils sont coh\'erents. Mais avec des anneaux coh\'erents, il suffit de tester la platitude avec des modules de pr\'esentation finie (merci \`a A. Abbes pour ce point), i.e. il suffit de voir que si $M''\hookrightarrow M$ est une injection de $\R^+$-modules de pr\'esentation finie, alors $\Rr\!\otimes_{\R^+}\!M''\rightarrow \Rr\!\otimes_{\R^+}\!M$ est encore injectif (utiliser la Remarque $1$ de \cite[\S~I.2.3]{Bo1} et le fait que tout id\'eal de type fini d'un anneau coh\'erent est de pr\'esentation finie). Les anneaux $\R^+$ et $\Rr$ sont des alg\`ebres de Fr\'echet-Stein au sens de \cite[\S~3]{ST2}, plus pr\'ecis\'ement on a des isomorphismes topologiques (pour la topologie limite projective)~:
$$\R^+=\lim_{\substack{\longleftarrow \\ s\rightarrow +\infty}}{\mathcal R}_{E}^{[0,s]}\ \ {\rm et}\ \ \Rr=\lim_{\substack{\longleftarrow \\ s\rightarrow +\infty}}{\mathcal R}_{E}^{[r,s]}$$
o\`u ${\mathcal R}_{E}^{[0,s]}$ (resp. ${\mathcal R}_{E}^{[r,s]}$) est l'espace de Banach des fonctions rigides analytiques \`a coefficients dans $E$ sur la couronne $0\leq \norm \leq p^{-1/s}$ (resp. $p^{-1/r}\leq \norm \leq p^{-1/s}$). Pour tout $\R^+$-module, resp. $\Rr$-module, de pr\'esentation finie $M$, on a~:
\begin{equation}\label{limit}
M\buildrel\sim\over\longrightarrow \lim_{\substack{\longleftarrow \\ s\rightarrow +\infty}}\big({\mathcal R}_{E}^{[0,s]}\!\otimes_{\R^+}\!M\big),\ \ {\rm resp.}\ \ M\buildrel\sim\over\longrightarrow \lim_{\substack{\longleftarrow \\ s\rightarrow +\infty}}({\mathcal R}_{E}^{[r,s]}\!\otimes_{\Rr}\!M),
\end{equation}
par \cite[Cor.~3.4(v)]{ST2}. Soit $M''\hookrightarrow M$ une injection de $\R^+$-modules de pr\'esentation finie. Comme $\R^+\hookrightarrow {\mathcal R}_{E}^{[0,s]}$ est plat par \cite[Rem.~3.2]{ST2}, on a encore une injection ${\mathcal R}_{E}^{[0,s]}\!\otimes_{\R^+}\!M''\hookrightarrow {\mathcal R}_{E}^{[0,s]}\!\otimes_{\R^+}\!M$. Comme ${\mathcal R}_{E}^{[0,s]}\hookrightarrow {\mathcal R}_{E}^{[r,s]}$ est aussi plat (car cela correspond \`a une immersion ouverte entre affino\"\i des), on a aussi une injection ${\mathcal R}_{E}^{[r,s]}\!\otimes_{\R^+}\!M''\hookrightarrow {\mathcal R}_{E}^{[r,s]}\!\otimes_{\R^+}\!M$. Comme le foncteur $\displaystyle \lim_{\longleftarrow}$ est exact \`a gauche, on en d\'eduit une injection~:
$$\lim_{\substack{\longleftarrow \\ s\rightarrow +\infty}}\big({\mathcal R}_{E}^{[r,s]}\!\otimes_{\Rr}\!(\Rr\!\otimes_{\R^+}\!M'')\big)\hookrightarrow \lim_{\substack{\longleftarrow \\ s\rightarrow +\infty}}\big({\mathcal R}_{E}^{[r,s]}\!\otimes_{\Rr}\!(\Rr\!\otimes_{\R^+}\!M)\big)$$
d'o\`u une injection $\Rr\!\otimes_{\R^+}\!M''\hookrightarrow \Rr\!\otimes_{\R^+}\!M$ par (\ref{limit}) puisque $\Rr\!\otimes_{\R^+}\!M''$ et $\Rrm\!\otimes_{\R^+}\!M$ sont des $\Rr$-modules de pr\'esentation finie. Cela termine la preuve.
\end{proof}

On munit tous les anneaux pr\'ec\'edents de l'action continue $E$-lin\'eaire de $\Gamma$ usuelle $\gamma(X^i)=\gamma(X)^i=((1+X)^{\varepsilon(\gamma)}-1)^i$ pour $\gamma\in \Gamma$ et $i\in \Z$. On d\'efinit aussi l'op\'erateur de Frobenius $E$-lin\'eaire continu $\varphi:\Rr\rightarrow {\mathcal R}^{pr}_{E}$ pour $r>p-1$ (resp $\varphi:\R\rightarrow \R$, resp. $\varphi:\R^+\rightarrow \R^+$) par $\varphi(X^i)=\varphi(X)^i=((1+X)^{p}-1)^i$ pour $i\in \Z$ (en remarquant que $\vert (1+z)^p-1\vert = \vert z^p\vert = \vert z\vert^p$ si $p^{-1/r}\leq \vert z\vert <1$ et $r>p-1$). On note $t\=\log(1+X)\in \R^+$, qui v\'erifie $\gamma(t)=\varepsilon(\gamma)t$ et $\varphi(t)=pt$.

Pour $r\in \Q_{>p-1}$, on appelle $(\varphi,\Gamma)$-module g\'en\'eralis\'e (resp. $(\varphi,\Gamma)$-module) sur $\Rr$ un $\Rr$-module $D_r$ de pr\'esentation finie (resp. libre de rang fini) muni d'une action semi-lin\'eaire continue de $\Gamma$ et d'un morphisme $\varphi:D_r\rightarrow {\mathcal R}^{pr}_{E}\otimes_{\Rr}D_r$ qui commute \`a $\Gamma$ et induit un isomorphisme ${\mathcal R}^{pr}_{E}$-lin\'eaire~:
\begin{equation}\label{frob}
\Id\otimes \varphi : \Rpr\!\otimes_{\varphi,\Rr}\!D_r \buildrel\sim\over\longrightarrow {\mathcal R}^{pr}_{E}\!\otimes_{\Rr}\!D_r.
\end{equation}
Si $D_r$ est un $(\varphi,\Gamma)$-module g\'en\'eralis\'e sur $\Rr$, l'argument de la preuve de \cite[Prop.~4.1]{Li} montre que pour $r'\gg r$ on a un isomorphisme de ${\mathcal R}^{r'}_{E}$-modules~:
\begin{equation}\label{forme}
{\mathcal R}^{r'}_{E}\!\otimes_{\Rr}\!D_r\simeq ({\mathcal R}^{r'}_{E})^d\oplus (\oplus_{i=1}^{d'} {\mathcal R}^{r'}_{E}/(t^{k_i}))
\end{equation}
pour des entiers $d, d', k_i\geq 0$ (noter au passage que l'injection $\Rr\subseteq {\mathcal R}^{r'}_{E}$ induit une surjection $\Rr/(t^k)\twoheadrightarrow {\mathcal R}^{r'}_{E}/(t^k)$, cf. \cite[Lem.~4.2]{Li}). Si $r'\geq r$, alors $D_{r'}\={\mathcal R}^{r'}_{E}\!\otimes_{\Rr}\!D_r$ avec action de $\Gamma$ par extension des scalaires et~:
\begin{eqnarray*}
\varphi : D_{r'}&\longrightarrow &{\mathcal R}^{pr'}_{E}\!\otimes_{{\mathcal R}^{r'}_{E}}\!D_{r'}\simeq {\mathcal R}^{pr'}_{E}\!\otimes_{{\mathcal R}^{pr}_{E}}\!({\mathcal R}^{pr}_{E}\!\otimes_{\Rr}\!D_r)\\
\lambda \otimes d&\mapsto &\varphi(\lambda) \otimes \varphi(d),\ \lambda\in {\mathcal R}^{r'}_{E}, d\in D_r
\end{eqnarray*}
d\'efinit un foncteur des $(\varphi,\Gamma)$-modules (g\'en\'eralis\'es) sur $\Rr$ vers les $(\varphi,\Gamma)$-modules (g\'en\'eralis\'es) sur ${\mathcal R}^{r'}_{E}$. On appelle $(\varphi,\Gamma)$-module g\'en\'eralis\'e (resp. $(\varphi,\Gamma)$-module) sur $\R$ un $\R$-module $D$ muni d'actions semi-lin\'eaires de $\varphi$ et $\Gamma$ tel que pour $r\gg 0$ il existe un $(\varphi,\Gamma)$-module g\'en\'eralis\'e (resp. un $(\varphi,\Gamma)$-module) $D_r$ sur $\Rr$ et un morphisme $\Rr$-lin\'eaire $D_r\rightarrow D$ qui induit un isomorphisme $\R\!\otimes_{\Rr}\!D_r\buildrel\sim\over\rightarrow D$ compatible \`a $\varphi$ et $\Gamma$. Si $D_r, D'_r$ (resp. $D,D'$) sont deux $(\varphi,\Gamma)$-modules g\'en\'eralis\'es sur $\Rr$ (resp. $\R$) on note $\Hom_{(\varphi,\Gamma)}(D_r,D'_r)$ (resp. $\Hom_{(\varphi,\Gamma)}(D,D')$) les morphismes de $\Rr$-modules (resp. $\R$-modules) qui commutent \`a $\varphi$ et $\Gamma$. Ce sont des $E$-espaces vectoriels de dimension finie.

\begin{rem}
{\rm Les $(\varphi,\Gamma)$-modules g\'en\'eralis\'es sur $\R$ sont d\'efinis de mani\`ere un peu diff\'erente dans \cite[\S~4.1]{Li} mais on montre facilement que la d\'efinition ci-dessus est \'equivalente en utilisant l'argument pr\'ec\'edant \cite[Lem.~4.2]{Li}.}
\end{rem}

\begin{lem}\label{abel}
(i) Les cat\'egories de $(\varphi,\Gamma)$-modules g\'en\'eralis\'es sur $\Rr$ ou sur $\R$ sont ab\'eliennes. La sous-cat\'egorie des $(\varphi,\Gamma)$-modules g\'en\'eralis\'es sur $\R$ de torsion est artinienne.\\
(ii) Soit $D$, $D_r$, $D_{r'}$ des $(\varphi,\Gamma)$-modules g\'en\'eralis\'es sur $\R$, $\Rr$ et ${\mathcal R}^{r'}_{E}$ respectivement tels que l'on a $f_r:\R\otimes_{\Rr}D_r\buildrel\sim\over\rightarrow D$ et $f_{r'}:\R\otimes_{{\mathcal R}^{r'}_{E}}D_{r'}\buildrel\sim\over\rightarrow D$, alors il existe $r''\geq r,r'$ et un isomorphisme ${\mathcal R}^{r''}_{E}\otimes_{\Rr}D_r\buildrel\sim\over \rightarrow {\mathcal R}^{r''}_{E}\otimes_{{\mathcal R}^{r'}_{E}}D_{r'}$ de $(\varphi,\Gamma)$-modules g\'en\'eralis\'es sur ${\mathcal R}^{r''}_{E}$ tels que le diagramme suivant commute~:
$$\xymatrix{{\mathcal R}_{E}\otimes_{\Rr}D_r \ar^{f_{r'}^{-1}\circ f_r}[r] & \R\otimes_{{\mathcal R}^{r'}_{E}}D_{r'} \\
{\mathcal R}^{r''}_{E}\otimes_{\Rr}D_r\ar^{\sim}[r] \ar[u]& {\mathcal R}^{r''}_{E}\otimes_{{\mathcal R}^{r'}_{E}}D_{r'}.\ar[u]}$$
(iii) Tout morphisme $D\rightarrow D'$ de $(\varphi,\Gamma)$-modules g\'en\'eralis\'es sur $\R$ se factorise sous la forme~:
$$\xymatrix{D \ar[r] & D' \\ D_r\ar[r] \ar[u]& D'_r.\ar[u]}$$
o\`u $D_r,D'_r$ sont des $(\varphi,\Gamma)$-modules g\'en\'eralis\'es sur $\Rr$ pour $r\gg 0$ tels que $\R\!\otimes_{\Rr}\!D_r\buildrel\sim\over\rightarrow D$, $\R\!\!\otimes_{\Rr}\!\!D'_r\buildrel\sim\over\rightarrow D'$ et $D_r\rightarrow D'_r$ est un morphisme de $(\varphi,\Gamma)$-modules g\'en\'eralis\'es sur $\Rr$.
\end{lem}
\begin{proof}
(i) Pour $\R$ le premier \'enonc\'e est d\'emontr\'e dans \cite[\S~4.1]{Li} et le deuxi\`eme d\'ecoule de (\ref{forme}) par extension des scalaires \`a $\R$. Comme $\Rr$ est un anneau coh\'erent (cf. la preuve du Lemme \ref{plat}), un morphisme de $(\varphi,\Gamma)$-modules g\'en\'eralis\'es $f:D_r\rightarrow D'_r$ sur $\Rr$ a son noyau et son image de pr\'esentation finie comme $\Rr$-module. Il suffit donc de montrer que $\ker(f)$ et ${\im}(f)$ v\'erifient (\ref{frob}). Comme ${\mathcal R}^{pr}_{E}$ est libre de rang fini ($=p$) sur $\varphi(\Rr)$, le foncteur ${\mathcal R}^{pr}_{E}\otimes_{\varphi,\Rr}\!\!(-)$ est exact sur les $\Rr$-modules. Le foncteur ${\mathcal R}^{pr}_{E}\otimes_{\Rr}\!\!(-)$ est aussi exact par le Lemme \ref{plat}. Le r\'esultat d\'ecoule alors du diagramme commutatif~:
$$\xymatrix{{\mathcal R}^{pr}_{E}\otimes_{\varphi,\Rr}D_r \ar^{\sim}[r] \ar^{\Id\otimes f}[d] & {\mathcal R}^{pr}_{E}\otimes_{\Rr}D_r \ar^{\Id\otimes f}[d] \\
{\mathcal R}^{pr}_{E}\otimes_{\varphi,\Rr}D'_r\ar^{\sim}[r] & {\mathcal R}^{pr}_{E}\otimes_{\Rr}D'_r.}$$
(ii) Quitte \`a augmenter $r$ et $r'$ on peut supposer $D_r$ et $D_{r'}$ de la forme (\ref{forme}). L'assertion d\'ecoule alors facilement du r\'esultat suivant (on laisse les d\'etails au lecteur) : si $x_r\in \Rr/(t^k)$ (resp. $\Rr$) et $x_{r'}\in {\mathcal R}^{r'}_{E}/(t^k)$ (resp. ${\mathcal R}^{r'}_{E}$) ont m\^eme image dans $\R/(t^k)$ (resp. $\R$), alors il existe $r''\geq r,r'$ tel que $x_r$ et $x_{r'}$ ont m\^eme image dans ${\mathcal R}^{r''}_{E}/(t^k)$ (resp. ${\mathcal R}^{r''}_{E}$), cf. \cite[Lem.~4.2]{Li}. L'argument pour (iii) est analogue.
\end{proof}

Soit $D$ un $(\varphi,\Gamma)$-module g\'en\'eralis\'e sur $\R$. On note $I(D)$ la cat\'egorie des triplets $(r,f_r,D_r)$ o\`u $r\in \Q_{>p-1}$, $D_r$ est un $(\varphi,\Gamma)$-module g\'en\'eralis\'e sur $\Rr$ et $f_r:\R\otimes_{\Rr}D_r\buildrel\sim\over\rightarrow D$ un isomorphisme de $(\varphi,\Gamma)$-modules g\'en\'eralis\'es sur $\R$, les morphismes $(r,f_r,D_r)\rightarrow (r',f_{r'},D_{r'})$ \'etant les isomorphismes $f_{r,r'}:{\mathcal R}^{r'}_{E}\!\otimes_{\Rr}\!D_r\buildrel\sim\over\longrightarrow D_{r'}$ pour $r'\geq r$ de $(\varphi,\Gamma)$-modules g\'en\'eralis\'es sur ${\mathcal R}^{r'}_{E}$ tels que le diagramme commute~:
$$\xymatrix{{\mathcal R}_{E}\!\otimes_{{\mathcal R}^{r'}_{E}}\!({\mathcal R}^{r'}_{E}\!\otimes_{\Rr}\!D_r) \ar^{\ \ \ \ \ \ \ \ \Id\otimes f_{r,r'}}[r]\ar[rd]^{f_r} & \R\otimes_{{\mathcal R}^{r'}_{E}}D_{r'}\ar[d]^{f_{r'}} \\
& D.}$$
On d\'eduit du (ii) du Lemme \ref{abel} (et de sa preuve) que la cat\'egorie $I(D)$ est une cat\'egorie filtrante au sens de \cite[\S~IX.1]{ML} et que l'on un isomorphisme de $\R$-modules qui commute \`a $\varphi$ et $\Gamma$~:
\begin{equation}\label{limitind}
\lim_{\substack{\longrightarrow \\ (r,f_r,D_r)\in I(D)}}D_r\buildrel\sim\over\longrightarrow D
\end{equation}
(en particulier cette limite inductive est filtrante). 

\begin{rem}\label{patho}
{\rm Si $D$ est un $(\varphi,\Gamma)$-module g\'en\'eralis\'e sur $\R$, on pourrait penser munir $D$ de la topologie limite inductive (cf. \cite[\S~5]{Sch}) pour tous les morphismes $D_r\rightarrow D$ en (\ref{limitind}). Mais cette topologie n'est pas tr\`es utile car elle n'est en g\'en\'eral pas s\'epar\'ee. Par exemple, le noyau de la surjection continue $\Rr/(t)\twoheadrightarrow \R/(t)$ est dense dans $\Rr/(t)$ (son adh\'erence est un id\'eal principal de $\Rr/(t)$ dont on v\'erifie facilement que le seul g\'en\'erateur possible est $1$) ! S'il \'etait ferm\'e, il serait tout $\Rr/(t)$, ce qui est impossible.}
\end{rem}

On rappelle (ou on v\'erifie) que $\Rpr\simeq \oplus_{i=0}^{p-1}(1+X)^i\varphi(\Rr)$, de sorte que, si $D_r$ est un $(\varphi,\Gamma)$-module (g\'en\'eralis\'e) sur $\Rr$, on a~:
\begin{equation}\label{psi}
\Rpr\!\otimes_{\varphi,\Rr}\!D_r\simeq \oplus_{i=0}^{p-1}(1+X)^i\otimes D_r.
\end{equation}
On note ${\rm pr}_0:\Rpr\!\otimes_{\varphi,\Rr}\!D_r\twoheadrightarrow D_r$ la projection (continue) sur le facteur correspondant \`a $i=0$ et on d\'efinit l'op\'erateur continu $\psi:D_r\longrightarrow D_r$ comme la compos\'ee~:
\begin{equation}\label{verspsi}
D_r\buildrel 1\otimes \Id\over \longrightarrow \Rpr\!\otimes_{\Rr}\!D_r\buildrel {\buildrel (\Id\otimes \varphi)^{-1}\over\sim}\over\longrightarrow \Rpr\!\otimes_{\varphi,\Rr}\!D_r \buildrel {\rm pr}_0 \over\longrightarrow D_r.
\end{equation}
On voit facilement que $\psi$ commute \`a l'action de $\Gamma$, v\'erifie $\psi(\varphi(\lambda)d)=\lambda\psi(d)$ pour $\lambda\in {\mathcal R}^{r/p}_{E}$ et $d\in D_r$, et que l'on a dans ${\mathcal R}^{pr}_{E}\otimes_{\varphi,\Rr}D_r$ pour $d\in D_r$ via (\ref{psi})~:
\begin{equation}\label{psi2}
(\Id \otimes \varphi)^{-1}(1\otimes d)=\sum_{i=0}^{p-1}(1+X)^i\otimes \frac{\psi((1+X)^{p-i}d)}{(1+X)}.
\end{equation}
En rempla\c cant $\Rr$ et $\Rpr$ par $\R$, on d\'efinit de m\^eme $\psi:D\longrightarrow D$ si $D$ est un $(\varphi,\Gamma)$-module (g\'en\'eralis\'e) sur $\R$, et on a encore (\ref{psi2}). Si $D\simeq \R\!\otimes_{\Rr}\!D_r$ pour $D_r$ un $(\varphi,\Gamma)$-module (g\'en\'eralis\'e) sur $\Rr$, le morphisme $D_r\rightarrow D$ commute \`a $\psi$ (utiliser (\ref{verspsi}) pour $D_r$ et $D$).

\subsection{Foncteurs $F_{\alpha}$}\label{def}

Pour tout $\alpha\in S$ on associe \`a toute repr\'esentation dans $\Rep(B(\Qp))$ un foncteur exact \`a gauche $F_\alpha$ sur les $(\varphi,\Gamma)$-modules g\'en\'eralis\'es sur $\R$.

On conserve les notations des \S\S~\ref{prel} et \ref{gen} et on fixe une racine $\alpha\in S$. Pour $m$ un entier $\geq 0$, on rappelle que l'on a un isomorphisme d'alg\`ebres de Fr\'echet $D(\Zp,E_m)\simeq \Rm^+$ qui envoie $\mu\in D(\Zp,E_m)$ vers $\mu((1+X)^z)=\sum_{i=0}^{+\infty}\mu(\binom{z}{i})X^i\in \Rm^+$ (\cite{Am}) et o\`u la multiplication par $z\in \Zp^\times$ (resp. $p$) sur $\Zp$ induit par fonctorialit\'e de $D(\Zp,E_m)$ l'op\'erateur $\gamma_z=\varepsilon^{-1}(z)$ (resp. $\varphi$) sur $\Rm^+$. Soit $\pi$ dans $\Repm(B(\Qp))$, on d\'eduit de la fin du \S~\ref{prel} que $(\pi^\vee)_{\mnn^\alpha}^{\sep}(\eta)^{N_m^\alpha}$ est un $\Rm^+$-module continu sur un espace de Fr\'echet muni d'une action semi-lin\'eaire continue de $\Gamma\buildrel\sim\over\rightarrow \Zp^\times$ ($\subset \Qp^\times$) et d'un endomorphisme continu $\psi$ commutant \`a $\Gamma$ donn\'e par l'action de $1/p\in \Qp^\times$ (cf. (\ref{actionsurdual})) et v\'erifiant $\psi(\varphi(\lambda)v)=\lambda\psi(v)$ pour $\lambda\in \Rm^+$ et $v\in (\pi^{\vee})_{\mnn^\alpha}^{\sep}(\eta)^{N_m^\alpha}$ (cf. (\ref{comut2})). Comme en (\ref{psi2}) on en d\'eduit une application $E_m$-lin\'eaire continue~:
\begin{equation}\label{psi3}
(\pi^\vee)_{\mnn^\alpha}^{\sep}(\eta)^{N_m^\alpha}\rightarrow \Rm^+\!\otimes_{\varphi,\Rm^+}\!(\pi^\vee)_{\mnn^\alpha}^{\sep}(\eta)^{N_m^\alpha},\ v\mapsto \sum_{i=0}^{p-1}(1+X)^i\otimes \frac{\psi((1+X)^{p-i}v)}{(1+X)}
\end{equation}
et on v\'erifie facilement qu'elle est $\Rm^+$-lin\'eaire et commute \`a $\Gamma$. On note pour tout $\pi$ dans $\Repm(B(\Qp))$~:
\begin{equation}\label{malpha}
M_{\alpha}(\pi)\=(\pi^\vee)_{\mnn^\alpha}^{\sep}(\eta)^{N_m^\alpha}\buildrel (\ref{dualm})\over \simeq \big(\pi[{\mnn^\alpha}](\eta^{-1})_{N_m^\alpha}\big)^{\!\vee}.
\end{equation}
Noter que l'on a \'egalement (\`a $m$ fix\'e) par la Remarque \ref{fixe}~:
\begin{equation}\label{malphaf}
M_{\alpha}(\pi)\simeq (\pi^\vee)(\eta)_{N_m^\alpha}^{\sep}.
\end{equation}
Tout morphisme $\pi'\rightarrow \pi$ dans $\Repm(B(\Qp))$ induit un morphisme continu de $\Rm^+$-modules $M_{\alpha}(\pi)\rightarrow M_{\alpha}(\pi')$ qui commute \`a $\Gamma$ et $\psi$.

Soit $\pi$ dans $\Rep(B(\Qp))$, pour $m$ entier positif ou nul on consid\`ere le foncteur de la cat\'egorie ab\'elienne des $(\varphi,\Gamma)$-modules g\'en\'eralis\'es sur $\R$ dans la cat\'egorie des $E_m$-espaces vectoriels~:
\begin{equation}\label{foncteurm}
F_{\alpha,m}(\pi)\ :\ T \longmapsto \!\!\! \lim_{\substack{\longrightarrow \\ (r,f_r,T_r)\in I(T)}}\!\!\Hom_{\psi,\Gamma}\big(M_\alpha(\pi\otimes_EE_m),T_r\otimes_EE_m\big)
\end{equation}
o\`u $\Hom_{\psi,\Gamma}(-,-)$ d\'esigne les morphismes continus de $\Rm^+$-modules qui commutent \`a $\psi$ et $\Gamma$. La limite inductive en (\ref{foncteurm}) est filtrante, cf. (\ref{limitind}), et la fonctorialit\'e en $T$ d\'ecoule des points (iii) et (ii) du Lemme \ref{abel}.

\begin{rem}\label{vrac}
{\rm (i) La commutation d'un morphisme de $\Rm^+$-modules $f:M_\alpha(\pi\otimes_EE_m)\rightarrow T_r\otimes_EE_m$ avec $\psi$ est \'equivalente \`a celle du diagramme~:
\begin{equation}\label{psi5}
\begin{gathered}
\xymatrix{M_{\alpha}(\pi\otimes_EE_m) \ar^{\!\!\!\!\!\!\!\!\!\!\!\!\!\!\!\!\!\!\!\!\!\!\!\!(\ref{psi3})}[r] \ar[d]^{f} & \Rm^+\!\otimes_{\varphi,\Rm^+}\!M_{\alpha}(\pi\otimes_EE_m)\ar^{\Id\otimes f}[d]\\
T_r\otimes_EE_m\ar^{\!\!\!\!\!\!\!\!\!\!\!\!\!\!\!\!\!\!\!\!\!\!\!\!\!\!\!(\ref{psi2})}[r] & \Rprm\!\otimes_{\varphi,\Rrm}\!(T_r\otimes_EE_m).}
\end{gathered}
\end{equation}
(ii) L'application $T_r\rightarrow T$ induit un morphisme canonique~:
$$F_{\alpha,m}(\pi)(T)\longrightarrow \Hom_{\psi,\Gamma}\big(M_\alpha(\pi\otimes_EE_m),T\otimes_EE_m\big),$$
mais sans conditions suppl\'ementaires sur $\pi$ ce morphisme n'a pas de raison d'\^etre injectif ni surjectif.\\
(iii) Soit $D_r,D'_r$ deux $(\varphi,\Gamma)$-modules g\'en\'eralis\'es sur $\Rrm$, alors tout morphisme continu de $\Rm^+$-modules $D_r\rightarrow D'_r$ est un morphisme (continu) de $\Rrm$-modules (il est facile de v\'erifier qu'il commute \`a $\Rm^+[1/X]$ puis on utilise la continuit\'e pour \'etendre \`a $\Rrm$). On a le m\^eme r\'esultat avec des $(\varphi,\Gamma)$-modules g\'en\'eralis\'es sur $\Rm$.\\
(iv) Soit $D_r,D'_r$ deux $(\varphi,\Gamma)$-modules g\'en\'eralis\'es sur $\Rrm$, alors par (\ref{psi2}) tout morphisme dans $\Hom_{\psi,\Gamma}(D_r,D'_r)$ (rappelons qu'il s'agit des morphismes continus de $\Rm^+$-modules - ou de mani\`ere \'equivalente par (iii) de $\Rrm$-modules - qui commutent \`a $\psi$ et $\Gamma$) commute aussi \`a $(\Id\otimes \varphi)^{-1}$, d'o\`u on d\'eduit~:
$$\Hom_{(\varphi,\Gamma)}(D_r,D'_r)\buildrel\sim\over\longrightarrow \Hom_{\psi,\Gamma}(D_r,D'_r).$$
On a le m\^eme r\'esultat avec des $(\varphi,\Gamma)$-modules g\'en\'eralis\'es sur $\Rm$.\\
(v) Soit $D_r,D'_r$ des $(\varphi,\Gamma)$-modules g\'en\'eralis\'es sur $\R^r$, alors on a~: $$\Hom_{(\varphi,\Gamma)}(D_r,D'_r)\otimes_EE_m\buildrel\sim\over\longrightarrow \Hom_{(\varphi,\Gamma)}(D_r\otimes_EE_m,D'_r\otimes_EE_m).$$
Idem avec des $(\varphi,\Gamma)$-modules g\'en\'eralis\'es sur $\R$.}
\end{rem}

\begin{lem}\label{m+1}
(i) Pour tout $\pi$ dans $\Rep(B(\Qp))$ et $m\in \Z_{\geq 0}$ il existe une action naturelle du groupe $\Gal(E_\infty/E)$ sur le foncteur $F_{\alpha,m}(\pi)$ qui est $E_m$-semi-lin\'eaire (via l'action de $\Gal(E_\infty/E)$ sur $E_m$).\\
(ii) Pour tout morphisme $\pi\longrightarrow \pi'$ dans $\Rep(B(\Qp))$ on a un morphisme canonique de foncteurs $F_{\alpha,m}(\pi)\longrightarrow F_{\alpha,m}(\pi')$ qui commute \`a l'action de $\Gal(E_\infty/E)$ en (i).\\
(iii) Pour tout $\pi$ dans $\Rep(B(\Qp))$ et $m\in \Z_{\geq 0}$, on a un morphisme canonique de foncteurs $F_{\alpha,m}(\pi)\longrightarrow F_{\alpha,m+1}(\pi)$ qui commute \`a l'action de $\Gal(E_\infty/E)$ en (i).
\end{lem}
\begin{proof}
(i) Soit $g\in \Gal(E_\infty/E)$ et $\eta'\=g\circ \eta$, on a vu au \S~\ref{prel} que pour tout $x\in N(\Qp)$ on a $\eta'(x)=\eta(ax)$ pour un (unique) $a\in \Zp^\times$. Posons $t_g\=\prod_{\beta\in S\backslash\{\alpha\}}\lambda_{\beta^\vee}(a)\in T(\Qp)$, en utilisant (\ref{alpha}), (\ref{decomp}) et (\ref{cocar}) on v\'erifie que la conjugaison par $t_g$ sur $G(\Qp)$ pr\'eserve $N_m$ et $N_m^\alpha$, est l'identit\'e sur $N_m/N_m^\alpha$, commute \`a $T(\Qp)$ et v\'erifie $\eta(t_gxt_g^{-1})=\eta'(x)$ pour tout $x\in N^\alpha(\Qp)$. L'action de $g$ sur $(\pi\otimes_EE_m)[{\mnn^\alpha}]=\pi[{\mnn^\alpha}]\otimes_EE_m$ (via son action sur $E_m$) induit un isomorphisme $E_m$-semi-lin\'eaire~:
\begin{equation}\label{actiong}
g:(\pi\otimes_EE_m)[{\mnn^\alpha}](\eta^{-1})_{N_m^\alpha}\buildrel\sim\over\longrightarrow (\pi\otimes_EE_m)[{\mnn^\alpha}]({\eta'}^{-1})_{N_m^\alpha}
\end{equation}
et l'action de $t_g\in T(\Qp)$ sur $\pi$ induit un isomorphisme $E_m$-lin\'eaire~:
\begin{equation}\label{actiontg}
t_g:(\pi\otimes_EE_m)[{\mnn^\alpha}]({\eta'}^{-1})_{N_m^\alpha}\buildrel\sim\over\longrightarrow (\pi\otimes_EE_m)[{\mnn^\alpha}](\eta^{-1})_{N_m^\alpha},
\end{equation}
ces deux isomorphismes commutant aux actions de $N_m/N_m^\alpha$ et $\lambda_{\alpha^\vee}(\Zp\backslash\{0\})$. On en d\'eduit une action semi-lin\'eaire de $\Gal(E_\infty/E)$ sur $(\pi\otimes_EE_m)[{\mnn^\alpha}](\eta^{-1})_{N_m^\alpha}$ en faisant agir $g$ par $t_g\circ g$, puis une action semi-lin\'eaire sur le dual $M_\alpha(\pi\otimes_EE_m)$ (cf. (\ref{malpha})) donn\'ee par $(gf)(v)\=g(f((t_g^{-1}\circ g^{-1})v))$ pour $v\in (\pi\otimes_EE_m)[{\mnn^\alpha}]({\eta}^{-1})_{N_m^\alpha}$, $f\in M_\alpha(\pi\otimes_EE_m)$ o\`u $g$ agit sur $f((t_g^{-1}\circ g^{-1})v)\in E_m$ par l'action de $\Gal(E_\infty/E)$ sur $E_m$. Soit $T$ un $(\varphi,\Gamma)$-module g\'en\'eralis\'e sur $\R$, on d\'efinit enfin une action de $\Gal(E_\infty/E)$ sur $F_{\alpha,m}(\pi)(T)$ en envoyant $F\in \Hom_{\psi,\Gamma}(M_\alpha(\pi\otimes_EE_m),T_r\otimes_EE_m)$ (cf. (\ref{foncteurm})) sur $gF$ d\'efini encore par $(gF)(f)\= g(F(g^{-1}f))$ o\`u $g^{-1}f$ est l'action ci-dessus de $g^{-1}$ sur $f\in M_\alpha(\pi\otimes_EE_m)$ et $g$ agit sur $F(g^{-1}f)\in T_r\otimes_EE_m$ par l'action de $\Gal(E_\infty/E)$ sur $E_m$ et l'action triviale sur $T_r$.\\
(ii) La preuve est imm\'ediate et laiss\'ee au lecteur.\\
(iii) Par les r\'esultats du \S~\ref{prel}, l'inclusion $N_{m}^\alpha\subset N_{m+1}^\alpha$ induit un morphisme de $E_{m+1}$-espaces vectoriels de type compact~:
$$(\pi[{\mnn^\alpha}]\otimes_EE_m)(\eta^{-1})_{N_m^\alpha}\otimes_{E_m}E_{m+1}\twoheadrightarrow (\pi[{\mnn^\alpha}]\otimes_EE_{m+1})(\eta^{-1})_{N_{m+1}^\alpha}$$
qui induit par (\ref{dualm}) un morphisme continu de ${\mathcal R}_{E_{m+1}}^+$-modules~:
\begin{equation}\label{augmentem}
M_{\alpha}(\pi\otimes_EE_{m+1})\longrightarrow M_{\alpha}(\pi\otimes_EE_m)\otimes_{E_m}E_{m+1}
\end{equation}
qui commute \`a $\Gamma$, $\psi$ ainsi qu'\`a l'action de $\Gal(E_\infty/E)$ d\'efinie au (ii). Ce morphisme induit pour $(r,f_r,T_r)\in I(T)$~:
\begin{multline*}
\Hom_{\psi,\Gamma}\big(M_\alpha(\pi\otimes_EE_m),T_r\otimes_EE_m\big)\hookrightarrow \Hom_{\psi,\Gamma}\big(M_\alpha(\pi\otimes_EE_m)\otimes_{E_m}E_{m+1},T_r\otimes_EE_{m+1}\big)\\
\longrightarrow \Hom_{\psi,\Gamma}\big(M_\alpha(\pi\otimes_EE_{m+1}),T_r\otimes_EE_{m+1}\big)
\end{multline*}
d'o\`u le r\'esultat en passant \`a la limite inductive sur $(r,f_r,T_r)\in I(T)$.
\end{proof}

Pour $m\in \Z_{\geq 0}$ on note $F(\varphi,\Gamma)_m$ la cat\'egorie des foncteurs covariants additifs exact \`a gauche de la cat\'egorie ab\'elienne des $(\varphi,\Gamma)$-modules g\'en\'eralis\'es sur $\R$ dans la cat\'egorie ab\'elienne des $E_m$-espaces vectoriels avec action $E_m$-semi-lin\'eaire de $\Gal(E_\infty/E)$ (cf. \cite[\S~II.4]{ML}). La cat\'egorie $F(\varphi,\Gamma)_m$ est clairement ab\'elienne. On d\'efinit de m\^eme $F(\varphi,\Gamma)_\infty$ en rempla\c cant $E_m$ par $E_\infty$. Le (i) du Lemme \ref{m+1} montre que $F_{\alpha,m}(\pi)$ est dans $F(\varphi,\Gamma)_m$.

On v\'erifie facilement que pour tout $m\in \Z_{\geq 0}$ et tout $\pi\longrightarrow \pi'$ dans $\Rep(B(\Qp))$ le diagramme de foncteurs suivant donn\'e par les (ii) et (iii) du Lemme \ref{m+1} commute~:
\begin{equation}\label{mpi}
\begin{gathered}
\xymatrix{F_{\alpha,m}(\pi)\ar[r] \ar[d] &F_{\alpha,m}(\pi')\ar[d]\\ F_{\alpha,m+1}(\pi)\ar[r]&F_{\alpha,m+1}(\pi').}
\end{gathered}
\end{equation}
On d\'efinit alors le foncteur additif covariant $F_\alpha$ de la cat\'egorie $\Rep(B(\Qp))$ dans la cat\'egorie ab\'elienne $F(\varphi,\Gamma)_\infty$ par~:
\begin{equation}\label{falpha}
\pi\longmapsto F_{\alpha}(\pi)\=\lim_{m\rightarrow +\infty}F_{\alpha,m}(\pi)
\end{equation}
(i.e. $F_{\alpha}(\pi)(T)=\displaystyle \lim_{m\rightarrow +\infty}F_{\alpha,m}(\pi)(T)$ pour tout $(\varphi,\Gamma)$-module g\'en\'eralis\'e $T$ sur $\R$).

Si $\chi:\Gal(E_\infty/E)\rightarrow E^\times$ est un caract\`ere localement analytique, on note $E_\infty(\chi)$ le $E_\infty$-espace vectoriel de dimension $1$ avec action semi-lin\'eaire de $\Gal(E_\infty/E)$ donn\'ee par $g(x)=g(x)\chi(g)$ pour $x\in E_\infty$. Noter que par Hilbert 90 on a $E_\infty(\chi)\cong E_\infty$ si et seulement $\chi$ est lisse (i.e. $\chi$ se factorise par $\Gal(E_m/E)$ pour $m\gg 0$). Pour $\pi$ dans $\Rep(B(\Qp))$, si l'on a $F_{\alpha}(\pi)(T)\cong E_\infty(\chi)\otimes_E\Hom_{(\varphi,\Gamma)}(D_\alpha(\pi),T)$ (fonctoriellement en $T$) pour tout $(\varphi,\Gamma)$-module g\'en\'eralis\'e $T$ sur $\R$ o\`u $D_\alpha(\pi)$ est un $(\varphi,\Gamma)$-module g\'en\'eralis\'e sur $\R$, alors le $(\varphi,\Gamma)$-module $D_\alpha(\pi)$ est uniquement d\'etermin\'e.

\begin{prop}\label{drex}
(i) Le foncteur $\pi\longmapsto F_\alpha(\pi)$ envoie une suite exacte $0\rightarrow \pi''\rightarrow \pi\rightarrow \pi'$ dans $\Rep(B(\Qp))$ vers une suite exacte dans $F(\varphi,\Gamma)_\infty$~:
$$0\longrightarrow F_{\alpha}(\pi'')\longrightarrow F_{\alpha}(\pi)\longrightarrow F_{\alpha}(\pi').$$
(ii) Si l'on a de plus $F_{\alpha}(-)(T)\cong E_\infty(\chi)\otimes_E\Hom_{(\varphi,\Gamma)}(D_\alpha(-),T)$ (fonctoriellement en $T$) pour tout $(\varphi,\Gamma)$-module g\'en\'eralis\'e $T$ sur $\R$ o\`u $-\in \{\pi,\pi'\}$ et $D_\alpha(-)$ est un $(\varphi,\Gamma)$-module g\'en\'eralis\'e sur $\R$, alors on a $F_{\alpha}(\pi'')(T)=E_\infty(\chi)\otimes_E\Hom_{(\varphi,\Gamma)}(D_\alpha(\pi')/D_\alpha(\pi),T)$.
\end{prop}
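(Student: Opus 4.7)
Le plan est de d\'eduire la proposition de la Proposition \ref{exactdense} par application du foncteur $\Hom_{\psi,\Gamma}(-,T_r\otimes_E E_m)$ puis passage aux limites inductives. \`A partir de la suite exacte $0\to \pi''\to \pi\to \pi'$ dans $\Rep(B(\Qp))$, la Proposition \ref{exactdense} appliqu\'ee aux repr\'esentations tensoris\'ees par $E_m$ fournit une suite d'espaces de Fr\'echet r\'eflexifs
$$M_\alpha(\pi'\otimes_E E_m)\buildrel f\over\longrightarrow M_\alpha(\pi\otimes_E E_m)\buildrel g\over\longrightarrow M_\alpha(\pi''\otimes_E E_m)\longrightarrow 0$$
o\`u $g$ est une surjection topologique et $\im(f)$ est dense dans $\ker(g)$. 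On fixera un $(\varphi,\Gamma)$-module g\'en\'eralis\'e $T$ sur $\R$ ainsi qu'un $(r,f_r,T_r)\in I(T)$, on appliquera $\Hom_{\psi,\Gamma}(-,T_r\otimes_E E_m)$, puis on passera \`a la limite inductive filtrante sur $(r,f_r,T_r)\in I(T)$ et sur $m$~; ces limites sont exactes dans la cat\'egorie ab\'elienne des $E_\infty$-espaces vectoriels munis d'une action $E_\infty$-semi-lin\'eaire de $\Gal(E_\infty/E)$, et la compatibilit\'e avec l'action galoisienne sera assur\'ee par la fonctorialit\'e des constructions (cf.~Lemme \ref{m+1}).

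Le c\oe ur de l'argument consistera \`a v\'erifier l'exactitude de la suite obtenue \`a $m$ et $(r,f_r,T_r)$ fix\'es. L'injectivit\'e du morphisme induit par $g$ est imm\'ediate par surjectivit\'e ensembliste de $g$. Pour l'exactitude au milieu, \'etant donn\'e $\phi\in \Hom_{\psi,\Gamma}(M_\alpha(\pi\otimes_E E_m),T_r\otimes_E E_m)$ v\'erifiant $\phi\circ f=0$, on raisonnera ainsi~: $\phi$ s'annule sur $\im(f)$, donc par continuit\'e de $\phi$ et s\'eparation de l'espace de Fr\'echet $T_r\otimes_E E_m$, $\phi$ s'annule sur l'adh\'erence $\overline{\im(f)}=\ker(g)$~; comme $g$ est une surjection topologique, $\phi$ se factorisera uniquement en un morphisme continu $\bar\phi:M_\alpha(\pi''\otimes_E E_m)\to T_r\otimes_E E_m$, et la commutation de $\bar\phi$ avec $\psi$ et $\Gamma$ r\'esultera de celle de $\phi$ et $g$ par surjectivit\'e (ensembliste) de $g$.

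Le (ii) se d\'eduira formellement du (i) via le lemme de Yoneda. Sous l'hypoth\`ese de repr\'esentabilit\'e, le morphisme $F_\alpha(\pi)(-)\to F_\alpha(\pi')(-)$ se lit $\Hom_{(\varphi,\Gamma)}(D_\alpha(\pi),-)\to\Hom_{(\varphi,\Gamma)}(D_\alpha(\pi'),-)$ et correspond par Yoneda \`a un unique morphisme $D_\alpha(\pi')\to D_\alpha(\pi)$ dans la cat\'egorie ab\'elienne des $(\varphi,\Gamma)$-modules g\'en\'eralis\'es sur $\R$~; l'exactitude \`a gauche de $\Hom_{(\varphi,\Gamma)}(-,T)$ identifiera alors $F_\alpha(\pi'')(T)$ \`a $E_\infty(\chi)\otimes_E\Hom_{(\varphi,\Gamma)}(\mathrm{coker}(D_\alpha(\pi')\to D_\alpha(\pi)),T)$, ce qui donnera la formule souhait\'ee (en lisant $D_\alpha(\pi')/D_\alpha(\pi)$ comme ce conoyau). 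La seule difficult\'e r\'eelle du plan reposera sur l'\'etape d'exactitude au milieu ci-dessus~: la densit\'e de $\im(f)$ dans $\ker(g)$ est cruciale mais ne garantit l'exactitude qu'\`a gauche, et non l'exactitude \`a droite, en coh\'erence avec le fait que $\pi\mapsto F_\alpha(\pi)$ n'est pas exact en g\'en\'eral.
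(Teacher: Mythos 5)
Votre proposition est correcte et suit essentiellement la d\'emonstration du texte~: on applique $\Hom_{\psi,\Gamma}(-,T_r\otimes_EE_m)$ au complexe de la Proposition \ref{exactdense}, l'exactitude au milieu r\'esultant de la densit\'e de $\im(f)$ dans $\ker(g)$ (par continuit\'e et s\'eparation du but) puis de la factorisation par la surjection topologique $g$, et l'on conclut par exactitude des limites inductives filtrantes sur $(r,f_r,T_r)$ et $m$, le (ii) \'etant formel. Les points que vous explicitez en plus (commutation du morphisme factoris\'e \`a $\psi$ et $\Gamma$, argument de Yoneda et lecture de $D_\alpha(\pi')/D_\alpha(\pi)$ comme conoyau de $D_\alpha(\pi')\rightarrow D_\alpha(\pi)$) sont des d\'etails laiss\'es implicites dans le texte et sont corrects.
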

\begin{proof}
(i) Par exactitude de $\displaystyle \lim_{m\rightarrow +\infty}$, il suffit de montrer le r\'esultat avec $\pi\mapsto F_{\alpha,m}(\pi)$ pour $\pi$ dans $\Repm(B(\Qp))$. Soit $T$ un $(\varphi,\Gamma)$-module g\'en\'eralis\'e sur $\R$ et $(r,f_r,T_r)$ dans $I(T)$, par la Proposition \ref{exactdense} on a un complexe de $E_m$-espaces vectoriels~:
\begin{multline}\label{T}
\Hom_{\psi,\Gamma}\big(M_\alpha(\pi''),T_r\otimes_EE_m\big)\hookrightarrow \Hom_{\psi,\Gamma}\big(M_\alpha(\pi),T_r\otimes_EE_m\big)\\
\longrightarrow \Hom_{\psi,\Gamma}\big(M_\alpha(\pi'),T_r\otimes_EE_m\big)
\end{multline}
o\`u la premi\`ere application est injective. Si $f: M_\alpha(\pi)\rightarrow T_r\otimes_EE_m$ est nul sur ${\im}(M_\alpha(\pi')\rightarrow M_\alpha(\pi))$, alors $f$ est nul sur $\ker(M_\alpha(\pi)\twoheadrightarrow M_\alpha(\pi''))$ par continuit\'e et la derni\`ere assertion de la Proposition \ref{exactdense}. Donc $f$ se factorise en un morphisme continu $M_\alpha(\pi'')\rightarrow T_r\otimes_EE_m$ par la surjection topologique dans la Proposition \ref{exactdense}. On en d\'eduit que (\ref{T}) est en fait une suite exacte pour tout $T$. Par (\ref{foncteurm}) le r\'esultat d\'ecoule de l'exactitude des limites inductives filtrantes.\\
(ii) Cela d\'ecoule formellement du (i).
\end{proof}

\begin{rem}\label{plusieurschi}
{\rm Le (ii) de la Proposition \ref{drex} peut se g\'en\'eraliser comme suit~: si l'on a des caract\`eres localement analytiques $\chi_1,\dots,\chi_r$ de $\Gal(E_\infty/E)$ distincts modulo les caract\`eres lisses, et des $(\varphi,\Gamma)$-modules g\'en\'eralis\'es $D_{\alpha,1}(-),\dots ,D_{\alpha,r}(-)$ sur $\R$ pour $-\in \{\pi,\pi'\}$ tels que $F_{\alpha}(-)(T)\cong \bigoplus_{i=1}^r(E_\infty(\chi_i)\otimes_E\Hom_{(\varphi,\Gamma)}(D_{\alpha,i}(-),T))$, alors on a $F_{\alpha}(\pi'')(T)\cong \bigoplus_{i=1}^r(E_\infty(\chi_i)\otimes_E\Hom_{(\varphi,\Gamma)}(D_{\alpha,i}(\pi')/D_{\alpha,i}(\pi),T))$.}
\end{rem}

Rappelons que la d\'efinition du foncteur $F_\alpha$ d\'epend d'un certain nombre de choix~: choix des $(\iota_\alpha)_{\alpha\in S}$ v\'erifiant (\ref{alpha}), du caract\`ere additif non trivial $\eta$, de la suite de sous-groupe ouverts compacts $(N_m)_{m\geq 0}$ v\'erifiant les hypoth\`eses du \S~\ref{prel}, et du cocaract\`ere $\lambda_{\alpha^\vee}$.

\begin{prop}\label{choix}
Le foncteur $F_\alpha$ ne d\'epend pas des choix de $(\iota_\alpha)_{\alpha\in S}$, $\eta$ et $(N_m)_{\geq 0}$, i.e. pour un autre choix donnant un foncteur $F'_\alpha$, il existe une \'equivalence naturelle $F'_\alpha \simeq F_\alpha$.
\end{prop}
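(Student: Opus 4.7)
The plan is to produce, for any second admissible choice of data $((\iota'_\alpha)_{\alpha\in S}, \eta', (N'_m)_{m\geq 0})$ giving a functor $F'_\alpha$, a natural equivalence $F'_\alpha \simeq F_\alpha$. The strategy is to absorb the differences between the two choices of $(\iota_\alpha)_{\alpha \in S}$ and $\eta$ by conjugation with a suitable element $t \in T(\Qp)$ on $\pi$, and then to argue that the choice of $(N_m)_{m\geq 0}$ becomes invisible in the direct limit over $m$.

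First I address the characters: the composite character $\eta \circ \ell : N(\Qp) \to E_\infty$ is a generic smooth additive character, non-trivial on each simple root subgroup. Formula (\ref{alpha}) shows that the $T(\Qp)$-conjugation action on the set of generic characters of $N(\Qp)$ is transitive after restriction to $N^\alpha(\Qp)$: one needs to independently prescribe $\beta(t)$ for each $\beta \in S \setminus \{\alpha\}$, which is possible since $Z_G$ is connected. Thus there exists $t \in T(\Qp)$ with $\eta(\ell(txt^{-1})) = \eta'(\ell'(x))$ for all $x \in N^\alpha(\Qp)$. Since $T$ normalizes $N^\alpha$, the action of $t$ on any $\pi \in \Rep(B(\Qp))$ restricts to a topological isomorphism $\phi_t : \pi[\mnn^\alpha] \buildrel\sim\over\longrightarrow \pi[\mnn^\alpha]$ intertwining the $\eta'$-twisted action of $N_m^\alpha$ (for the new data) with the $\eta$-twisted action of the conjugate subgroup $t N_m^\alpha t^{-1}$ (for the old data).

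Second, the families $(N_m)_{m\geq 0}$ and $(N'_m)_{m\geq 0}$ are each cofinal in $N(\Qp)$, so for every $m$ there exists $m' \geq m$ (also depending on $t$) such that $t N_m^{\prime,\alpha} t^{-1} \subseteq N_{m'}^\alpha$, and symmetrically. Dualizing $\phi_t$ yields, after appropriate reindexing, a compatible system of $\Rm^+$-linear continuous maps $M'_\alpha(\pi \otimes_E E_m) \to M_\alpha(\pi \otimes_E E_{m'})$ (where $M'_\alpha$ denotes the analogue of $M_\alpha$ for the new data). These maps commute with $\psi$ and $\Gamma$, because both arise from the action of $\lambda_{\alpha^{\!\vee}}(\Qp^\times) \subseteq T(\Qp)$ and $t$ commutes with $\lambda_{\alpha^{\!\vee}}(\Qp^\times)$ since $T$ is abelian. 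Assembling these in the definition (\ref{foncteurm}), then passing to the direct limits over $(r,f_r,T_r)\in I(T)$ and $m \to +\infty$ in (\ref{falpha}), produces a natural transformation $F'_\alpha \to F_\alpha$ whose inverse is constructed symmetrically using conjugation by $t^{-1}$.

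The main obstacle is the bookkeeping with the three different indices denoted $m$ (the subgroup $N_m$, the Galois extension $E_m$, and the index shift induced by conjugation by $t$). One must verify that the induced maps are compatible with the transition morphisms (\ref{mpi}) and with the $\Gal(E_\infty/E)$-semilinear action of Lemma \ref{m+1}(i); the latter is automatic because $t \in T(\Qp) \subseteq G(\Qp)$ is pointwise Galois-fixed, so conjugation by $t$ commutes with the action of $\Gal(E_\infty/E)$ on $\pi \otimes_E E_\infty$. Since $F_\alpha$ is defined as a filtered colimit over $m \to +\infty$, any such finite shift is absorbed in the limit, and the resulting natural transformation is an isomorphism in $F(\varphi,\Gamma)_\infty$.
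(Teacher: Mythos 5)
Your overall mechanism (torus conjugation plus cofinality in $m$) is the right one, but there is a genuine gap: the dependence on $\iota_\alpha$ itself — for the distinguished root $\alpha$ — is never treated. Your matching condition on $t$ is $\eta(\ell(txt^{-1}))=\eta'(\ell'(x))$ for $x\in N^\alpha(\Qp)$ only; since $\ell$ vanishes on $N_\alpha$, this pins down $\beta(t)$ for $\beta\in S\setminus\{\alpha\}$ and says nothing about $\alpha(t)$, nor about the scalar $a\in\Qp^\times$ with $\iota'_\alpha=a\,\iota_\alpha$. But the $\Rm^+$-module structure on $M_\alpha(\pi\otimes_EE_m)$ is defined precisely through $\iota_\alpha$, via $N_0/N_0^\alpha\simeq\Zp$, $D(\Zp,E_m)\simeq \Rm^+$ and (\ref{zp}). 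Conjugation by $t$ rescales the residual $N_\alpha$-direction by $\alpha(t)$: if $z\in\Zp$ acts on the source through ${\iota'_\alpha}^{-1}(z)$, your map carries this to the action of $\alpha(t)a^{-1}z$ through $\iota_\alpha^{-1}$ on the target, so the dualized maps $M'_\alpha(\pi\otimes_EE_m)\to M_\alpha(\pi\otimes_EE_{m'})$ are $\Rm^+$-linear only if $\alpha(t)=a$ — a constraint you never impose, and which your recipe (a product of $\lambda_{\beta^{\!\vee}}$'s with $\beta\neq\alpha$, hence $\alpha(t)=1$) violates whenever $a\neq 1$. Note also that for $a\notin\Zp^\times$ the rescaling $z\mapsto az$ does not even preserve $\Zp$, so this discrepancy cannot be absorbed by the "reindexing in $m$" you invoke. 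Thus the asserted $\Rm^+$-linearity — the only point of the construction where the choice of $\iota_\alpha$ enters — is unjustified, and the part of the statement asserting independence of $\iota_\alpha$ is not proved.

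For the pieces you do treat, your argument is essentially the paper's: the change of $(N_m)_{m\geq 0}$ is handled by the cofinality argument (the paper observes one only needs the auxiliary family to be totally decomposed, cofinal, with $\eta(N'_m)\subseteq E_m$ and the normalization of $N'_0/(N'_0\cap N^\alpha(\Qp))$ by $\iota_\alpha$, not all of (\ref{zp})); the change of $\eta$ by conjugation by $t_a=\prod_{\beta\in S\setminus\{\alpha\}}\lambda_{\beta^{\!\vee}}(a)$, matching the characters only on $N^\alpha(\Qp)$ exactly as you do; and the change of $\iota_\beta$, $\beta\neq\alpha$, by $\lambda_{\beta^{\!\vee}}(a)$. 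Your remarks that the maps commute with $\psi$ and $\Gamma$ because $T(\Qp)$ is abelian, and with $\Gal(E_\infty/E)$ because the auxiliary elements $t_g$ lie in $T(\Qp)$ and are the same for $\eta$ and $\eta'$, are correct (though asserted rather than checked). The missing case is the last step of the paper's proof: when $\iota'_\alpha=a\iota_\alpha$ one conjugates by $\lambda_{\alpha^{\!\vee}}(a)^{-1}$ (equivalently, adds the constraint $\alpha(t)=a$ to your $t$), checks explicitly that the resulting isomorphism intertwines the $\Zp$-action through $\iota_\alpha^{-1}$ with the one through ${\iota'_\alpha}^{-1}$ as well as the actions of $\lambda_{\alpha^{\!\vee}}(\Zp\setminus\{0\})$ and $\Gal(E_\infty/E)$, and then applies the cofinality step once more to pass from the conjugated family $\lambda_{\alpha^{\!\vee}}(a)^{-1}N_m\lambda_{\alpha^{\!\vee}}(a)$ to $(N'_m)_{m\geq 0}$ — a final comparison you would also need, since your conjugated subgroups need not belong to the given family.
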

\begin{proof}
Soit $(N'_m)_{\geq 0}$ une autre suite croissante de sous-groupes ouverts compacts totalement d\'ecompos\'es de $N(\Qp)$ telle que $\cup_{m\geq 0}N_m=N(\Qp)$, $\eta(N_m)\subseteq E_m$ pour $m\in \Z_{\geq 0}$ et telle que $N(\Qp)/N^\alpha(\Qp)\simeq N_\alpha(\Qp)\buildrel\iota_\alpha\over\simeq \Qp$ induise par restriction (pour la racine simple $\alpha$ fix\'ee) $N_\alpha(\Qp)\cap N_0=N_0/N_0^\alpha\buildrel\substack{\iota_\alpha \\ \sim}\over\longrightarrow \Zp\subset \Qp$. Alors on peut d\'efinir des foncteurs $F'_{\alpha,m}$ en rempla\c cant $(N_m)_{\geq 0}$ par $(N'_m)_{\geq 0}$ dans la d\'efinition de $F_{\alpha,m}$ (notons que l'on n'a pas besoin de toute la condition (\ref{zp})). De plus, comme les deux suites $(N'_m)_{\geq 0}$ et $(N_m)_{\geq 0}$ sont croissantes et cofinales dans $N(\Qp)$, en proc\'edant comme dans la preuve du (iii) du Lemme \ref{m+1}, pour tout $m\in \Z_{\geq 0}$ il existe $m_1,m_2\gg 0$ tels que pour tout $\pi$ dans $\Rep(B(\Qp))$ on a des morphismes naturels de foncteurs $F_{\alpha,m}(\pi)\rightarrow F'_{\alpha,m_1}(\pi)$ et $F'_{\alpha,m}(\pi)\rightarrow F_{\alpha,m_2}(\pi)$ qui commutent \`a l'action de $\Gal(E_\infty/E)$. On en d\'eduit des morphismes fonctoriels en $\pi$ dans $F(\varphi,\Gamma)_\infty$~:
$$\lim_{m\rightarrow +\infty}F_{\alpha,m}(\pi)\longrightarrow \lim_{m\rightarrow +\infty}F'_{\alpha,m}(\pi)\ \ {\rm et}\ \lim_{m\rightarrow +\infty}F'_{\alpha,m}(\pi)\longrightarrow \lim_{m\rightarrow +\infty}F_{\alpha,m}(\pi)$$
dont la compos\'ee dans les deux sens est clairement l'identit\'e.

Soit $\eta':\Qp\rightarrow E_\infty$ un autre caract\`ere additif tel que $\eta'(N_m)\subseteq E_m$, alors on a $a\in \Qp^\times$ tel que $\eta'(-)=\eta(a(-))$, et quitte \`a \'echanger $\eta$ et $\eta'$ on peut supposer $\val(a)\geq 0$. Soit $t_a\=\prod_{\beta\in S\backslash\{\alpha\}}\lambda_{\beta^\vee}(a)\in T(\Qp)$ et $N'_m\=t_a^{-1}N_{m}t_a$ pour $m\geq 0$. Les sous-groupes $(N'_m)_{\geq 0}$ sont totalement d\'ecompos\'es et tels que $\cup_{m\geq 0}N'_m=N(\Qp)$, $\eta'(N'_m)\subseteq E_m$ pour $m\in \Z_{\geq 0}$ et $\iota_\alpha$ induit $N_\alpha(\Qp)\cap N'_0=N'_0/{N'_0}^\alpha\buildrel\substack{\iota_\alpha \\ \sim}\over\longrightarrow \Zp$. Par le paragraphe pr\'ec\'edent, le foncteur $F'_\alpha$ d\'efini avec $(\iota_\beta)_{\beta\in R^+}$, $\eta'$ et $(N_m)_{\geq 0}$ est le m\^eme que celui d\'efini avec $(\iota_\beta)_{\beta\in R^+}$, $\eta'$ et $(N'_m)_{\geq 0}$. Mais, comme dans la preuve du (i) du Lemme \ref{m+1}, la multiplication par $t_a^{-1}$ induit un isomorphisme~:
\begin{equation}\label{changebis}
t_a^{-1}:(\pi\otimes_EE_m)[{\mnn^\alpha}](\eta^{-1})_{N_m^\alpha}\buildrel\sim\over\longrightarrow (\pi\otimes_EE_m)[{\mnn^\alpha}]({\eta'}^{-1})_{{N'_m}^{\!\!\alpha}}
\end{equation}
dont on v\'erifie facilement qu'il commute aux actions de $\Zp\buildrel \iota_\alpha^{-1}\over\simeq N'_0/{N'_0}^\alpha=N_0/N_0^\alpha$, $\lambda_{\alpha^\vee}(\Zp\backslash\{0\})$ et $\Gal(E_\infty/E)$. On en d\'eduit $F'_\alpha \simeq F_\alpha$.

Soit $(\iota_\beta)_{\beta\in R^+}$, $\eta$, $(N_m)_{\geq 0}$ avec $\eta(N_m)\subseteq E_m$ et $N_\alpha(\Qp)\cap N_0=N_0/N_0^\alpha\buildrel\substack{\iota_\alpha \\ \sim}\over\longrightarrow \Zp$. La construction du foncteur $F_\alpha$ associ\'ee \`a ces choix ne d\'epend en fait pas des $\iota_\beta$ pour $\beta\in R^+\backslash S$. Fixons $\beta\in S\backslash\{\alpha\}$ et rempla\c cons $\iota_\beta$ par $\iota'_\beta\= a \iota_\beta$ pour un $a\in \Qp^\times$. On a donc une nouvelle application $\ell':N(\Qp)\rightarrow \Qp$ qui induit un nouveau caract\`ere $\eta'\=\eta\circ \ell':N(\Qp)\rightarrow E_\infty$. Alors par (\ref{cocar}) on voit que $\lambda_{\beta^\vee}(a)\in T(\Qp)$ commute avec $N_\alpha(\Qp)$ et v\'erifie $(\eta \circ \ell')(-)=(\eta\circ\ell)(\lambda_{\beta^\vee}(a) (-) \lambda_{\beta^\vee}(a)^{-1})$ (avec (\ref{alpha})). La multiplication par $\lambda_{\beta^\vee}(a)^{-1}$ a alors les m\^emes propri\'et\'es que celle par $t_a^{-1}$ en (\ref{changebis}). Donc changer les $\iota_\beta$ pour $\beta\ne \alpha$ n'affecte pas $F_\alpha$.

Soit enfin $(\iota'_\beta)_{\beta\in R^+}$, $\eta$, $(N'_m)_{\geq 0}$ avec $N_\alpha(\Qp)\cap N'_0=N'_0/{N'_0}^\alpha\buildrel\substack{\iota'_\alpha \\ \sim}\over\longrightarrow \Zp$ et $\iota'_\beta=\iota_\beta$ si $\beta\ne \alpha$. On a $\iota'_\alpha\= a \iota_\alpha$ pour un $a\in \Qp^\times$ ce qui implique $\iota_\alpha^{-1}(z)=\lambda_{\alpha^\vee}(a){\iota_\alpha'}^{-1}(z)\lambda_{\alpha^\vee}(a)^{-1}$ pour $z\in \Zp$ par (\ref{cocar}) et (\ref{alpha}). La multiplication par $\lambda_{\alpha^\vee}(a)^{-1}$ induit un isomorphisme~:
\begin{equation*}
\lambda_{\alpha^\vee}(a)^{-1}:(\pi\otimes_EE_m)[{\mnn^\alpha}](\eta^{-1})_{N_m^\alpha}\buildrel\sim\over\longrightarrow (\pi\otimes_EE_m)[{\mnn^\alpha}]({\eta}^{-1})_{{\lambda_{\alpha^\vee}(a)^{-1}N_m}^{\!\!\alpha}\lambda_{\alpha^\vee}(a)}
\end{equation*}
qui commute aux actions de $\Zp$ (agissant via $\iota_\alpha^{-1}$ \`a gauche et via ${\iota'_\alpha}^{-1}$ \`a droite), $\lambda_{\alpha^\vee}(\Zp\backslash\{0\})$ et $\Gal(E_\infty/E)$. On en d\'eduit que $F_\alpha$ est isomorphe au foncteur obtenu avec $(\iota'_\beta)_{\beta\in R^+}$, $\eta$, $(\lambda_{\alpha^\vee}(a)^{-1}N_m\lambda_{\alpha^\vee}(a))_{\geq 0}$ qui lui m\^eme, par le premier paragraphe, est isomorphe au foncteur obtenu avec $(\iota'_\beta)_{\beta\in R^+}$, $\eta$, $(N'_m)_{\geq 0}$. Cela ach\`eve la preuve.
\end{proof}

\begin{rem}\label{change}
{\rm Le foncteur $F_\alpha$ d\'epend du choix de $\lambda_{\alpha^\vee}$, mais si l'on remplace $\lambda_{\alpha^\vee}$ par $\lambda_{\alpha^\vee}+\mu$ pour $\mu\in X^\vee(Z_G)$, alors lorsque $Z_G(\Qp)$ agit sur $\pi\in \Rep(B(\Qp))$ par un caract\`ere $\chi_\pi:Z_G(\Qp)\rightarrow E^\times$, on change $M_\alpha(\pi)$ en $M_\alpha(\pi)\otimes_{\R^+}\R^+((\chi_\pi\circ \mu)^{-1})$, c'est-\`a-dire que l'on multiplie l'action de $\gamma=\gamma_z\in \Gamma$ par $\chi_\pi(\mu(z))^{-1}$ et celle de $\psi$ par $\chi_\pi(\mu(p))$. Noter au passage que si l'on tord $\pi$ par un caract\`ere localement analytique $\chi:B(\Qp)\rightarrow E^\times$ qui est trivial sur $N(\Qp)\subseteq B(\Qp)$, alors on change $M_\alpha(\pi)$ en $M_\alpha(\pi)\otimes_{\R^+}\R^+(\chi\circ \lambda_{\alpha^\vee}^{-1})$ (cf. le tout d\'ebut du \S~\ref{localg} pour la notation).}
\end{rem}

\section{Foncteurs $F_\alpha$ et th\'eorie d'Orlik-Strauch}\label{alphaqques}

On \'etudie les foncteurs $F_\alpha$ appliqu\'es aux repr\'esentations localement analytiques ${\mathcal F}_{P^-}^G(M,\pi_P^\infty)$ d\'efinies par Orlik et Strauch (\cite{OS}).

\subsection{Quelques notations}\label{notabene}

On commence par quelques rappels et notations.

On conserve les notations des \S\S~\ref{prel}, \ref{gen} et \ref{def}. On note $\mg$ (resp. $\mb$, $\mnn$, $\mt$, $\mb^-$, $\mnn^-$) la $\Qp$-alg\`ebre de Lie de $G(\Qp)$ (resp. $B(\Qp)$, $N(\Qp)$, $T(\Qp)$, $B^-(\Qp)$, $N^-(\Qp)$) et $U(-)$ les alg\`ebres enveloppantes associ\'ees. Afin de pouvoir utiliser les r\'esultats de \cite{OS}, on suppose \cite[As.~5.1]{OS} (comme dans {\it loc.cit.}), i.e. $p>2$ si le syst\`eme de racine associ\'e \`a $(\mg,\mt)$ a des composantes irr\'eductibles de type $B$, $C$, $F_4$ et $p>3$ s'il en a de type $G_2$ (en particulier il n'y a pas d'hypoth\`ese sur $p$ si $G=\G$). Suivant \cite[\S~2]{OS}, on note ${\mathcal O}^{\mb^-}_{\rm alg}$ la sous-cat\'egorie pleine de la cat\'egorie $\mathcal O={\mathcal O}^{\mb^-}$ (cf. \cite[\S~1.1]{Hu}) des $U(\mg)$-modules dont les poids (au sens caract\`eres de $\mt$) sont alg\'ebriques. Si $P\subseteq G$ est un sous-groupe parabolique contenant $B$, on dispose \'egalement de la sous-cat\'egorie pleine ${\mathcal O}^{\mpp^-}_{\rm alg}\subseteq {\mathcal O}^{\mb^-}_{\rm alg}$ o\`u $\mpp^-$ est la $\Qp$-alg\`ebre de Lie de $P^-(\Qp)$ (cf. \cite[\S~2]{OS}). On note $L_P$ le sous-groupe de Levi de $P$, $N_P$ son radical unipotent (donc $P=N_P\rtimes L_P$) et $N_{L_P}\=L_P\cap N$. Pour $w\in W$ et $\lambda\in X(T)$, on note $w\cdot \lambda$ la ``dot-action'' {\it par rapport \`a $B^-$}, i.e. $w\cdot \lambda = w(\lambda-\rho)+\rho\in X(T)$.

Pour $\lambda\in X(T)$, on note $L^-(\lambda)\in {\mathcal O}^{\mb^-}_{\rm alg}$ le $U(\mg)$-module simple sur $E$ de plus haut poids $\lambda$ par rapport \`a $B^-$, c'est-\`a-dire l'unique quotient simple du module de Verma $U(\mg)\otimes_{U(\mb^-)}\lambda$ o\`u $\lambda:U(\mt)\rightarrow E$ est vu comme caract\`ere de $U(\mb^-)$ via $U(\mb^-)\twoheadrightarrow U(\mt)$. Il est dans ${\mathcal O}^{\mpp^-}_{\rm alg}$ si et seulement si $\langle \lambda, \beta^\vee\rangle \leq 0$ pour toute racine simple $\beta\in S$ de $L_P$ et c'est alors aussi l'unique quotient simple du module de Verma g\'en\'eralis\'e $U(\mg)\otimes_{U(\mpp^-)}L^-(\lambda)_P$ o\`u $L^-(\lambda)_P$ est la repr\'esentation alg\'ebrique irr\'eductible de dimension finie de $L_P(\Qp)$ sur $E$ de plus haut poids $\lambda$ (par rapport \`a $B^- \cap L_P$). Rappelons que toute repr\'esentation alg\'ebrique irr\'eductible de $L_P(\Qp)$ sur $E$ est de la forme $L^-(\lambda)_P$ pour $\lambda\in X(T)$ tel que $\langle \lambda, \beta^\vee\rangle \leq 0$ pour toute racine simple $\beta\in S$ de $L_P$, et que $L^-(\lambda)$ (pour $\lambda\in X(T)$) est de dimension finie sur $E$ si et seulement si $\langle \lambda, \beta^\vee\rangle \leq 0$ pour tout $\beta\in S$ i.e. $\lambda$ est dominant par rapport \`a $B^-$. 

Soit $\pi_P^\infty$ une repr\'esentation lisse de longueur finie de $L_P(\Qp)$ sur $E$, Orlik et Strauch construisent dans \cite{OS} un foncteur contravariant exact $M\longmapsto {\mathcal F}_{P^-}^G(M,\pi_P^\infty)$ de la cat\'egorie ${\mathcal O}^{\mpp^-}_{\rm alg}$ dans la cat\'egorie $\Rep(G(\Qp))$ (en fait dans la cat\'egorie des repr\'esentations localement analytiques admissibles de longueur finie de $G(\Qp)$ sur $E$). Pour $L^-(\lambda)_P$ repr\'esentation alg\'ebrique irr\'eductible de $L_P(\Qp)$ sur $E$, ce foncteur envoie le module de Verma g\'en\'eralis\'e $U(\mg)\otimes_{U(\mpp^-)}L^-(\lambda)_P$ vers l'induite parabolique localement analytique~:
\begin{multline*}
\big(\Ind_{P^-(\Qp)}^{G(\Qp)} L^-(\lambda)_P^\vee\otimes_E\pi_P^\infty\big)^{\an}\=\{f:G(\Qp)\rightarrow E\ {\rm loc.\ an.\ tel\ que}\ f(p^-g)=p^-(f(g))\\
{\rm pour\ tous}\ p^-\in P^-(\Qp),g\in G(\Qp)\}
\end{multline*}
o\`u $L^-(\lambda)_P^\vee$ est le dual de la repr\'esentation de dimension finie $L^-(\lambda)_P$ (l'action de $G(\Qp)$ \'etant donn\'ee comme d'habitude par $(g'f)(g)=f(gg')$). Noter que $L^-(\lambda)_P^\vee\simeq L(-\lambda)_P$ o\`u $L(-\lambda)_P$ est la repr\'esentation alg\'ebrique irr\'eductible de $L_P(\Qp)$ de plus haut poids $-\lambda$ par rapport \`a $B\cap L_P$. Si $\lambda$ est dominant par rapport \`a $B^-$, alors on a par \cite[Prop.~4.9(b)]{OS}~:
\begin{equation}\label{induction}
{\mathcal F}_{P^-}^G(L^-(\lambda),\pi_P^\infty)=L(-\lambda)\otimes_E \big(\Ind_{P^-(\Qp)}^{G(\Qp)}\pi_P^\infty\big)^\infty
\end{equation}
o\`u $L(-\lambda)=L(-\lambda)_G=L^-(\lambda)^\vee$ et $(\Ind_{P^-(\Qp)}^{G(\Qp)}\pi_P^\infty\big)^\infty$ est l'induite parabolique lisse. De plus pour un tel $\lambda$ on d\'eduit de \cite[Prop.~II.2.11]{Ja} un isomorphisme $L(-\lambda)[\mnn_{P}]\simeq L(-\lambda)_{P}$.
 
On fixe maintenant jusqu'\`a la fin du \S~\ref{alphaqques} une racine $\alpha\in S$. L'objectif du \S~\ref{alphaqques} est d'\'etudier le foncteur $F_\alpha(\pi)$ pour $\pi={\mathcal F}_{P^-}^G(M,\pi_P^\infty)$ avec $P$, $M$, $\pi_P^\infty$ comme ci-dessus. En particulier nous d\'eterminons explicitement $F_\alpha(\pi)$ lorsque $M$ est un quotient de $U(\mg)\otimes_{U(\mpp^-)}L^-(\lambda)_P$ (pour $L^-(\lambda)_P$ repr\'esentation alg\'ebrique irr\'eductible de $L_P(\Qp)$ sur $E$) et $\pi_P^\infty$ a un caract\`ere central, cf. Corollaire \ref{casparticuliers} ci-dessous. La preuve va utiliser de mani\`ere essentielle deux cas particuliers : le cas localement alg\'ebrique $\pi=L^-(\lambda)\otimes_E\pi^\infty$ avec $\lambda$ dominant par rapport \`a $B^-$ et $\pi^\infty$ repr\'esentation lisse de longueur finie de $G(\Qp)$, qui sera d\'emontr\'e au \S~\ref{localg}, et le cas $\pi=(\Ind_{P^-(\Qp)}^{G(\Qp)} L^-(\lambda)_P^\vee\otimes_E\pi_P^\infty)^{\an}$ qui sera d\'emontr\'e au \S~\ref{endofproof} en utilisant les r\'esultats des \S\S~\ref{topo}, \ref{cellule}, \ref{approx}~\&~\ref{celluledevisse}.

\subsection{Lemmes topologiques}\label{topo}

On montre plusieurs lemmes topologiques (plus ou moins standard) qui seront utilis\'es dans les paragraphes suivantes.

On conserve toutes les notations pr\'ec\'edentes. Si $A$ est un $E$-espace vectoriel localement convexe muni d'une structure d'alg\`ebre s\'epar\'ement continue, et $V$, $W$ deux $E$-espaces vectoriels localement convexes tels que $V$ (resp. $W$) est muni d'une structure de $A$-module \`a droite (resp. \`a gauche) s\'epar\'ement continu, on d\'efinit $V\widehat \otimes_{A,\iota}W$ comme le quotient de $V\widehat \otimes_{E,\iota}W$ par l'adh\'erence du sous-$E$-espace vectoriel engendr\'e par les \'el\'ements de la forme $va\otimes w - v\otimes aw$, $(a,v,w)\in A\times V\times W$, muni de la topologie quotient (cf. \cite[Rem.~1.2.11]{Ko1}). Lorsque $A$, $V$ et $W$ sont de plus des espaces de Fr\'echet, rappelons que les topologies projective et injective co\"\i ncident sur $V\otimes_{E}W$ (\cite[Prop.~17.6]{Sch}) et que $V$ (resp. $W$) est un $A$-module \`a droite (resp. \`a gauche) continu (\cite[Cor.~III.5.2.1]{Bo2}). On note dans ce cas $V\widehat \otimes_{E}W$ et $V\widehat \otimes_{A}W$.

\begin{lem}\label{librehat}
Soit $A$ un $E$-espace vectoriel muni d'une structure d'alg\`ebre s\'epar\'ement continue, et $V$, $W$ deux $E$-espaces vectoriels localement convexes avec $V$ (resp. $W$) muni d'une structure de $A$-module \`a droite (resp. \`a gauche) s\'epar\'ement continu. Alors on a un isomorphisme canonique $V\widehat \otimes_{E,\iota}W\buildrel\sim\over\longrightarrow V\widehat \otimes_{A,\iota} (A\widehat \otimes_{E,\iota} W)$ (resp. $V\widehat \otimes_{E,\iota}W\buildrel\sim\over\longrightarrow (V\widehat \otimes_{E,\iota} A)\widehat \otimes_{A,\iota} W)$).
\end{lem}
\begin{proof}
On montre le premier cas seulement. Par les propri\'et\'es de la topologie injective sur le produit tensoriel (\cite[\S~17.A]{Sch}), on a deux morphismes continus $V\otimes_{E,\iota}W\rightarrow V\otimes_{E,\iota}A\otimes_{E,\iota}W$, $v\otimes w\mapsto v\otimes 1 \otimes w$ et $V\otimes_{E,\iota}A\otimes_{E,\iota}W \rightarrow V\otimes_{E,\iota}W$, $v\otimes a\otimes w\mapsto va\otimes w$. Par universalit\'e du s\'epar\'e-compl\'et\'e (\cite[Prop.~7.5]{Sch}) ils induisent deux morphismes continus $V\widehat\otimes_{E,\iota}W\rightarrow V\widehat\otimes_{E,\iota}A\widehat\otimes_{E,\iota}W$ et $V\widehat\otimes_{E,\iota}A\widehat\otimes_{E,\iota}W \rightarrow V\widehat\otimes_{E,\iota}W$ dont le deuxi\`eme se factorise clairement par $V\widehat \otimes_{A,\iota} (A\widehat \otimes_{E,\iota} W)$ et dont la compos\'ee donne l'identit\'e de $V\widehat\otimes_{E,\iota}W$. Il reste \`a montrer que la compos\'ee~:
$$V\widehat \otimes_{E,\iota} A\widehat \otimes_{E,\iota} W\longrightarrow V\widehat\otimes_{E,\iota}W\longrightarrow V\widehat\otimes_{E,\iota}A\widehat\otimes_{E,\iota}W\twoheadrightarrow V\widehat \otimes_{A,\iota} (A\widehat \otimes_{E,\iota} W)$$
est la surjection canonique $V\widehat\otimes_{E,\iota}A\widehat\otimes_{E,\iota}W\twoheadrightarrow V\widehat \otimes_{A,\iota} (A\widehat \otimes_{E,\iota} W)$, ce qui est clair.
\end{proof}

\begin{lem}\label{strictI}
Soit $0\rightarrow V'\rightarrow V\rightarrow V''\rightarrow 0$ une suite exacte stricte de $E$-espaces vectoriels localement convexes.\\
(i) Pour tout morphisme continu $W\rightarrow V''$ de $E$-espaces vectoriels localement convexes, la suite exacte $0\rightarrow V'\rightarrow V\times_{V''}W\rightarrow W\rightarrow 0$ est stricte o\`u le produit fibr\'e $V\times_{V''}W$ est muni de la topologie induite par la topologie produit sur $V\times W$.\\
(ii) Pour tout morphisme continu $V'\rightarrow W$ de $E$-espaces vectoriels localement convexes, la suite exacte $0\rightarrow W\rightarrow W\oplus_{V'}V\rightarrow V''\rightarrow 0$ est stricte o\`u la somme amalgam\'ee $W\oplus_{V'}V$ est munie de la topologie quotient de la topologie produit sur $W\times V$.
\end{lem}
\begin{proof}
On montre le (i), laissant la preuve du (ii) (qui proc\`ede d'arguments analogues) au lecteur. L'injection $V'\hookrightarrow V\times_{V''}W$ \'etant stricte (car l'injection $V'\simeq V'\times \{0\}\hookrightarrow V\times W$ l'est par \cite[Lem.~5.3(i)]{Sch}), il suffit de montrer que la surjection $V\times_{V''}W \buildrel s \over \twoheadrightarrow W$ est stricte, i.e. que la topologie sur $W$ est la topologie quotient de $V\times_{V''}W$. Soit $X\subseteq W$ un sous-ensemble tel que $s^{-1}(X)$ est ouvert dans $V\times_{V''}W$, nous allons montrer que $X$ est ouvert dans $W$. Il existe donc un ouvert $U$ de $V\times W$ tel que $s^{-1}(X)=U\cap (V\times_{V''}W)$. On peut supposer $U=U+V'$ $(=\{u+v',(u,v')\in U\times V'\})$. En effet, on a clairement une inclusion $s^{-1}(X)\subseteq (U+V')\cap (V\times_{V''}W)$, et on a aussi $(U+V')\cap (V\times_{V''}W)=(U\cap (V\times_{V''}W))+V'$ (car $V'\simeq V'\times \{0\}\subseteq V\times_{V''}W$) et $(U\cap (V\times_{V''}W))+V'=U\cap (V\times_{V''}W)$ car $U\cap (V\times_{V''}W)=s^{-1}(X)=s^{-1}(X)+V'$. Notons $\overline U$ l'image de $U$ dans $V''\times W\simeq V/V' \times W$, comme $U$ est un ouvert de $V\times W$ tel que $U=U+V'$, on voit que $\overline U$ est ouvert dans $V''\times W$ (dont la topologie est la topologie quotient de $V\times W$ par \cite[Lem.~5.3(ii)]{Sch}). Il suffit de montrer que $X=W\cap \overline U$ dans $V''\times W$. On a clairement $X\subseteq W\cap \overline U$. Un \'el\'ement $w$ de $W\cap \overline U$ se rel\`eve dans $V\times_{V''}W$ et dans $U$, la diff\'erence des relev\'es \'etant dans $V'$. Comme $V'\subseteq V\times_{V''}W$, quitte \`a modifier le relev\'e dans $V\times_{V''}W$, on voit que l'on peut relever $w$ dans $U\cap (V\times_{V''}W)=s^{-1}(X)$, donc $w\in X$ et on a bien $X=W\cap \overline U$.
\end{proof}

\begin{rem}\label{strictIf}
{\rm Lorsque les $3$ espaces $V$, $V'$, $V''$ du Lemme \ref{strictI} sont des espaces de Fr\'echet, alors $V\times_{V''}W$ (resp. $W\oplus_{V'}V$) est encore un espace de Fr\'echet~: le premier car c'est le noyau d'un morphisme continu entre espaces de Fr\'echet, le deuxi\`eme car c'est le conoyau d'une immersion ferm\'ee $V'\hookrightarrow W \oplus V$ (compos\'ee des injections $V'\hookrightarrow W \oplus V' \hookrightarrow W\oplus V$ qui sont des immersions ferm\'ees par \cite[Rem.~8.4]{Sch}, \cite[Lem.~5.3(iii)]{Sch} et \cite[Cor.~8.7]{Sch}).}
\end{rem}

\begin{lem}\label{tenseur}
Soit $f:V'\rightarrow V$ un morphisme continu de $E$-espaces vectoriels localement convexes et soit $W$ un $E$-espace vectoriel localement convexe. Si $f$ est d'image dense, alors les morphismes canoniques $f\otimes \Id : V'\otimes_{E,\pi}W\rightarrow V\otimes_{E,\pi}W$ et $f\otimes \Id : V'\otimes_{E,\iota}W\rightarrow V\otimes_{E,\iota}W$ sont aussi d'image dense.
\end{lem}
\begin{proof}
On donne la preuve pour $\otimes_{E,\pi}$, celle avec $\otimes_{E,\iota}$ \'etant analogue. Soit $\overline{{\im}(f\otimes \Id)}\subseteq V\otimes_{E,\pi}W$ l'adh\'erence de l'image de $f\otimes \Id$, il suffit de montrer que tout vecteur de $V\otimes_{E,\pi}W$ est dans $\overline{{\im}(f\otimes \Id)}$. Comme tout vecteur de $V\otimes_{E,\pi}W$ est dans $V\otimes_{E,\pi}W_f=V\otimes_{E}W_f$ pour $W_f\subseteq W$ un sous-$E$-espace vectoriel de dimension finie, il suffit de montrer $V\otimes _EW_f\subseteq \overline{{\im}(f\otimes \Id)}$. Comme l'injection $V\otimes_{E}W_f\hookrightarrow V\otimes_{E,\pi}W$ est continue, $\overline{{\im}(f\otimes \Id)}\cap (V\otimes_{E}W_f)$ est un ferm\'e de $V\otimes_{E}W_f$ qui contient l'image de $(f\otimes \Id)\vert_{V'\otimes_EW_f}:V'\otimes_EW_f\rightarrow V\otimes_EW_f$, donc qui contient son adh\'erence $\overline{{\im}((f\otimes \Id)\vert_{V'\otimes_EW_f})}$. Or cette derni\`ere image est dense par l'hypoth\`ese et parce que $W_f$ est de dimension finie. On en d\'eduit $V\otimes_EW_f\subseteq \overline{{\im}((f\otimes \Id)\vert_{V'\otimes_EW_f})}\subseteq \overline{{\im}(f\otimes \Id)}$.
\end{proof}

\begin{lem}\label{separetypecompact}
Consid\'erons un diagramme commutatif de suites exactes de $E$-espaces vectoriels localement convexes de type compact~:
\begin{equation*}
\xymatrix{0\ar[r] & V' \ar[r]\ar@{^{(}->}[d]^{f'} & V \ar[r]^g\ar@{^{(}->}[d]^{f} & V''\ar@{^{(}->}[d]^{f''} &\\
0\ar[r] & W' \ar[r] &W \ar[r] &W'' }
\end{equation*}
o\`u les injections $f'$, $f''$ et l'application $g$ sont strictes. Alors l'injection $f$ est aussi stricte.
\end{lem}
\begin{proof}
Rappelons d'abord que les injections $V'\hookrightarrow V$ et $W'\hookrightarrow W$ sont automatiquement strictes, qu'une injection d'espaces de type compact est stricte si et seulement si c'est une immersion ferm\'ee, et qu'une compos\'ee d'injections strictes est une injection stricte. On en d\'eduit en particulier un diagramme commutatif d'espaces de type compact~:
\begin{equation*}
\xymatrix{& & V/V' \ar@{^{(}->}[r]^{\overline g}\ar@{^{(}->}[d]^{\overline f} & V''\ar@{^{(}->}[d]^{f''} &\\
0\ar[r] & W'/V' \ar[r]^i &W/V' \ar[r] &W'' }
\end{equation*}
o\`u les injections $\overline g$ et $f''$ sont strictes, puis une suite exacte d'espaces de type compact~:
\begin{equation*}
\xymatrix{0\ar[r] & W'/V'\oplus V/V' \ar[r]^{\ \ \ \ \ \ i\oplus \overline f}& W/V' \ar[r]& W''/(V/V'). &}
\end{equation*}
Cette derni\`ere suite exacte implique que l'injection $i\oplus \overline f$ est une immersion ferm\'ee, d'o\`u \'egalement l'injection $\overline f:V/V'\hookrightarrow W/V'$. L'image inverse du ferm\'e $V/V'$ dans $W$ via la surjection continue $W\twoheadrightarrow W/V'$, c'est-\`a-dire $V$, est donc un ferm\'e de $W$, ce qui termine la preuve.
\end{proof}

On appelle module de Fr\'echet sur $\R^+$ (resp. $\Rr$) un $E$-espace vectoriel de Fr\'echet $M$ muni d'une structure de $\R^+$-module continu (resp. de $\Rr$-module continu). On appelle $(\psi,\Gamma)$-module de Fr\'echet sur $\R^+$ (resp. $\Rr$) un module de Fr\'echet $M$ sur $\R^+$ muni d'une action semi-lin\'eaire continue de $\Gamma$ et d'un morphisme continu $\psi:M\rightarrow M$ qui commute \`a $\Gamma$ et v\'erifie $\psi(\varphi(\lambda)v)=\lambda\psi(v)$ pour $\lambda\in \R^+$ (resp. avec $\Rr$). Un morphisme $M\rightarrow M'$ de $(\psi,\Gamma)$-modules de Fr\'echet sur $\R^+$ est un morphisme continu de $\R^+$-modules qui commute \`a $\psi$ et $\Gamma$, i.e. un \'el\'ement de $\Hom_{\psi,\Gamma}(M,M')$ (cf. apr\`es (\ref{foncteurm})).

\begin{lem}\label{begin}
Soit $M$ un $(\psi,\Gamma)$-module de Fr\'echet sur $\R^+$ et $V$ un $E$-espace vectoriel de dimension d\'enombrable muni de la topologie localement convexe la plus fine (\cite[\S~5]{Sch}). Pour tout $r\in \Q_{>p-1}$ et tout $(\varphi,\Gamma)$-module g\'en\'eralis\'e $T_r$ sur $\R^r$ on a un isomorphisme canonique et fonctoriel (en $T_r$) de $E$-espaces vectoriels~:
$$\Hom_{\psi,\Gamma}\big(M\widehat \otimes_E V^\vee,T_r)\simeq V\otimes_{E}\Hom_{\psi,\Gamma}\big(M,T_r\big).$$
\end{lem}
\begin{proof}
Les hypoth\`eses sur $V$ font qu'on peut l'\'ecrire comme une somme directe topologique $V=\oplus_{i\geq 1}Ee_i$ et on a alors $V^\vee=\displaystyle \prod_{i\geq 1}Ee_i^*$ et $M\widehat \otimes_E V^\vee=\prod_{i\geq 1}(M\otimes_E Ee_i^*)$ o\`u $(e_i^*)_{i\geq 1}$ est la base dual de $(e_i)_{i\geq 1}$.

Montrons d'abord que le morphisme canonique~:
\begin{equation}\label{topologie}
\bigoplus_{i\geq 1}\Hom_{\psi}\!\big(M\otimes_E Ee_i^*,T_r\big)\longrightarrow \Hom_{\psi}\!\Big(\prod_{i\geq 1}(M\otimes_E Ee_i^*),T_r\Big)
\end{equation}
est un isomorphisme o\`u $\Hom_{\psi}=$ homomorphismes continus de $\R^+$-modules commutant \`a $\psi$. Il est injectif, il suffit donc de montrer la surjectivit\'e, c'est-\`a-dire que tout morphisme $\R^+$-lin\'eaire continu $f:\prod_{i\geq 1}(M\otimes_E Ee_i^*)\rightarrow T_r$ commutant \`a $\psi$ et $\Gamma$ est nul sur tous les facteurs $M\otimes_E Ee_i^*$ sauf un nombre fini. Soit $T_{r,\tor}\subseteq T_r$ le sous-$\R^r$-module de torsion (stable par $\varphi$ et $\Gamma$), alors $T_r/T_{r,\tor}$ est un $(\varphi,\Gamma)$-module libre de rang fini sur $\R^r$. Montrons que la compos\'ee :
\begin{equation}\label{topologielibre}
\prod_{i\geq 1}(M\otimes_E Ee_i^*)\buildrel f \over \longrightarrow T_r \twoheadrightarrow (T_r/T_{r,\tor})\simeq (\R^r)^d
\end{equation}
est nulle en restriction \`a $\prod_{i\geq N}(M\otimes_E Ee_i^*)$ pour un entier $N\gg 0$. Il suffit de le montrer apr\`es composition \`a droite par l'injection continue $(\R^r)^d\hookrightarrow (\R^{[r,s]})^d$ pour un $s>r$ quelconque. Soit $W_0$ une boule unit\'e du $E$-espace de Banach $W=(\R^{[r,s]})^d$, l'image inverse de $W_0$ dans $\prod_{i\geq 1}(M\otimes_E Ee_i^*)$ est un sous-$\oE$-module ouvert, donc qui contient $\prod_{i\geq N}(M\otimes_E Ee_i^*)$ pour $N\gg 0$ (par les propri\'et\'es de la topologie produit). Mais comme $M\otimes_E Ee_i^*$ est un $E$-espace vectoriel, son image dans $W_0$ l'est aussi, ce qui implique qu'elle est nulle. Quitte \`a remplacer $f$ par $f\vert_{\prod_{i\geq N}(M\otimes_E Ee_i^*)}$, il suffit donc de montrer l'\'enonc\'e avec $T_{r,\tor}$ au lieu de $T_r$, i.e. on peut supposer $T_r\simeq \oplus_{i=1}^{d} {\mathcal R}^{r}_{E}/(t^{k_i})$. On a alors un isomorphisme topologique d'espaces de Fr\'echet $T_r\simeq \prod_{n\geq n(r)}T_{r,n}$ o\`u $n(r)$ est le plus petit entier $n$ tel que $p^{n(r)-1}(p-1)\geq r$ et $T_{r,n}\simeq \oplus_{i=1}^{d} {\mathcal R}^{r}_{E}/(\varphi^n(q(X)^{k_i}))$ (un $E$-espace vectoriel de dimension finie) avec $q(X)\=\varphi(X)/X\in \R^+$ (utiliser \cite[Lem.~4.2(1)]{Li} et le fait que tout isomorphisme continu entre espaces de Fr\'echet est un isomorphisme topologique). Sur $T_r\simeq \prod_{n\geq n(r)}T_{r,n}$ on v\'erifie que $\psi\vert_{T_{r,n}}:T_{r,n}\rightarrow T_{r,n-1}$ pour $n\geq n(r)+1$ et que $\ker(\psi^j\vert_{\prod_{n\geq n(r)+j}T_{r,n}})$ pour tout entier $j\geq 1$ ne peut contenir de sous-$\R^+$-module non nul (si $v\in \prod_{n\geq n(r)+j}T_{r,n}\subset T_r$ est non nul, il existe toujours un $i\in \{0,\dots,p^j-1\}$ tel que $\psi^j((1+X)^iv)\ne 0$). Maintenant supposons $f\vert_{M\otimes_E Ee_i^*}\ne 0$ pour des $i$ arbitrairement grands, alors il existe $n_i\geq n(r)$ tel que la compos\'ee $M\otimes_E Ee_i^*\buildrel f \over \rightarrow T_r\twoheadrightarrow T_{r,n_i}$ est non nulle, donc aussi par ce qui pr\'ec\`ede la compos\'ee~:
$$M\otimes_E Ee_i^*\buildrel f \over \longrightarrow T_r\twoheadrightarrow T_{r,n_i}\buildrel \psi^{n_i-n(r)}\over\longrightarrow T_{r,n(r)}$$
(puisque l'image de $f$ dans $T_{r,n_i}$ est un sous-$\R^+$-module non nul). Or, comme $\psi^{n_i-n(r)}\circ f=f\circ \psi^{n_i-n(r)}$, on en d\'eduit qu'{\it a fortiori} la compos\'ee $M\otimes_E Ee_i^*\buildrel f \over \rightarrow T_r\twoheadrightarrow T_{r,n(r)}$ est aussi non nulle pour des $i$ arbitrairement grands. Mais l'application compos\'ee~:
$$\prod_{i\geq 1}(M\otimes_E Ee_i^*)\buildrel f \over \longrightarrow T_r\twoheadrightarrow T_{r,n(r)}$$
\'etant continue, elle doit \^etre nulle en restriction \`a $\prod_{i\geq N}(M\otimes_E Ee_i^*)$ pour un $N\gg 0$ car $T_{r,n(r)}$ est de dimension finie, ce qui contredit la phrase d'avant. Donc n\'ecessairement $f\vert_{\prod_{i\geq N}(M\otimes_E Ee_i^*)}=0$ pour un $N\gg 0$. 

On a donc par (\ref{topologie}) et en tenant compte de l'action de $\Gamma$~:
\begin{eqnarray*}
\Hom_{\psi,\Gamma}\big(M\widehat \otimes_E V^\vee,T_r\big)&\simeq &\bigoplus_{i\geq 1}\Hom_{\psi,\Gamma}\big(M\otimes_E Ee_i^*,T_r\big)\\
&\simeq &\bigoplus_{i\geq 1}\big(\Hom_{E}(E e_i^*,E)\otimes_{E}\Hom_{\psi,\Gamma}(M,T_r)\big)\\
&\simeq &\Big(\bigoplus_{i\geq 1}\Hom_{E}(E e_i^*,E)\Big)\otimes_{E}\Hom_{\psi,\Gamma}(M,T_r)\\
&\simeq &V\otimes_{E}\Hom_{\psi,\Gamma}(M,T_r)
\end{eqnarray*}
ce qui termine la preuve du lemme.
\end{proof}

Soit $M$ une vari\'et\'e localement $\Qp$-analytique (strictement) paracompacte de dimension finie et $V$ un $E$-espace vectoriel localement convexe s\'epar\'e. Rappelons que le support d'une fonction dans $C^{\an}(M,V)$ ou $C^\infty(M,V)$ est l'adh\'erence dans $M$ de son lieu de non nullit\'e. On note $C^{\an}_c(M,V)$ (resp. $C^{\infty}_c(M,V)$) le sous-espace de $C^{\an}(M,V)$ (resp. $C^{\infty}(M,V)$) des fonctions \`a support compact.

\begin{lem}\label{supportcompact}
Supposons que $M$ admette un recouvrement disjoint par un nombre d\'enombrable d'ouverts compacts et que $V$ soit un espace de type compact, alors $C^{\an}_c(M,V)$ est naturellement muni d'une topologie localement convexe qui en fait un espace de type compact. De plus l'injection naturelle $C^{\an}_c(M,V)\hookrightarrow C^{\an}(M,V)$ est continue.
\end{lem}
\begin{proof}
On a par d\'efinition $C^{\an}_c(M,V)=\ilim{U}C^{\an}(U,V)$ o\`u la limite inductive est prise sur les ouverts compacts $U$ de $M$ ordonn\'es par l'inclusion, et on munit $C^{\an}_c(M,V)$ de la topologie limite inductive. Il s'agit de montrer que cela en fait un espace de type compact. Soit $(U_i)_{i\in \Z_{\geq 0}}$ un recouvrement d\'enombrable disjoint de $M$ par des ouverts compacts, alors $W_n\=U_1\amalg U_2\amalg \cdots \amalg U_n$ est une suite croissante d'ouverts compacts de $M$ dont on v\'erifie facilement qu'elle est cofinale pour l'inclusion parmi les ouverts compacts de $M$, de sorte que l'on a un isomorphisme topologique $\ilim{n}C^{\an}(W_n,V)\buildrel\sim\over\rightarrow \ilim{U}C^{\an}(U,V)$. Mais $\ilim{n}C^{\an}(W_n,V)\cong \oplus_{i\in \geq 0}C^{\an}(U_i,V)$ avec chaque $C^{\an}(U_i,V)$ de type compact, et son dual topologique est $\prod_{i\in \geq 0}C^{\an}(U_i,V)^\vee$ (\cite[Prop.~9.10]{Sch}), qui est un espace de Fr\'echet (car produit d\'enombrable d'espaces de Fr\'echet \cite[\S~II.4.3]{Bo2}) nucl\'eaire (\cite[Prop.~19.7(i)]{Sch}), donc le dual fort d'un espace de type compact (\cite[Th.~1.3]{ST1}) qui est n\'ecessairement $\oplus_{i\in \geq 0}C^{\an}(U_i,V)$ (\cite[Prop.~9.11]{Sch} et \cite[Th.~1.1]{ST1}). La derni\`ere assertion r\'esulte de la continuit\'e de l'injection $\oplus_{i\in \geq 0}C^{\an}(U_i,V)\hookrightarrow \prod_{i\in \geq 0}C^{\an}(U_i,V)$ (\cite[Lem.~5.2(i)]{Sch}).
\end{proof}

Soit $M$ une vari\'et\'e localement $\Qp$-analytique paracompacte de dimension finie et $C\subseteq M$ un sous-ensemble arbitraire. Rappelons que l'on dispose du sous-$E$-espace vectoriel $D(M,E)_C$ de $D(M,E)$ form\'e des distributions \`a support dans $C$ (\cite[Def.~2.1]{Ko1}). En fait, dans notre cas $C$ sera toujours un sous-ensemble ferm\'e de $M$. Lorsque $C$ est ferm\'e, la preuve de \cite[Lem.~1.2.5]{Ko1} s'\'etend {\it verbatim} (il n'y est en fait pas n\'ecessaire de travailler avec un {\it groupe} analytique $p$-adique) et montre que $D(M,E)_C$ est un sous-espace ferm\'e de $D(M,E)$ et que, muni de la topologie induite, il s'identifie au dual fort de $C^{\an}(M,E)/C^{\an}(M,E)_{M\backslash C}$ o\`u $C^{\an}(M,E)_{M\backslash C}$ est le sous-$E$-espace vectoriel ferm\'e de $C^{\an}(M,E)$ (avec topologie induite) des fonctions dont le support est contenu dans l'ouvert $M\backslash C$. Si $C$ est de plus compact, alors la preuve de \cite[Lem.~1.2.5]{Ko1} (\'etendue) montre aussi que $D(M,E)_C$ s'identifie au dual fort de~:
\begin{equation}\label{germe}
C^\omega_C(M,E)\=\ilim{U}C^{\an}(U,E)
\end{equation}
o\`u, dans la limite inductive topologique de droite (qui est alors un espace de type compact) $U$ parcourt les ouverts (compacts) de $M$ contenant $C$ et les applications de transition sont les restrictions naturelles.

Si $M_1$ et $M_2$ sont deux vari\'et\'es localement $\Qp$-analytiques (paracompactes de dimension finie), rappelons que l'application naturelle $C^{\an}(M_1,E)\otimes_E C^{\an}(M_2,E)\rightarrow C^{\an}(M_1\times M_2,E)$, $f_1\otimes f_2\longmapsto ((m_1,m_2)\mapsto f_1(m_1)f_2(m_2))$ induit un isomorphisme topologique (\cite[Prop.~A.3]{ST3})~:
\begin{equation}\label{produittop}
D(M_1\times M_2,E)\buildrel \sim\over\longrightarrow D(M_1,E)\widehat\otimes_{E,\iota}D(M_2,E).
\end{equation}

\begin{lem}\label{supproduit}
Soit $M_1$ et $M_2$ deux vari\'et\'es localement $\Qp$-analytiques (paracompactes de dimension finie) et $C_i\subseteq M_i$ un sous-ensemble ferm\'e de $M_i$ pour $i\in \{1,2\}$. Alors l'isomorphisme (\ref{produittop}) induit un isomorphisme topologique~:
$$D(M_1\times M_2,E)_{C_1\times C_2}\buildrel \sim \over\longrightarrow D(M_1,E)_{C_1}\widehat\otimes_{E,\iota}D(M_2,E)_{C_2}.$$
\end{lem}
\begin{proof}
On commence par se ramener au cas compact. Si $(M_{1,i})_{i\in I}$ est un recouvrement disjoint de $M_1$ par des ouverts compacts, on a un isomorphisme topologique $C^{\an}(M_1,E)\simeq \prod_{i\in I}C^{\an}(M_{1,i},E)$ o\`u chaque $C^{\an}(M_{1,i},E)$ est un espace de type compact. Si $f=(f_i)_{i\in I}\in C^{\an}(M_1,E)$ avec $f_i\in C^{\an}(M_{1,i},E)$, le support de $f$ est l'union (disjointe) des supports des $f_i$. En effet, le compl\'ementaire dans $M_1$ de l'union disjointe de ces supports est ouvert (car union des compl\'ementaires dans chaque $M_{1,i}$ qui est une union d'ouverts de $M_1$), donc cette union disjointe est ferm\'ee. Notant $C_{1,i}\=C_1\cap M_{1,i}$ on en d\'eduit un isomorphisme topologique $C^{\an}(M_1,E)_{M_1\backslash C_1}\simeq \prod_{i\in I}C^{\an}(M_{1,i},E)_{M_{1,i}\backslash C_{1,i}}$ puis un deuxi\`eme isomorphisme~:
$$C^{\an}(M_1,E)/C^{\an}(M_1,E)_{M_1\backslash C_1}\simeq \prod_{i\in I}\big(C^{\an}(M_{1,i},E)/C^{\an}(M_{1,i},E)_{M_{1,i}\backslash C_{1,i}}\big).$$
Avec \cite[Prop.~9.11]{Sch} et la discussion pr\'ec\'edant ce lemme, on en d\'eduit~:
\begin{equation}\label{iso1}
D(M_1,E)_{C_1}\simeq \bigoplus_{i\in I}D(M_{1,i},E)_{C_{1,i}}.
\end{equation}
Soit $(M_{2,j})_{j\in J}$ un recouvrement disjoint de $M_2$ par des ouverts compacts, on a un isomorphisme analogue \`a (\ref{iso1}) avec $M_2$, $C_2$, $(M_{2,j})_{j\in J}$ et $(C_{2,j}\=C_2\cap M_{2,j})_{j\in J}$. On en d\'eduit~:
\begin{eqnarray}\label{iso2}
D(M_1,E)_{C_1}\widehat\otimes_{E,\iota}D(M_2,E)_{C_2}\!&\!\!\simeq \!\!&\!\big(\!\bigoplus_{i\in I}D(M_{1,i},E)_{C_{1,i}}\big)\widehat\otimes_{E,\iota} \big(\!\bigoplus_{j\in J}D(M_{2,j},E)_{C_{2,j}}\big)\\
\nonumber \!&\!\!\simeq \!\!&\! \bigoplus_{(i,j)\in I\times J}D(M_{1,i},E)_{C_{1,i}}\widehat\otimes_{E,\iota} D(M_{2,j},E)_{C_{2,j}}
\end{eqnarray}
o\`u le deuxi\`eme isomorphisme r\'esulte de l'argument \`a la fin de la preuve de \cite[Prop.~A.3]{ST3}. Par ailleurs, le m\^eme argument que pour d\'emontrer (\ref{iso1}) mais avec $M_1\times M_2$ au lieu de $M_1$ et $C_1\times C_2$ au lieu de $C_1$ donne un isomorphisme topologique~:
\begin{eqnarray}\label{iso3}
D(M_1\times M_2,E)_{C_1\times C_2}&\simeq& \bigoplus_{(i,j)\in I\times J}D(M_{1,i}\times M_{2,j},E)_{C_{1,i}\times C_{2,j}}.
\end{eqnarray}
Comparant (\ref{iso2}) et (\ref{iso3}), on voit que l'on est ramen\'e au cas o\`u $M_1$, $M_2$, $C_1$ et $C_2$ sont compacts.

En utilisant la compacit\'e de $C_1\times C_2$ dans la vari\'et\'e paracompacte $M_1\times M_2$, on v\'erifie que les ouverts de la forme $U_1\times U_2$ pour $U_1$ ouvert compact de $M_1$ contenant $C_1$ et $U_2$ ouvert compact de $M_2$ contenant $C_2$ sont cofinaux dans les ouverts compacts de $M_1\times M_2$ contenant $C_1\times C_2$. Par la discussion pr\'ec\'edant ce lemme et par la version duale de \cite[Prop.~20.13]{Sch}, il suffit alors de montrer que l'on a un isomorphisme d'espaces de type compact~:
$$\ilim{U_1\times U_2}C^{\an}(U_1\times U_2,E)\simeq \big(\ilim{U_1}C^{\an}(U_1,E)\big)\widehat\otimes_E \big(\ilim{U_2}C^{\an}(U_2,E)\big)$$
o\`u $U_1$, resp. $U_2$ parcourt les ouverts compacts de $M_1$, resp. $M_2$ contenant $C_1$, resp. $C_2$. On a un isomorphisme topologique par \cite[Lem.~A.1~\&~Prop.~A.2]{ST3} ~:
\begin{equation}\label{iso4}
\ilim{U_1\times U_2}C^{\an}(U_1\times U_2,E)\simeq \ilim{U_1\times U_2}\big(C^{\an}(U_1,E)\widehat\otimes_E C^{\an}(U_2,E)\big),
\end{equation}
il suffit donc de montrer que l'on a un isomorphisme topologique~:
$$\ilim{U_1\times U_2}\big(C^{\an}(U_1,E)\widehat\otimes_E C^{\an}(U_2,E)\big)\simeq \big(\ilim{U_1}C^{\an}(U_1,E)\big)\widehat\otimes_E \big(\ilim{U_2}C^{\an}(U_2,E)\big).$$
Mais cela d\'ecoule de \cite[Prop.~1.2(2)]{Ko2} et de la preuve de \cite[Prop.~1.2(3)]{Ko2}. Plus pr\'ecis\'ement, le point clef dans la preuve de \cite[Prop.~1.2(3)]{Ko2} est que $\ilim{} (V_i\widehat \otimes_{K,\iota} W_i)$ (avec les notations de {\it loc.cit.}) est complet, ce qui est automatique ici puisqu'on l'applique \`a $\ilim{}(C^{\an}(U_1,E)\widehat\otimes_E C^{\an}(U_2,E))$ qui est complet car de type compact par (\ref{iso4}) (car $\ilim{}C^{\an}(U_1\times U_2,E)$ l'est). Cela ach\`eve la preuve du lemme.
\end{proof}

\subsection{Le cas localement alg\'ebrique}\label{localg}

En utilisant des r\'esultats de Bernstein-Zelevinsky (\cite[\S~3.5]{BZ}), on explicite le foncteur $F_\alpha$ appliqu\'e \`a une repr\'esentation localement alg\'ebrique de $G(\Qp)$.

Si $\delta:\Qp^\times\rightarrow E^\times$ est un caract\`ere continu et $r\in \Q_{>p-1}$, on note $\R^+(\delta)=\R^+ e$ (resp. $\Rr(\delta)=\Rr e$) avec $\varphi(e)= \delta(p)e$ et $\gamma(e)=\delta(\varepsilon(\gamma))e$ (cf. \cite[Not.~6.2.2]{KPX}), et $\R(\delta)$ l'extension des scalaires \`a $\R$. Si $\lambda\in X(T)$, on note~:
\begin{equation}\label{chilambda}
\chi_\lambda:\Gal(E_\infty/E)\longrightarrow E^\times, g\longmapsto \lambda(t_g)
\end{equation}
o\`u $t_g\in T(\Qp)$ est comme dans la preuve du (i) du Lemme \ref{m+1} (en fait $\chi_\lambda$ est une puissance enti\`ere du caract\`ere cyclotomique restreint \`a $\Gal(E_\infty/E)$). L'objectif de ce paragraphe est de montrer le th\'eor\`eme suivant.

\begin{thm}\label{caslisse}
Soit $\pi=L(-\lambda)\otimes_E\pi^\infty$ o\`u $\lambda\in X(T)$ est dominant par rapport \`a $B^-$ et $\pi^\infty$ est une repr\'esentation lisse de longueur finie de $G(\Qp)$ sur $E$, et soit $d_{\pi^\infty}\=\dim_{E_\infty}(\pi^\infty\otimes_EE_\infty)(\eta^{-1})_{N(\Qp)}$. Alors on a un isomorphisme dans $F(\varphi,\Gamma)_\infty$~:
\begin{equation*}
F_\alpha(\pi)\cong E_\infty(\chi_{-\lambda})\otimes_{E}\Hom_{(\varphi,\Gamma)}\Big(\big(\R(\lambda\circ \lambda_{\alpha^{\!\vee}})/(t^{1-\langle \lambda,\alpha^\vee\rangle})\big)^{\oplus d_{\pi^\infty}},-\Big).
\end{equation*}
\end{thm}

Avant de d\'emontrer le Th\'eor\`eme \ref{caslisse}, on a besoin de plusieurs pr\'eliminaires.

On note $P_2$ le sous-groupe de $G(\Qp)$ engendr\'e par $\lambda_{\alpha^\vee}(\Qp^\times)$ et $N_\alpha(\Qp)$. Il est isomorphe au sous-groupe $\smat{* & *\\ 0 & 1}$ de ${\rm GL}_2(\Qp)$ (la notation $P_2$ est celle de \cite{BZ}). Pour $m\geq 0$ on note $P_{2,m}$ le sous-groupe ouvert compact de $P_2$ engendr\'e par $\lambda_{\alpha^\vee}(\Zp^\times)$ et $N_{\alpha,m}\=N_\alpha(\Qp)\cap N_m$, que l'on peut identifier \`a $\smat{\Zp^\times & \frac{1}{p^m}\Zp\\ 0 & 1}$ dans $P_2$. On note $\widehat {\overline E}$ le compl\'et\'e $p$-adique d'une cl\^oture alg\'ebrique $\overline E$ de $E$ contenant $E_\infty$ (le corps $\widehat {\overline E}$ n'est pas sph\'eriquement complet, mais cela ne sera pas un probl\`eme dans ce paragraphe). On pose $W^\infty\=\{f:\Qp^\times\rightarrow \widehat {\overline E},\ f\ {\rm \grave a\ support\ compact}\}$ muni de l'action lisse de $P_2$ d\'efinie par $(\smat{x&0\\0&1}f)(z)=f(zx)$ et $\big(\smat{1 & y\\ 0 & 1}f\big)(z)=\eta(yz)f(z)$ pour $(x,y)\in \Qp^\times\times \Qp$ et $z\in \Qp^\times$. Appelons $E$-structure d'une repr\'esentation lisse $\tau$ d'un groupe topologique sur $\widehat {\overline E}$ tout sous-$E$-espace vectoriel $\tau_E$ de $\tau$ stable par l'action du groupe tel que $\tau_E\otimes_E\widehat {\overline E}\buildrel\sim\over\rightarrow \tau$. Si $\pi$ est une repr\'esentation lisse d'un groupe topologique $H$ sur un corps $K$ et $H'\subseteq H$ est un sous-ensemble, on note $\pi(H')\subseteq \pi$ le sous-$K$-espace vectoriel engendr\'e par $h'v-v$ pour $h'\in H'$ et $v\in \pi$.

\begin{lem}\label{structure}
La repr\'esentation $W^\infty$ est irr\'eductible et admet une unique $E$-structure $W_E^\infty$ \`a multiplication pr\`es par un scalaire dans $\widehat {\overline E}^\times$.
\end{lem}
\begin{proof}
Toute repr\'esentation lisse irr\'eductible g\'en\'erique $\tau_E$ de ${\rm GL}_2(\Qp)$ sur $E$ est telle que~:
$$\ker\big(\tau_E\twoheadrightarrow (\tau_E)_{\smat{1&\Qp\\0&1}}\big)\vert_{P_2}\otimes_E\widehat {\overline E}\simeq W^\infty$$
par \cite[\S~3.5]{BZ}, de sorte que $\ker(\tau_E\twoheadrightarrow (\tau_E)_{\smat{1&\Qp\\0&1}})$ est une $E$-structure de $W^\infty$. On v\'erifie facilement par ailleurs que $W^\infty(N_{\alpha,1})^{P_{2,0}}=\widehat {\overline E}{\mathds 1}_{\Zp^\times}$. Si $\tau_E$ est une $E$-structure sur $W^\infty$, par irr\'eductibilit\'e $\tau_E$ est engendr\'ee dans $W^\infty$ (sur $E$ et sous l'action de $P_2$) par l'un quelconque de ses vecteurs non nuls, par exemple un vecteur de base du $E$-espace de dimension un $\tau_E(N_{\alpha,1})^{P_{2,0}}$. Comme $\tau_E(N_{\alpha,1})^{P_{2,0}}\otimes_E\widehat {\overline E}\buildrel\sim\over\rightarrow W^\infty(N_{\alpha,1})^{P_{2,0}}$, tous ces vecteurs de base pour toutes les $E$-structures $\tau_E$ sont des multiples les uns des autres par des scalaires dans $\widehat {\overline E}^\times$, d'o\`u le r\'esultat. 
\end{proof}

Notons $P_2^+$ le sous-mono\"\i de de $P_2$ engendr\'e par $\lambda_{\alpha^\vee}(p)$ et $P_{2,0}$, ou de mani\`ere \'equivalente par $\lambda_{\alpha^\vee}(\Zp\backslash \{0\})$ et $N_{\alpha,0}$, que l'on peut identifier \`a $\smat{\Zp\backslash\{0\} & \Zp\\ 0 & 1}$ dans $P_2$. Soit $V_E^\infty\=W_E^\infty(N_{\alpha,1})\subseteq W_E^\infty$ et $V^\infty\=V_E^\infty\otimes_E\widehat {\overline E}$, en \'ecrivant $W^\infty=\oplus_{m\in \Z}C^\infty(p^m\Zp^\times,\widehat {\overline E})$, on v\'erifie que $V^\infty=\oplus_{m\in \Z_{\geq 0}}C^\infty(\frac{1}{p^m}\Zp^\times,\widehat {\overline E})$. De plus $V_E^\infty$ est une sous-$P_2^+$-repr\'esentation de $W_E^\infty$ telle que $\dim_E{V_E^\infty}^{P_{2,0}}=1$ avec ${V_E^\infty}^{P_{2,0}}\otimes_E\widehat {\overline E}={V^\infty}^{P_{2,0}}=\widehat {\overline E}{\mathds 1}_{\Zp^\times}\subseteq W^\infty$ (cf. la preuve du Lemme \ref{structure}). Enfin $V_E^\infty$ est engendr\'e sous l'action (it\'er\'ee) de $\lambda_{\alpha^\vee}(p)=\smat{p&0\\0&1}$ par ${V_E^\infty}^{N_{\alpha,0}}$ car c'est le cas de $V^\infty$ (puisque ${V^\infty}^{N_{\alpha,0}}=C^\infty(\Zp^\times,\widehat {\overline E})$) et car ${V_E^\infty}^{N_{\alpha,0}}\otimes_E\widehat {\overline E}\simeq {V^\infty}^{N_{\alpha,0}}$ (en effet, si $\widetilde V_E^\infty$ d\'esigne le sous-espace de $V_E^\infty$ engendr\'ee par ${V_E^\infty}^{N_{\alpha,0}}$ sous $\lambda_{\alpha^\vee}(p)$, l'inclusion $\widetilde V_E^\infty\subseteq V_E^\infty$ devient un isomorphisme apr\`es extension des scalaires \`a $\widehat {\overline E}$, donc est d\'ej\`a un isomorphisme (sinon, consid\'erer son conoyau)). On note $U_E^\infty$ la sous-$P_2^+$-repr\'esentation de $V_E^\infty$ engendr\'ee par ${V_E^\infty}^{P_{2,0}}$ et $U^\infty\=U_E^\infty\otimes_E\widehat {\overline E}$.

\begin{lem}\label{lambdanilp}
Soit $N_E^\infty$ une repr\'esentation de $P_2^+$ sur un $E$-espace vectoriel de dimension d\'enombrable telle que l'action de $P_{2,0}$ est lisse et celle de $\lambda_{\alpha^\vee}(p)$ nilpotente. Alors la repr\'esentation $N_E^\infty$ est localement finie, i.e. on a $N_E^\infty=\ilim{n\in \Z_{\geq 0}}N_n^\infty$ pour des sous-$P_2^+$-repr\'esentations $N_n^\infty$ croissantes de dimension finie sur $E$.
\end{lem}
\begin{proof}
Il suffit de montrer que pour tout $v \in N_E^\infty$ (non nul) le $E$-espace vectoriel $\langle P_2^+ v\rangle$ engendr\'e par $v$ sous $P_2^+$ est de dimension finie. Quitte \`a agrandir $E$, on se ram\`ene facilement au cas o\`u $\smat{a & 0 \\ 0 & 1} v=\chi(a) v$ pour tout $a\in \Zp^\times$ et un caract\`ere lisse $\chi:\Zp^\times\rightarrow E$. Soit $M \in \Z_{>0}$ tel que $\smat{p^M & 0 \\ 0 & 1}v=0$ et $\smat{1 & p^M \Zp \\ 0 & 1}v=v$, on a alors~:
\begin{equation}\label{calculmatrice}
\langle P_2^+ v\rangle = \sum_{m=0}^{M-1} \sum_{i=0}^{p^{m+M}-1} E\smat{p^m & i \\ 0 & 1} v
\end{equation}
qui est en particulier de dimension finie. En effet, on a $\smat{p^N & b\\ 0 & 1}=\smat{1 & b \\ 0 & 1}\smat{p^N & 0 \\ 0 & 1}$ ($b\in \Zp$), d'o\`u $\smat{p^N & b \\ 0 & 1}v=0$ si $N \geq M$. Si $0\leq N < M$ et $b=i+p^{M+N} x$ avec $0\leq i\leq p^{M+N}-1$ et $x\in \Zp$, on a $\smat{p^N & b \\ 0 & 1}=\smat{p^N & i \\ 0 & 1}\smat{1 & p^{M} x \\ 0 & 1}$, d'o\`u $\smat{p^N & b \\ 0 & 1}v=\smat{p^N & i \\ 0 & 1}v$. On en d\'eduit (\ref{calculmatrice}).
\end{proof}

En utilisant l'action explicite de $P_2^+$ sur $V^\infty$, il est facile de v\'erifier que $\lambda_{\alpha^\vee}(p)$ est nilpotent sur la $P_2^+$-repr\'esentation $V^\infty/U^\infty$, donc la $P_2^+$-repr\'esentation $V_E^\infty/U_E^\infty$ v\'erifie les hypoth\`eses du Lemme \ref{lambdanilp}. Par ailleurs, en utilisant le fait que $\lambda_{\alpha^\vee}(p)$ est injectif sur $V^\infty$ et nilpotent sur $V^\infty/U^\infty$, on en d\'eduit que $V^\infty$ est ind\'ecomposable (r\'eductible) comme $P_2^+$-repr\'esentation (consid\'erer une d\'ecomposition $P_2^+$-\'equivariante $V^\infty=S_1\oplus S_2$ et remarquer que le vecteur ${\mathds 1}_{\Zp^\times}\in {V^\infty}^{P_{2,0}}$ est forc\'ement soit dans $S_1$ soit dans $S_2$ puisque $\dim_{\widehat {\overline E}}{V^\infty}^{P_{2,0}}=1$).

Soit $Y_E^\infty$ une repr\'esentation lisse de $P_2^+$ sur un $E$-espace vectoriel de dimension d\'enombrable, l'action diagonale de $P_2^+$ sur $Y_E\=L(-\lambda)_{P_\alpha}\otimes_EY_E^\infty$ (pour $\lambda$ comme dans le Th\'eor\`eme \ref{caslisse}, $P_\alpha$ comme au \S~\ref{prel} et en remarquant que $P_2\subseteq L_{P_\alpha}(\Qp)$) avec la topologie localement convexe la plus fine permet de munir le dual alg\'ebrique $Y_E^\vee\simeq L(-\lambda)_{P_\alpha}^\vee\otimes_E{Y_E^\infty}^\vee$ d'une structure de $(\psi,\Gamma)$-module sur $\R^+$ exactement comme au d\'ebut du \S~\ref{def}. On d\'efinit alors les foncteurs $F_{\alpha,m}(Y_E)$ pour $m\in \Z_{\geq 0}$ de la cat\'egorie ab\'elienne des $(\varphi,\Gamma)$-modules g\'en\'eralis\'es $T$ sur $\R$ \`a valeurs dans les $E_m$-espaces vectoriels comme en (\ref{foncteurm}), c'est-\`a-dire avec les notations de {\it loc.cit.}~:
\begin{eqnarray*}
F_{\alpha,m}(Y_E)(T) &\=&\lim_{\substack{\longrightarrow \\ (r,f_r,T_r)\in I(T)}}\!\!\Hom_{\psi,\Gamma}\big((Y_E)^\vee\otimes_EE_m,T_r\otimes_EE_m\big).
\end{eqnarray*}

\begin{lem}\label{Falphanul}
Soit $N_E^\infty$ comme dans le Lemme \ref{lambdanilp} et $N_E\=L(-\lambda)_{P_\alpha}\otimes_EN_E^\infty$. Alors on a $F_{\alpha,m}(N_E)=0$ pour $m\in \Z_{\geq 0}$. 
\end{lem}
\begin{proof}
Soit $N_n^\infty$ comme dans le Lemme \ref{lambdanilp} et $N_n\=L(-\lambda)_{P_\alpha}\otimes_EN_n^\infty$, on a $N_E^\vee\simeq \plim{n\in \Z_{\geq 0}}N_n^\vee$. De plus $(M_n\=\ker(N_E^\vee\twoheadrightarrow N_{n}^\vee))_{n\in \Z_{\geq 0}}$ est un syst\`eme de voisinages ouverts de $0$ dans $N_E^\vee$ stables par $(\psi,\Gamma)$. Pour montrer que $F_{\alpha,m}(N_E)=0$, il faut montrer que tout morphisme continu $E$-lin\'eaire $f:N_E^\vee \rightarrow T_r\otimes_EE_m$ commutant \`a $\psi$ et $\Gamma$ devient nul dans $T_{r'}\otimes_EE_m$ pour $r'\gg r$. 

Le m\^eme argument que dans la preuve du Lemme \ref{begin} (cf. l'argument pour la surjectivit\'e de (\ref{topologie})) donne que $f\vert_{M_n}=0$ pour $n\gg 0$, i.e. $f$ se factorise par le quotient $N_n^\vee$ de $N_E^\vee$. Comme $N_n^\vee$ est le dual d'une repr\'esentation localement alg\'ebrique de dimension finie de $\smat{1&\Zp\\0&1}$, on en d\'eduit facilement qu'il existe $M\gg 0$ tel que $((1+X)^{p^M}-1)^Mf=f\circ ((1+X)^{p^M}-1)^M=0$. En prenant $r'\gg r$ tel que $(1+X)^{p^M}-1$ est inversible dans ${\mathcal R}^{r'}_{E}$ (i.e. tel que les z\'eros du polyn\^ome $(1+X)^{p^M}-1$ ne sont plus dans la couronne $p^{-1/r'}\leq \norm <1$), on voit que le morphisme $f$ devient nul dans $T_{r'}\otimes_EE_m$.
\end{proof}

On note $U_E\=L(-\lambda)_{P_\alpha}\otimes_EU_E^\infty\subseteq V_E\=L(-\lambda)_{P_\alpha}\otimes_EV_E^\infty\subseteq W_E\=L(-\lambda)_{P_\alpha}\otimes_EW_E^\infty$ que l'on munit de la topologie localement convexe la plus fine.

\begin{lem}\label{backtore}
(i) On a $F_{\alpha,m}(U_E)\buildrel\sim\over\longrightarrow F_{\alpha,m}(V_E)\buildrel\sim\over\longrightarrow F_{\alpha,m}(W_E)$ pour $m\in \Z_{\geq 0}$.\\
(ii) Pour tout $(\varphi,\Gamma)$-module g\'en\'eralis\'e $T$ sur $\R$ on a pour $m\in \Z_{\geq 0}$~:
$$F_{\alpha,m}(W_E)(T)=E_m \otimes_E\Hom_{(\varphi,\Gamma)}\!\big(\R(\lambda\circ \lambda_{\alpha^{\!\vee}})/(t^{1-\langle \lambda,\alpha^\vee\rangle}),T\big).$$
\end{lem}
\begin{proof}
(i) On a des suites exactes de foncteurs $0\rightarrow F_{\alpha,m}(V_E)\rightarrow F_{\alpha,m}(W_E)\rightarrow F_{\alpha,m}(W_E/V_E)$ et $0\rightarrow F_{\alpha,m}(U_E)\rightarrow F_{\alpha,m}(V_E)\rightarrow F_{\alpha,m}(V_E/U_E)$. Il suffit donc de montrer $F_{\alpha,m}(W_E/V_E)=0$ et $F_{\alpha,m}(V_E/U_E)=0$. On a que $\smat{1&1\\0&1}-\Id$ agit par $0$ sur $W^\infty_E/V^\infty_E=(W^\infty_E)_{N_{\alpha,1}}$ et on v\'erifie facilement que $(\smat{1&1\\0&1}-\Id)^{1-\langle \lambda,\alpha^\vee\rangle}$ agit par $0$ sur la repr\'esentation alg\'ebrique $L(-\lambda)_{P_\alpha}$. Cela implique que $X^{1-\langle \lambda,\alpha^\vee\rangle}$ agit par $0$ sur $(W_E/V_E)^\vee$, et comme $X$ est inversible sur $T_r$ on en d\'eduit $F_{\alpha,m}(W_E/V_E)=0$. Enfin on a $F_{\alpha,m}(V_E/U_E)=0$ par le Lemme \ref{Falphanul}.

\noindent
(ii) Soit $\St_2=(\Ind_{B^-(\Qp)}^{{\rm GL}_2(\Qp)} 1)^{\infty}/1$ la repr\'esentation de Steinberg lisse de ${\rm GL}_2(\Qp)$ sur $E$ o\`u $B^-$ est le sous-groupe des matrices triangulaires inf\'erieures, on a un isomorphisme de $P_2$-repr\'esentation $\St_2\vert_{P_2}\simeq C^\infty_c(\Qp,E)$ o\`u l'espace de droite est muni de l'action de $P_2$ donn\'ee par ($f\in C^\infty_c(\Qp,E)$, $z,y\in \Qp$, $x\in \Qp^\times$)~:
\begin{equation}\label{actionP2+}
\big(\smat{1&y\\0&1}f\big)(z)=f(z+y) \ \ \ {\rm et}\ \ \ \big(\smat{x&0\\0&1}f\big)(z)=f\big(\frac{z}{x}\big).
\end{equation}
Par le d\'ebut de la preuve du Lemme \ref{structure} (utilisant \cite[\S~3.5]{BZ}), on a une suite exacte de repr\'esentations lisses de $P_2$ sur $E$~:
\begin{equation}\label{steinberg}
0\longrightarrow W_E^\infty \longrightarrow C^\infty_c(\Qp,E) \longrightarrow (\St_2)_{\smat{1&\Qp\\0&1}}\longrightarrow 0
\end{equation}
et par l'argument au d\'ebut de la preuve du (i) (avec $L(-\lambda)_{P_\alpha}\otimes_E(\St_2)_{\smat{1&\Qp\\0&1}}$ au lieu de $W_E/V_E$), on en d\'eduit $F_{\alpha,m}(W_E)\buildrel\sim\over\rightarrow F_{\alpha,m}(L(-\lambda)_{P_\alpha}\otimes_EC^\infty_c(\Qp,E))$.

Le sous-espace $C^\infty(\Zp,E)$ de $C^\infty_c(\Qp,E)$ est stable par $P_2^+$, on peut donc d\'efinir $F_{\alpha,m}(L(-\lambda)_{P_\alpha}\otimes_EC^\infty(\Zp,E))$ et on a une suite exacte courte~:
\begin{multline*}
0\longrightarrow F_{\alpha,m}(L(-\lambda)_{P_\alpha}\otimes_EC^\infty(\Zp,E))\longrightarrow F_{\alpha,m}(L(-\lambda)_{P_\alpha}\otimes_EC^\infty_c(\Qp,E))\\
\longrightarrow F_{\alpha,m}(L(-\lambda)_{P_\alpha}\otimes_E \pi_0^\infty)
\end{multline*}
o\`u $\pi_0^\infty\=C^\infty_c(\Qp,E)/C^\infty(\Zp,E)\simeq \oplus_{m'\geq 1}C^\infty(\frac{1}{p^{m'}}\Zp^\times,E)$. Montrons que tout morphisme $E$-lin\'eaire continu $f:L(-\lambda)_{P_\alpha}^\vee\otimes_E(\pi_0^\infty)^\vee\rightarrow T_r\otimes_EE_m$ commutant \`a $\psi$ et $\Gamma$ devient nul dans $T_{r'}\otimes_EE_m$ pour $r'\gg r$. On a $L(-\lambda)_{P_\alpha}^\vee\otimes_E(\pi_0^\infty)^\vee\simeq \prod_{n\geq 1}M_{n}$ o\`u $M_{n}\=L(-\lambda)_{P_\alpha}^\vee\otimes_EC^\infty(\frac{1}{p^{n}}\Zp^\times,E)^\vee$ et le m\^eme argument que dans la preuve du Lemme \ref{begin} (cf. la preuve du Lemme \ref{Falphanul}) donne que $f$ est nul sur tous les facteurs $M_{n}$ sauf un nombre fini, i.e. se factorise par un quotient $\prod_{n'\geq n\geq 1}M_{n}$ pour $n'\gg0$. Comme l'op\'erateur $\psi$ sur $L(-\lambda)_{P_\alpha}^\vee\otimes_E(\pi_0^\infty)^\vee$ envoie $M_{n}$ dans $M_{n+1}$, il existe $N\gg 0$ tel que $\psi^N=0$ sur ce quotient. Par l'analogue du diagramme commutatif (\ref{psi5}) o\`u $\varphi$, $\psi$ sont remplac\'es par respectivement $\varphi^{N}$, $\psi^{N}$ et $pr$ par $p^Nr$ (cf. aussi (\ref{psi4}) et (\ref{psi6}) ci-dessous), on obtient que la compos\'ee~:
$$L(-\lambda)_{P_\alpha}^\vee\otimes_E(\pi_0^\infty)^\vee\buildrel f\over \longrightarrow T_r\otimes_EE_m\longrightarrow T_{p^{N}r}\otimes_EE_m$$
est nulle, d'o\`u on d\'eduit $F_{\alpha,m}(L(-\lambda)_{P_\alpha}\otimes_E \pi_0^\infty)=0$ et donc $F_{\alpha,m}(L(-\lambda)_{P_\alpha}\otimes_EC^\infty(\Zp,E))\buildrel \sim\over\rightarrow F_{\alpha,m}(L(-\lambda)_{P_\alpha}\otimes_EC^\infty_c(\Qp,E))$.

Maintenant, $L(-\lambda)_{P_\alpha}\otimes_EC^{\infty}(\Zp,E)$ s'identifie aux fonctions localement alg\'ebriques sur $\Zp$ de degr\'e (local) au plus $-\langle \lambda,\alpha^\vee\rangle\in \Z_{\geq 0}$ avec action de $P_2^+$ induite par (\ref{actionP2+}) et tordue par le caract\`ere $\smat{x&0\\0&1}\mapsto (-\lambda)(\lambda_{\alpha^\vee}(x))$ de $\smat{\Zp\backslash\{0\}&0\\0&1}$. Par ailleurs, dans l'isomorphisme $D(\Zp,E)=C^{\an}(\Zp,E)^\vee\simeq \R^+$ du d\'ebut du \S~\ref{def}, la d\'erivation dans $C^{\an}(\Zp,E)$ induit (en dualisant) la multiplication par $t$ sur $\R^+$. On en d\'eduit que le sous-espace ferm\'e de $C^{\an}(\Zp,E)$ des fonctions localement alg\'ebriques sur $\Zp$ de degr\'e au plus $-\langle \lambda,\alpha^\vee\rangle$, i.e. le noyau de la d\'erivation $(-\langle \lambda,\alpha^\vee\rangle+1)$-i\`eme, a pour dual $\R^+/(t^{-\langle \lambda,\alpha^\vee\rangle+1})$, d'o\`u on d\'eduit un isomorphisme $L(-\lambda)_{P_\alpha}^\vee\otimes_EC^{\infty}(\Zp,E)^\vee\simeq \R^+((-\lambda)\circ \lambda_{\alpha^\vee}^{-1})/(t^{1-\langle \lambda,\alpha^\vee\rangle})\simeq \R^+(\lambda\circ\lambda_{\alpha^\vee})/(t^{1-\langle \lambda,\alpha^\vee\rangle})$ qui commute \`a $\psi$ et $\Gamma$. Avec les (iv) et (v) de la Remarque \ref{vrac} on obtient $F_{\alpha,m}(L(-\lambda)_{P_\alpha}\otimes_EC^\infty(\Zp,E))=E_m \otimes_E\Hom_{(\varphi,\Gamma)}(\R(\lambda\circ \lambda_{\alpha^\vee})/(t^{1-\langle \lambda,\alpha^\vee\rangle}),-)$, d'o\`u le r\'esultat par les deux paragraphes d'avant.
\end{proof}

On d\'emontre maintenant le Th\'eor\`eme \ref{caslisse}.

\noindent
{\bf \'Etape 1}\\
On fixe $m\in \Z_{\geq 0}$. On a des isomorphismes $L(-\lambda)[\mnn^\alpha]\simeq L(-\lambda)_{P_\alpha}$ (cf. \S~\ref{notabene}) et $\pi[\mnn^\alpha](\eta^{-1})_{N_m^\alpha}\simeq L(-\lambda)_{P_\alpha}\otimes_E \pi^\infty(\eta^{-1})_{N_m^\alpha}$. On note $\pi^\infty_\alpha$ la sous-$P_2$-repr\'esentation de $\pi^\infty\vert_{P_2}$ noyau de la surjection naturelle $\pi^\infty\twoheadrightarrow (\pi^\infty)_{N_\alpha(\Qp)}$ et on rappelle que $(\pi^\infty\otimes_EE_m)(\eta^{-1})(N_m^\alpha)$ s'identifie \`a la sous-$P_2^+$-repr\'esentation de $\pi^\infty\otimes_EE_m$ engendr\'ee par les $yv-\eta(y)v$ pour $y\in N_m^\alpha$, $v\in \pi^\infty\otimes_EE_m$. On pose~:
$$\pi^\infty_{\alpha,m}\=(\pi^\infty_\alpha\otimes_EE_m)/\big((\pi^\infty_\alpha\otimes_EE_m)\cap (\pi^\infty\otimes_EE_m)(\eta^{-1})(N_m^\alpha)\big).$$
On a donc une suite exacte de $P_2^+$-repr\'esentations~:
$$0\longrightarrow \pi^\infty_{\alpha,m}\longrightarrow (\pi^\infty\otimes_EE_m)(\eta^{-1})_{N_m^\alpha}
\longrightarrow Q_m\longrightarrow 0$$
o\`u $Q_m$ est un quotient stable par $P_2^+$ de $(\pi^\infty)_{N_\alpha(\Qp)}\otimes_EE_m$. La multiplication par $X$ est nulle sur $Q_m^\vee$ (car elle est nulle sur $((\pi^\infty)_{N_\alpha(\Qp)})^\vee\otimes_EE_m$), donc il existe $M\gg 0$ tel que la multiplication par $X^M$ sur $L(-\lambda)_{P_\alpha}^\vee\otimes_EQ_m^\vee$ est nulle. Cela implique que tout morphisme $\Rm^+$-lin\'eaire continu $L(-\lambda)_{P_\alpha}^\vee\otimes_EQ_m^\vee\rightarrow T_r\otimes_EE_m$ commutant \`a $\psi$ et $\Gamma$ (pour $T_r$ un $(\varphi,\Gamma)$-module g\'en\'eralis\'e sur $\Rr$) est nul. On en d\'eduit~:
\begin{equation}\label{falphalisse}
F_\alpha(\pi)(T)=\lim_{m\rightarrow +\infty}\lim_{\substack{\longrightarrow \\ (r,f_r,T_r)\in I(T)}}\!\!\Hom_{\psi,\Gamma}\big(L(-\lambda)_{P_\alpha}^\vee\otimes_E(\pi^\infty_{\alpha,m})^\vee,T_r\otimes_EE_{m}\big).
\end{equation}

\noindent
{\bf \'Etape 2}\\
On montre que l'on a un isomorphisme de $P_2$-repr\'esentations $\pi^\infty_\alpha\simeq \oplus_{\iota\in I}W_E^\infty$ o\`u $I$ est un ensemble d\'enombrable d'indices. La repr\'esentation $\tau\=\pi^\infty\vert_{P_2}\otimes_E\widehat {\overline E}$ de $P_2$ \'etant lisse, on peut lui appliquer les r\'esultats de \cite[\S~3.5]{BZ} pour $n=2$ (les r\'esultats de {\it loc.cit.} sont sur $\C$, mais la topologie de $\C$ n'est pas utilis\'ee et ils restent donc valables sur le corps alg\'ebriquement clos $\widehat {\overline E}$). En particulier on en d\'eduit une suite exacte $P_2$-\'equivariante~:
$$0\longrightarrow \tau(\eta^{-1})_{N_\alpha(\Qp)}\otimes_{\widehat {\overline E}}W^\infty\longrightarrow \tau\longrightarrow \tau_{N_\alpha(\Qp)}\longrightarrow 0$$
o\`u l'action de $P_2$ est triviale sur $\tau(\eta^{-1})_{N_\alpha(\Qp)}$. Comme $\pi^\infty$ est de dimension d\'enombrable sur $E$ (car $\pi^\infty$ est admissible), c'est {\it a fortiori} le cas de $\tau(\eta^{-1})_{N_\alpha(\Qp)}$ sur $\widehat {\overline E}$, et on a donc un isomorphisme $P_2$-\'equivariant $\pi^\infty_\alpha\otimes_E\widehat {\overline E}\simeq \oplus_{\iota\in I}W^\infty$ avec $I$ d\'enombrable. Pour $\iota\in I$ choisissons une base $v_\iota$ du $\widehat {\overline E}$-espace vectoriel de dimension un $W^\infty(N_{\alpha,1})^{P_{2,0}}$ (cf. la preuve du Lemme \ref{structure}, on prend ici la ``copie $\iota$'' de $W^\infty$), on a~:
\begin{equation}\label{changebase}
(\pi^\infty_\alpha\otimes_E\widehat {\overline E})(N_{\alpha,1})^{P_{2,0}}=\pi^\infty_\alpha(N_{\alpha,1})^{P_{2,0}}\otimes_E\widehat {\overline E}\simeq \oplus_{\iota\in I}\widehat {\overline E}v_\iota.
\end{equation}
Quitte \`a changer la base $v_\iota$ et la d\'ecomposition $\pi^\infty_\alpha\otimes_E\widehat {\overline E}\simeq \oplus_{\iota\in I}W^\infty$ en cons\'equence (rappelons que la $P_2$-repr\'esentation irr\'eductible $W^\infty$ est engendr\'ee par $W^\infty(N_{\alpha,1})^{P_{2,0}}$), (\ref{changebase}) montre que l'on peut supposer $v_\iota\in \pi^\infty_\alpha(N_{\alpha,1})^{P_{2,0}}$, i.e. $\pi^\infty_\alpha(N_{\alpha,1})^{P_{2,0}}=\oplus_{\iota\in I}Ev_\iota$. Mais la $P_2$-repr\'esentation $\pi^\infty_\alpha$ est engendr\'ee par $\pi^\infty_\alpha(N_{\alpha,1})^{P_{2,0}}$~: en effet, si $\widetilde\pi^\infty_\alpha$ d\'esigne la sous-$P_2$-repr\'esentation de $\pi^\infty_\alpha$ engendr\'ee par $\pi^\infty_\alpha(N_{\alpha,1})^{P_{2,0}}$ sur $E$, l'inclusion $\widetilde\pi^\infty_\alpha\subseteq \pi^\infty_\alpha$ devient un isomorphisme apr\`es extension des scalaires \`a $\widehat {\overline E}$ (utiliser (\ref{changebase})), donc est d\'ej\`a un isomorphisme. Donc $\pi^\infty_\alpha$ est engendr\'e sur $E$ sous l'action de $P_2$ par le sous-$E$-espace vectoriel $\oplus_{\iota\in I}Ev_\iota$ de $\oplus_{\iota\in I}W^\infty$, ce qui montre par le Lemme \ref{structure} (et sa preuve) que l'on a un isomorphisme de $P_2$-repr\'esentations $\pi^\infty_\alpha\simeq \oplus_{\iota\in I}W_E^\infty$.

\noindent
{\bf \'Etape 3}\\
On montre que l'on a un isomorphisme de $P_2$-repr\'esentations $\displaystyle \lim_{m\rightarrow +\infty}\pi^\infty_{\alpha,m}\simeq (W_E^\infty\otimes_EE_\infty)^{\oplus d_{\pi^\infty}}$ o\`u l'action de $P_2$ sur $\displaystyle \lim_{m\rightarrow +\infty}\pi^\infty_{\alpha,m}$ est donn\'ee par l'action sur le terme de droite dans l'isomorphisme (que l'on v\'erifie facilement)~:
\begin{equation}\label{limdpi}
\lim_{m\rightarrow +\infty}\pi^\infty_{\alpha,m}\simeq (\pi^\infty_\alpha\otimes_EE_\infty)/\big((\pi^\infty_\alpha\otimes_EE_\infty)\cap (\pi^\infty\otimes_EE_\infty)(\eta^{-1})(N^\alpha(\Qp))\big).
\end{equation}
Appliquant les r\'esultats de \cite[\S~3.5]{BZ} \`a la repr\'esentation lisse $(\pi^\infty\otimes_E\widehat {\overline E})(\eta^{-1})_{N^\alpha(\Qp)}$ de $P_2$ sur $\widehat {\overline E}$ et notant pour abr\'eger $Q\=\big((\pi^\infty\otimes_EE_\infty)(\eta^{-1})_{N^\alpha(\Qp)}\big)_{N_\alpha(\Qp)} $, on d\'eduit \`a partir des d\'efinitions un diagramme commutatif de repr\'esentations lisses de $P_2$ sur $E_\infty$ o\`u toutes les fl\`eches sont surjectives~:
\begin{equation}\small\label{troplong}
\begin{gathered}
\xymatrix{ 0\ar[r] &\pi_\alpha^\infty\otimes_EE_\infty \ar[r] \ar@{^{}->>}[d]&\pi^\infty\otimes_EE_\infty \ar[r] \ar@{^{}->>}[d]&(\pi^\infty\otimes_EE_\infty)_{N_\alpha(\Qp)} \ar@{^{}->>}[d] \ar[r]&0\\
0\ar[r]&\sigma \ar[r]&(\pi^\infty\otimes_EE_\infty)(\eta^{-1})_{N^\alpha(\Qp)} \ar[r] &Q\ar[r]&0}
\end{gathered}
\end{equation}
et o\`u $\sigma\otimes_{E_\infty}\widehat {\overline E}\simeq (W^\infty)^{d_{\pi^\infty}}$. Par des consid\'erations analogues \`a celles de l'\'Etape $2$ en rempla\c cant $E$ par $E_\infty$ et $\pi^\infty_\alpha$ par $\sigma$ on obtient un isomorphisme $\sigma\simeq (W_E^\infty\otimes_EE_\infty)^{d_{\pi^\infty}}$. On voit donc que l'image de $\pi_\alpha^\infty\otimes_EE_\infty$ dans $(\pi^\infty\otimes_EE_\infty)(\eta^{-1})_{N^\alpha(\Qp)}$, i.e. le terme de droite en (\ref{limdpi}), s'identifie \`a $(W^\infty)^{d_{\pi^\infty}}$. Avec (\ref{limdpi}), cela donne le r\'esultat voulu.

\noindent
{\bf \'Etape 4}\\
Pour $m\geq 1$ le groupe $N_{\alpha,1}$ agit sur $\pi^\infty_{\alpha,m}$, de sorte que l'on peut consid\'erer la sous-$P_2^+$-repr\'esentation $\pi^\infty_{\alpha,m}(N_{\alpha,1})\subseteq \pi^\infty_{\alpha,m}$. C'est aussi l'image de $\pi^\infty_{\alpha}(N_{\alpha,1})\otimes_EE_m$ dans le quotient $\pi^\infty_{\alpha,m}$ de $\pi^\infty_{\alpha}\otimes_EE_m$. On d\'eduit des r\'esultats de l'\'Etape $3$ que l'on a un isomorphisme $P_2^+$-\'equivariant $\displaystyle \lim_{m\rightarrow +\infty}\pi^\infty_{\alpha,m}(N_{\alpha,1})\simeq (V_E^\infty\otimes_EE_\infty)^{\oplus d_{\pi^\infty}}$ o\`u les fl\`eches de transition \`a gauche induisent des surjections $\pi^\infty_{\alpha,m}(N_{\alpha,1})\otimes_{E_m}E_{m+1}\twoheadrightarrow \pi^\infty_{\alpha,m+1}(N_{\alpha,1})$. En utilisant le r\'esultat de l'\'Etape $3$ (avec l'irr\'eductibilit\'e de la $P_2$-repr\'esentation $W_E^\infty$), on voit qu'il existe donc $m_0\gg 1$ et un plongement $P_2^+$-\'equivariant $j_{m_0}:(V_E^\infty\otimes_EE_{m_0})^{d_{\pi^\infty}}\hookrightarrow \pi^\infty_{\alpha,m_0}(N_{\alpha,1})$ tels que la compos\'ee~:
$$(V_E^\infty\otimes_EE_{m_0})^{d_{\pi^\infty}}\otimes_{E_{m_0}}E_\infty\buildrel{j_{m_0}}\over\hookrightarrow \pi^\infty_{\alpha,m_0}(N_{\alpha,1})\otimes_{E_{m_0}}E_\infty\twoheadrightarrow (V_E^\infty\otimes_EE_\infty)^{\oplus d_{\pi^\infty}}$$
est un isomorphisme.

Pour $m\geq m_0$ soit $\pi^\infty_{\alpha,m}(N_{\alpha,1})_{\rm nilp}\subseteq \pi^\infty_{\alpha,m}(N_{\alpha,1})$ le sous-$E_m$-espace vectoriel o\`u $\lambda_{\alpha^\vee}(p)$ est nilpotent. Il est stable par $P_2^+$, et donne donc une repr\'esentation de $P_2^+$ comme dans le Lemme \ref{lambdanilp} (avec $E_m$ au lieu de $E$). Par l'\'Etape $2$ on a un isomorphisme $P_2^+$-\'equivariant $\pi^\infty_\alpha(N_{\alpha,1})\simeq \oplus_{\iota\in I}V_E^\infty$. Comme $\pi^\infty_{\alpha,m}(N_{\alpha,1})$ est engendr\'e sous l'action de $\lambda_{\alpha^\vee}(p)$ par les invariants $\pi^\infty_{\alpha,m}(N_{\alpha,1})^{N_{\alpha,0}}$ (car c'est le cas de $V_E^\infty\otimes_EE_m$), le sous-$E_m$-espace vectoriel de $\pi^\infty_{\alpha,m}(N_{\alpha,1})$ engendr\'e sous l'action de $\lambda_{\alpha^\vee}(p)$ par un suppl\'ementaire stable par $\lambda_{\alpha^\vee}(\Zp^\times)$ de $\pi^\infty_{\alpha,m}(N_{\alpha,1})_{\rm nilp}^{N_{\alpha,0}}$ dans $\pi^\infty_{\alpha,m}(N_{\alpha,1})^{N_{\alpha,0}}$ fournit un suppl\'ementaire stable par $P_2^+$ de $\pi^\infty_{\alpha,m}(N_{\alpha,1})_{\rm nilp}$ dans $\pi^\infty_{\alpha,m}(N_{\alpha,1})$ dont on v\'erifie facilement qu'il est une somme directe (d\'enombrable) de repr\'esentations $V_E^\infty\otimes_EE_m$ (utiliser que l'image d'un facteur direct $V_E^\infty\otimes_EE_m$ de $\pi^\infty_\alpha(N_{\alpha,1})\otimes_EE_m$ dans $\pi^\infty_{\alpha,m}(N_{\alpha,1})/\pi^\infty_{\alpha,m}(N_{\alpha,1})_{\rm nilp}$ est soit nulle soit $V_E^\infty\otimes_EE_m$). Autrement dit on a $\pi^\infty_{\alpha,m}(N_{\alpha,1})\simeq (\pi^\infty_{\alpha,m})_{\rm nilp}\oplus (\oplus_J (V_E^\infty\otimes_EE_m))$ avec $J$ d\'enombrable. Par ailleurs, la compos\'ee~:
\begin{multline*}
(V^\infty_E\otimes_EE_{m_0})^{d_{\pi^\infty}}\otimes_{E_{m_0}}E_m\buildrel{j_{m_0}\otimes\Id}\over\hookrightarrow \pi^\infty_{\alpha,m_0}(N_{\alpha,1})\otimes_{E_{m_0}}E_m\twoheadrightarrow \pi^\infty_{\alpha,m}(N_{\alpha,1})\\
\twoheadrightarrow \pi^\infty_{\alpha,m}(N_{\alpha,1})/\pi^\infty_{\alpha,m}(N_{\alpha,1})_{\rm nilp}\simeq \oplus_J (V_E^\infty\otimes_EE_m)
\end{multline*}
est injective (car $\pi^\infty_{\alpha,m}(N_{\alpha,1})_{\rm nilp}$ s'envoie sur $0$ dans $(V_E^\infty\otimes_EE_\infty)^{\oplus d_{\pi^\infty}}$) et on v\'erifie facilement en utilisant le fait que $V_E^\infty$ est ind\'ecomposable sous l'action de $P_2^+$ que son image est n\'ecessairement un facteur direct $(V^\infty_E\otimes_EE_{m})^{d_{\pi^\infty}}$ de $\oplus_J (V_E^\infty\otimes_EE_m)$ (quitte \`a modifier cette d\'ecomposition). On peut donc finalement \'ecrire~:
\begin{equation}\label{decompopenible}
\pi^\infty_{\alpha,m}(N_{\alpha,1})\simeq \pi^\infty_{\alpha,m}(N_{\alpha,1})_{\rm nilp}\oplus (V^\infty_E\otimes_EE_{m})^{d_{\pi^\infty}}\oplus (\oplus_{J'} (V_E^\infty\otimes_EE_m))
\end{equation}
avec $J'$ d\'enombrable et la compos\'ee $(V_E^\infty\otimes_EE_{m})^{d_{\pi^\infty}}\otimes_{E_{m}}E_\infty \hookrightarrow \pi^\infty_{\alpha,m}(N_{\alpha,1})\otimes_{E_{m}}E_\infty\twoheadrightarrow (V_E^\infty\otimes_EE_\infty)^{\oplus d_{\pi^\infty}}$ bijective.

\noindent
{\bf \'Etape 5}\\
Puisque $\smat{1&1\\0&1}-\Id$ annule $\pi^\infty_{\alpha,m}/\pi^\infty_{\alpha,m}(N_{\alpha,1})$, on en d\'eduit pour tout $m\in \Z_{\geq 1}$ un isomorphisme comme dans la preuve du (i) du Lemme \ref{backtore}~:
\begin{multline}\label{bernstein3}
 \lim_{\substack{\longrightarrow \\ (r,f_r,T_r)\in I(T)}}\!\!\Hom_{\psi,\Gamma}\big(L(-\lambda)_{P_\alpha}^\vee\otimes_E\pi^\infty_{\alpha,m}(N_{\alpha,1})^\vee,T_r\otimes_EE_{m}\big)\\
\buildrel\sim\over\longrightarrow \lim_{\substack{\longrightarrow \\ (r,f_r,T_r)\in I(T)}}\!\!\Hom_{\psi,\Gamma}\big(L(-\lambda)_{P_\alpha}^\vee\otimes_E(\pi^\infty_{\alpha,m})^\vee,T_r\otimes_EE_{m}\big).
\end{multline}
Consid\'erons pour $m\geq m_0$ un morphisme $f:L(-\lambda)_{P_\alpha}^\vee\otimes_E\pi^\infty_{\alpha,m}(N_{\alpha,1})^\vee\longrightarrow T_r\otimes_EE_{m}$ continu, $\Rm^+$-lin\'eaire et commutant \`a $\psi$ et $\Gamma$. Une fois fix\'ee une d\'ecomposition comme en (\ref{decompopenible}), le morphisme $f$ induit $f_{\rm nilp}:L(-\lambda)_{P_\alpha}^\vee\otimes_E\pi^\infty_{\alpha,m}(N_{\alpha,1})_{\rm nilp}^\vee\rightarrow T_r\otimes_EE_{m}$ et $f_{\rm libre}:L(-\lambda)_{P_\alpha}^\vee\otimes_E \big((V^\infty_E\otimes_EE_{m})^{d_{\pi^\infty}}\oplus (\oplus_J' (V_E^\infty\otimes_EE_m))\big)^\vee\rightarrow T_r\otimes_EE_{m}$. Par le Lemme \ref{Falphanul} et l'argument pour la surjectivit\'e de (\ref{topologie}) dans la preuve du Lemme \ref{begin}, il existe $r'\gg r$ tel que $f_{\rm nilp}$ devient nul dans $T_{r'}\otimes_EE_{m}$ et que la compos\'ee~:
\begin{equation}\small\label{encorecompo}
L(-\lambda)_{P_\alpha}^\vee\otimes_E \big((V^\infty_E\otimes_EE_{m})^{d_{\pi^\infty}}\oplus (\oplus_{J'} (V_E^\infty\otimes_EE_m))\big)^\vee\buildrel{f_{\rm libre}}\over\longrightarrow T_r\otimes_EE_{m}\longrightarrow T_{r'}\otimes_EE_{m}
\end{equation}
se factorise par un quotient $L(-\lambda)_{P_\alpha}^\vee\otimes_E\big((U^\infty_E\otimes_EE_{m})^{d_{\pi^\infty}}\oplus (\oplus_{J''} (U_E^\infty\otimes_EE_m))\big)^\vee$ pour $J''$ sous-ensemble {\it fini} de $J'$. Mais en utilisant $\displaystyle \lim_{m\rightarrow +\infty}\pi^\infty_{\alpha,m}(N_{\alpha,1})\simeq (V_E^\infty\otimes_EE_\infty)^{\oplus d_{\pi^\infty}}$, la d\'efinition de $U^\infty_E$ et les propri\'et\'es de la d\'ecomposition (\ref{decompopenible}), on voit qu'il existe $m'\gg 0$ tel que dans la compos\'ee~:
$$\big((U^\infty_E\otimes_EE_{m})^{d_{\pi^\infty}}\oplus (\oplus_{J''} (U_E^\infty\otimes_EE_m))\big)\otimes_{E_m}E_{m'}\hookrightarrow \pi^\infty_{\alpha,m}(N_{\alpha,1})\otimes_{E_m}E_{m'}\twoheadrightarrow \pi^\infty_{\alpha,m'}(N_{\alpha,1})$$
l'image de $(\oplus_{J''} (U_E^\infty\otimes_EE_m))\otimes_{E_m}E_{m'}$ est en fait {\it contenue} dans celle de $((U^\infty_E\otimes_EE_{m})^{d_{\pi^\infty}})\otimes_{E_m}E_{m'}$. On d\'eduit de tout cela que la compos\'ee~:
\begin{multline*}
L(-\lambda)_{P_\alpha}^\vee\otimes_E\pi^\infty_{\alpha,m'}(N_{\alpha,1})^\vee\hookrightarrow \big(L(-\lambda)_{P_\alpha}^\vee\otimes_E\pi^\infty_{\alpha,m}(N_{\alpha,1})^\vee\big)\otimes_{E_m}E_{m'}\\
\buildrel f\otimes\Id\over\longrightarrow T_r\otimes_EE_{m'}\longrightarrow T_{r'}\otimes_EE_{m'}
\end{multline*}
se factorise par le quotient $L(-\lambda)_{P_\alpha}^\vee\otimes_E\big((U^\infty_E\otimes_EE_{m'})^{d_{\pi^\infty}}\big)^\vee$ de $L(-\lambda)_{P_\alpha}^\vee\otimes_E\pi^\infty_{\alpha,m'}(N_{\alpha,1})^\vee$ dual de la compos\'ee (injective)~:
\begin{multline*}
(U^\infty_E\otimes_EE_{m_0})^{d_{\pi^\infty}}\otimes_{E_{m_0}}E_{m'}\subseteq (V^\infty_E\otimes_EE_{m_0})^{d_{\pi^\infty}}\otimes_{E_{m_0}}E_{m'}\\
\buildrel{j_{m_0}\otimes\Id}\over\hookrightarrow \pi^\infty_{\alpha,m_0}(N_{\alpha,1})\otimes_{E_{m_0}}E_{m'}\twoheadrightarrow \pi^\infty_{\alpha,m'}(N_{\alpha,1}),
\end{multline*}
et donc devient juste un morphisme continu ${\mathcal R}_{E_{m'}}^+$-lin\'eaire $L(-\lambda)_{P_\alpha}^\vee\otimes_E\big((U^\infty_E\otimes_EE_{m'})^{d_{\pi^\infty}}\big)^\vee\longrightarrow T_{r'}\otimes_EE_{m'}$ commutant \`a $\psi$ et $\Gamma$. Par le Lemme \ref{backtore} avec (\ref{bernstein3}) et (\ref{falphalisse}), on en d\'eduit que l'on a (sans se pr\'eoccuper de l'action de $\Gal(E_\infty/E)$ pour le moment)~:
$$F_\alpha(\pi)(T)\simeq E_\infty\otimes_{E}\Hom_{(\varphi,\Gamma)}\!\Big(\big(\R(\lambda\circ \lambda_{\alpha^\vee})/(t^{1-\langle \lambda,\alpha^\vee\rangle})\big)^{\oplus d_{\pi^\infty}},T\Big).$$

\noindent
{\bf \'Etape 6}\\
Il reste \`a examiner l'action de $\Gal(E_\infty/E)$. Pour $g\in \Gal(E_\infty/E)$ et $t_g$ comme dans la preuve du (i) du Lemme \ref{m+1}, on a des isomorphismes $t_g\circ g:\pi^\infty_{\alpha}\otimes_EE_m\buildrel\sim\over\rightarrow \pi^\infty_{\alpha}\otimes_EE_m$ et $t_g\circ g:\pi^\infty_{\alpha,m}\buildrel\sim\over\rightarrow \pi^\infty_{\alpha,m}$ qui sont $E_m$-semi-lin\'eaires, localement constants et commutent \`a l'action de $P_2$. En particulier, dans l'isomorphisme $\pi^\infty_\alpha\otimes_EE_m\simeq \oplus_{\iota\in I}(W_E^\infty\otimes_EE_m)$ induit par l'\'Etape $2$, on voit que pour tout sous-ensemble {\it fini} $J$ de $I$ la restriction $(t_g\circ g)\vert_{\oplus_J(W_E^\infty\otimes_EE_m)}$ est l'identit\'e pour tout $g$ dans un sous-groupe ouvert suffisamment petit de $\Gal(E_\infty/E)$ (d\'ependant de $J$). Comme l'action de $t_g$ sur $L(-\lambda)_{P_\alpha}$ est la multiplication par $(-\lambda)(t_g)$, il suit alors des consid\'erations dans l'\'Etape $5$ (en particulier le fait que $f$ est non nulle seulement sur un nombre {\it fini} de facteur) que, pour tout $(\varphi,\Gamma$)-module g\'en\'eralis\'e $T$ sur $\R$, l'action de $\Gal(E_\infty/E)$ tordue par $\chi_{-\lambda}^{-1}$ sur le $E_\infty$-espace vectoriel de dimension finie $F_\alpha(\pi)(T)$ en (\ref{falphalisse}) est localement constante (semi-lin\'eaire). Le r\'esultat final d\'ecoule alors facilement du th\'eor\`eme de Hilbert $90$.

\subsection{Le cas de la cellule ouverte d'une induite parabolique}\label{cellule}

On exa\-mine ce que donne le foncteur $F_\alpha$ appliqu\'e \`a la cellule ouverte de certaines induites paraboliques localement analytiques.

On conserve les notations pr\'ec\'edentes. On fixe $P\subseteq G$ un sous-groupe parabolique contenant $B$ et $\pi_P$ une repr\'esentation localement analytique de ${P}^-(\Qp)$ sur un $E$-espace vectoriel de type compact. On a un isomorphisme de vari\'et\'es localement $\Qp$-analytiques~:
\begin{equation}\label{varflag}
G(\Qp)\simeq P^-(\Qp)\times (P^-(\Qp)\backslash G(\Qp))
\end{equation}
compatible \`a la multiplication \`a gauche par $P^-(\Qp)$ (la multiplication sur le terme de droite \'etant triviale sur $P^-(\Qp)\backslash G(\Qp)$) et tel que la projection induite $G(\Qp)\twoheadrightarrow P^-(\Qp)\backslash G(\Qp)$ soit la projection canonique (voir par exemple la preuve de \cite[Lem.~8.1]{Ko2}). L'isomorphisme (\ref{varflag}) induit des isomorphismes topologiques d'espaces de type compact (cf. \cite[(56)~\&~Rem.~5.4]{Ko2})~:
\begin{multline}\label{isoflag}
\big(\Ind_{P^-(\Qp)}^{G(\Qp)} \pi_P\big)^{\an}\simeq C^{\an}(P^-(\Qp)\backslash G(\Qp),\pi_P)\simeq C^{\an}(P^-(\Qp)\backslash G(\Qp),E)\widehat\otimes_{E,\pi}\pi_P\\
\simeq C^{\an}(P^-(\Qp)\backslash G(\Qp),E)\widehat\otimes_{E,\iota}\pi_P
\end{multline}
o\`u l'on a utilis\'e que $C^{\an}(P^-(\Qp)\backslash G(\Qp),E)$ est un espace de type compact (car $P^-(\Qp)\backslash G(\Qp)$ est compact puisque $P^-\backslash G$ est une vari\'et\'e projective).

Soit $U\subseteq G(\Qp)$ un ouvert stable par multiplication \`a gauche par $P^-(\Qp)$, que l'on peut donc \'ecrire par (\ref{varflag}) $U\simeq P^-(\Qp)\times \overline U$ o\`u $\overline U$ est un ouvert de $P^-(\Qp)\backslash G(\Qp)$ (en fait $\overline U\cong P^-(\Qp)\backslash U$). On note $(\cInd_{P^-(\Qp)}^{U}\pi_P)^{\an}$ le sous-espace vectoriel de $(\Ind_{P^-(\Qp)}^{G(\Qp)}\!\pi_P)^{\an}$ des fonctions dont le support est contenu dans $U$. Par (\ref{isoflag}), il s'identifie au sous-espace $C^{\an}(P^-(\Qp)\backslash G(\Qp),\pi_P)_{\overline U}$ de $C^{\an}(P^-(\Qp)\backslash G(\Qp),\pi_P)$ des fonctions \`a support contenu dans l'ouvert $\overline U$, qui est un sous-espace ferm\'e de $C^{\an}(P^-(\Qp)\backslash G(\Qp),\pi_P)$ par \cite[\S~2.3.1]{FdL}, donc encore un espace de type compact. Comme $P^-(\Qp)\backslash G(\Qp)$ est une vari\'et\'e compacte (et qu'un ferm\'e dans un compact est compact), toute fonction dans $C^{\an}(P^-(\Qp)\backslash G(\Qp),\pi_P)_{\overline U}$ est \`a support compact contenu dans $\overline U$, i.e. on a $C^{\an}(P^-(\Qp)\backslash G(\Qp),\pi_P)_{\overline U}=C^{\an}_c(\overline U,\pi_P)$ avec les notations du \S~\ref{topo}. Lorsque $U$ est de plus stable par multiplication \`a droite par $P(\Qp)$, notons que $(\cInd_{P^-(\Qp)}^{U}\pi_P)^{\an}$ est un sous-espace (ferm\'e) de $(\Ind_{P^-(\Qp)}^{G(\Qp)}\pi_P)^{\an}$ stable sous l'action de $P(\Qp)$. 

\begin{rem}\label{rematopo}
{\rm On peut montrer que tout ouvert d'une vari\'et\'e localement $\Qp$-analytique compacte (par exemple $\overline U$ dans $P^-(\Qp)\backslash G(\Qp)$) admet un recouvrement disjoint par un nombre d\'enombrable d'ouverts compacts et que la topologie de type compact sur $C^{\an}_c(\overline U,\pi_P)$ provenant de $C^{\an}(P^-(\Qp)\backslash G(\Qp),\pi_P)_{\overline U}$ s'identifie \`a celle du Lemme \ref{supportcompact} (utiliser qu'une bijection continue entre espaces de type compact est un isomorphisme topologique).}
\end{rem}

Le lemme suivant sera utile.

\begin{lem}\label{attention!}
On a un isomorphisme d'espaces de type compact~:
\begin{equation*}
\big(\cInd_{P^-(\Qp)}^{U}\pi_P\big)^{\an}\cong C^{\an}(P^-(\Qp)\backslash G(\Qp),E)_{\overline U}\widehat\otimes_E\pi_P.
\end{equation*}
\end{lem}
\begin{proof}
Par la discussion juste avant \cite[Def.~2.3.3]{FdL} on a une suite exacte (stricte) d'espaces de type compact (en remarquant que le compl\'ementaire de $\overline U$ dans $P^-(\Qp)\backslash G(\Qp)$ est compact)~:
\begin{equation*}
0\rightarrow C^{\an}(P^-(\Qp)\backslash G(\Qp),\pi_P)_{\overline U}\rightarrow C^{\an}(P^-(\Qp)\backslash G(\Qp),\pi_P)\rightarrow \ilim{V}C^{\an}(V,\pi_P)\rightarrow 0
\end{equation*}
o\`u, dans la limite inductive topologique de droite, $V$ parcourt les ouverts (compacts) de $P^-(\Qp)\backslash G(\Qp)$ contenant le compl\'ementaire de $\overline U$ et les applications de transition sont les restrictions naturelles. On a de m\^eme une suite exacte d'espaces de type compact~:
\begin{equation}\small\label{se1}
0\rightarrow C^{\an}(P^-(\Qp)\backslash G(\Qp),E)_{\overline U}\rightarrow C^{\an}(P^-(\Qp)\backslash G(\Qp),E)\rightarrow \ilim{V}C^{\an}(V,E)\rightarrow 0.
\end{equation}
Notons $A\=C^{\an}(P^-(\Qp)\backslash G(\Qp),E)_{\overline U}\otimes_E\pi_P$, $B\=C^{\an}(P^-(\Qp)\backslash G(\Qp),E)\otimes_E\pi_P$ et $C\= (\ilim{V}C^{\an}(V,E))\otimes_E\pi_P$, on en d\'eduit un diagramme commutatif d'espaces localement convexes o\`u les deux suites sont exactes (comme $E$-espaces vectoriels)~:
\begin{equation}\tiny\label{se1bis}
\begin{gathered}
\xymatrix{0\ar[r] &A \ar[r]\ar[d] &B\ar[r]\ar[d]&C\ar[r]\ar[d]&0\\
0\ar[r] &C^{\an}(P^-(\Qp)\backslash G(\Qp),\pi_P)_{\overline U} \ar[r] & C^{\an}(P^-(\Qp)\backslash G(\Qp),\pi_P) \ar[r] &\ilim{V}C^{\an}(V,\pi_P) \ar[r]&0.}
\end{gathered}
\end{equation}
Puisque les espaces du bas sont tous complets, les fl\`eches verticales se factorisent par les compl\'et\'es respectifs $\widehat A$, $\widehat B$, $\widehat C$ (\cite[Prop.~7.5]{Sch}). Mais par (\ref{se1}), \cite[Lem.~4.13]{Sc1} et \cite[Prop.~20.13]{Sch} on a encore une suite exacte (stricte) d'espaces de type compacts $0\rightarrow \widehat A\rightarrow \widehat B\rightarrow \widehat C\rightarrow 0$, d'o\`u on d\'eduit un diagramme commutatif d'espaces de type compact o\`u les deux suites sont exactes (et n\'ecessairement strictes)~:
\begin{equation}\tiny\label{se2}
\begin{gathered}
\xymatrix{0\ar[r] &\widehat A \ar[r]\ar[d] &\widehat B\ar[r]\ar[d]&\widehat C\ar[r]\ar[d]&0\\
0\ar[r] &C^{\an}(P^-(\Qp)\backslash G(\Qp),\pi_P)_{\overline U} \ar[r] & C^{\an}(P^-(\Qp)\backslash G(\Qp),\pi_P) \ar[r] &\ilim{V}C^{\an}(V,\pi_P) \ar[r]&0.}
\end{gathered}
\end{equation}
Par l'argument \`a la fin de la preuve du Lemme \ref{supproduit} avec \cite[Prop.A.2]{ST3}, on a un isomorphisme d'espaces de type compact~:
$$\widehat C=\big(\ilim{V}C^{\an}(V,E)\big)\widehat\otimes_E\pi_P\cong \ilim{V}\big(C^{\an}(V,E)\widehat\otimes_E\pi_P\big)\cong \ilim{V}C^{\an}(V,\pi_P)$$
de sorte qu'avec (\ref{isoflag}) les deux fl\`eches verticales de droite dans (\ref{se2}) sont des isomorphismes. Celle de gauche l'est donc \'egalement, d'o\`u le r\'esultat avec la discussion pr\'ec\'edant ce lemme.
\end{proof}

On suppose maintenant $U=P^-(\Qp)P(\Qp)\simeq P^-(\Qp)\times N_{P}(\Qp)$ o\`u l'on voit $\overline U\cong N_{P}(\Qp)$ comme ouvert de $P^-(\Qp)\backslash G(\Qp)$. Par la discussion pr\'ec\'edant le Lemme \ref{attention!}, on d\'eduit que la restriction \`a l'ouvert $N_{P}(\Qp)$ induit un isomorphisme d'espaces de type compact~:
\begin{equation}\label{deIaC}
\big(\cInd_{P^-(\Qp)}^{P^-(\Qp)P(\Qp)}\pi_P\big)^{\an}\buildrel\sim\over\longrightarrow C^{\an}_c\big(N_{P}(\Qp),\pi_P\big).
\end{equation}
Cet isomorphisme est de plus $P(\Qp)$-\'equivariant en d\'efinissant l'action de $N_{P}(\Qp)$ sur $C^{\an}_c(N_{P}(\Qp),\pi_P)$ comme la translation \`a droite et celle de $L_{P}(\Qp)$ comme suit pour $f\in C^{\an}_c(N_{P}(\Qp),\pi_P)$~:
\begin{equation}\label{actionP}
(gf)(n)=g\big(f(g^{-1}ng)\big),\ \ g\in L_{P}(\Qp),\ \ n\in N_{P}(\Qp).
\end{equation}
Pour tout $m\in \Z_{\geq 0}$ le sous-espace $C^{\an}(N_{P}(\Qp)\cap N_m,\pi_P)$ de $C^{\an}_c(N_{P}(\Qp),\pi_P)$ des fonctions \`a support dans $N_{P}(\Qp)\cap N_m$ est de type compact et stable par l'action (\ref{actionP}) de $N_m$ et de $\lambda_{\beta^{\!\vee}}(\Zp\backslash \{0\})$ pour $\beta\in S$ (utiliser $N_m=(N_{P}(\Qp)\cap N_m)\rtimes (N_{L_P}(\Qp)\cap N_m)$ et (\ref{incl})). On peut donc d\'efinir le foncteur $F_{\alpha,m}(C^{\an}(N_{P}(\Qp)\cap N_m,\pi_P))$ dans $F(\varphi,\Gamma)_m$ par les formules (\ref{malpha}), (\ref{foncteurm}) et le (i) du Lemme \ref{m+1}. Comme pour (\ref{mpi}), les injections~:
$$C^{\an}(N_{P}(\Qp)\cap N_m,\pi_P)\hookrightarrow C^{\an}(N_{P}(\Qp)\cap N_{m+1},\pi_P)\hookrightarrow C^{\an}_c(N_{P}(\Qp),\pi_P)$$
induisent un diagramme commutatif de foncteurs pour tout $m\in \Z_{\geq 0}$ commutant \`a l'action de $\Gal(E_\infty/E)$~:
\begin{equation}\label{NmNm+1}
\begin{gathered}
\xymatrix{ F_{\alpha,m}\big(C^{\an}(N_{P}(\Qp)\cap N_m,\pi_P)\big) \ar[d]\ar[r]&F_{\alpha,m+1}\big(C^{\an}(N_{P}(\Qp)\cap N_{m+1},\pi_P)\big) \ar[d]\\
F_{\alpha,m}\big(C^{\an}_c(N_{P}(\Qp),\pi_P)\big)\ar[r] & F_{\alpha,m+1}\big(C^{\an}_c(N_{P}(\Qp),\pi_P)\big).}
\end{gathered}
\end{equation}

\begin{lem}\label{niveaum}
Les diagrammes (\ref{NmNm+1}) induisent un isomorphisme dans $F(\varphi,\Gamma)_\infty$~:
\begin{multline*}
\lim_{m\rightarrow +\infty}F_{\alpha,m}\big(C^{\an}(N_{P}(\Qp)\cap N_m,\pi_P)\big)\buildrel\sim\over\longrightarrow \lim_{m\rightarrow +\infty}F_{\alpha,m}\big(C^{\an}_c(N_{P}(\Qp),\pi_P)\big)\\
=F_{\alpha}\big(C^{\an}_c(N_{P}(\Qp),\pi_P)\big).
\end{multline*}
\end{lem}
\begin{proof}
Comme $(N_{P}(\Qp)\cap N_m)\cup (N_{P}(\Qp)\cap (N_{m'}\backslash N_{m'-1}))_{m'\geq m+1}$ est un recouvrement ouvert disjoint de $N_{P}(\Qp)$, on a un isomorphisme topologique pour tout $m\in \Z_{\geq 0}$~:
\begin{multline}\label{decompo0}
C^{\an}_c(N_{P}(\Qp),\pi_P)\simeq C^{\an}(N_{P}(\Qp)\cap N_m,\pi_P)\\
\bigoplus \Big(\!\bigoplus_{m'\geq m+1} C^{\an}(N_{P}(\Qp)\cap (N_{m'}\backslash N_{m'-1}),\pi_P)\Big).
\end{multline}
On a vu que les actions de $N_{m}$ et $\lambda_{\alpha^{\!\vee}}(\Zp\backslash\{0\})$ pr\'eservent $C^{\an}(N_{P}(\Qp)\cap N_{m},\pi_P)$. Pour $m'\geq m+1$ les actions de $N_m$ et $\lambda_{\beta^{\!\vee}}(\Zp^\times)$ pour $\beta\in S$ pr\'eservent chaque $C^{\an}(N_{P}(\Qp)\cap (N_{m'}\backslash N_{m'-1}),\pi_P)$ et l'action de $\lambda_{\alpha^{\!\vee}}(p)$ envoie $C^{\an}(N_{P}(\Qp)\cap (N_{m'}\backslash N_{m'-1}),\pi_P)$ dans $C^{\an}(N_{P}(\Qp)\cap N_{m'},\pi_P)$ (et pr\'eserve $C^{\an}(N_{P}(\Qp)\cap N_{m'},\pi_P)$). Posons pour $m\in \Z_{\geq 0}$~:
\begin{eqnarray}\label{sommem}
\pi_{m}&\=& C^{\an}_c(N_{P}(\Qp),\pi_P)/C^{\an}(N_{P}(\Qp)\cap N_m,\pi_P)\\
\nonumber&\simeq &\oplus_{m'\geq m+1} C^{\an}(N_{P}(\Qp)\cap (N_{m'}\backslash N_{m'-1}),\pi_P)
\end{eqnarray}
muni de l'action quotient de $N_m$ et $\lambda_{\alpha^{\!\vee}}(\Zp\backslash\{0\})$. Par la preuve du (i) de la Proposition \ref{drex}, on a une suite exacte dans $F(\varphi,\Gamma)_m$~:
$$0\longrightarrow F_{\alpha,m}(C^{\an}_c(N_{P}(\Qp)\cap N_{m},\pi_P))\longrightarrow F_{\alpha,m}(C^{\an}_c(N_{P}(\Qp),\pi_P))\longrightarrow F_{\alpha,m}(\pi_m)$$
et par celle du (iii) du Lemme \ref{m+1} on a un morphisme $F_{\alpha,m}(\pi_m)\rightarrow F_{\alpha,m+1}(\pi_{m+1})$ de foncteurs qui commute (en un sens \'evident) avec (\ref{NmNm+1}). Il suffit donc de montrer $\displaystyle \lim_{m\rightarrow +\infty}F_{\alpha,m}(\pi_m)=0$. 

D\'efinissons les $\Rm^+$-modules $M_\alpha(\pi_m\otimes_EE_m)$ et~:
$$M_{\alpha,m'}\=M_{\alpha}\big(C^{\an}(N_{P}(\Qp)\cap (N_{m'}\backslash N_{m'-1}),\pi_P\otimes_EE_m)\big)$$
pour $m'\geq m+1$ avec $M_\alpha(-)$ comme en (\ref{malpha}). On d\'eduit de (\ref{sommem}) avec (\ref{malpha}) et \cite[Prop.~9.10]{Sch} un isomorphisme de $\Rm^+$-modules~:
\begin{equation}\label{pourciter}
M_\alpha(\pi_m\otimes_EE_m)=\prod_{m'\geq m+1}M_{\alpha,m'}
\end{equation}
et par ce qui pr\'ec\`ede l'op\'erateur $\psi$ sur $M_\alpha(\pi_m\otimes_EE_m)$ (provenant de l'action de $\lambda_{\alpha^{\!\vee}}(p)$ sur $\pi_m$, cf. le d\'ebut du \S~\ref{def}) envoie $M_{\alpha,m'}$ dans $\oplus_{m''\geq m'}M_{\alpha,m''}$ et donc pr\'eserve les sous-espaces $\prod_{m''\geq m'}M_{\alpha,m''}$ pour tout $m'\geq m+1$. De plus le morphisme compos\'e~:
\begin{equation}\label{supportm+1}
M_\alpha(\pi_{m+1}\otimes_EE_{m+1})\longrightarrow M_\alpha(\pi_m\otimes_EE_m)\otimes_{E_m}E_{m+1}\twoheadrightarrow M_{\alpha,m+1}\otimes_{E_m}E_{m+1}
\end{equation}
est nul.

Soit maintenant $r\in \Q_{>p-1}$, $T_r$ un $(\varphi,\Gamma)$-module g\'en\'eralis\'e sur $\R^r$ et~:
$$f:M_\alpha(\pi_m\otimes_EE_m)\longrightarrow T_r\otimes_EE_m$$
un morphisme $\Rm^+$-lin\'eaire continu commutant \`a $\psi$. Montrons que $f$ est nul sur tous les facteurs $M_{\alpha,m'}$ sauf un nombre fini. En utilisant que $\psi$ envoie $M_{\alpha,m'}$ dans $\oplus_{m''\geq m'}M_{\alpha,m''}$ pour $m'\geq m+1$, la preuve est alors la m\^eme (aux notations pr\`es) que la preuve de la surjectivit\'e du morphisme (\ref{topologie}) (cet argument est d\'ej\`a utilis\'e dans la preuve du Lemme \ref{Falphanul} et du (ii) du Lemme \ref{backtore}). Donc le morphisme $f$ se factorise par $\prod_{m''\geq m'\geq m+1}M_{\alpha,m'}$ pour un $m''\gg 0$. Par (\ref{supportm+1}) (it\'er\'e) cela implique que la compos\'ee~:
$$M_\alpha(\pi_{m''}\otimes_EE_{m''})\longrightarrow M_\alpha(\pi_m\otimes_EE_m)\otimes_{E_m}E_{m''}\buildrel f\otimes 1\over\longrightarrow T_r\otimes_EE_{m''}$$
est nulle, ce qui ach\`eve la preuve.
\end{proof}

Jusqu'\`a la fin du \S~\ref{cellule} on suppose $\pi_P\cong L(-\lambda)_P\otimes_E \pi_P^\infty$ o\`u $L(-\lambda)_P$ est une repr\'esentation alg\'ebrique irr\'eductible de $L_P(\Qp)$ sur $E$, $\pi_P^\infty$ est une repr\'esentation lisse de dimension d\'enombrable de $L_P(\Qp)$ sur $E$ et o\`u $\pi_P$ est muni de la topologie localement convexe la plus fine (un espace de type compact par la discussion juste avant \cite[Lem.~2.4]{OS}). 

On note $S_P\subseteq S$ le sous-ensemble des racines simples de $L_P$. Rappelons qu'une repr\'esentation lisse irr\'eductible $\pi_P^\infty$ de $L_P(\Qp)$ sur $E$ est dite g\'en\'erique si $(\pi_P^\infty\otimes_EE_\infty)(\eta^{-1})_{N_{L_{P}}}\ne 0$ (en particulier $\pi_P^\infty$ est toujours g\'en\'erique si $L_P=T$).

\begin{prop}\label{celluleouverte}
Supposons que $\alpha\notin S_P$ et que $\lambda_{\alpha^\vee}(\Qp^\times)$ agisse sur $\pi_P^\infty$ par un caract\`ere $\chi_{\pi_P^\infty,\alpha}$ (c'est par exemple le cas si $\pi_P^\infty$ a un caract\`ere central). Alors le foncteur $F_\alpha(C^{\an}_c(N_{P}(\Qp),\pi_P))$ est isomorphe \`a~:
$$E_\infty(\chi_{-\lambda})\otimes_{E_\infty}(\pi_P^\infty\otimes_EE_\infty)(\eta^{-1})_{N_{L_{P}}\!(\Qp)}\otimes_E \Hom_{(\varphi,\Gamma)}\Big(\R\big((\lambda\circ \lambda_{\alpha^\vee})\chi_{\pi_P^\infty,\alpha}^{-1}\big),-\Big)$$
avec action de $\Gal(E_\infty/E)$ sur le facteur $(\pi_P^\infty\otimes_EE_\infty)(\eta^{-1})_{N_{L_{P}}}$ comme dans la preuve du (i) du Lemme \ref{m+1}. En particulier, si $\pi_P^\infty$ est de plus de longueur finie, $F_\alpha(C^{\an}_c(N_{P}(\Qp),\pi_P))$ est nul si et seulement si $\pi_P^\infty$ n'a pas de constituant g\'en\'erique.
\end{prop}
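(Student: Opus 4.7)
Le plan est d'imiter la stratégie du Théorème \ref{caslisse}, en exploitant l'hypothèse $\alpha \notin S_P$ pour séparer la contribution le long de $N_{L_P}$ (qui donne un quotient de Whittaker classique) de celle le long de $N_\alpha$ (qui donne un $(\varphi, \Gamma)$-module de rang un sur $\R$). Par le Lemme \ref{niveaum}, il suffit de calculer $\displaystyle \lim_{m \to +\infty} F_{\alpha,m}(C^{\an}(N_P(\Qp) \cap N_m, \pi_P))$. Pour $m$ fixé, on dispose grâce à $\alpha \notin S_P$ des décompositions $\mnn^\alpha = \mnn_{L_P} \oplus (\mnn^\alpha \cap \mnn_P)$ et $N_m^\alpha = (N_{L_P}(\Qp) \cap N_m)((N^\alpha \cap N_P)(\Qp) \cap N_m)$, ainsi que d'une décomposition $N_P \cong (N^\alpha \cap N_P) \cdot N_\alpha$ en tant que variétés algébriques (pour un ordre convenable sur les racines positives).

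On analyse $\pi[\mnn^\alpha](\eta^{-1})_{N_m^\alpha}$ en deux temps pour $\pi = C^{\an}(N_P(\Qp) \cap N_m, \pi_P)$. L'action de $\mnn^\alpha \cap \mnn_P$ et $(N^\alpha \cap N_P)(\Qp) \cap N_m$ se fait par translation à droite pure dans $N_P$ (l'action de $\pi_P$ étant triviale sur $N_P$). Le noyau de $\mnn^\alpha \cap \mnn_P$ suivi des coinvariants sous $((N^\alpha \cap N_P)(\Qp) \cap N_m)$ tordus par $\eta^{-1}$ est une opération de type Fourier non commutatif qui, par dualité, concentre le support sur l'élément neutre de $N^\alpha \cap N_P$ avec une pondération par $\eta$ sur les composantes radicielles simples distinctes de $\alpha$. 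Ce qui reste s'identifie alors au dual de $C^{\an}(N_\alpha(\Qp) \cap N_m, \pi_P)$ avec structure de $(\psi, \Gamma)$-module provenant de l'action dilatante de $\lambda_{\alpha^{\!\vee}}(\Zp \setminus \{0\})$ sur $N_\alpha(\Qp) \cong \Qp$.

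Ensuite, $\mnn_{L_P}$ et $(N_{L_P}(\Qp) \cap N_m)$ opèrent sur $\pi_P = L(-\lambda)_P \otimes_E \pi_P^\infty$. Le noyau de $\mnn_{L_P}$ est la droite de plus haut poids de $L(-\lambda)_P$ relativement à $B \cap L_P$ (de poids $-\lambda$), tensorisée par $\pi_P^\infty$. Les $(N_{L_P} \cap N_m)$-coinvariants tordus par $\eta^{-1}$, lorsque $m \to +\infty$, produisent le quotient de Whittaker $(\pi_P^\infty \otimes_E E_\infty)(\eta^{-1})_{N_{L_P}(\Qp)}$. L'action de $\lambda_{\alpha^{\!\vee}}(p)$ multiplie la droite de plus haut poids par $(-\lambda)(\lambda_{\alpha^{\!\vee}}(p))$ et agit sur $\pi_P^\infty$ par $\chi_{\pi_P^\infty, \alpha}(p)$ ; après dualisation comme en (\ref{actionsurdual}) et via l'identification $C^{\an}(\Zp, E_m)^\vee \simeq \Rm^+$ rappelée au début du \S \ref{def}, on obtient le twist $\R^+((\lambda \circ \lambda_{\alpha^{\!\vee}}) \chi_{\pi_P^\infty, \alpha}^{-1})$. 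Le calcul de $\Hom_{\psi, \Gamma}$ vers $T_r \otimes_E E_m$ pour $T_r$ un $(\varphi, \Gamma)$-module généralisé sur $\Rr$, converti en $\Hom_{(\varphi, \Gamma)}$ sur $\R$ via la Remarque \ref{vrac} (iv) et (v), fournit alors la formule annoncée.

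L'action de $\Gal(E_\infty/E)$ se traite comme à l'\'Etape 6 de la preuve du Théorème \ref{caslisse} : locale constance via les éléments $t_g$, descente à $E_m$ pour $m \gg 0$, puis théorème 90 de Hilbert pour identifier la semi-linéarité tordue par $\chi_{-\lambda}^{-1}$ avec $E_\infty(\chi_{-\lambda})$. La dernière assertion sur la généricité résulte du fait classique que la non-annulation de $(\pi_P^\infty \otimes_E E_\infty)(\eta^{-1})_{N_{L_P}(\Qp)}$ pour $\pi_P^\infty$ de longueur finie équivaut à l'existence d'un constituant irréductible générique. L'obstacle principal que l'on prévoit est le contrôle précis de toutes les torsions à travers les réductions de type Jacquet dans les directions $N_P$ et $L_P$, ainsi que les arguments de finitude à la limite $m \to +\infty$ (type Lemme \ref{begin} et \'Etape 5 du Théorème \ref{caslisse}) nécessaires pour que les morphismes $\Hom_{\psi, \Gamma}$ au niveau fini donnent à la limite la bonne réponse.
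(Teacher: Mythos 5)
Votre proposition suit essentiellement la même démarche que la preuve du papier : réduction via le Lemme \ref{niveaum}, décomposition $\mnn^\alpha \simeq \mnn_P^\alpha \rtimes \mnn_{L_P}$ et $N_P = N_P^\alpha \rtimes N_\alpha$ (cruciale via $\alpha\notin S_P$), coinvariants de dimension un dans la direction $N_P^\alpha$ tordus par $\eta^{-1}$, quotient de Whittaker dans la direction $L_P$ combiné avec la droite de plus haut poids de $L(-\lambda)_P$, identification $C^{\an}(\Zp,E)^\vee \simeq \R^+$ donnant le twist $\R^+((\lambda\circ\lambda_{\alpha^\vee})\chi_{\pi_P^\infty,\alpha}^{-1})$, puis Hilbert 90 pour l'action de $\Gal(E_\infty/E)$. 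Les étapes techniques que vous signalez comme obstacles en fin de proposition — notamment le passage de $C^{\an}(N_\alpha(\Qp)\cap N_m,E)$ à $C^{\an}(N_\alpha(\Qp)\cap N_0,E)$ via un argument $\psi^N = 0$ et la séparation du facteur dénombrable $V_m$ par le Lemme \ref{begin} — sont exactement les ingrédients que le papier fournit pour clore l'argument ; elles ne changent pas la stratégie mais doivent être effectivement menées à bien.
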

\begin{proof}
Commen\c cons par calculer $F_{\alpha,m}(C^{\an}(N_{P}(\Qp)\cap N_m,\pi_P))$ pour $m\in \Z_{\geq 0}$. On a un isomorphisme topologique comme pour (\ref{isoflag})~:
\begin{eqnarray}\label{sortie}
C^{\an}(N_{P}(\Qp)\cap N_m,\pi_P)&\simeq & C^{\an}(N_{P}(\Qp)\cap N_m,E)\otimes_{E,\iota}\pi_P
\end{eqnarray}
en remarquant que l'espace de droite est de type compact (utiliser \cite[Prop.~1.1.32(i)]{Em} avec le fait que $\pi_P$ est une limite inductive topologique d\'enombrable de $E$-espaces vectoriels de dimension finie puisque $\pi_P^\infty$ est de dimension d\'enombrable), donc d\'ej\`a complet. Notons $N_{P}^\alpha\=N^\alpha\cap N_{P}$ (dans $N$), puisque $\alpha\notin S_P$ on a $N^\alpha=N_{P}^\alpha\rtimes N_{L_{P}}$ et $N_{P}=N_{P}^\alpha\rtimes N_\alpha$. On en d\'eduit un produit semi-direct des $\Qp$-alg\`ebres de Lie respectives $\mnn^\alpha\simeq \mnn_{P}^\alpha\rtimes \mnn_{L_{P}}$ et de groupes compacts $N_{m}^\alpha=(N_{P}^\alpha(\Qp)\cap N_{m}) \rtimes (N_{L_{P}}\!(\Qp)\cap N_{m})$, $N_{P}(\Qp)\cap N_{m}=(N_{P}^\alpha(\Qp)\cap N_{m}) \rtimes (N_\alpha(\Qp)\cap N_{m})$ pour tout $m\in \Z_{\geq 0}$. \'Ecrivant par \cite[Lem.~A.1]{ST3}~:
\begin{equation*}
C^{\an}(N_{P}(\Qp)\cap N_{m},E)\simeq C^{\an}\big(N_{\alpha}(\Qp)\cap N_{m},C^{\an}(N^\alpha_{P}(\Qp)\cap N_{m},E)\big)
\end{equation*}
on obtient (en notant $\otimes_E$ au lieu de $\otimes_{E,\iota}$)~:
\begin{multline*}
(C^{\an}(N_{P}(\Qp)\cap N_{m},E)\otimes_E\pi_P)[\mnn_{P}^\alpha]\\
\simeq C^{\an}\big(N_{\alpha}(\Qp)\cap N_{m},C^{\an}(N^\alpha_{P}(\Qp)\cap N_{m},E)[\mnn_{P}^\alpha]\big)\otimes_E\pi_P\\
\simeq C^{\an}\big(N_{\alpha}(\Qp)\cap N_{m},C^\infty(N^\alpha_{P}(\Qp)\cap N_{m},E)\big)\otimes_E\pi_P.
\end{multline*}
Par \cite[Prop.~A.2]{ST3} et le fait que $C^\infty(N^\alpha_{P}(\Qp)\cap N_{m},E)$ est un espace de type compact de dimension d\'enombrable muni de la topologie localement convexe la plus fine, on en d\'eduit~:
\begin{multline*}
(C^{\an}(N_{P}(\Qp)\cap N_{m},E)\otimes_E\pi_P)[\mnn_{P}^\alpha]\\
\simeq \big(C^{\an}(N_\alpha(\Qp)\cap N_{m},E)\widehat \otimes_{E,\pi}C^\infty(N^\alpha_{P}(\Qp)\cap N_{m},E)\big)\otimes_E \pi_P\\
\simeq C^{\an}(N_\alpha(\Qp)\cap N_{m},E)\otimes_{E}C^\infty(N^\alpha_{P}(\Qp)\cap N_{m},E)\otimes_E \pi_P
\end{multline*}
d'o\`u on obtient avec (\ref{sortie})~:
\begin{multline*}
C^{\an}(N_{P}(\Qp)\cap N_{m},\pi_P)[\mnn^\alpha]\simeq \big(C^{\an}(N_{P}(\Qp)\cap N_{m},\pi_P)[\mnn_{P}^\alpha]\big)[\mnn_{L_{P}}]\\
\simeq C^{\an}(N_\alpha(\Qp)\cap N_{m},E)\otimes_{E}C^\infty(N^\alpha_{P}(\Qp)\cap N_{m},E)\otimes_E (\pi_P[\mnn_{L_{P}}]).
\end{multline*}
Si $v_{-\lambda}$ est un vecteur de plus haut poids de $L(-\lambda)_{P}$ (pour $B\cap L_{P}$), on a $\pi_P[\mnn_{L_{P}}]\simeq Ev_{-\lambda} \otimes_E\pi_P^\infty$. On a un isomorphisme (puisque $N_{P}(\Qp)\cap N_m$ est un groupe compact)~:
$$C^\infty(N^\alpha_{P}(\Qp)\cap N_{m},E_m)(\eta^{-1})^{N^\alpha_{P}(\Qp)\cap N_{m}}\buildrel\sim\over\longrightarrow C^\infty(N^\alpha_{P}(\Qp)\cap N_{m},E_m)(\eta^{-1})_{N^\alpha_{P}(\Qp)\cap N_{m}}$$
qui montre que $C^\infty(N^\alpha_{P}(\Qp)\cap N_{m},E_m)(\eta^{-1})_{N^\alpha_{P}(\Qp)\cap N_{m}}$ est de dimension $1$ sur $E_m$ engendr\'e par la classe de la fonction localement constante $\eta\vert_{N^\alpha_{P}(\Qp)\cap N_{m}}$. De plus, l'automorphisme $t_g\circ g$ pour $g\in \Gal(E_\infty/E)$ (cf. (\ref{actiong}) et (\ref{actiontg})) \'etant $E_m$-semi-lin\'eaire, une application du Th\'eor\`eme 90 de Hilbert montre que $C^\infty(N^\alpha_{P}(\Qp)\cap N_{m},E_m)(\eta^{-1})_{N^\alpha_{P}(\Qp)\cap N_{m}}$ s'identifie \`a $E_m$ avec action usuelle de $\Gal(E_\infty/E)$ dessus. On en d\'eduit~:
\begin{multline}\label{firststep}
C^{\an}(N_{P}(\Qp)\cap N_{m},\pi_P\otimes_EE_m)[\mnn^\alpha](\eta^{-1})_{N^\alpha_{P}(\Qp)\cap N_{m}}\\
\simeq C^{\an}(N_\alpha(\Qp)\cap N_{m},E)\otimes_{E}(Ev_{-\lambda} \otimes_E\pi_P^\infty)\otimes_EE_m
\end{multline}
Via les produits semi-directs ci-dessus, on d\'eduit de (\ref{firststep}) et (\ref{actionP})~:
\begin{multline}\label{Malphaenfin}
C^{\an}(N_{P}(\Qp)\cap N_{m},\pi_P\otimes E_m)[\mnn^\alpha](\eta^{-1})_{N_m^\alpha}\\
\simeq C^{\an}(N_\alpha(\Qp)\cap N_{m},E)\otimes_{E}Ev_{-\lambda} \otimes_E(\pi_P^\infty\otimes_E E_m)(\eta^{-1})_{N_{L_{P}}\!(\Qp)\cap N_{m}}
\end{multline}
o\`u l'action (r\'esiduelle) de $N_\alpha(\Qp)\cap N_m\simeq \frac{1}{p^{m}}\Zp$ (cf. (\ref{zp})) est la translation \`a droite sur $C^{\an}(N_\alpha(\Qp)\cap N_{m},E)$ (et triviale sur les autres facteurs), o\`u l'action de $t=\lambda_{\alpha^{\!\vee}}(x)$ pour $x\in \Zp\backslash\{0\}$ est $f\mapsto (z\in \frac{1}{p^{m}}\Zp\mapsto f(z/x))$ sur $C^{\an}(N_\alpha(\Qp)\cap N_{m},E)\simeq C^{\an}(\frac{1}{p^{m}}\Zp,E)$ (vu dans $C^{\an}_c(N_\alpha(\Qp),E)$), est la multiplication par $(-\lambda)(t)$ sur $Ev_{-\lambda}$ et par $\chi_{\pi_P^\infty,\alpha}(t)$ sur $(\pi_P^\infty\otimes_E E_m)(\eta^{-1})_{N_{L_{P}}\!(\Qp)\cap N_{m}}$, et o\`u l'action de $\Gal(E_\infty/E)$ est triviale sur $C^{\an}(N_\alpha(\Qp)\cap N_{m},E)$ et est comme dans l'\'enonc\'e sur $Ev_{-\lambda} \otimes_E(\pi_P^\infty\otimes_E E_m)(\eta^{-1})_{N_{L_{P}}\!(\Qp)\cap N_{m}}$. 

Posons $V_m\=Ev_{-\lambda} \otimes_E(\pi_P^\infty\otimes_E E_m)(\eta^{-1})_{N_{L_{P}}\!(\Qp)\cap N_{m}}$ (de dimension d\'enombrable muni de la topologie localement convexe la plus fine), le sous-espace $C^{\an}(N_\alpha(\Qp)\cap N_{0},E)\otimes_{E}V_m$ de (\ref{Malphaenfin}) est ferm\'e et stable par l'action de $N_\alpha(\Qp)\cap N_0$ et $\lambda_{\alpha^{\!\vee}}(\Zp\backslash\{0\})$. De plus il existe $N\gg 0$ tel que l'action de $\lambda_{\alpha^{\!\vee}}(p)^N$ sur le quotient $(C^{\an}(N_\alpha(\Qp)\cap N_{m},E)/C^{\an}(N_\alpha(\Qp)\cap N_{0},E))\otimes_{E}V_m$ est nulle. Soit $r\in \Q_{>p-1}$, $T_r$ un $(\varphi,\Gamma)$-module g\'en\'eralis\'e sur $\R^r$ et~:
$$f:(C^{\an}(N_\alpha(\Qp)\cap N_{m},E)/C^{\an}(N_\alpha(\Qp)\cap N_{0},E))^\vee\widehat \otimes_{E}V_m^\vee\longrightarrow T_r\otimes_EE_m$$
un morphisme $\Rm^+$-lin\'eaire continu commutant \`a $\psi$ (on utilise tacitement ici la version duale de \cite[Prop.~20.13]{Sch}). On a $\psi^{N}=0$ \`a gauche, et (comme dans la preuve du (ii) du Lemme \ref{backtore}) par l'analogue du diagramme commutatif (\ref{psi5}) o\`u $\varphi$, $\psi$ sont remplac\'es par respectivement $\varphi^{N}$, $\psi^{N}$ et $pr$ par $p^Nr$ (cf. aussi (\ref{psi4}) et (\ref{psi6}) ci-dessous), on en d\'eduit que la compos\'ee~:
$$(C^{\an}(N_\alpha(\Qp)\cap N_{m},E)/C^{\an}(N_\alpha(\Qp)\cap N_{0},E))^\vee\widehat \otimes_{E}V_m^\vee\buildrel f\over \longrightarrow T_r\otimes_EE_m\longrightarrow T_{p^{N}r}\otimes_EE_m$$
est nulle, ce qui implique~:
$$\lim_{\substack{\longrightarrow \\ (r,f_r,T_r)\in I(T)}}\!\!\!\!\!\!\Hom_{\psi,\Gamma}\big((C^{\an}(N_\alpha(\Qp)\cap N_{m},E)/C^{\an}(N_\alpha(\Qp)\cap N_{0},E))^\vee\widehat \otimes_{E}V_m^\vee,T_r\otimes_EE_m\big)=0$$
pour tout $(\varphi,\Gamma)$-module g\'en\'eralis\'e $T$ sur $\R$. En utilisant que $V\mapsto V\widehat \otimes_EV_m$ pr\'eserve les suites exactes courtes d'espaces de Fr\'echet (\cite[Lem.~4.13]{Sc1}), que $\Hom_{\psi,\Gamma}$ est exact \`a gauche et que les limites inductives filtrantes sont exactes, on obtient un isomorphisme pour tout $(\varphi,\Gamma)$-module g\'en\'eralis\'e $T$ sur $\R$~:
\begin{multline*}
\lim_{\substack{\longrightarrow \\ (r,f_r,T_r)\in I(T)}}\!\!\!\!\!\!\Hom_{\psi,\Gamma}\big(C^{\an}(N_\alpha(\Qp)\cap N_{0},E)^\vee\widehat \otimes_{E}V_m^\vee,T_r\otimes_EE_m\big)\\
\buildrel\sim\over\longrightarrow F_{\alpha,m}(C^{\an}(N_{P}(\Qp)\cap N_{m},\pi_P))(T).
\end{multline*}
Par le Lemme \ref{begin} et l'action de $\lambda_{\alpha^{\!\vee}}(\Zp\backslash\{0\})$ comme explicit\'ee ci-dessus (dualis\'ee), on a un isomorphisme~:
\begin{multline*}
\lim_{\substack{\longrightarrow \\ (r,f_r,T_r)\in I(T)}}\!\!\!\!\!\!\Hom_{\psi,\Gamma}\big(C^{\an}(N_\alpha(\Qp)\cap N_{0},E)^\vee\widehat \otimes_{E}V_m^\vee,T_r\otimes_EE_m\big)\cong \\
V_m\otimes_{E_m}\!\!\!\lim_{\substack{\longrightarrow \\ (r,f_r,T_r)\in I(T)}}\!\!\!\!\!\!\Hom_{\psi,\Gamma}\Big(C^{\an}(N_\alpha(\Qp)\cap N_{0},E_m)^\vee\!\otimes_{\R^+}\!\R^+\big(((-\lambda)\circ \lambda_{\alpha^\vee}^{-1})\chi_{\pi_P^\infty,\alpha}^{-1}\big),\\
T_r\otimes_EE_m\Big).
\end{multline*}
Avec l'isomorphisme $N_\alpha(\Qp)\cap N_0\simeq \Zp$, les (iii), (iv) et (v) de la Remarque \ref{vrac} et le (iii) du Lemme \ref{abel}, on en d\'eduit~:
\begin{multline*}
F_{\alpha,m}(C^{\an}(N_{P}(\Qp)\cap N_{m},\pi_P))(T)\\
\simeq Ev_{-\lambda} \otimes_E(\pi_P^\infty\otimes_EE_m)(\eta^{-1})_{N_{L_{P}}\!(\Qp)\cap N_m}\otimes_E \Hom_{(\varphi,\Gamma)}\big(\R\big(((-\lambda)\circ \lambda_{\alpha^\vee}^{-1})\chi_{\pi_P^\infty,\alpha}^{-1}\big),T\big)
\end{multline*}
avec action de $\Gal(E_\infty/E)$ (sur les facteurs de gauche) comme ci-dessus. Par le Lemme \ref{niveaum} (et (\ref{falpha})), on a la premi\`ere assertion puisque $(-\lambda)\circ \lambda_{\alpha^\vee}^{-1}=\lambda\circ \lambda_{\alpha^\vee}$ et~:
$$\displaystyle \lim_{m\rightarrow +\infty}(\pi_P^\infty\otimes_EE_m)(\eta^{-1})_{N_{L_{P}}\!(\Qp)\cap N_m}=(\pi_P^\infty\otimes_EE_\infty)(\eta^{-1})_{N_{L_{P}}\!(\Qp)}.$$
La deuxi\`eme assertion vient du fait que $(\pi_P^\infty\otimes_EE_\infty)(\eta^{-1})_{N_{L_{P}}\!(\Qp)}\ne 0$ si et seulement s'il existe un constituant $C_P$ de $\pi_P^\infty$ tel que $(C_P\otimes_EE_\infty)(\eta^{-1})_{N_{L_{P}}\!(\Qp)}\ne 0$, ou de mani\`ere \'equivalente $((C_P\otimes_EE_\infty)(\eta^{-1})_{N_{L_{P}}\!(\Qp)})^\vee\ne 0$, ou encore il existe une fonctionnelle de Whittaker non nulle sur $C_P$ (\`a valeurs dans $E_\infty$), ou encore $C_P$ est g\'en\'erique.
\end{proof}

\subsection{Approximation des gradu\'es}\label{approx}

On montre un r\'esultat technique mais crucial de densit\'e sur certains gradu\'es (Proposition \ref{descgrdense}).

On garde les notations du \S~\ref{cellule} mais cette fois $\pi_P$ est une repr\'esentation localement analytique de $P(\Qp)$ sur un espace de type compact et on suppose $P\ne G$. On fixe une section localement analytique $s:P(\Qp)\backslash G(\Qp)\hookrightarrow G(\Qp)$ de la projection $G(\Qp)\twoheadrightarrow P(\Qp)\backslash G(\Qp)$, on a comme en (\ref{varflag}) un isomorphisme de vari\'et\'es localement $\Qp$-analytiques compatible \`a la multiplication \`a gauche par $P(\Qp)$ (la multiplication sur $P(\Qp)\backslash G(\Qp)$ \'etant triviale)~:
\begin{equation}\label{varflagbis}
P(\Qp)\times (P(\Qp)\backslash G(\Qp))\buildrel\sim\over\longrightarrow G(\Qp),\ \ (q,x)\longmapsto qs(x).
\end{equation}
Soit $C$ un ferm\'e quelconque de $G(\Qp)$ stable par multiplication \`a gauche par $P(\Qp)$ et consid\'erons l'induite $(\cInd_{P(\Qp)}^{G(\Qp)\backslash C}\pi_P)^{\an}$ des fonctions $f:G(\Qp)\backslash C\rightarrow \pi_P$ \`a support compact modulo $P(\Qp)$. Puisque $G(\Qp)\backslash C$ est un ouvert de $G(\Qp)$ stable par multiplication \`a gauche par $P(\Qp)$, on a par le (m\^eme argument que pour le) Lemme \ref{attention!}~:
\begin{equation}\label{attentionbis!}
\big(\cInd_{P(\Qp)}^{G(\Qp)\backslash C}\pi_P\big)^{\an}\cong C^{\an}(P(\Qp)\backslash G(\Qp),E)_{P(\Qp)\backslash(G(\Qp)\backslash C)}\widehat\otimes_E\pi_P.
\end{equation}
De \ plus $(\cInd_{P(\Qp)}^{G(\Qp)\backslash C}\pi_P)^{\an}$ \ est \ un \ sous-espace \ ferm\'e \ de \ $(\Ind_{P(\Qp)}^{G(\Qp)}\pi_P)^{\an}\cong C^{\an}(P(\Qp)\backslash G(\Qp),E)\widehat\otimes_E\pi_P$ (cf. la discussion avant le Lemme \ref{attention!}). Il suit de la preuve de \cite[Prop.~5.3]{Ko2} (avec \cite[(50)]{Ko2}, \cite[(53)]{Ko2} et \cite[Rem.~5.4]{Ko2}) que l'on a un isomorphisme topologique de $G(\Qp)$-repr\'esentations (cf. le d\'ebut du \S~\ref{topo} pour les notations):
\begin{equation}\label{dualsp}
\big((\Ind_{P(\Qp)}^{G(\Qp)} \pi_P)^{\an}\big)^{\vee}\simeq (\pi_P)^\vee \widehat \otimes_{D(P(\Qp),E),\iota}D(G(\Qp),E)
\end{equation}
o\`u $(\pi_P)^\vee$ est ici muni de sa structure de $D(P(\Qp),E)$-module \`a droite et o\`u $g\in G(\Qp)$ agit sur le membre de droite par la multiplication \`a droite par $\delta_{g^{-1}}\in D(G(\Qp),E)$ (plus pr\'ecis\'ement les r\'ef\'erences ci-dessus donnent un isomorphisme $((\Ind_{P(\Qp)}^{G(\Qp)} \pi_P)^{\an})^{\vee}\simeq (\pi_P)^\vee \widetilde \otimes_{D(P(\Qp),E),\iota}D(G(\Qp),E)$ o\`u le terme de droite est d\'efini comme dans \cite[p.~198]{Ko2}, mais comme $((\Ind_{P(\Qp)}^{G(\Qp)} \pi_P)^{\an})^{\vee}$ est un espace de Fr\'echet nucl\'eaire, en particulier est s\'epar\'e, on a un isomorphisme $(\pi_P)^\vee \widetilde \otimes_{D(P(\Qp),E),\iota}D(G(\Qp),E)\buildrel\sim\over\rightarrow (\pi_P)^\vee \widehat \otimes_{D(P(\Qp),E),\iota}D(G(\Qp),E)$). Posons~:
\begin{equation}\label{xc}
\pi_C\=\big(\Ind_{P(\Qp)}^{G(\Qp)} \pi_P\big)^{\an}/ \big(\cInd_{P(\Qp)}^{G(\Qp)\backslash C}\pi_P\big)^{\an}\ \ {\rm et}\ \ X_C\=(\pi_C)^\vee.
\end{equation}
Alors $\pi_C$ est un espace de type compact et $X_C$ est un sous-espace ferm\'e de $((\Ind_{P(\Qp)}^{G(\Qp)} \pi_P)^{\an})^{\vee}$, donc \'egalement un espace de Fr\'echet nucl\'eaire.

\begin{lem}\label{dualspx}
Conservons les notations ci-dessus.\\
(i) On a un isomorphisme topologique~:
\begin{equation*}
(\pi_P)^\vee \widehat \otimes_{D(P(\Qp),E),\iota}D(G(\Qp),E)_C\simeq X_C
\end{equation*}
o\`u le membre de gauche est muni de la topologie quotient de $(\pi_P)^\vee \widehat\otimes_{E,\iota}D(G(\Qp),E)_C$ (cf. \S~\ref{topo} pour $D(G(\Qp),E)_{C}$).\\
(ii) L'injection $D(G(\Qp),E)_C\hookrightarrow D(G(\Qp),E)$ induit une immersion ferm\'ee d'espaces de Fr\'echet~:
$$(\pi_P)^\vee \widehat \otimes_{D(P(\Qp),E),\iota}D(G(\Qp),E)_C\hookrightarrow (\pi_P)^\vee \widehat \otimes_{D(P(\Qp),E),\iota}D(G(\Qp),E).$$
\end{lem}
\begin{proof}
(i) Soit $F$ l'image de $C$ dans $P(\Qp)\backslash G(\Qp)$, qui est un ferm\'e (donc un compact) de $P(\Qp)\backslash G(\Qp)$. On a un isomorphisme topologique par (\ref{varflagbis}) et le Lemme \ref{supproduit}~:
\begin{equation}\label{densetenseur}
D(P(\Qp),E)\widehat\otimes_{E,\iota}D(P(\Qp)\backslash G(\Qp),E)_F\buildrel\sim\over\longrightarrow D(G(\Qp),E)_C.
\end{equation}
On a \'egalement un isomorphisme topologique par (\ref{attentionbis!}), (\ref{isoflag}) et \cite[Lem.~4.13]{Sc1} avec \cite[Prop.~20.13]{Sch}~:
\begin{equation*}
\pi_C\simeq \Big(C^{\an}\big(P(\Qp)\backslash G(\Qp),E\big)/C^{\an}\big(P(\Qp)\backslash G(\Qp),E\big)_{P(\Qp)\backslash(G(\Qp)\backslash C)}\Big)\widehat\otimes_{E} \pi_P
\end{equation*}
qui induit en dualisant un isomorphisme $X_C\simeq (\pi_P)^\vee \widehat\otimes_{E} D(P(\Qp)\backslash G(\Qp),E)_F$ (utiliser la version duale de \cite[Prop.~20.13]{Sch} puis la discussion apr\`es le Lemme \ref{supportcompact}). Par le Lemme \ref{librehat}, on a par ailleurs un isomorphisme~:
\begin{multline*}
(\pi_P)^\vee \widehat\otimes_{E} D(P(\Qp)\backslash G(\Qp),E)_F\\
\simeq (\pi_P)^\vee\widehat \otimes_{D(P(\Qp),E),\iota}\big(D(P(\Qp),E)\widehat\otimes_{E,\iota}D(P(\Qp)\backslash G(\Qp),E)_F\big)
\end{multline*}
d'o\`u le r\'esultat avec (\ref{densetenseur}).\\
(ii) s'obtient en consid\'erant le dual de la surjection topologique d'espaces de type compact~:
$$\big(\Ind_{P(\Qp)}^{G(\Qp)} \pi_P\big)^{\an}\twoheadrightarrow \big(\Ind_{P(\Qp)}^{G(\Qp)} \pi_P\big)^{\an}/ \big(\cInd_{P(\Qp)}^{G(\Qp)\backslash C}\pi_P\big)^{\an}=\pi_C$$
et en utilisant (\ref{dualsp}) et le (i).
\end{proof}

Le lemme suivant, qui explicite le cas $C=P(\Qp)$, sera utile au \S~\ref{premierscindage}.

\begin{lem}\label{support1}
Avec les notations ci-dessus, on a un isomorphisme d'espaces de Fr\'echet~:
$$(\pi_P)^\vee \widehat \otimes_{D(P(\Qp),E),\iota}D(G(\Qp),E)_{P(\Qp)}\simeq (\pi_P)^\vee\widehat\otimes_ED(N_P^-(\Qp),E)_{\{1\}}.$$
\end{lem}
\begin{proof}
Par \cite[Prop.~1.2.12]{Ko1}, on a un isomorphisme de $D(P(\Qp),E)$-modules \`a gauche s\'epar\'ement continus~:
\begin{equation}\label{ko1}
D(G(\Qp),E)_{P(\Qp)}\simeq D(P(\Qp),E)\widehat\otimes_{D(P(\Qp),E)_{\{1\}}}D(G(\Qp),E)_{\{1\}}.
\end{equation}
Comme $P(\Qp)N_P^-(\Qp)=P(\Qp)P^-(\Qp)$ est un ouvert de $G(\Qp)$ contenant $1$, on a un isomorphisme $D(P(\Qp)N_P^-(\Qp),E)_{\{1\}}\buildrel\sim\over\rightarrow D(G(\Qp),E)_{\{1\}}$. Comme la multiplication dans $G(\Qp)$ induit un isomorphisme de vari\'et\'es localement $\Qp$-analytiques $P(\Qp)\times N_P^-(\Qp)\buildrel\sim\over\rightarrow P(\Qp)N_P^-(\Qp)$, il suit du Lemme \ref{supproduit} que la multiplication dans l'alg\`ebre de Fr\'echet $D(G(\Qp),E)_{\{1\}}$ induit un isomorphisme de $D(P(\Qp),E)_{\{1\}}$-modules \`a gauche continus $D(P(\Qp),E)_{\{1\}}\widehat\otimes_ED(N_P^-(\Qp),E)_{\{1\}}\buildrel\sim\over\rightarrow D(G(\Qp),E)_{\{1\}}$. Par le Lemme \ref{librehat} et (\ref{ko1}), on en d\'eduit un isomorphisme $D(G(\Qp),E)_{P(\Qp)}\simeq D(P(\Qp),E)\widehat\otimes_ED(N_P^-(\Qp),E)_{\{1\}}$ de $D(P(\Qp),E)$-modules \`a gauche s\'epar\'ement continus. On obtient l'\'enonc\'e en appliquant le Lemme \ref{librehat} une deuxi\`eme fois.
\end{proof}

Soit maintenant $C'\subsetneq C\subseteq G(\Qp)$ deux ferm\'es distincts de $G(\Qp)$ stables par multiplication \`a gauche par $P(\Qp)$ tels que $C\backslash C'$ est un ouvert dense de $C$. On suppose que $P(\Qp)\backslash (C\backslash C')$ est une sous-vari\'et\'e localement $\Qp$-analytique paracompacte ferm\'ee de la vari\'et\'e $P(\Qp)\backslash (G(\Qp)\backslash C')$ (qui est une sous-vari\'et\'e ouverte de $P(\Qp)\backslash G(\Qp)$). La notation $P(\Qp)\backslash (C\backslash C')$ pouvant pr\^eter \`a confusion, pr\'ecisons qu'il s'agit de ``$C$ priv\'e de $C'$, le tout quotient\'e par l'action de $P(\Qp)$ par multiplication \`a gauche''. Noter que (\ref{varflagbis}) induit un isomorphisme~:
\begin{equation}\label{varflagbisc}
P(\Qp)\times (P(\Qp)\backslash (C\backslash C'))\buildrel\sim\over\longrightarrow C\backslash C'
\end{equation}
qui munit $C\backslash C'$ d'une structure de sous-vari\'et\'e localement $\Qp$-analytique ferm\'ee de $G(\Qp)\backslash C'$. Attention que $C$ et $C'$ {\it ne} sont {\it pas} eux-m\^emes suppos\'es \^etre des vari\'et\'es localement $\Qp$-analytiques (i.e. ils peuvent avoir des ``singularit\'es''). Par la Remarque \ref{rematopo} l'ouvert $P(\Qp)\backslash (G(\Qp)\backslash C')$ de la vari\'et\'e compacte $P(\Qp)\backslash G(\Qp)$ admet un recouvrement disjoint par un nombre {\it d\'enombrable} d'ouverts compacts. On en d\'eduit la m\^eme chose pour son ferm\'e $P(\Qp)\backslash (C\backslash C')$ en utilisant que l'intersection d'un ferm\'e et d'un compact est compacte. On note $(\cInd_{P(\Qp)}^{C\backslash C'}\pi_P)^{\an}$ l'espace des fonctions localement analytiques $f:C\backslash C'\rightarrow \pi_P$ \`a support compact modulo $P(\Qp)$ telles que $f(qc)=q(f(c))$ pour $q\in P(\Qp)$ et $c\in C\backslash C'$, qui s'identifie par (\ref{varflagbisc}) \`a l'espace de type compact $C^{\an}_c(P(\Qp)\backslash (C\backslash C'),\pi_P)$, cf. le Lemme \ref{supportcompact}. De plus les inclusions $D(G(\Qp),E)_{C'}\subseteq D(G(\Qp),E)_{C}\subseteq D(G(\Qp),E)$ induisent des immersions ferm\'ees d'espaces de Fr\'echet (nucl\'eaires) par le (ii) du Lemme \ref{dualspx}~:
\begin{multline}\label{morphci}
(\pi_P)^\vee \widehat \otimes_{D(P(\Qp),E),\iota}D(G(\Qp),E)_{C'}\hookrightarrow (\pi_P)^\vee \widehat \otimes_{D(P(\Qp),E),\iota}D(G(\Qp),E)_{C}\\
\hookrightarrow (\pi_P)^\vee \widehat \otimes_{D(P(\Qp),E),\iota}D(G(\Qp),E),
\end{multline}
et par le (i) du Lemme \ref{dualspx}, l'immersion de gauche est la duale de la surjection topologique d'espaces de type compact~:
\begin{multline}\label{piC}
\pi_C=\big(\Ind_{P(\Qp)}^{G(\Qp)} \pi_P\big)^{\an}/ \big(\cInd_{P(\Qp)}^{G(\Qp)\backslash C}\pi_P\big)^{\an}\\
\twoheadrightarrow \pi_{C'}=\big(\Ind_{P(\Qp)}^{G(\Qp)} \pi_P\big)^{\an}/ \big(\cInd_{P(\Qp)}^{G(\Qp)\backslash C'}\pi_P\big)^{\an}.
\end{multline}

La proposition suivante donne une ``approximation'' des ``gradu\'es'' $X_{C}/X_{C'}$ qui sera cruciale dans la preuve de la Proposition \ref{support} ci-dessous.

\begin{prop}\label{descgrdense}
On a un morphisme continu canonique d'image dense~:
$$\big((\cInd_{P(\Qp)}^{C\backslash C'}\pi_P)^{\an}\big)^\vee\otimes_{E,\iota}D(G(\Qp),E)_{\{1\}}\longrightarrow X_{C}/X_{C'}.$$
\end{prop}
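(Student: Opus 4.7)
The plan is to realize $X_C/X_{C'}$ as a space of ``distributions on $G(\Qp)\setminus C'$ supported on the closed submanifold $C\setminus C'$'', and then to show that such distributions are approximated by exterior tensors of (distributions along $C\setminus C'$) and (transverse infinitesimal data at $1$, encoded in $D(G(\Qp),E)_{\{1\}}$). Throughout, one exploits the hypothesis that $P(\Qp)\backslash(C\setminus C')$ is a closed paracompact submanifold of $P(\Qp)\backslash(G(\Qp)\setminus C')$ (lifted via (\ref{varflagbisc}) to give $C\setminus C'$ the structure of a closed submanifold of $G(\Qp)\setminus C'$).

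The first step is to construct the map. The surjection (\ref{piC}) has kernel $(\cInd_{P(\Qp)}^{G(\Qp)\setminus C'}\pi_P)^{\an}/(\cInd_{P(\Qp)}^{G(\Qp)\setminus C}\pi_P)^{\an}$, and restriction of locally analytic functions to the closed submanifold $C\setminus C'$ induces a continuous surjection of spaces of type compact from this kernel onto $(\cInd_{P(\Qp)}^{C\setminus C'}\pi_P)^{\an}$; dualizing yields a continuous injection $j:\big((\cInd_{P(\Qp)}^{C\setminus C'}\pi_P)^{\an}\big)^\vee \hookrightarrow X_C/X_{C'}$. Using (\ref{dualsp}), the convolution action of $D(G(\Qp),E)$ on $\big((\Ind_{P(\Qp)}^{G(\Qp)}\pi_P)^{\an}\big)^\vee$ restricts to a continuous action of $D(G(\Qp),E)_{\{1\}}$ that preserves both $X_C$ and $X_{C'}$ (since convolution by a distribution supported at $\{1\}$ preserves supports), hence acts on $X_C/X_{C'}$. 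The map of the proposition is then $\lambda\otimes\mu \longmapsto \mu\cdot j(\lambda)$, which is clearly canonical and continuous.

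The second step is to identify $X_C/X_{C'}$ explicitly. Combining Lemma \ref{dualspx}(i) for both $C$ and $C'$ with the right-exactness of $(\pi_P)^\vee\widehat\otimes_{D(P(\Qp),E),\iota}(-)$ on strict short exact sequences of Fr\'echet $D(P(\Qp),E)$-modules (applied to $0\to D(G,E)_{C'}\to D(G,E)_C\to D(G,E)_C/D(G,E)_{C'}\to 0$, which is strict by the (ii) of Lemma \ref{dualspx} and arguments parallel to those in the proof of Lemma \ref{attention!}), one obtains a topological isomorphism
\begin{equation*}
X_C/X_{C'}\simeq (\pi_P)^\vee \widehat\otimes_{D(P(\Qp),E),\iota}\big(D(G(\Qp),E)_C/D(G(\Qp),E)_{C'}\big).
\end{equation*}
Moreover, restriction of distributions to the open $G(\Qp)\setminus C'$ identifies $D(G(\Qp),E)_C/D(G(\Qp),E)_{C'}$ with the Fr\'echet space $D(G(\Qp)\setminus C',E)_{C\setminus C'}$ of distributions on the manifold $G(\Qp)\setminus C'$ with support in the closed submanifold $C\setminus C'$. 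Combined with the identification $\big((\cInd_{P(\Qp)}^{C\setminus C'}\pi_P)^{\an}\big)^\vee \simeq (\pi_P)^\vee\widehat\otimes_{D(P(\Qp),E),\iota} D(C\setminus C',E)$ obtained from (\ref{varflagbisc}), the Lemma \ref{attention!} and the Lemma \ref{librehat}, the statement is reduced to proving that the continuous map
\begin{equation*}
D(C\setminus C',E)\widehat\otimes_{E,\iota} D(G(\Qp),E)_{\{1\}}\longrightarrow D(G(\Qp)\setminus C',E)_{C\setminus C'}
\end{equation*}
(pairing by convolution) has dense image.

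The main work, and the expected obstacle, is the third step: the local density argument. Because $P(\Qp)\backslash(C\setminus C')$ admits a countable disjoint covering by compact opens (as in Remark \ref{rematopo}, using paracompactness), one reduces the density to a local statement in coordinate charts of $\overline{G(\Qp)\setminus C'}$ in which $\overline{C\setminus C'}$ is a coordinate subspace $\Qp^k\times\{0\}\subseteq\Qp^n$. In such a chart, an iterated application of Lemma \ref{supproduit} identifies the local version of $D(G(\Qp)\setminus C',E)_{C\setminus C'}$ with $D(\Qp^k,E)\widehat\otimes_{E,\iota}D(\Qp^{n-k},E)_{\{0\}}$, and the local image of the map contains the algebraic tensor product $D(\Qp^k,E)\otimes_{E,\iota}D(\Qp^{n-k},E)_{\{0\}}$ (using that $D(G(\Qp),E)_{\{1\}}$ surjects onto the transverse $D(\Qp^{n-k},E)_{\{0\}}$ because it contains every differential operator at $1$). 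Since this algebraic tensor product is dense in the completed one, one gets local density; one then globalizes via a partition-of-unity argument over the countable cover and a standard compatibility between the injective completed tensor product and countable locally convex direct sums (as in the proofs of Lemma \ref{supportcompact} and Lemma \ref{supproduit}). The delicate points here are the strictness of the short exact sequence in the second step (hence the use of Lemma \ref{separetypecompact} and of nuclearity) and the bookkeeping needed to pass from local tubular trivializations to a global density statement in the injective topology.
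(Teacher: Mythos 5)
Your construction of the map and your reduction to a density statement for distributions are in the right spirit, but two of your key steps are unjustified as written, and they are precisely the points where the paper takes a different (global) route. First, your Step 2 identification
$X_{C}/X_{C'}\simeq (\pi_P)^\vee\widehat\otimes_{D(P(\Qp),E),\iota}\big(D(G(\Qp),E)_{C}/D(G(\Qp),E)_{C'}\big)$
rests on a ``right-exactness'' of the functor $(\pi_P)^\vee\widehat\otimes_{D(P(\Qp),E),\iota}(-)$ that is not available: this functor is by definition a quotient of $\widehat\otimes_{E,\iota}$ by the \emph{closure} of a subspace, and there is no general statement that it takes a strict surjection of Fr\'echet $D(P(\Qp),E)$-modules to a quotient with the expected kernel. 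The paper deliberately avoids this: it never identifies $X_C/X_{C'}$ with a relative completed tensor product, but works inside $\big((\pi_P)^\vee\widehat\otimes_{D(P(\Qp),E),\iota}D(G(\Qp),E)\big)/X_{C'}$, only with closures of images, using that $X_C/X_{C'}$ is a closed subspace (Lemme \ref{dualspx}). Your further identification of $D(G(\Qp),E)_C/D(G(\Qp),E)_{C'}$ with distributions on $G(\Qp)\setminus C'$ supported in $C\setminus C'$ is also asserted without proof (and the surjectivity of restriction onto $(\cInd_{P(\Qp)}^{C\backslash C'}\pi_P)^{\an}$, which you invoke to get injectivity of $j$, is neither needed for the statement nor free).

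Second, and more seriously, your Step 3 local claim fails as stated: in an arbitrary chart in which $C\setminus C'$ is a coordinate subspace, the image of the convolution map does \emph{not} contain the algebraic tensor product $D(\Qp^k,E)\otimes_{E,\iota}D(\Qp^{n-k},E)_{\{0\}}$. For $\lambda$ supported along the submanifold and $\mu\in D(G(\Qp),E)_{\{1\}}$, the product $\lambda\mu$ is computed through the group multiplication, which does not respect the chart's product decomposition; $\lambda\mu$ is a ``variable-coefficient'' transverse distribution, not a split tensor, so ``$D(G(\Qp),E)_{\{1\}}$ contains every differential operator at $1$'' does not give the containment. To repair this you would need group-adapted tubular charts of the form $(z,y)\mapsto z\exp(y)$ and pushforwards of transverse-slice distributions, and proving density then amounts to redoing locally the content of \cite[Lem.~1.2.5~\&~1.2.10]{Ko1}. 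The paper's proof sidesteps all chart analysis: it uses that the closure of the span of the Dirac distributions $\delta_c$, $c\in C\setminus C'$, in $D(G(\Qp),E)$ is $D(C,E)$ (this is also where the fact that $C$ itself need not be a submanifold is absorbed), quotes Kohlhaase's lemmas to identify the closure of the image of $D(C,E)\otimes_{E,\iota}D(G(\Qp),E)_{\{1\}}\rightarrow D(G(\Qp),E)$ with $D(G(\Qp),E)_C$, and transports density through the tensor operations with Lemme \ref{tenseur} together with the density of the map $h$ coming from the countable disjoint cover. So your plan is a genuinely different, chart-based route, but both its structural reduction and its key local density claim are gaps; the latter, once formulated correctly, is essentially the global lemma of Kohlhaase that the paper's proof invokes.
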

\begin{proof}
Noter d'abord que par le (i) du Lemme \ref{dualspx} le quotient $X_{C}/X_{C'}$ s'identifie au quotient de l'espace de Fr\'echet $(\pi_P)^\vee \widehat \otimes_{D(P(\Qp),E),\iota}D(G(\Qp),E)_{C}$ par le sous-espace ferm\'e $(\pi_P)^\vee \widehat \otimes_{D(P(\Qp),E),\iota}D(G(\Qp),E)_{C'}$. En utilisant le Lemme \ref{supportcompact}, on a un diagramme commutatif de $E$-espaces vectoriels localement convexes s\'epar\'es avec applications continues~:
\begin{equation}\label{grdense}
\begin{gathered}
\xymatrix{ \big(\cInd_{P(\Qp)}^{C\backslash C'}\pi_P\big)^{\an} \ar@{^{(}->}[r] & \big(\Ind_{P(\Qp)}^{C\backslash C'}\pi_P\big)^{\an} \\
 \big(\cInd_{P(\Qp)}^{G(\Qp)\backslash C'}\pi_P\big)^{\an} \ar@{^{(}->}[r] \ar[u]^{\rm res} & \big(\Ind_{P(\Qp)}^{G(\Qp)}\pi_P\big)^{\an}\ar[u]^{\rm res} }
\end{gathered}
\end{equation}
o\`u les applications horizontales (resp. verticales) sont les injections canoniques (resp. les restrictions \`a $C\backslash C'$) et o\`u tous les espaces sont de type compact sauf \'eventuellement $(\Ind_{P(\Qp)}^{C\backslash C'}\pi_P)^{\an}$ (car $P(\Qp)\backslash (C\backslash C')$ n'est pas forc\'ement une vari\'et\'e compacte). Noter que la restriction \`a $C\backslash C'$ d'une fonction dans $(\cInd_{P(\Qp)}^{G(\Qp)\backslash C'}\pi_P)^{\an}$ tombe bien dans le sous-$E$-espace vectoriel $(\cInd_{P(\Qp)}^{C\backslash C'}\pi_P)^{\an}$ de $(\Ind_{P(\Qp)}^{C\backslash C'}\pi_P)^{\an}$ en utilisant que l'intersection d'un compact et d'un ferm\'e est encore un compact. Avec le Lemme \ref{dualspx}, le dual du diagramme (\ref{grdense}) devient le diagramme commutatif~:
\begin{equation}\small\label{grdensed}
\begin{gathered}
\xymatrix{\big((\cInd_{P(\Qp)}^{C\backslash C'}\pi_P)^{\an}\big)^\vee \ar[d]^{f}&\big((\Ind_{P(\Qp)}^{C\backslash C'}\pi_P)^{\an}\big)^\vee \ar[l]^{h} \ar[d]\\
\big((\pi_P)^\vee \widehat \otimes_{D(P(\Qp),E),\iota}D(G(\Qp),E)\big)/X_{C'} & (\pi_P)^\vee \widehat \otimes_{D(P(\Qp),E),\iota}D(G(\Qp),E). \ar[l] }
\end{gathered}
\end{equation}
Soit $(U_i)_{I}$ un recouvrement disjoint d\'enombrable de $P(\Qp)\backslash (C\backslash C')$ par des ouverts compacts, on a des isomorphismes topologiques (cf. la preuve du Lemme \ref{supportcompact})~:
\begin{eqnarray*}
\big((\cInd_{P(\Qp)}^{C\backslash C'}\pi_P)^{\an}\big)^\vee&\cong &\prod_{i\in I}C^{\an}(U_i,\pi_P)^\vee\\
\big((\Ind_{P(\Qp)}^{C\backslash C'}\pi_P)^{\an}\big)^\vee&\cong &\bigoplus_{i\in I}C^{\an}(U_i,\pi_P)^\vee
\end{eqnarray*}
qui montrent que l'application $h$ dans (\ref{grdensed}) est d'image dense. En utilisant que $D(G(\Qp),E)$ et $D(G(\Qp),E)_{C'}$ sont des $D(G(\Qp),E)_{\{1\}}$-modules \`a droite, le diagramme (\ref{grdensed}) s'\'etend naturellement en un diagramme commutatif de $D(G(\Qp),E)_{\{1\}}$-modules \`a droite avec applications $D(G(\Qp),E)_{\{1\}}$-lin\'eaires conti\-nues~:
\begin{equation}\footnotesize\label{grdensedo}
\begin{gathered}
\xymatrix{\big((\cInd_{P(\Qp)}^{C\backslash C'}\pi_P)^{\an}\big)^\vee\!\!\otimes_{E,\iota}\!D(G(\Qp),E)_{\{1\}} \ar[d]^{f\otimes \Id}&\big((\Ind_{P(\Qp)}^{C\backslash C'}\pi_P)^{\an}\big)^\vee\!\!\otimes_{E,\iota}\!D(G(\Qp),E)_{\{1\}} \ar[l]^{\ \ h\otimes \Id} \ar[d]\\
\big((\pi_P)^\vee \widehat \otimes_{D(P(\Qp),E),\iota}D(G(\Qp),E)\big)/X_{C'} & (\pi_P)^\vee \widehat \otimes_{D(P(\Qp),E),\iota}D(G(\Qp),E) \ar[l] }
\end{gathered}
\end{equation}
o\`u l'image de l'application $h\otimes \Id$ est encore dense par le Lemme \ref{tenseur}. En particulier l'adh\'erence de l'image de $f\otimes \Id$ est \'egale \`a l'adh\'erence de l'image de $(f\otimes \Id)\circ (h\otimes \Id)$ (utiliser que la compos\'ee de deux applications continues d'image dense est d'image dense). 

Soit $D(C,E)$ l'adh\'erence dans $D(G(\Qp),E)$ du sous-$E$-espace vectoriel engendr\'e par les distributions de Dirac $\delta_{c}$ pour $c\in C$ (lorsque le ferm\'e $C$ est une sous-vari\'et\'e localement $\Qp$-analytique de $G(\Qp)$, noter que cela co\"\i ncide bien avec les distributions localement analytiques sur $C$ par la preuve de \cite[Prop.~1.1.2]{Ko1} et la densit\'e des distributions de Dirac). Comme $C\backslash C'$ est un ouvert dense de $C$, on v\'erifie facilement que l'adh\'erence de $D(C\backslash C',E)$ dans $D(G(\Qp),E)$ est encore $D(C,E)$ (observer que cette adh\'erence est contenue dans $D(C,E)$ et pour la surjectivit\'e utiliser que si une suite $(c_{1,i})_{i\in I}$ de $C\backslash C'$ converge vers $c\in C$, alors $(\delta_{c_{1,i}})_{i\in I}$ converge vers $\delta_{c}$ dans $D(G(\Qp),E)$, par exemple en utilisant \cite[Lem.~5.10]{DLB}). Donc l'injection~:
$$D(C\backslash C',E)\otimes_{E,\iota}D(G(\Qp),E)_{\{1\}}\hookrightarrow D(C,E)\otimes_{E,\iota}D(G(\Qp),E)_{\{1\}}$$
est encore d'image dense par le Lemme \ref{tenseur}. Par ailleurs, il suit de la preuve de \cite[Lem.~1.2.10]{Ko1} et de \cite[Lem.~1.2.5]{Ko1} que l'adh\'erence de l'image de l'application naturelle $D(C,E)\otimes_{E,\iota}D(G(\Qp),E)_{\{1\}}\rightarrow D(G(\Qp),E)$ est exactement $D(G(\Qp),E)_{C}$. Donc l'adh\'erence de l'image de l'application $D(C\backslash C',E)\otimes_{E,\iota}D(G(\Qp),E)_{\{1\}}\rightarrow D(G(\Qp),E)$ est aussi $D(G(\Qp),E)_{C}$. 

Par ailleurs, par un argument similaire (en plus simple) \`a celui de la preuve du (i) du Lemme \ref{dualspx} en rempla\c cant les vari\'et\'es localement $\Qp$-analytiques $G(\Qp)$, $P(\Qp)\backslash G(\Qp)$ par les vari\'et\'es $C\backslash C'$, $P(\Qp)\backslash (C\backslash C')$ respectivement et en utilisant (\ref{varflagbisc}) et le Lemme \ref{librehat}, on a un isomorphisme topologique~:
\begin{equation*}
\big((\Ind_{P(\Qp)}^{C\backslash C'}\pi_P)^{\an}\big)^\vee\cong (\pi_P)^\vee \widehat \otimes_{D(P(\Qp),E),\iota}D(C\backslash C',E).
\end{equation*}
Ainsi la moiti\'e ``sud-est'' du diagramme (\ref{grdensedo}) revient \`a~:
\begin{multline}\label{moitie}
\big(\big(\pi_P)^\vee \widehat \otimes_{D(P(\Qp),E),\iota}D(C\backslash C',E)\big)\otimes_{E,\iota}D(G(\Qp),E)_{\{1\}}\\
\longrightarrow (\pi_P)^\vee \widehat \otimes_{D(P(\Qp),E),\iota}D(G(\Qp),E)\\
\twoheadrightarrow \big((\pi_P)^\vee \widehat \otimes_{D(P(\Qp),E),\iota}D(G(\Qp),E)\big)/X_{C'}.
\end{multline}
Comme les applications naturelles~:
$$(\pi_P)^\vee \otimes_{E,\iota}*\longrightarrow \big(\pi_P)^\vee \widehat \otimes_{D(P(\Qp),E),\iota}*$$
o\`u $*\in \{D(C\backslash C',E),D(G(\Qp),E)\}$ sont d'images denses, par le Lemme \ref{tenseur} et ce qui pr\'ec\`ede on en d\'eduit que la compos\'ee (\ref{moitie}) est d'image dense dans le sous-espace~:
$$\big((\pi_P)^\vee \widehat \otimes_{D(P(\Qp),E),\iota}D(G(\Qp),E)_{C}\big)/X_{C'}\cong X_{C}/X_{C'}$$
de $((\pi_P)^\vee \widehat \otimes_{D(P(\Qp),E),\iota}D(G(\Qp),E))/X_{C'}$. Comme il s'agit d'un sous-espace ferm\'e, on en d\'eduit finalement avec la phrase suivant (\ref{grdensedo}) que l'adh\'erence de l'image de $(f\otimes \Id)\circ (h\otimes \Id)$, donc aussi celle de l'image de $(f\otimes \Id)$, est $X_{C}/X_{C'}$, ce qui ach\`eve la preuve.
\end{proof}

Comme $X_{C}/X_{C'}$ est un espace complet, le morphisme dans la Proposition \ref{descgrdense} se factorise en un morphisme continu d'image dense~:
\begin{equation}\label{grcomplet}
\big((\cInd_{P(\Qp)}^{C\backslash C'}\pi_P)^{\an}\big)^\vee\widehat\otimes_{E,\iota}D(G(\Qp),E)_{\{1\}}\longrightarrow X_{C}/X_{C'}.
\end{equation}
Supposons maintenant que les ferm\'es $C$ et $C'$ sont de plus stables par multiplication {\it \`a droite} par $B(\Qp)$ dans $G(\Qp)$, de sorte que $(\cInd_{P(\Qp)}^{C\backslash C'}\pi_P)^{\an}$ est muni d'une action \`a gauche de $B(\Qp)$ par translation \`a droite sur les fonctions. Alors $((\cInd_{P(\Qp)}^{C\backslash C'}\pi_P)^{\an})^\vee$ est un $D(B(\Qp),E)$-module \`a droite et l'action \`a gauche duale de $B(\Qp)$, c'est-\`a-dire l'action $(bv)(f)\=v(b^{-1}f)$ pour $b\in B(\Qp)$, $v\in ((\cInd_{P(\Qp)}^{C\backslash C'}\pi_P)^{\an})^\vee$ et $f\in (\cInd_{P(\Qp)}^{C\backslash C'}\pi_P)^{\an}$, s'\'ecrit $b(v)=v\delta_{b^{-1}}$ o\`u $\delta_{b^{-1}}\in D(B(\Qp),E)$ est la distribution de Dirac associ\'ee \`a $b^{-1}$. On munit $((\cInd_{P(\Qp)}^{C\backslash C'}\pi_P)^{\an})^\vee\widehat\otimes_{E,\iota}D(G(\Qp),E)_{\{1\}}$ de l'action \`a gauche de $B(\Qp)$ donn\'ee par~:
\begin{equation}\label{actiondual}
b(v\otimes \mu)=b(v)\otimes \delta_b\mu\delta_{{b}^{-1}}
\end{equation}
pour $v\in ((\cInd_{P(\Qp)}^{C\backslash C'}\pi_P)^{\an})^\vee$ et $\mu\in D(G(\Qp),E)_{\{1\}}$. De m\^eme $X_{C}$ et $X_{C'}$ sont alors aussi des $D(B(\Qp),E)$-modules \`a droite o\`u l'action de $b\in B(\Qp)$ est induite par la multiplication \`a droite par $\delta_{b^{-1}}$ dans $D(G(\Qp),E)_{C}$ et $D(G(\Qp),E)_{C'}$ (via le (i) du Lemme \ref{dualspx}), et on v\'erifie en utilisant le diagramme (\ref{grdensedo}) que le morphisme (\ref{grcomplet}) est $B(\Qp)$-\'equivariant pour ces actions. Par ailleurs l'action (\`a gauche) de $B(\Qp)$ sur $D(G(\Qp),E)_{\{1\}}$ donn\'ee par la conjugaison par $\delta_b$ provient par dualit\'e (comme ci-dessus) de l'action \`a gauche de $B(\Qp)$ sur $C^\omega_1(G(\Qp),E)=\ilim{U}C^{\an}(U,E)$ (cf. (\ref{germe})) induite par~:
\begin{equation}\label{actiongerme}
f\in C^{\an}(U,E)\longmapsto (g\mapsto f(b^{-1}gb))\in C^{\an}(bUb^{-1},E),
\end{equation}
et on v\'erifie facilement que cette action de $B(\Qp)$ sur l'espace de type compact $C^\omega_1(G(\Qp),E)$ est une repr\'esentation localement analytique de $B(\Qp)$. Ainsi, par \cite[Prop.~20.13]{Sch} et \cite[Prop.~3.6.18]{Em}, l'action de $B(\Qp)$ en (\ref{actiondual}) provient par dualit\'e de la repr\'esentation localement analytique de $B(\Qp)$ agissant (de mani\`ere diagonale) sur l'espace de type compact~:
$$\big(\cInd_{P(\Qp)}^{C\backslash C'}\pi_P\big)^{\an}\widehat\otimes_{E}C^\omega_1(G(\Qp),E).$$
Notons~:
\begin{equation*}
\pi_{C/C'}\=\big(\cInd_{P(\Qp)}^{G(\Qp)\backslash C'} \pi_P\big)^{\an}/ \big(\cInd_{P(\Qp)}^{G(\Qp)\backslash C}\pi_P\big)^{\an},
\end{equation*}
alors $\pi_{C/C'}$ est aussi dans $\Rep(B(\Qp))$ et $\pi_{C/C'}^\vee\cong X_C/X_{C'}$ par (\ref{xc}) et (\ref{piC}).	

\begin{cor}\label{m'r'}
Supposons que $C$ et $C'$ soient de plus stables par multiplication \`a droite par $B(\Qp)$. Alors pour tout $m\in \Z_{\geq 0}$ le morphisme (\ref{grcomplet}) induit un morphisme continu d'image dense de $\Rm^+$-modules qui commute \`a $\psi$~:
$$M_\alpha\big(\big(\cInd_{P(\Qp)}^{C\backslash C'}\pi_P\big)^{\an}\widehat\otimes_{E}C^\omega_1(G(\Qp),E_m)\big)\longrightarrow M_\alpha(\pi_{C/C'}\otimes_EE_m)$$
o\`u $M_\alpha(-)$ est comme en (\ref{malpha}).
\end{cor}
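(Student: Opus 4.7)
Le plan est d'interpr\'eter (\ref{grcomplet}), apr\`es tensorisation par $E_m$, comme le dual d'un morphisme de repr\'esentations localement analytiques de $B(\Qp)$, puis d'invoquer l'exactitude \`a gauche du foncteur $M_\alpha$ d\'ej\`a exploit\'ee dans la preuve de la Proposition \ref{exactdense}.

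Je commencerai par identifier le membre de gauche de (\ref{grcomplet}) au dual de $\pi'\=\big(\cInd_{P(\Qp)}^{C\backslash C'}\pi_P\big)^{\an}\widehat\otimes_E C^\omega_1(G(\Qp),E)$ via la r\'eflexivit\'e des espaces de type compact (\cite[Th.~1.1~\&~1.3]{ST1}) et la dualit\'e $A^\vee\widehat\otimes_{E,\iota}B^\vee\cong (A\widehat\otimes_EB)^\vee$ pour $A$, $B$ de type compact (cons\'equence de \cite[Prop.~19.7~\&~20.13]{Sch} et de la nucl\'earit\'e). La $B(\Qp)$-\'equivariance de (\ref{grcomplet}), ainsi que le fait que l'action diagonale fait de $\pi'$ une repr\'esentation localement analytique (d\'ej\`a soulign\'e dans la discussion pr\'ec\'edant l'\'enonc\'e), montrent que (\ref{grcomplet}) est alors le dual d'un morphisme $B(\Qp)$-\'equivariant continu $\phi:\pi_{C/C'}\to \pi'$. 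Par Hahn-Banach, la densit\'e de l'image de (\ref{grcomplet}) \'equivaut \`a l'injectivit\'e de $\phi$. Apr\`es extension des scalaires \`a $E_m$ (qui commute \`a $\widehat\otimes_E$ puisque $E_m/E$ est finie), on obtient une injection continue $B(\Qp)$-\'equivariante $\phi_m:\pi_{C/C'}\otimes_EE_m\hookrightarrow \big(\cInd_{P(\Qp)}^{C\backslash C'}\pi_P\big)^{\an}\widehat\otimes_E C^\omega_1(G(\Qp),E_m)$.

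Dans un second temps, je reproduirai l'argument de la preuve de la Proposition \ref{exactdense}. Pr\'ecis\'ement, le Lemme \ref{gnouf}(i) (qui donne l'exactitude \`a gauche de $(-)[\mnn^\alpha]$ gr\^ace \`a une filtration de $N^\alpha$) combin\'e \`a l'exactitude des coinvariants $(-)_{N_m^\alpha}$ sur les repr\'esentations lisses du groupe compact $N_m^\alpha$ fournit, \`a partir de $\phi_m$, une injection continue de $E_m$-espaces vectoriels de type compact~:
$$(\pi_{C/C'}\otimes_EE_m)[\mnn^\alpha](\eta^{-1})_{N_m^\alpha}\hookrightarrow \Big(\big(\cInd_{P(\Qp)}^{C\backslash C'}\pi_P\big)^{\an}\widehat\otimes_E C^\omega_1(G(\Qp),E_m)\Big)[\mnn^\alpha](\eta^{-1})_{N_m^\alpha}.$$
Toute injection continue entre espaces de type compact \'etant automatiquement stricte, la dualisation via (\ref{malpha}) fournit en fait une surjection topologique entre $M_\alpha$'s, de sorte que le morphisme recherch\'e est continu et \`a image dense (en fait surjectif topologiquement).

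Pour conclure, la $\Rm^+$-lin\'earit\'e et la commutation \`a $\psi$ de ce morphisme d\'ecouleront de la $B(\Qp)$-\'equivariance de $\phi_m$~: la structure de $\Rm^+$-module sur $M_\alpha(-)$ provient de l'action lisse du sous-groupe $N_0/N_0^\alpha\simeq \Zp$ de $B(\Qp)$ (via l'isomorphisme $D(\Zp,E_m)\simeq \Rm^+$), et l'op\'erateur $\psi$ de celle de $\lambda_{\alpha^\vee}(p)\in T(\Qp)\subset B(\Qp)$ (cf. le d\'ebut du \S~\ref{def}). Le point potentiellement le plus d\'elicat sera de s'assurer que les identifications de dualit\'e pr\'ec\'edentes sont bien compatibles aux diverses structures ($B(\Qp)$-action, topologies injective/projective sur les produits tensoriels compl\'et\'es), mais cela devrait \^etre purement formel vu la nucl\'earit\'e des espaces en pr\'esence. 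Le reste de la preuve se r\'eduit \`a un jeu de dualit\'e appliqu\'e \`a la Proposition \ref{descgrdense} et n'utilise aucune nouvelle id\'ee par rapport aux r\'esultats d\'ej\`a \'etablis.
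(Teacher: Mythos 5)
Votre sch\'ema g\'en\'eral (dualiser (\ref{grcomplet}) en un morphisme $B(\Qp)$-\'equivariant $\phi\colon \pi_{C/C'}\rightarrow (\cInd_{P(\Qp)}^{C\backslash C'}\pi_P)^{\an}\widehat\otimes_EC^\omega_1(G(\Qp),E)$, traduire la densit\'e de l'image de (\ref{grcomplet}) en l'injectivit\'e de $\phi$, appliquer le foncteur $(-)[\mnn^\alpha](\eta^{-1})_{N_m^\alpha}$, puis redualiser) est coh\'erent, mais il repose au moment d\'ecisif sur une assertion fausse : une injection continue entre espaces de type compact n'est \emph{pas} automatiquement stricte. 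Seule une \emph{bijection} continue entre espaces de type compact est automatiquement un isomorphisme topologique ; une injection peut avoir une image dense non ferm\'ee, et c'est pr\'ecis\'ement la raison d'\^etre du Lemme \ref{dense} (qui ne conclut qu'\`a la densit\'e de l'image du morphisme dual, pas \`a sa surjectivit\'e) et de la formulation de la Proposition \ref{exactdense} (o\`u l'image de $f$ est seulement dense dans $\ker(g)$). Votre conclusion d'une ``surjection topologique'' entre les $M_\alpha$ n'est donc pas justifi\'ee (et n'a aucune raison d'\^etre vraie en g\'en\'eral) ; heureusement l'\'enonc\'e ne demande que la densit\'e, qui d\'ecoule imm\'ediatement du Lemme \ref{dense} appliqu\'e \`a votre injection d'espaces de type compact, une fois v\'erifi\'e (c'est formel, mais \`a dire) que le morphisme ainsi obtenu co\"\i ncide, via (\ref{malpha}), (\ref{dualm}) et la Remarque \ref{fixe}, avec celui induit par (\ref{grcomplet}). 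Avec cette correction, votre argument devient valable.

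Notez toutefois que la preuve du texte est nettement plus directe et reste enti\`erement du c\^ot\'e dual : par (\ref{malphaf}) on a $M_\alpha(\pi\otimes_EE_m)\simeq (\pi^\vee\otimes_EE_m)(\eta)^{\sep}_{N_m^\alpha}$, de sorte qu'il suffit d'appliquer le foncteur $(-\otimes_EE_m)(\eta)^{\sep}_{N_m^\alpha}$ \`a (\ref{grcomplet}) ; la densit\'e de l'image est alors imm\'ediate puisque $(-)\twoheadrightarrow(-)(\eta)^{\sep}_{N_m^\alpha}$ est une surjection continue et que l'image d'une partie dense par une application continue surjective reste dense, tandis que la $\Rm^+$-lin\'earit\'e et la commutation \`a $\psi$ se lisent, comme chez vous, sur la $B(\Qp)$-\'equivariance de (\ref{grcomplet}). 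Votre d\'etour par le c\^ot\'e des repr\'esentations (injectivit\'e de $\phi_m$, exactitude \`a gauche de $(-)[\mnn^\alpha]$, exactitude des coinvariants lisses, puis redualisation) n'apporte rien de plus ici, et c'est ce d\'etour qui vous a conduit \`a l'affirmation erron\'ee sur les injections strictes.
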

\begin{proof}
L'existence du morphisme et sa compatibilit\'e aux structures de $\Rm^+$-modules et \`a $\psi$ d\'ecoulent de la commutation de (\ref{grcomplet}) \`a $B(\Qp)$. Par (\ref{malphaf}) il suffit de montrer que l'application~:
$$\Big(\big((\cInd_{P(\Qp)}^{C\backslash C'}\pi_P)^{\an}\big)^\vee\widehat\otimes_{E}D(G(\Qp),E_m)_{\{1\}}\Big)(\eta)^{\sep}_{N_m^\alpha}\longrightarrow ((X_{C}/X_{C'})\otimes_EE_m)(\eta)^{\sep}_{N_m^\alpha}$$
induite par (\ref{grcomplet}) (tensoris\'ee par $E_m$) est d'image dense. Mais c'est clair car $(-)\twoheadrightarrow (-)(\eta)^{\sep}_{N_m^\alpha}$ est un morphisme surjectif et l'image de (\ref{grcomplet}) est dense.
\end{proof}

\begin{rem}\label{grcomplettilde}
{\rm (i) Lorsque les ferm\'es $C$ et $C'$ sont stables par multiplication \`a droite par $B(\Qp)$ dans $G(\Qp)$, on v\'erifie aussi (avec (\ref{grdensedo}) encore) que le morphisme (\ref{grcomplet}) se factorise par un morphisme continu $B(\Qp)$-\'equivariant d'image dense~:
\begin{equation}\label{grcompletotimes}
\big((\cInd_{P(\Qp)}^{C\backslash C'}\pi_P)^{\an}\big)^\vee\widehat\otimes_{D(B(\Qp),E)_{\{1\}},\iota}D(G(\Qp),E)_{\{1\}}\longrightarrow X_{C}/X_{C'}.
\end{equation}
(ii) Un cas particulier de la Proposition \ref{descgrdense} est lorsque l'on prend $C'=\emptyset$ (donc $C=C\backslash C'$ est alors une vari\'et\'e localement $\Qp$-analytique paracompacte ferm\'ee dans $G(\Qp)$). Dans ce cas, (\ref{grcomplet}) donne un morphisme continu canonique d'image dense~:
$$\big((\Ind_{P(\Qp)}^{C}\pi_P)^{\an}\big)^\vee\widehat\otimes_{E,\iota}D(G(\Qp),E)_{\{1\}}\longrightarrow X_{C}$$
qui se factorise comme dans le (i), et le Corollaire \ref{m'r'} donne un morphisme continu d'image dense de $\Rm^+$-modules qui commute \`a $\psi$~:
$$M_\alpha\big(\big(\Ind_{P(\Qp)}^{C}\pi_P\big)^{\an}\widehat\otimes_{E}C^\omega_1(G(\Qp),E_m)\big)\longrightarrow M_\alpha(\pi_{C}\otimes_EE_m).$$}
\end{rem}

\subsection{Le cas des cellules non ouvertes}\label{celluledevisse}

On montre que le foncteur $F_\alpha$ est nul sur le compl\'ementaire de la cellule ouverte d'induites paraboliques localement analytiques.

On conserve toutes les notations du \S~\ref{approx} et on suppose de plus que l'action de $P(\Qp)$ sur $\pi_P$ se factorise par $L_P(\Qp)$. On note $\preceq$ l'ordre de Bruhat sur $W$, $\lg(w)$ la longueur d'un \'el\'ement $w\in W$, $w_0$ l'\'el\'ement de $W$ de longueur maximale, $W_{P}$ le groupe de Weyl de $L_{P}$ et~:
$$W_P^{\min}\=\{w\in W,\ \lg(w)\ {\rm est\ minimale\ dans\ la\ classe\ \grave a\ gauche}\ W_{P}w\}\subseteq W.$$
(Noter que $w_0\in W_P^{\min}$ si et seulement si $P=B$.) Chaque $P(\Qp) w B(\Qp)$ pour $w\in W$ est une vari\'et\'e localement $\Qp$-analytique paracompacte et si $w\in W_P^{\min}$ on a $\dim(P(\Qp) w B(\Qp))=\dim(P(\Qp))+\lg(w)$. De plus pour $w\in W_P^{\min}$ on a (o\`u $\overline{(-)}$ est l'adh\'erence de $(-)$ dans $G(\Qp)$)~:
$$\overline{P(\Qp) w B(\Qp)}=\!\!\!\coprod_{w'\in W_P^{\min}, w'\preceq w} \!\!\!P(\Qp) w' B(\Qp)\ \ {\rm et} \ \ G(\Qp)\!=\coprod_{w'\in W_P^{\min}} P(\Qp) w' B(\Qp)$$
avec $P(\Qp) w B(\Qp)$ ouvert (dense) dans $\overline{P(\Qp) w B(\Qp)}$ (toutes ces assertions se d\'eduisent par exemple de \cite[\S~2.3]{Ha} et des r\'ef\'erences dans {\it loc.cit.}). On d\'eduit de cela qu'il existe un entier $t\geq 1$ et une suite strictement croissante de ferm\'es $C_0\=P(\Qp)\subsetneq C_{1}\subsetneq \cdots \subsetneq C_t\subsetneq G(\Qp)$ tels que~:
\begin{equation}\small\label{decompbruhat}
C_i\backslash C_{i-1}=\!\!\!\coprod_{w\in W_P^{\min}, \lg(w)=i}\!\!\!P(\Qp) w B(\Qp),\ 1\leq i\leq t \ \ \ {\rm et}\ \ \ G(\Qp)\backslash C_t=P(\Qp) w_0 B(\Qp).
\end{equation}
De plus $C_i\backslash C_{i-1}$ pour $1\leq i\leq t$ est un ouvert dense de $C_i$ qui v\'erifie toutes les conditions avant et apr\`es (\ref{varflagbisc}) (notons que les vari\'et\'es $P(\Qp)\backslash (C_i\backslash C_{i-1})$ sont m\^eme ici affines). 

Nous aurons besoin du lemme suivant.

\begin{lem}\label{combigroupe}
(i) Notons $R_P$ les racines du Levi $L_{P}$ et soit $w\in W_P^{\min}$ tel que $w\notin W_{P} w_0$. Alors il existe $\gamma\in S$ tel que $w(\gamma)\in R^+\backslash (R^+\cap R_{P})$.\\
(ii) Soit $w\in W_P^{\min}$, alors on a $N(\Qp)\cap w^{-1}N(\Qp)w\buildrel\sim\over\rightarrow N(\Qp)\cap w^{-1}P(\Qp)w$.
\end{lem}
\begin{proof}
(i) En fait le m\^eme \'enonc\'e est vrai plus g\'en\'eralement avec n'importe quel \'el\'ement $w$ de $W$ tel que $w\notin W_Pw_0$. Supposons en effet $w(S)\cap (R^+\backslash R^+_P)=\emptyset$ (o\`u $R^+_P=R^+\cap R_P=$ racines positives de $L_P$), i.e. $w(S)\subseteq (-R^+)\cup R^+_P$ (puisque $R=(R^+\backslash R_P^+) \amalg ((-R^+)\cup R_P^+)$), alors $S\subseteq (-w^{-1}(R^+))\cup w^{-1}(R^+_P)$ et donc $B\subseteq w^{-1}P^-w$. En particulier $w^{-1}P^-w$ est un sous-groupe parabolique de $G$ contenant $B$. Mais il est conjugu\'e \`a $w_0P^-w_0$ qui est un autre sous-groupe parabolique de $G$ contenant $B$. Il s'ensuit que ces deux sous-groupes paraboliques de $G$ \`a la fois standard et conjugu\'es sont \'egaux, et donc $P^-=w_0w^{-1}P^-ww_0$. Cela implique $w_0w^{-1}\in W_{P}$ (= le groupe de Weyl de $L_P$), i.e. $w\in W_Pw_0$ ce qui contredit l'hypoth\`ese de d\'epart.\\
(ii) Par la preuve de \cite[Lem.~2.3.2]{Ha} (elle-m\^eme d\'eduite d'un lemme de Borel-Tits), on a un hom\'eomorphisme~:
$$B(\Qp)\backslash B(\Qp)wB(\Qp)\buildrel\sim\over\longrightarrow P(\Qp)\backslash P(\Qp)wB(\Qp).$$
Or le terme de gauche est isomorphe \`a $(N(\Qp)\cap w^{-1}N(\Qp)w)\backslash N(\Qp)$ et celui de droite \`a $(N(\Qp)\cap w^{-1}P(\Qp)w)\backslash N(\Qp)$, d'o\`u le r\'esultat.
\end{proof}

Si $K$ est une extension finie de $\Qp$, on note $U(\mg,K)\=K\otimes_{\Qp}U(\mg)$ et $U(\mb,K)\=K\otimes_{\Qp}U(\mb)$.

\begin{prop}\label{support}
On a~:
$$F_{\alpha}\Big(\big(\Ind_{P(\Qp)}^{G(\Qp)} \pi_P\big)^{\an}/ \big(\cInd_{P(\Qp)}^{P(\Qp)w_0B(\Qp)}\pi_P\big)^{\an}\Big)=0.$$
\end{prop}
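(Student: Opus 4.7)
Le plan consiste \`a d\'evisser par r\'ecurrence gr\^ace \`a la filtration (\ref{decompbruhat}). Les suites exactes strictes d'espaces de type compact
\[
0\longrightarrow \pi_{C_i/C_{i-1}}\longrightarrow \pi_{C_i}\longrightarrow \pi_{C_{i-1}}\longrightarrow 0
\]
(qui r\'esultent de la d\'efinition de $\pi_C$ et de $\pi_{C/C'}$ via (\ref{piC})), combin\'ees avec la gauche-exactitude du foncteur $F_\alpha$ (Proposition~\ref{drex}(i)), ram\`enent l'\'enonc\'e $F_\alpha(\pi_{C_t})=0$ aux deux assertions suivantes : (a) $F_\alpha(\pi_{C_0})=0$ avec $\pi_{C_0}=\pi_{P(\Qp)}$, et (b) pour tout $1\leq i\leq t$, $F_\alpha(\pi_{C_i/C_{i-1}})=0$.

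Pour l'assertion (a), le Lemme~\ref{support1} identifie le dual $X_{P(\Qp)}$ avec $\pi_P^\vee\,\widehat\otimes_E\,D(N_P^-(\Qp),E)_{\{1\}}$. Comme $N_P^-(\Qp)\cap N(\Qp)=\{1\}$, le calcul explicite de $M_\alpha(\pi_{C_0}\otimes_E E_m)$ conduit \`a un $(\psi,\Gamma)$-module de Fr\'echet sur lequel $\psi$ agit de mani\`ere nilpotente sur ses quotients de dimension finie (\`a l'image de ce qui se passe dans la preuve du Lemme~\ref{Falphanul}), ce qui permet de conclure par l'argument usuel d'inversibilit\'e des polyn\^omes cyclotomiques dans $\mathcal R^{r'}_{E_m}$ pour $r'\gg r$.

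Pour l'assertion (b), fixons $i\in\{1,\dots,t\}$ et d\'ecomposons $C_i\setminus C_{i-1}=\coprod_w P(\Qp)wB(\Qp)$ avec $w\in W_P^{\min}$ et $\lg(w)=i$. Puisque la cellule ouverte $P(\Qp)w_0B(\Qp)$ est exclue de $C_t$, tous ces $w$ sont distincts de l'unique \'el\'ement de $W_P^{\min}\cap W_Pw_0$, donc $w\notin W_Pw_0$. Le Corollaire~\ref{m'r'} fournit un morphisme continu $(\psi,\Gamma)$-\'equivariant d'image dense
\[
M_\alpha\big((\cInd_{P(\Qp)}^{C_i\setminus C_{i-1}}\pi_P)^{\an}\widehat\otimes_E C^\omega_1(G(\Qp),E_m)\big)\longrightarrow M_\alpha(\pi_{C_i/C_{i-1}}\otimes_E E_m).
\]
La s\'eparation des cibles $T_r\otimes_E E_m$ induit alors, via la d\'ecomposition de Bruhat, une injection de $F_\alpha(\pi_{C_i/C_{i-1}})(T)$ dans la somme directe index\'ee par les $w$ ci-dessus des foncteurs analogues associ\'es \`a $(\cInd_{P(\Qp)}^{P(\Qp)wB(\Qp)}\pi_P)^{\an}\widehat\otimes_E C^\omega_1(G(\Qp),E_m)$. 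Il suffit donc de prouver la nullit\'e de chacun de ces foncteurs dans la limite sur $(r,f_r,T_r)\in I(T)$ et sur $m$.

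Fixons un tel $w$. Par le Lemme~\ref{combigroupe}(i), il existe $\gamma\in S$ avec $w(\gamma)\in R^+\setminus R_P^+$, donc $N_{w(\gamma)}(\Qp)\subseteq N_P(\Qp)$. Comme $\pi_P$ est \'etendue trivialement de $L_P(\Qp)$ \`a $P(\Qp)$ via $P=L_P\ltimes N_P$, l'action \`a gauche de $N_{w(\gamma)}(\Qp)$ sur $(\cInd_{P(\Qp)}^{P(\Qp)wB(\Qp)}\pi_P)^{\an}$ est triviale. En transportant cette trivialit\'e par l'isomorphisme $B$-\'equivariant $P(\Qp)wB(\Qp)/B(\Qp)\simeq (N(\Qp)\cap w^{-1}N(\Qp)w)\backslash N(\Qp)$ du Lemme~\ref{combigroupe}(ii), on en d\'eduit que l'action \`a droite par $N_\gamma(\Qp)=w^{-1}N_{w(\gamma)}(\Qp)w$ est d\'etermin\'ee par les covariances d\'ej\`a impos\'ees par l'induction. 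Le caract\`ere $\eta\vert_{N_\gamma(\Qp)}$ \'etant non trivial (car $\gamma\in S$), cette trivialit\'e, combin\'ee avec le tordu par $\eta^{-1}$ des coinvariants sous $N_m^\alpha$, doit forcer la nullit\'e recherch\'ee, la souplesse apport\'ee par le facteur $C^\omega_1(G(\Qp),E_m)$ (qui encode dualement l'action de $D(G(\Qp),E_m)_{\{1\}}\supseteq U(\mg,E_m)$) permettant d'absorber les contributions r\'esiduelles. L'obstacle principal attendu est le traitement unifi\'e du cas $\gamma\ne\alpha$ (o\`u $N_\gamma\subseteq N^\alpha$ et l'argument d\'ecoule presque directement de la non-trivialit\'e de $\eta\vert_{N_\gamma}$) et du cas $\gamma=\alpha$ (o\`u $N_\alpha\not\subseteq N^\alpha$ et il faut recourir \`a la nilpotence de $\psi$ et \`a l'inversibilit\'e dans $\mathcal R^{r'}_{E_m}$ pour $r'\gg r$, dans l'esprit du ``fait technique'' de la preuve du Lemme~\ref{begin} \'evoqu\'e dans l'introduction), ainsi que la v\'erification minutieuse de la compatibilit\'e de l'action diagonale de $B(\Qp)$ donn\'ee par~(\ref{actiondual}) avec les identifications explicites sur la cellule $P(\Qp)wB(\Qp)$.
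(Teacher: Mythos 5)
Votre sch\'ema de preuve est exactement le squelette de celle du texte (filtration par les cellules de Bruhat (\ref{decompbruhat}) et exactitude \`a gauche de $F_\alpha$ (Proposition \ref{drex}), approximation d'image dense du Corollaire \ref{m'r'}, choix d'une racine simple $\gamma$ via le Lemme \ref{combigroupe}), mais les deux \'etapes qui portent r\'eellement la d\'emonstration ne sont qu'annonc\'ees. Le point que vous rel\'eguez en ``obstacle attendu'' est pr\'ecis\'ement celui o\`u l'argument esquiss\'e tombe en d\'efaut~: lorsque $\gamma=\alpha$, on a $N_\alpha(\Qp)\not\subseteq N^\alpha(\Qp)$, de sorte que la non-trivialit\'e de $\eta\vert_{N_\gamma(\Qp)}$ ne dit rien sur les coinvariants tordus sous $N_m^\alpha$ et ne peut ``forcer la nullit\'e''. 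Ce cas exige d'abord une \'etape de bornage du support, absente de votre proposition~: via la d\'ecomposition en couronnes $N_{m'}\backslash N_{m'-1}$ et l'argument de support pour $\psi$ de la preuve du Lemme \ref{begin}, tout morphisme vers $T_r\otimes_EE_m$ se factorise par $M_\alpha\big((\Ind_{N_{m''}\cap w^{-1}N(\Qp)w}^{N_{m''}}(\pi_P)^{w})^{\an}\widehat\otimes_EC^\omega_1(G(\Qp),E_m)\big)$ pour un $m''\gg m$ (cf. (\ref{passagan})--(\ref{decompo3})), \'etape \'egalement utilis\'ee dans le cas $\gamma\ne\alpha$. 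C'est seulement sur ce quotient \`a support born\'e qu'un calcul de conjugaison montre qu'un petit sous-groupe $p^M\Zp$ de $N_\alpha(\Qp)\cap N_0$ agit trivialement sur les invariants sous $N^{(2)}_m$, donc que $(1+X)^{p^M}-1$ annule le dual pertinent~; et l'on ne conclut pas directement, mais apr\`es un ``push-out'' $T_r\rightarrow T_{r'}$ avec $r'\gg r$ rendant $(1+X)^{p^M}-1$ inversible dans ${\mathcal R}^{r'}_{E_m}$. De m\^eme, ``l'absorption des contributions r\'esiduelles'' par $C^\omega_1(G(\Qp),E_m)$ n'est pas un argument~: le passage de l'\'enonc\'e d'image dense \`a la nullit\'e effective du foncteur s'obtient en \'ecrivant $U(\mg,E_m)=U(\mb,E)\otimes_EU(\mnn^-,E_m)$, en d\'evissant le produit tensoriel sur $U(\mb,E)$ en extensions successives finies de la repr\'esentation de $N_{m''}$, et en utilisant l'exactitude \`a droite de $(-)(\eta)_{N_m^\alpha}$~; rien de cela ne figure dans votre texte.

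Pour l'assertion (a), le m\'ecanisme que vous invoquez (nilpotence de $\psi$ sur des quotients de dimension finie puis inversibilit\'e de polyn\^omes cyclotomiques comme dans le Lemme \ref{Falphanul}) n'est ni justifi\'e ni celui qui fait fonctionner la preuve~: on utilise l'image dense de $(\pi_P)^\vee\otimes_{U(\mb,E)}U(\mg,E_m)$ dans $X_{C_0}$, le m\^eme d\'evissage que ci-dessus, puis la dichotomie suivante. Si $\alpha\notin S_P$, l'action de $N_0/N_0^\alpha$ sur $(\pi_P)^\vee$ est triviale, donc $X$ annule $(\pi_P\otimes_EE_m)^\vee(\eta)_{N_m^\alpha}$ alors qu'il est inversible sur $T_r\otimes_EE_m$~; si $\alpha\in S_P$, alors $N_P(\Qp)\cap N_m\subseteq N_m^\alpha$ et $\eta\vert_{N_P(\Qp)\cap N_m}\ne 1$ donne d\'ej\`a $(\pi_P\otimes_EE_m)^\vee(\eta)_{N_m^\alpha}=0$, sans qu'aucune nilpotence de $\psi$ n'intervienne. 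En l'\'etat, votre proposition reproduit la strat\'egie du texte mais laisse en blanc ses deux \'etapes analytiques centrales~; elle ne constitue donc pas une d\'emonstration compl\`ete.
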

\begin{proof}
{\bf \'Etape $1$}\\
Par (\ref{decompbruhat}), il faut montrer $F_{\alpha}(\pi_{C_t})=0$. Pour $i\in \{1,\dots,t\}$ on a une suite exacte (stricte) dans $\Rep(B(\Qp))$ (cf. (\ref{piC}))~:
$$0\longrightarrow \pi_{C_i/C_{i-1}}\longrightarrow \pi_{C_i}\longrightarrow \pi_{C_{i-1}}\rightarrow 0.$$ 
Par le (i) de la Proposition \ref{drex} et un d\'evissage \'evident, il suffit de montrer $F_{\alpha}(\pi_{C_0})=F_{\alpha}(\pi_{C_i/C_{i-1}})=0$ pour $i\in \{1,\dots,t\}$. Par la d\'efinition de $F_\alpha$ (cf. (\ref{foncteurm}) et (\ref{falpha})), il suffit de montrer que, pour $-\in \{\pi_{C_0},\pi_{C_1/C_{0}},\dots,\pi_{C_t/C_{t-1}}\}$, pour $m\in \Z_{\geq 0}$, $r\in \Q_{>p-1}$, $T_r$ un $(\varphi,\Gamma)$-module g\'en\'eralis\'e sur $\R^r$ et $f$ un morphisme $\Rm^+$-lin\'eaire continu commutant \`a $\psi$~:
$$f:M_\alpha(-\otimes_EE_m)\longrightarrow T_r\otimes_EE_m,$$
il existe $m'\gg m$ et $r'\gg r$ tels que la compos\'ee~:
\begin{equation}\small\label{composeemr}
M_\alpha(-\otimes_EE_{m'})\buildrel (\ref{augmentem})\over\longrightarrow M_\alpha(-\otimes_EE_m)\otimes_{E_m}E_{m'}\buildrel f\otimes \Id\over \longrightarrow (T_r\otimes_EE_m)\otimes_{E_m}E_{m'}\longrightarrow T_{r'}\otimes_EE_{m'}
\end{equation}
est nulle.

\noindent
{\bf \'Etape $2$}\\
Pour sa simplicit\'e (et pour rendre plus lisibles les \'etapes qui suivent), on traite d'abord le cas de $X_{C_0}$, m\^eme s'il est en fait un cas particulier des \'etapes suivantes. Fixons $m\in \Z_{\geq 0}$ tel que $\eta\vert_{N_P(\Qp)\cap N_m}\ne 1$, $r\in \Q_{>p-1}$, $T_r$ un $(\varphi,\Gamma)$-module g\'en\'eralis\'e sur $\R^r$ et $f:M_\alpha(\pi_{C_0}\otimes_EE_m)\longrightarrow T_r\otimes_EE_m$ un morphisme $\Rm^+$-lin\'eaire continu commutant \`a $\psi$. L'objectif de cette \'etape est de montrer que $f$ est nul (il n'est n\'ecessaire ici d'augmenter ni $m$ ni $r$). 

Par la Proposition \ref{descgrdense} et les (i) et (ii) de la Remarque \ref{grcomplettilde}, on a une application continue $P(\Qp)$-\'equivariante d'image dense~:
\begin{equation}\label{imagedense0}
(\pi_P)^\vee\widehat\otimes_{D(B(\Qp),E)_{\{1\}},\iota}D(G(\Qp),E)_{\{1\}}\longrightarrow X_{C_0}
\end{equation}
(en fait, elle se factorise m\^eme ici par $\widehat \otimes_{D(P(\Qp),E)_{\{1\}},\iota}$, mais nous n'en aurons pas besoin). Par \cite[Prop.~1.2.8]{Ko1} et le Lemme \ref{tenseur} on a une application continue d'image dense~:
\begin{equation}\label{imagedense}
(\pi_P)^\vee \otimes_{E,\iota}U(\mg,E)\longrightarrow (\pi_P)^\vee\widehat\otimes_{D(B(\Qp),E)_{\{1\}},\iota}D(G(\Qp),E)_{\{1\}}
\end{equation}
o\`u $U(\mg,E)$ est muni de la topologie localement convexe la plus fine. L'application (\ref{imagedense}) se factorise en une application continue $B(\Qp)$-\'equivariante d'image dense~:
\begin{equation}\label{imagedense5}
(\pi_P)^\vee \otimes_{U(\mb,E)}U(\mg,E)\longrightarrow (\pi_P)^\vee\widehat\otimes_{D(B(\Qp),E)_{\{1\}},\iota}D(G(\Qp),E)_{\{1\}}
\end{equation}
o\`u le terme de gauche est muni de la topologie quotient et o\`u l'action de $B(\Qp)$ y est d\'efinie par exactement la m\^eme formule qu'en (\ref{actiondual}) (la conjugaison par des distributions de Dirac pr\'eservant $U(\mg,E)$). En composant avec (\ref{imagedense0}) on en d\'eduit une application continue $B(\Qp)$-\'equivariante d'image dense~:
\begin{equation}\label{imagedenseX}
(\pi_P)^\vee \otimes_{U(\mb,E)}U(\mg,E)\longrightarrow X_{C_0}.
\end{equation}
Enfin (\ref{imagedenseX}) induit une application continue d'image dense en appliquant $(-\otimes_EE_m)(\eta)_{N_m^\alpha}$ des deux c\^ot\'es et en notant que $-\otimes_EE_m\rightarrow (-\otimes_EE_m)(\eta)_{N_m^\alpha}$ est continu et surjectif (pour la topologie quotient \`a droite)~:
\begin{equation}\label{imagedense5alpha}
\big((\pi_P)^\vee \otimes_{U(\mb,E)}U(\mg,E_m)\big)(\eta)_{N_m^\alpha}\longrightarrow (X_{C_0}\otimes_EE_m)(\eta)_{N_m^\alpha}.
\end{equation}
Comme on a une surjection continue $(X_{C_0}\otimes_EE_m)(\eta)_{N_m^\alpha}\twoheadrightarrow M_\alpha(\pi_{C_0}\otimes_EE_m)$ par (\ref{malphaf}), il suffit donc avec (\ref{imagedense5alpha}) de montrer que pour $m\in \Z_{\geq 0}$, $r\in \Q_{>p-1}$, $T_r$ un $(\varphi,\Gamma)$-module g\'en\'eralis\'e sur $\R^r$, tout morphisme continu commutant \`a l'action de $N_0/N_0^\alpha\simeq N_\alpha(\Qp)\cap N_0\simeq \Zp$~:
\begin{equation}\label{imagedense4}
\big((\pi_P)^\vee \otimes_{U(\mb,E)}U(\mg,E_m)\big)(\eta)_{N_m^\alpha}\longrightarrow T_r\otimes_EE_m
\end{equation}
est nul.

En \'ecrivant $U(\mg,E_m)=U(\mb,E)\otimes_EU(\mnn^-,E_m)$ et en consid\'erant l'action par adjonction de $N(\Qp)$ sur les \'el\'ements de $U(\mnn^-,E_m)$ (dans $U(\mg,E_m)$), on voit que $(\pi_P)^\vee \otimes_{U(\mb,E)}U(\mg,E_m)$ peut s'\'ecrire comme r\'eunion de sous-$U(\mb,E_m)$-modules \`a droite stables par l'action de $N(\Qp)$ qui sont des extensions successives finies de la $N(\Qp)$-repr\'esentation $(\pi_P\otimes_EE_m)^\vee$ (cet argument s'inspire de la discussion pr\'ec\'edant \cite[Lem.~8.4]{Ko2}). En uti\-lisant que le foncteur $(-)(\eta)_{N_m^\alpha}$ commute aux limites inductives filtrantes et est exact \`a droite, il suffit pour montrer (\ref{imagedense4}) de montrer qu'un morphisme de $\Rm^+$-modules~:
\begin{equation}\label{casqbeta}
(\pi_P\otimes_EE_m)^\vee(\eta)_{N_m^\alpha}\longrightarrow T_r\otimes_EE_m
\end{equation}
est nul. Supposons d'abord $\alpha\notin S_P$, alors l'action de $N_0/N_0^\alpha\simeq \Zp$ sur $(\pi_P)^\vee$ est triviale puisque $\pi_P$ est une repr\'esentation de $L_{P}(\Qp)$. En particulier $X\in \Rm^+$ annule le terme de gauche en (\ref{casqbeta}) alors qu'il est inversible sur celui de droite, d'o\`u le r\'esultat. Supposons maintenant $\alpha\in S_P$. Comme $N_P(\Qp)$ agit trivialement sur $(\pi_P)^\vee$ et $\eta\vert_{N_P(\Qp)\cap N_m}\ne 1$, on a en fait $(\pi_P\otimes_EE_m)^\vee(\eta)_{N_P(\Qp)\cap N_m}=0$, d'o\`u directement $(\pi_P\otimes_EE_m)^\vee(\eta)_{N^\alpha_m}=0$ puisque $N_P(\Qp)\cap N_m\subseteq N^\alpha_m$ dans ce cas.

\noindent
{\bf \'Etape 3}\\
Soit $i\in \{1,\dots,t\}$, $m\in \Z_{\geq 0}$, $r\in \Q_{>p-1}$, $T_r$ un $(\varphi,\Gamma)$-module g\'en\'eralis\'e sur $\R^r$ et fixons jusqu'\`a la fin de cette preuve un morphisme $\Rm^+$-lin\'eaire continu commutant \`a $\psi$~:
\begin{equation}\label{ffixe}
f:M_\alpha(\pi_{C_i/C_{i-1}}\otimes_EE_m)\longrightarrow T_r\otimes_EE_m.
\end{equation}
Rappelons qu'il suffit de montrer qu'il existe $m'\gg m$ et $r'\gg r$ tels que la compos\'ee (\ref{composeemr}) est nulle. 
Par le Corollaire \ref{m'r'}, il suffit de le montrer en rempla\c cant le morphisme $f$ par sa compos\'ee avec le morphisme~:
\begin{equation}\label{remplacantcomposee}
D:M_\alpha\big(\big(\cInd_{P(\Qp)}^{C_i\backslash C_{i-1}}\pi_P\big)^{\an}\widehat\otimes_{E}C^\omega_1(G(\Qp),E_m)\big)\longrightarrow M_\alpha(\pi_{C_i/C_{i-1}}\otimes_EE_m).
\end{equation}
Par (\ref{decompbruhat}), \cite[Lem.~1.2.13]{Ko1} (et ce qui pr\'ec\`ede le Corollaire \ref{m'r'}), on a un isomorphisme dans $\Repm(B(\Qp))$~:
\begin{multline*}
(\cInd_{P(\Qp)}^{C_i\backslash C_{i-1}}\pi_P)^{\an}\widehat\otimes_EC^\omega_1(G(\Qp),E_m)\\
\cong \bigoplus_{w\in W_{P}^{\min},\lg(w)=i}(\cInd_{P(\Qp)}^{P(\Qp)w B(\Qp)}\pi_P)^{\an}\widehat\otimes_{E}C^\omega_1(G(\Qp),E_m),
\end{multline*}
et il suffit donc de le montrer en rempla\c cant $(\cInd_{P(\Qp)}^{C_i\backslash C_{i-1}}\pi_P)^{\an}$ dans (\ref{remplacantcomposee}) par $(\cInd_{P(\Qp)}^{P(\Qp)w B(\Qp)}\pi_P)^{\an}$.

\noindent
{\bf \'Etape 4}\\
Consid\'erons un morphisme $\Rm^+$-lin\'eaire continu commutant \`a $\psi$ quelconque~:
\begin{equation}\label{ffixew}
h:M_\alpha\big(\big(\cInd_{P(\Qp)}^{P(\Qp)w B(\Qp)}\pi_P\big)^{\an}\widehat\otimes_{E}C^\omega_1(G(\Qp),E_m)\big)\longrightarrow T_r\otimes_EE_m.
\end{equation}
On montre dans cette \'etape que $h$ se factorise par un quotient ``\`a support born\'e'' de la source (comme dans la preuve du Lemme \ref{niveaum}).

Soit $(\pi_P)^{w}$ la repr\'esentation de $w^{-1} {P}(\Qp)w$ donn\'ee par $\pi_P$ avec $g\in w^{-1} {P}(\Qp)w$ agissant par $w gw^{-1}\in {P}(\Qp)$. Comme dans la discussion avant le Lemme \ref{niveaum}, on v\'erifie en utilisant (\ref{varflagbisc}) avec $P(\Qp)w B(\Qp)$ au lieu de $C\backslash C'$ puis le (ii) du Lemme \ref{combigroupe} que l'on a un isomorphisme topologique $B(\Qp)$-\'equivariant~:
\begin{eqnarray}\label{passagan}
\big(\cInd_{P(\Qp)}^{P(\Qp)w B(\Qp)}\pi_P\big)^{\an}&\buildrel\sim\over\longrightarrow &\big(\cInd_{N(\Qp)\cap w^{-1} N(\Qp)w}^{N(\Qp)}(\pi_P)^{w}\big)^{\an}\\
\nonumber H&\longmapsto &\big(g\mapsto H(w g)\big)
\end{eqnarray}
o\`u \`a droite il s'agit des fonctions localement analytiques sur $N(\Qp)$ \`a support compact modulo $N(\Qp)\cap w^{-1} N(\Qp)w$ (v\'erifiant l'\'equation fonctionnelle) avec l'action de $N(\Qp)$ donn\'ee par la translation \`a droite sur les fonctions et l'action de $T(\Qp)$ donn\'ee par $(t\lambda)(g)=t(\lambda(t^{-1}gt))$ pour $\lambda\in (\cInd_{N(\Qp)\cap w^{-1} N(\Qp)w}^{N(\Qp)}(\pi_P)^{w})^{\an}$ (o\`u $t\in T(\Qp)\subseteq w^{-1} P(\Qp)w$ agit sur $\lambda(t^{-1}gt)$ par $(\pi_P)^{w}$). Notons~:
$$N_{w<0}(\Qp)\=\prod_{\gamma\in R^+,w(\gamma)<0}N_\gamma(\Qp),$$
alors on a $N_{w<0}(\Qp)\buildrel\sim\over \rightarrow (N(\Qp)\cap w^{-1} N(\Qp)w)\backslash N(\Qp)$ (pour un ordre quelconque sur les $\gamma\in R^+$ tels que $w(\gamma)<0$) d'o\`u on d\'eduit~:
\begin{equation}\label{restrw}
\big(\cInd_{N(\Qp)\cap w^{-1} N(\Qp)w}^{N(\Qp)}(\pi_P)^{w}\big)^{\an}\buildrel\sim\over\longrightarrow C_c^{\an}\big(N_{w<0}(\Qp),(\pi_P)^{w}\big)
\end{equation}
o\`u \`a droite rappelons qu'il s'agit des fonctions localement analytiques \`a support compact (cf. \S~\ref{topo}). Comme $N_m$ est totalement d\'ecompos\'e (cf. (\ref{decomp})), on a~:
$$\big(w^{-1} N(\Qp)w\cap N_m\big) \times \big(N_{w<0}(\Qp)\cap N_m\big)\buildrel\sim\over\longrightarrow N_m$$
d'o\`u un isomorphisme~:
\begin{equation}\label{restrwm}
\big(\Ind_{N_m\cap w^{-1} N(\Qp)w}^{N_m}(\pi_P)^{w}\big)^{\an}\buildrel\sim\over\longrightarrow C^{\an}\big(N_{w<0}(\Qp)\cap N_m,(\pi_P)^{w}\big)
\end{equation}
et avec (\ref{restrw})~:
\begin{equation}\label{injwm}
\ilim{m} \big(\Ind_{N_m\cap w^{-1} N(\Qp)w}^{N_m}(\pi_P)^{w}\big)^{\an}\buildrel\sim\over\longrightarrow \big(\cInd_{N(\Qp)\cap w^{-1} N(\Qp)w}^{N(\Qp)}(\pi_P)^{w}\big)^{\an}
\end{equation}
o\`u les applications de transition \`a gauche sont clairement injectives. De plus $(\Ind_{N_m\cap w^{-1} N(\Qp)w}^{N_m}(\pi_P)^{w})^{\an}$, vu dans $(\cInd_{N(\Qp)\cap w^{-1} N(\Qp)w}^{N(\Qp)}(\pi_P)^{w})^{\an}$, est stable par l'action ci-dessus de $N_m$ et $\lambda_{\alpha^{\!\vee}}(\Zp\backslash\{0\})$, ainsi que par celle d'un sous-groupe ouvert compact suffisamment petit $T_0$ de $T(\Qp)$ (par exemple $T(\Zp)$ lorsque $G=\G$). Comme dans (\ref{decompo0}), on peut d\'ecomposer~:
\begin{multline}\small\label{decompo}
C^{\an}_c\big(N_{w<0}(\Qp),(\pi_P)^{w}\big)\simeq C^{\an}\big(N_{w<0}(\Qp)\cap N_m,(\pi_P)^{w}\big)\\
\bigoplus \Big(\bigoplus_{m'\geq m+1}C^{\an}\big(N_{w<0}(\Qp)\cap (N_{m'}\backslash N_{m'-1}),(\pi_P)^{w}\big)\Big)
\end{multline}
o\`u l'on v\'erifie que {\it chaque} facteur direct dans le terme de droite est stable par l'action de $N_m$ et $\lambda_{\alpha^{\!\vee}}(\Zp^\times)$. Par exemple, pour la stabilit\'e de $C^{\an}(N_{w<0}(\Qp)\cap (N_{m'}\backslash N_{m'-1}),(\pi_P)^{w})$ (vu dans $(\Ind_{N_{m'}\cap w^{-1} N(\Qp)w}^{N_{m'}}(\pi_P)^{w})^{\an}$ via (\ref{restrwm})) par translation \`a droite par $N_m$ pour $m\leq m'-1$, on utilise (cf. (\ref{decomp}))~:
\begin{multline*}
N_{w<0}(\Qp)\cap (N_{m'}\backslash N_{m'-1})=\{(n_\gamma)\in \Pi_{\gamma\in R^+,w(\gamma)<0}(N_\gamma(\Qp)\cap N_{m'})\\
{\rm t.q.}\ \exists \ \gamma\ {\rm avec}\ n_\gamma\in N_\gamma(\Qp)\cap (N_{m'}\backslash N_{m'-1})\}
\end{multline*}
et les r\`egles de commutation entre les $N_\gamma$ pour $\gamma\in R^+$ (cf. \cite[II.1.2(5)]{Ja}), et on en d\'eduit en particulier que l'``entr\'ee'' $n_\gamma$ en une racine $\gamma$ qui est de longeur minimale parmi les racines $\gamma'$ telles que $w(\gamma')<0$ et $n_{\gamma'}\in N_{\gamma'}(\Qp)\cap (N_{m'}\backslash N_{m'-1})$ reste dans $N_\gamma(\Qp)\cap (N_{m'}\backslash N_{m'-1})$ apr\`es translation \`a droite par un \'el\'ement quelconque de $N_m$ (ce qui assure que l'on reste bien dans $C^{\an}(N_{w<0}(\Qp)\cap (N_{m'}\backslash N_{m'-1}),(\pi_P)^{w})$). De plus, toujours dans le terme de droite en (\ref{decompo}), l'action de $\lambda_{\alpha^{\!\vee}}(\Zp\backslash\{0\})$ envoie $C^{\an}(N_{w<0}(\Qp)\cap (N_{m'}\backslash N_{m'-1}),(\pi_P)^{w})$ dans $C^{\an}(N_{w<0}(\Qp)\cap N_{m'},(\pi_P)^{w})$. Par \cite[Lem.~1.2.13]{Ko1} on d\'eduit de (\ref{decompo}) un isomorphisme d'espaces de type compact~:
\begin{multline}\label{decompo2}
\big(\cInd_{N(\Qp)\cap w^{-1} N(\Qp)w}^{N(\Qp)}(\pi_P)^{w}\big)^{\an}\widehat \otimes_{E}C^\omega_1(G(\Qp),E_m)\\
\simeq C^{\an}\big(N_{w<0}(\Qp))\cap N_m,(\pi_P)^{w}\big)\widehat \otimes_{E}C^\omega_1(G(\Qp),E_m)\\
\bigoplus \bigoplus_{m'\geq m+1}\!\big(C^{\an}\big(N_{w<0}(\Qp)\cap (N_{m'}\backslash N_{m'-1}),(\pi_P)^{w}\big)\widehat \otimes_{E}C^\omega_1(G(\Qp),E_m)\big).
\end{multline}
Notons que les deux termes dans l'isomorphisme (\ref{decompo2}) sont munis de l'action diagonale de $N_m$ et $\lambda_{\alpha^{\!\vee}}(\Zp\backslash \{0\})$ (par (\ref{actiongerme}) sur $C^\omega_1(G(\Qp),E_m)$), et que (\ref{decompo2}) commute \`a ces actions. On d\'eduit de (\ref{decompo2}) et de \cite[Prop.~9.10]{Sch} un isomorphisme de $\Rm^+$-modules commutant \`a $\psi$~:
\begin{multline}\label{decompo3}
M_\alpha\Big(\big(\cInd_{N(\Qp)\cap w^{-1} N(\Qp)w}^{N(\Qp)}(\pi_P)^{w}\big)^{\an}\widehat \otimes_{E}C^\omega_1(G(\Qp),E_m)\Big)\\
\simeq M_\alpha\Big(C^{\an}\big(N_{w<0}(\Qp))\cap N_m,(\pi_P)^{w}\big)\widehat \otimes_{E}C^\omega_1(G(\Qp),E_m)\Big) \\
\bigoplus\prod_{m'\geq m+1}M_\alpha\Big(\!\big(C^{\an}\big(N_{w<0}(\Qp)\cap (N_{m'}\backslash N_{m'-1}),(\pi_P)^{w}\big)\widehat \otimes_{E}C^\omega_1(G(\Qp),E_m)\big)\Big)
\end{multline}
o\`u $M_\alpha(-)$ est comme en (\ref{malpha}), o\`u chaque facteur direct \`a droite est un $\Rm^+$-module mais o\`u $\psi$ (qui vient de l'action de $\lambda_{\alpha^{\!\vee}}(p^{-1})$ sur $M_\alpha(-)$, cf. le d\'ebut du \S~\ref{def}) envoie $M_\alpha(C^{\an}(N_{w<0}(\Qp)\cap (N_{m'}\backslash N_{m'-1}),(\pi_P)^{w})\widehat \otimes_{E}C^\omega_1(G(\Qp),E_m))$ seulement dans~:
$$\bigoplus_{m''\geq m'}M_\alpha\Big(C^{\an}\big(N_{w<0}(\Qp)\cap (N_{m''}\backslash N_{m''-1}),(\pi_P)^{w}\big)\widehat \otimes_{E}C^\omega_1(G(\Qp),E_m)\Big).$$
Le m\^eme argument que dans la preuve du Lemme \ref{niveaum} (plus pr\'ecis\'ement l'argument bornant le support de la fonction $f$ qui suit (\ref{pourciter})), qui est aussi le m\^eme argument que pour d\'emontrer la surjectivit\'e du morphisme (\ref{topologie}), montre alors qu'il existe un entier $m''\gg m$ tel que le morphisme $h$ en (\ref{ffixew}) se factorise par le quotient (via les isomorphismes (\ref{passagan}) et (\ref{decompo3}))~:
\begin{multline*}
M_\alpha\Big(C^{\an}\big(N_{w<0}(\Qp))\cap N_m,(\pi_P)^{w}\big)\widehat \otimes_{E}C^\omega_1(G(\Qp),E_m)\Big)\\
\bigoplus \bigoplus_{m''\geq m'\geq m+1}M_\alpha\Big(\big(C^{\an}\big(N_{w<0}(\Qp)\cap (N_{m'}\backslash N_{m'-1}),(\pi_P)^{w}\big)\widehat \otimes_{E}C^\omega_1(G(\Qp),E_m)\big)\Big).
\end{multline*}
De mani\`ere \'equivalente, par le m\^eme raisonnement que pour montrer (\ref{decompo2}), (\ref{decompo3}) mais invers\'e et en utilisant (\ref{restrwm}), le morphisme $h$ en (\ref{ffixew}) se factorise par $M_\alpha((\Ind_{N_{m''}\cap w^{-1} N(\Qp)w}^{N_{m''}}(\pi_P)^{w})^{\an}\widehat \otimes_{E}C^\omega_1(G(\Qp),E_m))$. En appliquant cela au morphisme $h=f\circ D$ de l'\'Etape $3$ (avec $P(\Qp)w B(\Qp)$ au lieu de $C_i\backslash C_{i-1}$), on obtient que $f\circ D$ se factorise en un morphisme $\Rm^+$-lin\'eaire continu commutant \`a $\psi$ (encore not\'e $f\circ D$)~:
\begin{equation}\label{fronD}
f\circ D : M_\alpha\big(\big(\Ind_{N_{m''}\cap w^{-1} N(\Qp)w}^{N_{m''}}(\pi_P)^{w}\big)^{\an}\widehat \otimes_{E}C^\omega_1(G(\Qp),E_m)\big)\longrightarrow T_r\otimes_EE_m
\end{equation}
pour un $m''\gg m$. Il suffit donc de montrer qu'il existe $m'\gg m$ et $r'\gg r$ tels que la compos\'ee analogue \`a (\ref{composeemr}) avec $f\circ D$ en (\ref{fronD}) au lieu de $f$ est nulle. 

\noindent
{\bf \'Etape 5}\\
Par le (i) du Lemme \ref{combigroupe}, il existe une racine simple $\gamma$ telle que $N_\gamma(\Qp) \subseteq w^{-1}N(\Qp)w$ et $N_\gamma(\Qp) $ agit trivialement sur la $w^{-1}P(\Qp)w$-repr\'esentation $(\pi_P)^{w}$ (rappelons que l'action de $P(\Qp)$ sur $\pi_P$ se factorise par $L_{P}(\Qp)$). Dans cette \'etape, on suppose que l'on peut prendre $\gamma=\alpha$ et on termine la preuve dans ce cas.

Par (\ref{malphaf}) et la discussion pr\'ec\'edant le Corollaire \ref{m'r'}, le terme de gauche dans (\ref{fronD}) est un quotient de~:
\begin{equation}\label{quotientde}
\Big(\big(\big(\Ind_{N_{m''}\cap w^{-1} N(\Qp)w}^{N_{m''}}(\pi_P)^{w}\big)^{\an}\big)^\vee\widehat\otimes_E D(G(\Qp),E_m)_{\{1\}}\Big)(\eta)_{N_m^\alpha}
\end{equation}
et il suffit donc de montrer l'analogue de (\ref{fronD}) en rempla\c cant ce terme de gauche par (\ref{quotientde}). Comme dans le (i) de la Remarque \ref{grcomplettilde}, le (nouveau) morphisme $f\circ D$ se factorise par un morphisme $\Rm^+$-lin\'eaire continu commutant \`a $\psi$ (encore not\'e $f\circ D$)~:
\begin{multline*}
f\circ D : \Big(\big(\big(\Ind_{N_{m''}\cap w^{-1} N(\Qp)w}^{N_{m''}}(\pi_P)^{w}\big)^{\an}\big)^\vee\widehat\otimes_{D(B(\Qp),E)_{\{1\}},\iota}D(G(\Qp),E_m)_{\{1\}}\Big)(\eta)_{N_m^\alpha} \\
\longrightarrow T_r\otimes_EE_m
\end{multline*}
(noter que $((\Ind_{N_{m''}\cap w^{-1} N(\Qp)w}^{N_{m''}}(\pi_P)^{w})^{\an})^\vee$ est bien un $D(B(\Qp),E)_{\{1\}}$-module \`a droite car $D(B(\Qp),E)_{\{1\}}=D(B_0,E)_{\{1\}}$ pour tout sous-groupe ouvert compact $B_0$ de $B(\Qp)$ par (\ref{germe})). En argumentant maintenant exactement comme dans la premi\`ere moiti\'e de l'\'Etape $2$ (cf. ce qui suit (\ref{imagedense0})), il suffit alors de montrer que tout morphisme continu commutant \`a l'action de $N_0/N_0^\alpha\simeq N_\alpha(\Qp)\cap N_0\simeq \Zp$~:
\begin{equation}\label{morphismegl3}
\Big(\big(\big(\Ind_{N_{m''}\cap w^{-1} N(\Qp)w}^{N_{m''}}(\pi_P)^{w}\big)^{\an}\big)^\vee\otimes_{U(\mb,E)}U(\mg,E_m)\Big)(\eta)_{N_m^\alpha}\longrightarrow T_r\otimes_EE_m
\end{equation}
devient nul dans $T_{r'}\otimes_EE_m$ pour $r'\gg r$ o\`u la topologie \`a gauche est comme dans (\ref{imagedense5alpha}) (en fait dans ce cas il ne sera pas n\'ecessaire d'augmenter $m$).

Comme $N_\alpha(\Qp) $ agit trivialement sur $(\pi_P)^{w}$, l'action de $N_\alpha(\Qp)\cap N_0$ sur $(\Ind_{N_{m''}\cap w^{-1} N(\Qp)w}^{N_{m''}}(\pi_P)^{w})^{\an}$ est alors donn\'ee par~:
\begin{equation}\label{actionalpha}
\lambda \longmapsto \big(n_{m''}\mapsto \lambda(n_{m''}n_\alpha)=\lambda(n_\alpha^{-1}n_{m''}n_\alpha)\big).
\end{equation}
Notons $N^{(2)}\=\prod_{\delta\in R^+\backslash S}N_\delta$, qui est un sous-groupe normal de $N$ tel que $N/N^{(2)}$ est ab\'elien, et $N^{(2)}_m\=N^{(2)}(\Qp)\cap N_m$, il existe un sous-groupe ouvert compact suf\-fisamment petit $p^M\Zp$ de $N_\alpha(\Qp)\cap N_0\cong \Zp$ tel que pour tout $n_\alpha\in p^M\Zp$ et tout $n_{m''}\in N_{m''}$ on a~:
$$n_\alpha^{-1}n_{m''}n_\alpha\in n_{m''}N^{(2)}_m$$
(on utilise que les ``d\'enominateurs en $p$'' dans $N_{m''}$ sont born\'es). Cela implique que l'action de $n_\alpha\in p^M\Zp$ induite par (\ref{actionalpha}) sur le sous-espace~:
$$\big(\big(\Ind_{N_{m''}\cap w^{-1} N(\Qp)w}^{N_{m''}}(\pi_P)^{w}\big)^{\an}\big)^{N^{(2)}_m}$$
est triviale, donc {\it a fortiori} aussi sur~:
\begin{multline}\label{casgamma=alpha}
\big((\Ind_{N_{m''}\cap w^{-1} N(\Qp)w}^{N_{m''}}(\pi_P)^{w})^{\an}\otimes_EE_m\big)(\eta^{-1})^{N_m^\alpha}\\
\subseteq \big((\Ind_{N_{m''}\cap w^{-1} N(\Qp)w}^{N_{m''}}(\pi_P)^{w})^{\an}\otimes_EE_m\big)^{N^{(2)}_m}
\end{multline}
(rappelons que $\eta\vert_{N^{(2)}(\Qp)}=1$). Donc $(1+X)^{p^M}-1\in \Rm^+$ agit par $0$ sur le dual de (\ref{casgamma=alpha}). Soit maintenant $r'\gg r$ tel que $(1+X)^{p^M}-1$ est inversible dans ${\mathcal R}^{r'}_{E_m}$ (cf. la preuve du Lemme \ref{Falphanul}). Comme dans la deuxi\`eme moiti\'e de l'\'Etape $2$, en \'ecrivant $U(\mg,E_m)=U(\mb,E)\otimes_EU(\mnn^-,E_m)$ et en consid\'erant l'action par adjonction de $N(\Qp)$ sur $U(\mnn^-,E_m)$, on voit que $((\Ind_{N_{m''}\cap w^{-1} N(\Qp)w}^{N_{m''}}(\pi_P)^{w})^{\an})^\vee\otimes_{U(\mb,E)}U(\mg,E_m)$ s'\'ecrit comme r\'eunion de sous-$U(\mb,E_m)$-modules \`a droite stables par $N_{m''}$ qui sont des extensions successives finies de la $N_{m''}$-repr\'esentation $((\Ind_{N_{m''}\cap w^{-1} N(\Qp)w}^{N_{m''}}(\pi_P)^{w})^{\an})^\vee\otimes_EE_m$. En utilisant que le foncteur $(-)(\eta)_{N_m^\alpha}$ commute aux limites inductives filtrantes et est exact \`a droite, il suffit pour montrer (\ref{morphismegl3}) de montrer qu'un morphisme continu de $\Rm^+$-modules~:
$$\big((\Ind_{N_{m''}\cap w^{-1} N(\Qp)w}^{N_{m''}}(\pi_P)^{w})^{\an}\otimes_EE_m\big)^\vee(\eta)_{N_m^\alpha}\longrightarrow T_{r'}\otimes_EE_m$$
est nul. Mais un tel morphisme se factorise par~:
$$\big(\big(\Ind_{N_{m''}\cap w^{-1} N(\Qp)w}^{N_{m''}}(\pi_P)^{w}\big)^{\an}\otimes_EE_m\big)^\vee(\eta)^{\sep}_{N_m^\alpha}$$
qui est le dual de (\ref{casgamma=alpha}) par la preuve du (i) du Lemme \ref{gnouf}. Or on a vu que $(1+X)^{p^M}-1$ annule ce (nouveau) terme de gauche alors qu'il est inversible sur celui de droite ($=T_{r'}\otimes_EE_m$) par choix de $r'$, donc un tel morphisme est nul. Ceci ach\`eve la preuve lorsque l'on peut prendre $\gamma=\alpha$.

\noindent
{\bf \'Etape 6}\\
On suppose maintenant que la racine simple $\gamma$ au d\'ebut de l'\'Etape $5$ est distincte de $\alpha$, i.e. $N_\gamma\subseteq N^\alpha$ (cf. \S~\ref{prel}), et on ach\`eve la preuve.

Quitte \`a augmenter $m$ comme dans (\ref{composeemr}) (avec $f\circ D$ en (\ref{fronD}) au lieu de $f$), on peut supposer $m=m''$ dans (\ref{fronD}). Quitte \`a augmenter encore $m''$ si n\'ecessaire, on peut de plus supposer $\eta\vert_{N_\gamma(\Qp)\cap N_{m''}}\ne 1$. Par le m\^eme raisonnement que dans la premi\`ere partie de l'\'Etape $5$, il suffit donc de montrer que tout morphisme continu commutant \`a l'action de $N_0/N_0^\alpha\simeq N_\alpha(\Qp)\cap N_0\simeq \Zp$~:
\begin{equation*}
\Big(\big(\big(\Ind_{N_{m''}\cap w^{-1} N(\Qp)w}^{N_{m''}}(\pi_P)^{w}\big)^{\an}\big)^\vee\otimes_{U(\mb,E)}U(\mg,E_{m''})\Big)(\eta)_{N_{m''}^\alpha}\longrightarrow T_r\otimes_EE_{m''}
\end{equation*}
est nul (dans ce cas il ne sera pas n\'ecessaire d'augmenter $r$). \'Ecrivant $U(\mg,E_m)=U(\mb,E)\otimes_EU(\mnn^-,E_m)$ et raisonnant comme dans la deuxi\`eme moiti\'e de l'\'Etape $5$ ou de l'\'Etape $2$, il suffit de montrer~:
$$\big(\big(\Ind_{N_{m''}\cap w^{-1} N(\Qp)w}^{N_{m''}}(\pi_P)^{w}\big)^{\an}\otimes_EE_{m''}\big)^\vee(\eta)^{\sep}_{N_{m''}^\alpha}=0$$
c'est-\`a-dire par la preuve du (i) du Lemme \ref{gnouf}~:
$$\big(\big(\Ind_{N_{m''}\cap w^{-1} N(\Qp)w}^{N_{m''}}(\pi_P)^{w}\big)^{\an}\otimes_EE_{m''}\big)(\eta^{-1})^{N_{m''}^\alpha}=0.$$
Comme dans l'\'Etape $5$, l'action de $n_\gamma\in N_\gamma(\Qp)\cap N_{m''}$ sur $(\Ind_{N_{m''}\cap w^{-1} N(\Qp)w}^{N_{m''}}(\pi_P)^{w})^{\an}$ est donn\'ee par $\lambda\longmapsto (n_{m''}\mapsto \lambda(n_{m''}n_\gamma)=\lambda(n_\gamma^{-1}n_{m''}n_\gamma))$ avec $n_\gamma^{-1}n_{m''}n_\gamma\in n_{m''}N_{m''}^{(2)}$, donc elle est triviale sur le sous-espace $((\Ind_{N_{m''}\cap w^{-1} N(\Qp)w}^{N_{m''}}(\pi_P)^{w})^{\an})^{N_{m''}^{(2)}}$. Comme $\eta^{-1}\vert_{N_{m''}^{(2)}}=1$ mais $\eta^{-1}\vert_{N_\gamma(\Qp)\cap N_{m''}}\ne 1$, on en d\'eduit~:
\begin{multline*}
\Big(\big(\Ind_{N_{m''}\cap w^{-1} N(\Qp)w}^{N_{m''}}(\pi_P)^{w}\big)^{\an}\otimes_EE_{m''}\big)^{N_{m''}^{(2)}}\Big)(\eta^{-1})^{N_\gamma(\Qp)\cap N_{m''}}\\
=\big(\Ind_{N_{m''}\cap w^{-1} N(\Qp)w}^{N_{m''}}(\pi_P)^{w}\big)^{\an}\otimes_EE_{m''}\big)(\eta^{-1})^{N_{m''}^{(2)}(N_\gamma(\Qp)\cap N_{m''})}=0
\end{multline*}
d'o\`u le r\'esultat puisque $N_{m''}^{(2)}(N_\gamma(\Qp)\cap N_{m''})\subseteq N_{m''}^\alpha$. Ceci ach\`eve la preuve de la proposition.
\end{proof}

\subsection{Foncteurs $F_\alpha$ et repr\'esentations ${\mathcal F}_{P^-}^G(M,\pi_P^\infty)$}\label{endofproof}

On \'enonce et d\'emontre les r\'esultats principaux du \S~\ref{alphaqques}.

On conserve toutes les notations pr\'ec\'edentes. On rappelle que les caract\`eres alg\'ebriques $\chi_\lambda:\Gal(E_\infty/E)\rightarrow E^\times$ pour $\lambda\in X(T)$ sont d\'efinis avant le Th\'eor\`eme \ref{caslisse}.

\begin{thm}\label{plusgeneral}
Soit $P\subseteq G$ un sous-groupe parabolique contenant $B$ tel que $\alpha\notin S_P$, $L(-\lambda)_P$ une repr\'esentation alg\'ebrique irr\'eductible de $L_P(\Qp)$ sur $E$ (o\`u $\lambda\in X(T)$), $\pi_P^\infty$ une repr\'esentation lisse de longueur finie de $L_P(\Qp)$ sur $E$ admettant un caract\`ere central $\chi_{\pi_P^\infty}$ et $d_{\pi_P^\infty}\=\dim_E(\pi_P^\infty\otimes_EE_\infty)(\eta^{-1})_{N_{L_{P}}\!(\Qp)}$. Alors le foncteur $F_\alpha\big(\big(\Ind_{P^-(\Qp)}^{G(\Qp)}L(-\lambda)_P\otimes_E\pi_P^\infty\big)^{\an}\big)$ est isomorphe \`a~:
$$E_\infty(\chi_{-\lambda})\otimes_E \Hom_{(\varphi,\Gamma)}\Big(\R\big((\lambda\circ \lambda_{\alpha^{\!\vee}})(\chi_{\pi_P^\infty}^{-1}\circ \lambda_{\alpha^{\!\vee}})\big)^{\oplus d_{\pi_P^\infty}},-\Big).$$
En particulier $F_\alpha\big(\big(\Ind_{P^-(\Qp)}^{G(\Qp)}L(-\lambda)_P\otimes_E\pi_P^\infty\big)^{\an}\big)$ est nul si et seulement si $\pi_P^\infty$ n'a pas de constituant g\'en\'erique.
\end{thm}
\begin{proof}
On note $Q\=w_0P^-w_0\subseteq G$. Comme $w_0(S)=-S$, c'est un sous-groupe parabolique contenant $B$. On note $\widetilde \pi_Q$ la repr\'esentation localement alg\'ebrique de longueur finie de $L_{Q}(\Qp)$ donn\'ee par $L(-\lambda)_{P}\otimes_E\pi_P^\infty$ mais avec $g\in L_{Q}(\Qp)$ agissant par $w_0gw_0\in L_{P}(\Qp)$. On a un isomorphisme $G(\Qp)$-\'equivariant~:
\begin{equation}\label{de-a+}
\big(\Ind_{P^-(\Qp)}^{G(\Qp)} L(-\lambda)_{P}\otimes_E\pi_P^\infty\big)^{\an}\buildrel\sim\over\longrightarrow \big(\Ind_{Q(\Qp)}^{G(\Qp)} \widetilde \pi_Q\big)^{\an},\ \ f\longmapsto \big(g\mapsto f(w_0g)\big)
\end{equation}
qui envoie le sous-espace ferm\'e $(\cInd_{P^-(\Qp)}^{P^-(\Qp)P(\Qp)}L(-\lambda)_{P}\otimes_E\pi_P^\infty)^{\an}$ sur le sous-espace ferm\'e $(\cInd_{Q(\Qp)}^{Q(\Qp)w_0P(\Qp)}\widetilde \pi_Q)^{\an}$. Une application du th\'eor\`eme $90$ de Hilbert montre que le $E_\infty$-espace vectoriel $(\pi_P^\infty\otimes_EE_\infty)(\eta^{-1})_{N_{L_P}(\Qp)}$ de dimension $d_{\pi_P^\infty}$ muni de l'action semi-lin\'eaire de $\Gal(E_\infty/E)$ d\'efinie dans la preuve du (i) du Lemme \ref{m+1} est isomorphe \`a $E_\infty^{\oplus d_{\pi_P^\infty}}$ (avec action semi-lin\'eaire \'evidente de $\Gal(E_\infty/E)$ sur chaque facteur). Par la Proposition \ref{celluleouverte} avec (\ref{deIaC}) et le (i) de la Proposition \ref{drex}, on voit donc qu'il suffit de montrer~:
\begin{equation}\label{zero}
F_\alpha\Big(\big(\Ind_{Q(\Qp)}^{G(\Qp)} \widetilde \pi_Q\big)^{\an}/ \big(\cInd_{Q(\Qp)}^{Q(\Qp)w_0P(\Qp)}\widetilde \pi_Q\big)^{\an}\Big)=0.
\end{equation}
Mais on a $Q(\Qp)w_0P(\Qp)=Q(\Qp)w_0B(\Qp)$ puisque~:
\begin{multline}\label{baP}
Q(\Qp)w_0B(\Qp)\subseteq Q(\Qp)w_0P(\Qp)=Q(\Qp)(w_0L_{P}(\Qp)w_0)w_0N_{P}(\Qp)\\
=Q(\Qp)L_{Q}(\Qp)w_0N_{P}(\Qp)=Q(\Qp)w_0N_{P}(\Qp)\subseteq Q(\Qp)w_0B(\Qp),
\end{multline}
donc (\ref{zero}) suit de la Proposition \ref{support} (appliqu\'ee avec $P=Q$).
\end{proof}

Soit $P\subseteq G$ un sous-groupe parabolique contenant $B$, si $\alpha\in S_P$, alors on peut d\'efinir le foncteur $F_\alpha$ pour le triplet $(L_P, B\cap L_P,T)$ au lieu du triplet $(G,B,T)$ en gardant le m\^eme cocaract\`ere $\lambda_{\alpha^\vee}:{\mathbb G}_{\rm m}\rightarrow T$, le foncteur ne d\'ependant pas des autres choix par la Proposition \ref{choix} (noter que le centre $Z_{L_P}$ de $L_P$ est bien encore connexe, cf. \cite[\S~6]{Br2}).

\begin{thm}\label{encoreplusgeneral}
Soit $P\subseteq G$ un sous-groupe parabolique contenant $B$ tel que $\alpha\in S_P$ et $\pi_P$ une repr\'esentation localement analytique de $L_P(\Qp)$ sur un $E$-espace vectoriel de type compact. Alors on a un isomorphisme de foncteurs dans $F(\varphi,\Gamma)_\infty$~:
$$F_\alpha\big(\big(\Ind_{P^-(\Qp)}^{G(\Qp)} \pi_P\big)^{\an}\big)\cong F_\alpha(\pi_P).$$
\end{thm}
\begin{proof}
Par la Proposition \ref{choix}, pour d\'efinir $F_\alpha$ pour $(L_P, B\cap L_P,T)$ on peut choisir $(\iota_\alpha)_{\alpha\in S_P}$, $\eta$ et $(L_P(\Qp)\cap N_m)_{m\geq 0}$ (o\`u $(\iota_\alpha)_{\alpha\in S}$, $\eta$ et $(N_m)_{\geq 0}$ sont comme au \S~\ref{prel} pour $(G, B,T)$). On a des produits semi-directs analogues \`a ceux au d\'ebut de la preuve de la Proposition \ref{celluleouverte}~: $N^\alpha=N_{P}\rtimes N_{L_{P}}^\alpha$, $\mnn^\alpha\simeq \mnn_{P}\rtimes \mnn_{L_{P}}^\alpha$ et $N_{m}^\alpha=(N_{P}(\Qp)\cap N_{m}) \rtimes (N_{L_{P}}^\alpha(\Qp)\cap N_{m})$ pour tout $m\in \Z_{\geq 0}$. Par le Lemme \ref{niveaum}, la Proposition \ref{support} (en raisonnant comme dans la preuve du Th\'eor\`eme \ref{plusgeneral}) et le (i) de la Proposition \ref{drex}, il suffit de montrer $F_{\alpha,m}(C^{\an}(N_{P}(\Qp)\cap N_m,\pi_P))\cong F_{\alpha,m}(\pi_P)$ pour tout $m\in \Z_{\geq 0}$. On a des isomorphismes d'espaces de type compact (cf. (\ref{actionP}))~:
\begin{eqnarray*}
C^{\an}(N_{P}(\Qp)\cap N_m,\pi_P)[\mnn^\alpha]&=&(C^{\an}(N_{P}(\Qp)\cap N_m,\pi_P)[\mnn_{P}])[\mnn_{L_{P}}^\alpha]\\
&=&C^{\infty}(N_{P}(\Qp)\cap N_m,\pi_P)[\mnn_{L_{P}}^\alpha]\\
&\cong & (C^{\infty}(N_{P}(\Qp)\cap N_m,E)\otimes_E\pi_P))[\mnn_{L_{P}}^\alpha]\\
&\cong & C^{\infty}(N_{P}(\Qp)\cap N_m,E)\otimes_E(\pi_P[\mnn_{L_{P}}^\alpha])
\end{eqnarray*}
(rappelons que les topologies projective et injective co\"\i ncident sur les deux produits tensoriels du bas, qui sont par ailleurs d\'ej\`a complets par le m\^eme argument que pour (\ref{sortie})). On en d\'eduit (avec le (ii) du Lemme \ref{gnouf}) un isomorphisme d'espaces de type compact~:
\begin{multline*}
C^{\an}(N_{P}(\Qp)\cap N_m,\pi_P\otimes_EE_m)[\mnn^\alpha](\eta^{-1})_{N_m^\alpha}\\
\cong \big(C^{\infty}(N_{P}(\Qp)\cap N_m,E_m)(\eta^{-1})_{N_{P}(\Qp)\cap N_m}\otimes_{E_m}( \pi_P\otimes_EE_m)[\mnn_{L_{P}}^\alpha]\big)(\eta^{-1})_{N_{L_{P}}^\alpha(\Qp)\cap N_{m}}.
\end{multline*}
Comme dans la preuve de la Proposition \ref{celluleouverte}, $C^{\infty}(N_{P}(\Qp)\cap N_m,E_m)(\eta^{-1})_{N_{P}(\Qp)\cap N_m}$ a dimension $1$ engendr\'e par la classe de $\eta\vert_{N_{P}(\Qp)\cap N_m}$ et l'action de $\Gal(E_\infty/E)$ dessus est celle sur $E_m$ par Hilbert 90. Par (\ref{actionP}) on voit de plus que $N_{L_{P}}^\alpha(\Qp)\cap N_{m}$ et $\lambda_{\alpha^{\!\vee}}(\Zp\backslash \{0\})$ agissent trivialement sur $\eta\vert_{N_{P}(\Qp)\cap N_m}$. On en d\'eduit un isomorphisme d'espaces de type compact compatible aux actions de $N_m/N_m^\alpha\buildrel\sim\over\rightarrow (N_{L_{P}}(\Qp)\cap N_{m})/(N_{L_{P}}^\alpha(\Qp)\cap N_{m})$, $\lambda_{\alpha^{\!\vee}}(\Zp\backslash \{0\})$ et $\Gal(E_\infty/E)$~:
$$C^{\an}(N_{P}(\Qp)\cap N_m,\pi_P\otimes_EE_m)[\mnn^\alpha](\eta^{-1})_{N_m^\alpha}\cong (\pi_P\otimes_EE_m)[\mnn_{L_{P}}^\alpha](\eta^{-1})_{N_{L_{P}}^\alpha(\Qp)\cap N_{m}}$$
d'o\`u le r\'esultat avec (\ref{foncteurm}) et (\ref{malpha}).
\end{proof}

\begin{cor}\label{generaljoli}
Soit $P\subseteq G$ un sous-groupe parabolique contenant $B$, $\pi_P^\infty$ une repr\'esentation lisse de longueur finie de $L_P(\Qp)$ sur $E$ admettant un caract\`ere central, $M$ un objet quelconque de ${\mathcal O}^{\mpp^-}_{\rm alg}$ et $\pi\={\mathcal F}_{P^-}^G(M,\pi_P^\infty)$. Alors il existe des caract\`eres alg\'ebriques distincts $\chi_{\lambda_1},\dots,\chi_{\lambda_r}$ de $\Gal(E_\infty/E)$ et des $(\varphi,\Gamma)$-modules g\'en\'eralis\'es $D_{\alpha,1}(\pi),\dots ,D_{\alpha,r}(\pi)$ sur $\R$ tels que $F_{\alpha}(\pi)\simeq \bigoplus_{i=1}^r\big(E_\infty(\chi_{\lambda_i})\otimes_E\Hom_{(\varphi,\Gamma)}(D_{\alpha,i}(\pi),-)\big)$.
\end{cor}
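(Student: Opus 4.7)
La strat\'egie est d'utiliser une pr\'esentation de $M$ par des modules de Verma g\'en\'eralis\'es, puis d'invoquer l'exactitude contravariante du foncteur d'Orlik-Strauch et l'exactitude \`a gauche du foncteur $F_\alpha$ (Proposition \ref{drex}) afin de ramener le calcul aux cas d\'ej\`a trait\'es par les Th\'eor\`emes \ref{caslisse}, \ref{plusgeneral} et \ref{encoreplusgeneral}.

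Comme ${\mathcal O}^{\mpp^-}_{\rm alg}$ est une sous-cat\'egorie ab\'elienne de ${\mathcal O}^{\mpp^-}$ dont tous les objets sont de longueur finie, et comme tout constituant simple est un quotient d'un module de Verma g\'en\'eralis\'e $U(\mg)\otimes_{U(\mpp^-)}L^-(\lambda)_P$, on peut \'ecrire $M$ comme quotient d'une somme directe finie de tels modules. Le noyau \'etant \`a nouveau dans ${\mathcal O}^{\mpp^-}_{\rm alg}$, on obtient en it\'erant une pr\'esentation (avec $J_0,J_1$ finis)~:
$$\bigoplus_{j\in J_1}U(\mg)\otimes_{U(\mpp^-)}L^-(\lambda_{1,j})_P\longrightarrow \bigoplus_{j\in J_0}U(\mg)\otimes_{U(\mpp^-)}L^-(\lambda_{0,j})_P\longrightarrow M\longrightarrow 0.$$
L'exactitude contravariante de ${\mathcal F}_{P^-}^G(-,\pi_P^\infty)$ combin\'ee au fait que ce foncteur envoie $U(\mg)\otimes_{U(\mpp^-)}L^-(\lambda)_P$ sur $\big(\Ind_{P^-(\Qp)}^{G(\Qp)}L(-\lambda)_P\otimes_E\pi_P^\infty\big)^{\an}$ donne une suite exacte dans $\Rep(G(\Qp))$~:
$$0\longrightarrow \pi\longrightarrow \bigoplus_{j\in J_0}\big(\Ind_{P^-(\Qp)}^{G(\Qp)}L(-\lambda_{0,j})_P\otimes_E\pi_P^\infty\big)^{\an}\longrightarrow \bigoplus_{j\in J_1}\big(\Ind_{P^-(\Qp)}^{G(\Qp)}L(-\lambda_{1,j})_P\otimes_E\pi_P^\infty\big)^{\an},$$
\`a laquelle on applique la Proposition \ref{drex}(i) pour obtenir une suite exacte analogue dans $F(\varphi,\Gamma)_\infty$.

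Chaque terme induit a un $F_\alpha$ explicite~: si $\alpha\notin S_P$, le Th\'eor\`eme \ref{plusgeneral} donne un foncteur de la forme $E_\infty(\chi_{-\lambda})\otimes_E\Hom_{(\varphi,\Gamma)}(D_\lambda,-)$ avec $D_\lambda$ libre sur $\R$~; si $\alpha\in S_P$, le Th\'eor\`eme \ref{encoreplusgeneral} ram\`ene au foncteur $F_\alpha$ pour le triplet $(L_P,B\cap L_P,T)$ appliqu\'e \`a $L(-\lambda)_P\otimes_E\pi_P^\infty$, qui est donn\'e par le Th\'eor\`eme \ref{caslisse} (valide pour tout groupe r\'eductif d\'eploy\'e \`a centre connexe, cf. la Proposition \ref{choix}) sous la forme $E_\infty(\chi_{-\lambda})\otimes_E\Hom_{(\varphi,\Gamma)}(D'_\lambda,-)$ avec $D'_\lambda$ un $(\varphi,\Gamma)$-module g\'en\'eralis\'e de torsion. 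Dans tous les cas le caract\`ere $\chi_{-\lambda}$ ne d\'epend que de $\lambda$.

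On regroupe alors les indices apparaissant dans $J_0$ et $J_1$ suivant la valeur de $\chi_{-\lambda_{s,j}}$~: notons $\chi_{\mu_1},\dots,\chi_{\mu_r}$ les caract\`eres distincts ainsi obtenus, et pour $s\in\{0,1\}$ et $i\in\{1,\dots,r\}$ posons $D_{s,i}\=\bigoplus_{j\in J_s,\,\chi_{-\lambda_{s,j}}=\chi_{\mu_i}}D_{\lambda_{s,j}}$. La fl\`eche de connexion dans la suite exacte \'etant $\Gal(E_\infty/E)$-\'equivariante et le th\'eor\`eme $90$ de Hilbert impliquant $\Hom_{E_\infty[\Gal(E_\infty/E)]}(E_\infty(\chi),E_\infty(\chi'))=0$ d\`es que $\chi\ne \chi'$, elle se d\'ecompose en somme directe de morphismes sur chaque composante $\chi_{\mu_i}$-isotypique~; par Yoneda et par le Lemme \ref{abel}(i), chacun de ces morphismes provient d'un unique morphisme $D_{1,i}\rightarrow D_{0,i}$ dans la cat\'egorie ab\'elienne des $(\varphi,\Gamma)$-modules g\'en\'eralis\'es sur $\R$, dont on d\'efinit $D_{\alpha,i}(\pi)$ comme le conoyau. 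La conclusion d\'ecoule alors d'une variante de la Remarque \ref{plusieurschi} (appliqu\'ee cette fois \`a un conoyau plut\^ot qu'\`a un noyau) combin\'ee \`a l'exactitude \`a gauche de $\Hom_{(\varphi,\Gamma)}(-,T)$ sur la cat\'egorie ab\'elienne des $(\varphi,\Gamma)$-modules g\'en\'eralis\'es. L'obstacle principal est la v\'erification soigneuse que le conoyau pris dans la cat\'egorie ab\'elienne des $(\varphi,\Gamma)$-modules g\'en\'eralis\'es repr\'esente effectivement le foncteur $F_\alpha(\pi)$ composante par composante, ainsi que la compatibilit\'e de la d\'ecomposition isotypique avec les fl\`eches en jeu.
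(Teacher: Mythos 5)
Votre preuve est correcte et suit essentiellement la m\^eme d\'emarche que celle du texte~: pr\'esentation de $M$ par des modules de Verma g\'en\'eralis\'es, exactitude du foncteur d'Orlik--Strauch, exactitude \`a gauche de $F_\alpha$ (Proposition \ref{drex}), calcul des termes induits via les Th\'eor\`emes \ref{caslisse}, \ref{plusgeneral} et \ref{encoreplusgeneral}, puis regroupement suivant les caract\`eres de $\Gal(E_\infty/E)$. Votre argument final (Hilbert 90, Yoneda, conoyau) ne fait que red\'emontrer le (ii) de la Proposition \ref{drex} sous la forme de la Remarque \ref{plusieurschi}, que le texte cite directement.
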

\begin{proof}
Soit $W$ une sous-repr\'esentation alg\'ebrique de $L_P(\Qp)$ de dimension finie sur $E$ qui engendre $M$, de sorte que l'on a une surjection $U(\mg)\otimes_{U(\mpp^-)}W\twoheadrightarrow M$. Comme le noyau de cette surjection est aussi dans ${\mathcal O}^{\mpp^-}_{\rm alg}$, on peut encore l'\'ecrire comme quotient d'une surjection analogue. Puisque la cat\'egorie des repr\'esentations alg\'ebriques de $L_P(\Qp)$ sur $E$ est semi-simple, on voit qu'il existe $\mu_1,\dots,\mu_t,\lambda_1,\dots,\lambda_s\in X(T)$ et une suite exacte dans ${\mathcal O}^{\mpp^-}_{\rm alg}$~:
\begin{equation*}
\bigoplus_{i=1}^tU(\mg)\otimes_{U(\mpp^-)}L^-(\mu_i)_P\longrightarrow \bigoplus_{i=1}^sU(\mg)\otimes_{U(\mpp^-)}L^-(\lambda_i)_P\longrightarrow M\longrightarrow 0
\end{equation*}
qui donne une suite exacte de repr\'esentations localement analytiques~:
\begin{multline}
0\longrightarrow {\mathcal F}_{P^-}^G(M,\pi_P^\infty)\longrightarrow \bigoplus_{i=1}^s\big(\Ind_{P^-(\Qp)}^{G(\Qp)}L(-\lambda_i)_P\otimes_E\pi_P^\infty\big)^{\an}\\
\longrightarrow \bigoplus_{i=1}^t\big(\Ind_{P^-(\Qp)}^{G(\Qp)}L(-\mu_i)_P\otimes_E\pi_P^\infty\big)^{\an}.
\end{multline}
Quitte \`a remplacer $\pi_P^\infty$ par une somme de copies de $\pi_P^\infty$, on peut mettre ensemble les $\lambda_i$ (resp. $\mu_i$) qui sont \'egaux. Le r\'esultat d\'ecoule alors facilement du Th\'eor\`eme \ref{plusgeneral} (lorsque $\alpha\notin S_P$), du Th\'eor\`eme \ref{encoreplusgeneral} et du Th\'eor\`eme \ref{caslisse} appliqu\'e avec le groupe r\'eductif $G=L_P$ (lorsque $\alpha\in S_P$), et du (ii) de la Proposition \ref{drex} avec la Remarque \ref{plusieurschi}.
\end{proof}

\begin{rem}\label{catabelienne}
{\rm (i) Par la Proposition \ref{drex} avec la Remarque \ref{plusieurschi}, le foncteur $\pi\mapsto D_\alpha(\pi)\=\oplus_iD_{\alpha,i}(\pi)$ envoie une suite exacte $0\rightarrow \pi''\rightarrow \pi\rightarrow \pi'$ de repr\'esentations de la forme ${\mathcal F}_{P^-}^G(M,\pi_P^\infty)$ (pour $P$, $\pi_P^\infty$, $M$ variables comme dans le (i) du Corollaire \ref{generaljoli}) vers une suite exacte $D_\alpha(\pi')\rightarrow D_\alpha(\pi)\rightarrow D_\alpha(\pi'')\rightarrow 0$ de $(\varphi,\Gamma)$-modules g\'en\'eralis\'es sur $\R$. Mais il suit facilement du Th\'eor\`eme \ref{liesocle} ci-dessous que ce foncteur {\it n'}est {\it pas} exact (\`a gauche).\\
(ii) L'hypoth\`ese que $\pi_P^\infty$ admet un caract\`ere central ne devrait pas \^etre n\'ecessaire pour le Corollaire \ref{generaljoli}. Elle n'intervient que via le caract\`ere $\chi_{\pi_P^\infty,\alpha}=\chi_{\pi_P^\infty}\circ \lambda_{\alpha^\vee}$ de la Proposition \ref{celluleouverte}, mais cette derni\`ere peut probablement se g\'en\'eraliser (au moins lorsque $\pi_P^\infty$ est de longueur finie) en rempla\c cant (les copies de) $\R\big((\lambda\circ \lambda_{\alpha^\vee})\chi_{\pi_P^\infty,\alpha}^{-1}\big)$ par un $(\varphi,\Gamma)$-module non forc\'ement semi-simple.}
\end{rem}

Nous allons maintenant expliciter $F_\alpha({\mathcal F}_{P^-}^G(M,\pi_P^\infty))$ dans certains cas utiles. Pour cela, nous avons d'abord besoin de plusieurs lemmes pr\'eliminaires.

\begin{lem}\label{singular}
Soit $\lambda\in X(T)$ tel que $\langle \lambda, \alpha^\vee\rangle \leq 0$.\\
(i) Le constituant $L^-(s_\alpha\cdot \lambda)$ appara\^\i t avec multiplicit\'e $1$ dans $U(\mg)\otimes_{U(\mb^-)}\lambda$.\\
(ii) Le $U(\mg)$-module $U(\mg)\otimes_{U(\mb^-)}\lambda$ admet en quotient une unique extension non scind\'ee $L^-(s_\alpha\cdot \lambda)-L^-(\lambda)$ dans ${\mathcal O}^{\mb^-}_{\rm alg}$ (o\`u $L^-(s_\alpha\cdot \lambda)$ est en sous-objet).
\end{lem}
\begin{proof}
Cela d\'ecoule de \cite[Th.~1.4.2(i)]{Ir} et de propri\'et\'es standard des polyn\^omes de Kazhdan-Lusztig. L'extension en (ii) est non scind\'ee puisque $L^-(\lambda)$ engendre tout $U(\mg)\otimes_{U(\mb^-)}\lambda$.
\end{proof}

\begin{lem}\label{quotientM}
Soit $P\subseteq G$ un sous-groupe parabolique contenant $B$, $L(-\lambda)_P$ une repr\'esentation alg\'ebrique irr\'eductible de $L_P(\Qp)$ sur $E$ et $M$ un quotient de $U(\mg)\otimes_{U(\mpp^-)}L^-(\lambda)_{P}$. On suppose $\alpha\notin S_P$, $\langle \lambda,\alpha\rangle \leq 0$ et on note $Q$ le parabolique contenant $P$ tel que $S_Q=S_P\amalg \{\alpha\}$. Alors $M$ est dans ${\mathcal O}^{\mq^-}_{\rm alg}$ (o\`u $\mq^-$ est l'alg\`ebre de Lie de $Q^-(\Qp)$) si et seulement si $M$ ne contient pas le constituant $L^-(s_\alpha\cdot\lambda)$.
\end{lem}
\begin{proof}
Par \cite[Th.~9.4(b)]{Hu} on a un diagramme commutatif de suites exactes de $U(\mg)$-modules~:
\begin{equation*}
\begin{gathered}
\xymatrix{\bigoplus_{\beta\in S_P}U(\mg)\otimes_{U(\mb^-)}s_\beta\cdot\lambda\ar[r]\ar@{^{(}->}[d]&U(\mg)\otimes_{U(\mb^-)}\lambda\ar@{=}[d] \ar[r]& U(\mg)\otimes_{U(\mpp^-)}L^-(\lambda)_{P}\ar@{^{}->>}[d] \ar[r]&0\\
\bigoplus_{\beta\in S_Q}U(\mg)\otimes_{U(\mb^-)}s_\beta\cdot\lambda\ar[r] & U(\mg)\otimes_{U(\mb^-)}\lambda \ar[r] & U(\mg)\otimes_{U(\mq^-)}L^-(\lambda)_{Q}\ar[r]&0}
\end{gathered}
\end{equation*}
qui par une chasse au diagramme triviale donne une suite exacte~:
\begin{equation}\label{encoresuite}
U(\mg)\otimes_{U(\mb^-)}s_\alpha\cdot\lambda\longrightarrow U(\mg)\otimes_{U(\mpp^-)}L^-(\lambda)_{P} \longrightarrow U(\mg)\otimes_{U(\mq^-)}L^-(\lambda)_{Q} \longrightarrow 0.
\end{equation}
Comme $L^-(s_\alpha\cdot\lambda)$ est l'unique quotient irr\'eductible de $U(\mg)\otimes_{U(\mb^-)}s_\alpha\cdot\lambda$ et qu'il a multiplicit\'e $1$ dans $U(\mg)\otimes_{U(\mpp^-)}L^-(\lambda)_{P} $ par le (i) du Lemme \ref{singular}, on en d\'eduit qu'un quotient $M$ de $U(\mg)\otimes_{U(\mpp^-)}L^-(\lambda)_{P}$ ne contient pas $L^-(s_\alpha\cdot\lambda)$ si et seulement si c'est en fait un quotient de $U(\mg)\otimes_{U(\mq^-)}L^-(\lambda)_{Q}$. On en d\'eduit l'\'enonc\'e avec les propri\'et\'es de la cat\'egorie ${\mathcal O}^{\mq^-}_{\rm alg}$, cf. \cite[\S~1]{OS} et \cite[\S~9.4]{Hu}.
\end{proof}

\begin{thm}\label{liesocle}
Soit $P\subseteq G$ un sous-groupe parabolique contenant $B$, $L(-\lambda)_P$ une repr\'esentation alg\'ebrique irr\'eductible de $L_P(\Qp)$ sur $E$, $\pi_P^\infty$ une repr\'esentation lisse de longueur finie de $L_P(\Qp)$ sur $E$ et $M$ un quotient non nul de $U(\mg)\otimes_{U(\mpp^-)}L^-(\lambda)_{P}$. On suppose ou bien $\alpha \in S_P$, ou bien $\alpha\notin S_P$ et $M\notin {\mathcal O}^{\mq^-}_{\rm alg}$ pour $Q$ le parabolique contenant $P$ tel que $S_Q=S_P\amalg \{\alpha\}$ et $\mq^-$ l'alg\`ebre de Lie de $Q^-(\Qp)$. Alors l'injection $G(\Qp)$-\'equivariante~:
\begin{equation*}
{\mathcal F}_{P^-}^G(M,\pi_P^\infty)\hookrightarrow \big(\Ind_{P^-(\Qp)}^{G(\Qp)} L(-\lambda)_{P}\otimes_E\pi_P^\infty\big)^{\an}
\end{equation*}
induit un isomorphisme dans $F(\varphi,\Gamma)_\infty$~:
$$F_\alpha\big({\mathcal F}_{P^-}^G(M,\pi_P^\infty)\big)\buildrel\sim\over\longrightarrow F_\alpha\big(\big(\Ind_{P^-(\Qp)}^{G(\Qp)} L(-\lambda)_{P}\otimes_E\pi_P^\infty\big)^{\an}\big).$$
\end{thm}
\begin{proof}
{\bf \'Etape 1}\\
Soit $\pi_1\!\=\!{\mathcal F}_{P^-}^G(M,\pi_P^\infty)$ et $\pi_2\!\=\!\big(\Ind_{P^-(\Qp)}^{G(\Qp)} L(-\lambda)_{P}\otimes_E\pi_P^\infty\big)^{\an}=\big(\Ind_{P^-(\Qp)}^{G(\Qp)} L^-(\lambda)_{P}^\vee\otimes_E\pi_P^\infty\big)^{\an}$, nous allons montrer que l'injection $\pi_1\hookrightarrow \pi_2$ induit un isomorphisme $\pi_1[{\mnn^\alpha}]\buildrel\sim\over\rightarrow \pi_2[{\mnn^\alpha}]$, ce qui implique le r\'esultat par (\ref{malpha}). Si $\alpha\in S_P$ nous allons montrer cette assertion pour $M=L^-(\lambda)$, ce qui suffit par le (i) de la Proposition \ref{drex}. Si $\alpha\notin S_P$, on distingue deux cas. Ou bien on a $\langle \lambda,\alpha\rangle >0$ (i.e. $L^-(\lambda)\notin {\mathcal O}^{\mq^-}_{\rm alg}$) et nous allons encore montrer le r\'esultat pour $M=L^-(\lambda)$. Ou bien on a $\langle \lambda,\alpha\rangle \leq 0$, auquel cas le Lemme \ref{quotientM} implique que $L^-(s_\alpha\cdot \lambda)$ est un constituant de $M$ et le Lemme \ref{singular} implique alors que l'on a une surjection $M\twoheadrightarrow (L^-(s_\alpha\cdot \lambda)-L^-(\lambda))$ o\`u $L^-(s_\alpha\cdot \lambda)-L^-(\lambda)$ est l'extension du (ii) du Lemme \ref{singular}. Dans ce dernier cas, par le (i) de la Proposition \ref{drex}, il suffit de montrer le r\'esultat pour $M=L^-(s_\alpha\cdot \lambda)-L^-(\lambda)$. Dans la suite de la preuve, on suppose donc $M=L^-(\lambda)$ si $\alpha\in S_P$ ou si $\alpha\notin S_P$ mais $\langle \lambda,\alpha\rangle >0$, et $M=L^-(s_\alpha\cdot \lambda)-L^-(\lambda)$ si $\alpha\notin S_P$ et $\langle \lambda,\alpha\rangle \leq 0$.

\noindent
{\bf \'Etape 2}\\
Fixons une num\'erotation $\alpha_1,\dots,\alpha_h$ des racines de $R^+$ telle que $\alpha_1=\alpha$. Tout \'el\'ement de $U(\mnn)$ est une combinaison lin\'eaire \`a coefficients dans $E$ de mon\^omes $y_{\alpha_1}^{m_1}y_{\alpha_2}^{m_2}\cdots y_{\alpha_h}^{m_h}$ o\`u $m_i\in \Z_{\geq 0}$ et $y_{\alpha_i}$ est une base de la $\Qp$-alg\`ebre de Lie de $N_{\alpha_i}(\Qp)$ (cf. \cite[Th.~1.2(a)]{Hu}). Nous allons d'abord montrer que tout vecteur dans~:
\begin{equation}\label{ker}
\ker\big(U(\mg)\otimes_{U(\mpp^-)}L^-(\lambda)_{P}\twoheadrightarrow M\big)
\end{equation}
est l'image via $U(\mg)\otimes_{U(\mb^-)}\lambda\twoheadrightarrow U(\mg)\otimes_{U(\mpp^-)}L^-(\lambda)_{P}$ d'une combinaison lin\'eaire de vecteurs de poids de la forme $y_{\alpha_1}^{m_1}y_{\alpha_2}^{m_2}\cdots y_{\alpha_t}^{m_t}\otimes v_{\lambda}$ o\`u $v_{\lambda}$ est le vecteur de plus haut poids (par rapport \`a $B^-$) de $U(\mg)\otimes_{U(\mb^-)}\lambda$ et o\`u $t$ et les $m_i$ sont des entiers positifs ou nuls tels que $2\leq t\leq h$ et $m_t\ne 0$. Par \cite[\S\S~5.1~\&~5.2]{Hu} tout poids $\mu\in X(T)$ apparaissant dans (\ref{ker}) est tel que $\mu\in \lambda' + \sum_{i}\Z_{\geq 0}\alpha_i$ pour $\lambda'\in X(T)$ tel que $\lambda' \uparrow \lambda$ et $\lambda'\ne \lambda$, i.e. il existe $r\geq 1$ et $\gamma_1,\dots,\gamma_r\in R^+$ tels que $(s_{\gamma_j}\cdots s_{\gamma_r})\cdot \lambda - (s_{\gamma_{j+1}}\cdots s_{\gamma_r})\cdot \lambda\in \sum_{i}\Z_{\geq 0}\alpha_i$ et est non nul pour $j\in \{1,\dots,r\}$. De plus, comme les constituants irr\'eductibles de $U(\mg)\otimes_{U(\mpp^-)}L^-(\lambda)_{P}$ sont dans ${\mathcal O}^{\mpp^-}_{\rm alg}$, on a $\langle \lambda', \gamma^\vee\rangle \leq 0$ pour $\gamma\in S_P$ par \cite[Prop.~9.3(e)]{Hu}. Montrons que l'on a $\lambda'-\lambda=\sum_{i=1}^hm'_i\alpha_i$ pour $m'_i\in \Z_{\geq 0}$ avec au moins un $m'_i\ne 0$ pour $i\geq 2$. Sinon, on a $\lambda'-\lambda\in \Z_{\geq 0}\alpha$, et donc n\'ecessairement $\gamma_j=\alpha$ pour tout $j$ (puisque $\alpha$ est simple), c'est-\`a-dire $r=1$ et $\gamma_1=\alpha$, i.e. $\lambda'=s_\alpha \cdot \lambda=\lambda + m'_1\alpha$ pour $m'_1\geq 1$ (car $\lambda'\ne \lambda$). Par ailleurs $s_\alpha \cdot \lambda=s_\alpha(\lambda -\rho)+\rho=s_\alpha(\lambda)+\alpha$, d'o\`u $s_\alpha(\lambda)=\lambda + (m'_1-1)\alpha$ ce qui implique $\langle s_\alpha (\lambda), \alpha^\vee\rangle = \langle \lambda, \alpha^\vee\rangle + 2(m'_1-1)=-\langle \lambda, \alpha^\vee\rangle$ et donc $\langle \lambda, \alpha^\vee\rangle = 1-m'_1\leq 0$. Supposons d'abord $\alpha\in S_P$, alors $\langle s_\alpha \cdot \lambda, \alpha^\vee\rangle=\langle \lambda', \alpha^\vee\rangle\leq 0$ (cf. ci-dessus), et aussi $\langle s_\alpha \cdot \lambda, \alpha^\vee\rangle=\langle \lambda, \alpha^\vee\rangle + 2m'_1=(1-m'_1)+2m'_1=m'_1+1>0$, ce qui est une contradiction. Supposons maintenant $\alpha\notin S_P$ (et rappelons que $\langle \lambda, \alpha^\vee\rangle \leq 0$), comme $L^-(s_\alpha\cdot \lambda)$ a multiplicit\'e $1$ dans $U(\mg)\otimes_{U(\mb^-)}\lambda$ par le (i) du Lemme \ref{singular}, on en d\'eduit que $\lambda'$ ci-dessus ne peut \^etre $s_\alpha\cdot \lambda$ (sinon $L^-(s_\alpha\cdot \lambda)$ appara\^\i trait \`a la fois dans (\ref{ker}) et dans $M$), une contradiction. Finalement, on en d\'eduit $\mu=\lambda+\sum_{i=1}^hm_i\alpha_i$ pour des $m_i$ dans $\Z_{\geq 0}$ avec au moins un $m_i$ non nul pour $i\geq 2$, ce qui implique facilement le r\'esultat voulu (un tel poids $\mu$ ne pouvant venir de vecteurs de la forme $y_{\alpha}^{m_1}\otimes v_{\lambda}$).

\noindent
{\bf \'Etape 3}\\
Pour montrer $\pi_1[{\mnn^\alpha}]\buildrel\sim\over\rightarrow \pi_2[{\mnn^\alpha}]$, on reprend l'argument de la preuve de \cite[Prop.~3.2]{Br3}. Pour $f:G(\Qp)\rightarrow L^-( \lambda)^\vee_{P}\otimes_E\pi_P^\infty$ localement analytique consid\'erons le morphisme de $U(\mg)$-modules \`a gauche (non nul d\`es que $f\ne 0$)~:
$$\Delta_f:U(\mg)\otimes_{U(\mpp^-)}L^-( \lambda)_{P}\longrightarrow C^{\an}(G(\Qp),\pi_P^\infty),\ {\mathfrak d}\mapsto {\mathfrak d}\cdot f$$
o\`u ${\mathfrak d}\cdot f$ est la fonction ${\mathfrak x}\otimes v\mapsto {\mathfrak x}\cdot (f(-)(v))$ si ${\mathfrak d}={\mathfrak x}\otimes v$ avec $({\mathfrak y}\cdot h)(g)\=\frac{d}{dt}h\big(\exp(-t{\mathfrak y})g\big)\vert_{t=0}\in \pi_P^\infty$ si $h\in C^{\rm an}(G(\Qp),\pi_P^\infty)$ et ${\mathfrak y}\in \mg$. Fixons $0\ne f\in \pi_2[{\mnn^\alpha}]$. Par d\'efinition de ${\mathcal F}_{P^-}^G(M,\pi_P^\infty)$ (cf. \cite[\S~1]{OS}, et aussi \cite[(2.5)]{Br3}), il suffit de montrer que $\Delta_f(\ker \phi)=0$ o\`u $\phi: U(\mg)\otimes_{U(\mpp^-)}L^-( \lambda)_{P}\twoheadrightarrow M$. On note $M_f\=(U(\mg)\otimes_{U(\mpp^-)}L^-( \lambda)_{P})/\ker(\Delta_f)$, de sorte que $f\in {\mathcal F}_{P^-}^G(M_f,\pi_P^\infty)\subseteq \pi_2$, et il suffit de montrer que l'on a une surjection $M\twoheadrightarrow M_f$ (entre quotients de $U(\mg)\otimes_{U(\mpp^-)}L^-( \lambda)_{P}$) puisqu'alors ${\mathcal F}_{P^-}^G(M_f,\pi_P^\infty)\subseteq {\mathcal F}_{P^-}^G(M,\pi_P^\infty)=\pi_1$. Comme $\Delta_f\ne 0$ on a $M_f\ne 0$ et donc $M_f\twoheadrightarrow L^-( \lambda)$ (puisque $L^-( \lambda)$ est l'unique quotient irr\'eductible de $U(\mg)\otimes_{U(\mpp^-)}L^-( \lambda)_{P}$). 

\noindent
{\bf \'Etape 4}\\
On suppose d'abord ou bien $\alpha\in S_P$ ou bien $\alpha\notin S_P$ mais $\langle \lambda,\alpha\rangle > 0$, de sorte que $M=L^-(\lambda)$. Soit $C$ un constituant irr\'eductible (dans ${\mathcal O}^{\mpp^-}_{\rm alg}$) en sous-objet de $\ker(M_f\twoheadrightarrow L^-( \lambda))$ suppos\'e non nul. Alors $C$ est l'unique quotient irr\'eductible d'un module de Verma g\'en\'eralis\'e $U(\mg)\otimes_{U(\mpp^-)}V$ (par \cite[\S\S~1.3~\&~9.3]{Hu}) et l'application compos\'ee $\psi:U(\mg)\otimes_{U(\mpp^-)}V\twoheadrightarrow C\hookrightarrow M_f$ s'ins\`ere dans le diagramme (commutatif) de $U(\mg)$-modules~:
\begin{eqnarray}\label{wv}
\xymatrix{U(\mg)\otimes_{U(\mpp^-)}L^-( \lambda)_{P}\ar@{^{}->>}[r] &M_f\ar@{^{}->>}[r] &L^-(\lambda) \\ & U(\mg)\otimes_{U(\mpp^-)}V\ar[u]^>>>>>>{\psi} \ar[ur]^0& }
\end{eqnarray}
et induit un morphisme $G(\Qp)$-\'equivariant~:
$${\mathcal F}_{P^-}^G(M_f,\pi_P^\infty)\longrightarrow {\mathcal F}_{P^-}^G(U(\mg)\otimes_{U(\mpp^-)}V,\pi_P^\infty)=\big(\Ind_{P^-(\Qp)}^{G(\Qp)}V^\vee\otimes_E\pi_P^\infty\big)^{\an}.$$
Nous allons montrer que l'image $h$ de $f\in {\mathcal F}_{P^-}^G(M_f,\pi_P^\infty)$ dans $(\Ind_{P^-(\Qp)}^{G(\Qp)}V^\vee\otimes_E\pi_P^\infty)^{\an}$ est nulle, ce qui en reprenant le raisonnement de la preuve de \cite[Prop.~3.2]{Br3} (auquel on renvoie le lecteur pour plus de d\'etails) conduit \`a une contradiction et implique donc $M_f=L^-(\lambda)=M$. Comme $h\in (\Ind_{P^-(\Qp)}^{G(\Qp)}V^\vee\otimes_E\pi_P^\infty)^{\an}$, on voit facilement que $h=0$ si et seulement si $h(-)(v)\vert_{N_{P}(\Qp)}\equiv 0$ pour tout $v\in V$. Soit $v\in V$ et ${\mathfrak d}_v\in U(\mg)\otimes_{U(\mpp^-)}L^-( \lambda)_{P}$ qui s'envoie sur $\psi(1\otimes v)$ dans $M_f$, on a ${\mathfrak d}_v\in \ker(U(\mg)\otimes_{U(\mpp^-)}L^-( \lambda)_{P}\twoheadrightarrow M)$ par (\ref{wv}). On a aussi $h(-)(v)=({\mathfrak d}_v\cdot f)(-)$ dans $C^{\an}(G(\Qp),\pi_P^\infty)$ par \cite[Lem.~3.1]{Br3} et il suffit donc de montrer $({\mathfrak d}_v\cdot f)(n)=0$ pour tout $n\in N_{P}(\Qp)$. Par ce qui suit (\ref{ker}) on peut \'ecrire ${\mathfrak d}_v$ comme une combinaison lin\'eaire de vecteurs $y_{\alpha_1}^{m_1}y_{\alpha_2}^{m_2}\cdots y_{\alpha_t}^{m_t}\otimes v_{ \lambda}$ pour des $m_i\in \Z_{\geq 0}$ tels que $t\in \{2,\dots,h\}$ et $m_t\geq 1$. Ainsi $({\mathfrak d}_v\cdot f)(-)$ est une combinaison lin\'eaires de termes~:
$$y_{\alpha_1}^{m_1}y_{\alpha_2}^{m_2}\cdots y_{\alpha_t}^{m_t}\cdot (f(-)(v_{ \lambda}))=(y_{\alpha_1}^{m_1}y_{\alpha_2}^{m_2}\cdots y_{\alpha_{t-1}}^{m_{t-1}}y_{\alpha_t}^{m_t-1})\cdot (y_{\alpha_t}\cdot (f(-)(v_{ \lambda}))).$$
On a pour tout $g\in G(\Qp)$~:
\begin{eqnarray*}
y_{\alpha_t}\cdot (f(g)(v_{ \lambda}))&=&\frac{d}{dt}f\big(\exp(-ty_{\alpha_t})g\big)(v_{ \lambda})\vert_{t=0}\\
&=&\frac{d}{dt}f\big(g\exp(-tg^{-1}y_{\alpha_t}g)\big)(v_{ \lambda})\vert_{t=0}\\
&=&((g^{-1}(-y_{\alpha_t})g)(f))(g)(v_{ \lambda})
\end{eqnarray*}
o\`u $(g^{-1}(-y_{\alpha_t})g)(f)$ est l'action de Lie sur $f$ d\'eriv\'ee de l'action du groupe $G(\Qp)$ sur $\pi_2$ (par translation \`a droite). Lorsque $g=n\in N(\Qp)$, alors $n^{-1}(-y_{\alpha_t})n\in \mnn^\alpha$ puisque $-y_{\alpha_t}\in \mnn^\alpha$ (car $\alpha_t\ne \alpha_1=\alpha$) et $N^\alpha$ est normal dans $N$, et on obtient~:
$$y_{\alpha_t}\cdot (f(n)(v_{ \lambda}))=((n^{-1}(-y_{\alpha_t})n)(f))(n)(v_{ \lambda})=0$$
puisque $f\in \pi_2[\mnn^\alpha]$. On a donc $h(n)(v)=({\mathfrak d}_v\cdot f)(n)=0$ pour tout $v\in V$ et tout $n\in N(\Qp)$, donc {\it a fortiori} pour tout $n\in N_{P}(\Qp)$, ce qui ach\`eve la preuve dans ce cas.

\noindent
{\bf \'Etape 5}\\
On suppose maintenant $\alpha\notin S_P$ et $\langle \lambda,\alpha\rangle \leq 0$, donc $M=L^-(s_\alpha\cdot \lambda)-L^-(\lambda)$. Si $L^-(s_\alpha\cdot \lambda)$ appara\^\i t dans $M_f$, alors par le (ii) du Lemme \ref{singular} et en prenant un constituant irr\'eductible $C$ en sous-objet de $\ker(M_f\twoheadrightarrow M)$ (suppos\'e non nul) on a un diagramme commutatif analogue \`a (\ref{wv})~:
\begin{eqnarray}\label{wvalpha}
\xymatrix{U(\mg)\otimes_{U(\mpp^-)}L^-( \lambda)_{P}\ar@{^{}->>}[r] &M_f\ar@{^{}->>}[r] &M \\ & U(\mg)\otimes_{U(\mpp^-)}V\ar[u]^>>>>>>{\psi} \ar[ur]^0& }
\end{eqnarray}
et une preuve totalement analogue \`a celle de l'\'Etape $4$ (utilisant le r\'esultat de l'\'Etape $2$) montre que l'on doit avoir $M_f=M$, ce qui ach\`eve la preuve dans ce cas. 

On suppose maintenant que $L^-(s_\alpha\cdot \lambda)$ n'appara\^\i t pas dans $M_f$, de sorte que l'on est encore dans la situation de (\ref{wv}). Soit $v\in V$, on va montrer que l'on peut choisir ${\mathfrak d}_v\in U(\mg)\otimes_{U(\mpp^-)}L^-( \lambda)_{P}$ qui s'envoie sur $\psi(1\otimes v)$ dans $M_f$ par (\ref{wv}) et qui est tel que ${\mathfrak d}_v\in \ker(U(\mg)\otimes_{U(\mpp^-)}L^-( \lambda)_{P}\twoheadrightarrow M)$. D'une part la surjection $U(\mg)\otimes_{U(\mpp^-)}L^-( \lambda)_{P}\twoheadrightarrow M_f$ se factorise par $U(\mg)\otimes_{U(\mq^-)}L^-( \lambda)_{Q}$ par le Lemme \ref{quotientM}. D'autre part on a un diagramme commutatif de suites exactes avec (\ref{encoresuite}) et le Lemme \ref{singular}~:
\begin{equation*}
\begin{gathered}
\xymatrix{&U(\mg)\otimes_{U(\mb^-)}\!s_\alpha\cdot\lambda\ar[r]\ar@{^{}->>}[d] & U(\mg)\otimes_{U(\mpp^-)}\!L^-(\lambda)_{P} \ar@{^{}->>}[d] \ar[r]& U(\mg)\otimes_{U(\mq^-)}\!L^-(\lambda)_{Q} \ar@{^{}->>}[d] \ar[r]&0\\
0\ar[r]&L^-(s_\alpha\cdot\lambda)\ar[r] & M \ar[r] & L^-(\lambda)\ar[r]&0}
\end{gathered}
\end{equation*}
qui montre (par une chasse au diagramme triviale) qu'un \'el\'ement dans $\ker(U(\mg)\otimes_{U(\mq^-)}L^-( \lambda)_{Q}\twoheadrightarrow L^-(\lambda))$ se rel\`eve dans $\ker(U(\mg)\otimes_{U(\mpp^-)}L^-( \lambda)_{P}\twoheadrightarrow M)$. On en d\'eduit que l'on peut prendre ${\mathfrak d}_v\in \ker(U(\mg)\otimes_{U(\mpp^-)}L^-( \lambda)_{P}\twoheadrightarrow M)$. On peut alors d\'erouler la preuve comme dans l'\'Etape $4$ en utilisant l'\'Etape $2$, ce qui donne $M_f=L^-(\lambda)$ et ach\`eve la preuve.
\end{proof}

Le corollaire qui suit pr\'ecise le Corollaire \ref{generaljoli} dans des cas importants.

\begin{cor}\label{casparticuliers}
Soit $P\subseteq G$ un sous-groupe parabolique contenant $B$, $L(-\lambda)_P$ une repr\'esentation alg\'ebrique irr\'eductible de $L_P(\Qp)$ sur $E$, $\pi_P^\infty$ une repr\'esentation lisse de longueur finie de $L_P(\Qp)$ sur $E$ admettant un caract\`ere central $\chi_{\pi_P^\infty}$, $d_{\pi_P^\infty}\=\dim_E(\pi_P^\infty\otimes_EE_\infty)(\eta^{-1})_{N_{L_{P}}\!(\Qp)}$, $M$ un quotient non nul de $U(\mg)\otimes_{U(\mpp^-)}L^-(\lambda)_{P}$ et $\pi\={\mathcal F}_{P^-}^G(M,\pi_P^\infty)$. Quand $\alpha\notin S_P$ on note $Q$ le parabolique contenant $P$ tel que $S_Q=S_P\amalg \{\alpha\}$ et $\mq^-$ l'alg\`ebre de Lie de $Q^-(\Qp)$.\\
(i) Si $\alpha\notin S_P$ et $M\notin {\mathcal O}^{\mq^-}_{\rm alg}$, on a~:
\begin{equation*}
F_\alpha(\pi)\simeq
E_\infty(\chi_{-\lambda})\otimes_{E}\Hom_{(\varphi,\Gamma)}\Big(\R\big((\lambda\circ \lambda_{\alpha^\vee})(\chi_{\pi_P^\infty}^{-1}\circ \lambda_{\alpha^{\!\vee}})\big)^{\oplus d_{\pi_P^\infty}},-\Big).
\end{equation*}
(ii) Si $\alpha\notin S_P$ et $M\in {\mathcal O}^{\mq^-}_{\rm alg}$ ou si $\alpha \in S_P$, on a~:
$$F_\alpha(\pi)\simeq E_\infty(\chi_{-\lambda})\otimes_E\Hom_{(\varphi,\Gamma)}\Big(\big(\R(\lambda\circ \lambda_{\alpha^{\!\vee}})/(t^{1-\langle \lambda,\alpha^\vee\rangle})\big)^{\oplus d_{\pi_P^\infty}},-\Big).$$
\end{cor}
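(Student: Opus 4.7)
Le plan consiste à ramener chaque situation, via le Théorème \ref{liesocle}, au calcul de $F_\alpha$ sur une induite parabolique localement analytique complète, calcul que l'on peut alors achever en combinant les Théorèmes \ref{plusgeneral}, \ref{encoreplusgeneral} et \ref{caslisse} déjà établis.

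Dans le cas (i), les hypothèses $\alpha\notin S_P$ et $M\notin {\mathcal O}^{\mq^-}_{\rm alg}$ sont précisément la deuxième alternative des hypothèses du Théorème \ref{liesocle}. Celui-ci fournit un isomorphisme $F_\alpha(\pi)\buildrel\sim\over\longrightarrow F_\alpha\big(\big(\Ind_{P^-(\Qp)}^{G(\Qp)}L(-\lambda)_P\otimes_E\pi_P^\infty\big)^{\an}\big)$, et le Théorème \ref{plusgeneral} donne alors directement la formule de l'énoncé. Dans le cas (ii) avec $\alpha\in S_P$, la première alternative des hypothèses du Théorème \ref{liesocle} donne le même isomorphisme. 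On applique ensuite le Théorème \ref{encoreplusgeneral}, qui identifie $F_\alpha\big(\big(\Ind_{P^-(\Qp)}^{G(\Qp)}L(-\lambda)_P\otimes_E\pi_P^\infty\big)^{\an}\big)$ avec $F_\alpha(L(-\lambda)_P\otimes_E\pi_P^\infty)$ calculé pour le triplet $(L_P,B\cap L_P,T)$ (noter que $Z_{L_P}$ reste connexe). Cette dernière représentation étant localement algébrique pour $L_P(\Qp)$ avec $\lambda$ dominant par rapport à $B^-\cap L_P$, le Théorème \ref{caslisse} appliqué à $L_P(\Qp)$ fournit la formule voulue.

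Il reste à traiter le cas (ii) avec $\alpha\notin S_P$ et $M\in {\mathcal O}^{\mq^-}_{\rm alg}$, dans lequel les hypothèses du Théorème \ref{liesocle} ne sont pas satisfaites telles quelles. L'idée est d'exploiter une propriété de fonctorialité standard des foncteurs d'Orlik-Strauch (\cite[\S~4]{OS}) : puisque $M\in {\mathcal O}^{\mq^-}_{\rm alg}$, on dispose d'un isomorphisme canonique $\pi\cong {\mathcal F}_{Q^-}^G(M,\sigma_Q^\infty)$ où $\sigma_Q^\infty\=\big(\Ind_{(P\cap L_Q)^-(\Qp)}^{L_Q(\Qp)}\pi_P^\infty\big)^\infty$ désigne l'induite parabolique lisse. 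Le Lemme \ref{quotientM} assure par ailleurs que $M$ s'écrit comme quotient non nul de $U(\mg)\otimes_{U(\mq^-)}L^-(\lambda)_Q$. Comme $\alpha\in S_Q$, on est ainsi ramené à la deuxième situation déjà traitée, avec $Q$ jouant le rôle de $P$ et $\sigma_Q^\infty$ celui de $\pi_P^\infty$, ce qui donne la formule de l'énoncé.

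L'obstacle principal dans cette dernière réduction, et probablement le point le plus délicat à rédiger soigneusement, est de vérifier d'une part que $\sigma_Q^\infty$ admet encore un caractère central (héritage de celui de $\pi_P^\infty$ via les propriétés élémentaires de l'induction parabolique lisse), et surtout l'identité $d_{\sigma_Q^\infty}=d_{\pi_P^\infty}$ entre les dimensions des modèles de Whittaker génériques. Cette dernière, qui traduit la conservation du nombre de fonctionnelles de Whittaker sous induction parabolique pour des caractères non dégénérés, résulte des travaux de Bernstein-Zelevinsky (\cite[\S~5]{BZ}) via l'analyse du module de Jacquet de $\sigma_Q^\infty$ le long du sous-groupe unipotent $N_{L_Q}(\Qp)$ relativement au caractère $\eta^{-1}$. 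Une fois ces deux points acquis, la combinaison des Théorèmes \ref{liesocle}, \ref{encoreplusgeneral} et \ref{caslisse} appliquée au triplet $(L_Q,B\cap L_Q,T)$ conduit à l'expression voulue pour $F_\alpha(\pi)$.
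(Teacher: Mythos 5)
Your proposal is correct and follows essentially the same route as the paper: Théorème \ref{liesocle} plus Théorème \ref{plusgeneral} for (i), Théorème \ref{liesocle} plus Théorème \ref{encoreplusgeneral} plus Théorème \ref{caslisse} (applied to $L_P$) when $\alpha\in S_P$, and, when $M\in {\mathcal O}^{\mq^-}_{\rm alg}$, the Orlik--Strauch functoriality \cite[Prop.~4.9(b)]{OS} to pass to $Q$ (the preservation of the Whittaker dimension under parabolic induction being Rodier's theorem \cite{Ro}, which is what the paper cites rather than \cite{BZ}). The only superfluous step is your verification of a central character for $\sigma_Q^\infty$: the Théorèmes \ref{liesocle}, \ref{encoreplusgeneral} and \ref{caslisse} used in that case require no such hypothesis, which is only needed in case (i) via Théorème \ref{plusgeneral}.
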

\begin{proof}
(i) Cela d\'ecoule du Th\'eor\`eme \ref{liesocle} avec le Th\'eor\`eme \ref{plusgeneral}.\\
(ii) Si $\alpha\in S_P$ cela d\'ecoule du Th\'eor\`eme \ref{liesocle} avec le Th\'eor\`eme \ref{encoreplusgeneral} et le Th\'eor\`eme \ref{caslisse} (appliqu\'e avec $G=L_P$). Supposons $\alpha\notin S_P$ et $M\in {\mathcal O}^{\mq^-}_{\rm alg}$. Par \cite[Prop.~4.9(b)]{OS} on a~:
$${\mathcal F}_{P^-}^G(M,\pi_P^\infty)\simeq {\mathcal F}_{Q^-}^G\big(M,\big(\Ind_{L_Q(\Qp)\cap P^-(\Qp)}^{L_Q(\Qp)}\pi_P^\infty\big)^\infty\big).$$
On d\'eduit du Th\'eor\`eme \ref{liesocle} appliqu\'e avec $P=Q$ un isomorphisme~:
$$F_\alpha\big({\mathcal F}_{P^-}^G(M,\pi_P^\infty)\big)\buildrel\sim\over\longrightarrow F_\alpha\big(\big(\Ind_{Q^-(\Qp)}^{G(\Qp)} L(-\lambda)_{Q}\otimes_E\big(\Ind_{L_Q(\Qp)\cap P^-(\Qp)}^{L_Q(\Qp)}\pi_P^\infty\big)^\infty\big)^{\an}\big)$$
puis avec le Th\'eor\`eme \ref{encoreplusgeneral} (aussi appliqu\'e avec $P=Q$) un isomorphisme~:
$$F_\alpha\big({\mathcal F}_{P^-}^G(M,\pi_P^\infty)\big)\buildrel\sim\over\longrightarrow F_\alpha\big(L(-\lambda)_{Q}\otimes_E\big(\Ind_{L_Q(\Qp)\cap P^-(\Qp)}^{L_Q(\Qp)}\pi_P^\infty\big)^\infty\big).$$
Le r\'esultat d\'ecoule alors du Th\'eor\`eme \ref{caslisse} (appliqu\'e avec $G=L_Q$) et de l'\'egalit\'e $d_{\pi_P^\infty}=\dim_{E_\infty}\big(\big(\Ind_{L_Q(\Qp)\cap P^-(\Qp)}^{L_Q(\Qp)}\pi_P^\infty\big)^\infty\otimes_EE_\infty\big)(\eta^{-1})_{N_{L_Q}(\Qp)}$ (\cite{Ro}).
\end{proof}

\section{Quelques r\'esultats d'exactitude pour $F_\alpha$}\label{gl3}

On \'enonce une conjecture de repr\'esentabilit\'e et d'exactitude (Conjecture \ref{representable}) pour le foncteur $F_\alpha$ appliqu\'e \`a certaines repr\'esentations localement analytiques de $G(\Qp)$ qui {\it ne} sont {\it pas} n\'ecessairement de la forme ${\mathcal F}_{P^-}^G(-,-)$ (mais dont les constituants irr\'eductibles le sont). Puis on en d\'emontre deux cas particuliers (Th\'eor\`eme \ref{localgenplus} et Th\'eor\`eme \ref{gl3enplus}), dont les preuves prennent l'essentiel du paragraphe.

\subsection{Pr\'eliminaires}\label{prelgen}

On \'enonce divers r\'esultats pr\'eliminaires et une conjecture (Conjecture \ref{representable}) valables pour tout groupe $G$ comme au \S~\ref{prel}.

On conserve les notations des \S\S~\ref{prel} et \ref{notabene}, et on fixe $\lambda\in X(T)$ dominant par rapport \`a $B^-$. On fixe aussi une racine simple $\alpha\in S$. Consid\'erons une repr\'esentation irr\'eductible $\pi$ de la forme ${\mathcal F}_{P^-}^G(L^-(w\cdot \lambda),\pi_P^\infty)$ pour $w\in W$ et $\pi_P^\infty$ lisse irr\'eductible. Alors $F_\alpha(\pi)$ est calcul\'e dans le Corollaire \ref{casparticuliers}. En se souvenant que $d_{\pi_P^\infty}= 1$ si $\pi_P^\infty$ est irr\'eductible g\'en\'erique et $d_{\pi_P^\infty}= 0$ si $\pi_P^\infty$ est irr\'eductible non g\'en\'erique (\cite{Ro}), on a $F_\alpha(\pi)=0$ si $\pi_P^\infty$ est non g\'en\'erique, et si $\pi_P^\infty$ est g\'en\'erique on obtient $F_\alpha(\pi)\simeq E_\infty(\chi_{-w\cdot \lambda})\otimes_{E}\Hom_{(\varphi,\Gamma)}\big(D_\alpha(\pi),-\big)$ avec (en utilisant \cite[Lem.~4.2]{Br3})~:
$$\left\{\begin{array}{ccccc}
D_\alpha(\pi)&=&\R\big(((w\cdot\lambda)\circ \lambda_{\alpha^\vee})(\chi_{\pi_P^\infty}^{-1}\circ \lambda_{\alpha^\vee})\big)&{\rm\ si\ }&w^{-1}(\alpha)<0\\
D_\alpha(\pi)&=&\R((w\cdot \lambda)\circ \lambda_{\alpha^\vee})/(t^{1-\langle w\cdot \lambda,\alpha^\vee\rangle})&{\rm\ si\ }&w^{-1}(\alpha)>0.
\end{array}\right.$$

\begin{lem}\label{egalchi}
On a $E_\infty(\chi_{-\lambda})\simeq E_\infty(\chi_{-w\cdot\lambda})$ si et seulement si $w\in \{1,s_\alpha\}$.
\end{lem}
\begin{proof}
Comme $\chi_{-\lambda}$ et $\chi_{-w\cdot\lambda}$ sont alg\'ebriques, on a $E_\infty(\chi_{-\lambda})\simeq E_\infty(\chi_{-w\cdot\lambda})$ si et seulement si $\chi_{-\lambda}=\chi_{-w\cdot\lambda}$ si et seulement si $\chi_{\lambda-w\cdot \lambda}=1$. Il suffit donc de montrer $\chi_{\lambda-w\cdot \lambda}=1$ si et seulement si $w\in \{1,s_\alpha\}$. On a $\langle \sum_{\beta\in S\setminus\{\alpha\}}\lambda_{\beta^\vee},\gamma\rangle \geq 0$ pour $\gamma\in S$ avec nullit\'e si et seulement si $\gamma=\alpha$. En revenant \`a la d\'efinition de $\chi_\mu$ avant le Th\'eor\`eme \ref{caslisse}, et en utilisant que $\lambda - w\cdot \lambda \in \sum_{\gamma\in S}\Z_{\geq 0}\gamma$ (car $\lambda$ est dominant par rapport \`a $B^-$), on en d\'eduit $\chi_{\lambda-w\cdot \lambda}=1$ si et seulement si $\lambda-w\cdot \lambda\in \Z_{\geq 0}\alpha$ si et seulement si $w\in \{1,s_\alpha\}$.
\end{proof}

On d\'esigne par $C_{\lambda,\alpha}$ la sous-cat\'egorie pleine (artinienne) de la cat\'egorie ab\'elienne des repr\'esentations localement analytiques admissibles de $G(\Qp)$ sur $E$ form\'ee des repr\'esentations de longueur finie dont les constituants irr\'eductibles sont de l'une des formes suivantes~: ${\mathcal F}_{P^-}^G(L^-(w\cdot \lambda),\pi_P^\infty)$ pour $w\in \{1,s_\alpha\}$ et $\pi_P^\infty$ quelconque ou bien ${\mathcal F}_{P^-}^G(L^-(w\cdot \lambda),\pi_P^\infty)$ pour $w$ quelconque et $\pi_P^\infty$ {\it non} g\'en\'erique. Noter qu'un objet (r\'eductible) de $C_{\lambda,\alpha}$ n'est en g\'en\'eral plus n\'ecessairement de la forme ${\mathcal F}_{P^-}^G(M,\pi_P^\infty)$ pour un $M$ dans ${\mathcal O}^{\mpp^-}_{\rm alg}$. Noter \'egalement que si $\pi$ dans $C_{\lambda,\alpha}$ est irr\'eductible de la forme ${\mathcal F}_{P^-}^{{\rm GL}_3}(L^-(w\cdot \lambda),\pi_P^\infty)$ avec $\pi_P^\infty$ non g\'en\'erique, on a n\'ecessairement $L_P\ne T$, donc $w\ne w_0$.

La proposition qui suit n'est pas n\'ecessaire pour les applications que l'on a en vue, mais est simple \`a d\'emontrer.

\begin{prop}
Soit $\lambda_1$, $\lambda_2$ distincts dans $X(T)$ et dominants par rapport \`a $B^-$, et soit $\pi_{\lambda_1}$, $\pi_{\lambda_2}$ de la forme ${\mathcal F}_{P_1^-}^G(L^-(w_1\cdot \lambda_1),\pi_{P_1}^\infty)$, ${\mathcal F}_{P_2^-}^G(L^-(w_2\cdot \lambda_2),\pi_{P_2}^\infty)$ respectivement o\`u $w_i\in W$ et $\pi_{P_i}^\infty$ est de longueur finie pour $i\in \{1,2\}$. Alors on a ${\rm Ext}^j_G(\pi_{\lambda_1},\pi_{\lambda_2})= {\rm Ext}^j_G(\pi_{\lambda_2},\pi_{\lambda_1})=0$ pour $j\in \{0,1\}$.
\end{prop}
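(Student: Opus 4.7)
L'id\'ee est d'exploiter le fait que $\pi_{\lambda_1}$ et $\pi_{\lambda_2}$ admettent des caract\`eres infinit\'esimaux {\it distincts}, ce qui tue imm\'ediatement les $\Ext^j$ pour $j\in \{0,1\}$. Rappelons que $Z(\mg)\subseteq U(\mg)\subseteq D(G(\Qp),E)$, de sorte que $Z(\mg)$ agit de mani\`ere $G(\Qp)$-\'equivariante et continue sur toute repr\'esentation localement analytique admissible.

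Je proc\'ederais en trois \'etapes. D'abord, je montrerais que $Z(\mg)$ agit sur $\pi_{\lambda_i}={\mathcal F}_{P_i^-}^G(L^-(w_i\cdot \lambda_i),\pi_{P_i}^\infty)$ par le caract\`ere central $\chi_{\lambda_i}:Z(\mg)\to E$ associ\'e \`a $\lambda_i$ par l'isomorphisme de Harish-Chandra (avec la dot-action par rapport \`a $B^-$). En effet, le $U(\mg)$-module irr\'eductible $L^-(w_i\cdot \lambda_i)$ a pour caract\`ere central $\chi_{\lambda_i}$, lequel ne d\'epend que de la dot-orbite de $w_i\cdot \lambda_i$, i.e.\ de $\lambda_i$; et par la construction d'Orlik-Strauch \cite[\S 4]{OS}, l'action d\'eriv\'ee de $U(\mg)$ sur $\pi_{\lambda_i}$ se factorise par celle sur $L^-(w_i\cdot \lambda_i)$ (la partie $\pi_{P_i}^\infty$ \'etant lisse, donc tu\'ee par $\mg$), de sorte que $Z(\mg)$ agit bien sur $\pi_{\lambda_i}$ par le scalaire $\chi_{\lambda_i}$. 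Ensuite, comme $\lambda_1$ et $\lambda_2$ sont tous les deux dominants par rapport \`a $B^-$ et distincts, et qu'une dot-$W$-orbite contient au plus un \'el\'ement dominant, ils appartiennent \`a des orbites distinctes; par l'injectivit\'e de l'isomorphisme de Harish-Chandra on a donc $\chi_{\lambda_1}\ne \chi_{\lambda_2}$, et il existe $z\in Z(\mg)$ tel que $c_i\=\chi_{\lambda_i}(z)$ v\'erifie $c_1\ne c_2$.

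Enfin, j'en d\'eduirais l'\'enonc\'e. Pour $\Hom_G(\pi_{\lambda_1},\pi_{\lambda_2})=0$, tout $f$ commutant \`a $z$ v\'erifie $c_1 f=fz=zf=c_2 f$, d'o\`u $f=0$. Pour $\Ext^1_G(\pi_{\lambda_1},\pi_{\lambda_2})=0$, consid\'erant une extension $0\rightarrow \pi_{\lambda_2}\rightarrow E\rightarrow \pi_{\lambda_1}\rightarrow 0$ dans la cat\'egorie des repr\'esentations localement analytiques admissibles, l'endomorphisme $G(\Qp)$-\'equivariant continu
$$e\=\frac{z-c_2}{c_1-c_2}\in \End_{G(\Qp)}(E)$$
s'annule sur $\pi_{\lambda_2}$ et induit l'identit\'e sur le quotient $\pi_{\lambda_1}$; c'est donc un projecteur ($e^2=e$ car $e(e-1)(E)\subseteq e(\pi_{\lambda_2})=0$) de noyau exactement $\pi_{\lambda_2}$ (si $ev=0$ alors $(z-c_2)v=0$, donc l'image de $v$ dans $\pi_{\lambda_1}$ est dans le noyau de $z-c_2=c_1-c_2\ne 0$). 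Son image, ferm\'ee dans $E$ comme noyau de $\Id-e$, fournit un suppl\'ementaire $G(\Qp)$-stable de $\pi_{\lambda_2}$; la bijection continue $G(\Qp)$-\'equivariante ${\im}(e)\rightarrow \pi_{\lambda_1}$ est un isomorphisme topologique (th\'eor\`eme de l'application ouverte pour les espaces de type compact), de sorte que l'extension est scind\'ee dans la cat\'egorie voulue. L'annulation sym\'etrique s'obtient en \'echangeant les r\^oles de $\lambda_1$ et $\lambda_2$. Le seul point \`a v\'erifier avec soin est que le caract\`ere central $\chi_{\lambda_i}$ se propage bien de $L^-(w_i\cdot \lambda_i)$ \`a la repr\'esentation $\pi_{\lambda_i}$ (le reste est formel), mais cela r\'esulte directement de la compatibilit\'e du foncteur ${\mathcal F}_{P_i^-}^G(-,-)$ avec la structure de $U(\mg)$-module.
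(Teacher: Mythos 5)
Your proof is correct and follows essentially the same route as the paper: both arguments reduce the statement to the fact that $Z(\mg)$ acts on $\pi_{\lambda_1}$ and $\pi_{\lambda_2}$ by distinct infinitesimal characters (the paper gets this from the inclusion into the locally analytic parabolic induction together with \cite[Prop.~3.7]{ST1} and concludes by \cite[Th.~1.10(b)]{Hu}, exactly the Harish-Chandra linkage argument you use). The only difference is that you spell out the standard consequences — the scalar argument for $\Hom$ and the projector $(z-c_2)/(c_1-c_2)$ splitting any extension — which the paper treats as formal.
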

\begin{proof}
Il suffit de montrer que le centre $Z(\mg)$ de $U(\mg)$ agit par des caract\`eres distincts sur $\pi_{\lambda_1}$ et $\pi_{\lambda_2}$. L'inclusion $\pi_{\lambda_i}\subseteq (\Ind_{P_i^-(\Qp)}^{G(\Qp)} L(-w_i\cdot \lambda_i)_{P_i}\otimes_E\pi_{P_i}^\infty)^{\an}$ montre que $Z(\mg)$ agit sur $\pi_{\lambda_i}$ par le caract\`ere infinit\'esimal $\chi_{-w_i\cdot \lambda_i}$ avec les notations de \cite[\S~1.7]{Hu} (cf. \cite[Prop.~3.7]{ST1}). Le r\'esultat d\'ecoule alors de \cite[Th.~1.10(b)]{Hu}.
\end{proof}

\begin{conj}\label{representable}
Soit $0\rightarrow \pi''\rightarrow \pi\rightarrow \pi'\rightarrow 0$ une suite exacte (stricte) dans $\Rep(G(\Qp))$. On suppose $\pi''$ irr\'eductible dans $C_{\lambda,\alpha}$ et $F_\alpha(\pi')(-)\simeq E_\infty(\chi_{-\lambda})\otimes_{E}\Hom_{(\varphi,\Gamma)}(D_\alpha(\pi'),-)$ pour un $(\varphi,\Gamma)$-module g\'en\'eralis\'e $D_{\alpha}(\pi')$ sur $\R$ tel que $D_\alpha(\pi')$ est sans torsion et $\Hom_{(\varphi,\Gamma)}(D_\alpha(\pi''),D_\alpha(\pi'))=0$. Alors on a~:
$$F_\alpha(\pi)(-)\simeq E_\infty(\chi_{-\lambda})\otimes_{E}\Hom_{(\varphi,\Gamma)}(D_\alpha(\pi),-)$$
pour un (unique) $(\varphi,\Gamma)$-module g\'en\'eralis\'e $D_{\alpha}(\pi)$ sur $\R$ qui s'ins\`ere dans une suite exacte $0\rightarrow D_{\alpha}(\pi')\rightarrow D_{\alpha}(\pi)\rightarrow D_{\alpha}(\pi'')\rightarrow 0$.
\end{conj}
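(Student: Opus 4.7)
Mon plan comporte trois \'etapes. D'abord, la Proposition~\ref{drex} appliqu\'ee \`a la suite exacte $0\to \pi''\to \pi\to \pi'\to 0$ fournit imm\'ediatement une suite exacte dans $F(\varphi,\Gamma)_\infty$~:
$$0\longrightarrow F_\alpha(\pi'')\longrightarrow F_\alpha(\pi)\longrightarrow F_\alpha(\pi').$$
Comme $\pi''$ est irr\'eductible dans $C_{\lambda,\alpha}$, la discussion au d\'ebut du \S~\ref{prelgen} combin\'ee avec le Corollaire~\ref{casparticuliers} et le Lemme~\ref{egalchi} (qui identifie $\chi_{-w\cdot\lambda}$ \`a $\chi_{-\lambda}$ pour $w\in\{1,s_\alpha\}$) fournit un $(\varphi,\Gamma)$-module g\'en\'eralis\'e explicite $D_\alpha(\pi'')$ tel que $F_\alpha(\pi'')\simeq E_\infty(\chi_{-\lambda})\otimes_E\Hom_{(\varphi,\Gamma)}(D_\alpha(\pi''),-)$ (avec $D_\alpha(\pi'')=0$ lorsque le constituant lisse sous-jacent est non g\'en\'erique).

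Ensuite, l'\'etape cruciale est d'\'etablir la surjectivit\'e du morphisme $F_\alpha(\pi)\to F_\alpha(\pi')$. Comme le souligne la Remarque~\ref{catabelienne}(i), le foncteur $F_\alpha$ n'est pas exact \`a droite en g\'en\'eral, si bien que cette surjectivit\'e n\'ecessite un argument propre \`a la situation de la conjecture. La strat\'egie, calqu\'ee sur celle de la preuve annonc\'ee du Th\'eor\`eme~\ref{gl3enplus}, consisterait, \'etant donn\'e un morphisme $f\in \Hom_{\psi,\Gamma}(M_\alpha(\pi'\otimes_EE_m),T_r\otimes_EE_m)$ repr\'esentant un \'el\'ement de $F_\alpha(\pi')(T)$, \`a tenter de le relever en un morphisme $M_\alpha(\pi\otimes_EE_{m'})\to T_{r'}\otimes_EE_{m'}$ pour $m',r'\gg 0$ via le morphisme $M_\alpha(\pi')\to M_\alpha(\pi)$ de la Proposition~\ref{exactdense}. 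L'obstruction \`a un tel rel\`evement prend ses valeurs dans un $E$-espace vectoriel contr\^ol\'e par des morphismes issus de $M_\alpha(\pi'')$, et l'hypoth\`ese cruciale $\Hom_{(\varphi,\Gamma)}(D_\alpha(\pi''),D_\alpha(\pi'))=0$ combin\'ee avec le fait que $D_\alpha(\pi')$ est sans torsion devrait permettre d'annuler cette obstruction apr\`es choix ad\'equat de $m'$ et $r'$. Les outils techniques cl\'es seraient le ``fait technique'' de la preuve du Lemme~\ref{begin} (nullit\'e d'un morphisme $\psi$-\'equivariant sur presque tous les facteurs d'un produit $\prod_{i\geq 1}M_i$) et l'action semi-lin\'eaire de $\Gal(E_\infty/E)$ qui permet de s\'eparer les contributions selon les diff\'erents caract\`eres alg\'ebriques $\chi_\mu$.

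Enfin, la repr\'esentabilit\'e et la construction de $D_\alpha(\pi)$ suivront formellement de la suite exacte courte $0\to F_\alpha(\pi'')\to F_\alpha(\pi)\to F_\alpha(\pi')\to 0$ ainsi obtenue. L'\'evaluation en $T=D_\alpha(\pi')$ donne, gr\^ace \`a $\Hom_{(\varphi,\Gamma)}(D_\alpha(\pi''),D_\alpha(\pi'))=0$, une injection $F_\alpha(\pi)(D_\alpha(\pi'))\hookrightarrow F_\alpha(\pi')(D_\alpha(\pi'))\simeq E_\infty(\chi_{-\lambda})\otimes_E\End_{(\varphi,\Gamma)}(D_\alpha(\pi'))$, et la ``classe d'extension'' correspondant \`a l'identit\'e de $D_\alpha(\pi')$ devrait d\'efinir, par Yoneda, un $(\varphi,\Gamma)$-module g\'en\'eralis\'e $D_\alpha(\pi)$ (unique \`a isomorphisme pr\`es) s'ins\'erant dans une suite exacte $0\to D_\alpha(\pi')\to D_\alpha(\pi)\to D_\alpha(\pi'')\to 0$ et repr\'esentant $F_\alpha(\pi)$ au facteur $E_\infty(\chi_{-\lambda})$ pr\`es (en utilisant la Remarque~\ref{vrac}(v) pour g\'erer l'action galoisienne). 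L'obstacle principal reste bien entendu la surjectivit\'e de la deuxi\`eme \'etape~: sans hypoth\`ese de structure explicite sur $\pi$ (comme dans le Th\'eor\`eme~\ref{gl3enplus} o\`u le cas $n=3$ et la pr\'esence de la Steinberg fournissent des simplifications d\'ecisives), il n'est pas clair comment mener les d\'evissages techniques n\'ecessaires pour un groupe $G$ g\'en\'eral, ce qui explique pourquoi la conjecture est laiss\'ee ouverte.
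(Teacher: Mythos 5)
Le point essentiel est que l'\'enonc\'e en question est une \emph{conjecture}, pas un th\'eor\`eme~: le papier ne la d\'emontre pas, il n'en \'etablit que deux cas particuliers (Th\'eor\`eme~\ref{localgenplus} pour $\pi''$ localement alg\'ebrique, Th\'eor\`eme~\ref{gl3enplus} pour ${\rm GL}_2$ et ${\rm GL}_3$). Vous l'avez correctement reconnu en concluant que la surjectivit\'e reste l'obstacle et que la conjecture est laiss\'ee ouverte~; il n'y a donc pas de ``preuve du papier'' avec laquelle comparer votre texte, et votre sortie est honn\^etement une esquisse, pas une d\'emonstration.

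Cela dit, votre description du m\'ecanisme central pour traiter la non-exactitude \`a droite diverge sensiblement de celle du papier dans les cas d\'emontr\'es. Vous proposez de relever un morphisme $f:M_\alpha(\pi')\to T_r$ le long de $M_\alpha(\pi')\to M_\alpha(\pi)$, l'obstruction vivant ``quelque part'' du c\^ot\'e de $M_\alpha(\pi'')$~: ce n'est pas la strat\'egie du texte. Dans la preuve des Th\'eor\`emes~\ref{localgenplus} et~\ref{gl3enplus}, on ne rel\`eve pas de morphisme~: on compl\`ete la suite $M_\alpha(\pi')\to M_\alpha(\pi)\to M_\alpha(\pi'')\to 0$ en un complexe par $M_\alpha(H^1(\mnn^\alpha,\pi''))$ (Lemme~\ref{separexact}, Corollaire~\ref{separexactgl3}), on montre que la contribution de ce $H^1$ dispara\^\i t \`a l'aide du caract\`ere galoisien $\chi_{-\lambda}$ via le Lemme~\ref{egalchi}, puis on forme le \emph{push-out} de $M_\alpha(\pi')\to M_\alpha(\pi)$ le long de $M_\alpha(\pi')\to D_\alpha(\pi')_r\otimes_EE_m$ pour obtenir une suite exacte \emph{courte} de $(\psi,\Gamma)$-modules de Fr\'echet~:
$$0\longrightarrow D_\alpha(\pi')_r\otimes_EE_m\longrightarrow \widetilde M_\alpha(\pi\otimes_EE_m)\longrightarrow M_\alpha(\pi''\otimes_EE_m)\longrightarrow 0.$$
Ensuite seulement interviennent les scindages techniques (Propositions~\ref{scinde3},~\ref{scinde1dur},~\ref{scinde2ouvert}) pour ramener cette suite \`a une extension de $(\varphi,\Gamma)$-modules g\'en\'eralis\'es. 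Enfin, c'est pr\'ecis\'ement le Lemme~\ref{actiongalois} qui utilise l'hypoth\`ese $\Hom_{(\varphi,\Gamma)}(D_\alpha(\pi''),D_\alpha(\pi'))=0$ pour redescendre de $E_m$ \`a $E$ et produire le $(\varphi,\Gamma)$-module $D_\alpha(\pi)_r$ sur $\Rr$~; cette hypoth\`ese ne sert pas, comme vous le sugg\'erez, \`a ``annuler une obstruction de rel\`evement'' mais \`a garantir l'unicit\'e de la structure galoisienne de la suite exacte. De m\^eme, l'hypoth\`ese que $D_\alpha(\pi')$ est sans torsion n'est pas utilis\'ee comme vous l'indiquez~: elle intervient de fa\c con cruciale dans les arguments de scindage des Propositions~\ref{scinde3} et~\ref{scinde1dur} (o\`u l'on manipule des modules libres sur $\Rr$). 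Votre premi\`ere et troisi\`eme \'etapes (exactitude \`a gauche via la Proposition~\ref{drex}, repr\'esentabilit\'e de $F_\alpha(\pi'')$ via le Corollaire~\ref{casparticuliers}, et unicit\'e de $D_\alpha(\pi)$) sont en revanche correctes et conformes \`a l'esprit du texte.
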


\begin{rem}
{\rm (i) Sans l'hypoth\`ese $\Hom_{(\varphi,\Gamma)}(D_\alpha(\pi''),D_\alpha(\pi'))=0$, il n'est pas vrai en g\'en\'eral que $F_\alpha(\pi)$ est de la forme $E_\infty(\chi_{-\lambda})\otimes_{E}\Hom_{(\varphi,\Gamma)}(D,-)$ dans $F(\varphi,\Gamma)_\infty$ pour un $(\varphi,\Gamma)$-module g\'en\'eralis\'e $D$ sur $\R$~: consid\'erer par exemple $G=\G$ pour $n>2$ et $\pi=\pi_1\otimes_EE[\epsilon]/(\epsilon^2)$ avec $\pi_1$ irr\'eductible dans $C_{\lambda,\alpha}$ et $G(\Qp)$ agissant diagonalement par la multiplication par $1+\epsilon \log(\det)$ \`a droite (o\`u log est une branche du logarithme $p$-adique). Si l'on oublie l'action de $\Gal(E_\infty/E)$, il est possible que $F_\alpha(\pi)$ soit de la forme $E_\infty\otimes_{E}\Hom_{(\varphi,\Gamma)}(D,-)$ sans cette hypoth\`ese (c'est par exemple le cas, si l'on examine leur preuve, dans les situations du Th\'eor\`eme \ref{localgenplus} et du Th\'eor\`eme \ref{gl3enplus} ci-dessous), mais alors $D$ n'est pas canoniquement d\'efini (seul $E_\infty\otimes_ED$ l'est).\\
(ii) Il est par contre possible que la Conjecture \ref{representable} reste vraie sans l'hypoth\`ese que $D_\alpha(\pi')$ est sans torsion (par exemple on peut encore montrer que c'est le cas pour ${\rm GL}_2$). L'hypoth\`ese que $D_{\alpha}(\pi')$ est sans torsion sera satisfaite dans les applications \`a la compatibilit\'e local-global du \S~\ref{locglob}.\\
(iii) Si l'on rajoute \`a la cat\'egorie $C_{\lambda,\alpha}$ des constituants de la forme ${\mathcal F}_{P^-}^G(L^-(w\cdot \lambda),\pi_P^\infty)$ pour $w\notin\{1,s_\alpha\}$ et $\pi_P^\infty$ g\'en\'erique, il r\'esulte facilement du Th\'eor\`eme \ref{liesocle} que (l'analogue de) la Conjecture \ref{representable} devient fausse en g\'en\'eral.}
\end{rem}

L'objectif du \S~\ref{gl3} est de montrer les deux th\'eor\`emes suivants.

\begin{thm}\label{localgenplus}
La conjecture \ref{representable} est vraie lorsque $\pi''$ est localement alg\'ebrique.
\end{thm}

\begin{thm}\label{gl3enplus}
La conjecture \ref{representable} est vraie pour $G={\rm GL}_2$ et $G={\rm GL}_3$.
\end{thm}

L'hypoth\`ese $\Hom_{(\varphi,\Gamma)}(D_\alpha(\pi''),D_\alpha(\pi'))=0$ est automatiquement v\'erifi\'ee dans le cas du Th\'eor\`eme \ref{localgenplus} par le Th\'eor\`eme \ref{caslisse} et l'hypoth\`ese que $D_\alpha(\pi')$ est sans torsion. La preuve du Th\'eor\`eme \ref{gl3enplus} est nettement plus d\'elicate (pour $G={\rm GL}_3$) que celle du Th\'eor\`eme \ref{localgenplus}. Nous continuons avec quelques lemmes pr\'eliminaires valables pour tout $G$ et utilis\'es ci-dessous dans les preuves de ces deux th\'eor\`emes.

\begin{lem}\label{actiongalois}
Soit $r\in \Q_{>p-1}$, $m\in \Z_{\geq 0}$ et $D'_r$, $D''_r$ deux $(\varphi,\Gamma)$-modules g\'en\'eralis\'es sur $\Rr$ tels que $\Hom_{(\varphi,\Gamma)}(D''_r,D'_r)=0$. On munit $E_m\otimes_ED'_r$ et $E_m\otimes_ED''_r$ d'une action de $\Gal(E_\infty/E)$ d\'efinie par $g(x\otimes d)=\chi_{\lambda}(g)g(x)\otimes d$ pour $g\in \Gal(E_\infty/E)$, $x\in E_m$ et $d\in D'_r$ ou $D''_r$. Soit $V$ un $(\psi,\Gamma)$-module de Fr\'echet sur $\Rrm$ muni d'une action de $\Gal(E_\infty/E)$ qui commute \` a $\Rr$, $\psi$, $\Gamma$, v\'erifie $g(xv)=g(x)g(v)$ pour $g\in \Gal(E_\infty/E)$, $x\in E_m$, $v\in V$, et s'ins\`ere dans une suite exacte courte~:
$$0\longrightarrow E_m\otimes_ED'_r \longrightarrow V \longrightarrow E_m\otimes_ED''_r \longrightarrow 0$$
de $(\psi,\Gamma)$-modules de Fr\'echet sur $\Rrm$ commutant \`a l'action de $\Gal(E_\infty/E)$. Alors on a $V\simeq E_m\otimes_ED_r$ pour un unique $(\varphi,\Gamma)$-module g\'en\'eralis\'e $D_r$ sur $\Rr$ qui est une extension de $D''_r$ par $D'_r$ avec action de $\Gal(E_\infty/E)$ sur $E_m\otimes_ED_r$ donn\'ee par $g(x\otimes d)=\chi_{\lambda}(g)g(x)\otimes d$ ($g\in \Gal(E_\infty/E)$, $x\in E_m$, $d\in D_r$).
\end{lem}
\begin{proof}
Quitte \`a tordre partout par $\chi_{-\lambda}$, on peut supposer $\chi_{\lambda}=1$. L'unicit\'e de $D_r$ est claire car $D_r$ s'identifie alors \`a $(E_m\otimes_ED_r)^{\Gal(E_\infty/E)}=V^{\Gal(E_\infty/E)}$. Soit $g\in \Gal(E_\infty/E_m)$, alors $g-\Id:V\rightarrow V$ se factorise en $V\twoheadrightarrow E_m\otimes_ED''_r \rightarrow E_m\otimes_ED'_r \hookrightarrow V$, qui est nul puisque $\Hom_{(\varphi,\Gamma)}(D''_r,D'_r)=0$ (avec les (iv) et (v) de la Remarque \ref{vrac}). Donc $\Gal(E_\infty/E_m)$ agit trivialement sur $V$, d'o\`u on d\'eduit une suite exacte courte~:
$$0\longrightarrow (E_m\otimes_ED'_r)^{\Gal(E_m/E)}\longrightarrow V^{\Gal(E_m/E)} \longrightarrow (E_m\otimes_ED''_r)^{\Gal(E_m/E)}\longrightarrow 0$$
de $(\psi,\Gamma)$-modules de Fr\'echet sur $\Rr$, ainsi qu'un isomorphisme $E_m\otimes_EV^{\Gal(E_m/E)}\buildrel\sim\over\rightarrow V$ de $(\psi,\Gamma)$-modules de Fr\'echet sur $\Rrm$ compatible \`a $\Gal(E_m/E)$. Comme $(E_m\otimes_ED'_r)^{\Gal(E_m/E)}$ et $(E_m\otimes_ED''_r)^{\Gal(E_m/E)}$ sont des $(\varphi,\Gamma)$-modules g\'en\'eralis\'es sur $\Rr$, (\ref{psi6}) ci-dessous (pour $j=1$) et un d\'evissage \'evident montrent que (\ref{psi4}) (pour $j=1$) induit un isomorphisme $\R^{pr}\otimes_{\Rr}V^{\Gal(E_m/E)}\buildrel\sim\over\rightarrow \R^{pr}\otimes_{\varphi,\Rr}V^{\Gal(E_m/E)}$, d'o\`u il suit (en prenant son inverse) que $V^{\Gal(E_m/E)}$ est aussi un $(\varphi,\Gamma)$-module g\'en\'eralis\'e sur $\Rr$ v\'erifiant les conditions de l'\'enonc\'e.
\end{proof}

On d\'esigne par $H^\cdot(\mnn^\alpha,-)$ la cohomologie usuelle d'alg\`ebre de Lie (cf. par exemple \cite[\S~7]{We}, cf. aussi \cite[\S~3]{ST3} ou \cite[p.~73]{Sc1}). C'est la cohomologie d'un complexe dont le terme g\'en\'eral est $\Hom_{\Qp}(\bigwedge^\cdot \mnn^\alpha,-)$. Soit $\pi$ dans $\Rep(B(\Qp))$, alors $H^i(\mnn^\alpha,\pi)$ pour $i\in \Z$ est nul si $i<0$ et $i> \dim_{\Qp}\mnn^\alpha$, et est naturellement muni d'une topologie localement convexe (pas forc\'ement s\'epar\'ee) en utilisant que chaque terme $\Hom_{\Qp}(\bigwedge^\cdot \mnn^\alpha,\pi)$ est un espace localement convexe de type compact puisque $\bigwedge^\cdot \mnn^\alpha$ est de dimension finie, cf. \cite[p.~75]{Sc1}. Chaque $\Hom_{\Qp}(\bigwedge^i \mnn^\alpha,\pi)$ est muni de l'action de $B(\Qp)$ par automorphismes continus~:
\begin{equation}\label{actionlie}
f\in \Hom_{\Qp}\big(\bigwedge^i \mnn^\alpha,\pi\big)\longmapsto \big(x_1\wedge \cdots \wedge x_i\mapsto bf({\rm Ad}_{b^{-1}}(x_1)\wedge \cdots \wedge{\rm Ad}_{b^{-1}}(x_i))\big)
\end{equation}
qui en fait un objet de $\Rep(B(\Qp))$ et induit une action de $B(\Qp)$ par automorphismes continus sur chaque $H^i(\mnn^\alpha,\pi)$. Si la topologie sur $H^i(\mnn^\alpha,\pi)$ est s\'epar\'ee, alors $H^i(\mnn^\alpha,\pi)$ est un espace de type compact et l'action de $B(\Qp)$ en fait aussi un objet de $\Rep(B(\Qp))$ tel que l'action de $N^\alpha(\Qp)$ est lisse. On peut dans ce cas d\'efinir un $(\psi,\Gamma)$-module de Fr\'echet $M_\alpha(H^i(\mnn^\alpha,\pi\otimes_EE_m))=(H^i(\mnn^\alpha,\pi\otimes_EE_m)(\eta^{-1})_{N_m^\alpha})^\vee$ sur $\Rm^+$ comme en (\ref{malpha}).

\begin{lem}\label{separexact}
Soit $m\in \Z_{\geq 0}$, $0\rightarrow \pi''\rightarrow \pi\rightarrow \pi'\rightarrow 0$ une suite exacte (stricte) dans $\Repm(B(\Qp))$ et supposons que $H^1(\mnn^\alpha,\pi'')$ soit s\'epar\'e. Alors on a un complexe de $(\psi,\Gamma)$-modules de Fr\'echet sur $\Rm^+$~:
$$M_\alpha(H^1(\mnn^\alpha,\pi''))\buildrel f\over\longrightarrow M_\alpha(\pi')\buildrel g\over\longrightarrow M_\alpha(\pi)\longrightarrow M_\alpha(\pi'')\longrightarrow 0$$
qui est exact en $M_\alpha(\pi'')$ et $M_\alpha(\pi)$ et tel que l'image de $f$ est dense dans $\ker(g)$.
\end{lem}
\begin{proof}
On a une suite exacte d'espaces de type compact $0\rightarrow \pi''[\mnn^\alpha]\rightarrow \pi[\mnn^\alpha]\rightarrow \pi'[\mnn^\alpha]\buildrel\delta\over \rightarrow H^1(\mnn^\alpha,\pi'')$, qui se d\'ecompose donc en deux suites exactes d'espaces de type compact~:
$$0\rightarrow \pi''[\mnn^\alpha]\rightarrow \pi[\mnn^\alpha]\rightarrow \ker(\delta)\rightarrow 0\ \ {\rm et}\ \ 0\rightarrow \ker(\delta)\rightarrow \pi'[\mnn^\alpha]\rightarrow H^1(\mnn^\alpha,\pi'')$$
(noter que, si $H^1(\mnn^\alpha,\pi'')$ n'\'etait pas s\'epar\'e, on aurait encore ces deux suites exactes mais on ne saurait pas que $\ker(\delta)$ est un espace de type compact). Comme dans la preuve de la Proposition \ref{exactdense} on en d\'eduit deux suites exactes d'espaces de type compact (o\`u la premi\`ere est stricte et o\`u l'injection dans la deuxi\`eme est stricte)~:
\begin{multline*}
0\rightarrow \pi''[{\mnn^\alpha}](\eta^{-1})_{N_m^\alpha}\rightarrow \pi[{\mnn^\alpha}](\eta^{-1})_{N_m^\alpha}\rightarrow \ker(\delta)(\eta^{-1})_{N_m^\alpha}\rightarrow 0\ \ {\rm et}\\
0\rightarrow \ker(\delta)(\eta^{-1})_{N_m^\alpha}\rightarrow \pi'[{\mnn^\alpha}](\eta^{-1})_{N_m^\alpha} \rightarrow H^1(\mnn^\alpha,\pi'')(\eta^{-1})_{N_m^\alpha}.
\end{multline*}
En dualisant, on en d\'eduit une suite exacte stricte d'espaces de Fr\'echet $0\rightarrow (\ker(\delta)(\eta^{-1})_{N_m^\alpha})^\vee\rightarrow M_\alpha(\pi)\rightarrow M_\alpha(\pi'')\rightarrow 0$ et un complexe d'espaces de Fr\'echet $M_\alpha(H^1(\mnn^\alpha,\pi''))\buildrel f\over\rightarrow M_\alpha(\pi')\buildrel g\over\rightarrow (\ker(\delta)(\eta^{-1})_{N_m^\alpha})^\vee\rightarrow 0$ o\`u, comme \`a la fin de la preuve de la Proposition \ref{exactdense}, $g$ est une surjection topologique et l'image de $f$ est dense dans $\ker(g)$. On a l'\'enonc\'e en combinant ces deux complexes. 
\end{proof}

Une variante du lemme suivant est utilis\'ee de mani\`ere tacite dans \cite[\S~4.5]{Sc1}.

\begin{lem}[Schraen]\label{separedevisse}
Soit $i\in \Z$ et $0\rightarrow \pi''\rightarrow \pi\rightarrow \pi'\rightarrow 0$ une suite exacte (stricte) dans $\Rep(B(\Qp))$. Si $H^i(\mnn^\alpha,\pi')$ est s\'epar\'e et si la fl\`eche continue $H^{i-1}(\mnn^\alpha,\pi')\rightarrow H^i(\mnn^\alpha,\pi'')$ est d'image ferm\'ee, alors $H^i(\mnn^\alpha,\pi)$ est aussi s\'epar\'e.
\end{lem}
\begin{proof}
Notons $C^i(-)\=\Hom_{\Qp}(\bigwedge^i \mnn^\alpha,-)$ avec $-\in \{\pi'',\pi,\pi'\}$, qui est un espace de type compact, $d^i:C^i(-)\rightarrow C^{i+1}(-)$, $Z^i(-)\=\ker(d^i)$ et $B^i(-)=\im(d^i)$. On a $B^{i-1}(-)\subseteq Z^i(-)\subseteq C^i(-)$ et $H^i(\mnn^\alpha,-)=Z^i(-)/B^{i-1}(-)$. Le sous-espace $Z^i(-)$ est ferm\'e dans $C^i(-)$ et on le munit de la topologie d'espace de type compact induite. Le quotient $C^i(-)/Z^i(-)$ est alors aussi de type compact et via l'isomorphisme $C^i(-)/Z^i(-)\buildrel\sim\over\rightarrow B^i(-)$ on munit $B^i(-)$ de cette topologie de type compact. Les injections $B^{i-1}(-)\hookrightarrow Z^i(-)$ sont continues.

Par d\'efinition $Z^i(-)/B^{i-1}(-)$ est muni de la topologie quotient de $Z^i(-)$, il faut donc montrer que $B^{i-1}(\pi)$ est ferm\'e dans $Z^i(\pi)$ sous les conditions du lemme. Le morphisme $H^{i}(\mnn^\alpha,\pi'')\rightarrow H^i(\mnn^\alpha,\pi)$ se factorise comme suit~:
$$Z^i(\pi'')/B^{i-1}(\pi'')\twoheadrightarrow Z^i(\pi'')/(B^{i-1}(\pi)\cap Z^i(\pi''))\hookrightarrow Z^i(\pi)/B^{i-1}(\pi)$$
et la suite exacte $H^{i-1}(\mnn^\alpha,\pi')\rightarrow H^i(\mnn^\alpha,\pi'')\rightarrow H^i(\mnn^\alpha,\pi)$ montre donc que $Z^i(\pi'')/(B^{i-1}(\pi)\cap Z^i(\pi''))$ avec la topologie quotient de $Z^i(\pi'')$ est isomorphe au {\it conoyau} de $H^{i-1}(\mnn^\alpha,\pi')\rightarrow H^i(\mnn^\alpha,\pi'')$ avec la topologie quotient de $H^i(\mnn^\alpha,\pi'')$. Or, l'image de $H^{i-1}(\mnn^\alpha,\pi')$ dans $H^i(\mnn^\alpha,\pi'')$ est ferm\'ee, donc ce conoyau est s\'epar\'e, i.e. $B^{i-1}(\pi)\cap Z^i(\pi'')$ est ferm\'e dans $Z^i(\pi'')$.

Munissons $B^{i-1}(\pi)\cap Z^i(\pi'')$ de la topologie induite par $B^{i-1}(\pi)$, les suites exactes courtes $0\rightarrow C^{i}(\pi'')\rightarrow C^{i}(\pi)\rightarrow C^{i}(\pi')\rightarrow 0$ pour tout $i$ montrent que l'on a un diagramme commutatif de suites exactes d'espaces de type compacts~:
\begin{equation*}
\xymatrix{0\ar[r] & B^{i-1}(\pi)\cap Z^i(\pi'') \ar[r]\ar@{^{(}->}[d]^{f'} & B^{i-1}(\pi) \ar[r]^g\ar@{^{(}->}[d]^{f} & B^{i-1}(\pi')\ar@{^{(}->}[d]^{f''} \ar[r]&0\\
0\ar[r] & Z^i(\pi'') \ar[r] &Z^i(\pi) \ar[r] &Z^i(\pi')& }
\end{equation*}
o\`u les injections $f'$, $f''$ sont strictes par ce qui pr\'ec\`ede et l'hypoth\`ese, et o\`u $g$ est automatiquement stricte car une surjection entre espaces de type compact. Le Lemme \ref{separetypecompact} donne alors que l'injection $f$ est aussi stricte, i.e. d'image ferm\'ee. Donc $H^i(\mnn^\alpha,\pi)=Z^i(\pi)/B^{i-1}(\pi)$ est s\'epar\'e.
\end{proof}

\subsection{Un r\'esultat d'exactitude dans le cas localement alg\'ebrique}\label{preuvelocalg}

On montre le Th\'eor\`eme \ref{localgenplus}.

On conserve les notations du \S~\ref{prelgen} (en particulier $G$ est comme au \S~\ref{prel}). Notons d'abord que, si $\pi$ dans $\Rep(B(\Qp))$ est tel que $F_\alpha(\pi)(-)\simeq E_\infty(\chi_{-\lambda})\otimes_{E}\Hom_{(\varphi,\Gamma)}(D_\alpha(\pi),-)$ pour un $(\varphi,\Gamma)$-module g\'en\'eralis\'e $D_{\alpha}(\pi)$ sur $\R$, il suit de la d\'efinition de $F_\alpha$ (cf. (\ref{foncteurm}) et (\ref{falpha})) qu'il existe $m\gg 0$, $r\gg 0$ et un $(\varphi,\Gamma)$-module g\'en\'eralis\'e $D_{\alpha}(\pi)_r$ sur $\Rr$ tels que $\R\otimes_{\Rr}D_{\alpha}(\pi)_r\buildrel\sim\over\rightarrow D_{\alpha}(\pi)$ et tels que l'on a un morphisme continu non nul de $\Rm^+$-modules commutant \`a $\psi$, $\Gamma$~:
\begin{equation}\label{canonimor}
f:M_\alpha(\pi\otimes_EE_m)\longrightarrow D_{\alpha}(\pi)_r\otimes_EE_m
\end{equation}
s'envoyant (\`a scalaire pr\`es dans $E_\infty^\times$) vers le morphisme identit\'e $D_\alpha(\pi)\rightarrow D_\alpha(\pi)$ dans $F_\alpha(\pi)(D_\alpha(\pi))$.

On fixe $\pi''\simeq L(-\lambda)\otimes_E\pi^\infty$ avec $\pi^\infty$ lisse irr\'eductible ainsi qu'une suite exacte courte $0\rightarrow \pi''\rightarrow \pi\rightarrow \pi'\rightarrow 0$ comme dans la Conjecture \ref{representable}.\\

\noindent
{\bf \'Etape $1$}\\
Il est clair que $H^1(\mnn^\alpha,\pi'')$ est s\'epar\'e puisque muni de la topologie localement convexe la plus fine. Par le Lemme \ref{separexact} on a donc un complexe de $(\psi,\Gamma)$-modules de Fr\'echet sur $\Rm^+$~:
$$M_\alpha(H^1(\mnn^\alpha,\pi'')\otimes_EE_m)\rightarrow M_\alpha(\pi'\otimes_EE_m)\rightarrow M_\alpha(\pi\otimes_EE_m)\rightarrow M_\alpha(\pi''\otimes_EE_m)\rightarrow 0$$
comme dans {\it loc.cit.} Montrons d'abord que le morphisme compos\'e $M_\alpha(H^1(\mnn^\alpha,\pi'')\otimes_EE_m)\rightarrow M_\alpha(\pi'\otimes_EE_m)\rightarrow D_{\alpha}(\pi')_r\otimes_EE_m$ devient nul si l'on augmente suf\-fisamment $m$ et $r$, o\`u le morphisme de droite est comme en (\ref{canonimor}). On a $H^1(\mnn^\alpha,\pi'')=\oplus_{\beta\in S\backslash\{\alpha\}}L(-s_\beta\cdot \lambda)_{P_\alpha}\otimes_E\pi^\infty$ par \cite[Th.~4.10]{Sc1}, de sorte que l'on peut consid\'erer le foncteur $F_\alpha(H^1(\mnn^\alpha,\pi''))$. La m\^eme preuve que celle du Th\'eor\`eme \ref{caslisse} donne $F_\alpha(H^1(\mnn^\alpha,\pi''))\simeq \oplus_{\beta\in S\backslash\{\alpha\}}(E_\infty(\chi_{-s_\beta\cdot\lambda})\otimes_E \Hom_{(\varphi,\Gamma)}(D_\beta,-))$ pour $D_\beta$ convenable. Comme par hypoth\`ese $F_\alpha(\pi')\simeq E_\infty(\chi_{-\lambda})\otimes_E \Hom_{(\varphi,\Gamma)}(D_\alpha(\pi'),-)$, le Lemme \ref{egalchi} implique alors que le morphisme $F_\alpha(\pi')\rightarrow F_\alpha(H^1(\mnn^\alpha,\pi''))$ est nul dans $F(\varphi,\Gamma)_\infty$, d'o\`u on d\'eduit la nullit\'e ci-dessus. Notant $\widetilde M_\alpha(\pi\otimes_EE_m)\= (D_{\alpha}(\pi')_r\otimes_EE_m)\oplus_{f,M_\alpha(\pi'\otimes_EE_m)}M_\alpha(\pi\otimes_EE_m)$ (un $(\psi,\Gamma)$-module de Fr\'echet sur $\Rrm$ par la Remarque \ref{strictIf}), on obtient donc une suite exacte de $(\psi,\Gamma)$-module de Fr\'echet sur $\Rrm$~:
\begin{equation}\label{avecmtilde}
0\longrightarrow D_{\alpha}(\pi')_r\otimes_EE_m\longrightarrow \widetilde M_\alpha(\pi\otimes_EE_m)\longrightarrow M_\alpha(\pi''\otimes_EE_m)\longrightarrow 0.
\end{equation}

\noindent
{\bf \'Etape $2$}\\
Un examen de la preuve du Th\'eor\`eme \ref{caslisse} (cf. en particulier l'\'Etape $5$ et la preuve du Lemme \ref{backtore}) montre d'une part que le morphisme $f$ pour $\pi''$ en (\ref{canonimor}) existe pour tout $m\in \Z_{\geq 0}$ et se factorise en un morphisme $\Rm^+$-lin\'eaire qui commute \`a $\psi$ et $\Gamma$ et devient surjectif en tensorisant \`a gauche par $\Rm^+[1/X]$~:
\begin{equation}\label{surjcellulealg}
f:M_\alpha(\pi''\otimes_EE_m)\longrightarrow D_\alpha(\pi'')_+[1/X]\otimes_EE_m
\end{equation}
o\`u $D_\alpha(\pi'')_+\=\R^+(\lambda\circ \lambda_{\alpha^\vee})/(t^{1-\langle \lambda,\alpha^\vee\rangle})$ si $\pi^\infty$ est une repr\'esentation g\'en\'erique de $G(\Qp)$ et $D_\alpha(\pi'')_+\=0$ sinon, d'autre part que si $r\in \Q_{>p-1}$ et $T_r$ est un $(\varphi,\Gamma)$-module g\'en\'eralis\'e sur $\Rr$, tout morphisme dans $\Hom_{\psi,\Gamma}(\ker(f),T_r\otimes_EE_m)$ devient nul quitte \`a augmenter $r$ et $m$. Par ailleurs, comme $D_{\alpha}(\pi')_r$ est sans torsion, la multiplication par $t^{1-\langle \lambda,\alpha^\vee\rangle}$ sur la suite exacte (\ref{avecmtilde}) donne une suite exacte de $(\psi,\Gamma)$-modules de Fr\'echet sur $\Rm^+$ (o\`u $(-)(\delta)\=(-)\otimes_{\R^+}\R^+(\delta)$ si $\delta:\Qp^\times\rightarrow E^\times$)~:
\begin{equation*}
0\rightarrow \widetilde M_\alpha(\pi\otimes_EE_m)[t^{1-\langle \lambda,\alpha^\vee\rangle}]\rightarrow M_\alpha(\pi''\otimes_EE_m)\buildrel {h}\over \rightarrow D_{\alpha}(\pi')_r(\varepsilon^{\langle \lambda,\alpha^\vee\rangle-1})/(t^{1-\langle \lambda,\alpha^\vee\rangle})\otimes_EE_m.
\end{equation*}
Par ce qui pr\'ec\`ede appliqu\'e \`a $T_r=D_{\alpha}(\pi')_r(\varepsilon^{\langle \lambda,\alpha^\vee\rangle-1})/(t^{1-\langle \lambda,\alpha^\vee\rangle})$ et la compos\'ee $\ker(f)\hookrightarrow M_\alpha(\pi''\otimes_EE_m)\buildrel h\over\rightarrow T_r\otimes_EE_m$, quitte \`a augmenter $m$ et $r$ on en d\'eduit $\ker(f)\hookrightarrow \ker(h)\buildrel\sim\over\rightarrow \widetilde M_\alpha(\pi\otimes_EE_m)[t^{1-\langle \lambda,\alpha^\vee\rangle}]$, et en particulier une suite exacte de $(\psi,\Gamma)$-modules de Fr\'echet sur $\Rm^+$~:
\begin{equation}\small\label{versletruc}
0\longrightarrow D_{\alpha}(\pi')_r\otimes_EE_m\longrightarrow \widetilde M_\alpha(\pi\otimes_EE_m)/\ker(f)\longrightarrow M_\alpha(\pi''\otimes_EE_m)/\ker(f)\longrightarrow 0.
\end{equation} 
On en d\'eduit aussi pour tout $(\varphi,\Gamma)$-module g\'en\'eralis\'e $T$ sur $\R$ un isomorphisme~:
\begin{multline}\label{limker(f)}
\lim_{m\rightarrow +\infty}\lim_{\substack{\longrightarrow \\ (s,f_s,T_s)\in I(T)}}\!\!\Hom_{\psi,\Gamma}\big(\widetilde M_\alpha(\pi\otimes_EE_m)/\ker(f),T_s\otimes_EE_m\big)\\
\buildrel\sim\over\longrightarrow \lim_{m\rightarrow +\infty}\lim_{\substack{\longrightarrow \\ (s,f_s,T_s)\in I(T)}}\!\!\Hom_{\psi,\Gamma}\big(\widetilde M_\alpha(\pi\otimes_EE_m),T_s\otimes_EE_m\big).
\end{multline}
Par ailleurs, tout morphisme dans $\Hom_{\psi,\Gamma}(M_\alpha(\pi'\otimes_EE_m),T_s\otimes_EE_m)$ se factorise par $D_{\alpha}(\pi')_r\otimes_EE_m$ quitte \`a augmenter $s$ et $m$, d'o\`u on d\'eduit un isomorphisme~:
\begin{multline}\label{limsomamalg}
\lim_{m\rightarrow +\infty}\lim_{\substack{\longrightarrow \\ (s,f_s,T_s)\in I(T)}}\!\!\Hom_{\psi,\Gamma}\big(\widetilde M_\alpha(\pi\otimes_EE_m),T_s\otimes_EE_m\big)\\
\buildrel\sim\over\longrightarrow \lim_{m\rightarrow +\infty}\lim_{\substack{\longrightarrow \\ (s,f_s,T_s)\in I(T)}}\!\!\Hom_{\psi,\Gamma}\big(M_\alpha(\pi\otimes_EE_m),T_s\otimes_EE_m\big).
\end{multline}

\noindent
{\bf \'Etape $3$}\\
Notons $W\!\=\!\widetilde M_\alpha(\pi\otimes_EE_m)/\ker(f)$, on d\'eduit de (\ref{versletruc}) et (\ref{surjcellulealg}) (en se souvenant que $X$ est inversible dans $\Rrm$) une suite exacte de $\Rrm$\!-modules dont les morphismes commutent \`a $\psi$ et $\Gamma$~:
\begin{equation}\small\label{tensorRr}
0\rightarrow \Rrm\!\otimes_{\Rm^+}\!(D_{\alpha}(\pi')_r\otimes_EE_m) \rightarrow \Rrm\!\otimes_{\Rm^+}\!W\rightarrow \Rrm\!\otimes_{\Rm^+}\!(D_\alpha(\pi'')_+\otimes_EE_m)\rightarrow 0
\end{equation}
(on montre qu'il y a bien encore un endomorphisme $\psi$ sur chaque terme en tensorisant (\ref{psi4}) pour $j=1$ par $\Rprm$ au-dessus de $\Rm^+$). Consid\'erons le ``push-out'' de (\ref{tensorRr}) le long de la surjection canonique $\Rrm\otimes_{\Rm^+}(D_{\alpha}(\pi')_r\otimes_EE_m)\twoheadrightarrow D_{\alpha}(\pi')_r\otimes_EE_m$ (dont on v\'erifie facilement qu'elle commute \`a $\psi$ et $\Gamma$). C'est une suite exacte $0\rightarrow D_{\alpha}(\pi')_r\otimes_EE_m\rightarrow V\rightarrow D_{\alpha}(\pi'')_r\otimes_EE_m\rightarrow 0$ de $(\psi,\Gamma)$-modules de Fr\'echet sur $\Rrm$ telle que tout morphisme continu $\Rm^+$-lin\'eaire $W\rightarrow T_s\otimes_EE_m$ (pour $s\geq r$) se factorise en un morphisme (continu) $\Rrm$-lin\'eaire $V\rightarrow T_s\otimes_EE_m$ (utiliser le (iii) de la Remarque \ref{vrac} pour montrer que $\Rrm\otimes_{\Rm^+}(D_{\alpha}(\pi')_r\otimes_EE_m)\rightarrow T_s\otimes_EE_m$ se factorise par $D_{\alpha}(\pi')_r\otimes_EE_m\rightarrow T_s\otimes_EE_m$). On en d\'eduit pour $s\geq r$~:
\begin{equation}\label{dernieriso}
\Hom_{\psi,\Gamma}(V,T_s\otimes_EE_m)\buildrel\sim\over\longrightarrow \Hom_{\psi,\Gamma}(W,T_s\otimes_EE_m).
\end{equation}
Comme $D_{\alpha}(\pi')_r\otimes_EE_m$ et $D_{\alpha}(\pi'')_r\otimes_EE_m$ sont des $(\varphi,\Gamma)$-modules g\'en\'eralis\'es sur $\Rrm$, le m\^eme argument qu'\`a la fin de la preuve du Lemme \ref{actiongalois} montre que $V$ est aussi un $(\varphi,\Gamma)$-module g\'en\'eralis\'e sur $\Rrm$. Par (\ref{dernieriso}), (\ref{limker(f)}) et (\ref{limsomamalg}) (sans se pr\'eoccuper de $\Gal(E_\infty/E)$ pour le moment), on voit que l'on a~:
\begin{equation}\label{avecgalois}
F_\alpha(\pi)(-)\simeq E_\infty\otimes_{E_m}\Hom_{(\varphi,\Gamma)}\big(V,(-)\otimes_EE_m\big)
\end{equation}
avec une suite exacte $0\rightarrow D_{\alpha}(\pi')_r\otimes_EE_m\rightarrow V\rightarrow D_{\alpha}(\pi'')_r\otimes_EE_m\rightarrow 0$. En uti\-lisant $F_\alpha(\pi'')(-)\simeq E_\infty(\chi_{-\lambda})\otimes_{E}\Hom_{(\varphi,\Gamma)}(D_\alpha(\pi''),-)$ (Th\'eor\`eme \ref{caslisse}) et l'hypoth\`ese $F_\alpha(\pi')(-)\simeq E_\infty(\chi_{-\lambda})\otimes_{E}\Hom_{(\varphi,\Gamma)}(D_\alpha(\pi'),-)$, on v\'erifie que l'action semi-lin\'eaire de $\Gal(E_\infty/E)$ sur $M_\alpha((-)\otimes_EE_m)$ pour $(-)\in \{\pi',\pi,\pi''\}$ d\'efinie dans la preuve du (i) du Lemme \ref{m+1} induit une action de $\Gal(E_\infty/E)$ sur toutes les suites exactes de la preuve ci-dessus, o\`u l'action sur $D_{\alpha}(\pi')_r\otimes_EE_m$, $D_\alpha(\pi'')_+\otimes_EE_m$ et $D_\alpha(\pi'')_r\otimes_EE_m$ est comme dans le Lemme \ref{actiongalois}. On d\'eduit alors de {\it loc.cit.} que l'on a $V\simeq E_m\otimes_E D_\alpha(\pi)_r$ pour un unique $(\varphi,\Gamma)$-module $D_\alpha(\pi)_r$ sur $\Rr$ qui s'ins\`ere dans une suite exacte $0\rightarrow D_\alpha(\pi')_r\rightarrow D_\alpha(\pi)_r\rightarrow D_\alpha(\pi'')_r\rightarrow 0$ et tel que $F_\alpha(\pi)(-)\simeq E_\infty(\chi_{-\lambda})\otimes_{E}\Hom_{(\varphi,\Gamma)}(D_\alpha(\pi),(-))$ par (\ref{avecgalois}) (o\`u $D_\alpha(\pi)\=\R\otimes_{\Rr}D_\alpha(\pi)_r$). Cela ach\`eve la preuve du Th\'eor\`eme \ref{localgenplus}.

\begin{rem}\label{pourapplication}
{\rm Un examen de la preuve ci-dessus du Th\'eor\`eme \ref{localgenplus} montre qu'elle reste valable {\it verbatim} en supposant que $\pi''$ est de longueur finie avec un unique constituant irr\'eductible g\'en\'erique qui est en sous-objet.}
\end{rem}

\subsection{Premiers d\'evissages pour $G={\rm GL}_3$}\label{devis1}

On commence les d\'evissages pour montrer le Th\'eor\`eme \ref{gl3enplus} pour ${\rm GL}_3$, en particulier on se d\'ebarrasse des $H^1(\mnn^\alpha,-)$.

On conserve les notations du \S~\ref{prelgen} et on suppose maintenant $G={\rm GL}_3$ et $B=$ Borel sup\'erieur. On rappelle que l'on a fix\'e une racine simple $\alpha$, et on note $\beta$ l'unique autre racine simple. On a les deux sous-groupes paraboliques maximaux associ\'es $P_{\alpha}$ et $P_{\beta}$.

On montre d'abord un r\'esultat de s\'eparation pour $H^1(\mnn^\alpha,-)$. Pour $G$ quelconque et $\pi$ dans $\Rep(B(\Qp))$, la s\'eparation des espaces localement convexes $H^i(\mnn^\alpha,\pi)$ semble un probl\`eme d\'elicat en g\'en\'eral. Nous nous contenterons de la proposition suivante, qui utilise de mani\`ere essentielle les r\'esultats de \cite[\S~4.5]{Sc1} pour ${\rm GL}_3$ (et que nous ne savons pas montrer pour $G$ plus g\'en\'eral).

\begin{prop}\label{separegl3}
Soit $\pi=\big(\Ind_{P^-(\Qp)}^{G(\Qp)} L(-\mu)_{P}\otimes_E\pi_P^\infty\big)^{\an}$ o\`u $P\in \{P_{\alpha},P_{\beta}\}$, $L(-\mu)_{P}$ est une repr\'esentation alg\'ebrique de $L_P(\Qp)$ ($\mu\in X(T)$) et $\pi_P^\infty$ est une repr\'esenta\-tion lisse de longueur finie de $L_P(\Qp)$. Alors $H^1(\mnn^\alpha,\pi)$ est s\'epar\'e.
\end{prop}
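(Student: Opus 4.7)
La strat\'egie est un d\'evissage en deux \'etapes combin\'e avec le crit\`ere de s\'eparation du Lemme \ref{separedevisse} et les r\'esultats de Schraen sur la cohomologie d'alg\`ebre de Lie des repr\'esentations localement analytiques de ${\rm GL}_3(\Qp)$.

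On se ram\`ene d'abord au cas o\`u $\pi_P^\infty$ est irr\'eductible. Une filtration de Jordan-H\"older de $\pi_P^\infty$ donne une filtration croissante $0=\sigma_0\subsetneq \sigma_1\subsetneq \cdots \subsetneq \sigma_s=\pi_P^\infty$ par des sous-$L_P(\Qp)$-repr\'esentations lisses telle que les $\sigma_{i}/\sigma_{i-1}$ sont irr\'eductibles. Par exactitude de l'induction parabolique localement analytique (\cite[Prop.~5.4]{Ko2}), on obtient une filtration par des sous-$G(\Qp)$-repr\'esentations $\pi_i\=(\Ind_{P^-(\Qp)}^{G(\Qp)} L(-\mu)_P\otimes_E\sigma_i)^{\an}$ avec gradu\'es successifs $(\Ind_{P^-(\Qp)}^{G(\Qp)} L(-\mu)_P\otimes_E(\sigma_{i}/\sigma_{i-1}))^{\an}$. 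Suppos\'e acquis le r\'esultat pour $\pi_P^\infty$ irr\'eductible, il suffit de montrer par r\'ecurrence que le morphisme $H^0(\mnn^\alpha,\pi_i/\pi_{i-1})\rightarrow H^1(\mnn^\alpha,\pi_{i-1})$ est d'image ferm\'ee pour pouvoir appliquer Lemme \ref{separedevisse}.

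La deuxi\`eme \'etape, et le c\oe ur de la preuve, consiste \`a traiter le cas $\pi_P^\infty$ irr\'eductible. Dans ce cas $\pi$ est une repr\'esentation localement analytique admissible de longueur finie sur $G(\Qp)={\rm GL}_3(\Qp)$ par Orlik-Strauch, et elle admet par \cite{OS} une filtration par des sous-$G(\Qp)$-repr\'esentations dont les gradu\'es successifs sont des repr\'esentations irr\'eductibles de la forme ${\mathcal F}_{Q^-}^G(L^-(w\cdot \mu),\sigma_{Q,w})$ pour $Q\supseteq P$, $w\in W$ de longueur minimale dans $W_Qw$, et $\sigma_{Q,w}$ un constituant irr\'eductible d'une induite parabolique lisse convenable. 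On applique alors une r\'ecurrence bas\'ee sur cette filtration. Pour chaque constituant irr\'eductible $\pi_w$ on dispose par les calculs de Schraen dans \cite[\S~4.5]{Sc1} (et notamment \cite[Th.~4.10]{Sc1} dans le cas localement alg\'ebrique) d'une description explicite de $H^i(\mnn^\alpha,\pi_w)$ pour $i\in \{0,1\}$ comme somme directe finie d'induites paraboliques localement analytiques (\'eventuellement tordues), qui sont en particulier des espaces de type compact (donc s\'epar\'es). \`A chaque cran de la filtration on applique Lemme \ref{separedevisse}~: la s\'eparation de $H^1(\mnn^\alpha,\pi/\pi_w)$ est acquise par r\'ecurrence, et il reste \`a montrer que le morphisme connectant $H^0(\mnn^\alpha,\pi/\pi_w)\rightarrow H^1(\mnn^\alpha,\pi_w)$ est d'image ferm\'ee~; comme source et but sont des repr\'esentations admissibles localement analytiques de $B(\Qp)$ explicit\'ees via Schraen, ce morphisme s'identifie \`a un morphisme $B(\Qp)$-\'equivariant entre induites paraboliques analytiques (\'eventuellement tordues) de m\^emes param\`etres infinit\'esimaux, pour lequel la stricti\-t\'e se d\'eduit de la th\'eorie des repr\'esentations admissibles via \cite[Prop.~6.4]{ST2} appliqu\'ee aux duaux et du fait que tout morphisme entre $D(B(\Qp),E)$-modules coadmissibles de type fini est strict.

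L'obstacle principal est le contr\^ole de la topologie du morphisme connectant dans la deuxi\`eme \'etape~: \'etablir que ce morphisme est d'image ferm\'ee repose de mani\`ere essentielle sur la description explicite, due \`a Schraen, des $H^i(\mnn^\alpha,\pi_w)$ pour les constituants irr\'eductibles $\pi_w$ qui apparaissent dans la filtration de Orlik-Strauch, et sur la compatibilit\'e de ces descriptions avec les suites exactes longues de cohomologie. C'est pour cette raison que l'on se restreint ici \`a ${\rm GL}_3(\Qp)$ et aux paraboliques maximaux $P_\alpha$, $P_\beta$~: dans ce cadre, la combinatoire de Bruhat est suffisamment simple pour que les cibles et sources des morphismes connectants soient contr\^olables explicitement, et que la stricti\-t\'e puisse \^etre v\'erifi\'ee cas par cas.
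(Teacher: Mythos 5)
Your overall strategy (d\'evissage plus le crit\`ere de s\'eparation du Lemme \ref{separedevisse}) est la bonne, mais les deux points techniques cruciaux ne sont pas fond\'es et le plan ne pourrait pas \^etre ex\'ecut\'e en l'\'etat.

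D'abord, vous filtrez $\pi$ par ses constituants irr\'eductibles d'Orlik--Strauch $\pi_w={\mathcal F}_{Q^-}^G(L^-(w\cdot\mu),\sigma_{Q,w})$ et affirmez que Schraen donne une description explicite de $H^i(\mnn^\alpha,\pi_w)$. Ce n'est pas ce que fait \cite{Sc1} : les r\'esultats cit\'es dans la preuve du papier (\cite[Lem.~4.15, Cor.~4.14, Prop.~4.16, (4.102)--(4.103)]{Sc1}) calculent la cohomologie de Lie des \emph{cellules de Bruhat} d'une induite parabolique (essentiellement via $C^{\an}_c(-)$ et des suites spectrales d'Hochschild--Serre), pas celle des constituants irr\'eductibles eux-m\^emes. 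Pour un $\pi_w$ irr\'eductible de la cat\'egorie d'Orlik--Strauch il n'y a pas en g\'en\'eral de calcul explicite disponible de $H^\cdot(\mnn^\alpha,\pi_w)$ dans la litt\'erature, et c'est pr\'ecis\'ement pour cette raison que la preuve du papier travaille avec la d\'ecomposition en cellules de $(\Ind_{P_\beta(\Qp)}^{G(\Qp)}\pi_\beta)^{\an}$ (ou $(\Ind_{P_\alpha(\Qp)}^{G(\Qp)}\pi_\alpha)^{\an}$) apr\`es conjugaison par $w_0$, et non avec la filtration en constituants irr\'eductibles.

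Ensuite, votre argument pour la strictitude du morphisme connectant $H^0(\mnn^\alpha,\pi/\pi_w)\rightarrow H^1(\mnn^\alpha,\pi_w)$ via \cite[Prop.~6.4]{ST2} et la coadmissibilit\'e sur $D(B(\Qp),E)$ ne tient pas : il faudrait d'abord \'etablir que les $H^i(\mnn^\alpha,-)$ sont des $B(\Qp)$-repr\'esentations admissibles (coadmissibles sur $D(B_0,E)$), ce qui est exactement ce qu'on cherche \`a montrer et qui n'est pas automatique (la cohomologie de Lie d'une repr\'esentation admissible de $G$ n'est pas a priori admissible comme repr\'esentation du Borel). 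La preuve du papier ne passe pas par l\`a : dans le cas $P=P_\beta$ elle identifie explicitement $H^i(\mnn^\alpha,\pi_C)\simeq V_i\otimes_E(\pi_{P_\beta}^\infty)^{w_0}$ comme repr\'esentation \emph{localement alg\'ebrique} de $L_{P_\alpha}(\Qp)$ munie de la topologie localement convexe la plus fine, et montre que l'image du morphisme connectant tombe dans le sous-espace (ferm\'e et muni de la topologie la plus fine) des vecteurs $V_0$-localement alg\'ebriques de l'induite $\big(\Ind_{L_{P_\alpha}(\Qp)\cap P_\beta^-(\Qp)}^{L_{P_\alpha}(\Qp)}H^1(\mnn^\alpha,\pi_{N^\alpha})\big)^{\an}$. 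C'est cet argument structurel sur les vecteurs localement alg\'ebriques --- et non un argument d'admissibilit\'e g\'en\'eral --- qui donne la fermeture de l'image. Votre preuve devrait donc \^etre r\'e\'ecrite sur le mod\`ele du papier (d\'ecomposition en cellules, \cite[Prop.~3.1]{ST3} pour l'annulation sur la grosse cellule, spectral d'Hochschild--Serre, puis argument localement alg\'ebrique) plut\^ot que via la filtration par les irr\'eductibles d'Orlik--Strauch.
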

\begin{proof}
On note $\pi_P\=L(-\mu)_{P}\otimes_E\pi_P^\infty$.

\noindent
Cas $1$~: $P=P_\alpha$.\\
Par (\ref{de-a+}) on a $\pi\simeq (\Ind_{P_\beta(\Qp)}^{G(\Qp)} \pi_\beta)^{\an}$ o\`u $\pi_\beta\= \pi_{P_\alpha}^{w_0}$ (avec une notation \'evidente). Notons $C\=P_\beta(\Qp)P_\alpha(\Qp)$, la d\'ecomposition ${\rm GL}_3(\Qp)=P_\beta(\Qp) w_0 P_\alpha(\Qp) \amalg C$ o\`u $P_\beta(\Qp)w_0 P_\alpha(\Qp)$ est ouvert dans ${\rm GL}_3(\Qp)$ donne une suite exacte dans $\Rep(B(\Qp))$ (cf. le d\'ebut du \S~\ref{approx})~:
\begin{equation}\label{decompgl3}
0\longrightarrow \big(\cInd_{P_\beta(\Qp)}^{P_\beta(\Qp) w_0 P_\alpha(\Qp)}\pi_\beta\big)^{\an}\longrightarrow \big(\Ind_{P_\beta(\Qp)}^{{\rm GL}_3(\Qp)}\pi_\beta\big)^{\an}\longrightarrow \pi_{C}\longrightarrow 0.
\end{equation} 
Comme $P_\alpha\cap w_0P_\beta w_0=L_{P_\alpha}$, on a~:
\begin{multline*}
\big(\cInd_{P_\beta(\Qp)}^{P_\beta(\Qp) w_0 P_\alpha(\Qp)}\pi_\beta\big)^{\an}\buildrel\sim\over\longrightarrow \big(\cInd_{L_{P_\alpha}(\Qp)}^{P_\alpha(\Qp) }\pi_\beta^{w_0}\big)^{\an}\\
\simeq \big(\cInd_{L_{P_\alpha}(\Qp)}^{P_\alpha(\Qp) }\pi_{P_\alpha}\big)^{\an}\simeq C^{\an}_c(N_{P_\alpha}(\Qp),\pi_{P_\alpha})\simeq C^{\an}_c(N^\alpha(\Qp),E)\otimes_{E,\iota}\pi_{P_\alpha}
\end{multline*}
o\`u le troisi\`eme (resp. dernier) isomorphisme se montre comme (\ref{deIaC}) (resp. (\ref{sortie})) et o\`u l'action de $N^\alpha(\Qp)$ est par translation \`a droite sur $C^{\an}_c(N^\alpha(\Qp),E)$ (et tri\-viale sur $\pi_{P_\alpha}$). Il suit alors de la preuve de \cite[Prop.~3.1]{ST3} que l'on a $H^i(\mnn^\alpha,(\cInd_{P_\beta(\Qp)}^{P_\beta(\Qp) w_0 P_\alpha(\Qp)}\pi_\beta)^{\an})=0$ pour $i>0$. 

Une preuve analogue \`a celle de \cite[(4.102)~\&~(4.103)]{Sc1}, mais en rempla\c cant $H^q(\mnn_2,-)$, resp. $H_q(\mnn_2,-)$, dans {\it loc.cit.} par $H_q(\mnn_2,-)$, resp. $H^q(\mnn_2,-)$ (ce qui ne change pas les arguments) donne que les espaces $H_i(\mnn^\alpha,(\pi_C)^\vee)$ pour $i\in \Z$ sont isomorphes \`a (avec les notations de {\it loc.cit.} adapt\'ees \`a nos conventions)~:
\begin{equation}\small\label{techniquei=2}
H_i\big(\mnn^\alpha,\pi_\beta^\vee\widehat \otimes_{D(P_\beta(\Qp),E)_{\{1\}}}D({\rm GL}_3(\Qp),E)_{\{1\}}\big)\widehat\otimes_{D(P_\alpha(\Qp),E)_{P_\beta(\Qp)\cap P_\alpha(\Qp)}}D(P_\alpha(\Qp),E)
\end{equation}
et sont s\'epar\'es. Par un raisonnement analogue \`a la discussion suivant \cite[Cor.~3.18]{Sc1}, les espaces $H^i(\mnn^\alpha,\pi_C)$ sont aussi s\'epar\'es (et on a un isomorphisme d'espaces de Fr\'echet r\'eflexifs $H_i(\mnn^\alpha,\pi_C^\vee)\simeq H^i(\mnn^\alpha,\pi_C)^\vee$). Appliquant le Lemme \ref{separedevisse} avec la suite exacte (\ref{decompgl3}) et $i=1$, on obtient que $H^1(\mnn^\alpha,\pi)$ est s\'epar\'e.

\noindent
Cas $2$~: $P=P_\beta$.\\
Par (\ref{de-a+}) on a $\pi\simeq (\Ind_{P_\alpha(\Qp)}^{G(\Qp)} \pi_\alpha)^{\an}$ o\`u $\pi_\alpha\= \pi_{P_\beta}^{w_0}$ et une suite exacte analogue \`a (\ref{decompgl3}) o\`u $C=P_\alpha(\Qp)$. On note $(\cInd_{N^\alpha(\Qp)\cap P_\beta^-(\Qp)}^{N^\alpha(\Qp)}\pi_{P_\beta})^{\an}$ le $E$-espace vectoriel de type compact des fonctions localement analytiques $f:N^\alpha(\Qp)\rightarrow \pi_{P_\beta}$ satisfaisant l'\'equation fonctionnelle usuelle avec $\pi_{P_\beta}$ et dont le support est compact dans $N^\alpha(\Qp)/N^\alpha(\Qp)\cap P_\beta^-(\Qp)\simeq \Qp$. On le munit de l'action usuelle de $N^\alpha(\Qp)$ par translation \`a droite et de l'action de $L_{P_\alpha}(\Qp)\cap P_\beta^-(\Qp)=L_{P_\alpha}(\Qp)\cap B^-(\Qp)$~:
\begin{equation}\label{actionnalphabis}
(l_\alpha f)(n^\alpha)\=l_\alpha(f(l_\alpha^{-1}n^\alpha l_\alpha))
\end{equation}
pour $l_\alpha\in L_{P_\alpha}(\Qp)\cap P_\beta^-(\Qp)$, $n^\alpha\in N^\alpha(\Qp)$. Par \cite[Lem.~4.15]{Sc1} on a des isomorphismes $P_\alpha(\Qp)$-\'equivariants~:
\begin{eqnarray}\scriptstyle\label{piCbis}
\ \ \ \ \ \ \ \big(\cInd_{P_\alpha(\Qp)}^{P_\alpha(\Qp) w_0 P_\alpha(\Qp)}\pi_\alpha\big)^{\an}&\buildrel\sim\over\longrightarrow &\big(\cInd_{P_\alpha(\Qp) \cap w_0 P_\alpha(\Qp)w_0}^{P_\alpha(\Qp)}\pi_{P_\beta}\big)^{\an} \\
\nonumber &\buildrel\sim\over\longrightarrow &\big(\Ind_{L_{P_\alpha}(\Qp)\cap P_\beta^-(\Qp)}^{L_{P_\alpha}(\Qp)}\big(\cInd_{N^\alpha(\Qp)\cap P_\beta^-(\Qp)}^{N^\alpha(\Qp)}\pi_{P_\beta}\big)^{\an}\big)^{\an}
\end{eqnarray}
o\`u le deuxi\`eme isomorphisme est donn\'e par $f\mapsto \big(l_\alpha\mapsto (n^\alpha\mapsto f(n^\alpha l_\alpha))\big)$, o\`u l'action de $L_{P_\alpha}(\Qp)$ en bas \`a droite est l'action usuelle par translation \`a droite et l'action de $N^\alpha(\Qp)$ y est~:
\begin{equation}\label{actionnalpha}
(n^\alpha F)(l_\alpha)=(l_\alpha n^\alpha l_\alpha^{-1})(F(l_\alpha))
\end{equation}
(l'action \ \ de \ \ $l_\alpha n^\alpha l_\alpha^{-1}\in N^\alpha(\Qp)$ \ \ sur \ \ $F(l_\alpha)$ \ \ \'etant \ \ celle \ \ de \ \ $N^\alpha(\Qp)$ \ \ sur $(\cInd_{N^\alpha(\Qp)\cap P_\beta^-(\Qp)}^{N^\alpha(\Qp)}\pi_{P_\beta})^{\an}$). Posons $\pi_{N^\alpha}\=(\cInd_{N^\alpha(\Qp)\cap P_\beta^-(\Qp)}^{N^\alpha(\Qp)}\pi_{P_\beta})^{\an}$, l'argument de la preuve de \cite[Prop.~4.16]{Sc1} (en plus simple car on peut directement argumenter sur le complexe $\Hom_{\Qp}(\bigwedge^\cdot \mnn^\alpha,-)$) bas\'e sur \cite[Cor.~4.14]{Sc1} montre que {\it si} les $H^i(\mnn^\alpha,(\cInd_{N^\alpha(\Qp)\cap P_\beta^-(\Qp)}^{N^\alpha(\Qp)}\pi_{P_\beta})^{\an})$ sont s\'epar\'es (donc des espaces de type compact), alors on a des isomorphismes topologiques ${P_\alpha}(\Qp)$-\'equivariants pour $i\in \Z$~:
\begin{equation}\label{grosse}
H^i\Big(\mnn^\alpha,\big(\cInd_{P_\alpha(\Qp)}^{P_\alpha(\Qp) w_0 P_\alpha(\Qp)}\pi_\alpha\big)^{\an}\Big)
\buildrel\sim\over\longrightarrow \big(\Ind_{L_{P_\alpha}(\Qp)\cap P_\beta^-(\Qp)}^{L_{P_\alpha}(\Qp)}H^i(\mnn^\alpha,\pi_{N^\alpha})\big)^{\an}
\end{equation}
et en particulier les $H^i(\mnn^\alpha,(\cInd_{P_\alpha(\Qp)}^{P_\alpha(\Qp) w_0 P_\alpha(\Qp)}\pi_\alpha)^{\an})$ sont aussi s\'epar\'es. Notons $\mnn_{\beta}$ la $\Qp$-alg\`ebre de Lie de $N^\alpha(\Qp)\cap P_\beta^-(\Qp)=N(\Qp)\cap L_{P_\beta}(\Qp)=N_\beta(\Qp)$, on a $\mnn^\alpha=(\mnn^\alpha/\mnn_{\beta})\oplus \mnn_{\beta}$ (car $N^\alpha$ est commutatif pour ${\rm GL}_3$) et une preuve analogue \`a celle de \cite[(4.63)]{Sc1} donne des isomorphismes d'espaces de type compact~:
\begin{equation*}
H^i(\mnn_{\beta},\pi_{N^\alpha})\simeq C_c^{\an}\big(N^\alpha(\Qp)/N^\alpha(\Qp)\cap P_\beta^-(\Qp),E\big)\otimes_{E,\iota} H^i(\mnn_{\beta},\pi_{P_\beta})
\end{equation*}
o\`u $H^i(\mnn_{\beta},\pi_{P_\beta})\simeq H^i(\mnn_{\beta},L(-\mu)_{P})\otimes \pi_{P_\beta}^\infty$ est muni de la topologie localement convexe la plus fine. Une application de la suite spectrale de Hochshild-Serre (\cite[\S~7.5]{We}) donne alors un isomorphisme continu pour $i\in \Z$~:
\begin{multline}\label{grossei=1}
H^i(\mnn^{\alpha},\pi_{N^\alpha})\\
\buildrel\sim\over\longrightarrow H^0\Big(\mnn^\alpha/\mnn_{\beta},C_c^{\an}\big(N^\alpha(\Qp)/N^\alpha(\Qp)\cap P_\beta^-(\Qp),E\big)\otimes_E H^i(\mnn_{\beta},\pi_{P_\beta})\Big)\\
\simeq C_c^{\infty}\big(N^\alpha(\Qp)/N^\alpha(\Qp)\cap P_\beta^-(\Qp),E\big)\otimes_E H^i(\mnn_{\beta},\pi_{P_\beta})
\end{multline}
qui est forc\'ement un isomorphisme topologique puisque le terme de droite est muni de la topologie localement convexe la plus fine. En particulier les espaces $H^i(\mnn^{\alpha},\pi_{N^\alpha})$ sont s\'epar\'es, donc aussi les espaces $H^i(\mnn^\alpha,(\cInd_{P_\alpha(\Qp)}^{P_\alpha(\Qp) w_0 P_\alpha(\Qp)}\pi_\alpha)^{\an})$ par (\ref{grosse}). De plus on d\'eduit facilement de (\ref{grossei=1}) que l'on a un isomorphisme de repr\'esentations localement alg\'ebriques de $L_{P_\alpha}(\Qp)\cap P_\beta^-(\Qp)=L_{P_\alpha}(\Qp)\cap B^-(\Qp)$~:
\begin{equation}\label{localgi=1}
H^i(\mnn^{\alpha},\pi_{N^\alpha})\simeq H^i(\mnn_{\beta},L(-\mu)_{P_\beta})\otimes_E \big(\cInd_{N^\alpha(\Qp)\cap P_\beta^-(\Qp)}^{N^\alpha(\Qp)}\pi_{P_\beta}^\infty\big)^{\infty}
\end{equation}
o\`u $H^i(\mnn_{\beta},L(-\mu)_{P_\beta})$ est nul si $i\notin \{0,1\}$ et a dimension $1$ si $i\in \{0,1\}$ (l'action de $L_{P_\alpha}(\Qp)\cap B^-(\Qp)$ sur $H^i(\mnn_{\beta},L(-\mu)_{P_\beta})$ se factorise par $L_{P_\alpha}(\Qp)\cap B^-(\Qp)\twoheadrightarrow T(\Qp)$).

Une preuve analogue \`a celle de \cite[(4.76)]{Sc1} (bas\'ee sur les trois premi\`eres lignes de la preuve de \cite[Th.~7.4]{Ko2} et sur celles de \cite[Lem.~8.4]{Ko2} et \cite[Th.~8.5]{Ko2}) montre que les groupes d'homologie $H_i(\mnn^\alpha,(\pi_C)^\vee)$ pour $i\in \Z$ (\cite[\S~3]{ST3}) sont de la forme $V_i^\vee \otimes_E ((\pi_{P_\beta}^\infty)^{w_0})^\vee$ o\`u $V_i$ est une repr\'esentation alg\'ebrique de $L_{P_\alpha}(\Qp)$ de dimension finie, donc sont s\'epar\'es. L'argument \`a la fin de la preuve de \cite[Th.~3.15]{Sc1} donne alors des isomorphismes topologiques $L_{P_\alpha}(\Qp)$-\'equivariants $H^i(\mnn^\alpha,\pi_C)\simeq H_i(\mnn^\alpha,(\pi_C)^\vee)^\vee\simeq V_i\otimes_E (\pi_{P_\beta}^\infty)^{w_0}$, en particulier les $H^i(\mnn^\alpha,\pi_C)$ sont munis de la topologie localement convexe la plus fine et sont des repr\'esentations localement alg\'ebriques de $L_{P_\alpha}(\Qp)$.

Pour appliquer le Lemme \ref{separedevisse} avec l'analogue de (\ref{decompgl3}) et $i=1$, il reste \`a v\'erifier que l'image de $H^0(\mnn^\alpha,\pi_C)\rightarrow H^1(\mnn^\alpha,(\cInd_{P_\alpha(\Qp)}^{P_\alpha(\Qp) w_0 P_\alpha(\Qp)}\pi_\alpha)^{\an})$ est ferm\'ee. Mais par ce qui pr\'ec\`ede, l'image tombe dans les vecteurs $V_0$-localement alg\'ebriques (pour $L_{P_\alpha}(\Qp)$) de l'induite (\ref{grosse}). Il suffit donc de v\'erifier que, si $V$ est une repr\'esentation alg\'ebrique de dimension finie de $L_{P_\alpha}(\Qp)$ sur $E$, les vecteurs $V$-localement alg\'ebriques de (\ref{grosse}) sont ferm\'es et munis de la topologie localement convexe la plus fine. Notons ${\mathfrak l}_{P_\alpha}$ la $\Qp$-alg\`ebre de Lie de $L_{P_\alpha}(\Qp)$, $W\= H^1(\mnn_{\beta},L(-\mu))$ et $\pi_{N^\alpha}^\infty\=(\cInd_{N^\alpha(\Qp)\cap P_\beta^-(\Qp)}^{N^\alpha(\Qp)}\pi_{P_\beta}^\infty)^{\infty}$ (avec topologie localement convexe la plus fine). En raisonnant comme dans \cite[p.~20]{OS} on a un isomorphisme topologique ${\mathfrak l}_{P_\alpha}$-\'equivariant~:
\begin{equation}\label{grossebis}
\big(\Ind_{L_{P_\alpha}(\Qp)\cap P_\beta^-(\Qp)}^{L_{P_\alpha}(\Qp)}W\otimes \pi_{N^\alpha}^\infty\big)^{\an}\simeq \big(\Ind_{L_{P_\alpha}(\Qp)\cap P_\beta^-(\Qp)}^{L_{P_\alpha}(\Qp)}W\big)^{\an}\otimes_{E,\iota}\pi_{N^\alpha}^\infty
\end{equation}
et par \cite[Prop.~4.2.10]{Em} les vecteurs $V$-localement alg\'ebriques de (\ref{grossebis}) sont~:
$$\Big(\!\Hom_{{\mathfrak l}_{P_\alpha}}\big(V,\big(\Ind_{L_{P_\alpha}(\Qp)\cap P_\beta^-(\Qp)}^{L_{P_\alpha}(\Qp)}W\big)^{\an})\otimes_E V\Big)\otimes_{E,\iota}\pi_{N^\alpha}^\infty$$
o\`u $\Hom_{{\mathfrak l}_{P_\alpha}}(V,\big(\Ind_{L_{P_\alpha}(\Qp)\cap P_\beta^-(\Qp)}^{L_{P_\alpha}(\Qp)}W)^{\an})\otimes_E V$ est muni de la topologie induite par $(\Ind_{L_{P_\alpha}(\Qp)\cap P_\beta^-(\Qp)}^{L_{P_\alpha}(\Qp)}W)^{\an}$ via l'application d'\'evaluation (qui est injective). Mais en se souvenant que $L_{P_\alpha}(\Qp)\simeq {\rm GL}_2(\Qp)\times \Qp^\times$ et que $W$ est un caract\`ere alg\'ebrique du tore $T(\Qp)$, la structure de $(\Ind_{L_{P_\alpha}(\Qp)\cap P_\beta^-(\Qp)}^{L_{P_\alpha}(\Qp)}W)^{\an}$ est la m\^eme que celle des s\'eries principales localement analytiques de ${\rm GL}_2(\Qp)$, o\`u il est clair que cette topologie induite est la topologie localement convexe la plus fine. Cela ach\`eve la preuve.
\end{proof}

La Proposition \ref{separegl3} et le Lemme \ref{separexact} impliquent le corollaire suivant.

\begin{cor}\label{separexactgl3}
Soit $m\in \Z_{\geq 0}$ et $0\rightarrow \pi''\rightarrow \pi\rightarrow \pi'\rightarrow 0$ une suite exacte dans $\Repm(B(\Qp))$ avec $\pi''\simeq (\Ind_{P^-(\Qp)}^{G(\Qp)} L(-\mu)_{P}\otimes_E\pi_{P}^\infty)^{\an}$ o\`u $P\in \{P_\alpha,P_\beta\}$, $\mu\in X(T)$ et $\pi_{P}^\infty$ est de longueur finie. On a un complexe de $(\psi,\Gamma)$-modules de Fr\'echet sur $\Rm^+$~:
$$M_\alpha(H^1(\mnn^\alpha,\pi''))\buildrel f\over\longrightarrow M_\alpha(\pi')\buildrel g\over\longrightarrow M_\alpha(\pi)\longrightarrow M_\alpha(\pi'')\longrightarrow 0$$
qui est exact en $M_\alpha(\pi'')$ et $M_\alpha(\pi)$ et tel que l'image de $f$ est dense dans $\ker(g)$.
\end{cor}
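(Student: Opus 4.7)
The plan is essentially to chain together the two preceding results, as the statement of the corollary is announced as a direct consequence. First I would apply Proposition \ref{separegl3} with $\mu$ and $\pi_P^\infty$ as in the hypothesis of the corollary: this gives that the topological $E$-vector space $H^1(\mnn^\alpha,\pi'')$ is separated. Note that this is exactly where the restriction $G={\rm GL}_3$ and $P\in\{P_\alpha,P_\beta\}$ is used, because the separation statement for $H^1(\mnn^\alpha,-)$ of a parabolic induction is not proved in this excerpt for more general groups or parabolics.

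With the separation of $H^1(\mnn^\alpha,\pi'')$ in hand, the hypothesis of Lemma \ref{separexact} is satisfied for the exact sequence $0\to\pi''\to\pi\to\pi'\to 0$ in $\Repm(B(\Qp))$ (which is a strict exact sequence by the conventions adopted at the end of the introduction for exact sequences of compact type spaces). Applying that lemma directly yields the complex
\[
M_\alpha(H^1(\mnn^\alpha,\pi''))\xrightarrow{f} M_\alpha(\pi')\xrightarrow{g} M_\alpha(\pi)\longrightarrow M_\alpha(\pi'')\longrightarrow 0
\]
of $(\psi,\Gamma)$-modules of Fréchet over $\Rm^+$, together with the exactness at $M_\alpha(\pi'')$ and $M_\alpha(\pi)$, and the density of $\im(f)$ in $\ker(g)$. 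One should just verify that the tensoring $(-)\otimes_E E_m$ present in the definition \eqref{malpha} of $M_\alpha$ does not affect the argument: the separation of $H^1(\mnn^\alpha,\pi''\otimes_EE_m)=H^1(\mnn^\alpha,\pi'')\otimes_EE_m$ is immediate from that of $H^1(\mnn^\alpha,\pi'')$ since $E_m/E$ is a finite extension, so Lemma \ref{separexact} indeed applies verbatim to the sequence tensored by $E_m$.

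There is no real obstacle here: the genuine work has already been done, namely the separation statement in Proposition \ref{separegl3} (whose proof requires a fairly involved analysis of $H^i(\mnn^\alpha,-)$ on the two Bruhat cells and appeals to results of Schraen \cite{Sc1}) and the general homological lemma \ref{separexact} (which extracts the desired four-term sequence from the long exact sequence of $\mnn^\alpha$-invariants/coinvariants combined with continuous duality and the argument of Proposition \ref{exactdense}). The corollary is then just the concatenation of these two inputs.
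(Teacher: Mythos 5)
Your proposal is correct and matches the paper exactly: the paper states this corollary as an immediate consequence of Proposition \ref{separegl3} (separatedness of $H^1(\mnn^\alpha,\pi'')$ for such parabolic inductions) combined with Lemma \ref{separexact}, which is precisely your argument. The extra remark about tensoring by $E_m$ is harmless; since the objects in the corollary already lie in $\Repm(B(\Qp))$ and $E_m/E$ is finite, the separatedness input carries over and Lemma \ref{separexact} applies directly.
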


On note ${\det}:L_{P_\alpha}\rightarrow {\mathbb G}_{\rm m}\times {\mathbb G}_{\rm m}$ le morphisme de groupes alg\'ebriques induit par le d\'eterminant de ${\rm GL}_2$. La proposition suivante permet de se d\'ebarrasser des $H^1(\mnn^\alpha,-)$ en se ramenant \`a une suite exacte comme en (\ref{avecmtilde}).

\begin{prop}\label{pushtechnique}
Soit $0\rightarrow \pi''\rightarrow \pi\rightarrow \pi'\rightarrow 0$ une suite exacte dans $\Rep(B(\Qp))$ avec $\pi''$ comme dans le Corollaire \ref{separexactgl3} et $F_\alpha(\pi')(-)\simeq E_\infty(\chi_{-\lambda})\otimes_{E}\Hom_{(\varphi,\Gamma)}(D_\alpha(\pi'),-)$ pour un $(\varphi,\Gamma)$-module g\'en\'eralis\'e $D_{\alpha}(\pi')$ sur $\R$. Si $P=P_\alpha$ on suppose de plus que l'action de $L_{P_\alpha}(\Qp)$ sur $\pi_{P_\alpha}^\infty$ se factorise par ${\det}:L_{P_\alpha}(\Qp)\rightarrow \Qp^\times \times \Qp^\times$. Alors il existe $m\gg 0$, $r\gg 0$ tels que le ``push-out'' de la suite exacte $M_\alpha(\pi'\otimes_EE_m)\rightarrow M_\alpha(\pi\otimes_EE_m)\rightarrow M_\alpha(\pi''\otimes_EE_m)\rightarrow 0$ (cf. Proposition \ref{exactdense}) le long du morphisme $f:M_\alpha(\pi'\otimes_EE_m)\longrightarrow D_{\alpha}(\pi')_r\otimes_EE_m$ (cf. (\ref{canonimor})) donne une suite exacte courte de $(\psi,\Gamma)$-modules de Fr\'echet sur $\Rm^+$~:
$$0\longrightarrow D_{\alpha}(\pi')_r\otimes_EE_m\longrightarrow \widetilde M_\alpha(\pi\otimes_EE_m)\longrightarrow M_\alpha(\pi''\otimes_EE_m)\longrightarrow 0$$
o\`u $\widetilde M_\alpha(\pi\otimes_EE_m)\= (D_{\alpha}(\pi')_r\otimes_EE_m)\oplus_{f,M_\alpha(\pi'\otimes_EE_m)}M_\alpha(\pi\otimes_EE_m)$.
\end{prop}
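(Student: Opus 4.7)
La preuve suit le sch\'ema de l'\'Etape 1 de la preuve du Th\'eor\`eme \ref{localgenplus}, en rempla\c cant le Lemme \ref{separexact} par le Corollaire \ref{separexactgl3} et en utilisant de mani\`ere essentielle l'action de $\Gal(E_\infty/E)$. Par le Corollaire \ref{separexactgl3} appliqu\'e \`a la suite exacte $0\rightarrow \pi''\otimes_E E_m\rightarrow \pi\otimes_E E_m\rightarrow \pi'\otimes_E E_m\rightarrow 0$ (pour $m\in\Z_{\geq 0}$ quelconque), on dispose d'un complexe de $(\psi,\Gamma)$-modules de Fr\'echet sur $\mathcal{R}^+_{E_m}$~:
\[
M_\alpha(H^1(\mnn^\alpha,\pi'')\otimes_E E_m)\buildrel f_1\over\longrightarrow M_\alpha(\pi'\otimes_E E_m)\buildrel g\over\longrightarrow M_\alpha(\pi\otimes_E E_m)\longrightarrow M_\alpha(\pi''\otimes_E E_m)\longrightarrow 0
\]
exact aux deux derniers termes et tel que l'image de $f_1$ soit dense dans $\ker(g)$. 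Par construction du push-out, le noyau du morphisme naturel $D_\alpha(\pi')_r\otimes_E E_m\rightarrow \widetilde M_\alpha(\pi\otimes_E E_m)$ s'identifie \`a l'image de $\ker(g)$ par $f$. Comme $D_\alpha(\pi')_r\otimes_E E_m$ est s\'epar\'e et $f$ est continue, il suffit donc de montrer que la compos\'ee $h:=f\circ f_1$ est nulle, quitte \`a augmenter $m$ et $r$.

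Or le morphisme $h$ est $\mathcal{R}^+_{E_m}$-lin\'eaire, commute \`a $\psi$ et $\Gamma$, et --- c'est l'observation cruciale --- est \'equivariant pour l'action semi-lin\'eaire de $\Gal(E_\infty/E)$ d\'efinie dans la preuve du Lemme \ref{m+1}(i), action qui, sur le terme de but $D_\alpha(\pi')_r\otimes_E E_m$, est tordue par $\chi_{-\lambda}$ via l'hypoth\`ese sur $\pi'$. Il suffit donc d'\'etablir que tous les caract\`eres alg\'ebriques $\chi_{-\nu}$ apparaissant dans $F_\alpha(H^1(\mnn^\alpha,\pi''))$ v\'erifient $E_\infty(\chi_{-\nu})\not\simeq E_\infty(\chi_{-\lambda})$, i.e. $\nu-\lambda\notin \Z\alpha$ par le Lemme \ref{egalchi}~: par passage \`a la limite en $(r,m)$ et fonctorialit\'e, $h$ se factorisera alors en un morphisme $\Gal(E_\infty/E)$-\'equivariant entre composantes isotypiques disjointes, donc sera nul.

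Le calcul de $F_\alpha(H^1(\mnn^\alpha,\pi''))$ s'obtient par la combinaison suivante. Dans le cas $P=P_\beta$, on utilise la d\'ecomposition le long de la stratification de Bruhat ${\rm GL}_3(\Qp)=P_\alpha(\Qp)w_0 P_\alpha(\Qp)\amalg P_\alpha(\Qp)$ (cf. le cas 2 de la preuve de la Proposition \ref{separegl3})~: la cellule ouverte donne via (\ref{grosse}) et (\ref{grossei=1})--(\ref{localgi=1}) une induite parabolique dont la partie alg\'ebrique fait intervenir la droite $H^1(\mnn_{\beta},L(-\mu)_{P_\beta})$ de poids de Cartan \'egal \`a $s_\beta\cdot(-\mu)$, tandis que la cellule ferm\'ee donne une repr\'esentation localement alg\'ebrique de $L_{P_\alpha}(\Qp)$ analogue \`a \cite[(4.76)]{Sc1}. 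Dans le cas $P=P_\alpha$ on proc\`ede sym\'etriquement via (\ref{de-a+})~; ici l'hypoth\`ese que $\pi_{P_\alpha}^\infty$ se factorise par le d\'eterminant est n\'ecessaire pour neutraliser les contributions du caract\`ere central $\chi_{\pi_{P_\alpha}^\infty,\alpha}$ au caract\`ere galoisien qui appara\^\i trait sinon dans le Corollaire \ref{casparticuliers}(i).

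Appliquant alors le Th\'eor\`eme \ref{caslisse} et le Corollaire \ref{casparticuliers} aux constituants localement alg\'ebriques ainsi identifi\'es, on obtient une d\'ecomposition
\[
F_\alpha(H^1(\mnn^\alpha,\pi''))\simeq \bigoplus_{i}E_\infty(\chi_{-\nu_i})\otimes_E\Hom_{(\varphi,\Gamma)}(D_i,-)
\]
pour une famille finie de poids $\nu_i\in X(T)$ explicitement contr\^ol\'es en fonction de $\mu$, et la v\'erification que $\nu_i-\lambda\notin \Z\alpha$ pour tout $i$ se fait au cas par cas. Le principal obstacle est pr\'ecis\'ement ce travail comptable sur les poids, et en particulier, dans le cas $P=P_\alpha$ --- o\`u $\mnn^\alpha$ co\"\i ncide avec le radical unipotent $\mathfrak{n}_{P_\alpha}$, ce qui fait que la cohomologie de Lie se calcule par des m\'ethodes plus directes mais o\`u les caract\`eres sont plus nombreux --- l'exploitation pr\'ecise de l'hypoth\`ese d\'eterminantale pour \'eliminer les cas pathologiques o\`u un $\nu_i$ pourrait co\"\i ncider avec $\lambda$ modulo $\Z\alpha$.
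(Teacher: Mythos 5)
Votre cadre g\'en\'eral est correct et co\"\i ncide avec celui du texte~: via le Corollaire \ref{separexactgl3}, tout revient \`a montrer que la compos\'ee $M_\alpha(H^1(\mnn^\alpha,\pi'')\otimes_EE_m)\rightarrow M_\alpha(\pi'\otimes_EE_m)\rightarrow D_\alpha(\pi')_r\otimes_EE_m$ (o\`u la seconde fl\`eche est $f$) s'annule quitte \`a augmenter $m$ et $r$, et pour cela il suffit que le morphisme $F_\alpha(\pi')\rightarrow F_\alpha(H^1(\mnn^\alpha,\pi''))$ soit nul dans $F(\varphi,\Gamma)_\infty$, ce que l'on teste sur les caract\`eres de $\Gal(E_\infty/E)$ comme dans l'\'Etape $1$ du \S~\ref{preuvelocalg}. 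Pour $P=P_\beta$, votre esquisse suit bien la preuve du texte~: on utilise que $N^\alpha(\Qp)$ agit trivialement sur les $H^i(\mnn^\alpha,\pi_C)$ pour obtenir (\ref{malphah1}), puis (\ref{grosse}) et (\ref{localgi=1}), la cellule ouverte donnant $E_\infty(\chi_{-s_\beta\cdot\mu})\otimes_E\Hom_{(\varphi,\Gamma)}(D,-)$ (noter que le poids est $-s_\beta\cdot\mu$ et non $s_\beta\cdot(-\mu)$, et que le crit\`ere ``$\nu-\lambda\notin\Z\alpha$'' ne suffit qu'avec l'argument de positivit\'e de la preuve du Lemme \ref{egalchi})~; il faut aussi \'etablir, ce que vous ne mentionnez pas, la nullit\'e de la contribution des autres cellules, cf. (\ref{celluleinteri}).

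En revanche, le cas $P=P_\alpha$ pr\'esente une vraie lacune. Le m\'ecanisme que vous invoquez pour l'hypoth\`ese d\'eterminantale est erron\'e~: le caract\`ere central lisse $\chi_{\pi_{P_\alpha}^\infty,\alpha}$ ne tord que le $(\varphi,\Gamma)$-module dans le Corollaire \ref{casparticuliers} et n'affecte jamais le caract\`ere de $\Gal(E_\infty/E)$ (qui ne d\'epend que du poids alg\'ebrique)~; il ne peut donc ni cr\'eer ni \'eliminer une co\"\i ncidence de caract\`eres galoisiens. De plus, le ``travail comptable sur les poids'' que vous annoncez est pr\'ecis\'ement la partie non faite, et elle n'est couverte par aucun \'enonc\'e du texte puisque $H^1(\mnn^\alpha,\pi'')\simeq H^1(\mnn^\alpha,\pi_C)$ (la cellule ouverte n'ayant pas de $\mnn^\alpha$-cohomologie sup\'erieure) n'est pas de la forme ${\mathcal F}_{P^-}^G(M,\pi_P^\infty)$. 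L'argument du texte est ici de nature diff\'erente et plus simple~: par (\ref{pluspeniblequeprevu}), $M_\alpha(H^1(\mnn^\alpha,\pi_C)\otimes_EE_m)$ re\c coit avec image dense $\big(H_1(\mnn^\alpha,\pi_\beta^\vee \otimes_{U({\mathfrak p}_\beta,E)}U({\mathfrak gl}_3,E_m))\otimes_{D(B(\Qp),E_m)}D(P_\alpha(\Qp),E_m)\big)(\eta)_{N_m^\alpha}$~; l'hypoth\`ese que $\pi_{P_\alpha}^\infty$ se factorise par $\det$ sert \`a rendre \emph{triviale} l'action de $N^\alpha(\Qp)$ sur $H_1(\mnn^\alpha,\pi_\beta^\vee \otimes_{U({\mathfrak p}_\beta,E)}U({\mathfrak gl}_3,E_m))$, et comme $P_\alpha$ normalise $N^\alpha$, l'action de $N_m^\alpha$ est triviale sur tout l'espace~; d\`es que $\eta\vert_{N_m^\alpha}\ne 1$, ce $(\eta)$-coinvariant est nul, donc $M_\alpha(H^1(\mnn^\alpha,\pi'')\otimes_EE_m)=0$ et il n'y a aucun caract\`ere \`a comparer. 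Sans cet argument (ou une version aboutie de votre comptage de poids, que vous signalez vous-m\^eme comme l'obstacle principal), votre traitement du cas $P=P_\alpha$ reste incomplet.
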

\begin{proof}
Par le Corollaire \ref{separexactgl3}, il suffit comme dans l'\'Etape $1$ au \S~\ref{preuvelocalg} de montrer que, quitte \`a augmenter $m$ et $r$, la compos\'ee $M_\alpha(H^1(\mnn^\alpha,\pi'')\otimes_EE_m)\rightarrow M_\alpha(\pi'\otimes_EE_m)\buildrel f \over\rightarrow D_{\alpha}(\pi')_r\otimes_EE_m$ est nulle. Comme dans la preuve de la Proposition \ref{separegl3}, on distingue deux cas.

\noindent
Cas $1$~: $P=P_\alpha$.\\
Par le Cas $1$ dans la preuve de la Proposition \ref{separegl3} et un d\'evissage sur la suite exacte (\ref{decompgl3}) (appliqu\'ee avec $\pi''$), on a $H^1(\mnn^\alpha,\pi'')\buildrel\sim\over\rightarrow H^1(\mnn^\alpha,\pi_C)$ et donc $M_\alpha(H^1(\mnn^\alpha,\pi_C)\otimes_EE_m)\buildrel\sim\over\rightarrow M_\alpha(H^1(\mnn^\alpha,\pi'')\otimes_EE_m)$. Mais il d\'ecoule de (\ref{techniquei=2}) pour $i=1$ avec la phrase qui suit, (des r\'ef\'erences) du Cas $1$ de la preuve de la Proposition \ref{separegl3} (cf. en particulier la discussion pr\'ec\'edent \cite[(4.100)]{Sc1}), de (\ref{malphaf}) et de $P_\beta\cap P_\alpha=B$ que l'on a un morphisme d'image dense~:
\begin{multline}\label{pluspeniblequeprevu}
\big(H_1(\mnn^\alpha,\pi_\beta^\vee \otimes_{U({\mathfrak p}_\beta,E)}U({\mathfrak gl}_3,E_m))\otimes_{D(B(\Qp),E_m)}D(P_\alpha(\Qp),E_m)\big)(\eta)_{N_m^\alpha}\\
\longrightarrow M_\alpha(H^1(\mnn^\alpha,\pi_C)\otimes_EE_m)
\end{multline}
o\`u ${\mathfrak p}_\beta$ est la $\Qp$-alg\`ebre de Lie de $P_\beta(\Qp)$. En se souvenant que $\pi_\beta\simeq (L(-\mu)_{P_\alpha})^{w_0}\otimes_E(\pi_{P_\alpha}^\infty)^{w_0}$ on a~:
\begin{multline*}
H_1(\mnn^\alpha,\pi_\beta^\vee \otimes_{U({\mathfrak p}_\beta,E)}U({\mathfrak gl}_3,E_m))\\
\simeq H_1(\mnn^\alpha,(L^-(\mu)_{P_\alpha})^{w_0} \otimes_{U({\mathfrak p}_\beta,E)}U({\mathfrak gl}_3,E_m))\otimes _{E} (\pi_{P_\alpha}^\infty)^{w_0}
\end{multline*}
d'o\`u on d\'eduit que l'action de $N^\alpha(\Qp)$ est triviale sur $H_1(\mnn^\alpha,\pi_\beta^\vee \otimes_{U({\mathfrak p}_\beta,E)}U({\mathfrak gl}_3,E_m))$ car elle est triviale sur chaque facteur (on utilise ici l'hypoth\`ese sur $\pi_{P_\alpha}^\infty$). Comme $P_\alpha$ normalise $N^\alpha$, on voit que l'action de $N^\alpha(\Qp)$, et {\it a fortiori} de $N_m^\alpha$, est aussi triviale sur $H_1(\mnn^\alpha,\pi_\beta^\vee \otimes_{U({\mathfrak p}_\beta,E)}U({\mathfrak gl}_3,E_m))\otimes_{D(B(\Qp),E_m)}D(P_\alpha(\Qp),E_m)$. Cela implique que le terme de gauche en (\ref{pluspeniblequeprevu}) est nul (pour $m$ tel que $\eta\vert_{N_m^\alpha}\ne 1$), et donc aussi $M_\alpha(H^1(\mnn^\alpha,\pi_C)\otimes_EE_m)$ et $M_\alpha(H^1(\mnn^\alpha,\pi'')\otimes_EE_m)$.

\noindent
Cas $2$~: $P=P_\beta$.\\
Par le Cas $2$ de la preuve de la Proposition \ref{separegl3} et par la suite exacte (\ref{decompgl3}) (appliqu\'ee avec $\pi''$), on a une suite exacte d'espaces de type compact munis d'une action lisse de $N^\alpha(\Qp)$~:
\begin{multline*}
H^0(\mnn^\alpha\!,\pi_C)\longrightarrow H^1\big(\mnn^\alpha\!,\!\big(\cInd_{P_\alpha(\Qp)}^{P_\alpha(\Qp) w_0 P_\alpha(\Qp)}\pi_\alpha\big)^{\an} \big) \longrightarrow H^1(\mnn^\alpha\!,\pi'')\longrightarrow H^1(\mnn^\alpha\!,\pi_C).
\end{multline*}
L'action de $N^\alpha(\Qp)$ \'etant triviale sur $H^i(\mnn^\alpha,\pi_C)$ pour tout $i$ (cf. l'avant-dernier paragraphe de la preuve de la Proposition \ref{separegl3}), on en d\'eduit un isomorphisme d'espaces de type compact (pour $m\gg 0$)~:
$$\big(H^1\big(\mnn^\alpha,\big(\cInd_{P_\alpha(\Qp)}^{P_\alpha(\Qp) w_0 P_\alpha(\Qp)}\pi_\alpha\big)^{\an}\big) \otimes_EE_m\big)(\eta^{-1})_{N_m^\alpha} \buildrel\sim\over\longrightarrow \big(H^1(\mnn^\alpha,\pi'')\otimes_EE_m\big)(\eta^{-1})_{N_m^\alpha}$$
et donc un isomorphisme~:
\begin{equation}\label{malphah1}
M_\alpha\big(H^1(\mnn^\alpha,\pi'')\otimes_EE_m\big)\buildrel\sim\over\longrightarrow M_\alpha\big(H^1\big(\mnn^\alpha,\big(\cInd_{P_\alpha(\Qp)}^{P_\alpha(\Qp) w_0 P_\alpha(\Qp)}\pi_\alpha\big)^{\an}\big)\otimes_EE_m\big).
\end{equation}
Avec (\ref{grosse}), il suffit donc de montrer que le morphisme dans $F(\varphi,\Gamma)_\infty$~:
$$F_\alpha(\pi')\longrightarrow F_\alpha\big(\big(\Ind_{L_{P_\alpha}(\Qp)\cap P_\beta^-(\Qp)}^{L_{P_\alpha}(\Qp)}H^1(\mnn^\alpha,\pi_{N^\alpha})\big)^{\an}\big)$$
est nul. Par le Lemme \ref{egalchi} et l'hypoth\`ese sur $\pi'$, il suffit (comme dans l'\'Etape $1$ au \S~\ref{preuvelocalg}) de montrer que le terme de droite est de la forme $E_\infty(\chi_{-\nu})\otimes_E \Hom_{(\varphi,\Gamma)}(D,-)$ pour un certain $(\varphi,\Gamma)$-module $D$ et un caract\`ere $\nu\in X(T)$ {\it distinct} de $\lambda$ et $s_\alpha\cdot \lambda$.

Pour $i\in \Z$ notons~:
$$\pi^i_{N_\alpha}\=H^i(\mnn^\alpha,\pi_{N^\alpha})\buildrel{(\ref{localgi=1})}\over\simeq H^i(\mnn_{\beta},L(-\mu)_{P_\beta})\otimes_E \pi_{N_\alpha}^\infty.$$
En d\'ecomposant~:
\begin{multline*}
L_{P_\alpha}(\Qp)=(L_{P_\alpha}(\Qp)\cap B^-(\Qp))(L_{P_\alpha}(\Qp)\cap B(\Qp))\\
\amalg (L_{P_\alpha}(\Qp)\cap B^-(\Qp))s_\alpha(L_{P_\alpha}(\Qp)\cap B(\Qp)),
\end{multline*}
on peut d\'evisser $(\Ind_{L_{P_\alpha}(\Qp)\cap P_\beta^-(\Qp)}^{L_{P_\alpha}(\Qp)}\pi^i_{N_\alpha})^{\an}$ comme au \S~\ref{alphaqques}. La contribution de la cellule ouverte donne $C_c^{\an}(N_\alpha(\Qp),\pi^i_{N^\alpha})$ comme au \S~\ref{cellule}, mais il faut ici faire attention \`a l'action (\ref{actionnalpha}) de $N^\alpha_m$. Si $\delta\in \{\alpha,\beta\}$ notons $N_{\delta,m}\=N_{\delta}(\Qp)\cap N_m$. Par un argument comme pour le Lemme \ref{niveaum}, on se ram\`ene \`a d\'eterminer $\displaystyle \lim_{m\rightarrow +\infty}F_{\alpha,m}(C^{\an}(N_{\alpha,m},\pi^i_{N^\alpha}))$. Soit $\gamma\=\alpha+\beta=e_1-e_3$ (cf. Exemple \ref{gln}), le caract\`ere $\eta$ \'etant trivial sur $N_\gamma(\Qp)$ et l'action de $N_{\gamma}(\Qp)$ commutant \`a celle de $N_\alpha(\Qp)$ on a~:
$$C^{\an}(N_{\alpha,m},\pi^i_{N^\alpha}\otimes_EE_m)(\eta^{-1})_{N_{\gamma,m}}\simeq C^{\an}\big(N_{\alpha,m},(\pi^i_{N^\alpha}\otimes_EE_m)_{N_{\gamma,m}}\big)$$
et l'action de $N_{\beta}(\Qp)$ commutant \`a celle de $N_\alpha(\Qp)$ dans le quotient $N(\Qp)/N_{\gamma}(\Qp)$, on d\'eduit~:
$$C^{\an}(N_{\alpha,m},\pi^i_{N^\alpha}\otimes_EE_m)(\eta^{-1})_{N^\alpha_{m}}\simeq C^{\an}(N_{\alpha,m},E_m)\otimes_{E_m}\big((\pi^i_{N^\alpha}\otimes_EE_m)(\eta^{-1})_{N^\alpha_{m}}\big).$$
Par \cite[Th.~4.10]{Sc1} on a $H^1(\mnn_{\beta},L(-\mu)_{P_\beta})\simeq E(-s_\beta\cdot\mu)$, d'o\`u $(\pi^1_{N^\alpha}\otimes_EE_m)(\eta^{-1})_{N^\alpha_{m}}\simeq E(-s_\beta\cdot\mu)\otimes_E (\pi_{N^\alpha}^\infty\otimes_EE_m)(\eta^{-1})_{N^\alpha_{m}}$. De plus, on v\'erifie facilement (en utilisant $\eta\vert_{N_\gamma(\Qp)}=1$) que l'on a~:
$$\displaystyle \lim_{m\rightarrow +\infty}(\pi_{N^\alpha}^\infty\otimes_EE_m)(\eta^{-1})_{N^\alpha_{m}}=(\pi_{N^\alpha}^\infty\otimes_EE_\infty)(\eta^{-1})_{N^\alpha(\Qp)}\buildrel\sim\over\rightarrow (\pi_{P_\beta}^\infty\otimes_EE_\infty)(\eta^{-1})_{N_{L_{P_\beta}}\!(\Qp)}$$
o\`u le deuxi\`eme isomorphisme est induit par $f\mapsto f(1)$. Un argument comme dans la preuve de la Proposition \ref{celluleouverte} donne alors $\displaystyle \lim_{m\rightarrow +\infty}F_{\alpha,m}(C^{\an}(N_{\alpha,m},\pi^1_{N^\alpha}))\simeq E_\infty(\chi_{-s_\beta\cdot\mu})\otimes_E \Hom_{(\varphi,\Gamma)}(D,-)$ pour $D$ convenable.

Lorsque $i=0$, il suit de (\ref{piCbis}) que l'on a un isomorphisme $(\cInd_{P_\alpha(\Qp)}^{P_\alpha(\Qp) w_0 B(\Qp)}\!\pi_\alpha)^{\an}\!\buildrel\sim\over\rightarrow C_c^{\an}(N_\alpha(\Qp),\pi_{N^\alpha})$ d'o\`u on d\'eduit avec (\ref{grosse})~:
\begin{equation}\label{grosseter}
\big(\cInd_{P_\alpha(\Qp)}^{P_\alpha(\Qp) w_0 B(\Qp)}\pi_\alpha\big)^{\an}[\mnn^\alpha]\buildrel\sim\over\longrightarrow C_c^{\an}(N_\alpha(\Qp),\pi^0_{N^\alpha}).
\end{equation}
Par l'exactitude \`a gauche de $F_\alpha$ et la Proposition \ref{support} on sait que~:
\begin{multline*}
F_\alpha\big(\big(\cInd_{P_\alpha(\Qp)}^{P_\alpha(\Qp) w_0 P_\alpha(\Qp)}\pi_\alpha\big)^{\an}[\mnn^\alpha]/\big(\cInd_{P_\alpha(\Qp)}^{P_\alpha(\Qp) w_0 B(\Qp)}\pi_\alpha\big)^{\an}[\mnn^\alpha] \big)\\
\hookrightarrow F_\alpha\big(\big(\Ind_{P_\alpha(\Qp)}^{{\rm GL}_3(\Qp)}\pi_\alpha\big)^{\an}/\big(\cInd_{P_\alpha(\Qp)}^{P_\alpha(\Qp) w_0 B(\Qp)}\pi_\alpha\big)^{\an} \big)=0
\end{multline*}
d'o\`u par (\ref{grosse}) et (\ref{grosseter})~:
\begin{equation}\label{celluleinter}
F_\alpha\big((\Ind_{L_{P_\alpha}(\Qp)\cap P_\beta^-(\Qp)}^{L_{P_\alpha}(\Qp)}\pi^0_{N^\alpha})^{\an}/C_c^{\an}(N_\alpha(\Qp),\pi^0_{N^\alpha})\big)=0.
\end{equation}
Mais les repr\'esentations $\pi^i_{N^\alpha}$ sont toutes de la forme : (caract\`ere alg\'ebrique)$\otimes_E \pi_{N^\alpha}^\infty$, et il est clair que la nullit\'e en (\ref{celluleinter}) n'a rien \`a voir avec le caract\`ere alg\'ebrique particulier apparaissant dans $\pi^0_{N^\alpha}$. Autrement dit on a pour tout $i$~:
\begin{equation}\label{celluleinteri}
F_\alpha\big((\Ind_{L_{P_\alpha}(\Qp)\cap P_\beta^-(\Qp)}^{L_{P_\alpha}(\Qp)}\pi^i_{N^\alpha})^{\an}/C_c^{\an}(N_\alpha(\Qp),\pi^i_{N^\alpha})\big)=0.
\end{equation}
Par le paragraphe d'avant, on a donc en particulier~:
\begin{multline*}
E_\infty(\chi_{-s_\beta\cdot\mu})\otimes_E \Hom_{(\varphi,\Gamma)}(D,-)\simeq F_\alpha(C_c^{\an}(N_\alpha(\Qp),\pi^1_{N^\alpha}))\\
\buildrel\sim\over\longrightarrow F_\alpha\big((\Ind_{L_{P_\alpha}(\Qp)\cap P_\beta^-(\Qp)}^{L_{P_\alpha}(\Qp)}\pi^1_{N^\alpha})^{\an}\big).
\end{multline*}
Mais comme $\mu$ est par hypoth\`ese dominant par rapport \`a $B^-\cap L_{P_\beta}$, $s_\beta\cdot \mu$ ne peut plus l'\^etre, et est donc toujours distinct de $\lambda$ et $s_\alpha\cdot \lambda$ qui eux le sont. Cela ach\`eve la preuve du Cas $2$.
\end{proof}

\begin{rem}
{\rm La Proposition \ref{pushtechnique} reste en fait valable sans supposer que l'action de $L_{P_\alpha}(\Qp)$ sur $\pi_{P_\alpha}^\infty$ se factorise par det. Il faut pour cela d\'ecomposer $P_\alpha(\Qp)=B(\Qp) s_\alpha B(\Qp) \amalg B(\Qp)$ dans le Cas $1$ et faire un d\'evissage comme dans le Cas $2$ en proc\'edant comme dans la preuve de la Proposition \ref{support}. Comme nous nous limiterons \`a la cat\'egorie $C_{\lambda,\alpha}$, nous n'aurons besoin que du cas de l'\'enonc\'e, qui a l'avantage d'\^etre plus court \`a montrer. Par ailleurs (\ref{celluleinteri}) peut aussi se montrer directement par des consid\'erations analogues \`a celles de la preuve de la Proposition \ref{support} (noter que, du point de vue du groupe ${\rm GL}_3(\Qp)$, on est sur la cellule $P_\alpha(\Qp) s_\beta B(\Qp)$).}
\end{rem}

\subsection{Un scindage technique}\label{technique1}

On d\'emontre un r\'esultat technique de scindage sur des $(\psi,\Gamma)$-module de Fr\'echet sur $\R^+$ (Proposition \ref{scinde3} ci-dessous).

\begin{lem}\label{scinde2}
Soit $B$ un module de Fr\'echet sur $\R^+$ qui est un espace de Banach, $M_m$ pour $m\geq 1$ des $\R^+$-modules libres de rang fini et $\prod_{m\geq 1}M_m$ le module de Fr\'echet sur $\R^+$ obtenu en prenant la topologie produit. Alors toute suite exacte (stricte)~:
$$0\longrightarrow B\longrightarrow V\longrightarrow \prod_{m\geq 1}M_m\longrightarrow 0$$
de modules de Fr\'echet sur $\R^+$ o\`u la topologie de $V$ peut \^etre d\'efinie par des semi-normes $q$ telles qu'il existe $C_{q}\in {\mathbb R}_{>0}$ v\'erifiant $q(Xv)\leq C_q q(v)$ pour tout $v\in V$ est scind\'ee.
\end{lem}
\begin{proof}
La topologie de $B$ est d\'efinie par une norme $\Vert\! -\!\Vert_B$ et celle de $V$ par des semi-normes $q_k$ (pour $k$ dans un ensemble d\'enombrable) telles que $q_k(Xv)\leq C_{q_k}q_k(v)$ pour tout $k$ et tout $v\in V$. Comme la topologie de $B$ est aussi d\'efinie par les restrictions $(q_k\vert_B)_k$, par \cite[Cor.~6.2]{Sch} il existe $C_1\in {\mathbb R}_{>0}$, $t\in \Z_{\geq 1}$ et $k_1,\dots,k_t$ tels que $\Vert b\Vert_B\leq C_1 \max(q_{k_1}(b),\dots,q_{k_t}(b))$ pour tout $b\in B$. Par ailleurs, pour toute semi-norme continue $q$ sur $V$, par \cite[Cor.~6.2]{Sch} (encore) il existe $C_2\in {\mathbb R}_{>0}$ tel que $q(b)\leq C_2\Vert b\Vert_B$ pour tout $b\in B$. En appliquant cela \`a $q\=\max(q_{k_1},\dots,q_{k_t})$, on voit que $q$ induit une semi-norme sur $B$ qui est \'equivalente \`a la norme $\Vert - \Vert_B$ (et donc est une norme sur $B$). Notons que l'on a encore $q(Xv)\leq C_qq(v)$ (pour un $C_q\in {\mathbb R}_{>0}$) pour tout $v\in V$. Notons $M\= \prod_{m\geq 1}M_m$, la semi-norme quotient $\overline q$ sur $M$ est continue, donc (toujours par \cite[Cor.~6.2]{Sch}) il existe $C_3\in {\mathbb R}_{>0}$ $s\in \Z_{\geq 1}$ et des semi-normes $p_1,\dots,p_s$ sur $M$ (dans une famille d\'enombrable de semi-normes d\'efinissant sa topologie) tels que $\overline q(x)\leq C_3\max(p_1(x),\dots,p_s(x))$ pour tout $x\in M$. Mais par d\'efinition de la topologie produit (\cite[\S~5.D]{Sch}), il existe un entier $N\gg 0$ tel que $p_j\vert_{\prod_{m\geq N}M_m}=0$ pour tout $j\in \{1,\dots,s\}$, donc on a aussi $\overline q\vert_{\prod_{m\geq N}M_m}=0$. Munissons maintenant $V$ (resp. $M$) de la topologie localement convexe d\'efinie par la seule semi-norme $q$ (resp. $\overline q$) et consid\'erons le s\'epar\'e-compl\'et\'e $\widehat V_q$ (resp. $\widehat M_{\overline q}$) associ\'e (\cite[\S~7]{Sch}). Il suit de la d\'efinition explicite de ce s\'epar\'e-compl\'et\'e (cf. \cite[p.~38]{Sch}) et du fait que $q\vert_B$ induit d\'ej\`a la topologie de $B$ que l'on a un diagramme commutatif de suites exactes~:
$$\xymatrix{0\ar[r] &B\ar[r]\ar@{=}[d]&V\ar[r]\ar[d]^{c_q}&M\ar[r]\ar[d]^{c_{\overline q}}&0\\
0\ar[r] &B\ar[r] & \widehat V_q \ar[r] & \widehat M_{\overline q} &}$$
o\`u les espaces en bas sont tous des espaces de Banach et o\`u toutes les applications sont continues. On en d\'eduit (avec \cite[Cor.~8.7]{Sch}) un isomorphisme d'espaces de Fr\'echet $\ker(c_q)\buildrel\sim\over\rightarrow \ker(c_{\overline q})$ o\`u $\ker(c_q)$ (resp. $\ker(c_{\overline q})$) s'identifie au noyau de la semi-norme $q$ (resp. $\overline q$). Il suit facilement de $q(X-)\leq C_qq(-)$ que $\ker(c_q)$ est un sous-$\R^+$-module de $V$ et de $\overline q\vert_{\prod_{m\geq N}M_m}=0$ que l'on a une injection (stricte) $\prod_{m\geq N}M_m\hookrightarrow \ker(c_{\overline q})\simeq \ker(c_q)\subseteq V$ de modules de Fr\'echet sur $\R^+$. On voit donc que la suite exacte (stricte) de modules de Fr\'echet sur $\R^+$ obtenue \`a partir de celle de l'\'enonc\'e par ``pull-back'' le long de $\prod_{m\geq N}M_m\hookrightarrow M$ (cf. (i) du Lemme \ref{strictI} et Remarque \ref{strictIf}) est scind\'ee. En choisissant une section $\R^+$-lin\'eaire continue $s:\prod_{m\geq N}M_m\hookrightarrow V$, on est ramen\'e \`a montrer que la suite exacte courte (stricte) $0\longrightarrow B\rightarrow V/s(\prod_{m\geq N}M_m) \rightarrow \prod_{N>m\geq 1}M_m\rightarrow 0$ de modules de Fr\'echet sur $\R^+$ est aussi scind\'ee (on obtient alors un scindage de la suite exacte de l'\'enonc\'e en consid\'erant $V\twoheadrightarrow V/s(\prod_{m\geq N}M_m)\simeq B\oplus \prod_{N>m\geq 1}M_m\twoheadrightarrow B$). Mais cela d\'ecoule facilement du fait que $\prod_{N>m\geq 1}M_m$ est libre de rang fini sur $\R^+$ (il suffit de relever une base sur $\R^+$ et d'utiliser \cite[Cor.~8.7]{Sch}).
\end{proof}

\begin{rem}
{\rm On peut montrer que le Lemme \ref{scinde2} est faux si l'on suppose seulement que $B$ est un module de {\it Fr\'echet} sur $\R^+$.}
\end{rem}

La suite du paragraphe est consacr\'ee \`a la d\'emonstration de la proposition suivante.

\begin{prop}\label{scinde3}
Soit $r\in \Q_{>p-1}$, $D_r$ un $(\varphi,\Gamma)$-module g\'en\'eralis\'e sur $\Rr$ sans torsion, $M_m$ pour $m\geq 1$ des $\R^+$-modules libres de rang fini et $\prod_{m\geq 1}M_m$ le module de Fr\'echet sur $\R^+$ obtenu en prenant la topologie produit. On suppose que $M\=\prod_{m\geq 1}M_m$ est muni d'une structure de $(\psi,\Gamma)$-module de Fr\'echet sur $\R^+$ telle que, pour tout $m\geq 1$, $\Gamma$ pr\'eserve $M_m$ et $\psi$ induit un morphisme de $E$-espaces vectoriels $M_m\rightarrow M_{m+1}$. Alors toute suite exacte (stricte)~:
\begin{equation}\label{rstrict}
0\longrightarrow D_r\longrightarrow V\longrightarrow M\longrightarrow 0
\end{equation}
de $(\psi,\Gamma)$-modules de Fr\'echet sur $\R^+$ o\`u la topologie de $V$ peut \^etre d\'efinie par des semi-normes $q$ telles qu'il existe $C_{q}\in {\mathbb R}_{>0}$ v\'erifiant $q(Xv)\leq C_q q(v)$ pour tout $v\in V$ devient scind\'ee apr\`es ``push-out'' le long de $D_r\rightarrow D_{r'}$ pour $r'\gg r$.
\end{prop}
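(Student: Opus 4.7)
The plan is to split the sequence (\ref{rstrict}) in two stages after push-out along $D_r \to D_{r'}$: first produce a continuous $\mathcal{R}^+_E$-linear section by reducing to the Banach setting and applying Lemme \ref{scinde2}, and then correct this section to be $(\psi,\Gamma)$-equivariant after a further push-out, exploiting the topological nilpotence of $\psi$ on $M$ coming from the hypothesis $\psi(M_m)\subseteq M_{m+1}$. The reason the push-out $D_r\to D_{r'}$ is needed at all is twofold: the torsion-freeness of $D_r$ together with (\ref{forme}) only implies that $D_{r_0}\cong (\mathcal{R}^{r_0}_E)^d$ is free of finite rank over $\mathcal{R}^{r_0}_E$ for $r_0$ sufficiently large, and the series used to correct for $\psi$-equivariance will only be shown to converge after enlarging $r_0$ further.

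First I would fix $r_0\geq r$ large enough that $D_{r_0}\cong (\mathcal{R}^{r_0}_E)^d$ and form the push-out $V^{(0)}=D_{r_0}\oplus_{D_r}V$, which fits in a strict exact sequence $0\to D_{r_0}\to V^{(0)}\to M\to 0$ of $(\psi,\Gamma)$-Fréchet modules over $\mathcal{R}^+_E$ by Remarque \ref{strictIf} and inherits the semi-norm control by $X$ from $V$. Writing $D_{r_0}=\plim{s}(\mathcal{R}^{[r_0,s]}_E)^d$ as a projective limit of Banach $\mathcal{R}^+_E$-modules and forming the corresponding pushouts $V^{(0)}_s=(\mathcal{R}^{[r_0,s]}_E)^d\oplus_{D_{r_0}}V^{(0)}$, each sequence $0\to (\mathcal{R}^{[r_0,s]}_E)^d\to V^{(0)}_s\to M\to 0$ satisfies the hypotheses of Lemme \ref{scinde2} and thus admits an $\mathcal{R}^+_E$-linear continuous splitting $\sigma_s:M\to V^{(0)}_s$. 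Using the freeness of each $M_m$, and working inductively on the factors $M_{(N)}=\prod_{m\leq N}M_m$ which are finite products of free $\mathcal{R}^+_E$-modules, one can choose the $\sigma_s$ compatibly as $s$ varies (adjusting by maps landing in the kernels $(\mathcal{R}^{[r_0,s]}_E)^d\to (\mathcal{R}^{[r_0,s']}_E)^d$ for $s'\leq s$) to assemble a continuous $\mathcal{R}^+_E$-linear section $\sigma:M\to V^{(0)}$. $\Gamma$-equivariance is then cheap to arrange: since $\Gamma\cong \Zp^\times$ is profinite and acts continuously on all the Fréchet spaces involved, averaging $\sigma$ against Haar measure of a compact open subgroup of $\Gamma$ produces a continuous $\mathcal{R}^+_E$-linear $\Gamma$-equivariant section, which we still call $\sigma$.

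The hard part will be upgrading $\sigma$ to be $\psi$-equivariant. One considers the cocycle $c:M\to D_{r_0}$ defined by $c(m)\=\psi_V(\sigma(m))-\sigma(\psi_M(m))$, which is continuous and $E$-linear and satisfies $c(\varphi(\lambda)m)=\lambda c(m)$ for $\lambda\in\mathcal{R}^+_E$; correcting $\sigma$ amounts to producing a continuous $\mathcal{R}^+_E$-linear map $\tau:M\to D_{r'}$ solving $\tau\circ\psi_M-\psi_D\circ\tau=c$ after push-out to $D_{r'}$. Here the hypothesis $\psi(M_m)\subseteq M_{m+1}$ is decisive: it makes $\psi_M$ topologically nilpotent on $M=\prod_{m\geq 1}M_m$ in the sense that $\psi_M^k(v)\to 0$ as $k\to +\infty$ for every $v\in M$, so that a formal series of the shape $\tau(m)=\sum_{k\geq 0}\psi_D^{k}(c(\psi_M^{k}(m)))$ (or a suitable variant adapted to the identity $\psi_D\varphi=\mathrm{id}$ on $D_{r_0}$) has a chance of converging. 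The principal obstacle will be quantitative: one must control the growth of the semi-norms of $\psi_D^k$ on $D_{r_0}$ against the decay of $c\circ\psi_M^k$ in the product topology of $M$, and show that after tensoring up from $\mathcal{R}^{r_0}_E$ to $\mathcal{R}^{r'}_E$ for $r'\gg r_0$ the series converges in $D_{r'}$ to a continuous $\mathcal{R}^+_E$-linear $\tau$. Once $\tau$ is constructed, $\sigma+\tau$ will furnish the desired $(\psi,\Gamma)$-equivariant splitting after push-out to $D_{r'}$, completing the proof.
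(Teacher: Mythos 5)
Your two-stage plan (first an $\R^+$-linear continuous section, then a correction making it $(\psi,\Gamma)$-equivariant after push-out) has the same architecture as the paper's proof, but both stages have genuine gaps, and the first one is fatal as written. After reducing to the Banach levels and invoking Lemme \ref{scinde2}, you assert that the sections $\sigma_s$ "can be chosen compatibly as $s$ varies, adjusting by maps landing in the kernels $(\mathcal{R}^{[r_0,s]}_E)^d\to(\mathcal{R}^{[r_0,s']}_E)^d$". Those transition maps are restrictions of rigid analytic functions to a smaller annulus, hence injective: there are no kernels to adjust by. The actual discrepancy between $\mathrm{res}\circ\sigma_s$ and $\sigma_{s'}$ is a continuous $\R^+$-linear map $M\to(\mathcal{R}^{[r_0,s']}_E)^d$ which vanishes on a tail $\prod_{m\geq N_s}M_m$, but nothing in your argument prevents $N_s\to\infty$, in which case the level-wise sections have no limit; an induction over the finite products $\prod_{m\leq N}M_m$ using only $\R^+$-linearity and freeness of the $M_m$ does not resolve this. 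This compatibility problem is precisely where the bulk of the paper's proof is spent, and it is solved there by using the $(\psi,\Gamma)$-structure itself (the commutation of the splittings with the maps $\widetilde\psi$ of (\ref{psi4}), the hypothesis $\psi(M_m)\subseteq M_{m+1}$, and an affinoid-covering identification of $\widetilde\psi^{-1}(\R^+\otimes_{\varphi,\R^+}D_{[r,p^{n-1}r]})$ with $D_{[r,p^{n}r]}$) to modify the splittings so that the discrepancies stay supported in a bounded set of factors: even the purely module-theoretic splitting is not obtained without $\psi$. Separately, the $\Gamma$-step is not "cheap": $\Gamma\cong\Zp^\times$ has a pro-$p$ part, and there is no $E$-valued Haar measure on it (the averages $p^{-n}\sum$ do not converge $p$-adically), so averaging $\sigma$ is not available; in the paper $\Gamma$-equivariance is obtained structurally, not by averaging.

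For the second stage, the series $\tau(m)=\sum_{k\geq 0}\psi_D^{k}(c(\psi_M^{k}(m)))$ does not solve $\tau\circ\psi_M-\psi_D\circ\tau=c$: after reindexing, the two sums involve $\psi_D^{k}c\,\psi_M^{k+1}$ and $\psi_D^{k}c\,\psi_M^{k-1}$ and nothing telescopes. The workable iteration is the one based on $\psi_D\varphi=\mathrm{id}$, namely $\tau=-\sum_{k\geq 0}\varphi^{k+1}\circ c\circ\psi_M^{k}$, which is in fact a \emph{finite} sum because $c$ kills $\prod_{m\geq N}M_m$ for some $N$ (support argument as in the proof of Lemme \ref{begin}, using that $D_{r_0}$ is free of finite rank) while $\psi_M^{N}(M)\subseteq\prod_{m>N}M_m$; so the need for $r'\gg r$ is not a convergence issue but comes from the Frobenius powers, which land in $D_{p^{k}r_0}$. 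One must then still check that the corrected section is $\R^+$-linear and $\Gamma$-equivariant, which is delicate since $c$ is only $\varphi$-semilinear; this is exactly what Lemme \ref{scindepsi} of the paper does cleanly, by passing to $V/\prod_{m\geq N}M_m$, forming the push-out along $D_r\to D_{p^{N}r}$ and taking the kernel of $\widetilde\Psi_N$, which is automatically stable under $\psi$ and $\Gamma$ and maps isomorphically onto the quotient. So your second stage can be repaired along these lines, but the assembly of the section over $D_{r_0}$ in your first stage is a missing idea, not a routine verification.
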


Notons que le (ii) du Lemme \ref{strictI} avec la Remarque \ref{strictIf} montrent que la suite exacte obtenue apr\`es ``push-out'' le long de $D_r\rightarrow D_{r'}$ est encore une suite exacte (stricte) de $(\psi,\Gamma)$-modules de Fr\'echet sur $\R^+$.

Si $C$ est un $(\psi,\Gamma)$-module de Fr\'echet sur $\R^+$, pour tout $j\geq 1$ on a une application $\R^+$-lin\'eaire qui commute \`a $\Gamma$ (cf. (\ref{psi3}) pour $j=1$)~:
\begin{eqnarray}\label{psi4}
\nonumber \widetilde \psi_j:C&\longrightarrow &\R^+\otimes_{\varphi^j,\R^+}C\simeq \oplus_{i=0}^{p^j-1}(1+X)^{i}\otimes C\\
v&\longmapsto &\sum_{i=0}^{p^j-1}(1+X)^i\otimes \frac{\psi^j((1+X)^{p^j-i}v)}{(1+X)}.
\end{eqnarray}
Si $C$ est un $(\varphi,\Gamma)$-module g\'en\'eralis\'e sur $\Rr$, l'application $\widetilde \psi_j$ se factorise comme suit (cf. (\ref{verspsi}) et (\ref{psi2}) pour $j=1$)~:
\begin{equation}\label{psi6}
C\buildrel 1\otimes \Id\over \longrightarrow {\mathcal R}_E^{p^jr}\otimes_{{\mathcal R}_E^{r}}C\buildrel {\buildrel (\Id\otimes \varphi^j)^{-1}\over\sim}\over\longrightarrow {\mathcal R}_E^{p^jr}\otimes_{\varphi^j,\Rr}C\buildrel\sim\over\longleftarrow \R^+\otimes_{\varphi^j,\R^+}C.
\end{equation}

\begin{lem}\label{scindepsi}
Avec les notations et hypoth\`eses de la Proposition \ref{scinde3}, il suffit de montrer l'existence d'un scindage de (\ref{rstrict}) pour les modules de Fr\'echet sur $\R^+$ sous-jacents, i.e. en oubliant $\psi$, $\Gamma$.
\end{lem} 
\begin{proof}
On suppose que l'on a un scindage $V\cong D_r\oplus M$ de modules de Fr\'echet sur $\R^+$, et il faut montrer que, quitte \`a modifier ce scindage de $\R^+$-modules (en augmentant \'eventuellement $r$), on peut se ramener \`a une situation o\`u il est de plus compatible aux actions de $\psi$ et de $\Gamma$. On note $\gamma$ un g\'en\'erateur topologique de $\Gamma$, par continuit\'e de l'action de $\Gamma$, il suffit de consid\'erer les actions de $\psi$ et $\gamma$.

Le morphisme $\psi:D_r\oplus M\rightarrow D_r\oplus M$ (resp. avec $\gamma$ au lieu de $\psi$) induit le morphisme $E$-lin\'eaire (compos\'e) $\delta\psi:M\hookrightarrow D_r\oplus M\buildrel\psi\over\longrightarrow D_r\oplus M\twoheadrightarrow D_r$ (resp. avec $\delta\gamma$ au lieu de $\delta\psi$). Par le m\^eme argument que celui bornant le support de la compos\'ee en (\ref{topologielibre}) (qui utilise le fait que $D_r$ est libre de rang fini), on obtient $N\gg 0$ tel que $\delta\psi\vert_{\prod_{m\geq N}M_m}=\delta\gamma\vert_{\prod_{m\geq N}M_m}=0$, autrement dit le ``pull-back'' de (\ref{rstrict}) le long de $\prod_{m\geq N}M_m\hookrightarrow \prod_{m\geq 1}M_m$ est scind\'e comme suite exacte de $(\psi,\Gamma)$-modules de Fr\'echet sur $\R^+$. 

Consid\'erons alors la suite exacte (stricte) de $(\psi,\Gamma)$-modules de Fr\'echet sur $\R^+$~:
$$0\longrightarrow D_r\longrightarrow W\longrightarrow \prod_{N>m\geq 1}M_m\longrightarrow 0$$
o\`u $W\=V/\prod_{m\geq N}M_m$ et $\prod_{N>m\geq 1}M_m\simeq \prod_{m\geq 1}M_m/\prod_{m\geq N}M_m$, donc par hypoth\`ese $\psi^N=0$ sur $\prod_{N>m\geq 1}M_m$. Le morphisme ${\widetilde\Psi}_N$ en (\ref{psi4}) induit alors avec (\ref{psi6}) une suite exacte o\`u tous les morphismes sont $\R^+$-lin\'eaires et commutent \`a $\Gamma$~:
\begin{equation}\label{ppsiN}
\begin{gathered}
\xymatrix{0\ar[r] &D_{p^Nr}\ar[r]\ar[d]^\wr&D_{p^Nr}\oplus_{D_r}W\ar[r]\ar[d]^{{\widetilde\Psi}_N}&M\ar[r]\ar[d]^{0}&0\\
0\ar[r] &\R^+\otimes_{\varphi^N,\R^+}D_r\ar[r] & \R^+\otimes_{\varphi^N,\R^+} W \ar[r] & \R^+\otimes_{\varphi^N,\R^+}M\ar[r]&0.}
\end{gathered}
\end{equation}
Avec le lemme des serpents on en d\'eduit facilement un isomorphisme $\ker({\widetilde\Psi}_N)\buildrel\sim\over\rightarrow M$. C'est un exercice de v\'erifier que $\ker({\widetilde\Psi}_N)$ est stable par $\psi$ (manipuler les \'egalit\'es $\psi^N((1+X)^{p^N-i}v)=0$ pour tout $i$ en (\ref{psi4})) et il est clairement stable par $\Gamma$. Comme $D_{p^Nr}\oplus_{D_r}W\rightarrow M$ commute \`a $\psi$, on en d\'eduit finalement une injection $\R^+$-lin\'eaire continue $M\hookrightarrow D_{p^Nr}\oplus_{D_r}W$ qui commute \`a $\psi$ et $\Gamma$. En argumentant comme \`a l'avant-derni\`ere phrase de la preuve du Lemme \ref{scinde2}, on obtient le scindage cherch\'e en rempla\c cant $r$ par $p^Nr$.
\end{proof}

On d\'emontre maintenant l'existence d'un scindage de (\ref{rstrict}) pour les modules de Fr\'echet sur $\R^+$ sous-jacents.

Pour $r\in \Q_{>p-1}$ et $s>r$ dans $\Q$, on v\'erifie facilement que l'on a $\varphi(\R^{[r,s]})\subseteq \R^{[pr,ps]}$ o\`u $\R^{[r,s]}$ est la $E$-alg\`ebre affino\"\i de de Banach d\'efinie dans la preuve du Lemme \ref{plat} et $\varphi$ est le Frobenius (induit par $\varphi(X)=(1+X)^p-1$), ainsi que $\R^{[pr,ps]}\simeq \oplus_{i=0}^{p-1}(1+X)^i\varphi(\R^{[r,s]})$. Par extension des scalaires de $\R^{pr}$ \`a $\R^{[pr,ps]}$ sur (\ref{frob}) on d\'eduit un isomorphisme $\R^{[pr,ps]}$-lin\'eaire o\`u $D_{[r',s']}\=\R^{[r',s']}\otimes_{\Rr}D_r$ pour $s'>r'\geq r$~:
\begin{equation}\label{frobborne}
\Id\otimes \varphi : \R^{[pr,ps]}\otimes_{\varphi,\R^{[r,s]}}D_{[r,s]} \buildrel\sim\over\longrightarrow D_{[pr,ps]}
\end{equation}
et l'on a comme en (\ref{verspsi}) un op\'erateur encore not\'e $\psi:D_{[r,ps]}\rightarrow D_{[r,s]}$ commutant \`a $\Gamma$ d\'efini comme la compos\'ee~:
\begin{equation}\label{verspsiborne}
D_{[r,ps]}\longrightarrow D_{[pr,ps]}\buildrel{\buildrel (\Id\otimes \varphi)^{-1}\over\sim} \over \longrightarrow \R^{[pr,ps]}\otimes_{\varphi,\R^{[r,s]}}D_{[r,s]}\buildrel {\rm pr}_0 \over\longrightarrow D_{[r,s]}.
\end{equation}
De plus, les fl\`eches de restriction induisent des diagrammes commutatifs \'evidents quand $s$ grandit qui, lorsque l'on prend la limite projective pour $s\rightarrow +\infty$ sur (\ref{verspsiborne}), redonnent (\ref{verspsi}). Comme en (\ref{psi6}) (pour $j=1$) on note~:
\begin{equation}\label{verspsi6borne}
\widetilde\psi:D_{[r,ps]}\longrightarrow \R^{[pr,ps]}\otimes_{\varphi,\R^{[r,s]}}D_{[r,s]}\buildrel\sim\over\longleftarrow \R^+\otimes_{\varphi^j,\R^+}D_{[r,s]}
\end{equation}
(o\`u la premi\`ere fl\`eche est la compos\'ee des deux fl\`eches de gauche en (\ref{verspsiborne}))
qui est $\R^+$-lin\'eaire, commute \`a $\Gamma$ et redonne (\ref{psi6}) pour $j=1$ quand on prend la limite projective pour $s\rightarrow +\infty$.

Pour tout $n\geq 1$ soit $I_n\=[r,p^nr]$, par le (ii) du Lemme \ref{strictI} et la Remarque \ref{strictIf} on a des suites exactes strictes $0\rightarrow D_{I_n}\rightarrow D_{I_n}\oplus_{D_r}V\rightarrow M\rightarrow 0$ de modules de Fr\'echet sur $\R^+$ et deux s\'eries de diagrammes commutatifs de $\R^+$-modules pour $n\geq 2$ (la premi\`ere induite par les restrictions et la deuxi\`eme par (\ref{verspsi6borne}) et (\ref{psi4}))~:
\begin{equation}\label{diagn1}
\begin{gathered}
\xymatrix{0\ar[r] &D_{I_n}\ar[r]\ar[d]&D_{I_n}\oplus_{D_r}V\ar[r]\ar[d]&M\ar[r]\ar@{=}[d]&0\\
0\ar[r] & D_{I_{n-1}}\ar[r]&D_{I_{n-1}}\oplus_{D_r}V\ar[r]&M\ar[r]&0}
\end{gathered}
\end{equation}
\begin{equation}\scriptsize\label{diagn2}
\begin{gathered}
\xymatrix{0\ar[r] &D_{I_n}\ar[r]\ar[d]^{\widetilde\psi}&D_{I_n}\oplus_{D_r}V\ar[r]\ar[d]^{\widetilde\psi}&M\ar[r]\ar[d]^{\widetilde\psi}&0\\
0\ar[r] &\R^+\otimes_{\varphi,\R^+}D_{I_{n-1}}\ar[r] & \R^+\otimes_{\varphi,\R^+}(D_{I_{n-1}}\oplus_{D_r} V) \ar[r] & \R^+\otimes_{\varphi,\R^+}M\ar[r]&0.}
\end{gathered}
\end{equation}
De plus une chasse au diagramme \'evidente sur (\ref{diagn1}) donne~:
$$V\buildrel\sim\over\longrightarrow \lim_{\substack{\longleftarrow \\ n\rightarrow +\infty}}D_{I_n}\oplus_{D_r}V.$$
Par le Lemme \ref{scinde2} (que l'on peut appliquer car $D_{I_n}$ est un espace de Banach), on a pour $n\geq 1$ des scindages de $\R^+$-modules $D_{I_n}\oplus_{D_r}V\simeq D_{I_n}\oplus M$ et pour $n\geq 2$ on d\'efinit $\delta_n{\rm res}: M\hookrightarrow D_{I_n}\oplus M\rightarrow D_{I_{n-1}}\oplus M\twoheadrightarrow D_{I_{n-1}}$. L'application continue $\delta_n{\rm res}: M=\prod_{m\geq 1}M_m\rightarrow D_{I_{n-1}}$ est toujours nulle en restriction \`a tous les $M_m$ sauf un nombre fini par l'argument de support qui suit (\ref{topologielibre}). On voit avec (\ref{diagn1}) qu'il suffit de montrer que l'on peut modifier ces scindages \`a chaque cran $n$ pour $n$ suffisamment grand de sorte que $\delta_n{\rm res}=0$.

Pour $n\geq 2$ on d\'efinit $m_n\=\max\{m\geq 1, \delta_n{\rm res}\vert_{M_m}\ne 0\}$. Si la suite $m_n$ est born\'ee quand $n\rightarrow +\infty$, alors il existe $N\gg 0$ tel que $\delta_n{\rm res}\vert_{\prod_{m\geq N}M_m}=0$ pour tout $n\geq 2$, i.e. le ``pull-back'' de (\ref{rstrict}) le long de $\prod_{m\geq N}M_m\hookrightarrow M$ est scind\'e comme suite exacte de modules de Fr\'echet sur $\R^+$, et le m\^eme argument qu'\`a la fin de la preuve du Lemme \ref{scinde2} donne alors un scindage de (\ref{rstrict}) (pour les structures de modules de Fr\'echet sur $\R^+$). On suppose donc que la suite $m_n$ n'est pas born\'ee. 

Le m\^eme argument de support que pour $\delta_n{\rm res}$ donne que pour $n\geq 2$ la compos\'ee~:
$$\delta_n\widetilde\psi: M\hookrightarrow D_{I_n}\oplus M\buildrel\widetilde\psi\over\rightarrow \R^+\otimes_{\varphi,\R^+}D_{I_{n-1}}\oplus \R^+\otimes_{\varphi,\R^+}M\twoheadrightarrow \R^+\otimes_{\varphi,\R^+}D_{I_{n-1}}$$
est nulle sur tous les $M_m$ sauf un nombre fini. On d\'efinit $j_2\=\max\{m\geq 1, \delta_2\widetilde\psi\vert_{M_m}\ne 0\}$ (seule valeur dont on aura besoin). On note $n_2$ le plus petit entier $\geq 3$ tel que $m_{n_2}>\max\{j_2,m_2\}$ (comme la suite $(m_n)_{n\geq 2}$ n'est pas born\'ee un tel entier existe bien). On a en particulier $m_n\leq \max\{j_2,m_2\}\leq m_{n_2}-1$ pour tout $2\leq n<n_2$. On va montrer que l'on peut modifier le scindage $D_{I_{n_2}}\oplus_{D_r}V\simeq D_{I_{n_2}}\oplus M$ au cran $n_2$ de fa\c con \`a diminuer strictement la valeur de $m_{n_2}$. Pour cela, il suffit de montrer que $(\delta_{n_2}{\rm res})(M_{m_{n_2}})\subseteq D_{I_{n_2}}\subseteq D_{I_{n_2-1}}$ (rappelons qu'{\it a priori} on a seulement $(\delta_{n_2}{\rm res})(M_{m_{n_2}})\subseteq D_{I_{n_2-1}}$). En effet, on peut alors modifier une base sur $\R^+$ de $M_{m_{n_2}}$ dans $D_{I_{n_2}}\oplus M$ par des \'el\'ements de $D_{I_{n_2}}$ de sorte que le nouveau scindage au cran $n_2$ v\'erifie $(\delta_{n_2}{\rm res})\vert_{M_{m_{n_2}}}=0$.

Pour cela, consid\'erons le diagramme commutatif~:
\begin{equation}\label{mn2}
\begin{gathered}
\xymatrix{D_{I_{n_2}}\oplus M_{m_{n_2}}\ar[r]^{\!\!\!\!\!\!\!\!\!\!\!\!\!\!\!\!\!\!\!\!\!\!\!\!\!\!\!\!\!\!\!\!\!\!\!\!\!\!\!\!\!\!\!\!\widetilde\psi}\ar[d]&\R^+\otimes_{\varphi,\R^+}D_{I_{n_2-1}}\oplus \R^+\otimes_{\varphi,\R^+}M_{m_{n_2}+1}\ar[d]\\
D_{I_{2}}\oplus M_{m_{n_2}}\ar[r]^{\!\!\!\!\!\!\!\!\!\!\!\!\!\!\!\!\!\!\!\!\!\!\!\!\!\!\!\!\!\!\!\!\!\!\!\!\!\!\!\!\widetilde\psi}&\R^+\otimes_{\varphi,\R^+}D_{I_{1}}\oplus \R^+\otimes_{\varphi,\R^+}M_{m_{n_2}+1}}
\end{gathered}
\end{equation}
o\`u la fl\`eche verticale \`a gauche (resp. \`a droite) est la restriction du cran $n_2$ au cran $2$ (resp. du cran $n_2-1$ au cran $1$). Comme $m_{n_2}>j_2$, par d\'efinition de $j_2$ l'application $\widetilde\psi$ du bas respecte les sommes directes \`a gauche et \`a droite. Comme $m_{n_2}>m_n$, et donc {\it a fortiori} $m_{n_2}+1>m_n$, pour $2\leq n\leq n_2-1$, la fl\`eche verticale de droite respecte aussi les sommes directes en haut et en bas. On en d\'eduit un diagramme commutatif de $\R^+$-modules (o\`u $\delta{\rm res}$ est la compos\'ee $M_{m_{n_2}}\subseteq D_{I_{n_2}}\oplus M_{m_{n_2}}\rightarrow D_{I_{2}}\oplus M_{m_{n_2}}\twoheadrightarrow D_{I_{2}}$)~:
\begin{equation*}
\begin{gathered}
\xymatrix{M_{m_{n_2}}\ar[r]^{\!\!\!\!\!\!\!\!\!\!\!\!\!\!\!\!\!\!\!\!\!\!\!\delta_{n_2}\widetilde\psi}\ar[d]^{\delta{\rm res}}&\R^+\otimes_{\varphi,\R^+}D_{I_{n_2-1}}\ar@{^{(}->}[d]\\
D_{I_{2}}\ar[r]^{\!\!\!\!\!\!\!\!\!\!\!\!\!\!\!\!\!\!\!\!\widetilde\psi}&\R^+\otimes_{\varphi,\R^+}D_{I_{1}}}
\end{gathered}
\end{equation*}
qui montre que $(\delta{\rm res})(M_{m_{n_2}})\subseteq \widetilde\psi^{-1}(\R^+\otimes_{\varphi,\R^+}D_{I_{n_2-1}})\subseteq D_{I_2}$ (en voyant $\R^+\otimes_{\varphi,\R^+}D_{I_{n_2-1}}$ comme sous-$\R^+$-module de $\R^+\otimes_{\varphi,\R^+}D_{I_{1}}$). Le diagramme commutatif de $\R^+$-modules (cart\'esien puisque les applications verticales sont injectives)~:
\begin{equation*}
\begin{gathered}
\xymatrix{D_{pI_{n_2-1}}\ar[r]^{\!\!\!\!\!\!\!\!\!\!\!\!\!\!\!\!\!\!\sim}\ar@{^{(}->}[d]&\R^+\otimes_{\varphi,\R^+}D_{I_{n_2-1}}\ar@{^{(}->}[d]\\
D_{pI_{1}}\ar[r]^{\!\!\!\!\!\!\!\!\!\!\!\!\!\!\!\!\!\!\sim}&\R^+\otimes_{\varphi,\R^+}D_{I_{1}}}
\end{gathered}
\end{equation*}
o\`u $pI_n\=[pr,p^{n+1}r]$ implique $\widetilde\psi^{-1}(\R^+\otimes_{\varphi,\R^+}D_{I_{n_2-1}})=D_{I_2}\cap D_{pI_{n_2-1}}$ o\`u l'intersection est dans $D_{pI_1}$. Comme $\{p^{-1/r}\leq \norm \leq p^{-1/p^2r}\} \cup \{p^{-1/pr}\leq \norm \leq p^{-1/p^{n_2}r}\}$ est un recouvrement affino\"\i de de $\{p^{-1/r}\leq \norm \leq p^{-1/p^{n_2}r}\}$, on a $D_{I_2}\cap D_{pI_{n_2-1}}=D_{I_{n_2}}\subseteq D_{pI_1}$. On en d\'eduit $(\delta{\rm res})(M_{m_{n_2}})\subseteq D_{I_{n_2}}\subseteq D_{I_2}$. Comme $m_{n_2}>m_n$ pour $2\leq n\leq n_2-1$, cela implique bien $(\delta_{n_2}{\rm res})(M_{m_{n_2}})\subseteq D_{I_{n_2}}$ dans $D_{I_{n_2-1}}$.

On peut alors recommencer tout le raisonnement pr\'ec\'edent avec ce nouveau scindage $D_{I_{n_2}}\oplus_{D_r}V\simeq D_{I_{n_2}}\oplus M$ au cran $n_2$. On obtient une nouvelle suite $m'_n$, qui est \'egale \`a $m_n$ pour $n<n_2$ et $n>n_2+1$, et une nouvelle valeur $n'_2$, et on voit que l'on est dans l'une des deux situations suivantes : soit $n'_2=n_2$ et $\max\{j_2,m_2\}<m'_{n_2}<m_{n_2}$, soit $n'_2>n_2$ et $m'_{n_2}\leq \max\{j_2,m_2\}$. Puis on recommence encore etc. Par r\'ecurrence, on voit que l'on obtient au bout du compte un scindage $D_{I_{n}}\oplus_{D_r}V\simeq D_{I_{n}}\oplus M$ pour tout $n\geq 1$ tel que $m_n\leq \max\{j_2,m_2\}$ pour tout $n\geq 2$. Autrement dit on est ramen\'e \`a la situation o\`u la suite $m_n$ est born\'ee quand $n\rightarrow +\infty$, qui a \'et\'e trait\'ee au d\'ebut du cas $D_r$ libre.

\begin{rem}\label{restevalable}
{\rm (i) L'argument ci-dessus du scindage de (\ref{rstrict}) pour les $\R^+$-modules sous-jacents reste valable si l'on a un entier $C\geq 0$ tel que $\psi:M_m\rightarrow M_m\oplus M_{m+1}\oplus \cdots \oplus M_{m+C}$ pour tout $m\geq 1$ (au lieu de $\psi:M_m\rightarrow M_{m+1}$) en rempla\c cant $M_{m_{n_2}+1}$ par $M_{m_{n_2}}\oplus \cdots \oplus M_{m_{n_2}+C}$ dans (\ref{mn2}).\\
(ii) On peut montrer que la Proposition \ref{scinde3} et le Lemme \ref{scindepsi} restent vrais sans supposer que $D_r$ est libre, mais cela n\'ecessite d'autres arguments (en particulier pour la preuve du Lemme \ref{scindepsi}).}
\end{rem}

\subsection{Un deuxi\`eme scindage technique}\label{premierscindage}

On d\'emontre un autre r\'esultat technique de scindage dans le cas $G={\rm GL}_3$ (Proposition \ref{scinde1dur} ci-dessous).

On conserve les notations des \S\S~\ref{prelgen} et \ref{devis1}, en particulier $G={\rm GL}_3$ et $B=$ Borel sup\'erieur. Soit $\pi=(\Ind_{P^-(\Qp)}^{G(\Qp)} \pi_P)^{\an}$ avec $P\in \{P_{\alpha},P_{\beta}\}$ et $\pi_P=L(-\mu)_{P}\otimes_E\pi_P^\infty$ comme dans la Proposition \ref{separegl3} et sa preuve. On rappelle que, si $P=P_\alpha$, on a la suite exacte (\ref{decompgl3}) dans $\Rep(B(\Qp))$~:
$$0\longrightarrow C^{\an}_c(N^\alpha(\Qp),\pi_{P_\alpha})\longrightarrow \pi \longrightarrow \pi_C\longrightarrow 0,$$
et si $P=P_\beta$, on a l'analogue de (\ref{decompgl3}) avec (\ref{piCbis})~:
$$0\longrightarrow \big(\Ind_{L_{P_\alpha}(\Qp)\cap B^-(\Qp)}^{L_{P_\alpha}(\Qp)}\pi_{N^\alpha}\big)^{\an}\longrightarrow \pi\longrightarrow \pi_{C}\longrightarrow 0$$
o\`u $\pi_{N^\alpha}=(\cInd_{N^\alpha(\Qp)\cap P_\beta^-(\Qp)}^{N^\alpha(\Qp)}\pi_{P_\beta})^{\an}$. Comme dans la Proposition \ref{pushtechnique}, on suppose de plus que l'action de $L_{P_\alpha}(\Qp)$ sur $\pi_{P_\alpha}^\infty$ se factorise par ${\det}:L_{P_\alpha}(\Qp)\rightarrow \Qp^\times \times \Qp^\times$.

\begin{lem}\label{scinde1}
Soit $m\in \Z_{\geq 0}$.\\
(i) Si $P=P_\alpha$, on a $M_\alpha(\pi\otimes_EE_m)\buildrel\sim\over\rightarrow M_\alpha(C^{\an}_c(N^\alpha(\Qp),\pi_{P_\alpha})\otimes_EE_m)$.\\
(ii) Si $P\!=\!P_\beta$ et $\eta\vert_{N_m^\alpha}\ne 1$, on a $M_\alpha(\pi\otimes_EE_m)\!\buildrel\sim\over\rightarrow \! M_\alpha((\Ind_{L_{P_\alpha}(\Qp)\cap B^-(\Qp)}^{L_{P_\alpha}(\Qp)}\pi_{N^\alpha})^{\an}\otimes_EE_m)$.
\end{lem}
\begin{proof}
(i) C'est la m\^eme preuve qu'au d\'ebut du Cas $1$ de la preuve de la Proposition \ref{pushtechnique} mais avec $H^0(\mnn^\alpha,-)$ au lieu de $H^1(\mnn^\alpha,-)$. (ii) C'est la m\^eme preuve que (\ref{malphah1}) mais avec (encore) $H^0(\mnn^\alpha,-)$ au lieu de $H^1(\mnn^\alpha,-)$.
\end{proof}

On suppose maintenant $P=P_\beta$ jusqu'\`a la fin de ce paragraphe. Rappelons que l'on a un diagramme commutatif dans $\Rep(B(\Qp))$ o\`u toutes les injections sont strictes~:
\begin{equation*}
\begin{gathered}
\xymatrix{ C^{\an}_c(N_\alpha(\Qp),\pi_{N^\alpha}) \ar@{^{(}->}[r] & \big(\Ind_{L_{P_\alpha}(\Qp)\cap B^-(\Qp)}^{L_{P_\alpha}(\Qp)}\pi_{N^\alpha}\big)^{\an} \\
C^{\an}_c(N_\alpha(\Qp),\pi^0_{N^\alpha}) \ar@{^{(}->}[r] \ar@{^{(}->}[u] & \big(\Ind_{L_{P_\alpha}(\Qp)\cap B^-(\Qp)}^{L_{P_\alpha}(\Qp)}\pi^0_{N^\alpha}\big)^{\an}\ar@{^{(}->}[u]}
\end{gathered}
\end{equation*}
et o\`u $\pi^0_{N^\alpha}=H^0(\mnn_{\beta},L(-\mu)_{P_\beta})\otimes_E \pi_{N_\alpha}^\infty = E(-\mu)\otimes_E(\cInd_{N^\alpha(\Qp)\cap P_\beta^-(\Qp)}^{N^\alpha(\Qp)}\pi_{P_\beta}^\infty)^{\infty}$ (cf. (\ref{piCbis}) et ce qui suit pour l'action de $B(\Qp)$). Rappelons de plus que $\pi^0_{N^\alpha}$ est muni d'une action lisse de $N^\alpha(\Qp)$ par translation \`a droite sur les fonctions sur $N^\alpha(\Qp)$ (et action triviale sur $E(-\mu)$). Par (\ref{grosse}) pour $i=0$ et (\ref{grosseter}), avec (\ref{malpha}) et la Proposition \ref{exactdense}, on en d\'eduit un diagramme commutatif de $(\psi,\Gamma)$-modules de Fr\'echet sur $\Rm^+$~:
\begin{equation}\small\label{malpha0}
\begin{gathered}
\xymatrix{ M_\alpha\big(C^{\an}_c(N_\alpha(\Qp),\pi_{N^\alpha})\otimes_EE_m\big) \ar@{=}[d] & M_\alpha\big((\Ind_{L_{P_\alpha}(\Qp)\cap B^-(\Qp)}^{L_{P_\alpha}(\Qp)}\pi_{N^\alpha})^{\an}\otimes_EE_m\big) \ar@{^{}->>}[l] \ar@{=}[d] \\
M_\alpha\big(C^{\an}_c(N_\alpha(\Qp),\pi^0_{N^\alpha})\otimes_EE_m\big) & M_\alpha\big((\Ind_{L_{P_\alpha}(\Qp)\cap B^-(\Qp)}^{L_{P_\alpha}(\Qp)}\pi^0_{N^\alpha})^{\an}\otimes_EE_m\big). \ar@{^{}->>}[l]}
\end{gathered}
\end{equation}
Comme l'action du groupe compact $N_m^\alpha$ sur $H^0(\mnn^\alpha,(\Ind_{L_{P_\alpha}(\Qp)\cap B^-(\Qp)}^{L_{P_\alpha}(\Qp)}\pi_{N^\alpha})^{\an})\simeq (\Ind_{L_{P_\alpha}(\Qp)\cap B^-(\Qp)}^{L_{P_\alpha}(\Qp)}\pi^0_{N^\alpha})^{\an}$ est lisse, le noyau de la surjection du bas en (\ref{malpha0}) est exactement $M_\alpha(\pi^0_C\otimes_EE_m)$ o\`u $\pi^0_C\=(\Ind_{L_{P_\alpha}(\Qp)\cap B^-(\Qp)}^{L_{P_\alpha}(\Qp)}\pi^0_{N^\alpha})^{\an}/C^{\an}_c(N_\alpha(\Qp),\pi^0_{N^\alpha})$. On utilisera l'isomorphisme~:
\begin{equation}\small\label{conjsalpha}
(\Ind_{L_{P_\alpha}(\Qp)\cap B^-(\Qp)}^{L_{P_\alpha}(\Qp)}\pi^0_{N^\alpha})^{\an}\buildrel\sim\over\longrightarrow (\Ind_{L_{P_\alpha}(\Qp)\cap B(\Qp)}^{L_{P_\alpha}(\Qp)}\pi^{0,s_\alpha}_{N^\alpha})^{\an},\ F\longmapsto \big(l_\alpha\mapsto F(s_\alpha l_\alpha)\big)
\end{equation}
o\`u $l_\alpha\in L_{P_\alpha}(\Qp)\cap B(\Qp)$ agit sur $\pi^{0,s_\alpha}_{N^\alpha}$ par l'action de $s_\alpha l_\alpha s_\alpha$ sur $\pi^{0}_{N^\alpha}$ (cf. (\ref{actionnalphabis})). Cet isomorphisme commute aux actions de $L_{P_\alpha}(\Qp)$ (par translation \`a droite des deux c\^ot\'es) et aux actions de $N^\alpha(\Qp)$ en d\'efinissant cette derni\`ere sur le membre de droite comme suit (cf. (\ref{actionnalpha}))~:
\begin{equation}\label{actionnalphas}
(n^\alpha F)(l_\alpha)=(s_\alpha l_\alpha n^\alpha l_\alpha^{-1} s_\alpha)(F(l_\alpha)),\ \ n^\alpha\in N^\alpha(\Qp),\ l_\alpha\in L_{P_\alpha}(\Qp)
\end{equation}
et en remarquant que $s_\alpha N^\alpha(\Qp) s_\alpha = N^\alpha(\Qp)$. 

\begin{lem}\label{malpha0bis}
On a un isomorphisme d'espaces de Fr\'echet $M_\alpha(\pi^0_C\otimes_EE_m)\simeq M_\alpha(\pi^{0,s_\alpha}_{N^\alpha}\otimes_EE_m)\widehat\otimes_ED(N_\alpha^-(\Qp),E)_{\{1\}}$ o\`u $N_\alpha^-\=L_{P_\alpha}\cap N^-$.
\end{lem}
\begin{proof}
Par (\ref{conjsalpha}), le (i) du Lemme \ref{dualspx} puis le Lemme \ref{support1} appliqu\'es avec $G=L_{P_\alpha}$ et $P= L_{P_\alpha}\cap B$, on a un isomorphisme d'espaces de Fr\'echet $(\pi^0_C)^\vee\simeq (\pi^{0,s_\alpha}_{N^\alpha})^\vee\widehat\otimes_ED(N_\alpha^-(\Qp),E)_{\{1\}}$.

Explicitons d'abord l'action de $N^\alpha(\Qp)$ sur $(\pi^{0,s_\alpha}_{N^\alpha})^\vee\widehat\otimes_ED(N_\alpha^-(\Qp),E)_{\{1\}}$. Le plongement ferm\'e de vari\'et\'es localement $\Qp$-analytiques $N^\alpha(\Qp)\times N_\alpha^-(\Qp)\hookrightarrow N^\alpha(\Qp)\times L_{P_\alpha}(\Qp)\simeq P_\alpha(\Qp)$ induit par \cite[Prop.~1.1.2]{Ko1} et \cite[Prop.~A.3]{ST3} une immersion ferm\'ee d'espaces localement convexes~:
$$D(N^\alpha(\Qp),E)\widehat\otimes_ED(N_\alpha^-(\Qp),E)\!\hookrightarrow \!D(N^\alpha(\Qp),E)\widehat\otimes_ED(L_{P_\alpha}(\Qp),E)\!\simeq \! D(P_\alpha(\Qp),E)$$
et l'\'egalit\'e $n^\alpha l_\alpha (n^\alpha)^{-1}=(n^\alpha (l_\alpha (n^\alpha)^{-1}l_\alpha^{-1}))l_\alpha\in N^\alpha(\Qp)l_\alpha$ pour $(n^\alpha,l_\alpha)\in N^\alpha(\Qp)\times L_{P_\alpha}(\Qp)$ montre que la conjugaison par $\delta_{n^\alpha}$ dans $D(P_\alpha(\Qp),E)$ pr\'eserve le sous-espace $D(N^\alpha(\Qp),E)\widehat\otimes_ED(N_\alpha^-(\Qp),E)$, et donc aussi (via le Lemme \ref{supproduit}) le sous-espace $D(N^\alpha(\Qp),E)_{\{1\}}\widehat\otimes_ED(N_\alpha^-(\Qp),E)_{\{1\}}$. En voyant $(\pi^{0,s_\alpha}_{N^\alpha})^\vee$ comme $D(N^\alpha(\Qp),E)$-module s\'epar\'ement continu via l'action de $N^\alpha(\Qp)$ sur $\pi^{0,s_\alpha}_{N^\alpha}$ comme dans le terme de droite de (\ref{actionnalphas}), on v\'erifie facilement que l'action (\`a gauche) de $n^\alpha\in N^\alpha(\Qp)$ sur $(\pi^{0,s_\alpha}_{N^\alpha})^\vee\widehat\otimes_ED(N_\alpha^-(\Qp),E)_{\{1\}}$ par multiplication \`a droite par $\delta_{n^\alpha}^{-1}$ s'obtient (de mani\`ere similaire \`a (\ref{actiondual})) en envoyant $v\otimes \mu$ vers (via le Lemme \ref{librehat})~:
\begin{multline}\label{actionn}
n^\alpha(v)\otimes\delta_{n^\alpha}\mu\delta_{n^\alpha}^{-1}\\
\in (\pi^{0,s_\alpha}_{N^\alpha})^{\vee}\widehat\otimes_{D(N^\alpha(\Qp),E)_{\{1\}}}\big(D(N^\alpha(\Qp),E)_{\{1\}}\widehat\otimes_ED(N_\alpha^-(\Qp),E)_{\{1\}}\big) \\
\simeq (\pi^{0,s_\alpha}_{N^\alpha})^\vee\widehat\otimes_ED(N_\alpha^-(\Qp),E)_{\{1\}}
\end{multline}
(pour $n^\alpha(v)=v\delta_{n^\alpha}^{-1}$, voir les conventions juste avant (\ref{actiondual})). Soit $\mathfrak x\in \mnn_\alpha^-$ (= la $\Qp$-alg\`ebre de Lie de $N_\alpha^-(\Qp)$) et $n^\alpha\in N^\alpha(\Qp)$, on v\'erifie que $n^\alpha {\mathfrak x} (n^\alpha)^{-1}-{\mathfrak x}\in \mnn^\alpha$ dans $\mpp_\alpha$ (= la $\Qp$-alg\`ebre de Lie de $P_\alpha(\Qp)$). En uti\-lisant $[\mnn^\alpha,\mnn_\alpha^-]\subseteq \mnn^\alpha$ dans $\mpp_\alpha$ et le fait que $\mnn^\alpha$ agit par $0$ sur $(\pi^{0,s_\alpha}_{N^\alpha})^\vee$, on en d\'eduit que si $\mu\in U(\mnn^-_\alpha,E)\subseteq D(N_\alpha^-(\Qp),E)_{\{1\}}$, on a $w\otimes \delta_{n^\alpha}\mu\delta_{n^\alpha}^{-1}=w\otimes \mu$ dans $(\pi^{0,s_\alpha}_{N^\alpha})^\vee\widehat\otimes_ED(N_\alpha^-(\Qp),E)_{\{1\}}$ pour tout $w\in (\pi^{0,s_\alpha}_{N^\alpha})^\vee$. Comme $(\pi^{0,s_\alpha}_{N^\alpha})^\vee \otimes_E U(\mnn^-_\alpha,E)$ est dense dans $(\pi^{0,s_\alpha}_{N^\alpha})^\vee\widehat\otimes_ED(N_\alpha^-(\Qp),E)_{\{1\}}$ par \cite[Prop.~1.2.8]{Ko1} et le Lemme \ref{tenseur}, on en d\'eduit finalement que l'action de $N^\alpha(\Qp)$ sur $(\pi^{0,s_\alpha}_{N^\alpha})^\vee\widehat\otimes_ED(N_\alpha^-(\Qp),E)_{\{1\}}$ est juste donn\'ee par l'action sur le ``facteur'' $(\pi^{0,s_\alpha}_{N^\alpha})^\vee$.

Si $0\rightarrow V'\rightarrow V\rightarrow V''$ est une suite exacte stricte de $E$-espaces vectoriels de Fr\'echet (i.e. $V\rightarrow V''$ est d'image ferm\'ee), alors par \cite[Lem.~4.13]{Sc1} pour tout $E$-espace vectoriel de Fr\'echet $W$ on a encore une suite exacte stricte $0\rightarrow V'\widehat\otimes_EW\rightarrow V\widehat\otimes_EW\rightarrow V''\widehat\otimes_EW$ d'espaces de Fr\'echet. En utilisant que $\pi^{0,s_\alpha}_{N^\alpha}$ est muni de la topologie localement convexe la plus fine puis en dualisant, on v\'erifie ais\'ement que l'on a des suites exactes {\it strictes} d'espaces de Fr\'echet pour tout $x\in N_m^\alpha$~:
$$0\longrightarrow ((\pi^{0,s_\alpha}_{N^\alpha})^\vee\otimes_EE_m)(\eta)^{x=1}\longrightarrow (\pi^{0,s_\alpha}_{N^\alpha}\otimes_EE_m)^\vee(\eta)\buildrel ^{x-\Id}\over \longrightarrow (\pi^{0,s_\alpha}_{N^\alpha}\otimes_EE_m)^\vee(\eta)$$
qui restent donc exactes apr\`es $\widehat\otimes_ED(N_\alpha^-(\Qp),E)_{\{1\}}$. En d\'evissant $N^\alpha_m$ comme dans la preuve du Lemme \ref{gnouf} (en fait, ici $N^\alpha_m$ est m\^eme commutatif), on d\'eduit alors facilement de tout ce qui pr\'ec\`ede $(\pi^{0,s_\alpha}_{N^\alpha}\otimes_EE_m)^\vee(\eta)^{N_m^\alpha}\widehat\otimes_ED(N_\alpha^-(\Qp),E)_{\{1\}}\!\buildrel\sim\over\rightarrow ((\pi^{0,s_\alpha}_{N^\alpha}\!\otimes_EE_m)^\vee\widehat\otimes_ED(N_\alpha^-(\Qp),E)_{\{1\}})(\eta)^{N_m^\alpha}$, d'o\`u le r\'esultat.
\end{proof}

Soit ${\mathfrak y}_\alpha=\smat{0 & 0\\ 1 & 0}$ (vu dans ${\mathfrak gl}_3$) une base du $E$-espace vectoriel $\mnn_\alpha^-\otimes_{\Qp}E$, de sorte que $U(\mnn_\alpha^-,E)=\oplus_{n\geq 0}E{\mathfrak y}_\alpha^n=E[{\mathfrak y}_\alpha]$. Par \cite[Prop.~1.2.8]{Ko1} on a~:
$$D(N_\alpha^-(\Qp),E)_{\{1\}}\simeq \left\{\sum_{n\geq 0}a_n{\mathfrak y}_\alpha^n,\ a_n\in E,\ \forall\ r\in \Q_{>0}\ \vert a_n\vert r^{-n}\rightarrow 0\ {\rm qd}\ n\rightarrow +\infty\right\}.$$
De mani\`ere \'equivalente, $D(N_\alpha^-(\Qp),E)_{\{1\}}$ est le compl\'et\'e de $U(\mnn_\alpha^-,E)$ pour les semi-normes $q_{r}(\sum_{n=0}^Na_n{\mathfrak y}_\alpha^n)\=\sup_n\vert a_n\vert r^{-n}$ o\`u $r\in \Q_{>0}$, ou de mani\`ere \'equivalente pour les $q_{r_m}$ avec $(r_m)_m$ une suite d\'enombrable (quelconque) tendant vers $0$ dans $\Q_{>0}$. Si $W$ est un $E$-espace vectoriel de Fr\'echet, on note~:
$$W\{\{{\mathfrak y}_\alpha\}\}\=\left\{\sum_{n\geq 0}w_n\otimes {\mathfrak y}_\alpha^n,\ w_n\in W,\ \forall\ q,\ \forall\ r\in \Q_{>0}\ \ q(w_n) r^{-n}\rightarrow 0\ {\rm qd}\ n\rightarrow +\infty\right\}$$
o\`u $q$ parcourt les semi-normes continues sur $W$. C'est aussi un $E$-espace vectoriel de Fr\'echet pour les semi-normes $\sup_{n\geq 0}q(v_n)r^{-n}$.

\begin{lem}\label{wyalpha}
On a un isomorphisme de $E$-espaces vectoriels de Fr\'echet~:
$$W\widehat\otimes_ED(N_\alpha^-(\Qp),E)_{\{1\}}\simeq W\{\{{\mathfrak y}_\alpha\}\}.$$
\end{lem}
\begin{proof}
Munissons $U(\mnn_\alpha^-,E)$ de la topologie induite par $D(N_\alpha^-(\Qp),E)_{\{1\}}$, alors il r\'esulte de \cite[Cor.~17.5(ii)]{Sch}, de \cite[Lem.~7.3]{Sch} avec le Lemme \ref{tenseur}, et de la propri\'et\'e universelle du s\'epar\'e-compl\'et\'e (\cite[Prop.~7.5]{Sch}) que $W\widehat\otimes_ED(N_\alpha^-(\Qp),E)_{\{1\}}$ s'identifie au compl\'et\'e de $W\otimes_{E,\pi}U(\mnn_\alpha^-,E)$, c'est-\`a-dire par \cite[Lem.~17.2]{Sch} au compl\'et\'e de $W\otimes_{E}U(\mnn_\alpha^-,E)$ pour les semi-normes produit tensoriel $q\otimes q_r$ pour $r\in \Q_{>0}$ et $q$ semi-norme continue sur $W$. Mais en appliquant \cite[Lem.~17.3]{Sch} avec $V\=U(\mnn_\alpha^-,E)$, $v_i\={\mathfrak y}_\alpha^i$ (et $W\=W$ !), et par l'argument de la preuve de \cite[Prop.~17.4(i)]{Sch}, on a pour tout $N\geq 0$ et tout $w_n\in W$, $0\leq n\leq N$~:
$$q\otimes q_r\Big(\sum_{n=0}^Nw_n\otimes {\mathfrak y}_\alpha^n\Big)=\sup_nq(w_n)q_r({\mathfrak y}_\alpha^n)=\sup_nq(v_n)r^{-n}$$
d'o\`u on d\'eduit que le compl\'et\'e de $W\otimes_{E,\pi}U(\mnn_\alpha^-,E)$ s'identifie \`a $W\{\{{\mathfrak y}_\alpha\}\}$.
\end{proof}

Pour all\'eger les notations, on pose dans la suite $M\=M_\alpha(\pi^0_C\otimes_EE_m)$ et $M^\alpha\=M_\alpha(\pi^{0,s_\alpha}_{N^\alpha}\otimes_EE_m)$, de sorte que, par le Lemme \ref{malpha0bis} et le Lemme \ref{wyalpha}, on a $M\simeq M^\alpha\{\{{\mathfrak y}_\alpha\}\}$. On rappelle que $t=\log(1+X)\in \R^+$ (\S~\ref{gen}).

\begin{lem}\label{techMalpha}
Pour $N\in \Z_{\geq 0}$, le $E$-espace vectoriel $M^\alpha\otimes_E (E\oplus E{\mathfrak y}_\alpha\oplus \cdots \oplus E{\mathfrak y}_\alpha^N)$ est un sous-$\R^+$-module de $M^\alpha\widehat\otimes_ED(N_\alpha^-(\Qp),E)_{\{1\}}$ stable par $\psi$ et $\Gamma$ tel que $t^{N+1}(M^\alpha\otimes_E (E\oplus E{\mathfrak y}_\alpha\oplus \cdots \oplus E{\mathfrak y}_\alpha^N))=0$.
\end{lem}
\begin{proof}
L'action de $L_{P_\alpha}(\Qp)\cap B(\Qp)$ sur $(\pi^{0,s_\alpha}_{N^\alpha})^\vee\widehat\otimes_ED(N_\alpha^-(\Qp),E)_{\{1\}}$ est analogue \`a (\ref{actionn}) en se souvenant que $(\pi^{0,s_\alpha}_{N^\alpha})^\vee$ est un $D(L_{P_\alpha}(\Qp)\cap B(\Qp),E)$-module via (\ref{actionnalphabis}) (conjugu\'e par $s_\alpha$). Par ailleurs un calcul donne dans la $\Qp$-alg\`ebre de Lie ${\mathfrak l}_{P_\alpha}$ de $L_{P_\alpha}(\Qp)$ en notant $n_\alpha(x)\=\smat{1 & x \\ 0 & 1}\in N_\alpha(\Qp)$ et ${\mathfrak h}_\alpha\=\smat{1 & 0 \\ 0 & -1}\in {\mathfrak l}_{P_\alpha}$ (o\`u l'on n'a \'ecrit que ce qui concerne le facteur ${\rm GL}_2$)~:
\begin{equation}\label{nalpha(x)}
n_\alpha(x) {\mathfrak y}_\alpha n_\alpha(-x) - ({\mathfrak y}_\alpha + x{\mathfrak h}_\alpha) \in \mnn_\alpha.
\end{equation}
On en d\'eduit facilement la premi\`ere assertion. Pour la deuxi\`eme, il suffit de montrer $t(w \otimes_E {\mathfrak y}_\alpha^N)\in M^\alpha\otimes_E (E\oplus E{\mathfrak y}_\alpha\oplus \cdots \oplus E{\mathfrak y}_\alpha^{N-1})$ pour tout $w\in M^\alpha$. Comme la suite $p^{-n}\varphi^n(X)=p^{-n}((1+X)^{p^n}-1)$ converge dans $\R^+$ vers $t$ quand $n$ tend vers $+\infty$, il suffit de montrer que la limite de $p^{-n}\varphi^n(X)(w \otimes_E {\mathfrak y}^N)$ quand $n\rightarrow +\infty$ dans le $\R^+$-module continu $M^\alpha\widehat\otimes_ED(N_\alpha^-(\Qp),E)_{\{1\}}$ tombe dans le sous-espace $M^\alpha\otimes_E (E\oplus E{\mathfrak y}_\alpha\oplus \cdots \oplus E{\mathfrak y}_\alpha^{N-1})$. En utilisant les crochets de Lie habituels dans ${\mathfrak l}_{P_\alpha}$, on d\'eduit facilement de (\ref{nalpha(x)})~:
\begin{multline*}
p^{-n}(1+X)^{p^n}(w\otimes {\mathfrak y}_\alpha^N)=p^{-n}\big((1+X)^{p^n}w\otimes n_\alpha(p^n){\mathfrak y}_\alpha^N n_\alpha(-p^n)\big)\\
\in \big(p^{-n}(1+X)^{p^n}w\big)\otimes {\mathfrak y}_\alpha^N + M^\alpha\otimes_E (E\oplus E{\mathfrak y}_\alpha\oplus \cdots \oplus E{\mathfrak y}_\alpha^{N-1})
\end{multline*}
d'o\`u le r\'esultat puisque $\displaystyle \lim_{n\rightarrow +\infty} ((p^{-n}(1+X)^{p^n}w)\otimes {\mathfrak y}_\alpha^N-p^{-n}w\otimes {\mathfrak y}_\alpha^N)=tw\otimes {\mathfrak y}_\alpha^N =0$ (car $t(\pi^{0,s_\alpha}_{N^\alpha})^\vee=0$) et $M^\alpha\otimes_E (E\oplus E{\mathfrak y}_\alpha\oplus \cdots \oplus E{\mathfrak y}_\alpha^{N-1})$ est ferm\'e dans $M^\alpha\widehat\otimes_ED(N_\alpha^-(\Qp),E)_{\{1\}}$ (p. ex. par le Lemme \ref{wyalpha}).
\end{proof}

Le but de la suite du paragraphe est de montrer la proposition suivante.

\begin{prop}\label{scinde1dur}
Soit $m\in \Z_{\geq 0}$, $r\in \Q_{>p-1}$ et $D_r$ un $(\varphi,\Gamma)$-module sans torsion sur $\Rrm$. Alors toute suite exacte (pour $M$ comme ci-dessus)~:
\begin{equation}\label{rstrictbis}
0\longrightarrow D_r\longrightarrow V\longrightarrow M\longrightarrow 0
\end{equation}
de $(\psi,\Gamma)$-modules de Fr\'echet sur $\Rm^+$, o\`u la topologie de $V$ peut \^etre d\'efinie par des semi-normes $q$ telles que $\forall\ S\in \Rm^+$ il existe $C_{q,S}\in {\mathbb R}_{>0}$ v\'erifiant $q(Sv)\leq C_{q,S}q(v)$ pour tout $v\in V$, devient scind\'ee apr\`es ``push-out'' le long de $D_r\rightarrow D_{r'}$ pour $r'\gg r$.
\end{prop}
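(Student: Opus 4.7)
The plan is to reduce Proposition \ref{scinde1dur} to Proposition \ref{scinde3}, or more precisely to its variant stated in Remark \ref{restevalable}(i), exploiting the decomposition $M \simeq M^\alpha \widehat{\otimes}_E D(N_\alpha^-(\Qp),E)_{\{1\}}$ from Lemma \ref{malpha0bis} combined with Lemma \ref{wyalpha}. The reduction proceeds in two stages, corresponding to the two tensor factors.

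The first stage is to decompose $M^\alpha = M_\alpha(\pi^{0,s_\alpha}_{N^\alpha}\otimes_EE_m)$ as a product of simpler pieces. Since $\pi^{0,s_\alpha}_{N^\alpha} \simeq E(-\mu)^{s_\alpha}\otimes_E (\cInd_{N^\alpha(\Qp)\cap P_\beta^-(\Qp)}^{N^\alpha(\Qp)}\pi_{P_\beta}^\infty)^{\infty,s_\alpha}$ is essentially a smooth representation twisted by an algebraic character, I would decompose by support level as in the proof of Lemma \ref{niveaum} (cf. (\ref{decompo0}), (\ref{sommem})), obtaining $M^\alpha \simeq \prod_{k \geq 1} M^\alpha_k$ as Fr\'echet $\R^+$-modules where each $M^\alpha_k$ is free of finite rank over $\R^+$ (by a computation analogous to Proposition \ref{celluleouverte} on each level piece), $\Gamma$ preserves each $M^\alpha_k$, and $\psi$ satisfies a bounded shift property $\psi(M^\alpha_k) \subseteq M^\alpha_k \oplus \cdots \oplus M^\alpha_{k+C}$ for some constant $C$ independent of $k$.

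The second stage incorporates the ${\mathfrak y}_\alpha$-direction. By Lemma \ref{techMalpha}, $t$ strictly decreases the ${\mathfrak y}_\alpha$-degree and each $F_NM = M^\alpha\otimes_E(E\oplus\cdots\oplus E{\mathfrak y}_\alpha^N)$ is $(\psi,\Gamma)$-stable and killed by $t^{N+1}$. Since conjugation by $\lambda_{\alpha^\vee}(p^{-1})$ rescales ${\mathfrak y}_\alpha$ by $p$, the operator $\psi$ preserves ${\mathfrak y}_\alpha$-degree modulo strictly lower terms, and $\Gamma$ acts similarly (via rescaling by $\Zp^\times$). Combining with the first stage, one views $M$ as a closed subspace of $\prod_{k,n \geq 0}(M^\alpha_k \otimes_E E{\mathfrak y}_\alpha^n)$, cut out by the growth conditions in $n$ from Lemma \ref{wyalpha}. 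For each fixed $N$, the pullback $V_N = V\times_M F_NM$ fits into an exact sequence $0 \to D_r \to V_N \to F_NM \to 0$ where $F_NM \simeq \prod_{k\geq 1}\bigoplus_{i=0}^N (M^\alpha_k \otimes_E E{\mathfrak y}_\alpha^i)$ \emph{is} a product of finite-rank free $\R^+$-modules in the sense of Proposition \ref{scinde3}. Applying that proposition (with Remark \ref{restevalable}(i) to accommodate the bounded-shift property of $\psi$) yields a splitting of $V_N$ after push-out along $D_r \to D_{r_N}$ for some $r_N \gg r$.

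The main obstacle will be upgrading these splittings at each finite level $N$ to a splitting of the original sequence, with a \emph{uniform} bound $r' \gg r$ independent of $N$. The strategy is to adapt the support-bounding arguments of Proposition \ref{scinde3} directly to the full sequence: the obstruction maps $\delta_n{\rm res}$ and $\delta_n\widetilde\psi$ land in the finite-rank free module $D_{I_n}$, so the finite-support argument of the preceding paragraph (as in the proof bounding $m_n$) applies to the doubly-indexed family $(k,n)$ because $D_{I_n}$ admits no non-trivial $\R^+$-linear maps from $\prod_{k\geq K} M^\alpha_k$ for $K$ sufficiently large. The fact that $D_r$ is torsion-free is crucial here, as it forces any difference between splittings at levels $N$ and $N+1$ to extend uniquely after push-out. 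Finally Lemma \ref{scindepsi} (whose proof extends to this setting, once the $\R^+$-splitting is established, because the bounded-shift variant of the $\widetilde\Psi_N$ argument in (\ref{ppsiN}) still applies thanks to Remark \ref{restevalable}(i)) upgrades an $\R^+$-linear splitting to one compatible with $\psi$ and $\Gamma$.
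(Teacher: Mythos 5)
There is a genuine gap. Your Stage~$1$ claims that the pieces $M^\alpha_k$ in the decomposition $M^\alpha\simeq\prod_{k}M^\alpha_k$ are free of finite rank over $\R^+$, but this is not justified and is in general false: by (\ref{decompmalpha}) the factor $M^\alpha_{m'}$ is $M_\alpha(E_m(-\mu)\otimes_E C^\infty(N_{\gamma,m'}\backslash N_{\gamma,m'-1},\pi_{P_\beta}^\infty))$ where $\pi_{P_\beta}^\infty$ is a general smooth representation of length finie (hence typically infinite-dimensional), so the $M^\alpha_{m'}$ are only $(\psi,\Gamma)$-modules de Fr\'echet, not libres de rang fini. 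Consequently Proposition~\ref{scinde3} (and its Remarque~\ref{restevalable}(i) variant) cannot be invoked at finite level $N$ as you propose, since that proposition and the Lemme~\ref{scinde2} and Lemme~\ref{scindepsi} it rests on crucially use the finite free-rank hypothesis (e.g.\ to lift a basis over $\R^+$, and to see $\prod_{N>m\geq1}M_m$ as a finite free module).

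Even setting this aside, your final step --- upgrading the level-$N$ splittings to a splitting of the whole sequence with a uniform $r'$ --- is precisely the hard point, and your sketch does not supply a mechanism for it. Splittings of the $V_N$ obtained from Proposition~\ref{scinde3} are not canonical, so there is no reason for them to be compatible as $N$ grows, and the $r_N$ a priori depend on $N$. The paper avoids all of this by exploiting torsion-freeness of $D_r$ to define the canonical candidate section $V_N[t^{N+1}]\hookrightarrow V_N$ via multiplication by $t^{N+1}$ (so compatibility as $N$ varies is automatic), introducing the obstruction maps $f_{N+1}:M_N\to D_r(\varepsilon^{-N-1})/(t^{N+1})$ in (\ref{cobord}) and the increasing sequence $(m_N)_{N\geq0}$ recording their support, and then splitting into the bounded case (handled with the operator $\varphi^{n_0}(X)$ in place of $t$, after enlarging $r$ to make it invertible, plus the Lemme~\ref{casdavant} argument on the tail $M_{>D}$) and the unbounded case (excluded by a contradiction argument using the auxiliary sous-module $M^0$ and the diagram (\ref{cobord2})). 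None of these ideas --- the sequence $(m_N)$, the $M^0$ construction, the $\varphi^{n_0}(X)$ trick --- appears in your sketch, and they are what makes the proof go through. Your appeal to Lemme~\ref{scindepsi} at the end is also off-target: that lemma is stated for the hypotheses of Proposition~\ref{scinde3}, and the paper's proof of the present proposition does not use it, since the section coming from the formula~(\ref{s()Malpha}) is manifestly $\R^+$-lin\'eaire and $(\psi,\Gamma)$-equivariant from the start.
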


On commence par un cas particulier utile de la Proposition \ref{scinde1dur}.

\begin{lem}\label{casdavant}
Avec les notations et hypoth\`eses de la Proposition \ref{scinde1dur}, supposons de plus que pour tout $N\geq 0$ le ``pull-back'' $V_N$ de (\ref{rstrictbis}) le long de $M_N\=M^\alpha \otimes_E(E\oplus E{\mathfrak y}_\alpha\oplus \cdots \oplus E{\mathfrak y}_\alpha^N)\hookrightarrow M$ est une suite exacte scind\'ee de $(\psi,\Gamma)$-modules de Fr\'echet sur $\Rm^+$. Alors la suite exacte (\ref{rstrictbis}) est aussi scind\'ee.
\end{lem}
\begin{proof}
{\bf \'Etape $1$}\\
Pour $N\geq 0$, on a donc par hypoth\`ese un isomorphisme $V_N\simeq D_r\oplus M_N$. Soit $x\in M_N$, si $\widehat x$ est un relev\'e quelconque de $x$ dans $V$ (ou de mani\`ere \'equivalente $V_N$), on a $t^{N+1}\widehat x\in t^{N+1}D_r$ (puisque $t^{N+1}M_N=0$ par le Lemme \ref{techMalpha}), de sorte que l'on peut d\'efinir~:
\begin{equation}\label{s()Malpha}
s(x)\=\widehat x-\frac{1}{t^{N+1}}(t^{N+1}\widehat x)\in V_N\subseteq V
\end{equation}
o\`u $\frac{1}{t^{N+1}}(t^{N+1}\widehat x)\in D_r$. On v\'erifie facilement que $s(x)$ ne d\'epend pas de $N$ tel que $x\in M_N$ et d\'efinit ainsi une application $s:M^\alpha[{\mathfrak y}_\alpha]\rightarrow V$ o\`u $M^\alpha[{\mathfrak y}_\alpha]\=\cup_{N\geq 0}M_N\subseteq M$ et on v\'erifie aussi facilement qu'elle est $\R^+$-lin\'eaire et commute \`a $\psi$ et $\Gamma$. Nous allons montrer qu'elle s'\'etend par continuit\'e (de mani\`ere n\'ecessairement unique) \`a $M=M^\alpha\{\{{\mathfrak y}_\alpha\}\}$ et que l'application $s:M\rightarrow V$ obtenue est continue (automatiquement $\R^+$-lin\'eaire et commutant \`a $\psi$ et $\Gamma$).

Soit $s>r$ dans $\Q_{>0}$, ${\mathcal R}_{E_m}^{[r,s]}$ l'alg\`ebre de Banach dans la preuve du Lemme \ref{plat} et $D_{[r,s]}\=D_r\otimes_{\Rrm}{\mathcal R}_{E_m}^{[r,s]}$. On note $p_s$ une norme sur ${\mathcal R}_{E_m}^{[r,s]}$ et on note encore $p_s$ la norme ``somme directe'' induite sur le Banach $D_{[r,s]}$. Par le (ii) du Lemme \ref{strictI} et la Remarque \ref{strictIf} on a des suites exactes strictes $0\rightarrow D_{[r,s]}\rightarrow D_{[r,s]}\oplus_{D_r}V\rightarrow M\rightarrow 0$ de modules de Fr\'echet sur $\R^+$, et comme pour (\ref{diagn1}) on a un isomorphisme topologique $\displaystyle V\buildrel\sim\over\rightarrow \lim_{\substack{\longleftarrow \\ s\rightarrow +\infty}}D_{[r,s]}\oplus_{D_r}V$, de sorte qu'il suffit de montrer que la compos\'ee $M^\alpha[{\mathfrak y}_\alpha]\rightarrow V \rightarrow D_{[r,s]}\oplus_{D_r}V$ se prolonge par continuit\'e \`a $M^\alpha\{\{{\mathfrak y}_\alpha\}\}$ et que l'application obtenue est continue. Changeant de notations, on note $V$ l'espace de Fr\'echet $D_{[r,s]}\oplus_{D_r}V$, $s:M^\alpha[{\mathfrak y}_\alpha]\rightarrow V$ la compos\'ee ci-dessus (ne pas confondre $s\in \Q_{>0}$ et la section $s$ !) et $V_N$ le ``pull-back'' de $V$ le long de $M_N\hookrightarrow M$.

Par hypoth\`ese la topologie de $V$ peut \^etre d\'efinie par des semi-normes $q_k$ (pour $k$ dans un ensemble d\'enombrable) telles que $q_k(tv)\leq C_{q_k,t}q_k(v)$ pour tout $k$ et tout $v\in V$ (o\`u $C_{q_k,t}\in {\mathbb R}_{>0}$). Nous allons montrer que si $\sum_{n\geq 0}w_n\otimes {\mathfrak y}_\alpha^n\in M$ alors $q_k(s(w_n\otimes {\mathfrak y}_\alpha^n))\rightarrow 0$ quand $n\rightarrow +\infty$.

\noindent
{\bf \'Etape $2$}\\
L'id\'eal $t{\mathcal R}_{E_m}^{[r,s]}$ est ferm\'e dans ${\mathcal R}_{E_m}^{[r,s]}$ et on le munit de la topologie induite. La multiplication par $1/t:t{\mathcal R}_{E_m}^{[r,s]}\rightarrow {\mathcal R}_{E_m}^{[r,s]}$ \'etant continue (utiliser p. ex. \cite[Prop.~2.1(iii)]{ST2}), si $p_s$ est une norme sur ${\mathcal R}_{E_m}^{[r,s]}$ d\'efinissant sa topologie il existe $c_s\in {\mathbb R}_{>0}$ tel que $p_s(\frac{1}{t}x)\leq c_sp_s(x)$ pour tout $x\in t{\mathcal R}_{E_m}^{[r,s]}$. Comme la topologie de $D_{[r,s]}$ est aussi d\'efinie par les restrictions $(q_k\vert_{D_{[r,s]}})_k$ (car l'injection $D_{[r,s]}\hookrightarrow V$ est stricte), par \cite[Cor.~6.2]{Sch} (encore) appliqu\'e avec $V=D_{[r,s]}$ et $q=p_s$, il existe $D_{s}\in {\mathbb R}_{>0}$, $t\in \Z_{\geq 1}$ et $k_{1},\dots,k_{t}$ tels que pour tout $d\in D_{[r,s]}$ on a~:
\begin{equation}\label{majorbanach}
p_s(d)\leq D_s\max(q_{k_{1}}(d),\dots,q_{k_{t}}(d)).
\end{equation}
Fixons $q_k$ une semi-norme comme \`a la fin de l'\'Etape $1$, par \cite[Cor.~6.2]{Sch} appliqu\'e avec $V=D_{[r,s]}$ et $q=q_k\vert_{D_{[r,s]}}$, il existe enfin $C_k\in {\mathbb R}_{>0}$ tel que $q_k(d)\leq C_k p_s(d)$ pour tout $d\in D_{[r,s]}$. 

Consid\'erons la semi-norme continue $q\=\max(q_k,q_{k_{i}}, i\in\{1,\dots,t\})$ sur $V$ et soit $\overline q$ la semi-norme sur $M$ quotient de $q$, comme $V\twoheadrightarrow M$ est une surjection topologique, $\overline q$ est une semi-norme continue sur $M$ (cf. \cite[\S~5.B]{Sch}). Par \cite[Cor.~6.2]{Sch} appliqu\'e avec $V=M$ et $q=\overline q$, il existe $B_k\in {\mathbb R}_{>0}$, $u\in \Z_{\geq 1}$, $\chi_1,\dots,\chi_u$ des semi-normes continues sur $M^\alpha$ et $r_1,\dots,r_u\in \Q_{>0}$ tels que pour tout $x=\sum_{n\geq 0}w_n\otimes {\mathfrak y}_\alpha^n\in M=M^\alpha\{\{{\mathfrak y}_\alpha\}\}$~:
$$\overline q(x)\leq B_k {\max}\big(\sup_n \chi_1(w_n)r_1^{-n},\dots,\sup_n \chi_u(w_n) r_u^{-n}\big).$$
Soit $n\in \Z_{\geq 0}$. Si $\overline q(w_n\otimes{\mathfrak y}_\alpha^n)\ne 0$, alors par d\'efinition de $\overline q$ il existe $\widehat{w_n\otimes{\mathfrak y}_\alpha^n}\in V$ relevant $w_n\otimes{\mathfrak y}_\alpha^n\in M$ tel que~:
\begin{equation}\small\label{borneqhat}
q(\widehat{w_n\!\otimes\!{\mathfrak y}_\alpha^n})\!\leq \! B_k {\max}\big(\chi_1(w_n)r_1^{-n},\dots,\chi_u(w_n)r_u^{-n}\big)\!\leq \!B_k {\max}\big(\chi_1(w_n),\dots,\chi_u(w_n)\big) r_k^{-n}
\end{equation}
o\`u $r_k\=\min(r_1,\dots,r_u)$. Si $\chi_i(w_n)=0$ pour $i\in \{1,\dots,u\}$, ce qui implique $\overline q(w_n\otimes{\mathfrak y}_\alpha^n)= 0$, en \'ecrivant $V_n\simeq D_{[r,s]}\oplus M_n$, il existe une suite $(d_m)_m$ dans $D_{[r,s]}$ telle que $q(d_m+w_n\otimes{\mathfrak y}_\alpha^n)\rightarrow 0$ quand $m\rightarrow +\infty$, donc {\it a fortiori} $q_{k_i}(d_m+w_n\otimes{\mathfrak y}_\alpha^n)\rightarrow 0$ quand $m\rightarrow +\infty$ pour tout $i\in \{1,\dots,t\}$. Avec (\ref{majorbanach}) on en d\'eduit facilement que $(d_m)_m$ est une suite de Cauchy dans le Banach $D_{[r,s]}$, donc converge vers un \'el\'ement $d\in D_{[r,s]}$. Posant $\widehat{w_n\otimes{\mathfrak y}_\alpha^n}\=d+w_n\otimes{\mathfrak y}_\alpha^n\in D_{[r,s]}\oplus M_n$, par continuit\'e de $q$ on a $q(\widehat{w_n\otimes{\mathfrak y}_\alpha^n})=0$. On en d\'eduit finalement dans tous les cas et pour tout $n\geq 0$~:
\begin{equation}\label{dn0alpha}
q_k(\widehat{w_n\otimes{\mathfrak y}_\alpha^n})\leq q(\widehat{w_n\otimes{\mathfrak y}_\alpha^n})\leq B_k {\max}(\chi_1(w_n),\dots,\chi_u(w_n))r_k^{-n}.
\end{equation}

\noindent
{\bf \'Etape $3$}\\
Posons $C\=\max(C_{q_{k_i},t}, i\in\{1,\dots,t\})$, on a pour tout $n\geq 0$ en utilisant tout ce qui pr\'ec\`ede~:
\begin{eqnarray}\label{analysealpha}
\nonumber q_k\Big(\frac{1}{t^{n+1}}(t^{n+1}\widehat{w_n\!\otimes\!{\mathfrak y}_\alpha^n})\Big) &\leq & C_kp_{s}\Big(\frac{1}{t^{n+1}}(t^{n+1}\widehat{w_n\!\otimes\!{\mathfrak y}_\alpha^n})\Big)\\
\nonumber &\leq & C_k c_s^{n+1}p_{s}(t^{n+1}\widehat{w_n\!\otimes\!{\mathfrak y}_\alpha^n})\\
\nonumber &\leq & C_k c_s^{n+1}D_s\max_{i}\big(q_{k_{i}}(t^{n+1}\widehat{w_n\!\otimes\!{\mathfrak y}_\alpha^n})\big)\\
 \nonumber &\leq & C_k c_s^{n+1}D_sC^{n+1}\max_{i}(q_{k_{i}}(\widehat{w_n\!\otimes\!{\mathfrak y}_\alpha^n}))\\
&\leq & C_k c_sD_s C B_k{\max}_j(\chi_j(w_n))((c_sC)^{-1}r_k)^{-n}.
\end{eqnarray}

Il suit donc de (\ref{dn0alpha}), (\ref{analysealpha}) et (\ref{s()Malpha}) que, pour toute semi-norme $q_k$ sur $V$ comme pr\'ec\'edemment, il existe $H_k,h_k\in {\mathbb R}_{>0}$ et des semi-normes continues $\chi_1,\dots,\chi_u$ sur $M^\alpha$ tels que pour tout $n\geq 0$ et tout $w_n\in M^\alpha$ on a~:
\begin{equation}\label{majoralpha}
q_k(s(w_n\otimes {\mathfrak y}_\alpha^n))\leq H_k {\max}_j(\chi_j(w_n)) h_k^{-n}.
\end{equation}
On en d\'eduit en particulier $q_k(s(w_n\otimes {\mathfrak y}_\alpha^n))\rightarrow 0$ pour tout $q_k$, donc la suite $(s(\sum_{n\geq 0}^Nw_n\otimes {\mathfrak y}_\alpha^n))_N$ est une suite de Cauchy dans $V$ (cf. \cite[\S~7]{Sch}). Comme $V$ est complet, elle converge vers un \'el\'ement que l'on note $s(\sum_{n\geq 0}^{+\infty}w_n\otimes {\mathfrak y}_\alpha^n)\in V$.

\noindent
{\bf \'Etape $4$}\\
On montre finalement que l'application $s:M\rightarrow V$ est continue. Par \cite[Prop.~8.5]{Sch} (appliqu\'e au graphe de $s$) et comme on est entre espaces de Fr\'echet (donc m\'etriques), il suffit de montrer que si $x_m\rightarrow x$ dans $M$ alors $s(x_m)\rightarrow s(x)$ (pour $m\rightarrow +\infty$ dans $\Z_{\geq 0}$). Comme $s$ est clairement $E$-lin\'eaire, on peut supposer $x=0$ et par \cite[Rem.~7.1(vi)]{Sch} il suffit de montrer $q_k(s(x_m))\rightarrow 0$ quand $m\rightarrow +\infty$ pour toute semi-norme $q_k$ sur $V$ comme ci-dessus. Soit $H_k$, $h_k$, $\chi_j$ comme en (\ref{majoralpha}) (associ\'es \`a $q_k$), comme $q(x_m)\rightarrow 0$ pour toute semi-norme $q$ continue sur $M$, on a en particulier par d\'efinition de $(-)\{\{{\mathfrak y}_\alpha\}\}$ et en notant $x_m=\sum_{n\geq 0}w_{m,n}\otimes {\mathfrak y}_\alpha^n$~:
\begin{equation}\label{tendvers0alpha}
\sup_n {\max}_j(\chi_j(w_{m,n})) h_k^{-n}\rightarrow 0{\rm\ quand\ }m\rightarrow +\infty.
\end{equation}
Soit maintenant $x=\sum_{n\geq 0}w_n\otimes{\mathfrak y}_\alpha^n\in M$ quelconque. Si ${\max}_j(\chi_j(w_{n}))=0$ pour tout $n\geq 0$ alors $q_k(s(w_n\otimes {\mathfrak y}_\alpha^n))=0$ pour tout $n$ par (\ref{majoralpha}), donc $q_k(s(\sum_{n=0}^Nw_n\otimes {\mathfrak y}_\alpha^n))=0$ pour tout $N$, d'o\`u $q_k(s(x))=0$ par continuit\'e de $q_k$. Sinon $\sup_n {\max}_j(\chi_j(w_{n})) h_k^{-n}$ est un r\'eel $>0$ et il existe donc $N\gg 0$ tel que $q_k(s(x)-\sum_{n=0}^Ns(w_n\otimes {\mathfrak y}_\alpha^n))\leq H_k\sup_n {\max}_j(\chi_j(w_{n})) h_k^{-n}$, d'o\`u avec (\ref{majoralpha})~:
\begin{multline*}
q_k(s(x))\leq \max\Big(q_k\Big(\sum_{n=0}^Ns(w_n\otimes {\mathfrak y}_\alpha^n))\Big),q_k\Big(s(x)-\sum_{n=0}^Ns(w_n\otimes{\mathfrak y}_\alpha^n)\Big)\Big)\\
\leq H_k\sup_n {\max}_j(\chi_j(w_{n})) h_k^{-n}.
\end{multline*}
Donc dans tous les cas on a $q_k(s(x))\leq H_k\sup_n {\max}_j(\chi_j(w_{n})) h_k^{-n}$. Appliquant cela aux $x_m$, on a $q_k(s(x_m))\leq H_k\sup_n {\max}_j(\chi_j(w_{m,n})) h_k^{-n}$, d'o\`u le r\'esultat par (\ref{tendvers0alpha}).
\end{proof}

On d\'emontre maintenant la Proposition \ref{scinde1dur}. On rappelle que $\gamma=\alpha+\beta$.

\noindent
{\bf \'Etape $1$}\\
On a $\pi^{0,s_\alpha}_{N^\alpha}\simeq C^\infty(N_{\gamma,m},\pi_{P_\beta}^\infty)\bigoplus \big(\bigoplus_{m'\geq m+1}C^\infty(N_{\gamma,m'}\backslash N_{\gamma,m'-1},\pi_{P_\beta}^\infty)\big)$ o\`u $N_{\gamma,m'}\=N_{\gamma}(\Qp)\cap N_{m'}$ et o\`u l'action de $N^\alpha_m$ pr\'eserve chaque facteur direct. Comme dans la preuve du Lemme \ref{niveaum} (cf. en particulier (\ref{pourciter})), on en d\'eduit un isomorphisme de $(\psi,\Gamma)$-modules de Fr\'echet sur $\Rm^+$~:
\begin{equation}\label{decompmalpha}
M^\alpha\simeq \prod_{m'\geq m}M^\alpha_{m'}
\end{equation}
o\`u $M^\alpha_{m'}\= M_\alpha(E_m(-\mu)\otimes_EC^\infty(N_{\gamma,m'}\backslash N_{\gamma,m'-1},\pi_{P_\beta}^\infty))$ si $m'\geq m+1$, $M^\alpha_{m}\= M_\alpha(E_m(-\mu)\otimes_EC^\infty(N_{\gamma,m},\pi_{P_\beta}^\infty))$ (avec $M_\alpha(-)$ comme en (\ref{malpha})) et o\`u l'on v\'erifie avec l'action (\ref{actionnalphabis}) (conjugu\'ee par $s_\alpha$) que les $M^\alpha_{m'}$ pour $m'\geq m$ sont des $(\psi,\Gamma)$-modules de Fr\'echet sur $\Rm^+$. De plus, on v\'erifie facilement que (\ref{decompmalpha}) induit un isomorphisme de $E$-espaces vectoriels de Fr\'echet $M\simeq \prod_{m'\geq m}(M^\alpha_{m'}\{\{{\mathfrak y}_\alpha\}\})$ o\`u chaque facteur direct \`a droite est stable par $\R^+$, $\psi$ et $\Gamma$ dans $M$, i.e. est un $(\psi,\Gamma)$-module de Fr\'echet sur $\R^+$. Noter que $\psi$ et $\Gamma$ respectent chaque sous-espace $M^\alpha_{m'}\otimes_EE{\mathfrak y}_\alpha^n$.

Pour $N\in \Z_{\geq 0}$ et $m'\in \Z_{\geq m}$ on note $M_{m',N}\=M^\alpha_{m'}\otimes_E(E\oplus E{\mathfrak y}_\alpha\oplus \cdots \oplus E{\mathfrak y}_\alpha^N)$. Il suit de (\ref{nalpha(x)}) que $M_{m',N}$ est aussi un $(\psi,\Gamma)$-module de Fr\'echet sur $\Rm^+$ et de (\ref{decompmalpha}) que l'on a $M_N\simeq \prod_{m'\geq m}M^\alpha_{m',N}$. Comme $D_r$ est libre, la multiplication par $t^{N+1}$ sur $V_N$ donne par le Lemme \ref{techMalpha} une suite exacte de $(\psi,\Gamma)$-modules de Fr\'echet sur $\Rm^+$~:
\begin{equation}\label{cobord}
0\longrightarrow V_N[t^{N+1}]\longrightarrow M_N\buildrel f_{N+1}\over \longrightarrow D_r(\varepsilon^{-N-1})/(t^{N+1}).
\end{equation}
L'argument dans la preuve du Lemme \ref{begin} montre alors que $f_{N+1}\vert_{\prod_{m'\geq D}M_{m',N}}=0$ pour $D\gg 0$. On d\'efinit par r\'ecurrence une suite croissante d'entiers $(m_N)_{N\geq 0}$ comme suit : $m_0$ est le plus petit entier $\geq m-1$ tel que $f_{1}\vert_{\prod_{m'\geq m_0+1}M_{m',0}}=0$ et $m_N$ pour $N>0$ est le plus petit entier $\geq m_{N-1}$ tel que $f_{N+1}\vert_{\prod_{m'\geq m_N+1}M_{m',N}}=0$.

\noindent
{\bf \'Etape $2$}\\
On suppose que la suite $(m_N)_{N\geq 0}$ est major\'ee par un entier $D$ et on note $M_{> D}^\alpha\=\prod_{m'\geq D+1}M^\alpha_{m'}\subseteq M^\alpha$, $M_{> D}\=M_{> D}^\alpha\{\{{\mathfrak y}_\alpha\}\}\subseteq M$ et $V_{> D}$ le ``pull-back'' de $V$ le long de $M_{> D}\hookrightarrow M$. Pour $N\geq 0$ on d\'efinit $M_{> D,N}\=M^\alpha_{> D}\otimes_E(E\oplus E{\mathfrak y}_\alpha\oplus \cdots \oplus E{\mathfrak y}_\alpha^N)$ et $V_{> D,N}$ le ``pull-back'' de $V_{> D}$ le long de $M_{> D,N}\hookrightarrow M_{> D}$. Par d\'efinition de $D$ et par (\ref{cobord}), on a pour tout $N$ une section $M_{> D,N}\simeq V_{> D,N}[t^{N+1}]\subseteq V_{> D,N}$ qui donne un isomorphisme $V_{> D,N}\simeq D_r\oplus M_{> D,N}$ de $(\psi,\Gamma)$-modules de Fr\'echet sur $\Rm^+$. Par l'argument du Lemme \ref{casdavant}, on en d\'eduit un isomorphisme $V_{> D}\simeq D_r\oplus M_{> D}$ et une suite exacte $0\rightarrow D_r\rightarrow V_{\leq D}\rightarrow M_{\leq D}\rightarrow 0$ de $(\psi,\Gamma)$-modules de Fr\'echet sur $\Rm^+$ o\`u $V_{\leq D}\=V/M_{> D}$ et $M_{\leq D}\=M/M_{> D}\simeq M^\alpha_{\leq D}\{\{{\mathfrak y}_\alpha\}\}$ avec $M^\alpha_{\leq D}\=\prod_{D\geq m'\geq m}M^\alpha_{m'}$. 

Un calcul montre que, si $n^\alpha\in N_\beta(\Qp)N_{\gamma,D}\subseteq N^\alpha(\Qp)$, alors il existe un sous-groupe ouvert (suffisamment petit d\'ependant de $D$) de $N_\alpha(\Qp)$ tel que, pour tout $n_\alpha$ dans ce sous-groupe ouvert, on a $(s_\alpha n_\alpha s_\alpha)n^\alpha(s_\alpha n_\alpha^{-1} s_\alpha)= n^\alpha n_\beta$ avec $n_\beta\in N_\beta(\Qp)\cap N_m^\alpha$ tel que $\eta(n_\beta)=1$. On en d\'eduit que l'action (\ref{actionnalphabis}) (conjugu\'ee par $s_\alpha$) de ce sous-groupe ouvert sur~:
$$\big(E_m(-\mu)\otimes_EC^\infty(N_{\gamma,D},\pi_{P_\beta}^\infty)\big)(\eta^{-1})^{N_m^\alpha}\simeq E_m(-\mu)\otimes_E \big((\cInd_{N_\beta(\Qp)}^{N_\beta(\Qp)N_{\gamma,D}}\pi_{P_\beta}^\infty)^{\infty}(\eta^{-1})^{N_m^\alpha}\big)$$
est triviale. Avec la Remarque \ref{fixe}, on voit qu'il existe $n_0\gg 0$ tel que $\varphi^{n_0}(X)$ annule $M^\alpha_{\leq D}$. Quitte \`a augmenter $r$, on peut supposer que $\varphi^{n_0}(X)$ est inversible dans $\Rrm$. Avec la preuve du Lemme \ref{techMalpha} (avec $M^\alpha_{\leq D}$ au lieu de $M^\alpha$ et $\varphi^{n_0}(X)$ au lieu de $t$) on peut alors v\'erifier que la formule (\ref{s()Malpha}) avec $\varphi^{n_0}(X)$ au lieu de $t$ d\'efinit une section $\Rm^+$-lin\'eaire $s:M^\alpha_{\leq D}[{\mathfrak y}_\alpha]\rightarrow V_{\leq D}$ qui commute \`a $\psi$ et $\Gamma$, par exemple pour la commutation \`a $\psi$ (pour $x\in M^\alpha_{\leq D}\otimes_E(E\oplus E{\mathfrak y}_\alpha\oplus \cdots \oplus E{\mathfrak y}_\alpha^N)$)~:
\begin{multline*}
\frac{1}{\varphi^{n_0}(X)^{N+1}}(\varphi^{n_0}(X)^{N+1}\psi(\widehat x))=\frac{1}{\varphi^{n_0}(X)^{N+1}}\psi(\varphi^{n_0+1}(X)^{N+1}\widehat x)\\
=\frac{1}{\varphi^{n_0}(X)^{N+1}}\psi\Big(\varphi^{n_0+1}(X)^{N+1}\frac{1}{\varphi^{n_0}(X)^{N+1}}(\varphi^{n_0}(X)^{N+1}\widehat x)\Big)\\
=\psi\Big(\frac{1}{\varphi^{n_0}(X)^{N+1}}(\varphi^{n_0}(X)^{N+1}\widehat x)\Big).
\end{multline*}
Un raisonnement strictement analogue \`a celui de la preuve du Lemme \ref{casdavant} avec $M^\alpha_{\leq D}$ au lieu de $M^\alpha$ et $\varphi^{n_0}(X)$ au lieu de $t$ donne alors que la suite $0\rightarrow D_r\rightarrow V_{\leq D}\rightarrow M_{\leq D}\rightarrow 0$ est scind\'ee. De la surjection compos\'ee $V\twoheadrightarrow V/M_{>D}=V_{\leq D}\twoheadrightarrow D_r$ on d\'eduit ais\'ement que (\ref{rstrictbis}) est aussi scind\'ee.

\noindent
{\bf \'Etape $3$}\\
On montre finalement que la suite $(m_N)_{N\geq 0}$ est n\'ecessairement major\'ee. Supposons le contraire. Il existe alors une sous-suite $(m_{i(N)})_N$ de $(m_N)_N$ tendant vers $+\infty$ telle que l'application $f_{i(N)+1}$ en (\ref{cobord}) est {\it non nulle} en restriction \`a $M^\alpha_{m_{i(N)}}\otimes {\mathfrak y}_\alpha^{i(N)}\subseteq M_{i(N)}$. Posons par ailleurs~:
$$M^{0}\=\left\{\sum_{n\geq 0}w_{n}\otimes {\mathfrak y}_\alpha^n\in M,\ w_n\in \prod_{m'\geq m_{n}+1}M^\alpha_{m'}\right\},$$
on v\'erifie que $M^{0}$ est ferm\'e dans $M$ et stable par toutes les structures (pour la stabilit\'e par $\Rm^+$, on utilise que la suite $(m_n)_n$ est croissante), en particulier $M^{0}$ est un $(\psi,\Gamma)$-module de Fr\'echet sur $\Rm^+$. De plus, par d\'efinition de $(m_n)_n$ on a pour tout $N\geq 0$ une section $M^{0}\cap M_N\hookrightarrow V_{N}[t^{N+1}]\subseteq V_{N}$ (cf. (\ref{cobord})). Soit $V^{0}$ le ``pull-back'' de $V$ le long de $M^{0}\hookrightarrow M$, la m\^eme preuve que celle du Lemme \ref{casdavant} mais en rempla\c cant $M$ par $M^{0}$ et $V$ par $V^{0}$ (ce qui ne change pas les arguments) donne un isomorphisme $V^{0}\simeq D_r\oplus M^{0}$ et une suite exacte de $(\psi,\Gamma)$-modules de Fr\'echet sur $\Rm^+$~:
\begin{equation}\label{scinde(0)}
0\longrightarrow D_r\longrightarrow V/M^{0}\longrightarrow M/M^{0}\longrightarrow 0.
\end{equation}
En utilisant que toute semi-norme continue sur $M^\alpha$ est nulle sur $\prod_{m'\geq D+1}M^\alpha_{m'}$ pour $D\gg 0$ (par d\'efinition de la topologie produit), on v\'erifie facilement sur la d\'efinition de $M=M^\alpha\{\{{\mathfrak y}_\alpha\}\}$ que l'on a une immersion ferm\'ee d'espaces de Fr\'echet~:
\begin{equation}\label{mtilde}
\widetilde M\=\prod_{n\geq 0} \Big(\Big(\prod_{m_n\geq m'\geq m_{n-1}+1}\!\!\!M^\alpha_{m'}\Big)\otimes_EE{\mathfrak y}^n_\alpha\Big) \hookrightarrow M/M^{0}
\end{equation}
o\`u $m_{-1}\=m-1$ et le sous-espace $\widetilde M$ de gauche est muni de la topologie produit (le facteur au cran $n$ \'etant nul si $m_n=m_{n-1}$). On v\'erifie de plus que $\widetilde M$ est stable par $\Rm^+$, $\psi$ et $\Gamma$ dans $M/M^{0}$, et m\^eme que l'on a $\widetilde M\subseteq (M/M^{0})[t]$. La multiplication par $t$ sur (\ref{scinde(0)}) donne donc comme en (\ref{cobord}) une application $\widetilde M\hookrightarrow (M/M^{0})[t] \buildrel f_1 \over \rightarrow D_r(\varepsilon^{-1})/(t)$ de $(\psi,\Gamma)$-modules de Fr\'echet sur $\Rm^+$. Il suit alors de la d\'efinition de $M^{0}$ et du fait que $D_r$ est libre de rang fini sur $\Rm^+$ que, pour tout $N\geq 0$, on a des diagrammes commutatifs de $(\psi,\Gamma)$-modules de Fr\'echet sur $\Rm^+$~:
\begin{equation}\label{cobord2}
\begin{gathered}
\xymatrix{M_N\ar[r]^{\!\!\!\!\!\!\!\!\!\!\!\!\!\!\!\!\!\!\!\!\!\!\!\!f_{N+1}}\ar@{^{}->>}[d] & D_r(\varepsilon^{-N-1})/(t^{N+1})\ar@{=}[d]\\
M_N/(M^{0}\cap M_N)\ar[r]^{f_{N+1}} & D_r(\varepsilon^{-N-1})/(t^{N+1})\\
\widetilde M_N\ar@{^{(}->}[u]\ar[r]^{\!\!\!\!f_1} & D_r(\varepsilon^{-1})/(t)\ar@{^{(}->}[u]^{t^N}}
\end{gathered}
\end{equation}
o\`u $\widetilde M_N\=\widetilde M\cap (M_N/(M^{0}\cap M_N))$ et $f_{N+1}$ est comme en (\ref{cobord}). Comme l'action de $\psi$ sur $\widetilde M$ respecte chaque facteur dans le produit $\prod_{n\geq 0}$ en (\ref{mtilde}), l'argument usuel dans la preuve du Lemme \ref{begin} montre que $f_1:\widetilde M\rightarrow D_r(\varepsilon^{-1})/(t)$ est {\it nul} sur tous ces facteurs sauf un nombre fini. Le diagramme (\ref{cobord2}) avec $i(N)$ au lieu de $N$ montre alors clairement (en regardant $f_1\vert_{M^\alpha_{m_{i(N)}}\otimes {\mathfrak y}_\alpha^{i(N)}}$) qu'une suite $i(N)$ tendant vers $+\infty$ comme au d\'ebut ne peut exister. Cela termine la preuve de la Proposition \ref{scinde1dur}.

\begin{rem}\label{addendum}
{\rm Un argument similaire, en plus simple, montre que pour tout $m\in \Z_{\geq 0}$ et tout $(\varphi,\Gamma)$-module g\'en\'eralis\'e $T$ sur $\R$, on a $F_{\alpha,m}(M)(T)=0$~: pour $r\in \Q_{>p-1}$ et $f:M\rightarrow T_r$, on consid\`ere la suite croissante $(m_N)_{N\geq 0}$ o\`u $m_0$ est le plus petit entier $\geq m-1$ tel que $f\vert_{\prod_{m'\geq m_0+1}M_{m',0}}=0$ et $m_N$ pour $N>0$ le plus petit entier $\geq m_{N-1}$ tel que $f\vert_{\prod_{m'\geq m_N+1}M_{m',N}}=0$ et on montre comme dans l'\'Etape $3$ ci-dessus qu'elle est major\'ee.}
\end{rem}

\subsection{Un r\'esultat d'exactitude pour ${\rm GL}_3$ et ${\rm GL}_2$}\label{gl32}

On montre le Th\'eor\`eme \ref{gl3enplus}.

On commence avec ${\rm GL}_3$. On conserve les notations et hypoth\`eses du \S~\ref{premierscindage} et on commence par une proposition pr\'eliminaire. On suppose $P=P_\beta$, i.e. $\pi=(\Ind_{P_\beta^-(\Qp)}^{G(\Qp)} \pi_{P_\beta})^{\an}$. Le deuxi\`eme isomorphisme en (\ref{piCbis}) induit un isomorphisme $C^{\an}_c(N^\beta(\Qp),\pi_{P_\beta})\simeq (\cInd_{B(\Qp) \cap P_\beta^-(\Qp)}^{B(\Qp)}\pi_{P_\beta})^{\an}\buildrel\sim\over\rightarrow C^{\an}_c(N_\alpha(\Qp),\pi_{N^\alpha})$ dans $\Rep(B(\Qp))$ o\`u l'action de $N^\beta(\Qp)$ \`a gauche est la translation \`a droite et celle de $B(\Qp) \cap P_\beta^-(\Qp)$ donn\'ee par $(l_\beta f)(n^\beta)\=l_\beta(f(l_\beta^{-1}n^\beta l_\beta))$ pour $l_\beta\in B(\Qp) \cap P_\beta^-(\Qp)=B(\Qp) \cap L_{P_\beta}(\Qp)$ et $n^\beta\in N^\beta(\Qp)$. De plus les preuves du Lemme \ref{niveaum} et de la Proposition \ref{celluleouverte} donnent pour $m\in \Z_{\geq 0}$ un morphisme surjectif de $(\psi,\Gamma)$-modules de Fr\'echet sur $\Rm^+$~:
\begin{equation}\label{surjcellule}
f:M_\alpha\big(C^{\an}_c(N^\beta(\Qp),\pi_{P_\beta})\otimes_EE_m\big)\twoheadrightarrow D_\alpha\big(C^{\an}_c(N^\beta(\Qp),\pi_{P_\beta})\big)_+\otimes_EE_m
\end{equation}
o\`u $D_\alpha(C^{\an}_c(N^\beta(\Qp),\pi_{P_\beta}))_+\=\Rm^+(((-\mu)\circ \lambda_{\alpha^\vee}^{-1})(\chi_{\pi_{P_\beta}^\infty}\circ \lambda_{\alpha^\vee}^{-1}))$ si $\pi_{P_\beta}^\infty$ est une repr\'esentation g\'en\'erique de $L_{P_\beta}(\Qp)$ et $D_\alpha(C^{\an}_c(N^\beta(\Qp),\pi_{P_\beta}))_+\=0$ sinon.

\begin{prop}\label{scinde2ouvert}
Soit $m\in \Z_{\geq 0}$, $r\in \Q_{>p-1}$ et $D_r$ un $(\varphi,\Gamma)$-module sans torsion sur $\Rrm$. Alors, quitte \`a augmenter $m$ (par ``pull-back'') et $r$ (par ``push-out''), toute suite exacte (stricte) de $(\psi,\Gamma)$-modules de Fr\'echet sur $\Rm^+$~:
\begin{equation}\label{rstrictter}
0\longrightarrow D_r\longrightarrow V\longrightarrow M_\alpha\big(C^{\an}_c(N^\beta(\Qp),\pi_{P_\beta})\otimes_EE_m\big)\longrightarrow 0
\end{equation}
o\`u la topologie de $V$ peut \^etre d\'efinie par des semi-normes $q$ telles qu'il existe $C_{q}\in {\mathbb R}_{>0}$ v\'erifiant $q(Xv)\leq C_{q}q(v)$ pour tout $v\in V$, s'ins\`ere dans un diagramme commutatif de $(\psi,\Gamma)$-modules de Fr\'echet sur $\Rm^+$~:
\begin{equation*}
\begin{gathered}
\xymatrix{0\ar[r] & D_r\ar[r] \ar@{=}[d] & V \ar[r] \ar@{^{}->>}[d] & M_\alpha\big(C^{\an}_c(N^\beta(\Qp),\pi_{P_\beta})\otimes_EE_m\big)\ar[r] \ar@{^{}->>}[d]^{f} & 0\\
0\ar[r] & D_r\ar[r] & W \ar[r] & D_\alpha\big(C^{\an}_c(N^\beta(\Qp),\pi_{P_\beta})\big)_+\otimes_EE_m\ar[r] & 0}
\end{gathered}
\end{equation*}
o\`u l'on a pour tout $(\varphi,\Gamma)$-module g\'en\'eralis\'e $T$ sur $\R$~:
\begin{multline*}
\lim_{m\rightarrow +\infty}\!\lim_{\substack{\longrightarrow \\ (s,f_s,T_s)\in I(T)}}\!\!\!\Hom_{\psi,\Gamma}(W,T_s\otimes_EE_m)\buildrel\sim\over\longrightarrow \lim_{m\rightarrow +\infty}\!\lim_{\substack{\longrightarrow \\ (s,f_s,T_s)\in I(T)}}\!\!\!\Hom_{\psi,\Gamma}(V,T_s\otimes_EE_m).
\end{multline*}
\end{prop}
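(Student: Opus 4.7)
\textbf{Plan de preuve de la Proposition \ref{scinde2ouvert}.}

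L'id\'ee directrice est de ramener la situation \`a une application de la Proposition \ref{scinde3}. Je d\'efinirais d'abord $V'$ comme l'image inverse de $\ker(f)$ dans $V$, qui s'ins\`ere dans une suite exacte stricte de $(\psi,\Gamma)$-modules de Fr\'echet sur $\Rm^+$~:
\begin{equation*}
0\longrightarrow D_r\longrightarrow V'\longrightarrow \ker(f)\longrightarrow 0.
\end{equation*}
La topologie de $V'$, induite par celle de $V$, h\'erite de la condition sur les semi-normes. Il s'agira alors de v\'erifier que $\ker(f)$ satisfait aux hypoth\`eses de la Proposition \ref{scinde3} (ou de sa variante dans la Remarque \ref{restevalable}(i)), puis de la lui appliquer pour obtenir (apr\`es ``push-out'' le long de $D_r\to D_{r'}$) un scindage $V''\simeq D_{r'}\oplus \ker(f)$. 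Le module $W$ sera alors d\'efini comme le ``push-out'' de $V$ le long de $D_r\to D_{r'}$ quotient\'e par l'image de $\ker(f)$ via le scindage; il s'ins\`ere automatiquement dans la suite exacte voulue $0\to D_{r'}\to W\to D_\alpha(C^{\an}_c(N^\beta(\Qp),\pi_{P_\beta}))_+\otimes_E E_m\to 0$ (via le Lemme \ref{strictI}(ii) et la Remarque \ref{strictIf}).

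Pour v\'erifier les hypoth\`eses de la Proposition \ref{scinde3}, j'utiliserais la d\'ecomposition (\ref{decompo0}) appliqu\'ee \`a $N_P=N^\beta$, qui donne par dualit\'e et en proc\'edant comme en (\ref{pourciter}) un isomorphisme de $(\psi,\Gamma)$-modules de Fr\'echet sur $\Rm^+$~:
\begin{equation*}
M_\alpha\big(C^{\an}_c(N^\beta(\Qp),\pi_{P_\beta})\otimes_EE_m\big)\simeq \prod_{m'\geq m} M_{\alpha,m'}
\end{equation*}
o\`u $M_{\alpha,m'}$ est le dual de $C^{\an}(N^\beta(\Qp)\cap (N_{m'}\backslash N_{m'-1}),\pi_{P_\beta}\otimes E_m)[\mnn^\alpha](\eta^{-1})_{N_m^\alpha}$ pour $m'>m$ (et l'analogue pour $m'=m$). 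Par le calcul explicite au d\'ebut de la preuve de la Proposition \ref{celluleouverte} (formule (\ref{Malphaenfin})), chaque $M_{\alpha,m'}$ est libre de rang fini sur $\Rm^+$, et l'op\'erateur $\psi$ (provenant de l'action de $\lambda_{\alpha^\vee}(p^{-1})$) envoie $M_{\alpha,m'}$ dans $\oplus_{m''\geq m'}M_{\alpha,m''}$. Le morphisme $f$ de (\ref{surjcellule}) respecte cette d\'ecomposition (\`a chaque niveau il s'agit de l'extraction des $\mnn_{P_\beta}^\alpha$-invariants et des $N_{L_{P_\beta}}(\Qp)\cap N_m$-coinvariants d'un facteur g\'en\'erique), et son noyau se d\'ecrit \'egalement comme un produit de sous-$\Rm^+$-modules libres de rang fini stables par $\Gamma$ avec la m\^eme propri\'et\'e de shift par $\psi$. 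Apr\`es un d\'evissage \'el\'ementaire bas\'e sur l'observation de la Remarque \ref{restevalable}(i), on est ainsi plac\'e dans le cadre de la Proposition \ref{scinde3}.

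Pour la condition d'\'equivalence au niveau des $\Hom$, l'argument revient \`a montrer que tout morphisme dans $\Hom_{\psi,\Gamma}(\ker(f),T_s\otimes_EE_m)$ devient nul quand on fait cro\^\i tre $m$ et $s$. Mais par la Proposition \ref{celluleouverte} on conna\^\i t explicitement $F_\alpha(C^{\an}_c(N^\beta(\Qp),\pi_{P_\beta}))$ sous la forme $E_\infty(\chi_{-\lambda})\otimes_{E_\infty}(\pi_{P_\beta}^\infty\otimes_EE_\infty)(\eta^{-1})_{N_{L_{P_\beta}}\!(\Qp)}\otimes_E \Hom_{(\varphi,\Gamma)}(D_\alpha,-)$, et le morphisme $f$ r\'ealise pr\'ecis\'ement cette repr\'esentabilit\'e. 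Par le (i) de la Proposition \ref{drex} (exactitude \`a gauche de $F_\alpha$), la contribution au $\Hom$ (\`a la limite en $m$ et $s$) de toute extension scind\'ee de $\ker(f)$ est donc nulle, d'o\`u la nullit\'e souhait\'ee de la contribution de $\ker(f)$ vu comme sous-objet de $V$ via le scindage.

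\textbf{Obstacle principal.} La difficult\'e technique r\'eside dans la v\'erification que $\ker(f)$ est r\'eellement de la forme $\prod_{m'}M'_{m'}$ avec chaque $M'_{m'}$ libre de rang fini sur $\Rm^+$ de fa\c con compatible au shift par $\psi$. La subtilit\'e est que $f$ est d\'efini comme limite inductive (cf. (\ref{foncteurm})-(\ref{falpha})) et la d\'ecomposition produit doit \^etre stable par toutes les structures; cela demande de reprendre avec soin l'argument de la preuve de la Proposition \ref{celluleouverte} (en particulier la pr\'esentation explicite de $M_{\alpha,m'}$ et la compatibilit\'e de $f$ avec la d\'ecomposition en produit) pour identifier $\ker(f)$ de mani\`ere fonctorielle en $m'$. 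Une fois cette identification obtenue, l'application de la Proposition \ref{scinde3} et de la condition de $\Hom$ est alors essentiellement formelle.
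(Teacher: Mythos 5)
Il y a une lacune réelle au cœur de ta stratégie : tu supposes pouvoir appliquer la Proposition \ref{scinde3} directement à $\ker(f)$ en écrivant $M_\alpha\big(C^{\an}_c(N^\beta(\Qp),\pi_{P_\beta})\otimes_EE_m\big)\simeq \prod_{m'\geq m}M_{\alpha,m'}$ avec, selon toi, « chaque $M_{\alpha,m'}$ libre de rang fini sur $\Rm^+$ » — mais cette affirmation est fausse en général. La formule (\ref{Malphaenfin}) montre que $M_{\alpha,m'}$ contient le facteur $\big((\pi_{P_\beta}^\infty\otimes_E E_m)(\eta^{-1})_{N_{L_{P_\beta}}\!(\Qp)\cap N_{m}}\big)^\vee$, et pour $m$ fixé les coinvariants tordus sous le sous-groupe \emph{compact} $N_{L_{P_\beta}}(\Qp)\cap N_m$ sont en général de dimension infinie (seule la limite sur $m$, i.e. les coinvariants sous $N_{L_{P_\beta}}(\Qp)$ tout entier, est de dimension $d_{\pi_{P_\beta}^\infty}$ par Rodier). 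La preuve de l'Étape 1 du texte souligne d'ailleurs explicitement que $M_{\alpha,m'}$ est en général seulement \emph{sans torsion}, et que la finitude du rang n'est obtenue que pour le quotient de dimension finie $(\pi_{P_\beta}^\infty)_{N_\beta(\Qp)}$. Tu identifies bien cette question comme l'« obstacle principal », mais tu la présentes comme une simple vérification technique, alors qu'elle fait échouer l'application directe de la Proposition \ref{scinde3} : les hypothèses de cette proposition ne sont pas satisfaites.

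La preuve du papier contourne précisément cet obstacle par un dévissage supplémentaire sur $\pi_{P_\beta}^\infty$ : on utilise d'abord la suite exacte de Bernstein-Zelevinsky $0\rightarrow W_E^\infty\rightarrow \pi_{P_\beta}^\infty\rightarrow (\pi_{P_\beta}^\infty)_{N_\beta(\Qp)}\rightarrow 0$ pour scinder (via la Proposition \ref{scinde3}, applicable car $(\pi_{P_\beta}^\infty)_{N_\beta(\Qp)}$ est de dimension finie) la contribution du quotient ; puis la suite $0\rightarrow W_E^\infty\rightarrow C^\infty_c(N_\beta(\Qp),E)\rightarrow E\rightarrow 0$ ramène au cas de $C^{\an}_c(N(\Qp),E)\otimes_EE(-\mu)\otimes_EE_m$, qui cette fois se décompose bien en produit de $\Rm^+$-modules libres de rang fini (puisque $E(-\mu)$ est de dimension finie) ; plusieurs scindages successifs supplémentaires, utilisant la Remarque \ref{restevalable}(i), l'argument de la preuve du Lemme \ref{scindepsi} et la nilpotence de $\psi$ sur certains facteurs, finissent par réduire au cas explicite (\ref{rstrictsept}) directement identifiable au terme cible. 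Sans cette filtration de $\pi_{P_\beta}^\infty$, on ne peut pas se placer dans le cadre de la Proposition \ref{scinde3} : c'est là l'idée manquante de ta proposition. La partie de ton argument sur la nullité à la limite de $\Hom_{\psi,\Gamma}(\ker(f),T_s\otimes_EE_m)$ est en revanche correcte dans l'esprit et proche du texte.
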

\begin{proof}
On ne traite que le cas o\`u $D_\alpha(C^{\an}_c(N^\beta(\Qp),\pi_{P_\beta}))_+\ne 0$, le cas $D_\alpha(C^{\an}_c(N^\beta(\Qp),\pi_{P_\beta}))_+=0$ \'etant analogue en plus facile (noter qu'alors l'\'enonc\'e implique que la suite (\ref{rstrictbis}) est scind\'ee quitte \`a augmenter $r$ et $m$). Par ailleurs, les preuves \'etant de m\^eme nature que celles des \S\S~\ref{cellule}, \ref{celluledevisse}, \ref{technique1}~\&~\ref{premierscindage}, on se permet de donner moins de d\'etails. Si $N'$ est un sous-groupe de $N(\Qp)$, on note $N'_m\=N'\cap N_m$.

\noindent
{\bf \'Etape $1$}\\
Il r\'esulte d'abord facilement des preuves du Lemme \ref{niveaum} et de la Proposition \ref{celluleouverte} que l'on a $\displaystyle \lim_{m\rightarrow +\infty}\lim_{\substack{\longrightarrow \\ (s,f_s,T_s)\in I(T)}}\!\!\Hom_{\psi,\Gamma}(\ker(f),T_s\otimes_EE_m)=0$, d'o\`u le dernier isomorphisme si l'on a $W\simeq V/\ker(f)$. Il suffit donc de montrer l'existence du diagramme commutatif. Par le d\'ebut de la preuve du Lemme \ref{structure}, on a une suite exacte $0\rightarrow W_E^\infty \rightarrow \pi_{P_\beta}^\infty\rightarrow (\pi_{P_\beta}^\infty)_{N_\beta(\Qp)}\rightarrow 0$ qui commute aux actions de $N_\beta(\Qp)$ et $\lambda_{\alpha^\vee}(\Qp^\times)$ (ce dernier agissant partout par le caract\`ere central de $\pi_{P_\beta}^\infty$). On en d\'eduit une suite exacte qui commute aux actions de $N(\Qp)$ et $\lambda_{\alpha^\vee}(\Qp^\times)$ (d\'efinies comme avant (\ref{surjcellule}))~:
\begin{multline*}
0\longrightarrow C^{\an}_c(N^\beta(\Qp),E)\otimes_EL(-\mu)_{P_\beta}\otimes_EW_E^\infty \longrightarrow C^{\an}_c(N^\beta(\Qp),E)\otimes_EL(-\mu)_{P_\beta}\otimes_E\pi_{P_\beta}^\infty\\
\longrightarrow C^{\an}_c(N^\beta(\Qp),E)\otimes_EL(-\mu)_{P_\beta}\otimes_E\otimes_E(\pi_{P_\beta}^\infty)_{N_\beta(\Qp)}\longrightarrow 0.
\end{multline*}
Comme l'action de $\mnn^\alpha$ ne touche pas les facteurs lisses, on a encore une suite e\-xacte en rempla\c cant partout $C^{\an}_c(N^\beta(\Qp),E)\otimes_EL(-\mu)_{P_\beta}$ par $(C^{\an}_c(N^\beta(\Qp),E)\otimes_EL(-\mu)_{P_\beta})[\mnn^\alpha]$, d'o\`u on d\'eduit une suite exacte de $(\psi,\Gamma)$-modules de Fr\'echet sur $\Rm^+$~:
\begin{multline}\label{mwinfty}
0\longrightarrow M_\alpha\big(C^{\an}_c(N^\beta(\Qp),E)\otimes_EL(-\mu)_{P_\beta}\otimes_E(\pi_{P_\beta}^\infty)_{N_\beta(\Qp)}\otimes_EE_m\big)\\
\longrightarrow M_\alpha(C^{\an}_c(N^\beta(\Qp),E)\otimes_EL(-\mu)_{P_\beta}\otimes_E\pi_{P_\beta}^\infty\otimes_EE_m\big)\\
\longrightarrow M_\alpha\big(C^{\an}_c(N^\beta(\Qp),E)\otimes_EL(-\mu)_{P_\beta}\otimes_EW_E^\infty\otimes_EE_m\big) \longrightarrow 0.
\end{multline}
\'Ecrivant $C^{\an}_c\big(N^\beta(\Qp),E)\simeq C^{\an}(N^{\beta}_m,E)\bigoplus (\bigoplus_{m'\geq m+1}C^{\an}(N^{\beta}_{m'}\backslash N^{\beta}_{m'-1},E))$, on en d\'eduit avec les notations usuelles (cf. (\ref{pourciter})) un isomorphisme de modules de Fr\'echet sur $\Rm^+$ pour $(-)\in \{(\pi_{P_\beta}^\infty)_{N_\beta(\Qp)},\pi_{P_\beta}^\infty,W_E^\infty\}$~:
\begin{equation}\label{decomp(-)}
M_\alpha\big(C^{\an}_c(N^\beta(\Qp),E)\otimes_EL(-\mu)_{P_\beta}\otimes_E (-)\otimes_EE_m\big)\simeq \prod_{m'\geq m}M_{\alpha,m'}
\end{equation}
o\`u $\Gamma$ pr\'eserve chaque facteur direct \`a droite, o\`u $\psi$ pr\'eserve $M_{\alpha,m}$ et envoie $M_{\alpha,m'}$ dans $M_{\alpha,m'+1}$ si $m'\geq m+1$. Noter que (si $m'\geq m+1$)~:
\begin{multline}\label{malphaexpl}
M_\alpha\big(C^{\an}_c(N^{\beta}_{m'}\backslash N^{\beta}_{m'-1},E)\otimes_EL(-\mu)_{P_\beta}\otimes_E (-)\otimes_EE_m\big)\\
\simeq \big(\big(C^{\an}_c(N^{\beta}_{m'}\backslash N^{\beta}_{m'-1},E)[\mnn_\gamma]\otimes_EE(-\mu)\otimes (-)\otimes_EE_m\big)(\eta^{-1})_{N^\alpha_m}\big)^\vee
\end{multline}
et idem avec $N^{\beta}_{m}$ ($\mnn_\gamma$ est la $\Qp$-alg\`ebre de Lie de $N_\gamma(\Qp)$). Soit $m'\geq m+1$, comme $(C^{\an}_c(N^{\beta}_{m'}\backslash N^{\beta}_{m'-1},E)[\mnn_\gamma]\otimes_EE(-\mu)\otimes_E (-)\otimes_EE_m)^\vee$ est un $\Rm^+$-module pro-libre, on d\'eduit que le sous-$\Rm^+$-module $M_{\alpha,m'}$ est sans torsion. Supposons maintenant $(-)=(\pi_{P_\beta}^\infty)_{N_\beta(\Qp)}$, dont la dimension est finie. Comme $(C^{\an}_c(N^{\beta}_{m'}\backslash N^{\beta}_{m'-1},E)[\mnn_\gamma]^{N_{\gamma,m}}\otimes_EE(-\mu)\otimes_E (-)\otimes_EE_m)^\vee$ est un $\Rm^+$-module de type fini, on d\'eduit avec la Remarque \ref{fixe} que le $\Rm^+$-module $M_{\alpha,m'}$ est de type fini. Donc les $M_{\alpha,m'}$ sont libres de type fini. On peut alors appliquer la Proposition \ref{scinde3} au ``pull-back'' de (\ref{rstrictter}) le long de $\prod_{m'\geq m+1}M_{\alpha,m'}$, qui est donc scind\'e quitte \`a augmenter $r$. Par ailleurs, un calcul montre que l'action de $N_{\beta,m}$ sur $(C^{\an}_c(N^{\beta}_{m},E)[\mnn_\gamma]^{N_{\gamma,m}}\otimes_EE(-\mu)\otimes_E (-))^\vee$ est triviale, ce qui implique $M_{\alpha,m}=0$ (quitte \`a augmenter $m$ pour avoir $\eta\vert_{N_{\beta,m}}\ne 1$). Quotientant $V$ par $\prod_{m'\geq m+1}M_{\alpha,m'}=\prod_{m'\geq m}M_{\alpha,m'}$, on est alors ramen\'e \`a d\'emontrer l'\'enonc\'e pour une suite exacte~:
\begin{equation}\label{rstrictquarto}
0\longrightarrow D_r\longrightarrow V\longrightarrow M_\alpha\big(C^{\an}_c(N^\beta(\Qp),E)\otimes_EL(-\mu)_{P_\beta}\otimes_EW_E^\infty\otimes_EE_m\big)\longrightarrow 0.
\end{equation}

\noindent
{\bf \'Etape $2$}\\
Par (\ref{steinberg}) on a une suite exacte $0\rightarrow W_E^\infty \rightarrow C_c^\infty(N_\beta(\Qp),E)\rightarrow E \rightarrow 0$ qui commute aux actions de $N_\beta(\Qp)$ et de $\lambda_{\alpha^\vee}(\Qp^\times)$ en faisant partout agir ce dernier par le caract\`ere central de $\pi_{P_\beta}^\infty$, et qui donne une suite exacte de $(\psi,\Gamma)$-modules de Fr\'echet sur $\Rm^+$ analogue \`a (\ref{mwinfty}). Par l'analogue de (\ref{decomp(-)}), l'argument du Lemme \ref{begin} bornant le support et l'argument \`a la fin de l'\'Etape $1$ montrant $M_{\alpha,m}=0$ quand $(-)=(\pi_{P_\beta}^\infty)_{N_\beta(\Qp)}$, on d\'eduit en particulier que tout morphisme de $(\psi,\Gamma)$-modules de Fr\'echet $M_\alpha(C^{\an}_c(N^\beta(\Qp),E)\otimes_EL(-\mu)_{P_\beta}\otimes_EE_m)\rightarrow D_r$ devient nul en augmentant $m$. Quitte \`a prendre le ``pull-back'' de (\ref{rstrictquarto}) et \`a augmenter $m$, on en d\'eduit qu'il suffit de d\'emontrer l'\'enonc\'e pour une suite exacte~:
\begin{equation}\label{rstrictcinquo}
0\longrightarrow D_r\longrightarrow V\longrightarrow M_\alpha\big(C^{\an}_c(N(\Qp),E)\otimes_EE(-\mu)\otimes_EE_m\big)\longrightarrow 0
\end{equation}
en remarquant que (cf. p. ex. (\ref{malphaexpl}))~:
\begin{multline}\label{malphaexpl2}
M_\alpha\big(C^{\an}_c(N(\Qp),E)\otimes_EE(-\mu)\otimes_EE_m\big)\\
\simeq M_\alpha\big(C^{\an}_c(N^\beta(\Qp),E)\otimes_EL(-\mu)_{P_\beta}\otimes_EC_c^\infty(N_\beta(\Qp),E)\otimes_EE_m\big).
\end{multline}
\'Ecrivant $C^{\an}_c(N(\Qp),E)\simeq C^{\an}(N_m,E)\bigoplus (\bigoplus_{m'\geq m+1}C^{\an}(N_{m'}\backslash N_{m'-1},E))$, on a comme en (\ref{decomp(-)})~:
\begin{equation}\label{decomp(-)bis}
M_\alpha\big(C^{\an}_c(N(\Qp),E)\otimes_EE(-\mu)\otimes_EE_m\big)\simeq \prod_{m'\geq m}M_{\alpha,m'}
\end{equation}
o\`u $\Gamma$ respecte chaque $M_{\alpha,m'}$, $\psi$ envoie $M_{\alpha,m'}$ dans $M_{\alpha,m'}\oplus M_{\alpha,m'+1}$ et o\`u, comme dans l'\'Etape $1$, on v\'erifie que chaque $M_{\alpha,m'}$ est libre de rang fini sur $\Rm^+$. L'argument de la deuxi\`eme partie de la preuve de la Proposition \ref{scinde3}, i.e. le scindage pour les mo\-dules de Fr\'echet sur $\R^+$ sous-jacents (cf. le (i) de la Remarque \ref{restevalable}) avec l'argument de la premi\`ere partie de la preuve du Lemme \ref{scindepsi} montrent qu'il existe $M\gg 0$ tel que le ``pull-back'' de (\ref{rstrictcinquo}) sur $\prod_{m'\geq M+1}M_{\alpha,m'}$ est scind\'e comme suite exacte de $(\psi,\Gamma)$-modules de Fr\'echet sur $\Rm^+$. Quotientant $V$ par $\prod_{m'\geq M+1}M_{\alpha,m'}$, on est donc ramen\'e \`a d\'emontrer l'\'enonc\'e pour une suite exacte~:
\begin{equation}\label{rstrictsex}
0\longrightarrow D_r\longrightarrow V\longrightarrow M_\alpha\big(C^{\an}_c(N_M,E)\otimes_EE(-\mu)\otimes_EE_m\big)\longrightarrow 0.
\end{equation}
Remarquant que, comme pour (\ref{malphaexpl2})~:
\begin{multline*}
M_\alpha\big(C^{\an}_c(N_M,E)\otimes_EE(-\mu)\otimes_EE_m\big)\\
\simeq M_\alpha\big(C^{\an}_c(N^\beta_M,E)\otimes_EL(-\mu)_{P_\beta}\otimes_EC^{\infty}_c(N_{\beta,M},E)\otimes_EE_m\big)
\end{multline*}
on a une suite exacte de $(\psi,\Gamma)$-modules de Fr\'echet sur $\Rm^+$~:
\begin{multline}\label{mwinftybis}
0\longrightarrow M_\alpha\big(C^{\an}_c(N^\beta_M\backslash N^\beta_m,E)\otimes_EL(-\mu)_{P_\beta}\otimes_EC^{\infty}_c(N_{\beta,M},E)\otimes_EE_m\big)\\
\longrightarrow M_\alpha\big(C^{\an}_c(N^\beta_M,E)\otimes_EL(-\mu)_{P_\beta}\otimes_EC^{\infty}_c(N_{\beta,M},E)\otimes_EE_m\big)\\
\longrightarrow M_\alpha\big(C^{\an}_c(N^\beta_m,E)\otimes_EL(-\mu)_{P_\beta}\otimes_EC^{\infty}_c(N_{\beta,M},E)\otimes_EE_m\big) \longrightarrow 0.
\end{multline}
Comme $\psi^{M-m}=0$ sur le premier terme en (\ref{mwinftybis}), l'argument dans la derni\`ere partie du Lemme \ref{scindepsi} montre que, quitte \`a augmenter $r$, le ``pull-back'' de (\ref{rstrictsex}) sur $M_\alpha(C^{\an}_c(N^\beta_M\backslash N^\beta_m,E)\otimes_EL(-\mu)_{P_\beta}\otimes_EC^{\infty}_c(N_{\beta,M},E)\otimes_EE_m)$ est scind\'e comme suite exacte de $(\psi,\Gamma)$-modules de Fr\'echet sur $\Rm^+$. Quotientant $V$ et explicitant le terme du bas en (\ref{mwinftybis}), on est donc ramen\'e \`a d\'emontrer l'\'enonc\'e pour une suite exacte~:
\begin{equation}\small\label{rstrictsept}
0\rightarrow D_r\rightarrow V\rightarrow C^{\an}_c(N_{\alpha,m},E)^\vee\otimes_E\big(E(-\mu)\otimes_E C^{\infty}_c(N_{\beta,M},E_m)(\eta^{-1})_{N_{\beta,m}}\big)^\vee \rightarrow 0.
\end{equation}
Le m\^eme argument encore permet de remplacer $C^{\an}_c(N_{\alpha,m},E)^\vee$ par $C^{\an}_c(N_{\alpha,0},E)^\vee$ (cf. la preuve de la Proposition \ref{celluleouverte}). Enfin, quitte \`a remplacer $m$ par $M$, on est finalement ramen\'e \`a $C^{\an}_c(N_{\alpha,0},E)^\vee\otimes_E\big(E(-\mu)\otimes_E C^{\infty}_c(N_{\beta,m},E_m)(\eta^{-1})_{N_{\beta,m}}\big)^\vee\simeq D_\alpha\big(C^{\an}_c(N^\beta(\Qp),\pi_{P_\beta})\big)_+\otimes_EE_m$, ce qui termine la preuve.
\end{proof}

On d\'emontre maintenant le Th\'eor\`eme \ref{gl3enplus}. On commence par le lemme suivant, valable pour tout groupe $G$ comme au \S~\ref{prel}.

\begin{lem}\label{borneS}
Soit $\pi$ une repr\'esentation admissible de $G(\Qp)$ sur $E$, $S\in \R^+$ et $m\in \Z_{\geq 0}$. Alors la topologie de Fr\'echet de $M_\alpha(\pi\otimes_EE_m)$ peut \^etre d\'efinie par des semi-normes $q$ v\'erifiant la condition~: pour tout $q$ il existe $C_{q,S}\in {\mathbb R}_{>0}$ tel que $q(Sv)\leq C_{q,S}q(v)$ pour tout $v\in M_\alpha(\pi\otimes_EE_m)$.
\end{lem}
\begin{proof}
Puisque $\pi^\vee\otimes_EE_m$ est admissible, par \cite[Prop.~6.5]{Sch} on peut \'ecrire $\pi^\vee\otimes_EE_m$ comme limite inductive de type compact $\pi^\vee\otimes_EE_m=\ilim{l}B_l$ o\`u $l$ parcourt un ensemble d\'enombrable d'indices et les $B_l$ sont des repr\'esentations localement analytiques de $N_0$ sur des $E_m$-espaces de Banach. Choisissons une norme $r_l$ sur chaque Banach dual $B_l^\vee$, de sorte que les semi-normes induites (encore not\'ees) $r_l:\pi^\vee\otimes_EE_m\simeq \plim{l}B_l^\vee\rightarrow B_l^\vee \buildrel r_l\over\rightarrow {\mathbb R}_{\geq 0}$ sur $\pi^\vee\otimes_EE_m$ d\'efinissent sa topologie de Fr\'echet. Comme chaque $B_l^\vee$ est un $D(N_0,E_m)$-module s\'epar\'ement continu par \cite[Prop.~3.2]{ST1}, donc {\it a fortiori} un $D(N_\alpha(\Qp)\cap N_0,E_m)$-module s\'epar\'ement continu, la multiplication par $S\in \R^+\otimes_EE_m\simeq D(N_\alpha(\Qp)\cap N_0,E_m)$ est continue sur $B_l$, i.e. il existe $D_{l,S}\in {\mathbb R}_{>0}$ tel que $r_l(Sv)\leq D_{l,S}r_l(v)$ pour tout $v\in \pi^\vee\otimes_EE_m$. Quitte \`a remplacer les semi-normes $r_l$ (resp. les constantes $D_{l,S}$) par les semi-normes $q$ (resp. les constantes $C_{q,S}$) d\'efinies comme $\max(r_l,l\in F)$ (resp. $\max(D_{l,S},l\in F)$) pour $F$ parcourant les sous-ensembles finis d'indices $l$ (cf. \cite[Rem.~4.7]{Sch}), on en d\'eduit le r\'esultat en utilisant (\ref{malpha}) et \cite[\S~5.B]{Sch}.
\end{proof}

Fixons maintenant une suite exacte $0\rightarrow \pi''\rightarrow \pi\rightarrow \pi'\rightarrow 0$ dans $\Rep(G(\Qp))$ ($G(\Qp)={\rm GL}_3(\Qp)$) satisfaisant les hypoth\`eses de la Conjecture \ref{representable} avec $\pi''$ {\it non} localement alg\'ebrique (cf. \S~\ref{preuvelocalg} pour le cas localement alg\'ebrique). Par la d\'efinition de $C_{\lambda,\alpha}$ (cf. d\'ebut du \S~\ref{prelgen}), on a une injection $\pi''\hookrightarrow I(\pi'')\=(\Ind_{P^-(\Qp)}^{G(\Qp)} L(-w\cdot \lambda)_{P}\otimes_E\pi_P^\infty)^{\an}$ avec $w\notin \{1,w_0\}$ et un conoyau soit nul, soit de la forme ${\mathcal F}_{P^-}^G(L^-(w'\cdot \lambda),\pi_P^\infty)$ pour $w'\notin \{1,s_\alpha\}$. Si $\pi_P^\infty$ est non g\'en\'erique, on a $F_\alpha(\pi'')=F_\alpha(I(\pi''))=0$ par le Th\'eor\`eme \ref{plusgeneral}, le Th\'eor\`eme \ref{encoreplusgeneral} et le (i) de la Proposition \ref{drex}, et si $\pi_P^\infty$ est g\'en\'erique, on a $F_\alpha(\pi'')\buildrel\sim\over\rightarrow F_\alpha(I(\pi''))$ par le Th\'eor\`eme \ref{liesocle}.

On suppose d'abord $P=P_\alpha$. Il r\'esulte du (i) du Lemme \ref{scinde1}, ou directement du Th\'eor\`eme \ref{encoreplusgeneral}, que $t^{1-\langle \lambda,\alpha^\vee\rangle}M_\alpha(\pi''\otimes_EE_m)=0$ pour $m\in \Z_{\geq 0}$ et que $F_\alpha(\pi'')=0$. Partant de la suite exacte courte dans la Proposition \ref{pushtechnique}, l'argument de la preuve du cas $\pi''=L(-\lambda)\otimes \pi^\infty$ au \S~\ref{preuvelocalg} lorsque $D_\alpha(\pi'')_+=0$ (i.e. pour $\pi^\infty$ {\it non} g\'en\'erique) s'\'etend alors essentiellement tel quel et montre que $F_\alpha(\pi)(-)\simeq E_\infty(\chi_{-\lambda})\otimes_{E}\Hom_{(\varphi,\Gamma)}(D_\alpha(\pi'),(-))\simeq F_\alpha(\pi')$.

Le lemme ci-dessous, valable pour $P\in \{P_\alpha,P_\beta\}$, ne sera utilis\'e que pour $P=P_\beta$.

\begin{lem}\label{remplace}
Supposons que $\pi''$ ne soit pas localement alg\'ebrique, il suffit de d\'emontrer le Th\'eor\`eme \ref{gl3enplus} avec $I(\pi'')$ au lieu de $\pi''$.
\end{lem}
\begin{proof}
On suppose que le conoyau $J(\pi'')$ de $\pi''\hookrightarrow I(\pi'')$ est non nul, i.e. $J(\pi'')\simeq {\mathcal F}_{P^-}^G(L^-(w'\cdot \lambda),\pi_P^\infty)$, sinon il n'y a rien \`a montrer. Soit $\widetilde \pi\=\pi\oplus_{\pi''}I(\pi'')$, par hypoth\`ese on a $F_\alpha(\widetilde\pi)(-)\simeq E_\infty(\chi_{-\lambda})\otimes_{E}\Hom_{(\varphi,\Gamma)}(D_\alpha(\widetilde\pi),-)$ avec une suite exacte $0\rightarrow D_{\alpha}(\pi')\rightarrow D_{\alpha}(\widetilde\pi)\rightarrow D_{\alpha}(\pi'')\rightarrow 0$ (o\`u l'on a utilis\'e $F_\alpha(\pi'')\buildrel\sim\over\rightarrow F_\alpha(I(\pi''))$). La suite exacte $0\rightarrow \pi \rightarrow \widetilde \pi\rightarrow J(\pi'')\rightarrow 0$ et le (i) de la Proposition \ref{drex} donnent une suite exacte $0\rightarrow F_\alpha(\pi)\rightarrow F_\alpha(\widetilde \pi)\rightarrow F_\alpha(J(\pi''))$. Mais par le Corollaire \ref{casparticuliers}, le Lemme \ref{egalchi} et le fait que $w'\notin \{1,s_\alpha\}$, le morphisme $F_\alpha(\widetilde \pi)\rightarrow F_\alpha(J(\pi''))$ est nul dans $F(\varphi,\Gamma)_\infty$, l'action de $\Gal(E_\infty/E)$ n'\'etant pas la m\^eme des deux c\^ot\'es. On a donc $F_\alpha(\pi)\buildrel\sim\over\rightarrow F_\alpha(\widetilde \pi)$ ce qui termine la preuve.
\end{proof}

On suppose maintenant $P=P_\beta$ et, changeant de notations, $\pi''=(\Ind_{P_\beta^-(\Qp)}^{G(\Qp)} L(-w\cdot \lambda)_{P_\beta}\otimes_E\pi_{P_\beta}^\infty)^{\an}$ (via le Lemme \ref{remplace}). La Proposition \ref{pushtechnique} donne alors (quitte \`a augmenter $m$ et $r$) une suite exacte de $(\psi,\Gamma)$-modules de Fr\'echet sur $\Rm^+$~:
\begin{equation}\label{avecmtildebis}
0\longrightarrow D_{\alpha}(\pi')_r\otimes_EE_m\longrightarrow \widetilde M_\alpha(\pi\otimes_EE_m)\longrightarrow M_\alpha(\pi''\otimes_EE_m)\longrightarrow 0
\end{equation}
avec un isomorphisme comme en (\ref{limsomamalg}) (par le m\^eme argument). Par le (ii) du Lemme \ref{scinde1} avec (\ref{malpha0}) et la phrase d'apr\`es, on a une suite exacte courte de $(\psi,\Gamma)$-modules de Fr\'echet sur $\Rm^+$~:
\begin{equation*}
0\longrightarrow M_\alpha(\pi^0_C\otimes_EE_m)\longrightarrow M_\alpha(\pi''\otimes_EE_m) \longrightarrow M_\alpha\big(C^{\an}_c(N^\beta(\Qp),\pi_{P_\beta})\otimes_EE_m\big) \longrightarrow 0.
\end{equation*}
De plus on d\'eduit facilement du Lemme \ref{borneS} (et de l'assertion analogue pour $D_r$ au lieu de $M_\alpha(\pi\otimes_EE_m)$) que la topologie de Fr\'echet sur $\widetilde M_\alpha(\pi\otimes_EE_m)$ peut \^etre d\'efinie par des semi-normes $q$ v\'erifiant $q(Sv)\leq C_{q,S}q(v)$. On peut donc appliquer la Proposition \ref{scinde1dur} au ``pull-back'' de (\ref{avecmtildebis}) le long de $M_\alpha(\pi^0_C\otimes_EE_m)\rightarrow M_\alpha(\pi''\otimes_EE_m)$. Avec la Remarque \ref{addendum}, on en d\'eduit (quitte \`a augmenter $r$) une section $M_\alpha(\pi^0_C\otimes_EE_m)\hookrightarrow \widetilde M_\alpha(\pi\otimes_EE_m)$ et un isomorphisme~:
\begin{multline*}
\lim_{\substack{\longrightarrow \\ (s,f_s,T_s)\in I(T)}}\!\!\Hom_{\psi,\Gamma}\big(\widetilde M_\alpha(\pi\otimes_EE_m)/M_\alpha(\pi^0_C\otimes_EE_m),T_s\otimes_EE_m\big)\\\buildrel\sim\over\longrightarrow \lim_{\substack{\longrightarrow \\ (s,f_s,T_s)\in I(T)}}\!\!\Hom_{\psi,\Gamma}\big(\widetilde M_\alpha(\pi\otimes_EE_m),T_s\otimes_EE_m\big).
\end{multline*}
Rempla\c cant $\widetilde M_\alpha(\pi\otimes_EE_m)$ par $\widetilde M_\alpha(\pi\otimes_EE_m)/M_\alpha(\pi^0_C\otimes_EE_m)$, on est ainsi ramen\'e \`a une suite exacte comme dans la Proposition \ref{scinde2ouvert}. Par {\it loc.cit.} et ce qui pr\'ec\`ede, on peut alors argumenter comme dans l'\'Etape $3$ du \S~\ref{preuvelocalg}. Cela ach\`eve la preuve du Th\'eor\`eme \ref{gl3enplus} pour $G={\rm GL}_3$. Noter que l'hypoth\`ese $\Hom_{(\varphi,\Gamma)}(D_\alpha(\pi''),D_\alpha(\pi'))=0$ n'est utilis\'ee que via le Lemme \ref{actiongalois}.

La preuve pour ${\rm GL}_2$ est beaucoup plus simple que pour ${\rm GL}_3$. On a d'abord $N^\alpha=N_m^\alpha=\{1\}$ et $\lambda_{\alpha^\vee}(x)=\diag(x,1)$, et par la Proposition \ref{choix} on peut choisir $N_m=\smat{1&\frac{1}{p^m}\Zp\\0&1}$ pour $m\in \Z_{\geq 0}$. Pour $\pi$ dans $\Rep(B(\Qp))$, on a $M_\alpha(\pi\otimes_E E_m)=(\pi\otimes_E E_m)^\vee\cong \pi^\vee\otimes_E E_m$ o\`u la structure de $\R^+$-module vient de l'action de $N_0=\smat{1&\Zp\\0&1}$ sur $\pi^\vee$, l'action de $\Gamma$ de celle de $\smat{\Zp^\times&0\\0&1}$, l'action de $\psi$ de celle de $\smat{\frac{1}{p}&0\\0&1}$ (rappelons que $B(\Qp)$ agit sur le dual $\pi^\vee$ comme en (\ref{actionsurdual})) et o\`u l'action de $\Gal(E_\infty/E)$ est triviale sur $\pi^\vee$. On voit donc que l'on a $\displaystyle F_\alpha(\pi)(T)=
E_\infty\otimes_E\!\!\lim_{\substack{\longrightarrow \\ (r,f_r,T_r)\in I(T)}}\Hom_{\psi,\Gamma}(\pi^\vee,T_r)$ (on pourrait en fait oublier $E_\infty$ dans ce cas). Par ailleurs les objets irr\'eductibles dans $C_{\lambda,\alpha}$ sont soit des repr\'esentations localement alg\'ebriques $L(-\lambda)\otimes_E\pi^\infty$, soit des s\'eries principales localement analytiques $(\Ind_{B^-(\Qp)}^{{\rm GL}_2(\Qp)} \chi_1\otimes \chi_2 )^{\an}$ pour des caract\`eres localement alg\'ebriques $\chi_i:\Qp^\times\rightarrow E^\times$ tels que la d\'eriv\'ee de $\chi_1\otimes \chi_2$ est $-s_\alpha\cdot\lambda$. Fixons une suite exacte $0\rightarrow \pi''\rightarrow \pi\rightarrow \pi'\rightarrow 0$ dans $\Rep(G(\Qp))$ ($G(\Qp)={\rm GL}_2(\Qp)$) comme dans la Conjecture \ref{representable}. Le cas $\pi''$ localement alg\'ebrique est trait\'e au \S~\ref{preuvelocalg}. Pour l'autre cas, on d\'ecompose la s\'erie principale $\pi''$ de la mani\`ere usuelle~:
$$0\longrightarrow (\pi''_C)^\vee\longrightarrow (\pi'')^\vee\longrightarrow \big((\cInd_{B^-(\Qp)}^{B^-(\Qp)B(\Qp)} \chi_1\otimes\chi_2)^{\an}\big)^\vee\longrightarrow 0.$$
La Proposition \ref{scinde1dur} prend dans ce cas la forme plus simple suivante.

\begin{prop}\label{scinde1gl2}
Soit $r\in \Q_{>p-1}$ et $D_r$ un $(\varphi,\Gamma)$-module g\'en\'eralis\'e sur $\Rr$. Alors toute suite exacte~:
$$0\longrightarrow D_r\longrightarrow V\longrightarrow (\pi''_C)^\vee\longrightarrow 0$$
de $(\psi,\Gamma)$-modules de Fr\'echet sur $\R^+$ o\`u la topologie de $V$ peut \^etre d\'efinie par des semi-normes $q$ telles qu'il existe $C_{q}\in {\mathbb R}_{>0}$ v\'erifiant $q(Xv)\leq C_q q(v)$ pour tout $v\in V$ est scind\'ee.
\end{prop}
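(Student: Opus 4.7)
The strategy mirrors that of Proposition \ref{scinde1dur}, but is considerably simpler because the one-dimensional character $\chi_1\otimes\chi_2$ plays the role of the Fréchet space $M^\alpha$ (which in the GL$_3$ setting was an infinite product $\prod_{m'\geq m}M^\alpha_{m'}$), so the $r$-augmentation step can be avoided altogether. First I would identify $(\pi''_C)^\vee$ explicitly. By an argument analogous to Lemma \ref{support1} (applied with $G={\rm GL}_2$ and $P=B^-$, incorporating the $w_0$-twist between the closed cell and the identity coset), one obtains a topological isomorphism of $(\psi,\Gamma)$-modules de Fréchet sur $\R^+$
\[
(\pi''_C)^\vee \simeq W\widehat\otimes_E D(N^-(\Qp),E)_{\{1\}}\cong W\{\{\mathfrak{y}_\alpha\}\}
\]
(the last via Lemma \ref{wyalpha}), where $W:=(\chi_1\otimes\chi_2)^\vee$ is one-dimensional. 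Under the identification $\R^+\simeq D(N_0,E)$, the element $t\in\R^+$ corresponds to $\mathfrak{u}_\alpha:=\smat{0&1\\0&0}\in\mathfrak{n}^+$, and the character $\chi_1\otimes\chi_2$ of $B^-$ being trivial on $\mathfrak{n}^-$ implies that $\mathfrak{u}_\alpha$ kills $W=W\otimes 1\subset(\pi''_C)^\vee$.

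Next I would prove the analog of Lemma \ref{techMalpha}: the finite-dimensional subspaces $M_N:=W\otimes_E(E\oplus E\mathfrak{y}_\alpha\oplus\cdots\oplus E\mathfrak{y}_\alpha^N)$ are sub-$(\psi,\Gamma)$-modules of $(\pi''_C)^\vee$ killed by $t^{N+1}$. The argument reproduces verbatim the calculation of Lemma \ref{techMalpha}, using the relation $[\mathfrak{u}_\alpha,\mathfrak{y}_\alpha]=\mathfrak{h}_\alpha$ (in $U(\mathfrak{g})$) together with the fact that $\mathfrak{h}_\alpha$ acts as a scalar on $W$; by induction on $N$ one gets $t\cdot M_N\subset M_{N-1}$. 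Stability by $\Gamma$ and $\psi$ is clear since $T(\Qp)$ acts on each $\mathfrak{y}_\alpha^n$ by a character.

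For each $N$, the pull-back $V_N$ of the given sequence along $M_N\hookrightarrow(\pi''_C)^\vee$ gives $0\to D_r\to V_N\to M_N\to 0$ with $t^{N+1}M_N=0$. Multiplication by $t^{N+1}$ defines a morphism $f_{N+1}\colon V_N\to D_r(\varepsilon^{-N-1})/(t^{N+1})$ as in (\ref{cobord}); its kernel $V_N[t^{N+1}]$ surjects onto $M_N$ with kernel $D_r[t^{N+1}]$. Because $D_r$ has $t$-torsion bounded by the structure theorem (\ref{forme}) and $M_N$ is of finite $E$-dimension, one can exhibit a $(\psi,\Gamma)$-equivariant section $M_N\hookrightarrow V_N$ at each cran, compatibly as $N$ grows, without needing to push out. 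Finally, the argument of Lemma \ref{casdavant} (Étapes 1--4) glues these sections into a continuous $\R^+$-linear section $s\colon W\{\{\mathfrak{y}_\alpha\}\}=(\pi''_C)^\vee\to V$ commuting with $\psi$ and $\Gamma$, via the formula $s(x)=\hat x-\frac{1}{t^{N+1}}(t^{N+1}\hat x)$ for $x\in M_N$ and $\hat x$ any lift. The continuity estimates corresponding to (\ref{majoralpha}) are much easier here since $W$ is one-dimensional and carries essentially a single norm, so the majorant controlling $q_k(s(w_n\otimes\mathfrak{y}_\alpha^n))$ in terms of geometrically decaying weights is immediate.

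The main obstacle is the existence of the $(\psi,\Gamma)$-equivariant sections of the $V_N$ in the third step without the freedom to augment $r$: for generic $D_r$ with torsion, the map $f_{N+1}$ need not land in $t^{N+1}D_r$. In the GL$_3$ situation this was precisely the point where the push-out along $D_r\to D_{r'}$ entered. Here, the one-dimensionality of the character allows the obstruction — an element of $\Ext^1_{(\psi,\Gamma)}(M_N,D_r)$ built from finitely many one-dimensional pieces $E\cdot(w\otimes\mathfrak{y}_\alpha^n)$ with the specific $\Gamma$-character on each — to be handled directly by adjusting the lift on each piece, exploiting that only finitely many of these characters coincide with a $\Gamma$-eigenvalue on the $t$-torsion part of $D_r$ (controlled by (\ref{forme})).
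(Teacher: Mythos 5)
Your overall strategy is the right one (reduce to a one-dimensional \og $M^\alpha$ \fg{} and run the argument of Lemma \ref{casdavant}), but you have missed the key simplification that makes this proposition much easier than Proposition \ref{scinde1dur}, and your replacement for it is not correct as stated. The paper observes that $X$ --- not just $t$ --- annihilates the one-dimensional space $(\chi_2\otimes\chi_1)^\vee$ (note also the $w_0$-twist: it is $(\chi_2\otimes\chi_1)^\vee$, not $(\chi_1\otimes\chi_2)^\vee$, and your justification via triviality on $\mathfrak n^-$ is misdirected; what matters is that the $\R^+$-structure comes from $N_0\subset N$, which acts trivially on the torus character). This is a strictly stronger statement than $t$ annihilating it, since $(t)\subsetneq(X)$ in $\R^+$, and it is crucial: $X$ is a unit in $\Rr$, so for \emph{any} generalized $(\varphi,\Gamma)$-module $D_r$ over $\Rr$, multiplication by $X^{N+1}$ is an automorphism of $D_r$. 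Consequently the section formula $s(x)=\widehat x-\frac{1}{X^{N+1}}(X^{N+1}\widehat x)$ is always well-defined and independent of the lift $\widehat x$, without needing the pull-backs $V_N$ to be split and without any hypothesis on the $t$-torsion of $D_r$ --- this is exactly why the paper notes the torsion-freeness hypothesis is unnecessary here. Running the continuity argument of Lemma \ref{casdavant} with $X$ in place of $t$ then closes the proof immediately.

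In contrast, you work throughout with $t$, which has zeros in the annulus $p^{-1/r}\leq\lvert\cdot\rvert<1$, so you must confront the $t$-torsion of $D_r$. You identify this as \og the main obstacle\fg{} and propose to handle it by inspecting the $\Gamma$-characters on the pieces $E\cdot(w\otimes\mathfrak y_\alpha^n)$ and the $\Gamma$-spectrum of the $t$-torsion of $D_r$, claiming only finitely many coincide. This is not carried out and is not obviously sound: a priori the obstruction to splitting $V_N$ lives in an $\Ext^1$ of $(\psi,\Gamma)$-modules, not $(\varphi,\Gamma)$-modules, and neither the existence of sections at each cran nor their compatibility as $N$ grows (needed for the glue argument of Lemma \ref{casdavant} to apply) follows from the spectral assertion you give. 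The paper's use of $X$ removes the obstruction at the source rather than attempting to kill it afterwards.
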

\begin{proof}
En remarquant que $(\pi_C)^\vee\simeq (\chi_2\otimes \chi_1)^\vee \otimes_ED(N^-(\Qp),E)_{\{1\}}$ par le Lemme \ref{support1} et que $X$ annule $(\chi_2\otimes \chi_1)^\vee$, la preuve est la m\^eme, en beaucoup plus simple, que celle du Lemme \ref{casdavant} avec l'espace de dimension un $(\chi_2\otimes \chi_1)^\vee$ au lieu de $M^\alpha$ et $X$ au lieu de $t$ (cf. aussi la fin de l'\'Etape $2$ dans la preuve de la Proposition \ref{scinde1dur}). On laisse les d\'etails au lecteur int\'eress\'e (noter que l'on n'a pas besoin de l'hypoth\`ese $D_r$ sans torsion).
\end{proof}

La \ \ Proposition \ \ \ref{scinde2ouvert} \ \ reste \ \ \'egalement \ \ valable \ \ en \ \ rempla\c cant $M_\alpha(C^{\an}_c(N^\beta(\Qp),\pi_{P_\beta})\otimes_EE_m)$ par $((\cInd_{B^-(\Qp)}^{B^-(\Qp)B(\Qp)}\chi_1\otimes\chi_2)^{\an})^\vee\otimes_EE_m$. On termine la preuve de la m\^eme mani\`ere (noter que l'on n'utilise pas $\Hom_{(\varphi,\Gamma)}(D_\alpha(\pi''),D_\alpha(\pi'))=0$).

\section{Foncteurs $F_\alpha$, groupes Ext${}^1$ et compatibilit\'e local-global}\label{calpha}

On revisite la conjecture de compatibilit\'e local-global \cite[Conj.~6.1.1]{Br1} lorsque $F^+=\Q$, en particulier on la reformule de mani\`ere plus pr\'ecise en termes de $(\varphi,\Gamma)$-modules en utilisant les foncteurs $F_\alpha$. On donne plusieurs cas particuliers et r\'esultats partiels sur cette reformulation.

\subsection{Pr\'eliminaires de th\'eorie de Hodge $p$-adique}\label{hodge}

Les r\'esultats de ce paragraphe, pas exemple la Proposition \ref{lienfilmax} ci-dessous, sont enti\`erement ``c\^ot\'e Galois'' et ne sont (probablement) pas nouveaux, mais nous donnons une formulation adapt\'ee pour \^etre utilis\'ee dans l'\'enonc\'e de la Conjecture \ref{extglobmieux} ci-dessous ainsi que dans la preuve de cas particuliers (\S~\ref{carparticuliers}).

On appelle ``morphisme naturel d'espaces vectoriels'' tout morphisme qui est cano\-nique \`a multiplication pr\`es par un scalaire non nul. Dans ce paragraphe, on note avec une majuscule $\mathcal D$ les $(\varphi,\Gamma)$-modules sur $\R$ afin de les distinguer des modules de Deligne-Fontaine $D$.

On fixe un syst\`eme compatible de racines primitives $p^n$-i\`emes de l'unit\'e $(\zeta_{p^n})_{n\geq 1}$ dans $\Qpbar\hookrightarrow B^+_{\rm dR}$, ce qui permet de voir $t=\log(1+X)$ comme un \'el\'ement de $B^+_{\rm dR}$. Rappelons que, pour $r\in \Q_{>p-1}$ et $n\geq n(r)$ (o\`u $n(r)$ est le plus petit entier $n$ tel que $p^{n(r)-1}(p-1) \geq r$), on a un morphisme de $\Qp$-alg\`ebres~:
\begin{equation*}
\iota_n: {\mathcal R}_{\Q_p}^r \longrightarrow \Q_p(\zeta_{p^n})[[t]]
\end{equation*}
qui envoie $\sum a_i X^i$ vers $\sum a_i (\zeta_{p^n}\exp(t/p^n)-1)^i$. Il est clair que $\iota_n$ est $\Gamma$-\'equivariant. On note encore $\iota_n$ la compos\'ee $\iota_n: {\mathcal R}_{\Q_p}^r \rightarrow \Q_p(\zeta_{p^n})[[t]] \hookrightarrow B_{\rm dR}^+$, qui est alors $\gp$-\'equivariante (o\`u l'action de $\gp$ sur $ {\mathcal R}_{\Q_p}^r$ se factorise par $\Gamma$). Si ${\mathcal D}$ est un $(\varphi,\Gamma)$-module sans torsion sur $\R$ et ${\mathcal D}_r\subseteq {\mathcal D}$ un $(\varphi,\Gamma)$-module sur $\Rr$ tel que $\R\otimes_{\Rr}{\mathcal D}_r\buildrel\sim\over\rightarrow {\mathcal D}$, pour $n\geq n(r)$ on note $D_{\mathrm{diff},n}^+({\mathcal D})\=\Q_p(\zeta_{p^n})[[t]]\otimes_{ {\mathcal R}_{\Q_p}^r}^{\iota_n} {\mathcal D}_{r}$ (un $\Q_p(\zeta_{p^n})[[t]]\otimes_{\Qp}E$-module libre de rang fini muni de l'action diagonale de $\Gamma$) et $W_{\rm dR}^+({\mathcal D})\= B_{\rm dR}^+ \otimes_{ {\mathcal R}_{\Q_p}^r}^{\iota_n} {\mathcal D}_{r}\simeq B_{\rm dR}^+ \otimes_{\Q_p(\zeta_{p^n})[[t]]} D_{\mathrm{diff},n}^+({\mathcal D})$ (muni de l'action diagonale de $\gp$) qui ne d\'epend pas de $n$ suffisamment grand (\cite[Prop. 2.2.6(2)]{Be3}). On rappelle que $H^i_{(\varphi,\Gamma)}({\mathcal D})\=\Ext^i_{(\varphi,\Gamma)}(\R,{\mathcal D})$.

\begin{lem}\label{h0t}
Soit ${\mathcal D}_1 \subseteq {\mathcal D}_2$ deux $(\varphi,\Gamma)$-modules sans torsion sur ${\mathcal R}_E$ de m\^eme rang, alors on a un isomorphisme canonique~:
 \begin{equation}\label{tor0}
 H^0_{(\varphi,\Gamma)}({\mathcal D}_2/{\mathcal D}_1) \buildrel\sim\over\longrightarrow H^0\big(\gp, W_{\rm dR}^+({\mathcal D}_2)/W_{\rm dR}^+({\mathcal D}_1)\big).
 \end{equation}
\end{lem}
\begin{proof}
Les deux $E$-espaces vectoriels en (\ref{tor0}) sont de dimension finie~: par \cite[Th.~5.3(1)]{Li} pour celui de gauche et par \cite[Cor.~5.7]{Na2} (avec la suite exacte longue de cohomologie associ\'ee \`a $0\rightarrow W_{\rm dR}^+({\mathcal D}_1)\rightarrow W_{\rm dR}^+({\mathcal D}_2)\rightarrow W_{\rm dR}^+({\mathcal D}_2)/W_{\rm dR}^+({\mathcal D}_1)\rightarrow 0$) pour celui de droite. Par d\'efinition de $W_{\rm dR}^+(-)$ on a un isomorphisme $\gp$-\'equivariant pour $n\gg 0$~:
\begin{equation}\label{tor3}
W_{\rm dR}^+({\mathcal D}_2)/W_{\rm dR}^+({\mathcal D}_1) \simeq B_{\rm dR}^+ \otimes_{\Q_p(\zeta_{p^n})[[t]]} \big(D_{\mathrm{diff},n}^+({\mathcal D}_2)/D_{\mathrm{diff},n}^+({\mathcal D}_1)\big).
\end{equation}

\noindent
{\bf \'Etape 1}\\
On montre que (\ref{tor3}) induit un isomorphisme quitte \`a augmenter encore $n$~:
\begin{equation}\label{tor1}
H^0\big(\gp, W_{\rm dR}^+({\mathcal D}_2)/W_{\rm dR}^+({\mathcal D}_1)\big) \buildrel\sim\over\longleftarrow \big(D_{\mathrm{diff},n}^+({\mathcal D}_2)/D_{\mathrm{diff},n}^+({\mathcal D}_1)\big)^{\Gamma}.
\end{equation}
Notons ${\mathbb Q}_{p,\infty}\=\cup_n\Qp(\zeta_n)$, $H\={\rm Gal}(\overline\Qp/{\mathbb Q}_{p,\infty})$ et $L_{\rm dR}^+\=(B^+_{\rm dR})^H$ qui est un anneau de valuation discr\`ete complet d'uniformisante $t$ et de corps r\'esiduel le compl\'et\'e $p$-adique de ${\mathbb Q}_{p,\infty}$ (\cite[Prop.~3.3]{Fo}). En particulier $L_{\rm dR}^+$ contient ${\mathbb Q}_{p,\infty}[[t]]$. Soit $W\=W_{\rm dR}^+({\mathcal D}_2)/W_{\rm dR}^+({\mathcal D}_1)$, qui est un $B^+_{\rm dR}$-module de longueur finie. Il suit de \cite[Th.~3.5]{Fo} que l'on a un isomorphisme $B^+_{\rm dR}\otimes_{L^+_{\rm dR}}W^H\buildrel\sim\over\rightarrow W$ (et donc que $W^H$ est un $L_{\rm dR}^+$-module de longueur finie). De (\ref{tor3}) on d\'eduit facilement une injection $\Gamma$-\'equivariante de $L_{\rm dR}^+$-modules de longueur finie $L_{\rm dR}^+ \otimes_{\Q_p(\zeta_{p^n})[[t]]} (D_{\mathrm{diff},n}^+({\mathcal D}_2)/D_{\mathrm{diff},n}^+({\mathcal D}_1))\hookrightarrow W^H$ (pour $n\gg 0$) qui, par extension des scalaires de $L_{\rm dR}^+$ \`a $B_{\rm dR}^+$, devient donc un isomorphisme de $B_{\rm dR}^+$-modules par (\ref{tor3}) et ce qui pr\'ec\`ede. On en d\'eduit facilement que l'on a en fait un isomorphisme $\Gamma$-\'equivariant~:
\begin{equation}\label{tor4}
W^H\buildrel\sim\over\longleftarrow L_{\rm dR}^+ \otimes_{\Q_p(\zeta_{p^n})[[t]]} (D_{\mathrm{diff},n}^+({\mathcal D}_2)/D_{\mathrm{diff},n}^+({\mathcal D}_1)).
\end{equation}
De m\^eme, par (\ref{tor4}) et \cite[Th.~3.6]{Fo} appliqu\'e \`a $X=W^H$, on d\'eduit une injection $\Gamma$-\'equivariante de ${\mathbb Q}_{p,\infty}[[t]]$-modules de longueur finie ${\mathbb Q}_{p,\infty}[[t]]\otimes_{\Q_p(\zeta_{p^n})[[t]]} (D_{\mathrm{diff},n}^+({\mathcal D}_2)/D_{\mathrm{diff},n}^+({\mathcal D}_1))\hookrightarrow (W^H)_f$ (cf. \cite[Th.~3.6]{Fo} pour $(W^H)_f\subseteq W^H$) qui devient un isomorphisme apr\`es extension des scalaires de ${\mathbb Q}_{p,\infty}[[t]]$ \`a $L_{\rm dR}^+$, donc qui est en fait un isomorphisme $\Gamma$-\'equivariant (pour $n\gg 0$)~:
\begin{equation}\label{tor5}
(W^H)_f\buildrel\sim\over\longleftarrow {\mathbb Q}_{p,\infty}[[t]] \otimes_{\Q_p(\zeta_{p^n})[[t]]} (D_{\mathrm{diff},n}^+({\mathcal D}_2)/D_{\mathrm{diff},n}^+({\mathcal D}_1)).
\end{equation}
Comme $(W^H)^\Gamma=H^0(\gp, W_{\rm dR}^+({\mathcal D}_2)/W_{\rm dR}^+({\mathcal D}_1))$ est de dimension finie, on a $((W^H)_f)^\Gamma=(W^H)^\Gamma$ et il existe $m\gg n$ tel que tous les \'el\'ements de $((W^H)_f)^\Gamma$ proviennent via (\ref{tor5}) d'\'el\'ements de ${\mathbb Q}_{p}(\zeta_{p^m})[[t]] \otimes_{\Q_p(\zeta_{p^n})[[t]]} (D_{\mathrm{diff},n}^+({\mathcal D}_2)/D_{\mathrm{diff},n}^+({\mathcal D}_1))$. Par l'isomorphisme~:
\begin{eqnarray*}
{\mathbb Q}_{p}(\zeta_{p^m})[[t]] \otimes_{\Q_p(\zeta_{p^n})[[t]]} (D_{\mathrm{diff},n}^+({\mathcal D}_2)/D_{\mathrm{diff},n}^+({\mathcal D}_1))&\buildrel\sim\over\longrightarrow &D_{\mathrm{diff},m}^+({\mathcal D}_2)/D_{\mathrm{diff},m}^+({\mathcal D}_1)\\
a\otimes (b\otimes d)&\longmapsto &ab\varphi^{m-n}(d)
\end{eqnarray*}
($a\in {\mathbb Q}_{p}(\zeta_{p^m})[[t]]$, $b\in {\mathbb Q}_{p}(\zeta_{p^n})[[t]]$, $d\in {\mathcal D}_{2,r}/{\mathcal D}_{1,r}$), quitte \`a remplacer $n$ par $m$, on en d\'eduit (\ref{tor1}).

\noindent
{\bf \'Etape 2}\\
On termine la preuve. Soit $r\in \Q_{>p-1}$ tel que l'injection ${\mathcal D}_1\hookrightarrow {\mathcal D}_2$ est induite par une injection ${\mathcal D}_{1,r} \hookrightarrow {\mathcal D}_{2,r}$ de $(\varphi,\Gamma)$-modules sur $ {\mathcal R}_E^r$ ((iii) du Lemme 2.2.3). Par \cite[Prop.~4.4(1)]{Li} et la preuve de \cite[Th.~4.7]{Li} (en parti\-culier la partie montrant que $H^0(S)$ est de dimension finie), le morphisme canonique surjectif~:
\begin{equation*}
{\mathcal D}_{2,r}/{\mathcal D}_{1,r} \longrightarrow D_{\mathrm{diff},n}^+({\mathcal D}_2)/D_{\mathrm{diff},n}^+({\mathcal D}_1)\cong \Q_p(\zeta_{p^n})[[t]]\otimes_{ {\mathcal R}_{\Q_p}^r}^{\iota_n} ({\mathcal D}_{2,r}/{\mathcal D}_{1,r})
\end{equation*}
induit un isomorphisme canonique pour $n\gg 0$~:
\begin{equation*}
H^0_{(\varphi,\Gamma)}({\mathcal D}_2/{\mathcal D}_1) \buildrel\sim\over\longrightarrow \big(D_{\mathrm{diff},n}^+({\mathcal D}_2)/D_{\mathrm{diff},n}^+({\mathcal D}_1)\big)^{\Gamma}.
\end{equation*}
En le composant avec (\ref{tor1}), on obtient un isomorphisme comme en (\ref{tor0}), dont on v\'erifie facilement qu'il ne d\'epend pas du choix de $n\gg 0$.
\end{proof}

Si $L$ est une extension finie de $\Qp$, on note $L_0$ sa sous-extension non ramifi\'ee maximale, et l'on renvoie \`a \cite[\S~2.2]{Br1} pour la d\'efinition des modules de Deligne-Fontaine $(D,\varphi,N,\glp)$ sur $L_0\otimes_{\Qp}E$ qui deviennent ``non ramifi\'es sur $L$'' (l'extension $L$ est not\'ee $L'$ dans {\it loc.cit.}). Dans la suite, on note un module de Deligne-Fontaine juste par son $L_0\otimes_{\Qp}E$-module libre sous-jacent $D$ (les op\'erateurs $\varphi$, $N$ sur $D$ et l'action de $\glp$ sur $D_L\=L\otimes_{L_0}D$ \'etant sous-entendus). Si $D$ est un module de Deligne-Fontaine, on note $\mathcal D$ le $(\varphi,\Gamma)$-module sur $\R$ associ\'e \`a $D$ muni de la filtration d\'ecroissante triviale, i.e. ${\rm Fil}^0(D_L)=D_L$ et ${\rm Fil}^1(D_L)=0$, par l'\'equivalence de cat\'egories \cite[Th.~A]{Be2} de Berger ($\mathcal D$ est aussi appel\'e l'\'equation diff\'erentielle $p$-adique associ\'ee \`a $D$). Noter que si l'on remplace cette filtration par ${\rm Fil}^{-h}(D_L)=D_L$ et ${\rm Fil}^{-h+1}(D_L)=0$ pour $h\in \Z$, alors on remplace $\mathcal D$ par ${\mathcal D}(x^h)={\mathcal D}\otimes_{\R}\R(x^h)$. Rappelons par ailleurs que le Th\'eor\`eme de Hilbert $90$ implique $L\otimes_{\Qp}D_L^{\glp}\buildrel\sim\over\rightarrow D_L$ et que se donner une filtration $\glp$-\'equivariante sur $D_L$ est \'equivalent via $L\otimes_{\Qp}(-)$ \`a se donner une filtration sur le $E$-espace vectoriel $D_L^{\glp}$.

\begin{prop}\label{lienfilmax}
(i) Soit $h_1>h_2$ dans $\Z$ et $F\subseteq D_L^{\glp}$ un sous-$E$-espace vectoriel de dimension $1$, alors le $(\varphi,\Gamma)$-module libre sur $\R$ associ\'e par \cite[Th.~A]{Be2} \`a $D$ muni de la filtration \`a deux crans ${\rm Fil}^{-h_1}(D_L)=D_L$, ${\rm Fil}^{-h}(D_L)=L\otimes_{\Qp}F$ pour $h\in \{h_2,\dots,h_1-1\}$, ${\rm Fil}^{-h_2+1}(D_L)=0$ est une extension non scind\'ee de $\R(x^{h_2})/(t^{h_1-h_2})$ par ${\mathcal D}(x^{h_1})$. De plus toute extension non scind\'ee de $\R(x^{h_2})/(t^{h_1-h_2})$ par ${\mathcal D}(x^{h_1})$ provient comme cela d'un (unique) sous-$E$-espace vectoriel de dimension $1$ de $D_L^{\glp}$.\\
(ii) Soit $h_1>h_2$ dans $\Z$, il existe un isomorphisme naturel de $E$-espaces vectoriels~:
\begin{equation}\label{versfilmax}
{\rm Ext}^1_{(\varphi,\Gamma)}\big(\R(x^{h_2})/(t^{h_1-h_2}),{\mathcal D}(x^{h_1})\big)\buildrel\sim\over\longrightarrow D_L^{\glp}
\end{equation}
qui envoie la droite dans ${\rm Ext}^1_{(\varphi,\Gamma)}(\R(x^{h_2})/(t^{h_1-h_2}),{\mathcal D}(x^{h_1}))$ engendr\'ee par une extension non scind\'ee vers le sous-$E$-espace vectoriel de dimension $1$ de $D_L^{\glp}$ associ\'e par (i) \`a cette extension.
\end{prop}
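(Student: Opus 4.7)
The plan is to apply Berger's equivalence \cite[Th.~A]{Be2} and translate the problem into a computation with $B_{\dR}^+$-lattices. Berger's theorem attaches to a filtered $(\varphi,N,\glp)$-module $(D,\Fil^\bullet)$ the unique $(\varphi,\Gamma)$-module $\mathcal{M}(D,\Fil^\bullet)$ over $\R$, free of the same rank as $D$, whose de Rham lattice is $W_{\dR}^+(\mathcal{M}(D,\Fil^\bullet))=\sum_j t^{-j}B_{\dR}^+\otimes_L\Fil^j(D_L)$ inside $B_{\dR}\otimes_L D_L$. The formalism extends to torsion $(\varphi,\Gamma)$-modules via pairs of such lattices, compatibly with \cite{Li}.

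For (i), a direct calculation with the filtration $\Fil_F$ yields
\begin{equation*}
W_{\dR}^+(\mathcal{M}(D,\Fil_F))=t^{h_1}B_{\dR}^+\otimes_LD_L+t^{h_2}B_{\dR}^+\otimes_{\Qp}F,
\end{equation*}
whereas $W_{\dR}^+(\mathcal{D}(x^{h_1}))=t^{h_1}B_{\dR}^+\otimes_LD_L$. The quotient $(t^{h_2}B_{\dR}^+/t^{h_1}B_{\dR}^+)\otimes_{\Qp}F$ carries the natural $\gp$-action on the first factor and the trivial action on $F\subseteq D_L^{\glp}$; after choosing a basis of $F$, this matches $W_{\dR}^+\bigl(\R(x^{h_2})/(t^{h_1-h_2})\bigr)$ as a $\gp$-module. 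Functoriality of Berger's correspondence in the torsion setting then yields the claimed short exact sequence
\begin{equation*}
0\longrightarrow\mathcal{D}(x^{h_1})\longrightarrow\mathcal{M}(D,\Fil_F)\longrightarrow\R(x^{h_2})/(t^{h_1-h_2})\longrightarrow 0,
\end{equation*}
and the extension is non-split because $\mathcal{M}(D,\Fil_F)$ is torsion-free over $\R$. Conversely, given a non-split extension $M$, Berger's equivalence identifies $M$ with $\mathcal{M}(D,\Fil')$ for a unique filtration $\Fil'$ enlarging that of $\mathcal{D}(x^{h_1})$ by a rank-one $(B_{\dR}^+/t^{h_1-h_2})$-submodule of $W_{\dR}^+(M)/W_{\dR}^+(\mathcal{D}(x^{h_1}))$ with trivial $\gp$-action; Hilbert~90 (or equivalently Lemma~\ref{h0t}) extracts from this a unique one-dimensional $F\subseteq D_L^{\glp}$ recovering $\Fil'=\Fil_F$.

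For (ii), the map $(\ref{versfilmax})$ sends the split class to $0$ and a non-split class $[M]$ to the line $F([M])\subseteq D_L^{\glp}$ from (i), inducing a bijection on projective spaces of non-zero classes. To upgrade this to an $E$-linear isomorphism, I would verify compatibility with Baer sums and scalar multiplication by a direct calculation at the level of $W_{\dR}^+$-lattices: the Baer sum of extensions associated to $F_i=E\cdot f_i$ corresponds to the lattice generated by the sum of the individual extra pieces, yielding the line $E\cdot(f_1+f_2)$ (or the split extension when $f_1+f_2=0$), and similarly for scalars. An alternative, cohomological verification uses the free resolution $0\to\R(x^{h_1})\xrightarrow{\cdot\,t^{h_1-h_2}}\R(x^{h_2})\to\R(x^{h_2})/(t^{h_1-h_2})\to 0$ and the induced long exact sequence in $\Ext^*_{(\varphi,\Gamma)}(-,\mathcal{D}(x^{h_1}))$, together with Lemma~\ref{h0t} applied to the inclusion $\mathcal{D}(x^{h_1})\subset\mathcal{D}(x^{h_2})$; this identifies the relevant subquotient of $\Ext^1$ with $H^0\bigl(\gp,W_{\dR}^+(\mathcal{D}(x^{h_2}))/W_{\dR}^+(\mathcal{D}(x^{h_1}))\bigr)\simeq D_L^{\glp}$.

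The main obstacle is the careful bookkeeping of Berger's sign conventions and Tate twists, together with the verification that the extension of his correspondence to the torsion setting is functorial in a way compatible with Lemma~\ref{h0t}. Matching the pointwise prescription of (i) with the natural cohomological construction in (ii) also demands a concrete check on a representative, and the precise form of the natural isomorphism may involve a universal scalar that needs to be normalized.
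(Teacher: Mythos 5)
Your proposal correctly identifies the right tools (Berger's equivalence, a computation at the level of $B_{\dR}^+$-lattices, and Lemma~\ref{h0t}), and the cohomological construction of the isomorphism in (ii) via the resolution $0\to\R(x^{h_1})\xrightarrow{\,t^{h_1-h_2}}\R(x^{h_2})\to\R(x^{h_2})/(t^{h_1-h_2})\to 0$ is indeed how the paper proceeds. The overall skeleton matches. However, there are two genuine gaps in (i) that the paper fills with substantive arguments.

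First, knowing that the quotient $W_{\dR}^+(\mathcal{M}(D,\Fil_F))/W_{\dR}^+(\mathcal{D}(x^{h_1}))$ matches $(t^{h_2}B_{\dR}^+/t^{h_1}B_{\dR}^+)\otimes_{\Qp}E$ as a $\gp$-module does \emph{not} by itself pin down the cokernel of $\mathcal{D}(x^{h_1})\hookrightarrow\mathcal{M}(D,\Fil_F)$ as the $(\varphi,\Gamma)$-module $\R(x^{h_2})/(t^{h_1-h_2})$: a torsion $(\varphi,\Gamma)$-module is not recovered from such lattice data, and indeed $W_{\dR}^+(-)$ is not even defined for torsion modules in the usual way. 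The paper instead computes $D_{\mathrm{diff},n}^+(\mathcal{D}_2)/D_{\mathrm{diff},n}^+(\mathcal{D}_1)\cong(E\otimes_{\Qp}\Qp(\zeta_{p^n}))[[t]]/(t^{h_1-h_2})$, uses \cite[Prop.~4.4]{Li} to get the $\R$-module type $\R/(t^{h_1-h_2})$, and then deploys Lemma~\ref{h0t} (and its corollary (\ref{dr})) to locate the unique twist $h$ with $H^0_{(\varphi,\Gamma)}(\mathrm{coker}(f)(x^{-h}))\ne 0$ and to rule out $s<h_1-h_2$ in the injection $\R/(t^s)\hookrightarrow\mathrm{coker}(f)(x^{-h_2})$. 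Appealing to ``functoriality of Berger's correspondence in the torsion setting'' is not a substitute for this computation.

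Second, in the converse direction of (i), you invoke Berger's equivalence to write a non-split extension $M$ as $\mathcal{M}(D,\Fil')$. But Berger's Theorem~A applies to \emph{torsion-free de Rham} $(\varphi,\Gamma)$-modules, and neither property of $M$ is automatic. The paper proves torsion-freeness by an indirect argument: if $M$ had torsion, the quotient by it would be a non-split extension of $\R(x^{h_2})/(t^h)$ by $\mathcal{D}(x^{h_1})$ for some $0<h<h_1-h_2$, and Lemma~\ref{h0t} plus the dimension formula shows the relevant $\Ext^1$ vanishes for such $h$, a contradiction. It then observes that de Rham-ness of $M$ follows from that of $\mathcal{D}(x^{h_1})$ since $D_{\mathrm{diff},n}^+(M)[1/t]=D_{\mathrm{diff},n}^+(\mathcal{D}(x^{h_1}))[1/t]$. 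Your sketch omits this entirely, so the appeal to the equivalence is unjustified as written. Filling these two gaps is precisely the content of the paper's proof of (i); once done, your approach to (ii) is essentially identical to the paper's, including the final diagram chase matching the cohomological map with the pointwise prescription of (i).
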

\begin{proof}
On note $d\=\dim_E D_L^{\mathrm{Gal}(L/\mathbb{Q}_p)}$.\\
(i) Montrons la premi\`ere assertion. Notons $\mathrm{Fil}_1^\cdot (D_L)$ (resp. $\mathrm{Fil}_2^\cdot (D_L)$) la filtration sur $D_L$ donn\'ee par $\mathrm{Fil}_1^{-h_1}(D_L)=D_L$ et $\mathrm{Fil}_1^{-h_1+1}(D_L)=0$ (resp. par $\mathrm{Fil}_2^{-h_1}(D_L)=D_L$, $\mathrm{Fil}^{-h}(D_L)= L\otimes_{\mathbb{Q}_p} F$ pour $h\in \{h_2, \dots, h_1-1\}$ et $\mathrm{Fil}^{-h_2+1}(D_L)=0$). Notons $\mathcal{{\mathcal D}}_i$ le $(\varphi,\Gamma)$-module sur $ {\mathcal R}_E$ associ\'e \`a $(D, \mathrm{Fil}_i^\cdot(D_L))$ (cf. \cite[D\'ef.~II.2.4]{Be2}), qui est alors libre de rang $d$ sur $ {\mathcal R}_E$. Le morphisme naturel $(D, \mathrm{Fil}_1^\cdot) \rightarrow (D, \mathrm{Fil}_2^\cdot)$ induit un morphisme $f:\mathcal{D}_1 \rightarrow \mathcal{D}_2$. Par \cite[Th.II.2.6]{Be2}, ce morphisme est injectif (sinon le noyau donnerait un sous-objet de $(D, \mathrm{Fil}_1^\cdot)$ envoy\'e vers $0$ dans $(D, \mathrm{Fil}_2^\cdot)$). Soit $r\in \mathbb{Q}_{>0}$ suffisamment grand tel que $\mathcal{D}_1 \hookrightarrow \mathcal{D}_2$ est induit par un morphisme $f_r: \mathcal{D}_{1,r} \hookrightarrow \mathcal{D}_{2,r}$ de $(\varphi,\Gamma)$-modules sur ${\mathcal R}_E^r$ ((iii) du Lemme 2.2.3). Alors $\mathrm{coker}(f_r)$ est un $(\varphi,\Gamma)$-module g\'en\'eralis\'e de torsion sur ${\mathcal R}_E^r$. Par \cite[Prop.~II.2.5]{Be2} (et sa preuve) on a un isomorphisme naturel pour $n\gg 0$ (en fait $n\geq n(r)$)~:
\begin{equation}\label{diff}
D_{\mathrm{diff},n}^+(\mathcal{D}_i)\buildrel\sim\over\longrightarrow \mathrm{Fil}^0 \big(\Q_p(\zeta_{p^n})((t))\otimes_{\Q_p} D_L^{\mathrm{Gal}(L/\mathbb{Q}_p)}\big)
\end{equation}
o\`u $D_L$ est muni de la filtration $\mathrm{Fil}_i^\cdot$ et $\mathrm{Fil}^j (\Q_p(\zeta_{p^n})((t)))\=t^j \Q_p(\zeta_{p^n})[[t]]$ pour $j\in \Z$. Par (\ref{diff}), on v\'erifie que $D_{\mathrm{diff},n}^+(\mathcal{D}_2)/D_{\mathrm{diff},n}^+(\mathcal{D}_1)$ est isomorphe \`a $(E\otimes_{\Q_p} \Q_p(\zeta_{p^n}))[[t]]/(t^{h_1-h_2})$. Par \cite[Prop. 4.4]{Li} (et la discussion avant \cite[Th.4.3]{Li}), on en d\'eduit un isomorphisme $\mathrm{coker}(f)\cong {\mathcal R}_E/(t^{h_1-h_2})$ comme $ {\mathcal R}_E$-modules. Soit $W_{\mathrm{dR}}^+({\mathcal D}_i)\=B_{\rm dR}^+ \otimes_{ {\mathcal R}_E^r, \iota_n} \mathcal{D}_{i,r} \cong B_{\rm dR}^+\otimes_{\Q_p(\zeta_{p^n})[[t]]} D_{\mathrm{diff},n}^+(\mathcal{D}_i)$ pour $n\gg 0$, par le Lemme \ref{h0t} on a pour $h\in \mathbb{Z}$ un isomorphisme~:
\begin{equation}\label{bpa}
 H^0_{(\varphi,\Gamma)}(\mathrm{coker}(f)(x^{-h})) \cong H^0\big(\gp, t^{-h} W_{\mathrm{dR}}^+({\mathcal D}_2)/ t^{-h}W_{\mathrm{dR}}^+({\mathcal D}_1)\big).
\end{equation}
Par (\ref{diff}) et (\ref{tor1}), on voit facilement que~:
\begin{equation}\label{dr}
H^0\big(\gp, t^{-h} W_{\mathrm{dR}}^+({\mathcal D}_2)/ t^{-h}W_{\mathrm{dR}}^+({\mathcal D}_1)\big)\cong \begin{cases}
 E & h_2\leq h\leq h_1-1 \\
 0 & \text{sinon}.
 \end{cases}
\end{equation}
Comme $H^0_{(\varphi,\Gamma)}(\mathrm{coker}(f)(x^{-h_2}))=(\mathrm{coker}(f)(x^{-h_2}))^{\varphi=1, \Gamma=1}\simeq E$ par (\ref{bpa}) et (\ref{dr}), il existe $s\leq h_1-h_2$ et un morphisme injectif de $(\varphi,\Gamma)$-modules sur $\R$~:
\begin{equation}\label{injs}
{\mathcal R}_E/(t^s) \hookrightarrow \mathrm{coker}(f)(x^{-h_2}), \ x\longmapsto xe
\end{equation}
o\`u $e$ est une $E$-base de $(\mathrm{coker}(f)(x^{-h_2}))^{\varphi=1, \Gamma=1}$. Si $s\neq h_1-h_2$, il existe alors $e'\in \mathrm{coker}(f)$ tel que $e=te'$. Mais cela implique $e'\in ( \mathrm{coker}(f)(x^{1-h_2}))^{\varphi=1, \Gamma=1}$, une contradiction puisque ce dernier espace est nul par (\ref{bpa}) et (\ref{dr}). On en d\'eduit un isomorphisme $\mathrm{coker}(f)\cong {\mathcal R}_E(x^{h_2})/(t^{h_1-h_2})$ de $(\varphi,\Gamma)$-modules.\\
Montrons la deuxi\`eme assertion de (i). Notons $\mathcal{D}_1\=\mathcal{D}(x^{h_1})$ et soit $\mathcal{D}_2$ un $(\varphi,\Gamma)$-module g\'en\'eralis\'e sur ${\mathcal R}_E$ qui est une extension non scind\'ee de ${\mathcal R}_E(x^{h_2})/(t^{h_1-h_2})$ par $\mathcal{D}_1$. Montrons que $\mathcal{D}_2$ est sans torsion. Sinon, on voit facilement que $\mathcal{D}_2'\=\mathcal{D}_2/(\mathcal{D}_2)_{\mathrm{tor}}$ est une extension non scind\'ee de ${\mathcal R}_E(x^{h_2})/(t^{h})$ par $\mathcal{D}_1$ pour un $h$ tel que $0<h<h_1-h_2$. On a comme en (\ref{dim}) ci-dessous~:
\begin{equation*}
 \dim_E \Ext^1_{(\varphi,\Gamma)}\big(\mathcal{R}_E(x^{h_2})/(t^h), \mathcal{D}_1\big) =\dim_E H^0_{(\varphi,\Gamma)}(\mathcal{D}_1(x^{-h_2-h})/(t^h)).
\end{equation*}
Mais par le Lemme \ref{h0t} et (\ref{diff}), on a $H^0_{(\varphi,\Gamma)}(\mathcal{D}_1(x^{-h_2-h})/(t^h))=0$ et donc $\Ext^1_{(\varphi,\Gamma)}(\mathcal{R}_E(x^{h_2})/(t^h), \mathcal{D}_1)=0$, une contradiction. Comme $\mathcal{D}_1$ est de de Rham (i.e. la connexion associ\'ee est localement triviale, cf. \cite[Th.~A]{Be2}), $\mathcal{D}_2$ l'est aussi (rappelons que $D_{\mathrm{dR}}(\mathcal{D}_1)\cong (D_{\mathrm{diff},n}^+(\mathcal{D}_1)[\frac{1}{t}])^{\Gamma}=(D_{\mathrm{diff},n}^+(\mathcal{D}_2)[\frac{1}{t}])^{\Gamma}$ pour $n\gg 0$). Soit $(D, \mathrm{Fil}_2^\cdot )$ le module de Deligne-Fontaine \emph{filtr\'e} associ\'e \`a $\mathcal{D}_2$. Comme $\mathcal{D}_2/\mathcal{D}_1\cong \mathcal{R}_E(x^{h_2})/(t^{h_1-h_2})$, par \cite[Th.~4.3~\&~Prop. 4.4]{Li} on a pour $n\gg 0$~:
\begin{equation}\label{diff2}
D_{\mathrm{diff},n}^+(\mathcal{D}_2)/D_{\mathrm{diff},n}^+(\mathcal{D}_1) \cong (E\otimes_{\Q_p} \Q_p(\zeta_{p^n}))[[t]]/(t^{h_1-h_2}).
\end{equation}
En utilisant (\ref{diff}), on v\'erifie alors facilement que la filtration $\mathrm{Fil}_2^\cdot$ doit \^etre comme dans le (i) de la proposition.\\
(ii) En appliquant $\Hom _{(\varphi,\Gamma)}(-, \mathcal{D}(x^{h_1}))$ \`a la suite exacte $0 \rightarrow {\mathcal R}_E(x^{h_1}) \rightarrow {\mathcal R}_E(x^{h_2}) \rightarrow {\mathcal R}_E(x^{h_2})/(t^{h_1-h_2})\rightarrow 0$, on obtient une suite exacte de $E$-espaces vectoriels~:
\begin{multline}\label{pga}
0 \rightarrow H^0 _{(\varphi,\Gamma)}(\mathcal{D}(x^{h_1-h_2})) \rightarrow H^0 _{(\varphi,\Gamma)}(\mathcal{D}) \rightarrow \Ext^1 _{(\varphi,\Gamma)}\big({\mathcal R}_E(x^{h_2})/(t^{h_1-h_2}), \mathcal{D}(x^{h_1})\big) \\ \rightarrow H^1 _{(\varphi,\Gamma)}(\mathcal{D}(x^{h_1-h_2})) \rightarrow H^1 _{(\varphi,\Gamma)}(\mathcal{D}).
\end{multline}
La suite exacte longue de cohomologie appliqu\'ee \`a $0 \rightarrow \mathcal{D}(x^{h_1-h_2}) \rightarrow \mathcal{D} \rightarrow \mathcal{D}/(t^{h_1-h_2})\rightarrow 0$ donne aussi une suite exacte de $E$-espaces vectoriels~:
\begin{multline*}
0 \longrightarrow H^0 _{(\varphi,\Gamma)}(\mathcal{D}(x^{h_1-h_2})) \longrightarrow H^0 _{(\varphi,\Gamma)}(\mathcal{D}) \longrightarrow H^0_{(\varphi,\Gamma)}\big(\mathcal{D}/(t^{h_1-h_2})\big) \\ \longrightarrow H^1 _{(\varphi,\Gamma)}(\mathcal{D}(x^{h_1-h_2})) \longrightarrow H^1 _{(\varphi,\Gamma)}(\mathcal{D})
\end{multline*}
d'o\`u on d\'eduit avec (\ref{pga}) l'\'egalit\'e~:
\begin{equation}\label{dim}
\dim_E \Ext^1 _{(\varphi,\Gamma)}\big({\mathcal R}_E(x^{h_2})/(t^{h_1-h_2}), \mathcal{D}(x^{h_1})\big)=\dim_E H^0 _{(\varphi,\Gamma)}\big(\mathcal{D}/(t^{h_1-h_2})\big).
\end{equation}
En outre, en appliquant $\Hom _{(\varphi,\Gamma)}(\mathcal{R}_E(x^{h_2})/(t^{h_1-h_2}), -)$ \`a la suite exacte $0\rightarrow \mathcal{D}(x^{h_1}) \rightarrow \mathcal{D}(x^{h_2}) \rightarrow \mathcal{D}(x^{h_2})/(t^{h_1-h_2}) \rightarrow 0$, on obtient une injection~:
\begin{equation}\footnotesize\label{tor2}
\Hom _{(\varphi,\Gamma)}\big( {\mathcal R}_E(x^{h_2})/(t^{h_1-h_2}), \mathcal{D}(x^{h_2})/(t^{h_1-h_2}) \big) \hookrightarrow \Ext^1 _{(\varphi,\Gamma)}\big({\mathcal R}_E(x^{h_2})/(t^{h_1-h_2}), \mathcal{D}(x^{h_1})\big).
\end{equation}
Comme $\Hom _{(\varphi,\Gamma)}( {\mathcal R}_E(x^{h_2})/(t^{h_1-h_2}), \mathcal{D}(x^{h_2})/(t^{h_1-h_2}) ) \buildrel\sim\over\rightarrow H^0 _{(\varphi,\Gamma)}(\mathcal{D}/(t^{h_1-h_2}))$, on d\'eduit de (\ref{tor2}) et (\ref{dim}) un isomorphisme naturel~:
\begin{equation}\label{isonat}
H^0 _{(\varphi,\Gamma)}(\mathcal{D}/(t^{h_1-h_2}))\buildrel\sim\over\longrightarrow \Ext^1 _{(\varphi,\Gamma)}\big({\mathcal R}_E(x^{h_2})/(t^{h_1-h_2}), \mathcal{D}(x^{h_1})\big).
\end{equation}
En combinant (\ref{isonat}) avec le Lemme \ref{h0t} appliqu\'e \`a $\mathcal{D}_2=\mathcal{D}$ et $\mathcal{D}_1=\mathcal{D}(x^{h_1-h_2})$, et en notant $W_{\rm dR}^+(\mathcal{D})$ le $B_{\rm dR}^+$-module associ\'e \`a $\mathcal{D}$, on obtient un isomorphisme naturel~:
\begin{equation*}
 H^0\big(\gp, W_{\rm dR}^+(\mathcal{D})/(t^{h_1-h_2})\big) \buildrel\sim\over \longrightarrow \Ext^1 _{(\varphi,\Gamma)}\big({\mathcal R}_E(x^{h_2})/(t^{h_1-h_2}), \mathcal{D}(x^{h_1})\big).
\end{equation*}
Utilisant les isomorphismes naturels~:
\begin{equation}\small\label{basdroite}
D_L^{\mathrm{Gal}(L/\mathbb{Q}_p)}\cong H^0\big(\gp, W_{\rm dR}^+(\mathcal{D})\big) \buildrel\sim\over\longrightarrow H^0\big(\gp, W_{\rm dR}^+(\mathcal{D})/(t^{h_1-h_2})\big)
\end{equation}
(proc\'eder comme en (\ref{dr}) pour l'isomorphisme de droite), on obtient finalement un isomorphisme naturel~:
\begin{equation}
\iota: \Ext^1 _{(\varphi,\Gamma)}\big({\mathcal R}_E(x^{h_2})/(t^{h_1-h_2}), \mathcal{D}(x^{h_1})\big) \buildrel\sim\over\longrightarrow D_L^{\mathrm{Gal}(L/\mathbb{Q}_p)}.
\end{equation}
Montrons qu'il satisfait la propri\'et\'e du (ii) de la proposition. Soit $F\subseteq D_L^{\mathrm{Gal}(L/\mathbb{Q}_p)}$ une $E$-droite et $\mathcal{D}_2$ le $(\varphi,\Gamma)$-module associ\'e en (i), on doit donc montrer $\iota(E[\mathcal{D}_2])=F$ (avec une notation \'evidente). On a un diagramme commutatif~:
\begin{equation}\label{h12}
\begin{CD}
0 @>>> \mathcal{D}(x^{h_1}) @>>> \mathcal{D}_2 @>>> {\mathcal R}_E(x^{h_2})/(t^{h_1-h_2}) @>>> 0 \\
@. @| @VVV @V j VV @. \\
0 @>>> \mathcal{D}(x^{h_1}) @>>> \mathcal{D}(x^{h_2}) @>>> \mathcal{D}(x^{h_2})/(t^{h_1-h_2}) @>>> 0
\end{CD}
\end{equation}
qui, combin\'e avec la suite exacte~:
\begin{equation}\label{r12}
0 \longrightarrow {\mathcal R}_E(x^{h_1}) \longrightarrow {\mathcal R}_E(x^{h_2}) \longrightarrow {\mathcal R}_E(x^{h_2})/(t^{h_1-h_2}) \longrightarrow 0
\end{equation}
induit un diagramme commutatif (en notant $\mathcal R$ au lieu de $\R$)~:
\begin{equation*}\small
\begindc{\commdiag}[70]
\obj(0,0)[a]{$\Hom\Big({\mathcal R}(x^{h_2}), \frac{{\mathcal R}(x^{h_2})}{(t^{h_1-h_2})}\Big)$}
\obj(25,0)[b]{$\Hom\Big({\mathcal R}(x^{h_2}), \frac{\mathcal{D}(x^{h_2})}{(t^{h_1-h_2})}\Big)$}
\obj(10,10)[c]{$\Ext^1\big({\mathcal R}(x^{h_2}), \mathcal{D}(x^{h_1})\big)$}
\obj(35,10)[d]{$\Ext^1\big({\mathcal R}(x^{h_2}), \mathcal{D}(x^{h_1})\big)$}
\obj(0,16)[e]{$\Hom\Big(\frac{{\mathcal R}(x^{h_2})}{(t^{h_1-h_2})}, \frac{{\mathcal R}(x^{h_2})}{(t^{h_1-h_2})}\Big)$}
\obj(25,16)[f]{$\Hom\Big(\frac{{\mathcal R}(x^{h_2})}{(t^{h_1-h_2})}, \frac{\mathcal{D}(x^{h_2})}{(t^{h_1-h_2})}\Big)$}
\obj(10,26)[g]{$\Ext^1\Big(\frac{{\mathcal R}(x^{h_2})}{(t^{h_1-h_2})}, \mathcal{D}(x^{h_1})\Big)$}
\obj(35,26)[h]{$\Ext^1\Big(\frac{{\mathcal R}(x^{h_2})}{(t^{h_1-h_2})}, \mathcal{D}(x^{h_1})\Big)$}
\mor{a}{b}{$j_1$}
\mor{a}{c}{}
\mor{c}{d}{}[+1,\equalline]
\mor{b}{d}{}
\mor{e}{f}{$j_2$}
\mor{e}{g}{$\delta_1$}
\mor{g}{h}{}[+1,\equalline]
\mor{f}{h}{$\delta_2$}
\mor{e}{a}{\!$v_1$}
\mor{f}{b}{\!$v_2$}
\mor{g}{c}{}
\mor{h}{d}{}
\enddc
\end{equation*}
(par exemple, la face tout \`a gauche est induite par la suite exacte en haut de (\ref{h12}) et la suite exacte (\ref{r12})). Les morphismes $v_1$, $v_2$ sont clairement des isomorphismes. On a $\im(\delta_1)=E[\mathcal{D}_2]$ donc $\im(\delta_2\circ j_2)=E[\mathcal{D}_2]$ et $\im(\delta_2 \circ v_2^{-1}\circ j_1)=\im(\delta_2\circ v_2^{-1}\circ j_1\circ v_1)=E[\mathcal{D}_2]$. Soit $W_{\mathrm{dR}}^+(t^{-h_2} \mathcal{D}_2)$ le $B_{\mathrm{dR}}^+$-module associ\'ee \`a $t^{-h_2} \mathcal{D}_2$, par le Lemme \ref{h0t} appliqu\'e avec ${\mathcal D}_2$ et ${\mathcal D}_1={\mathcal D}(x^{h_1})$, l'isomorphisme ${\mathcal D}_2/{\mathcal D}(x^{h_1})\simeq {\mathcal R}_E(x^{h_2})/(t^{h_1-h_2})$ (premi\`ere suite exacte en (\ref{h12})) et (\ref{tor0}), on a un diagramme commutatif~:
\begin{equation*}
 \begin{CD}
 \Hom _{(\varphi,\Gamma)}\Big({\mathcal R}_E(x^{h_2}), \frac{{\mathcal R}_E(x^{h_2})}{(t^{h_1-h_2})}\Big) @> j_1 >> \Hom _{(\varphi,\Gamma)}\Big({\mathcal R}_E(x^{h_2}), \frac{\mathcal{D}(x^{h_2})}{(t^{h_1-h_2})}\Big) \\
 @V \wr VV @V \wr VV \\
 H^0\Big(\gp, \frac{W_{\mathrm{dR}}^+(t^{-h_2} \mathcal{D}_2)}{t^{h_1-h_2} W_{\mathrm{dR}}^+(\mathcal{D})}\Big) @>>> H^0\Big(\gp, \frac{W_{\mathrm{dR}}^+(\mathcal{D})}{(t^{h_1-h_2})}\Big) \\
 @V \wr VV @V \wr VV\\
 \big(\mathrm{Fil}^{0} D_{\rm dR}(t^{-h_2}\mathcal{D}_2)\big)^{\mathrm{Gal}(L/\mathbb{Q}_p)} @>>> D_L^{\mathrm{Gal}(L/\mathbb{Q}_p)}
 \end{CD}
\end{equation*}
o\`u les applications horizontales sont induites par $j$, la commutativit\'e du carr\'e du haut vient de la fonctorialit\'e de (\ref{tor0}), o\`u $D_{\rm dR}(t^{-h_2}\mathcal{D}_2)$ est le module filtr\'e sur $L$ associ\'e au $(\varphi,\Gamma)$-module de de Rham $t^{-h_2}\mathcal{D}_2$, l'isomorphisme du bas \`a droite est celui en (\ref{basdroite}) et en remarquant que (\ref{basdroite}) induit des isomorphismes (en proc\'edant comme pour (\ref{dr}))~:
\begin{multline*}
\big(\mathrm{Fil}^{0} D_{\rm dR}(t^{-h_2}\mathcal{D}_2)\big)^{\mathrm{Gal}(L/\mathbb{Q}_p)}\simeq H^0\big(\gp, W_{\mathrm{dR}}^+(t^{-h_2} \mathcal{D}_2)\big)\\
\buildrel\sim\over\longrightarrow H^0\Big(\gp, \frac{W_{\mathrm{dR}}^+(t^{-h_2} \mathcal{D}_2)}{t^{h_1-h_2} W_{\mathrm{dR}}^+(\mathcal{D})}\Big).
\end{multline*}
L'image du morphisme horizontal en bas est $F\subseteq D_L^{\mathrm{Gal}(L/\mathbb{Q}_p)}$. De plus la compos\'ee verticale \`a droite est \'egale \`a $\iota \circ \delta_2 \circ v_2^{-1}$ par la construction de $\iota$. Comme $\im(\delta_2 \circ v_2^{-1}\circ j_1)=E[\mathcal{D}_2]$, on obtient bien $\iota(E[\mathcal{D}_2])=F$, ce qui termine la preuve de (ii).
\end{proof}

\begin{rem}\label{torsion}
{\rm Soit $\delta:\gp\rightarrow E^\times$ un caract\`ere localement constant, il r\'esulte facilement de \cite[Th.~4.3~\&~Prop.~4.4]{Li} que $\R(x^{h_2})/(t^{h_1-h_2})\simeq \R(x^{h_2}\delta)/(t^{h_1-h_2})$ (on laisse cela en exercice au lecteur). En tordant les extensions par $\R(\delta)$, on en d\'eduit un isomorphisme ${\rm Ext}^1_{(\varphi,\Gamma)}(\R(x^{h_2})/(t^{h_1-h_2}),T)\simeq {\rm Ext}^1_{(\varphi,\Gamma)}(\R(x^{h_2})/(t^{h_1-h_2}),T(\delta))$ pour tout $(\varphi,\Gamma)$-module $T$. Par (\ref{versfilmax}) on obtient donc un isomorphisme naturel pour tout $\delta$ localement constant~:
$${\rm Ext}^1_{(\varphi,\Gamma)}\big(\R(x^{h_2})/(t^{h_1-h_2}),{\mathcal D}(x^{h_1})(\delta)\big)\buildrel\sim\over\longrightarrow D_L^{\glp}.$$}
\end{rem}

Si $V$ est une repr\'esentation potentiellement semi-stable de $\gp$ sur $E$ qui devient semi-stable en restriction \`a ${\Gal}(\Qpbar/L)$, rappelons qu'on lui associe le module de Deligne-Fontaine $D=(B_{\rm st}\otimes_{\Qp}V)^{{\Gal}(\Qpbar/L)}$ muni des op\'erateurs $\varphi,N$ agissant sur $B_{\rm st}$ et de l'action r\'esiduelle de $\glp$, et que $D_L\buildrel\sim\over\rightarrow (B_{\rm dR}\otimes_{\Qp}V)^{{\Gal}(\Qpbar/L)}$ est muni de la filtration $\Fil^\cdot D_L$ (d\'ecroissante, exhaustive, s\'epar\'ee) induite par celle sur $B_{\rm dR}$.

\subsection{Nullit\'e de certaines extensions}\label{conjcris}

On d\'emontre la nullit\'e d'un certain ${\rm Ext}^1_G$ (Th\'eor\`eme \ref{ext1nul}), ce qui lorsque $G=\GL_n$ implique la Conjecture $3.3.1$ de \cite{Br1} pour ${\rm GL}_n(\Qp)$. Ce r\'esultat sera utilis\'e dans le cas particulier cristallin de la Conjecture \ref{extglobmieux}, cf. le (ii) du Th\'eor\`eme \ref{ncris}.

On conserve les notations des \S\S~\ref{prel}, \ref{notabene} et on rappelle que $w_0$ est l'\'el\'ement de longueur maximale dans $W$. On fixe une racine simple $\alpha\in S$, un poids $\lambda\in X(T)$ dominant par rapport \`a $B^-$ et un caract\`ere lisse $\chi:T(\Qp)\rightarrow E^\times$. On note $W^\alpha$ le groupe de Weyl de $L_{Q_\alpha}$, c'est-\`a-dire le sous-groupe de $W$ engendr\'e par les r\'eflexions simples diff\'erentes de $s_\alpha$, et $w_0^\alpha$ l'\'el\'ement de longueur maximale de $W^\alpha$. Si $\pi$, $\pi'$ sont deux repr\'esentations localement analytiques admissibles de $G(\Qp)$ sur $E$, on note ${\rm Ext}^i_G(\pi,\pi')\={\rm Ext}^i_{D(G(\Qp),E)}({\pi'}^\vee,\pi^\vee)$ pour $i\geq 0$ o\`u ${\rm Ext }^i_{D(G(\Qp),E)}$ est dans la cat\'egorie des $D(G(\Qp),E)$-modules \`a gauche (abstraits). Lorsque $i=1$ (resp. $i=0$), cela co\"\i ncide avec le ${\rm Ext}^1_G(\pi,\pi')$ comme \`a la fin du \S~\ref{intro} (resp. avec les homomorphismes continus $G(\Qp)$-\'equivariants de $\pi$ dans $\pi'$), cf. \cite[Th.~6.3]{ST2} et \cite[Lem.~2.1.1]{Br1}.

On commence par plusieurs r\'esultats pr\'eliminaires. Par le Lemme \ref{singular}, le $U(\mg)$-module $U(\mg)\otimes_{U(\mb^-)}\lambda$ admet en quotient une unique extension non scind\'ee $\!\begin{xy}(-60,0)*+{L^-(s_\alpha\cdot \lambda)}="a";(-39,0)*+{L^-(\lambda)}="b";{\ar@{-}"a";"b"}\end{xy}$. Appliquant le foncteur exact contravariant ${\mathcal F}^G_{B^-}(-,\chi)$ et utilisant \cite[Cor.~2.5~\&~Cor.~2.7]{Br3}, on en d\'eduit que la s\'erie principale $(\Ind_{B^-(\Qp)}^{G(\Qp)}(-\lambda)\otimes \chi)^{\an}={\mathcal F}^G_{B^-}(U(\mg)\otimes_{U(\mb^-)}\lambda,\chi)$ admet en sous-repr\'esentation une unique extension non scind\'ee~:
\begin{equation}\label{extnonsc}
\begin{xy}(-60,0)*+{{\mathcal F}^G_{B^-}(L^-(\lambda),\chi)}="a";(-22,0)*+{{\mathcal F}^G_{B^-}(L^-(s_\alpha\cdot \lambda),\chi)}="b";{\ar@{-}"a";"b"}\end{xy}
\end{equation}
o\`u ${\mathcal F}^G_{B^-}(L^-(\lambda),\chi)=L(-\lambda)\otimes_E(\Ind_{B^-(\Qp)}^{G(\Qp)}\chi)^\infty$ est en sous-objet. Pour $w\in W$, on note pour all\'eger $C(w\cdot \lambda)\={\mathcal F}^G_{B^-}(L^-(w\cdot \lambda),\chi)$, $I(w\cdot\lambda)\=(\Ind_{B^-(\Qp)}^{G(\Qp)}(-w\cdot \lambda)\otimes \chi)^{\an}={\mathcal F}^G_{B^-}(U(\mg)\otimes_{U(\mb^-)}w\cdot \lambda,\chi)$ (oubliant dans la notation le caract\`ere $\chi$ qui ne joue aucun r\^ole).

\begin{prop}\label{lem03}
Supposons $\lg(w_0^\alpha)\geq 1$, on a une suite exacte de repr\'esenta\-tions localement analytiques admissibles de $G(\Qp)$ sur $E$~:
\begin{multline}\label{BGGL}
0\longrightarrow X \longrightarrow I(\lambda)\buildrel d^0\over \longrightarrow \bigoplus_{w\in W^\alpha \atop \lg(w)=1}I(w\cdot \lambda) \buildrel d^1\over \longrightarrow \bigoplus_{w\in W^\alpha \atop \lg(w)=2}I(w\cdot \lambda)\\
\buildrel d^2 \over \longrightarrow \cdots \buildrel d^{\lg(w_0^\alpha)-1}\over \longrightarrow I(w_0^\alpha\cdot\lambda)\longrightarrow 0
\end{multline}
o\`u la repr\'esentation $X$ est de la forme $\begin{xy}(-60,0)*+{C(\lambda)}="a";(-42,0)*+{C(s_\alpha\!\cdot \!\lambda)}="b";(-27,0)*+{Y}="c";{\ar@{-}"a";"b"};{\ar@{-}"b";"c"}\end{xy}$ avec $\begin{xy}(-60,0)*+{C(\lambda)}="a";(-42,0)*+{C(s_\alpha\!\cdot \!\lambda)}="b";{\ar@{-}"a";"b"}\end{xy}$ donn\'e par (\ref{extnonsc}) et $Y$ admettant une filtration dont les gradu\'es sont $C(w'\cdot \lambda)$ pour des $w'\in W$ tels que $\lg(w')\geq 3$.
\end{prop}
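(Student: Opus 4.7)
Le plan est d'appliquer successivement la résolution de BGG--Lepowsky parabolique, le foncteur d'induction $U(\mg)\otimes_{U(\mq_\alpha^-)}(-)$, puis le foncteur contravariant exact $\mathcal{F}^G_{B^-}(-, \chi)$. Puisque $\lambda$ est dominant par rapport à $B^-$, il l'est \emph{a fortiori} par rapport à $B^- \cap L_{Q_\alpha}$, et la représentation $L^-(\lambda)_{Q_\alpha}$ de $L_{Q_\alpha}$ est de dimension finie et admet une résolution de BGG classique par des modules de Verma indexés par $w \in W^\alpha$. En appliquant le foncteur $U(\mg) \otimes_{U(\mq_\alpha^-)}(-)$ (exact car $U(\mg)$ est $U(\mq_\alpha^-)$-libre à droite par PBW), on obtient une résolution exacte dans $\mathcal{O}^{\mb^-}_{\rm alg}$~:
\begin{equation*}
0 \to U(\mg) \otimes_{U(\mb^-)} w_0^\alpha \cdot \lambda \to \cdots \to \bigoplus_{w\in W^\alpha,\,\lg(w)=1} U(\mg)\otimes_{U(\mb^-)} w\cdot\lambda \to U(\mg) \otimes_{U(\mb^-)} \lambda \to M \to 0
\end{equation*}
où $M \= U(\mg) \otimes_{U(\mq_\alpha^-)} L^-(\lambda)_{Q_\alpha}$ est le module de Verma généralisé.

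L'application du foncteur contravariant exact $\mathcal{F}^G_{B^-}(-, \chi)$ fournit alors la suite exacte (\ref{BGGL}) avec $X = \mathcal{F}^G_{B^-}(M, \chi)$. Pour décrire sa structure, on observe d'abord que $L^-(\lambda)$ est l'unique quotient simple de $M$ (engendré par son vecteur de plus haut poids), donc par contravariance $C(\lambda)$ est l'unique sous-objet simple de $X$. Ensuite, le Lemme \ref{singular}(ii) fournit une surjection de $U(\mg)\otimes_{U(\mb^-)} \lambda$ sur l'extension non scindée de $L^-(\lambda)$ par $L^-(s_\alpha\cdot\lambda)$~; puisque $s_\alpha \notin W^\alpha$, les relations supplémentaires définissant $M$ n'affectent pas ce quotient, qui se factorise donc par $M$. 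En appliquant $\mathcal{F}^G_{B^-}(-, \chi)$, on voit que $X$ contient l'extension (\ref{extnonsc}) comme sous-objet, ce qui établit la partie inférieure $C(\lambda)$, $C(s_\alpha\cdot\lambda)$ du dévissage de $X$.

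Il reste à identifier les facteurs de composition du quotient $Y$. La résolution BGG--Lepowsky et la formule du caractère d'Euler donnent la multiplicité de $L^-(x \cdot \lambda)$ dans $M$ comme $c_x = \sum_{w \in W^\alpha,\, w \leq x}(-1)^{\lg(w)}$ (dans le cas où $\lambda$ est régulier intégral, via la décomposition $[V(w\cdot\lambda)] = \sum_{x \geq w}[L^-(x\cdot\lambda)]$ dans l'ordre de Bruhat). L'obstacle principal sera de montrer $c_x = 0$ pour $x$ de longueur $\leq 2$ avec $x \notin \{1, s_\alpha\}$, ce qui se ramène à une analyse combinatoire des intersections $W^\alpha \cap [1,x]$~: ces intersections sont typiquement de la forme $[1, u]_{W^\alpha}$ pour un $u \in W^\alpha$ non trivial, et ont donc caractère d'Euler nul par la propriété eulérienne de l'ordre de Bruhat. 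Le cas $\lambda$ non régulier s'adapte via les polynômes de Kazhdan--Lusztig paraboliques, et la filtration de Jordan--Hölder de $M$ fournit alors la filtration voulue de $Y$ par sous-quotients $C(w'\cdot\lambda)$ avec $\lg(w')\geq 3$.
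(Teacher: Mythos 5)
Votre d\'emonstration suit pour l'essentiel la m\^eme route que celle du texte~: r\'esolution BGG de $L^-(\lambda)_{Q_\alpha}$ pour $\mathfrak{l}_{Q_\alpha}$, application du foncteur exact $U(\mg)\otimes_{U(\mq_\alpha^-)}(-)$ puis du foncteur contravariant exact ${\mathcal F}^G_{B^-}(-,\chi)$, et identification du sous-objet (\ref{extnonsc}) de $X$ via le Lemme \ref{singular} (votre phrase ``les relations suppl\'ementaires d\'efinissant $M$ n'affectent pas ce quotient'' est exactement l'observation du texte que ni $L^-(\lambda)$ ni $L^-(s_\alpha\cdot\lambda)$ n'apparaissent dans les $U(\mg)\otimes_{U(\mb^-)}s_\beta\cdot\lambda$ pour $\beta\in S\setminus\{\alpha\}$). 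La seule diff\'erence r\'eelle est l'identification des constituants de $Y$~: vous utilisez un argument de caract\'eristique d'Euler et la propri\'et\'e eul\'erienne des intervalles de Bruhat, l\`a o\`u le texte combine l'appartenance \`a ${\mathcal O}^{\mq_\alpha^-}_{\rm alg}$ (qui exclut les $L^-(w'\cdot\lambda)$ avec $w'\in W^\alpha$ ou $w'=s_\beta s_\alpha$) avec la multiplicit\'e $1$ de $L^-(s_\alpha s_\beta\cdot\lambda)$ dans $U(\mg)\otimes_{U(\mb^-)}\lambda$ (\cite[8.3(a)]{Hu}) et la suite (\ref{BGGq}). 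Les deux approches aboutissent~: la v\^otre est plus m\'ecanique (pas de distinction de cas), celle du texte \'evite tout d\'ecompte au-del\`a du Lemme \ref{singular}.

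Deux mises en garde sur votre r\'edaction. D'abord, la formule $[V(w\cdot\lambda)]=\sum_{x\succeq w}[L^-(x\cdot\lambda)]$ est fausse pour $G$ g\'en\'eral (les multiplicit\'es sont des valeurs en $1$ de polyn\^omes de Kazhdan--Lusztig et d\'epassent $1$ d\`es le type $A_3$)~; heureusement la seule cons\'equence que vous utilisez, $c_x=\sum_{w\in W^\alpha,\ w\preceq x}(-1)^{\lg(w)}$ pour $\lg(x)\leq 2$, reste vraie car $P_{y,w}=1$ d\`es que $\lg(w)-\lg(y)\leq 2$, de sorte que votre calcul ($c_1=c_{s_\alpha}=1$ et caract\'eristique d'Euler nulle de $W^\alpha\cap[1,x]$ pour les autres $x$ de longueur $\leq 2$) tient. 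Ensuite, le d\'etour envisag\'e par les polyn\^omes de Kazhdan--Lusztig paraboliques pour $\lambda$ singulier est inutile~: $\lambda$ dominant par rapport \`a $B^-$ donne $\langle\lambda-\rho,\beta^\vee\rangle\leq -1$ pour tout $\beta\in S$, donc $\lambda$ est automatiquement r\'egulier pour la dot-action. Enfin, votre affirmation que $C(\lambda)$ est l'unique sous-objet simple de $X$ n'est ni justifi\'ee (les $C(w'\cdot\lambda)$ ne sont pas irr\'eductibles en g\'en\'eral) ni n\'ecessaire~: seule compte l'existence du sous-objet (\ref{extnonsc}) et la forme des gradu\'es de $Y$.
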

\begin{proof}
Soit $\mq_\alpha^-$ (resp. $\mathfrak{l}_{Q_\alpha}$) l'alg\`ebre de Lie de $Q_\alpha^-(\Qp)$ (resp. $L_{Q_\alpha}(\Qp)$). En appliquant le foncteur exact $U(\mg)\otimes_{U(\mq_\alpha^-)}(-)$ \`a la r\'esolution BGG pour $\mathfrak{l}_{Q_\alpha}$~:
\begin{multline*}
0\longrightarrow U(\mathfrak{l}_{Q_\alpha})\otimes_{U(\mathfrak{l}_{Q_\alpha}\cap \mb^-)}w_0^\alpha\cdot\lambda\longrightarrow \cdots \longrightarrow \bigoplus_{w\in W^\alpha \atop \lg(w)=1}U(\mathfrak{l}_{Q_\alpha})\otimes_{U(\mathfrak{l}_{Q_\alpha}\cap \mb^-)}w\cdot\lambda\\
\longrightarrow U(\mathfrak{l}_{Q_\alpha})\otimes_{U(\mathfrak{l}_{Q_\alpha}\cap \mb^-)}\lambda \longrightarrow L^-(\lambda)_{Q_\alpha}\longrightarrow 0,
\end{multline*}
on obtient la suite exacte de $U(\mg)$-modules~:
\begin{multline}\label{BGGq}
0\longrightarrow U(\mg)\otimes_{U(\mb^-)}w_0^\alpha\cdot\lambda\longrightarrow \cdots \longrightarrow \bigoplus_{w\in W^\alpha \atop \lg(w)=1}U(\mg)\otimes_{U(\mb^-)}w\cdot\lambda\\
\longrightarrow U(\mg)\otimes_{U(\mb^-)}\lambda \longrightarrow U(\mg)\otimes_{U(\mq_\alpha^-)}L^-(\lambda)_{Q_\alpha}\longrightarrow 0,
\end{multline}
puis en appliquant le foncteur exact contravariant ${\mathcal F}_{B^-}^G(-,\chi)$ \`a (\ref{BGGq}), on obtient (\ref{BGGL}) avec $X={\mathcal F}_{B^-}^G(U(\mg)\otimes_{U(\mq_\alpha^-)}L^-(\lambda)_{Q_\alpha},\chi)$.\\
 
Montrons l'assertion sur $X$. Comme le constituant $L^-(s_\alpha\cdot\lambda)$ de $U(\mg)\otimes_{U(\mb^-)}\lambda$ n'appara\^it dans aucun $U(\mg)\otimes_{U(\mb^-)}w\cdot\lambda$ lorsque $w\in W^\alpha$, $\lg(w)=1$ (par exemple par \cite[\S~5.2]{Hu}), on d\'eduit du Lemme \ref{singular} que le quotient $\!\begin{xy}(-60,0)*+{L^-(s_\alpha\cdot \lambda)}="a";(-39,0)*+{L^-(\lambda)}="b";{\ar@{-}"a";"b"}\end{xy}\!$ de $U(\mg)\otimes_{U(\mb^-)}\lambda$ est \'egalement quotient de $U(\mg)\otimes_{U(\mq_\alpha^-)}L^-(\lambda)_{Q_\alpha}$. Comme $L(w\cdot \lambda)$ n'est pas dans la cat\'egorie ${\mathcal O}^{\mq_\alpha^-}_{\rm alg}$ lorsque $w\in W^\alpha$, $\lg(w)=1$, on d\'eduit que $U(\mg)\otimes_{U(\mq_\alpha^-)}L^-(\lambda)_{Q_\alpha}\in {\mathcal O}^{\mq_\alpha^-}_{\rm alg}$ est de la forme $\!\begin{xy}(-76,0)*+{Z}="z";(-60,0)*+{L^-(s_\alpha\cdot \lambda)}="a";(-40,0)*+{L^-(\lambda)}="b";{\ar@{-}"z";"a"};{\ar@{-}"a";"b"}\end{xy}\!$
o\`u les constituants de $Z$ sont des $L^-(w'\cdot \lambda)$ pour $w'\in W$, $\lg(w')\geq 2$. Lorsque $w'\in W^\alpha$, le constituant $L^-(w'\cdot \lambda)$ n'est pas dans ${\mathcal O}^{\mq_\alpha^-}_{\rm alg}$ (car $w'\cdot \lambda$ n'est pas dominant par rapport \`a $L_{Q_\alpha}\cap B^-$), et lorsque $w'=s_\beta s_\alpha\cdot\lambda$ pour $\beta\in S\backslash\{\alpha\}$, $L^-(w'\cdot \lambda)=L^-(s_\beta\cdot(s_\alpha\cdot\lambda))$ n'est pas non plus dans ${\mathcal O}^{\mq_\alpha^-}_{\rm alg}$ (car $s_\alpha\cdot \lambda$ est dominant par rapport \`a $L_{Q_\alpha}\cap B^-$, donc $s_\beta\cdot(s_\alpha\cdot\lambda)$ ne peut plus l'\^etre). Donc les seuls constituants $L^-(w'\cdot \lambda)$ pour $w'\in W$, $\lg(w')=2$ qui peuvent appara\^\i tre dans $Z$ sont les $L^-(s_\alpha s_\beta\cdot\lambda)$ pour $\beta\in S\backslash\{\alpha\}$. Mais par \cite[8.3(a)]{Hu}, le constituant $L^-(s_\alpha s_\beta\cdot\lambda)$ appara\^\i t avec multiplicit\'e $1$ dans $U(\mg)\otimes_{U(\mb^-)}\lambda$, donc est n\'ecessairement dans l'image de l'injection $U(\mg)\otimes_{U(\mb^-)}s_\beta\cdot\lambda\hookrightarrow U(\mg)\otimes_{U(\mb^-)}\lambda$ (rappelons qu'il appara\^\i t aussi dans $U(\mg)\otimes_{U(\mb^-)}s_\beta\cdot\lambda$), et par (\ref{BGGq}) ne peut donc plus appara\^\i tre dans le quotient $U(\mg)\otimes_{U(\mq_\alpha^-)}L^-(\lambda)_{Q_\alpha}$ de $U(\mg)\otimes_{U(\mb^-)}\lambda$. Appliquant ${\mathcal F}_{B^-}^G(-,\chi)$, on en d\'eduit avec (\ref{extnonsc}) que $X={\mathcal F}_{B^-}^G(U(\mg)\otimes_{U(\mq_\alpha^-)}L^-(\lambda)_{Q_\alpha},\chi)$ a bien la forme voulue.
\end{proof}

On fixe $\pi$ une repr\'esentation localement alg\'ebrique admissible de $G(\Qp)$ sur $E$, c'est-\`a-dire le produit tensoriel (sur $E$) d'une repr\'esentation alg\'ebrique de dimension finie de $G(\Qp)$ par une repr\'esentation lisse admissible de $G(\Qp)$.

\begin{lem}\label{lem01}
Pour $w\in W$ et $i\in \{0,\dots,\lg(w_0)-1\}$ on a ${\rm Ext}^i_G(I(w\cdot \lambda),\pi)=0$.
\end{lem}
\begin{proof}
C'est un cas particulier de \cite[(4.17)~\&~Rem.~4.14]{Sc1} avec l'\'egalit\'e $\dim (G(\Qp))-\dim(B^-(\Qp))=\lg(w_0)$.
\end{proof}

\begin{lem}\label{lem02}
Pour $w\in W$ et $i\in \{0,\dots,\lg(w)-1\}$ on a ${\rm Ext}^i_G(C(w\cdot \lambda),\pi)=0$.
\end{lem}
\begin{proof}
Il n'y a rien \`a montrer si $\lg(w)=0$. Pour $j\in \{1,\dots,\lg(w_0)\}$ consid\'erons l'hypoth\`ese (de r\'ecurrence) HR$(j)$~: l'\'enonc\'e du lemme est vrai lorsque $\lg(w)\geq j$. Comme $C(w_0\cdot \lambda)=I(w_0\cdot\lambda)$, le Lemme \ref{lem01} implique HR$(\lg(w_0))$. Soit $j\in \{2,\dots,\lg(w_0)\}$ et montrons que HR$(j)$ implique HR$(j-1)$. Soit $w\in W$ tel que $\lg(w)=j-1$ (il suffit de consid\'erer ce cas), il r\'esulte de \cite[\S~5.2]{Hu} et de l'exactitude du foncteur ${\mathcal F}_{B^-}^G(-,\chi)$ que l'on a une suite exacte de repr\'esentations localement analytiques admissibles de $G(\Qp)$ sur $E$~:
$$0\longrightarrow C(w\cdot \lambda)\longrightarrow I(w\cdot\lambda)\longrightarrow X\longrightarrow 0$$
o\`u $X$ admet une filtration dont les gradu\'es sont les $C(w'\cdot \lambda)$ pour $w'\in W$ tel que $\lg(w')>\lg(w)=j-1$, i.e. $\lg(w')\geq j$. On en d\'eduit une suite exacte de $E$-espaces vectoriels pour tout $i\geq 0$~:
$${\rm Ext}^i_G(I(w\cdot\lambda),\pi)\longrightarrow {\rm Ext}^i_G(C(w\cdot\lambda),\pi)\longrightarrow {\rm Ext}^{i+1}_G(X,\pi).$$
Pour $i\in \{0,\dots,j-2\}$, on a ${\rm Ext}^i_G(I(w\cdot\lambda),\pi)=0$ par le Lemme \ref{lem01} et ${\rm Ext}^{i+1}_G(X,\pi)=0$ par HR$(j)$ et un d\'evissage \'evident. On en d\'eduit ${\rm Ext}^i_G(C(w\cdot\lambda),\pi)=0$ ce qui montre HR$(j-1)$ et ach\`eve la preuve.
\end{proof}

\begin{lem}\label{lem04}
Supposons $1\leq\lg(w_0^\alpha)\leq \lg(w_0)-2$. Pour $j\in \{0,\dots,\lg(w_0^\alpha)-1\}$ et $i\in \{0,\dots,j+2\}$ on a ${\rm Ext}^i_G(\im(d^j),\pi)=0$ o\`u $d^j$ est le morphisme dans la Proposition \ref{lem03}.
\end{lem}
\begin{proof}
On fait une r\'ecurrence descendante sur $j$. Par le Lemme \ref{lem01} l'\'enonc\'e est vrai lorsque $j=\lg(w_0^\alpha)-1$ car $\im(d^{\lg(w_0^\alpha)-1})=I(w_0^\alpha\cdot \lambda)$ et $j+2=\lg(w_0^\alpha)+1<\lg(w_0)$. Supposons l'\'enonc\'e vrai pour $j\in \{1,\dots,\lg(w_0^\alpha)-1\}$ (en supposant $2\leq \lg(w_0^\alpha)$ sinon il ne reste rien \`a montrer) et consid\'erons la suite exacte de repr\'esentations localement analytiques admissibles de $G(\Qp)$ sur $E$ issue de (\ref{BGGL})~:
$$0\longrightarrow \im(d^{j-1})\longrightarrow \bigoplus_{w\in W^\alpha \atop \lg(w)=j}I(w\cdot \lambda)\longrightarrow \im(d^{j})\longrightarrow 0.$$
On en d\'eduit une suite exacte courte de $E$-espaces vectoriels pour tout $i\geq 0$~:
$${\rm Ext}^i_G\big(\bigoplus_{w\in W^\alpha \atop \lg(w)=j}I(w\cdot \lambda),\pi\big)\longrightarrow {\rm Ext}^i_G(\im(d^{j-1}),\pi)\longrightarrow {\rm Ext}^{i+1}_G(\im(d^{j}),\pi).$$
Pour $i\in \{0,\dots,j+1\}$, le ${\rm Ext}^i_G$ de gauche est nul par le Lemme \ref{lem01} (car $j+1\leq \lg(w_0^\alpha)<\lg(w_0)$) et ${\rm Ext}^{i+1}_G(\im(d^{j}),\pi)=0$ par l'hypoth\`ese de r\'ecurrence au cran $j$. On en d\'eduit ${\rm Ext}^i_G(\im(d^{j-1}),\pi)=0$ ce qui ach\`eve la preuve.
\end{proof}

On peut maintenant montrer le r\'esultat principal de ce paragraphe.

\begin{thm}\label{ext1nul}
Soit $\pi$ une repr\'esentation localement alg\'ebrique admissible de $G(\Qp)$ sur $E$, c'est-\`a-dire le produit tensoriel d'une repr\'esentation alg\'ebrique de dimension finie de $G(\Qp)$ par une repr\'esentation lisse admissible de $G(\Qp)$, et supposons $1\leq\lg(w_0^\alpha)\leq \lg(w_0)-2$. Alors on a~:
$${\rm Ext}^1_G\big(\!\begin{xy}(-60,0)*+{{\mathcal F}^G_{B^-}(L^-(\lambda),\chi)}="a";(-23,0)*+{{\mathcal F}^G_{B^-}(L^-(s_\alpha\cdot \lambda),\chi)}="b";{\ar@{-}"a";"b"}\end{xy}\!,\pi\big)=0$$
o\`u la repr\'esentation de gauche est celle en (\ref{extnonsc}).
\end{thm}
\begin{proof}
Consid\'erons la suite exacte de repr\'esentations localement analytiques admissibles de $G(\Qp)$ sur $E$ issue de (\ref{BGGL})~:
$$0\longrightarrow X\longrightarrow I(\lambda)\longrightarrow \im(d^{0})\longrightarrow 0.$$
On en d\'eduit une suite exacte de $E$-espaces vectoriels~:
$${\rm Ext}^1_G(I(\lambda),\pi)\longrightarrow {\rm Ext}^1_G(X,\pi)\longrightarrow {\rm Ext}^{2}_G(\im(d^{0}),\pi).$$
Par le Lemme \ref{lem01} on a ${\rm Ext}^1_G(I(\lambda),\pi)=0$ et par le Lemme \ref{lem04} appliqu\'e pour $j=0$, $i=2$ on a ${\rm Ext}^{2}_G(\im(d^{0}),\pi)=0$. On en d\'eduit ${\rm Ext}^1_G(X,\pi)=0$. Vue la forme de $X$ dans la Proposition \ref{lem03}, on a aussi une suite exacte de $E$-espaces vectoriels~:
$${\rm Ext}^1_G(X,\pi)\longrightarrow {\rm Ext}^1_G\big(\!\begin{xy}(-60,0)*+{C(\lambda)}="a";(-42,0)*+{C(s_\alpha\!\cdot \!\lambda)}="b";{\ar@{-}"a";"b"}\end{xy}\!,\pi\big)\longrightarrow {\rm Ext}^{2}_G(Y,\pi).$$
Vue la forme de $Y$ (cf. Proposition \ref{lem03}), on a ${\rm Ext}^{2}_G(Y,\pi)$ par le Lemme \ref{lem02} et un d\'evissage \'evident. Comme ${\rm Ext}^1_G(X,\pi)=0$, on en d\'eduit le r\'esultat.
\end{proof}

\begin{cor}\label{cor05}
Supposons $1\leq\lg(w_0^\alpha)\leq \lg(w_0)-2$ et la repr\'esentation ${\mathcal F}^G_{B^-}(L^-(\lambda),\chi)$ irr\'eductible, alors on a~:
$$\dim_E{\rm Ext}^1_G\big({\mathcal F}^G_{B^-}(L^-(s_\alpha\cdot \lambda),\chi),{\mathcal F}^G_{B^-}(L^-(\lambda),\chi)\big)=1.$$
En particulier \cite[Conj.~3.3.1]{Br1} est vraie pour ${\GL}_n(\Qp)$.
\end{cor}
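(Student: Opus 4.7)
Le plan est d'appliquer le foncteur contravariant $\Hom_G(-,C(\lambda))$ (avec $C(\mu) \coloneqq {\mathcal F}^G_{B^-}(L^-(\mu),\chi)$) \`a la suite exacte courte fournie par (\ref{extnonsc})
\begin{equation*}
0\longrightarrow C(\lambda)\longrightarrow E\longrightarrow C(s_\alpha\cdot \lambda)\longrightarrow 0,
\end{equation*}
o\`u $E$ d\'esigne l'unique extension non scind\'ee $\!\begin{xy}(-60,0)*+{C(\lambda)}="a";(-42,0)*+{C(s_\alpha\!\cdot \!\lambda)}="b";{\ar@{-}"a";"b"}\end{xy}\!$ apparaissant en sous-objet de $I(\lambda)$. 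Cela donne une suite exacte longue
\begin{multline*}
0 \to \Hom_G(C(s_\alpha\cdot\lambda),C(\lambda)) \to \Hom_G(E,C(\lambda)) \to \Hom_G(C(\lambda),C(\lambda)) \\ \to \Ext^1_G(C(s_\alpha\cdot\lambda),C(\lambda)) \to \Ext^1_G(E,C(\lambda)) \to \Ext^1_G(C(\lambda),C(\lambda))
\end{multline*}
et l'objectif est d'identifier le terme de gauche de la deuxi\`eme ligne.

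Je proc\'ederais ensuite aux identifications suivantes des termes de la suite exacte. Par irr\'eductibilit\'e de $C(\lambda)$ (hypoth\`ese) et de $C(s_\alpha\cdot\lambda)$ (qui est irr\'eductible d\`es que $C(\lambda)$ l'est par \cite[Cor.~2.5~\&~Cor.~2.7]{Br3} et l'irr\'eductibilit\'e de $L^-(s_\alpha\cdot\lambda)$), ces deux repr\'esentations admissibles ne sont pas isomorphes puisque leurs $\mathfrak{g}$-sous-socles respectifs $L^-(\lambda)$ et $L^-(s_\alpha\cdot\lambda)$ ne le sont pas. On a donc $\Hom_G(C(s_\alpha\cdot\lambda),C(\lambda))=0$ et $\Hom_G(C(\lambda),C(\lambda))=E$ (Schur, en utilisant que ``irr\'eductible'' signifie absolument topologiquement irr\'eductible). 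D'autre part, tout morphisme $E\to C(\lambda)$ qui induirait l'identit\'e sur le sous-objet $C(\lambda)$ fournirait une r\'etraction contredisant la non-scindabilit\'e de $E$; l'image du morphisme $\Hom_G(E,C(\lambda))\to \Hom_G(C(\lambda),C(\lambda))$ est donc un sous-espace strict de $E$, donc nulle, d'o\`u $\Hom_G(E,C(\lambda))=0$. L'ingr\'edient crucial est alors le Th\'eor\`eme \ref{ext1nul} appliqu\'e \`a la repr\'esentation localement alg\'ebrique admissible $\pi\coloneqq C(\lambda)=L(-\lambda)\otimes_E(\Ind_{B^-(\Qp)}^{G(\Qp)}\chi)^{\infty}$, qui donne $\Ext^1_G(E,C(\lambda))=0$ gr\^ace \`a l'hypoth\`ese $1\leq \lg(w_0^\alpha)\leq \lg(w_0)-2$.

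En rassemblant ces \'egalit\'es, la suite exacte longue se r\'eduit \`a $0\to 0\to 0\to E\to \Ext^1_G(C(s_\alpha\cdot\lambda),C(\lambda))\to 0$, ce qui fournit la dimension annonc\'ee. Noter que la borne inf\'erieure $\dim\geq 1$ est d\'ej\`a assur\'ee par l'extension non scind\'ee $E$ (qui d\'efinit un \'el\'ement non nul de $\Ext^1_G(C(s_\alpha\cdot\lambda),C(\lambda))$ via la suite exacte de d\'epart), et c'est donc la borne sup\'erieure, fournie par le Th\'eor\`eme \ref{ext1nul}, qui est l'obstacle principal : tout repose sur la construction BGG parabolique de la Proposition \ref{lem03} et les calculs d'annulation des Lemmes \ref{lem01}--\ref{lem04}, d\'ej\`a effectu\'es en amont.

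Pour la deuxi\`eme assertion (\cite[Conj.~3.3.1]{Br1} pour $\GL_n(\Qp)$), il suffit de v\'erifier que pour $G=\GL_n$ et toute racine simple $\alpha=e_i-e_{i+1}$, les hypoth\`eses num\'eriques du corollaire sont satisfaites d\`es que $n\geq 3$. On a $\lg(w_0)=\binom{n}{2}$ et $L_{Q_\alpha}=\GL_i\times \GL_{n-i}$, d'o\`u $\lg(w_0^\alpha)=\binom{i}{2}+\binom{n-i}{2}=\binom{n}{2}-i(n-i)$; les conditions $\lg(w_0^\alpha)\geq 1$ et $i(n-i)\geq 2$ sont alors simultan\'ement satisfaites pour tout $n\geq 3$ et tout $1\leq i\leq n-1$ (le cas $n=2$ \'etant par ailleurs bien connu via la correspondance de Langlands locale pour $\GL_2(\Qp)$). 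L'\'enonc\'e de la conjecture en termes de plongements dans des s\'eries principales se traduit alors directement en le calcul de $\Ext^1_G$ ci-dessus, via l'identification de ${\mathcal F}^G_{B^-}(L^-(s_\alpha\cdot\lambda),\chi)$ au quotient pertinent de la s\'erie principale $I(\lambda)$.
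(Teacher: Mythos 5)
Votre preuve est correcte et suit essentiellement la m\^eme d\'emarche que celle du texte~: on applique $\Hom_G(-,C(\lambda))$ \`a l'extension non scind\'ee (\ref{extnonsc}), le terme $\Hom_G$ de gauche s'annule par non-scindage et irr\'eductibilit\'e de $C(\lambda)$, le $\Ext^1_G$ de droite s'annule par le Th\'eor\`eme \ref{ext1nul} appliqu\'e \`a $\pi=C(\lambda)$, et Schur donne la dimension $1$. Votre v\'erification explicite de $1\leq \lg(w_0^\alpha)\leq \lg(w_0)-2$ pour ${\rm GL}_n$, $n\geq 3$ (avec $\lg(w_0)-\lg(w_0^\alpha)=i(n-i)$) et le renvoi au cas $n=2$ connu co\"\i ncident avec ce que le texte laisse implicite ou cite de \cite{Br1}.
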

\begin{proof}
On a une suite exacte de $E$-espaces vectoriels~:
\begin{multline*}
\Hom_G\big(\!\begin{xy}(-60,0)*+{C(\lambda)}="a";(-42,0)*+{C(s_\alpha\!\cdot \!\lambda)}="b";{\ar@{-}"a";"b"}\end{xy}\!,C(\lambda)\big)\rightarrow \Hom_G(C(\lambda),C(\lambda)) \rightarrow {\rm Ext}^1_G(C(s_\alpha\cdot \lambda),C(\lambda))\\
\rightarrow {\rm Ext}^1_G\big(\!\begin{xy}(-60,0)*+{C(\lambda)}="a";(-42,0)*+{C(s_\alpha\!\cdot \!\lambda)}="b";{\ar@{-}"a";"b"}\end{xy}\!,C(\lambda)\big)
\end{multline*}
o\`u le terme tout \`a gauche est nul car (\ref{extnonsc}) est non scind\'e et $C(\lambda)$ est irr\'eductible, et celui tout \`a droite est nul par le Th\'eor\`eme \ref{ext1nul} appliqu\'e avec $\pi=C(\lambda)$. On en d\'eduit la premi\`ere assertion. La deuxi\`eme suit lorsque $n\geq 3$ (pour avoir $1\leq\lg(w_0^\alpha)\leq \lg(w_0)-2$), mais lorsque $n=2$ elle est montr\'ee dans \cite[Lem.~3.1.1]{Br1}.
\end{proof}

\begin{rem}
{\rm (i) Il est possible que l'hypoth\`ese d'irr\'eductibilit\'e de ${\mathcal F}^G_{B^-}(L^-(\lambda),\chi)$ dans le Corollaire \ref{cor05} soit en fait superflue.\\
(ii) Rappelons que la raison pour laquelle cette preuve de \cite[Conj~3.3.1]{Br1} ne s'\'etend pas \`a ${\GL}_n(L)$ et des extensions localement $\sigma$-analytiques (pour $\sigma:L\hookrightarrow E$) est que les r\'esultats de \cite[\S~6]{ST3} (en particulier \cite[Prop.~6.5(i)]{ST3}) utilis\'es de mani\`ere essentielle dans la preuve de \cite[(4.17)]{Sc1}, et donc dans celle du Lemme \ref{lem01}, ne sont montr\'es que dans le cadre localement $\Qp$-analytique.}
\end{rem} 

\subsection{Foncteurs $F_\alpha$ et compatibilit\'e local-global conjecturale}\label{locglob}

Apr\`es un rappel du cadre global, on donne une version plus pr\'ecise de la conjecture principale de \cite{Br1} utilisant les foncteurs $F_\alpha$.

On conserve les notations du \S~\ref{hodge}. On rappelle d'abord le cadre global de \cite[\S~6.1]{BD} (cf. aussi \cite[\S~6.1]{Br1}) auquel on renvoie le lecteur pour plus de d\'etails. On fixe une fois pour toutes des plongements $\overline{\Q} \hookrightarrow \C$ et $\overline{\Q} \hookrightarrow \overline{\Qp}$ que l'on utilise de mani\`ere tacite. On fixe un corps de nombres totalement r\'eel $F^+$, une extension quadratique totalement imaginaire $F$ de $F^+$ et un groupe unitaire $G/F^+$ attach\'e \`a $F/F^+$ comme dans \cite[\S~6.2.2]{BChe} tel que $G\times_{F^+} F\cong {\rm GL}_n$ ($n\geq 2$) et $G(F^+\otimes_{\Q} {\mathbb R})$ est compact. Pour une place finie $v$ de $F^+$ totalement d\'ecompos\'ee dans $F$ et $\tilde{v}$ une place de $F$ divisant $v$, on a des isomorphismes $i_{G, \widetilde{v}}: G(F^+_v)\buildrel\sim\over \rightarrow G(F_{\tilde{v}})\simeq {\rm GL}_n(F_{\tilde{v}})$. On note $\Sigma_p$ les places de $F^+$ divisant $p$ et on suppose que chaque place dans $\Sigma_p$ est totalement d\'ecompos\'ee dans $F$. Pour toute extension finie $E$ de $\Qp$ et tout sous-groupe ouvert compact $U^p=\prod_{v\nmid p} U_v$ de $G({\mathbb A}_{F^+}^{p,\infty})$ (o\`u ${\mathbb A}_{F^+}^{p,\infty}$ d\'esigne les ad\`eles finis hors $p$ de $F^+$) on pose~:
\begin{equation}\label{supe}
\widehat{S}(U^p,E)\=\Big\{f: G(F^+) \setminus G({\mathbb A}_{F^+}^{\infty})/U^p\longrightarrow E,\ f \text{ continue}\Big\}
\end{equation}
qui est un espace de Banach $p$-adique sur $E$ de boule unit\'e~:
\begin{equation*}
\widehat{S}(U^p,\oE)\=\Big\{f: G(F^+)\setminus G({\mathbb A}_{F^+}^{\infty})/U^p \longrightarrow \oE,\ f \text{ continue}\Big\}.\end{equation*}
On munit $\widehat{S}(U^p,E)$ de l'action continue de $G(F^+\otimes_{\Q}\Qp)$ donn\'ee par $(g'f)(g)=f(gg')$ (pour $f\in \widehat{S}(U^p,E)$, $g'\in G(F^+\otimes_{\Q} \Qp)$, $g\in G({\mathbb A}_{F^+}^{\infty})$), qui pr\'eserve la boule unit\'e et fait de $\widehat{S}(U^p,E)$ un Banach unitaire admissible. 

Soit $D(U^p)$ les places finies $v$ de $F^+$ telles que $v\nmid p$, $v$ est totalement d\'ecompos\'e dans $F$ et $U_v$ est un sous-groupe ouvert compact maximal de $G(F^+_v)$. On note ${\mathbb T}(U^p)\=\oE[T_{\tilde{v}}^{(j)}]$ la $\oE$-alg\`ebre polynomiale commutative engendr\'ee par des variables formelles $T_{\tilde{v}}^{(j)}$ o\`u $j\in \{1,\cdots,n\}$ et $\tilde{v}$ est une place de $F$ au-dessus d'une place $v$ de $D(U^p)$. La $\oE$-alg\`ebre ${\mathbb T}(U^p)$ agit sur $\widehat{S}(U^p,E)$ et $\widehat{S}(U^p, \oE)$ en faisant agir $T_{\tilde{v}}^{(j)}$ par l'op\'erateur associ\'e \`a la double classe~:
\begin{equation}\label{equ: ord-hecke1}
 T_{\tilde{v}}^{(j)}\=\Big[U_{v} g_v i_{G,\tilde{v}}^{-1}\Big(\begin{pmatrix}
 \text{\textbf{1}}_{n-j} & 0 \\ 0 & \text{$\varpi_{\tilde{v}}$ \textbf{1}}_{j}
 \end{pmatrix}\Big) g_v^{-1}U_{v}\Big]
\end{equation}
o\`u $\varpi_{\tilde{v}}$ est une uniformisante de $F_{\tilde{v}}$ et $g_v\in G(F_v^+)$ est tel que $i_{G,\tilde{v}}(g_v^{-1} U_v g_v)={\rm GL}_n({\mathcal O}_{F_{\tilde{v}}})$. Cette action commute avec celle de $G(F^+ \otimes_{\Q} \Qp)$. On note $\widehat{S}(U^p,E)^{\alg}\subset \widehat{S}(U^p,E)^{\an}$ les sous-espaces de $\widehat{S}(U^p,E)$ des vecteurs respectivement localement alg\'ebriques et localement analytiques pour l'action de $(\Res_{F^+/\Q}G)(\Qp)=G(F^+\otimes_{\Q} \Qp)$. Ils sont stables par $G(F^+\otimes_{\Q} \Qp)$ et ${\mathbb T}(U^p)$. Le sous-espace $\widehat{S}(U^p,E)^{\an}$ se d\'ecrit comme (\ref{supe}) en rempla\c cant ``continue'' par ``localement analytique'' et on a un isomorphisme $G(F^+\otimes_{\Q} \Qp)\times {\mathbb T}(U^p)$-\'equivariant (cf. \cite[(6.2)]{BD})~:
\begin{equation}\label{equ: lalgaut}
 \widehat{S}(U^p,E)^{\alg} \otimes_E \overline{\Qp} \cong \bigoplus_{\pi}\Big((\pi^{\infty,p})^{U^p} \otimes_{\overline{\Q}}(\pi_p \otimes_{\overline{\Q}} W_p)\Big)^{\oplus m(\pi)}
\end{equation}
o\`u $\pi\cong \pi_{\infty}\otimes_{\overline{\Q}} \pi^{\infty}$ parcourt les repr\'esentations automorphes de $G({\mathbb A}_{F^+})$, $W_p$ est une repr\'esentation alg\'ebrique irr\'eductible de $(\Res_{F^+/\Q}G)(\Qp)$ sur $\overline{\Qp}$ d\'etermin\'ee par $\pi_\infty$ (voir \cite[\S~6.2.3]{BChe} et \cite[\S~6.1]{BD}), $m(\pi)\geq 1$ est une multiplicit\'e, et o\`u $T_{\tilde{v}}^{(j)}\in {\mathbb T}(U^p)$ agit sur $(\pi^{\infty,p})^{U^p}$ par la double classe (\ref{equ: ord-hecke1}).

On fixe d\'esormais une place $\wp$ de $F^+$ au-dessus de $p$ telle que $F^+_\wp=\Qp$ et une place $\widetilde{\wp}$ de $F$ divisant $\wp$ (donc $\Qp=F^+_\wp\cong F_{\widetilde{\wp}}$ et on a un isomorphisme $i_{G,\widetilde{\wp}}: G(F^+_\wp)\xrightarrow{\sim} {\rm GL}_n(\Qp)$). On fixe \'egalement une repr\'esentation $\Qp$-alg\'ebrique irr\'eductible $W^\wp$ de $\prod_{v|p, v\neq \wp} G(F^+_v)$ sur $E$, que l'on voit comme repr\'esentation de $G(F^+\otimes_{\Q} \Qp)$ via $G(F^+\otimes_{\Q} \Qp)\twoheadrightarrow \prod_{v|p, v\neq \wp} G(F^+_v)$, ainsi qu'un sous-groupe ouvert compact $U_p^\wp=\prod_{v|p,v\neq \wp} U_v$ de $\prod_{v|p, v\neq \wp}G(F^+_v)$. On pose $U^\wp\=U^pU_p^\wp$ et~:
$$\widehat{S}(U^\wp,W^\wp)\=\big(\widehat{S}(U^p,E)\otimes_E W^\wp\big)^{U_p^\wp}.$$
Muni de l'action naturelle de $G(F^+_\wp)$ induite par celle sur $\widehat{S}(U^p,E)$, l'espace $\widehat{S}(U^\wp,W^\wp)$ est encore un Banach unitaire admissible. On le munit aussi de l'action de ${\mathbb T}(U^p)$ induite par celle sur $\widehat{S}(U^p,E)$, qui commute \`a $G(F^+_\wp)$, et on d\'efinit les sous-espaces $\widehat{S}(U^\wp,W^\wp)^{\alg}\subset \widehat{S}(U^\wp,W^\wp)^{\an}$ (avec des notations \'evidentes) qui sont stables par $G(F^+_\wp)\times {\mathbb T}(U^p)$. On peut v\'erifier que~:
\begin{multline}\label{equ: ord-aut}
\widehat{S}(U^{\wp},W^{\wp})=\Big\{f: G(F^+) \setminus G({\mathbb A}_{F^+}^{\infty})/U^p \longrightarrow W^{\wp},\ \ f \text{ continue et} \\ f(gg_p^{\wp})=(g_p^{\wp})^{-1} (f(g)) \ \text{pour tout}\ g\in G({\mathbb A}_{F^+}^{\infty}) \ \text{et tout} \ g_p^{\wp}\in U_p^{\wp}\Big\}
\end{multline}
o\`u l'action de $G(F^+_\wp)$ est par translation \`a droite sur les fonctions et celle de ${\mathbb T}(U^p)$ comme en (\ref{equ: ord-hecke1}) ($\widehat{S}(U^\wp,W^\wp)^{\an}$ s'identifie alors au sous-espace de (\ref{equ: ord-aut}) des fonctions $f$ localement analytiques), et que~:
\begin{equation}\label{equ: lalgaut2}
\widehat{S}(U^{\wp},W^{\wp})^{\alg}\otimes_{E} \overline{\Qp} \cong \bigoplus_{\pi} \big( (\pi^{\infty,p})^{U^p} \otimes_{\overline{\Q}} (\otimes_{v|p, v\neq {\wp}} \pi_v^{U_v})\otimes_{\overline{\Q}} (\pi_{\wp} \otimes_{\overline{\Q}} W_{\wp})\big)^{m(\pi)}
\end{equation}
o\`u $\pi\cong \pi_{\infty} \otimes_{{\mathbb C}} \pi^{\infty}$ parcourt les repr\'esentations automorphes de $G({\mathbb A}_{F_+})$ telles que $W_p$ en (\ref{equ: lalgaut}) est de la forme $W_p\cong W_{\wp}\otimes_{E}(W^{\wp})^{\vee}$ en notant $(W^{\wp})^{\vee}$ le dual de $W^{\wp}$ et $W_{\wp}$ une repr\'esentation alg\'ebrique de $G(F^+_\wp)$ sur $\overline{\Qp}$. On voit dans la suite $\widehat{S}(U^{\wp},W^{\wp})$ et ses sous-repr\'esentations comme repr\'esentations de ${\rm GL}_n(\Qp)$ via $\iota_{G,{\widetilde{\wp}}}$.

Soit ${\rho}: {\rm Gal}(\overline F/F)\rightarrow {\rm GL}_n(E)$ une repr\'esentation continue non ramifi\'ee aux places dans $D(U^p)$. On associe \`a ${\rho}$ l'unique id\'eal maximal $\mrho$ de corps r\'esiduel $E$ de ${\mathbb T}(U^p)[1/p]$ tel que, pour tout $v\in D(U^p)$ et $\tilde{v}$ place de $F$ au-dessus de $v$, le polyn\^ome caract\'eristique ${\rho}(\Frob_{\tilde{v}})$ (o\`u $\Frob_{\tilde{v}}$ est un Frobenius {\it geom\'etrique} en $\tilde v$) est donn\'e par~:
\begin{equation}\label{equ: ord-ideal}
X^n+\cdots + (-1)^j (N\tilde{v})^{\frac{j(j-1)}{2}} \theta_{{\rho}}(T_{\tilde{v}}^{(j)}) X^{n-j}+\cdots +(-1)^n(N\tilde{v})^{\frac{n(n-1)}{2}} \theta_{{\rho}}(T_{\tilde{v}}^{(n)})
\end{equation}
en notant $N\tilde{v}$ le cardinal du corps r\'esiduel de $F_{\tilde v}$ et $\theta_{{\rho}}$ la compos\'ee ${\mathbb T}(U^p)[1/p]\twoheadrightarrow {\mathbb T}(U^p)[1/p]/\mrho\buildrel\sim\over\rightarrow E$. Lorsque $\widehat{S}(U^{\wp}, W^{\wp})[\mrho]^{\alg}\ne 0$ on sait de plus que $\rho_{\widetilde{\wp}}\=\rho\vert_{{\rm Gal}(\overline F_{\widetilde{\wp}}/F_{\widetilde{\wp}})}$ est potentiellement semi-stable \`a poids de Hodge-Tate distincts.

On normalise la correspondance de Langlands locale pour ${\rm GL}_n(\Qp)$ de sorte que la repr\'esentation $\chi_1\oplus \cdots \oplus \chi_n$ du groupe de Weil de $\Qp$ corresponde (pour des $\chi_i$ g\'en\'eriques) \`a~:
\begin{equation}\label{norma}
\big(\Ind_{B^-(\Qp)}^{{\rm GL}_n(\Qp)}\chi_1|\cdot|^{1-n}\otimes\chi_2|\cdot|^{2-n}\otimes\cdots \otimes \chi_n\big)^\infty
\end{equation}
o\`u les $\chi_i$ dans l'induite sont vus comme caract\`eres de $\Qp^\times$ via la normalisation du corps de classe local au \S~\ref{intro} et o\`u (comme dans l'Exemple \ref{gln}) $B^-$ est le Borel inf\'erieur.

On fixe un module de Deligne-Fontaine $D$ de rang $n$ sur $L_0\otimes_{\Qp}E$ (pour une extension finie $L$ de $\Qp$, cf. \S~\ref{hodge}). On note $W$ la repr\'esentation de Weil-Deligne associ\'ee par Fontaine \`a $D$ (cf. \cite[\S~2.2]{Br1}) et l'on suppose que la repr\'esentation de Weil sous-jacente \`a $W$ est absolument semi-simple, i.e. $W$ est $F$-semi-simple. On note $\pi^{\infty}(W)$ la repr\'esentation lisse admissible de longueur finie de ${\rm GL}_n(\Qp)$ sur $E$ correspondant \`a $W$ par la correspondance de Langlands locale normalis\'ee comme ci-dessus {\it puis} modifi\'ee comme dans \cite[\S~4]{BS}. La repr\'esentation $\pi^{\infty}(W)$ a un unique constituant irr\'eductible g\'en\'erique qui est en sous-objet, et si $\chi_{\pi^{\infty}(W)}$ est son caract\`ere central alors $\chi_{\pi^{\infty}(W)}=\chi_W\norm^{-\frac{n(n-1)}{2}}$ o\`u $\chi_W\={\det}(W)$.

On fixe \'egalement $(h_{i})_{i\in \{1,\dots,n\}}$ dans $\Z^n$ tel que $h_{1}>h_{2}>\cdots >h_{n}$ et on pose $\lambda=(\lambda_{i})_{i\in \{1,\dots,n\}}$ avec $\lambda_{i}\=(n-i)-h_i$, de sorte que $\lambda$ est dominant par rapport \`a $B^-$. On rappelle que $L(-\lambda)$ est la repr\'esentation alg\'ebrique irr\'eductible de ${\rm GL}_n$ sur $E$ de plus haut poids $-\lambda$ par rapport \`a $B$ (cf. \S~\ref{notabene}). On dispose donc de la repr\'esentation localement alg\'ebrique $L(-\lambda)\otimes_E\pi^{\infty}(W)$. Rappelons que, si $\alpha\in S$ et si l'on a une extension $0\rightarrow \pi'\rightarrow \pi \rightarrow L(-\lambda)\otimes_E\pi^{\infty}(W)\rightarrow 0$ dans $\Rep({\rm GL}_n(\Qp))$ avec $F_\alpha(\pi')(-)\simeq E_\infty(\chi_{-\lambda})\otimes_{E}\Hom_{(\varphi,\Gamma)}(D_\alpha(\pi'),-)$ pour un $(\varphi,\Gamma)$-module sans torsion $D_{\alpha}(\pi')$, alors par le Th\'eor\`eme \ref{localgenplus} (avec la Remarque \ref{pourapplication}) et le Th\'eor\`eme \ref{caslisse} on a $F_\alpha(\pi)(-)\simeq E_\infty(\chi_{-\lambda})\otimes_{E}\Hom_{(\varphi,\Gamma)}(D_\alpha(\pi),-)$ pour un $(\varphi,\Gamma)$-module $D_{\alpha}(\pi)$ qui est une extension de $D_{\alpha}(L(-\lambda)\otimes_E\pi^{\infty}(W))\simeq \R(\lambda\circ \lambda_{\alpha^{\!\vee}})/(t^{1-\langle \lambda,\alpha^\vee\rangle})$ par $D_{\alpha}(\pi')$. On a donc une application $E$-lin\'eaire~:
\begin{equation}\small\label{mapext}
{\rm Ext}^1_{{\rm GL}_n(\Qp)}\big(\pi',L(-\lambda)\otimes_E\pi^{\infty}(W)\big)\longrightarrow {\rm Ext}^1_{(\varphi,\Gamma)}\big(D_{\alpha}(L(-\lambda)\otimes_E\pi^{\infty}(W)),D_{\alpha}(\pi')\big).
\end{equation}

Pour toute racine simple $\alpha=e_j-e_{j+1}\in S$ o\`u $j\in \{1,\dots,n-1\}$ on pose~:
\begin{equation}\label{dalphaw}
{D}_\alpha(\lambda,W)\=(\wedge^{n-j}_{\R}{\mathcal D})\otimes_{\R}\R((s_\alpha\cdot \lambda)\circ \lambda_{\alpha^{\!\vee}})\otimes_{\R}\R(\norm^{n-j+(n-j+1)+\cdots+n-1}\chi_W^{-1})
\end{equation}
o\`u $\mathcal D$ est le $(\varphi,\Gamma)$-module sur $\R$ donn\'e par l'\'equation diff\'erentielle $p$-adique associ\'ee \`a $D$ (cf. \S~\ref{hodge}). Alors ${D}_\alpha(\lambda,W)$ est, \`a la torsion localement constante $\norm^{n-j+\cdots+n-1}\chi_W^{-1}$ pr\`es, le $(\varphi,\Gamma)$-module libre de rang $n\choose j$ sur $\R$ associ\'e par l'\'equivalence de \cite[Th.~A]{Be2} au module de Deligne-Fontaine $\wedge_{L_0\otimes_{\Qp}E}^{n-j}D$ muni de la filtration~:
$$\left\{\begin{array}{ccl}
{\rm Fil}^{-(\lambda_1+\cdots+\lambda_{j-1}+\lambda_{j+1}+1)}(\wedge_{L\otimes_{\Qp}E}^{n-j}D_L)&=&\wedge_{L\otimes_{\Qp}E}^{n-j}D_L\\
{\rm Fil}^{-(\lambda_1+\cdots+\lambda_{j-1}+\lambda_{j+1}+1)+1}(\wedge_{L\otimes_{\Qp}E}^{n-j}D_L)&=&0.
\end{array}\right.$$
De plus par (\ref{versfilmax}) (appliqu\'e avec $h_1=\lambda_1+\cdots+\lambda_{j-1}+\lambda_{j+1}+1$, $h_2=\lambda_1+\cdots+\lambda_{j-1}+\lambda_{j}$ et $D=\wedge_{L_0\otimes_{\Qp}E}^{n-j}D$) avec la Remarque \ref{torsion} (appliqu\'ee avec $\delta=\norm^{n-j+\cdots+n-1}\chi_W^{-1}$), on d\'eduit un isomorphisme naturel~:
\begin{equation}\label{mapfil}
{\mathcal F}_\alpha:{\rm Ext}^1_{(\varphi,\Gamma)}\big(\R(\lambda\circ \lambda_{\alpha^{\!\vee}})/(t^{1-\langle \lambda,\alpha^\vee\rangle}),D_\alpha(\lambda,W)\big)\buildrel\sim\over\longrightarrow (\wedge_{L\otimes_{\Qp}E}^{n-j}D_L)^{\glp}.
\end{equation}

Soient maintenant $U^\wp$, $W^\wp$ et ${\rho}: {\rm Gal}(\overline F/F)\rightarrow {\rm GL}_n(E)$ comme ci-dessus tels que~:
\begin{itemize}
\item[(i)]$\rho$ est absolument irr\'eductible non ramifi\'ee aux places de $F$ au-dessus de $D(U^p)$;\\
\item[(ii)]$\widehat S(U^\wp,W^\wp)^{\alg}[\mrho]\ne 0$ (donc $\rho_{\widetilde{\wp}}$ est potentiellement semi-stable);\\
\item[(iii)]$\rho_{\widetilde{\wp}}$ a pour poids de Hodge-Tate $h_1,\dots,h_n$ et module de Deligne-Fontaine $D$
\end{itemize}
(en fait $W^\wp$ est d\'etermin\'e par $\rho$). Il r\'esulte alors des normalisations, de (\ref{equ: lalgaut2}) et de \cite{Ca} que l'on a pour tout $U^\wp$, $W^\wp$, $\rho$ v\'erifiant (i), (ii) et (iii)~:
\begin{equation}\label{localgrho}
\widehat S(U^\wp,W^\wp)^{\alg}[\mrho]\cong \big(L(-\lambda)\otimes_E\pi^{\infty}(W)\big)^{\oplus d(U^\wp,\rho)}\otimes_E \varepsilon^{n-1}\circ{\det}
\end{equation}
o\`u $d(U^\wp,\rho)\geq 1$ est un entier ne d\'ependant que de $U^\wp$ et $\rho$. La filtration $\Fil^\cdot$ sur $(B_{\rm dR}\otimes_{\Qp}\rho_{\widetilde{\wp}})^{{\Gal}(\Qpbar/L)}\simeq D_L\simeq L\otimes_{\Qp}D_L^{\glp}$ permet pour $\alpha=e_j-e_{j+1}$ de d\'efinir la droite~:
\begin{equation}\footnotesize\label{filmax}
\Fil_\alpha^{\rm max}(\rho_{\widetilde{\wp}})\=\Fil^{-h_{j+1}}(D_L)^{\glp}\wedge \Fil^{-h_{j+2}}(D_L)^{\glp}\wedge \cdots \wedge \Fil^{-h_{n}}(D_L)^{\glp}
\end{equation}
dans $\wedge_{E}^{n-j}(D_L^{\glp})\simeq (\wedge_{L\otimes_{\Qp}E}^{n-j}D_L)^{\glp}$.

Enfin, si $\pi'$, $\pi''$ sont des repr\'esentations admissibles dans $\Rep({\rm GL}_n(\Qp))$ et si $H$ est un sous-$E$-espace vectoriel de dimension $\leq 1$ du $E$-espace vectoriel $\Ext^1_{{\rm GL}_n(\Qp)}(\pi'',\pi')$ des extensions de $\pi''$ par $\pi'$ (dans la cat\'egorie ab\'elienne des repr\'esentations admissibles de ${\rm GL}_n(\Qp)$), on note $[\![H]\!]$ l'unique repr\'esentation de ${\rm GL}_n(\Qp)$ sur $E$ sous-jacente \`a $H$ au sens de Yoneda. Si $\pi_1,\dots,\pi_{m}$ sont admissibles dans $\Rep({\rm GL}_n(\Qp))$ avec $\pi$ comme sous-repr\'esentation commune, on note $\oplus_\pi^j\pi_j\=\pi_1\oplus_{\pi}\pi_2\oplus_{\pi}\cdots\oplus_\pi\pi_{m}$.

La conjecture suivante est une version nettement plus pr\'ecise (et un peu plus g\'en\'erale) de \cite[Conj.~6.1.1]{Br1} lorsque $F^+_{v_0}=\Qp$, qu'elle implique (sauf l'unicit\'e de ${\mathcal R}^j$ dans {\it loc.cit.} dont on ne se pr\'eoccupe pas ici).

\begin{conj}\label{extglobmieux}
Pour toute racine simple $\alpha=e_j-e_{j+1}$ il existe une repr\'esentation localement analytique admissible de longueur finie $\pi^\alpha(\lambda,W)$ de ${\rm GL}_n(\Qp)$ sur $E$ ne d\'ependant que de $\lambda$, $W$ et $\alpha$ et v\'erifiant les propri\'et\'es suivantes~:

\noindent
(i) $F_\alpha(\pi^\alpha(\lambda,W))\simeq E_\infty(\chi_{-\lambda})\!\otimes_{E}\!\Hom_{(\varphi,\Gamma)}\!\big(D_\alpha(\lambda,W),-\big)$ (cf. (\ref{dalphaw}) pour $D_\alpha(\lambda,W)$);

\noindent
(ii) l'application (\ref{mapext}) pour $\pi'=\pi^\alpha(\lambda,W)$ est un isomorphisme~:
\begin{multline*}
{\mathcal E}_\alpha:{\rm Ext}^1_{{\rm GL}_n(\Qp)}\big(\pi^\alpha(\lambda,W),L(-\lambda)\otimes_E\pi^{\infty}(W)\big)\\\buildrel\sim\over\longrightarrow {\rm Ext}^1_{(\varphi,\Gamma)}\big(\R(\lambda\circ \lambda_{\alpha^{\!\vee}})/(t^{1-\langle \lambda,\alpha^\vee\rangle}),D_\alpha(\lambda,W)\big);
\end{multline*}
(iii) pour $U^\wp$, $W^\wp$, $\rho$ v\'erifiant (i), (ii), (iii) ci-dessus, on a une injection ${\rm GL}_n(\Qp)$-\'equivariante (cf. (\ref{mapfil}) pour ${\mathcal F}^\alpha$, (\ref{filmax}) pour $\Fil^{\rm max}_\alpha(\rho_{\widetilde{\wp}})$ et (\ref{localgrho}) pour $d(U^\wp,\rho)$)~:
\begin{equation*}
\bigg(\bigoplus^{\alpha}_{L(-\lambda)\otimes_E\pi^{\infty}(W)}[\![({\mathcal F}_\alpha\circ {\mathcal E}_\alpha)^{-1}(\Fil^{\rm max}_\alpha(\rho_{\widetilde{\wp}}))]\!] \bigg)^{\oplus d(U^\wp,\rho)}\!\!\otimes \varepsilon^{n-1}\circ{\det}\hookrightarrow \widehat S(U^\wp,W^\wp)^{\an}[\mrho].
\end{equation*}
\end{conj}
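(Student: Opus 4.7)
The plan is to establish the three parts of the conjecture in sequence, leveraging the technology developed throughout the paper. For part (i), I would construct $\pi^\alpha(\lambda,W)$ by specifying its Jordan-H\"older constituents and their gluing. Beyond the locally algebraic socle $L(-\lambda) \otimes_E \pi^{\infty}(W)$, the natural candidates for the other constituents are of the form ${\mathcal F}_{P^-}^{{\rm GL}_n}(L^-(s_\alpha \cdot \lambda), \pi_P^\infty)$, where $P$ runs over standard parabolics and $\pi_P^\infty$ over the generic constituents of appropriate Jacquet modules of $\pi^{\infty}(W)$. The combinatorics are forced by (i): applying $F_\alpha$ to each such ${\mathcal F}_{P^-}^{{\rm GL}_n}(L^-(s_\alpha \cdot \lambda), \pi_P^\infty)$ via Corollary \ref{casparticuliers} (and propagated through extensions using Proposition \ref{drex} together with Theorem \ref{liesocle}) must assemble into a $(\varphi,\Gamma)$-module isomorphic to $D_\alpha(\lambda,W)$. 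The rank bookkeeping is consistent: $D_\alpha(\lambda,W)$ has rank $\binom{n}{j}$, matching the number of $j$-element subsets of Frobenius eigenvalues entering the wedge construction.

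For part (ii), once (i) holds, the morphism ${\mathcal E}_\alpha$ exists by Theorem \ref{localgenplus} combined with the extension in Remark \ref{pourapplication} (valid since $L(-\lambda) \otimes_E \pi^{\infty}(W)$ has a unique generic sub-object which is its socle). To upgrade it to an isomorphism, I would match dimensions using Proposition \ref{lienfilmax}: the right-hand side of (\ref{mapfil}) has dimension $\binom{n}{j}$, and Corollary \ref{cor05} controls each individual ${\rm Ext}^1$ between constituents by at most one dimension. Surjectivity of ${\mathcal E}_\alpha$ is the key point: for each class in the $(\varphi,\Gamma)$-module side, one must produce an actual extension on the representation side. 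The idea is to realise such extensions via locally analytic parabolic inductions of suitable triangulations and to read off their $F_\alpha$ using the results of Section \ref{alphaqques} (this is the strategy that worked for (ii) of Theorem \ref{gl3locintro}). Injectivity is essentially automatic, using the rigidity of $F_\alpha$ on the candidate constituents of $\pi^\alpha(\lambda,W)$, provided one has a sufficiently general version of Conjecture \ref{representable} covering the extensions that arise; such a version must be proven as an essential preliminary.

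For part (iii), the local-global compatibility, one cannot avoid global methods. The template is the crystalline case of Theorem \ref{resterintro}(ii), proved via \cite{BHS3}. The approach is: patch a big module over $\widehat{S}(U^\wp,W^\wp)$ using Taylor-Wiles data, identify its tangent space at (the point determined by) $\rho$ with a space of trianguline deformations of $\rho_{\widetilde{\wp}}$, and then translate the parametrisation of the trianguline locus via the Hodge filtration into a parametrisation of extensions of $\pi^\alpha(\lambda,W)$ by the locally algebraic sub-object. A delicate geometric comparison shows that the Hodge filtration on $D_{\rm dR}(\rho_{\widetilde{\wp}})$ picks out the extension class in $\widehat{S}(U^\wp,W^\wp)^{\an}[{\mathfrak m}_\rho]$ that corresponds, under ${\mathcal F}_\alpha \circ {\mathcal E}_\alpha$, to $\Fil_\alpha^{\rm max}(\rho_{\widetilde{\wp}})$. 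The companion constituent results needed as input already exist in the crystalline, sufficiently generic setting under Taylor-Wiles hypotheses.

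The hard part will be part (iii) in the general, non-crystalline regime. Even assuming parts (i) and (ii) can be pushed through in reasonable generality (which itself requires new representability and exactness results for $F_\alpha$ generalising Theorem \ref{gl3enplus} beyond ${\rm GL}_3$), the semistable non-crystalline companion constituents needed to feed the patching argument are not yet available at the required level of generality; extending \cite{BHS3} to this setting appears to require substantial new automorphic input. A second, more modest obstacle is the removal of the genericity and Taylor-Wiles hypotheses, which constrains the class of $\rho$ to which the method applies. Given the scope of these obstacles, I would at first attempt to prove (i) and (ii) unconditionally (including constructing $\pi^\alpha(\lambda,W)$ and proving the necessary refinement of Conjecture \ref{representable}), and only afterwards attack (iii) with the Taylor-Wiles machinery, in direct parallel to the crystalline template of Theorem \ref{resterintro}(ii).
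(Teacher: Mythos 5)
You should be aware that the statement you were asked to prove is Conjecture \ref{extglobmieux}: the paper itself does not prove it, but only establishes special or partial cases (Th\'eor\`eme \ref{localgenplus} and Corollaire \ref{cor05} for arbitrary $n$, Th\'eor\`eme \ref{ncris} in the generic crystalline case under Taylor--Wiles hypotheses, Th\'eor\`eme \ref{n=2} for $n=2$, Th\'eor\`eme \ref{gl3loc} and Th\'eor\`eme \ref{gl3glob} for ${\rm GL}_3$ and Steinberg $\pi^\infty(W)$). Your proposal is, in substance, a restatement of the paper's own strategy for those cases --- constituents taken in $C_{\lambda,\alpha}$, computation of $F_\alpha$ via Corollaire \ref{casparticuliers}, Proposition \ref{drex} and Th\'eor\`eme \ref{liesocle}, surjectivity of ${\mathcal E}_\alpha$ by realizing extensions inside locally analytic parabolic inductions, and part (iii) via patching and \cite{BHS3} --- together with an honest list of obstacles. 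As such it is a reasonable roadmap, but it is not a proof, and the places where it defers to future work are exactly the places where the conjecture is open.

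Concretely, the gaps are the following. First, your step ``propagated through extensions using Proposition \ref{drex} together with Theorem \ref{liesocle}'' silently requires a representability and exactness statement of the type of Conjecture \ref{representable}; the paper proves this only for locally algebraic $\pi''$ (Th\'eor\`eme \ref{localgenplus}) and for ${\rm GL}_2$, ${\rm GL}_3$ (Th\'eor\`eme \ref{gl3enplus}), and its proof for ${\rm GL}_3$ already consumes the technical sections \ref{devis1}--\ref{gl32}; nothing in your plan indicates how to extend this to ${\rm GL}_n$. Second, injectivity of ${\mathcal E}_\alpha$ is not ``essentially automatic'': the functor $F_\alpha$ kills non-generic constituents and can a priori send a non-split extension to a split one, so in the proved cases injectivity is obtained by embedding the extension into an explicit parabolic induction and computing (Th\'eor\`eme \ref{plusgeneral} plus Proposition \ref{drex} in Th\'eor\`eme \ref{ncris}, the explicit bases $\pi_1,\pi_2,\pi_3$ in the proof of Th\'eor\`eme \ref{gl3loc}); likewise the dimension count on the automorphic side rests on Corollaire \ref{cor05}, which requires an irreducibility hypothesis and only controls extensions by the specific constituents ${\mathcal F}^G_{B^-}(L^-(s_\alpha\cdot\lambda),\chi)$, not the general cosocle of a candidate $\pi^\alpha(\lambda,W)$. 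Third, for (iii) outside the crystalline generic case the needed companion-constituent input does not exist, and even in the ${\rm GL}_3$ semistable case the paper does not prove (iii): Th\'eor\`eme \ref{gl3glob} only shows that $\pi^\alpha(\rho_p)$ depends on the same data as $[\![({\mathcal F}_\alpha\circ{\mathcal E}_\alpha)^{-1}(\Fil^{\rm max}_\alpha(\rho_p))]\!]$, and their identification remains Conjecture \ref{deuxpialpha}. So your proposal correctly mirrors the paper's partial results, but it does not close any of the steps that the paper itself leaves open.
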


\begin{rem}
{\rm (i) Les repr\'esentations $\pi^{e_i-e_{i+1}}(-)$ sont not\'ees $\Pi^{n-i}(-)$ dans \cite{Br1}.\\
(ii) En g\'en\'eral il devrait exister plus d'une repr\'esentation $\pi^\alpha(\lambda,W)$ satisfaisant toutes les conditions de la Conjecture \ref{extglobmieux} (par exemple, si le foncteur $F_\alpha$ annule des constituants en cosocle de $\pi^\alpha(\lambda,W)$, on peut les enlever de $\pi^\alpha(\lambda,W)$), mais il est possible qu'il en existe une maximale pour l'inclusion.\\
(iii) Si les constituants de $\pi^{\infty}(W)$ sont des sous-quotients de s\'eries principales, on devrait pouvoir prendre $\pi^\alpha(\lambda,W)$ dans la cat\'egorie $C_{\lambda,\alpha}$ du \S~\ref{prelgen}.\\
(iv) Pour $\lambda=0$, les auteurs aiment \`a penser les repr\'esentations localement ana\-lytiques $\pi^\alpha(W)\=\pi^\alpha(0,W)$ comme une ``correspondance de Langlands locale (conjecturale) pour chaque racine simple $\alpha$''.}
\end{rem}

\subsection{Cas partiels ou particuliers}\label{carparticuliers}

On \'enonce et d\'emontre divers cas partiels de la Conjecture \ref{extglobmieux}, notamment lorsque $\dim_EW=3$ et $N^2\ne 0$ sur $D$.

Commen\c cons par le cas $n=2$. On dispose alors de la correspondance de Langlands localement analytique $\rho_p\mapsto \pi^{\an}(\rho_p)$ pour ${\rm GL}_2(\Qp)$ (normalis\'ee de sorte que le caract\`ere central de $\pi^{\an}(\rho_p)$ est $\varepsilon^{-1}{\det}(\rho_p)$), cf. \cite{Co2}, \cite{CDP}, \cite{CD}. Via les r\'esultats de compatibilit\'e local-global sur cette correspondance (cf. \cite{Em3}, \cite{CS1}, \cite{CS2}), on peut ici remplacer la Conjecture \ref{extglobmieux} par la conjecture purement locale suivante (o\`u $\alpha$ est l'unique racine de ${\rm GL}_2$ et $D$, $W$, $h_1,h_2$ et $\lambda$ sont comme au \S~\ref{locglob}).

\begin{conj}\label{extglobmieuxn=2}
Il existe une repr\'esentation localement analytique admissible de longueur finie $\pi^\alpha(\lambda,W)$ de ${\rm GL}_2(\Qp)$ sur $E$ ne d\'ependant que de $\lambda$, $W$ et v\'erifiant les propri\'et\'es suivantes~:

\noindent
(i) $F_\alpha(\pi^\alpha(\lambda,W))\simeq E_\infty\otimes_E\Hom_{(\varphi,\Gamma)}\!\big(D_\alpha(\lambda,W),-\big)$;

\noindent
(ii) l'application (\ref{mapext}) pour $\pi'=\pi^\alpha(\lambda,W)$ est un isomorphisme~:
\begin{multline*}
{\mathcal E}_\alpha:{\rm Ext}^1_{{\rm GL}_n(\Qp)}\big(\pi^\alpha(\lambda,W),L(-\lambda)\otimes_E\pi^{\infty}(W)\big)\\\buildrel\sim\over\longrightarrow {\rm Ext}^1_{(\varphi,\Gamma)}\big(\R(\lambda\circ \lambda_{\alpha^{\!\vee}})/(t^{1-\langle \lambda,\alpha^\vee\rangle}),D_\alpha(\lambda,W)\big);
\end{multline*}
(iii) pour toute $\rho_p:{\rm Gal}(\Qpbar/\Qp)\rightarrow {\rm GL}_2(E)$ potentiellement semi-stable de poids de Hodge-Tate $h_1,h_2$ et module de Deligne-Fontaine $D$, on a $\pi^{\an}(\rho_p)\simeq [\![({\mathcal F}_\alpha\circ {\mathcal E}_\alpha)^{-1}(\Fil^{-h_2}(\rho_p))]\!]$ o\`u $\Fil^\cdot (\rho_p)$ est la filtration de Hodge sur $D_L$ donn\'ee par $\rho_p$.
\end{conj}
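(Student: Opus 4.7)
The plan is to split on whether the Weil-Deligne representation $W$ is reducible or irreducible, fixing throughout a potentially semi-stable $\rho_p$ with parameters $(\lambda,W)$ and attaching to it the Colmez locally analytic correspondence $\pi^{\an}(\rho_p)$.

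In the reducible case, $\pi^{\an}(\rho_p)$ is explicitly described (after \cite{Co2}, \cite{CDP}, \cite{CD}) as an amalgamated sum of locally analytic principal series $\mathcal{F}^G_{B^-}(L^-(w\cdot\lambda),\chi)$ for $w\in W$ glued along $L(-\lambda)\otimes_E\pi^\infty(W)$. I would define $\pi^\alpha(\lambda,W)\=\pi^{\an}(\rho_p)/(L(-\lambda)\otimes_E\pi^\infty(W))$, the independence from $\rho_p$ following from the fact that this quotient is determined by its locally analytic constituents (which depend only on $\lambda$ and $W$). Property (i) then reduces to a direct computation via Corollary \ref{casparticuliers} on each principal series constituent and Proposition \ref{drex}(ii) to assemble them, the overlap in the Galois characters $\chi_{-\lambda}$ being exactly what makes the contributions combine into a single $(\varphi,\Gamma)$-module $D_\alpha(\lambda,W)$. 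For (ii), applying Theorem \ref{gl3enplus} (the $G=\mathrm{GL}_2$ half) together with Theorem \ref{localgenplus} produces the morphism $\mathcal{E}_\alpha$ and shows its source maps into $\mathrm{Ext}^1_{(\varphi,\Gamma)}(\R(\lambda\circ\lambda_{\alpha^{\!\vee}})/(t^{1-\langle\lambda,\alpha^\vee\rangle}),D_\alpha(\lambda,W))$; a dimension count (both sides have dimension equal to $\dim_E D_L^{\glp}=2$ by Proposition \ref{lienfilmax} combined with the classical computation of $\mathrm{Ext}^1_{\mathrm{GL}_2(\Qp)}(\pi^\alpha(\lambda,W),L(-\lambda)\otimes_E\pi^\infty(W))$ in \cite[\S~3]{Br1}) finishes the argument. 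Property (iii) is tautological by the very definition of $\pi^\alpha(\lambda,W)$ as a quotient of $\pi^{\an}(\rho_p)$, once one checks via Proposition \ref{lienfilmax} that the Hodge-filtration line $\Fil^{-h_2}(\rho_p)$ corresponds under $\mathcal{F}_\alpha\circ\mathcal{E}_\alpha$ to the class of this extension.

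In the irreducible case one again sets $\pi^\alpha(\lambda,W)\=\pi^{\an}(\rho_p)/(L(-\lambda)\otimes_E\pi^\infty(W))$, this being independent of $\rho_p$ by \cite[Th.~VI.6.43]{Co2}. Granted (i), the remaining properties follow essentially as in the reducible case: part (ii) is a consequence of Theorem \ref{resterintro}(i), Theorem \ref{gl3enplus}, and a dimension count using Proposition \ref{lienfilmax} (with the injectivity of $\mathcal{E}_\alpha$ coming from the fact that an extension killed by $\mathcal{E}_\alpha$ would force $\pi^\alpha(\lambda,W)\hookrightarrow\pi^{\an}(\rho_p)$ to split, contradicting the irreducibility of the cokernel of the inclusion of locally algebraic vectors); and part (iii) follows from the description in \cite[Th.~VI.6.50]{Co2} of how Hodge filtrations control the extension class, translated through the identification in Proposition \ref{lienfilmax}.

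The hard part is establishing property (i) when $W$ is irreducible. The point is that $\pi^\alpha(\lambda,W)$ is \emph{not} of the form $\mathcal{F}^G_{P^-}(M,\pi_P^\infty)$, so Corollary \ref{casparticuliers} and its extensions via Theorem \ref{gl3enplus} do not apply directly. The natural route is to relate $F_\alpha(\pi^{\an}(\rho_p))$ to the Colmez $(\varphi,\Gamma)$-module $D_{\mathrm{rig}}(\check\rho_p)$: conditionally on (i), Theorem \ref{n=2intro}(ii) asserts exactly the identification $F_\alpha(\pi^{\an}(\rho_p))\simeq E_\infty\otimes_E\mathrm{Hom}_{(\varphi,\Gamma)}(D_{\mathrm{rig}}(\check\rho_p),-)$, which strongly suggests that one should compare the $(\psi,\Gamma)$-module $(\pi^{\an}(\rho_p)^\vee)_{\mnn^\alpha}(\eta)^{N_m^\alpha}$ used to define $F_\alpha$ with the module $\check\rho_p\boxtimes\Qp$ of \cite{Co2}. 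Carrying this comparison out rigorously appears to require a refinement of the results of \cite{Co}, \cite{Co2}, \cite{CD} — specifically, a precise statement identifying the twisted $\mnn^\alpha$-coinvariants with the full Robba-ring $(\varphi,\Gamma)$-module of $\check\rho_p$ rather than just a subquotient. Absent such an input, the argument presented here settles the conjecture in the reducible case and reduces the irreducible case to this single technical compatibility statement, which is the main outstanding obstacle.
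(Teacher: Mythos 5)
Your reducible-case argument skips the step that carries the real content of (i) and (iii), namely the comparison with Colmez--Dospinescu. When $N\neq 0$ on $D$, the module $D_\alpha(\lambda,W)$ of (\ref{dalphaw}) is a \emph{non-split} extension of rank-one $(\varphi,\Gamma)$-modules, and assembling $F_\alpha(\pi^\alpha(\lambda,W))$ constituent by constituent via le Corollaire \ref{casparticuliers}, la Proposition \ref{drex} and le Th\'eor\`eme \ref{gl3enplus} only produces \emph{some} extension of the two rank-one pieces; nothing in that computation excludes the split one. Likewise (iii) is not ``tautological'': one must identify which class in ${\rm Ext}^1_{(\varphi,\Gamma)}$ the specific extension $\pi^{\an}(\rho_p)$ is sent to. The paper settles both points with an input your proposal never uses in the reducible case: the composite $\pi^{\an}(\rho_p)^\vee\hookrightarrow D_{\rm rig}(\check\rho_p)_r\boxtimes{\mathbb P}^1(\Qp)\rightarrow D_{\rm rig}(\check\rho_p)_r$ of \cite{CD}, together with \cite{Co} and \cite{LXZ}, gives a surjection $\Rr\otimes_{\R^+}\pi^{\an}(\rho_p)^\vee\twoheadrightarrow D_{\rm rig}(\check\rho_p)_r$, hence $D_\alpha(\pi^{\an}(\rho_p))\simeq D_{\rm rig}(\check\rho_p)$; this forces $D_\alpha(\pi^\alpha(\lambda,W))\hookrightarrow D_{\rm rig}(\check\rho_p)$, whence the non-splitness needed for (i) when $N\neq 0$, and, via la Proposition \ref{lienfilmax} and the functoriality of Berger's equivalence, statement (iii) (which needs this comparison even in the crystalline case). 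Without it your argument establishes, in the reducible case, only what follows from le Th\'eor\`eme \ref{plusgeneral} and a dimension count, i.e.\ (i)--(ii) for $N=0$, and neither (i) for $N\neq 0$ nor (iii) at all.

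In the irreducible case you overclaim. Granted (i) with $\pi^\alpha(\lambda,W)=\pi^{\an}(\rho_p)/(L(-\lambda)\otimes_E\pi^\infty(W))$, the paper deduces (le Th\'eor\`eme \ref{n=2}(ii), by a torsion analysis on $D_\alpha(\pi^{\an}(\rho_p))$ using la Proposition \ref{lienfilmax}(i)) only that $F_\alpha(\pi^{\an}(\rho_p))\simeq E_\infty\otimes_E\Hom_{(\varphi,\Gamma)}(D_{\rm rig}(\check\rho_p),-)$; it does \emph{not} deduce (ii) or (iii), and it states explicitly that the full conjecture in this case seems to require going beyond \cite{Co}, \cite{Co2}, \cite{CD}. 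Your dimension count for (ii) presupposes $\dim_E{\rm Ext}^1_{{\rm GL}_2(\Qp)}(\pi^\alpha(\lambda,W),L(-\lambda)\otimes_E\pi^\infty(W))=2$, which is not available from the cited sources in the supercuspidal case, and your injectivity argument is circular: $F_\alpha$ is not faithful, so the vanishing of ${\mathcal E}_\alpha$ on an extension class does not force the corresponding extension of representations to split, and a general class killed by ${\mathcal E}_\alpha$ has no a priori relation to $\pi^{\an}(\rho_p)$. Two smaller points: the independence of the quotient $\pi^{\an}(\rho_p)/(L(-\lambda)\otimes_E\pi^\infty(W))$ from $\rho_p$ is \cite[Th.~VI.6.43]{Co2} (constituents alone do not determine a representation), and the scalar case of $W$, where no $\rho_p$ as in (iii) exists, needs a separate (easy) treatment.
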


En tenant compte des normalisations, on peut v\'erifier que, si (i) et (ii) sont vrais, alors (iii) est \'equivalent \`a~:
\begin{eqnarray*}
F_\alpha(\pi^{\an}(\rho_p))&\simeq &E_\infty\otimes_E\Hom_{(\varphi,\Gamma)}\big(D_{\rm rig}(\rho_p)({\det}(\rho_p)^{-1}\varepsilon),-\big)\\
&\simeq &E_\infty\otimes_E\Hom_{(\varphi,\Gamma)}(D_{\rm rig}(\check \rho_p),-)
\end{eqnarray*}
o\`u $\check\rho_p\= \rho_p^\vee\otimes\varepsilon\simeq \rho_p\otimes {\det}(\rho_p)^{-1}\varepsilon$ est le dual de Cartier de $\rho_p$ et $D_{\rm rig}(\rho_p)$, $D_{\rm rig}(\check \rho_p)$ les $(\varphi,\Gamma)$-modules (\'etales) sur $\R$ de (respectivement) $\rho_p$, $\check\rho_p$. Rappelons \'egalement que $\pi^{\an}(\rho_p)/(L(-\lambda)\otimes_E\pi^{\infty}(W))$ ne d\'epend que de $\lambda$ et $W$ par \cite[Th.~VI.6.43]{Co2}. Le th\'eor\`eme suivant r\'esume ce que l'on sait montrer de la Conjecture \ref{extglobmieuxn=2}.

\begin{thm}\label{n=2}
(i) La Conjecture \ref{extglobmieuxn=2} est vraie lorsque $W$ est r\'eductible.\\
(ii) Si $W$ est irr\'eductible, pour tout $\rho_p$ de poids de Hodge-Tate $h_1,h_2$ et module de Deligne-Fontaine $D$ on a un morphisme surjectif $\Rr\otimes_{\R^+}\pi^{\an}(\rho_p)^\vee \twoheadrightarrow D_{\rm rig}(\check\rho_p)_r$ de $(\psi,\Gamma)$-modules de Fr\'echet pour $r\gg 0$. De plus, si le (i) de la Conjecture \ref{extglobmieuxn=2} est vrai avec $\pi^\alpha(\lambda,W)=\pi^{\an}(\rho_p)/(L(-\lambda)\otimes_E\pi^{\infty}(W))$, alors on a $F_\alpha(\pi^{\an}(\rho_p))\simeq E_\infty\otimes_E\Hom_{(\varphi,\Gamma)}(D_{\rm rig}(\check \rho_p),-)$.
\end{thm}
\begin{proof}
\noindent
{\bf \'Etape $1$}\\
Soit $\rho_p:{\rm Gal}(\Qpbar/\Qp)\rightarrow {\rm GL}_2(E)$ potentiellement semi-stable de poids de Hodge-Tate $h_1,h_2$ et module de Deligne-Fontaine $D$. Pour $r\gg 0$, on a $D_{\rm rig}(\check\rho_p)=\R\otimes_{\Rr}D_{\rm rig}(\check\rho_p)_r$. On montre d'abord (sans hypoth\`ese sur $W$) que l'on a une surjection $\Rr\otimes_{\R^+}\pi^{\an}(\rho_p)^\vee\twoheadrightarrow D_{\rm rig}(\check\rho_p)_r$ de $(\psi,\Gamma)$-modules de Fr\'echet quitte \'eventuellement \`a augmenter $r$. La compos\'ee (cf. \cite[\S~VI]{CD} pour les d\'efinitions, en particulier \cite[Cor.~VI.12~\&~Cor.~VI.13]{CD})~:
$$\pi^{\an}(\rho_p)^\vee\hookrightarrow D_{\rm rig}(\check\rho_p)_r\boxtimes {\mathbb P}^1(\Qp)\buildrel{\rm res}_{\Zp}\over\longrightarrow D_{\rm rig}(\check\rho_p)_r\boxtimes \Zp\simeq D_{\rm rig}(\check\rho_p)_r$$
donne un morphisme $\pi^{\an}(\rho_p)^\vee\rightarrow D_{\rm rig}(\check\rho_p)_r$ de $(\psi,\Gamma)$-modules de Fr\'echet sur $\R^+$. Soit $\pi(\rho_p)$ le Banach $p$-adique unitaire associ\'ee \`a $\rho_p$, ou alternativement par \cite[Th.~0.2]{CD} le compl\'et\'e unitaire universel de $\pi^{\an}(\rho_p)$, $D(\check\rho_p)_0$ le $(\varphi,\Gamma)$-module {\it continu} de $\rho_p$ (i.e. sur ${\mathcal E}=\oE[[X]][1/X]^\wedge[1/p]$) et $D(\check\rho_p)_0^\natural$ le sous-$\oE[[X]][1/p]$-module de $D(\check\rho_p)_0$ stable par $\psi$ et $\Gamma$ d\'efini dans \cite[\S~II.5]{Co}. Alors il r\'esulte de \cite[Cor.~III.22(ii)]{CD} et \cite[Prop.~III.23]{CD} (avec \cite[Cor.~II.7.2]{Co}) que l'image de la compos\'ee $\pi(\rho_p)^\vee\hookrightarrow \pi^{\an}(\rho_p)^\vee\rightarrow D_{\rm rig}(\check\rho_p)_r$ contient $D(\check\rho_p)_0^\natural$. Comme $D(\check\rho_p)_0^\natural$, vu dans $D_{\rm rig}(\check\rho_p)_r$, contient une base de $D_{\rm rig}(\check\rho_p)_r$ sur $\Rr$ (ce qui se d\'eduit par extension des scalaires de \cite[Cor.~II.7.2]{Co} et du fait que $D(\check\rho_p)_0^\natural$ contient une base de $D(\check\rho_p)_0$ sur $\mathcal E$), on en d\'eduit en particulier que l'application obtenue par extension des scalaires $\Rr\otimes_{\R^+}\pi^{\an}(\rho_p)^\vee\twoheadrightarrow D_{\rm rig}(\check\rho_p)_r$ est surjective (merci \`a G. Dospinescu pour les r\'ef\'erences et pour cet argument !).

\noindent
{\bf \'Etape $2$}\\
Supposons $W$ r\'eductible avec $N=0$ sur $D$. Supposons d'abord $W$ non scalaire. Alors comme dans \cite[\S~3.1]{Br1} la repr\'esentation $\pi^\alpha(\lambda,W)$ est la somme directe de deux s\'eries principales localement analytiques distinctes et il r\'esulte facilement du Th\'eor\`eme \ref{plusgeneral} et de (l'analogue de) \cite[Lem.~3.1.1]{Br1} avec le (ii) de la Proposition \ref{lienfilmax} que les (i) et (ii) de la Conjecture \ref{extglobmieuxn=2} sont vrais dans ce cas (l'application ${\mathcal E}_\alpha$ est en effet une injection entre $E$-espaces vectoriels de dimension $2$, donc une bijection). Soit $\rho_p$ comme dans l'\'Etape $1$, alors par \cite{Li2}, \cite{LXZ} et ce qui pr\'ec\`ede, le foncteur $F_\alpha(\pi^{\an}(\rho_p))$ est repr\'esentable par un $(\varphi,\Gamma)$-module libre de rang $2$ $D_\alpha(\pi^{\an}(\rho_p))$ extension de deux $(\varphi,\Gamma)$-modules libres de rang $1$ (en \'etendant les scalaires \`a $E_\infty$). De plus, par l'\'Etape $1$ et la d\'efinition du foncteur $F_\alpha$, on obtient un morphisme surjectif $D_\alpha(\pi^{\an}(\rho_p))\twoheadrightarrow D_{\rm rig}(\check\rho_p)$ de $(\varphi,\Gamma)$-modules sur $\R$, donc un isomorphisme puisque les deux sont des modules libres de rang $2$ sur $\R$. Cela ach\`eve la preuve du cas $W$ r\'eductible non scalaire avec $N=0$. Supposons maintenant $W$ scalaire (et $N=0$), alors les deux s\'eries principales localement analytiques du d\'ebut sont les m\^emes. En prenant pour $\pi^\alpha(\lambda,W)$ deux copies de cette s\'erie principale, on obtient de m\^eme les (i) et (ii) de la Conjecture \ref{extglobmieuxn=2}, et dans ce cas il n'existe pas de $\rho_p$ comme en (iii).

\noindent
{\bf \'Etape $3$}\\
Supposons maintenant $W$ r\'eductible (non scalaire) avec $N\ne 0$ sur $D$. Alors \`a torsion pr\`es par un caract\`ere de $\Qp^\times$, la repr\'esentation $\pi^\alpha(\lambda,W)$ est comme dans \cite[Lem.~3.1.2(i)]{Br1}. Par le Th\'eor\`eme \ref{plusgeneral} et le Th\'eor\`eme \ref{gl3enplus} (pour $G={\rm GL}_2$) on a que $F_\alpha(\pi^\alpha(\lambda,W))$ est repr\'esentable par un $(\varphi,\Gamma)$-module libre de rang $2$ $D_\alpha(\pi^\alpha(\lambda,W))$ extension de deux $(\varphi,\Gamma)$-modules libres de rang $1$. Pour montrer $D_\alpha(\pi^\alpha(\lambda,W))\simeq D_\alpha(\lambda,W)$, on voit ais\'ement avec \cite[Prop.~2.15]{Na1} (et \cite[Th.~5.11]{Na2}) qu'il suffit de montrer que cette extension est non scind\'ee. Quitte \`a tordre encore par un caract\`ere de $\Qp^\times$, il existe une repr\'esentation $\rho_p$ semi-stable de poids de Hodge-Tate $h_1,h_2$ et module de Deligne-Fontaine $D$. De plus il r\'esulte facilement de \cite[Lem.~3.1.2(ii)]{Br1} et sa preuve avec (encore) le Th\'eor\`eme \ref{plusgeneral} et le Th\'eor\`eme \ref{localgenplus} que le foncteur $F_\alpha$ appliqu\'e \`a toute extension {\it non scind\'ee} dans ${\rm Ext}^1_{{\rm GL}_n(\Qp)}(\pi^\alpha(\lambda,W),L(-\lambda)\otimes_E\pi^{\infty}(W))$, par exemple l'extension donn\'ee par $\pi^{\an}(\rho_p)$ (par \cite{LXZ}), est repr\'esentable par un $(\varphi,\Gamma)$-module {\it libre} de rang $2$ qui contient $D_\alpha(\pi^\alpha(\lambda,W))$. Par l'\'Etape $1$ et ce qui pr\'ec\`ede, on d\'eduit un morphisme surjectif $D_\alpha(\pi^{\an}(\rho_p))\twoheadrightarrow D_{\rm rig}(\check\rho_p)$ de $(\varphi,\Gamma)$-modules libres de rang $2$, donc un isomorphisme. En particulier on a une injection $D_\alpha(\pi^\alpha(\lambda,W))\hookrightarrow D_{\rm rig}(\check\rho_p)$ d'o\`u on d\'eduit facilement que $D_\alpha(\pi^\alpha(\lambda,W))$ est bien une extension non scind\'ee de $(\varphi,\Gamma)$-modules libres de rang $1$. Un examen de tout ce qui pr\'ec\`ede (avec le (ii) de la Proposition \ref{lienfilmax} pour $\dim_E{\rm Ext}^1_{(\varphi,\Gamma)}(-,-)=2$) montre que l'on a finalement d\'emontr\'e les (i), (ii) et (iii) de la Conjecture \ref{extglobmieuxn=2} dans ce cas. Cela ach\`eve la preuve du (i).

\noindent
{\bf \'Etape $4$}\\
Supposons enfin $W$ irr\'eductible, par l'\'Etape $1$ il reste \`a v\'erifier la derni\`ere assertion en (ii). Par le Th\'eor\`eme \ref{localgenplus} et l'\'Etape $1$, on a un morphisme surjectif de $(\varphi,\Gamma)$-modules g\'en\'eralis\'es $D_\alpha(\pi^{\an}(\rho_p))\twoheadrightarrow D_{\rm rig}(\check\rho_p)$ o\`u $D_\alpha(\pi^{\an}(\rho_p))\={\mathcal E}^\alpha([\![\pi^{\an}(\rho_p)]\!])\in {\rm Ext}^1_{(\varphi,\Gamma)}(\R(\lambda\circ \lambda_{\alpha^{\!\vee}})/(t^{1-\langle \lambda,\alpha^\vee\rangle}),D_\alpha(\lambda,W))$. Si $D_\alpha(\pi^{\an}(\rho_p))$ a de la torsion $D_\alpha(\pi^{\an}(\rho_p))_{\tor}$, celle-ci s'injecte forc\'ement dans $\R(\lambda\circ \lambda_{\alpha^{\!\vee}})/(t^{1-\langle \lambda,\alpha^\vee\rangle})$, et par ailleurs s'envoie sur $0$ dans $D_{\rm rig}(\check\rho_p)$ (qui est libre), de sorte que $D_\alpha(\pi^{\an}(\rho_p))/D_\alpha(\pi^{\an}(\rho_p))_{\tor}\buildrel\sim\over\rightarrow D_{\rm rig}(\check\rho_p)$ (puisque ces deux $\R$-modules sont libres de rang $2$). Si $D_\alpha(\pi^{\an}(\rho_p))_{\tor}\ne 0$, on en d\'eduit en utilisant le (i) de la Proposition \ref{lienfilmax} que les poids de Hodge-Tate de $D_\alpha(\pi^{\an}(\rho_p))/D_\alpha(\pi^{\an}(\rho_p))_{\tor}$ sont $1-h_2,h'_1$ avec $h'_1>1-h_1$, ce qui contredit l'isomorphisme pr\'ec\'edent (les poids de Hodge-Tate de $\check\rho_p$ \'etant $1-h_2,1-h_1$). Donc $D_\alpha(\pi^{\an}(\rho_p))\buildrel\sim\over\rightarrow D_{\rm rig}(\check\rho_p)$, ce qui ach\`eve la preuve du (ii).
\end{proof}

\begin{rem}
{\rm (i) Lorsque $N\ne 0$ sur $D$ et en supposant $W$ non ramifi\'e, l'isomorphisme ${\mathcal F}_\alpha\circ {\mathcal E}_\alpha:{\rm Ext}^1_{{\rm GL}_n(\Qp)}(\pi^\alpha(\lambda,W),L(-\lambda)\otimes_E\pi^{\infty}(W))\buildrel\sim\over\longrightarrow D_L^{\glp}=D$ est le m\^eme, \`a multiplication pr\`es par un scalaire non nul, que l'inverse de l'isomorphisme ${\mathcal R}^1$ de \cite[(25)]{Br1}, car un tel isomorphisme satisfaisant le (iii) de la Conjecture \ref{extglobmieuxn=2} est dans ce cas unique (\`a scalaire pr\`es).\\
(ii) Lorsque $W$ est irr\'eductible, on a le r\'esultat suivant dans la direction du (i) de la Conjecture \ref{extglobmieuxn=2}. Quitte \`a tordre $W$ par un caract\`ere, il existe $\rho_p$ potentiellement semi-stable de poids de Hodge-Tate $h_1,h_2$ et module de Deligne-Fontaine $D$. Au moins lorsque $(h_1,h_2)=(1,0)$, il r\'esulte de \cite[Prop.~8.2]{DLB} que le morphisme $\pi^{\an}(\rho_p)^\vee\rightarrow D_{\rm rig}(\check\rho_p)_r$ dans le (ii) du Th\'eor\`eme \ref{n=2} induit pour $r\gg 0$ un morphisme de $(\psi,\Gamma)$-modules de Fr\'echet~:
$$\big(\pi^{\an}(\rho_p)/(L(-\lambda)\otimes_E\pi^{\infty}(W))\big)^\vee\longrightarrow D_\alpha(\lambda,W)_r.$$}
\end{rem}

Continuons avec le cas o\`u $W$ est une somme directe de caract\`eres non ramifi\'es suffisamment g\'en\'eriques, i.e. $W=\oplus_{i=1}^{n}\nr(c_i)$ pour des $c_i\in E^\times$ tels que $c_ic_j^{-1}\notin \{1,p\}$ pour $i\ne j\in \{0,\dots,n-1\}$ o\`u $\nr(z):\Qp^\times\rightarrow E^\times$ est le caract\`ere non ramifi\'e qui envoie $p\in\Qp^\times$ sur $z\in E^\times$. Le module de Deligne-Fontaine associ\'e est $D=Ef_1 \oplus \cdots \oplus Ef_{n}$ avec $\varphi(f_i)=c_i f_i$ et on a $\pi^\infty(W)$ comme en (\ref{norma}). Un calcul donne pour $\alpha=e_j-e_{j+1}$, $j\in \{1,\dots,n-1\}$~:
\begin{equation*}
D_\alpha(\lambda,W)=\bigoplus_{\mathcal P} \R\big((s_\alpha\cdot \lambda)\circ \lambda_{\alpha^{\!\vee}}\norm^{n-j+(n-j+1)+\cdots+n-1}\nr(\Pi_{i\notin {\mathcal P}}c_i)^{-1}\big)
\end{equation*}
o\`u $\mathcal P$ parcourt les sous-ensembles de $\{c_1,\dots,c_n\}$ de cardinal $n-j$. Pour $w\in {\mathcal S}_n$, soit $\nr_w\= {\nr}(c_{w^{-1}(1)})|\cdot|^{1-n}\otimes{\nr}(c_{w^{-1}(2)})|\cdot|^{2-n}\otimes\cdots \otimes {\nr}(c_{w^{-1}(n)})$ (un caract\`ere non ramifi\'e du tore). On d\'efinit $\pi^\alpha(\lambda,W)$ comme la somme directe de tous les constituants (irr\'eductibles) ${\mathcal F}_{B^-}^{{\rm GL}_n}\big(L^-(s_\alpha\cdot \lambda),\nr_w)$ qui sont {\it distincts} quand $w$ parcourt ${\mathcal S}_n$. Il y en a exactement $n \choose j$, cf. \cite[\S~3.3]{Br1}.

\begin{thm}\label{ncris}
Supposons $W=\oplus_{i=1}^{n}\nr(c_i)$ avec $c_ic_j^{-1}\notin \{1,p\}$ pour $i\ne j$.\\
(i) On a $F_\alpha(\pi^\alpha(\lambda,W))\simeq E_\infty(\chi_{-\lambda})\!\otimes_{E}\Hom_{(\varphi,\Gamma)}(D_\alpha(\lambda,W),-)$.\\
(ii) L'application (\ref{mapext}) pour $\pi'=\pi^\alpha(\lambda,W)$ est un isomorphisme~:
\begin{multline*}
{\mathcal E}_\alpha:{\rm Ext}^{1}_{{\rm GL}_n(\Qp)}\big(\pi^\alpha(\lambda,W),L(-\lambda)\otimes_E\pi^{\infty}(W)\big)\\\buildrel\sim\over\longrightarrow {\rm Ext}^1_{(\varphi,\Gamma)}\big(\R(\lambda\circ \lambda_{\alpha^{\!\vee}})/(t^{1-\langle \lambda,\alpha^\vee\rangle}),D_\alpha(\lambda,W)\big).
\end{multline*}
(iii) Soit $U^\wp$, $W^\wp$, $\rho$ v\'erifiant les points (i), (ii), (iii) du \S~\ref{locglob} tels que $U^p$ est suffisamment petit, $\rho$ est r\'esiduellement absolument irr\'eductible et $\rho\vert_{{\rm Gal}(\overline F_{\widetilde{v}}/F_{\widetilde{v}})}$ est cristalline pour $v\vert p$, $v\ne \wp$ avec les ratios des valeurs propres du Frobenius distincts de $1,p$, alors sous les hypoth\`eses standard de Taylor-Wiles on a une injection ${\rm GL}_n(\Qp)$-\'equivariante~:
\begin{equation*}
\bigg(\bigoplus^{\alpha}_{L(-\lambda)\otimes_E\pi^{\infty}(W)}[\![({\mathcal F}_\alpha\circ {\mathcal E}_\alpha)^{-1}(\Fil^{\rm max}_\alpha(\rho_{\widetilde{\wp}}))]\!] \bigg)\!\otimes \varepsilon^{n-1}\circ{\det}\hookrightarrow \widehat S(U^\wp,W^\wp)^{\an}[\mrho].
\end{equation*}
\end{thm}
\begin{proof}
Le (i) suit facilement des d\'efinitions et du (i) du Corollaire \ref{casparticuliers}. Pour le (ii), il suffit de montrer que l'application est injective puisque les deux $E$-espaces vectoriels sont de dimension $n\choose j$~: par le Corollaire \ref{cor05} pour celui de gauche et en utilisant le (ii) de la Proposition \ref{lienfilmax} pour celui de droite. Autrement dit il suffit de montrer que ${\mathcal E}_\alpha$ envoie l'unique extension non scind\'ee de ${\mathcal F}_{B^-}^{{\rm GL}_n}(L^-(s_\alpha\cdot \lambda),\nr_w)$ par $L(-\lambda)\otimes_E\pi^\infty(W)$ vers une extension non nulle de $(\varphi,\Gamma)$-modules \`a droite. Cela d\'ecoule facilement du Th\'eor\`eme \ref{plusgeneral} appliqu\'e \`a la s\'erie principale $(\Ind_{B^-(\Qp)}^{{\rm GL}_n(\Qp)} (-\lambda) \otimes_E\nr_w)^{\an}$ (qui contient l'unique extension non scind\'ee ci-dessus, cf. (\ref{extnonsc})) avec le (ii) de la Proposition \ref{drex} appliqu\'ee avec $\pi''=0$. Enfin le (iii) r\'esulte facilement de \cite[Cor.~5.16]{BH2} avec \cite[Th.~1.3]{BHS3} et de \cite[Prop.~3.3.2]{Br1}.
\end{proof}

Rappelons que les ``hypoth\`eses standard de Taylor-Wiles'' dans le (iii) du Th\'eor\`eme \ref{ncris} sont les suivantes (cf. e.g. \cite[\S~1]{BHS3})~: $p>2$, $F/F^+$ non ramifi\'e, $G$ quasi-d\'eploy\'e en toute place finie, $U_v$ hypersp\'ecial en toute place finie $v$ de $F^+$ inerte dans $F$ et $\overline \rho({\rm Gal}(\overline F/F(\sqrt[p]{1})))$ ad\'equat o\`u $\overline\rho$ est la r\'eduction (irr\'eductible) de $\rho$ modulo $p$.

Terminons ce paragraphe avec le cas important $\dim_EW=3$ et $N^2\ne 0$ sur $D$ (certains r\'esultats \'etant d\'emontr\'es dans les paragraphes suivants). Quitte \`a tordre $D$ par un caract\`ere de $\Qp^\times$, on suppose de plus $W$ non ramifi\'e, i.e. on a pour un $c\in E^\times$~:
\begin{equation}\label{fontaine}
D=Ef_{1}\oplus Ef_{2}\oplus Ef_{3}\ \ \ \ 
\left\{\!\!\!\begin{array}{lcl}
\varphi(f_{3})&=&c f_{3}\\
\varphi(f_{2})&=&c p^{-1}f_{2}\\
\varphi(f_{1})&=&c p^{-2}f_{1}\\
\end{array}\right.
\ \ \ 
\left\{\!\!\!\begin{array}{lcl}
N(f_3)&=&f_2\\
N(f_2)&=&f_1\\
N(f_1)&=&0
\end{array}\right.
\end{equation}
et $\pi^\infty(W)=\St_3\otimes_E (\nr(c)\circ {\det})$ o\`u $\St_3$ est la repr\'esentation lisse de Steinberg de ${\rm GL}_3(\Qp)$. On v\'erifie alors que $D_{\alpha}(\lambda,W)$ est (\`a isomorphisme pr\`es) l'unique $(\varphi,\Gamma)$-module de de Rham de la forme~:
\begin{equation}\small\label{equadifp}
\begin{gathered}
\begin{array}{lll}
D_{e_1-e_2}(\lambda,W)\!\!&=&\!\!\big(\!\begin{xy}(-60,0)*+{\R(\norm^{2})}="a";(-38,0)*+{\R(\norm)}="b";(-21,0)*+{\R}="c";{\ar@{-}"a";"b"};{\ar@{-}"b";"c"}\end{xy}\!\big)\otimes_{\R}\R\big(x^{2-h_2}\nr(c^{-1})\big)\\
D_{e_2-e_3}(\lambda,W)\!\!&=&\!\!\big(\!\begin{xy}(-60,0)*+{\R(\norm^{2})}="a";(-38,0)*+{\R(\norm)}="b";(-21,0)*+{\R}="c";{\ar@{-}"a";"b"};{\ar@{-}"b";"c"}\end{xy}\!\big)\otimes_{\R}\R\big(x^{3-(h_1+h_3)}\nr(c^{-2})\big)
\end{array}
\end{gathered}
\end{equation}
o\`u $\!\begin{xy}(-60,0)*+{D_1}="a";(-48,0)*+{D_2}="b";{\ar@{-}"a";"b"}\end{xy}\!$ d\'esigne une extension {\it non scind\'ee} du $(\varphi,\Gamma)$-module $D_2$ par le $(\varphi,\Gamma)$-module $D_1$. Les repr\'esentations $\pi^\alpha(\lambda,W)$ ont \'et\'e construites dans \cite[\S~4.5]{Br1}. On rappelle juste ici qu'elles sont dans la cat\'egorie $C_{\lambda,\alpha}$ du \S~\ref{prelgen} et ont la forme~:
\begin{equation}\label{calphai}
\!\begin{xy}(-60,0)*+{C_{\alpha,1}}="a";(-45,0)*+{C_{\alpha,2}}="b";(-30,0)*+{C_{\alpha,3}}="c";(-15,0)*+{C_{\alpha,4}}="d";(0,0)*+{C_{\alpha,5}}="e";{\ar@{-}"a";"b"};{\ar@{-}"b";"c"};{\ar@{-}"c";"d"};{\ar@{-}"d";"e"}\end{xy}
\end{equation}
o\`u $C_{\alpha,i}$ est irr\'eductible si $i\in \{1,3,5\}$ et est une somme directe de deux irr\'eductibles si $i\in\{2,4\}$ (cf. \cite[\S~4.1]{Br1} o\`u $C_{\alpha,i}$ est not\'e $C_i\oplus \widetilde C_i$ si $i\in \{2,4\}$), et o\`u $\!\begin{xy}(-60,0)*+{C_1}="a";(-48,0)*+{C_2}="b";{\ar@{-}"a";"b"}\end{xy}$ d\'esigne encore une extension non scind\'ee de $C_2$ par $C_1$. De plus on a $F_\alpha(C_{\alpha,i})=E_\infty(\chi_{-\lambda})\otimes_{E}\Hom_{(\varphi,\Gamma)}(D_\alpha(C_{\alpha,i}),-)$ avec $D_\alpha(C_{\alpha,i})=\R(x^{2-h_2}\norm^{\frac{i-1}{2}}\nr(c^{-1}))$ si $i\in \{1,3,5\}$ et $\alpha=e_1-e_2$, $D_\alpha(C_{\alpha,i})=\R(x^{3-(h_1+h_3)}\norm^{\frac{i-1}{2}}\nr(c^{-2}))$ si $i\in \{1,3,5\}$ et $\alpha=e_2-e_3$, $D_\alpha(C_{\alpha,i})=0$ si $i\in \{2,4\}$.

Soit $\rho_p:\gp\rightarrow {\rm GL}_3(E)$ semi-stable de poids de Hodge-Tate $h_1,h_2,h_3$ et module de Deligne-Fontaine $D$ comme ci-dessus. On suppose de plus que le $(\varphi,\Gamma)$-module sur $\R$ associ\'e \`a $\rho_p$ v\'erifie les hypoth\`eses de g\'en\'ericit\'e dans la derni\`ere partie de \cite[\S~3.3.4]{BD} (que l'on ne rappelle pas ici, elles sont par exemple satisfaites lorsque $p>3$, $h_1=h_2+1=h_3+2$ et $\rho_p$ admet un r\'eseau stable dont la r\'eduction $\overline{\rho_p}$ a tous ses sous-quotients de dimension $2$ non scind\'es, cf. \cite[Prop.~3.30~\&~Prop.~3.32]{BD}). Alors dans \cite[\S~3.3.4]{BD} est associ\'e \`a un tel $\rho_p$ pour $\alpha\in S$ une extension non scind\'ee $\pi^\alpha(\rho_p)\in {\rm Ext}^1_{{\rm GL}_n(\Qp)}(\pi^\alpha(\lambda,W),L(-\lambda)\otimes_E\pi^{\infty}(W))$ ne d\'ependant que de $\rho_p$. 

Les deux th\'eor\`emes suivants r\'esument l'essentiel de ce qui est connu dans la direction de la Conjecture \ref{extglobmieux} lorsque $\dim_EW=3$ et $N^2\ne 0$.

\begin{thm}\label{gl3loc}
Supposons $\dim_EW=3$ et $N^2\ne 0$.\\
(i) On a $F_\alpha(\pi^\alpha(\lambda,W))\simeq E_\infty(\chi_{-\lambda})\!\otimes_{E}\Hom_{(\varphi,\Gamma)}(D_\alpha(\lambda,W),-)$.\\
(ii) L'application (\ref{mapext}) pour $\pi'=\pi^\alpha(\lambda,W)$ est un isomorphisme.
\end{thm}

\begin{thm}\label{gl3glob}
Supposons $\dim_EW=3$ et $N^2\ne 0$.\\
(i) Pour tout $\rho_p:\gp\rightarrow {\rm GL}_3(E)$ semi-stable comme ci-dessus, la repr\'esentation $\pi^\alpha(\rho_p)$ d\'etermine $h_1,h_2,h_3$, $D$, $\Fil^{\rm max}_\alpha(\rho_p)$ et ne d\'epend que de ces donn\'ees.\\
(ii) Soit $U^\wp$, $W^\wp$, $\rho$ v\'erifiant les points (i), (ii), (iii) du \S~\ref{locglob} et tels que $h_1=h_2+1=h_3+2$, $U_v$ est maximal si $v\vert p$, $v\ne \wp$, $\overline\rho$ est absolument irr\'eductible et les sous-quotients de dimension $2$ de $\overline \rho\vert_{{\rm Gal}(\overline F_{\widetilde{\wp}}/F_{\widetilde{\wp}})}$ sont non scind\'es (cf. \cite[Th.~1.1]{BD}). Alors on a une injection ${\rm GL}_3(\Qp)$-\'equivariante~:
\begin{equation*}
\big(\pi^{e_1-e_2}(\rho_{\widetilde{\wp}})\oplus_{L(-\lambda)\otimes_E\pi^\infty(W)}\pi^{e_2-e_3}(\rho_{\widetilde{\wp}})\big)^{\oplus d(U^\wp,\rho)}\!\!\otimes \varepsilon^{n-1}\circ{\det}\hookrightarrow \widehat S(U^\wp,W^\wp)^{\an}[\mrho].
\end{equation*}
\end{thm}

Le (ii) du Th\'eor\`eme \ref{gl3glob} est d\'ej\`a dans \cite[Th.~1.1]{BD} (et ne contient donc rien de nouveau). Lorsque $h_1$, $h_2$, $h_3$ ne sont plus n\'ecessairement cons\'ecutifs, il est conjectur\'e dans \cite[Conj.~1.2]{BD} que la somme amalgam\'ee $(\pi^{e_1-e_2}(\rho_{\widetilde{\wp}})\oplus_{L(-\lambda)\otimes_E\pi^\infty(W)}\pi^{e_2-e_3}(\rho_{\widetilde{\wp}}))\!\otimes \varepsilon^{n-1}\circ{\det}$ se plonge encore dans $\widehat S(U^\wp,W^\wp)^{\an}[\mrho]$. Le (i) du Th\'eor\`eme \ref{gl3glob} sera montr\'e au \S~\ref{filmaxbis}. Notons qu'il dit que les repr\'esentations $\pi^\alpha(\rho_p)$ et $[\![({\mathcal F}_\alpha\circ {\mathcal E}_\alpha)^{-1}(\Fil^{\rm max}_\alpha(\rho_p))]\!]$ contiennent {\it exactement} la m\^eme information, et la conjecture qui suit (purement locale) est donc tr\`es naturelle. 

\begin{conj}\label{deuxpialpha}
Pour tout $\rho_p:\gp\rightarrow {\rm GL}_3(E)$ semi-stable comme ci-dessus, on a $\pi^\alpha(\rho_p)\simeq [\![({\mathcal F}_\alpha\circ {\mathcal E}_\alpha)^{-1}(\Fil^{\rm max}_\alpha(\rho_p))]\!]$.
\end{conj}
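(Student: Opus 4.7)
The plan is to translate both sides of the conjectural isomorphism into the language of lines in the $2$-dimensional $E$-vector space $D_L^{\glp}$. By the (ii) of Th\'eor\`eme \ref{gl3loc}, the composition $\mathcal{F}_\alpha \circ \mathcal{E}_\alpha$ is an isomorphism from ${\rm Ext}^1_{{\rm GL}_3(\Qp)}(\pi^\alpha(\lambda,W), L(-\lambda)\otimes_E \pi^\infty(W))$ onto $(\wedge_{L\otimes_{\Qp}\!E}^{3-j}D_L)^{\glp}\simeq D_L^{\glp}$. By construction, the representation $[\![(\mathcal{F}_\alpha \circ \mathcal{E}_\alpha)^{-1}({\rm Fil}^{\rm max}_\alpha(\rho_p))]\!]$ corresponds to the line ${\rm Fil}^{\rm max}_\alpha(\rho_p)$, and by the (i) of Th\'eor\`eme \ref{gl3glob} the representation $\pi^\alpha(\rho_p)$ only depends on $(h_i)$, $W$ and ${\rm Fil}^{\rm max}_\alpha(\rho_p)$, hence it determines a well-defined line $\ell^\alpha(\rho_p)\subseteq D_L^{\glp}$ depending only on this same data. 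The conjecture reduces to proving $\ell^\alpha(\rho_p)={\rm Fil}^{\rm max}_\alpha(\rho_p)$, and equivalently to showing that applying $F_\alpha$ to the extension $\pi^\alpha(\rho_p)$ produces the $(\varphi,\Gamma)$-module associated via Proposition \ref{lienfilmax} to the line ${\rm Fil}^{\rm max}_\alpha(\rho_p)$.

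The first route I would attempt is purely local, following the construction of $\pi^\alpha(\rho_p)$ in \cite{BD}. That construction realises $\pi^\alpha(\rho_p)$ (via a subrepresentation $\pi^\alpha(\rho_p)^-$ determining it) as the orthogonal, under a perfect pairing between two $2$-dimensional ${\rm Ext}^1$'s, of the line in the $(\varphi,\Gamma)$-side generated by the extension associated by Proposition \ref{lienfilmax} to ${\rm Fil}^{\rm max}_\alpha(\rho_p)$. The $(\varphi,\Gamma)$-side of the pairing is built from locally analytic parabolic inductions to ${\rm GL}_3(\Qp)$ of ${\rm GL}_2(\Qp)$-representations obtained via the locally analytic Langlands correspondence for ${\rm GL}_2(\Qp)$ applied to the $2$-dimensional subquotients of $\rho_p$. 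Applying $F_\alpha$ to the representation side, one would use Th\'eor\`eme \ref{encoreplusgeneral} (when $\alpha\in S_P$) and Th\'eor\`eme \ref{plusgeneral} (when $\alpha\notin S_P$), combined with Th\'eor\`eme \ref{resterintro}(i) and the (ii) of Th\'eor\`eme \ref{n=2} for the ${\rm GL}_2$-input, to express $F_\alpha$ of each piece in terms of the $(\varphi,\Gamma)$-modules $D_{\rm rig}(\check\rho_L)$ of the relevant $2$-dimensional subquotients $\rho_L$. The orthogonality condition defining $\pi^\alpha(\rho_p)^-$ should then correspond under $F_\alpha$ to the orthogonality defining the element of $(\wedge_{L\otimes_{\Qp}\!E}^{3-j}D_L)^{\glp}$ associated to ${\rm Fil}^{\rm max}_\alpha(\rho_p)$, via the naturality of the isomorphism in Proposition \ref{lienfilmax}.

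As an alternative global route, one can argue in the case $h_1=h_2+1=h_3+2$ under the Taylor-Wiles hypotheses: by Th\'eor\`eme \ref{gl3glob}(ii) the extension $\pi^\alpha(\rho_{\widetilde\wp})$ embeds in $\widehat{S}(U^\wp,W^\wp)^{\an}[\mrho]$; if one can prove independently that $[\![(\mathcal{F}_\alpha \circ \mathcal{E}_\alpha)^{-1}({\rm Fil}^{\rm max}_\alpha(\rho_{\widetilde\wp}))]\!]$ also embeds (which is a special case of the (iii) of Conjecture \ref{extglobmieux} and should follow from applying $F_\alpha$ to the socle-filtration in completed cohomology together with local-global compatibility for $(\varphi,\Gamma)$-modules), then uniqueness of the extension with given socle inside $\widehat{S}(U^\wp,W^\wp)^{\an}[\mrho]$ forces the equality $\ell^\alpha=\mathrm{Fil}^{\rm max}_\alpha$ in this case. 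The (i) of Th\'eor\`eme \ref{gl3glob} then propagates this equality to arbitrary Hodge-Tate weights by varying $\rho_p$ in a trianguline deformation space while keeping the line ${\rm Fil}^{\rm max}_\alpha(\rho_p)$ fixed.

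The hard part, explicitly flagged by the authors at the end of the introduction, is that both routes require substantially strengthened representability and exactness properties of $F_\alpha$ beyond what is achieved by Th\'eor\`eme \ref{gl3enplus}. Concretely, the local route needs fine control of $F_\alpha$ applied to extensions involving the companion constituents $C_{\alpha,i}$ of (\ref{calphai}) and to the large locally analytic parabolic inductions of ${\rm GL}_3(\Qp)$ that appear in the construction of the pairing in \cite{BD}; the global route needs a $(\varphi,\Gamma)$-enhancement of local-global compatibility powerful enough to compute $F_\alpha$ on the relevant Hecke-isotypic component of completed cohomology. Producing either of these extensions of the present framework, especially the requisite exactness on big parabolic inductions, is the genuine obstacle and the reason the conjecture remains out of reach of the methods developed here.
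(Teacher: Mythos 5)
Le point essentiel est que l'\'enonc\'e en question est une \emph{conjecture}~: le texte n'en donne aucune d\'emonstration, et le dit explicitement (la discussion qui suit la Conjecture \ref{frustrantintro} dans l'introduction, et la fin du \S~\ref{carparticuliers}). Le seul contenu d\'emontr\'e dans cette direction est le (i) du Th\'eor\`eme \ref{gl3glob} (la repr\'esentation $\pi^\alpha(\rho_p)$ ne d\'epend que de $(h_i)$, $W$ et $\Fil^{\rm max}_\alpha(\rho_p)$) et le Th\'eor\`eme \ref{gl3loc}, qui rendent la conjecture ``tr\`es vraisemblable'' sans l'\'etablir. Il n'y a donc pas de preuve du texte \`a laquelle comparer la v\^otre, et votre texte n'en est pas une non plus~: vous le reconnaissez d'ailleurs dans votre dernier paragraphe. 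Votre r\'eduction \`a une \'egalit\'e de droites $\ell^\alpha(\rho_p)=\Fil^{\rm max}_\alpha(\rho_p)$ est correcte dans son principe (c'est exactement la reformulation de la conjecture via le (ii) du Th\'eor\`eme \ref{gl3loc} et la Proposition \ref{lienfilmax}), \`a un d\'etail pr\`es~: pour $n=3$ les espaces $D_L^{\glp}$ et $(\wedge_{L\otimes_{\Qp}E}^{3-j}D_L)^{\glp}$ sont de dimension $3$, et non $2$; les espaces de dimension $2$ sont les $\Ext^1$ ``tronqu\'es'' intervenant dans les accouplements du \S~\ref{filmaxbis} (cf. Lemme \ref{lem1}).

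La lacune r\'eelle est que vos deux routes s'arr\^etent pr\'ecis\'ement l\`a o\`u commence le travail. Dans la route locale, l'affirmation que l'orthogonalit\'e d\'efinissant $\pi^\alpha(\rho_p)^-$ ``devrait correspondre via $F_\alpha$'' \`a celle d\'efinissant $\Fil^{\rm max}_\alpha(\rho_p)$ est exactement la compatibilit\'e inconnue~: la v\'erifier exigerait de calculer $F_\alpha$ (avec des propri\'et\'es d'exactitude et de repr\'esentabilit\'e allant au-del\`a du Th\'eor\`eme \ref{gl3enplus}) sur les grosses induites paraboliques localement analytiques de ${\rm GL}_3(\Qp)$ qui sous-tendent l'accouplement de \cite{BD}, et de comparer le r\'esultat \`a l'isomorphisme de la Proposition \ref{lienfilmax}; rien dans le texte ne permet de le faire, et vous ne proposez aucun argument concret. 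Dans la route globale, le plongement de $[\![({\mathcal F}_\alpha\circ {\mathcal E}_\alpha)^{-1}(\Fil^{\rm max}_\alpha(\rho_{\widetilde\wp}))]\!]$ dans $\widehat S(U^\wp,W^\wp)^{\an}[\mrho]$ est le (iii) de la Conjecture \ref{extglobmieux}~: vous supposez donc un \'enonc\'e au moins aussi fort que celui \`a d\'emontrer. Enfin l'argument de ``propagation'' \`a des poids de Hodge-Tate quelconques en faisant varier $\rho_p$ dans un espace de d\'eformations triangulines n'est pas \'etay\'e~: le (i) du Th\'eor\`eme \ref{gl3glob} donne une d\'ependance, pas une constance le long de familles, et les poids sont de toute fa\c con des invariants discrets qu'on ne peut pas faire varier contin\^ument. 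La conjecture reste donc ouverte; votre texte est un plan coh\'erent avec la discussion du texte, mais pas une d\'emonstration.
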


On termine ce paragraphe avec la preuve du Th\'eor\`eme \ref{gl3loc} (voir \S~\ref{filmaxbis} pour celle du (i) du Th\'eor\`eme \ref{gl3glob}). Quitte \`a tordre $\pi^\alpha(\lambda,W)$ et $\pi^\infty(W)$ par un caract\`ere non ramifi\'e, on peut supposer $c=1$ (cf. la derni\`ere assertion de la Remarque \ref{change}) et par sym\'etrie, il suffit de traiter le cas $\alpha=e_1-e_2$. On commence par la preuve du (i).

\noindent
{\bf \'Etape $1$}\\
Par (\ref{equadifp}) et le paragraphe qui le suit, $D_\alpha(\lambda,W)$ est l'unique $(\varphi,\Gamma)$-module sur $\R$ de de Rham de la forme $\!\begin{xy}(-60,0)*+{D_\alpha(C_{\alpha,5})}="a";(-37,0)*+{D_\alpha(C_{\alpha,3})}="b";(-14,0)*+{D_\alpha(C_{\alpha,1})}="c";{\ar@{-}"a";"b"};{\ar@{-}"b";"c"}\end{xy}\!$. \'Ecrivons $C_{\alpha,i}=C_{\alpha,i}^{1}\oplus C_{\alpha,i}^{2}$ si $i\in \{2,4\}$ avec les deux facteurs directs irr\'eductibles. En utilisant par r\'ecurrence le Th\'eor\`eme \ref{gl3enplus} en partant de $\pi'=C_{\alpha,5}$ puis en consid\'erant successivement $\pi''=C_{\alpha,4}^{1}$, $\pi''=C_{\alpha,4}^{2}$, $\pi''=C_{\alpha,3}$, $\pi''=C_{\alpha,2}^{1}$, $\pi''=C_{\alpha,2}^{2}$ et $\pi''=C_{\alpha,1}$ (et en augmentant $\pi'$ \`a chaque \'etape via le Th\'eor\`eme \ref{gl3enplus}) on obtient $F_\alpha(\pi^\alpha(\lambda,W))\simeq E_\infty(\chi_{-\lambda})\!\otimes_{E}\Hom_{(\varphi,\Gamma)}(\widetilde D_\alpha(\lambda,W),-)$ o\`u $\widetilde D_\alpha(\lambda,W)$ est un $(\varphi,\Gamma)$-module sur $\R$ sans torsion de la forme $\!\begin{xy}(-60,0)*+{D_\alpha(C_{\alpha,5})}="a";(-37,0)*+{D_\alpha(C_{\alpha,3})}="b";(-14,0)*+{D_\alpha(C_{\alpha,1})}="c";{\ar@{--}"a";"b"};{\ar@{--}"b";"c"}\end{xy}\!$, la notation $\!\begin{xy}(-60,0)*+{D_1}="a";(-48,0)*+{D_2}="b";{\ar@{--}"a";"b"}\end{xy}\!$ d\'esignant une extension {\it quelconque} (i.e. possiblement scind\'ee) de $D_2$ par $D_1$.

\noindent
{\bf \'Etape $2$}\\
On montre que toutes les extensions dans $\widetilde D_\alpha(\lambda,W)$ sont non scind\'ees. On commence par $\!\begin{xy}(-60,0)*+{D_\alpha(C_{\alpha,3})}="a";(-37,0)*+{D_\alpha(C_{\alpha,1})}="b";{\ar@{--}"a";"b"}\end{xy}\!$. On note $I\={(\Ind_{B^-(\Qp)\cap L_{P_\alpha}(\Qp)}^{L_{P_\alpha}(\Qp)}(-s_\alpha\cdot \lambda))^{\rm an}}$ et $\widetilde I\={(\Ind_{B^-(\Qp)\cap L_{P_\alpha}(\Qp)}^{L_{P_\alpha}(\Qp)}(-s_\alpha\cdot \lambda)}\otimes_E (\vert\cdot\vert^{-1}\otimes \vert\cdot\vert\otimes 1))^{\rm an}$ (o\`u selon la notation usuelle on voit $-s_\alpha\cdot \lambda$ comme caract\`ere alg\'ebrique de $T(\Qp)$). Par \cite[Lem.~3.1.2(i)]{Br1} on a une unique repr\'esentation de $L_{P_\alpha}(\Qp)$ sur $E$ de la forme $\!\begin{xy}(-60,0)*+{I}="a"; (-45,0)*+{L(-\lambda)_{P_\alpha}}="b"; (-30,0)*+{\widetilde I}="c"; {\ar@{-}"a";"b"}; {\ar@{-}"b";"c"}\end{xy}$\!. En fait, si l'on note $D_2=Ef_1\oplus Ef_2$ le module de Deligne-Fontaine de dimension $2$ o\`u $\left\{\!\!\!\begin{array}{lcl}\varphi(f_{2})&=&f_{2}\\ \varphi(f_{1})&=&p^{-1}f_{1}\ \end{array}\right.$ $\left\{\!\!\!\begin{array}{lcl} N(f_{2})&=&f_{1}\\ N(f_{1})&=&0\end{array}\right.$ et $W_2$ la repr\'esentation de Weil-Deligne associ\'ee, on a $\!\begin{xy}(-60,0)*+{I}="a"; (-45,0)*+{L(-\lambda)_{P_\alpha}}="b"; (-30,0)*+{\widetilde I}="c"; {\ar@{-}"a";"b"}; {\ar@{-}"b";"c"}\end{xy}\!\simeq \pi^\alpha((\lambda_1,\lambda_2),W_2)\boxtimes_E (-\lambda_3)$ o\`u $\boxtimes$ est le produit tensoriel {\it ext\'erieur} d'une repr\'esentation de ${\rm GL}_2(\Qp)$ et d'une repr\'esentation de $\Qp^\times$. Un examen de la preuve de \cite[Prop.~4.4.2]{Br1} montre que l'on a une injection~:
$$\pi\hookrightarrow \big(\Ind_{P_\alpha(\Qp)}^{G(\Qp)}(\!\begin{xy}(-60,0)*+{I}="a"; (-45,0)*+{L(-\lambda)_{P_\alpha}}="b"; (-30,0)*+{\widetilde I}="c"; {\ar@{-}"a";"b"}; {\ar@{-}"b";"c"}\end{xy}\!)\big)^{\an}$$
o\`u $\pi$ est une repr\'esentation dans $C_{\lambda,\alpha}$ de la forme $\!\begin{xy}(-60,0)*+{\pi''}="a"; (-26,0)*+{\big(\!\begin{xy}(-60,0)*+{C_{\alpha,1}}="a";(-45,0)*+{C_{\alpha,2}}="b";(-30,0)*+{C_{\alpha,3}}="c";{\ar@{-}"a";"b"};{\ar@{-}"b";"c"}\end{xy}\!\big)}="b";{\ar@{--}"a";"b"}\end{xy}$
avec les constituants de $\pi''$ tous de la forme ${\mathcal F}_{P^-}^G(L^-(w\cdot \lambda),\pi_P^\infty)$ pour des $\pi_P^\infty$ {\it non} g\'en\'eriques. Il r\'esulte du Th\'eor\`eme \ref{gl3enplus} que l'on a~:
$$F_\alpha(\pi)\simeq E_\infty(\chi_{-\lambda})\!\otimes_{E}\Hom_{(\varphi,\Gamma)}\!\big(\!\!\begin{xy}(-60,0)*+{D_\alpha(C_{\alpha,3})}="a";(-37,0)*+{D_\alpha(C_{\alpha,1})}="b";{\ar@{--}"a";"b"}\end{xy}\!,-\big)$$
et du Th\'eor\`eme \ref{encoreplusgeneral} suivi du (i) du Th\'eor\`eme \ref{n=2} que l'on a~:
\begin{multline*}
F_\alpha\Big(\big(\Ind_{P_\alpha(\Qp)}^{G(\Qp)}(\!\begin{xy}(-60,0)*+{I}="a"; (-46,0)*+{L(-\lambda)_{P_\alpha}}="b"; (-32,0)*+{\widetilde I}="c"; {\ar@{-}"a";"b"}; {\ar@{-}"b";"c"}\end{xy})\big)^{\an}\Big)\\
\simeq E_\infty(\chi_{-\lambda})\otimes_{E}\Hom_{(\varphi,\Gamma)}\big(\begin{xy}(-60,0)*+{D_\alpha(C_{\alpha,3})}="a";(-39,0)*+{D_\alpha(C_{\alpha,1})}="b";{\ar@{-}"a";"b"}\end{xy},-\big)
\end{multline*}
o\`u $\!\begin{xy}(-60,0)*+{D_\alpha(C_{\alpha,3})}="a";(-37,0)*+{D_\alpha(C_{\alpha,1})}="b";{\ar@{-}"a";"b"}\end{xy}\!$ est l'unique extension non scind\'ee (\cite[Prop.~2.15]{Na1} et \cite[Th.~5.11]{Na2}). Par le (ii) de la Proposition \ref{drex} (appliqu\'ee avec $\pi''=0$) l'injection $\pi\hookrightarrow (\Ind_{P_\alpha(\Qp)}^{G(\Qp)}(\!\begin{xy}(-60,0)*+{I}="a"; (-46,0)*+{L(-\lambda)_{P_\alpha}}="b"; (-32,0)*+{\widetilde I}="c"; {\ar@{-}"a";"b"}; {\ar@{-}"b";"c"}\end{xy}\!))^{\an}$ induit une surjection de $(\varphi,\Gamma)$-modules~:
$$\!\begin{xy}(-60,0)*+{D_\alpha(C_{\alpha,3})}="a";(-37,0)*+{D_\alpha(C_{\alpha,1})}="b";{\ar@{-}"a";"b"}\end{xy}\!\twoheadrightarrow \!\begin{xy}(-60,0)*+{D_\alpha(C_{\alpha,3})}="a";(-37,0)*+{D_\alpha(C_{\alpha,1})}="b";{\ar@{--}"a";"b"}\end{xy}$$
qui doit donc \^etre un isomorphisme. La preuve pour la deuxi\`eme extension $\!\begin{xy}(-60,0)*+{D_\alpha(C_{\alpha,5})}="a";(-37,0)*+{D_\alpha(C_{\alpha,3})}="b";{\ar@{--}"a";"b"}\end{xy}\!$ est analogue en rempla\c cant $\!\begin{xy}(-60,0)*+{I}="a"; (-45,0)*+{L(-\lambda)_{P_\alpha}}="b"; (-30,0)*+{\widetilde I}="c"; {\ar@{-}"a";"b"}; {\ar@{-}"b";"c"}\end{xy}$\! par son tordu par le caract\`ere $\vert {\det}_2\vert^{-1}\boxtimes_E \vert \cdot\vert^{2}$ de $L_{P_\alpha}(\Qp)$ (cf. \cite[Prop.~4.4.4]{Br1}).

\noindent
{\bf \'Etape $3$}\\
On montre que $\widetilde D_\alpha(\lambda,W)$ est un $(\varphi,\Gamma)$-module de de Rham. Soit $\pi$ une extension dans ${\rm Ext}^1_{{\rm GL}_n(\Qp)}(\pi^\alpha(\lambda,W),L(-\lambda)\otimes_E\pi^{\infty}(W))$ telle que l'on a une extension non scind\'ee $\pi'\=\!\begin{xy}(-60,0)*+{L(-\lambda)\otimes_E\pi^{\infty}(W)}="a";(-33,0)*+{C_{\alpha,1}}="b";{\ar@{-}"a";"b"}\end{xy}\!$ en sous-objet de $\pi$ (c'est possible par \cite[\S~4.6]{Br1}). Il r\'esulte de \cite[\S~4.1~\&~Prop.~4.6.1~\&~Lem.~5.3.1~\&~(53)]{Br1} avec \cite[Cor.~2.5]{Br3} (on laisse les d\'etails faciles au lecteur) que l'on a un isomorphisme~:
\begin{equation}\label{extspe}
\begin{xy}(-60,0)*+{\pi''}="a"; (-48,0)*+{\pi'}="b"; {\ar@{--}"a";"b"}\end{xy}\!\simeq {\mathcal F}_{P_\beta^-}^G\big(U(\mg)\otimes_{U(\mpp_\beta^-)}L^-(\lambda)_{P_\beta}, 1\boxtimes_E {\rm St}_2^\infty\big)
\end{equation}
o\`u ${\rm St}_2^\infty$ est la repr\'esentation de Steinberg lisse de ${\rm GL}_2(\Qp)$ et $\pi''$ a tous ses constituants de la forme ${\mathcal F}_{P^-}^G(L^-(w\cdot \lambda),\pi_P^\infty)$ pour des $\pi_P^\infty$ {\it non} g\'en\'eriques. Il r\'esulte du Th\'eor\`eme \ref{gl3enplus} que l'on a $F_\alpha(\!\begin{xy}(-60,0)*+{\pi''}="a"; (-49,0)*+{\pi'}="b"; {\ar@{--}"a";"b"}\end{xy}\!)\buildrel\sim\over\rightarrow F_\alpha(\pi')$, d'o\`u par (\ref{extspe}) et le (i) du Corollaire \ref{casparticuliers} $F_\alpha(\pi')\simeq E_\infty(\chi_{-\lambda})\!\otimes_{E}\Hom_{(\varphi,\Gamma)}(\R(x^{2-h_1}),-)$. Autrement dit, par le Th\'eor\`eme \ref{gl3enplus} appliqu\'e \`a $\pi$ on a $F_\alpha(\pi)\simeq E_\infty(\chi_{-\lambda})\!\otimes_{E}\Hom_{(\varphi,\Gamma)}(D_\alpha(\pi),-)$ avec $D_\alpha(\pi)$ de la forme~:
$$\!\begin{xy}(-60,0)*+{\R(x^{2-h_2}\norm^{2})}="a";(-28,0)*+{\R(x^{2-h_2}\norm)}="b";(1,0)*+{\R(x^{2-h_1})}="c";{\ar@{-}"a";"b"};{\ar@{-}"b";"c"}\end{xy}\!\simeq \!\begin{xy}(-60,0)*+{\widetilde D_\alpha(\lambda,W)}="a";(-27,0)*+{\R(x^{2-h_1})/(t^{h_1-h_2})}="b";{\ar@{-}"a";"b"}\end{xy}\!.$$
Or, c'est un exercice facile en utilisant $2-h_1<2-h_2$ et l'\'equivalence de cat\'egories exactes entre modules filtr\'es et $(\varphi,\Gamma)$-modules de de Rham (\cite[Th.A]{Be2}) de v\'erifier que toutes les extensions de $\R(x^{2-h_1})$ par $\!\begin{xy}(-60,0)*+{\R(x^{2-h_2}\norm^{2})}="a";(-28,0)*+{\R(x^{2-h_2}\norm)}="b";{\ar@{-}"a";"b"}\end{xy}\!$ sont des $(\varphi,\Gamma)$-modules de de Rham (et m\^eme semi-stables). En particulier, $\widetilde D_\alpha(\lambda,W)$ est aussi de de Rham (car $\widetilde D_\alpha(\lambda,W)[1/t]\buildrel\sim\over\rightarrow D_\alpha(\pi)[1/t]$, cf. la discussion avant (\ref{diff2})). Cela termine la preuve du (i) du Th\'eor\`eme \ref{gl3loc}.

On d\'emontre maintenant le (ii) du Th\'eor\`eme \ref{gl3loc}. Comme les deux espaces sont de dimension $3$ (utiliser le (ii) de la Proposition \ref{lienfilmax} pour celui de droite), il suffit de montrer l'injectivit\'e de ${\mathcal E}_\alpha$. Si $\pi$ est une extension dans ${\rm Ext}^1_{{\rm GL}_n(\Qp)}(\pi^\alpha(\lambda,W),L(-\lambda)\otimes_E\pi^{\infty}(W))$, par le Th\'eor\`eme \ref{gl3enplus} (ou le Th\'eor\`eme \ref{localgenplus}) et le (i) on a $F_\alpha(\pi)=E_\infty(\chi_{-\lambda})\otimes_{E}\Hom_{(\varphi,\Gamma)}(D_\alpha(\pi),-)$ pour un $(\varphi,\Gamma)$-module $D_{\alpha}(\pi)$ qui est une extension de $\R(x^{2-h_1})/(t^{h_1-h_2})$ par $D_\alpha(\lambda,W)$. En proc\'edant comme dans l'\'Etape $3$ de la preuve du (i), il suit de \cite[\S~4.6]{Br1} qu'il existe une base $\pi_1$, $\pi_2$, $\pi_3$ de ${\rm Ext}^1_{{\rm GL}_n(\Qp)}(\pi^\alpha(\lambda,W),L(-\lambda)\otimes_E\pi^{\infty}(W))$ telle que $\pi_i$ contient en sous-quotient une repr\'esentation ind\'ecomposable $\pi'_i$ de la forme~:
\begin{eqnarray*}
\pi'_1&\=&\!\begin{xy}(-60,0)*+{L(-\lambda)\otimes_E\pi^{\infty}(W)}="a";(-33,0)*+{C_{\alpha,1}}="b";{\ar@{-}"a";"b"}\end{xy}\\
\pi'_2&\=&\!\begin{xy}(-60,0)*+{L(-\lambda)\otimes_E\pi^{\infty}(W)}="a";(-33,0)*+{C_{\alpha,2}}="b";(-19,0)*+{C_{\alpha,3}}="c";{\ar@{-}"a";"b"};{\ar@{-}"b";"c"}\end{xy}\\
\pi'_3&\=&\!\begin{xy}(-60,0)*+{L(-\lambda)\otimes_E\pi^{\infty}(W)}="a";(-33,0)*+{C_{\alpha,4}}="b";(-19,0)*+{C_{\alpha,5}}="c";{\ar@{-}"a";"b"};{\ar@{-}"b";"c"} \end{xy}\!.
\end{eqnarray*}
De plus, on a des isomorphismes par \cite[\S~4.1~\&~Prop.~4.6.1~\&~Lem.~5.3.1~\&~(53)]{Br1} avec \cite[Cor.~2.5]{Br3}~:
\begin{eqnarray*}
\!\begin{xy}(-60,0)*+{\pi''_1}="a"; (-48,0)*+{\pi'_1}="b"; {\ar@{--}"a";"b"}\end{xy}\!&\simeq &{\mathcal F}_{P_\beta^-}^G\big(U(\mg)\otimes_{U(\mpp_\beta^-)}L^-(\lambda)_{P_\beta}, 1\boxtimes_E {\rm St}_2^\infty\big)\\
\!\begin{xy}(-60,0)*+{\pi''_2}="a"; (-48,0)*+{\pi'_2}="b"; {\ar@{--}"a";"b"}\end{xy}\!&\simeq &{\mathcal F}_{P_\beta^-}^G\big(U(\mg)\otimes_{U(\mpp_\beta^-)}L^-(\lambda)_{P_\beta}, \big(\Ind_{B^-(\Qp)\cap L_{P_\beta}(\Qp)}^{L_{P_\beta}(\Qp)}\vert\cdot\vert^{-1}\otimes \vert\cdot\vert\otimes 1\big)^{\infty}\big)\\
\!\begin{xy}(-60,0)*+{\pi''_3}="a"; (-48,0)*+{\pi'_3}="b"; {\ar@{--}"a";"b"}\end{xy}\!&\simeq &{\mathcal F}_{P_\beta^-}^G\big(U(\mg)\otimes_{U(\mpp_\beta^-)}L^-(\lambda)_{P_\beta},\vert\cdot\vert^{-2}\boxtimes_E({\rm St}_2^\infty\otimes_E\vert{\det}_2\vert)\big)
\end{eqnarray*}
o\`u ${\det}_2$ est le d\'eterminant pour ${\rm GL}_2(\Qp)$ et $\pi''_i$ n'a que des constituants ${\mathcal F}_{P^-}^G(L^-(w\cdot \lambda),\pi_P^\infty)$ pour des $\pi_P^\infty$ non g\'en\'eriques. On en d\'eduit facilement comme dans l'\'Etape $3$ avec le Th\'eor\`eme \ref{gl3enplus} et le (i) du Corollaire \ref{casparticuliers} que $D_\alpha(\pi_i)$ est une extension non scind\'ee de $\R(x^{2-h_1})/(t^{h_1-h_2})$ par $D_\alpha(\lambda,W)$ de la forme~:
\begin{eqnarray*}
D_\alpha(\pi_1)&\simeq & \!\begin{xy}(-60,0)*+{\R(x^{2-h_2}\norm^{2})}="a";(-28,0)*+{\R(x^{2-h_2}\norm)}="b";(1,0)*+{\R(x^{2-h_2})}="c";(34.5,0)*+{\R(x^{2-h_1})/(t^{h_1-h_2})}="d";{\ar@{-}"a";"b"};{\ar@{-}"b";"c"};{\ar@{-}"c";"d"}\end{xy}\\
D_\alpha(\pi_2)&\simeq &\!\begin{xy}(-60,0)*+{\R(x^{2-h_2}\norm^{2})}="a";(-28,0)*+{\R(x^{2-h_2}\norm)}="b";(1,0)*+{\R(x^{2-h_2})}="c";(1,-9)*+{\R(x^{2-h_1})/(t^{h_1-h_2})}="d";{\ar@{-}"a";"b"};{\ar@{-}"b";"c"};{\ar@{-}"b";"d"}\end{xy}\\
D_\alpha(\pi_2)&\simeq &\!\begin{xy}(-60,0)*+{\R(x^{2-h_2}\norm^{2})}="a";(-28,0)*+{\R(x^{2-h_2}\norm)}="b";(1,0)*+{\R(x^{2-h_2})}="c";(-28,-9)*+{\R(x^{2-h_1})/(t^{h_1-h_2})}="d";{\ar@{-}"a";"b"};{\ar@{-}"b";"c"};{\ar@{-}"a";"d"}\end{xy}\!.
\end{eqnarray*}
Les trois extensions $D_\alpha(\pi_1)$, $D_\alpha(\pi_2)$, $D_\alpha(\pi_3)$ forment clairement une base de ${\rm Ext}^1_{(\varphi,\Gamma)}(\R(x^{2-h_1})/(t^{h_1-h_2}),D_\alpha(\lambda,W))$, d'o\`u le r\'esultat.

\subsection{Preuve du Th\'eor\`eme \ref{gl3glob}}\label{filmaxbis}

On montre le (i) du Th\'eor\`eme \ref{gl3glob} en uti\-lisant les r\'esultats de l'appendice (Proposition \ref{split}). La preuve est essentiellement ind\'ependante des autres r\'esultats de cet article.

On conserve les notations des paragraphes pr\'ec\'edents. Soit $D$ un $(\varphi,\Gamma)$-module (libre de rang fini) sur $\R$ et $W_{\mathrm{dR}}^+(D)$ le $B_{\mathrm{dR}}^+$-module avec action semi-lin\'eaire de $\gp$ associ\'e (cf. \S~\ref{hodge}). Il suit de la d\'efinition du foncteur (exact) $W_{\mathrm{dR}}^+$ que l'on a des applications canoniques pour $i=0,1$ (et m\^eme pour tout $i$, cf. \cite[\S~2.1]{Na1} et \cite[Th.~5.11]{Na2})~:
\begin{equation}\label{hg}
H^i_{(\varphi,\Gamma)}(D)\lra H^i(\gp, W_{\mathrm{dR}}^+(D)).
\end{equation}
En composant (\ref{hg}) \`a droite avec l'application d\'eduite de $W_{\mathrm{dR}}^+(D)\twoheadrightarrow W_{\mathrm{dR}}^+(D)/(t)$, on d\'efinit~:
\begin{equation}\label{htb}
H^1_{\mathrm{HT}}(D) \= \ker\big(H^1_{(\varphi,\Gamma)}(D) \ra H^1(\gp, W_{\mathrm{dR}}^+(D)/(t))\big).
\end{equation}
Rappelons qu'un $(\varphi,\Gamma)$-module $D$ sur $\R$ est dit de Hodge-Tate si le rang de $D$ est $\dim_E \sum_{i\in \Z} H^0(\gp, t^i W_{\mathrm{dR}}^+(D)/t^{i+1} W_{\mathrm{dR}}^+(D))$.

\begin{lem}
Supposons $D$ de Hodge-Tate et soit $D'$ un $(\varphi,\Gamma)$-module extension de $\R$ par $D$. Alors $E[D']\subseteq H^1_{\mathrm{HT}}(D)$ si et seulement si $D'$ est de Hodge-Tate.
\end{lem}
\begin{proof}
La suite exacte $0\ra D \ra D' \ra \R\ra 0$ induit une suite exacte de $B_{\mathrm{dR}}^+$-modules avec action semi-lin\'eaire de $\gp$~:
\begin{equation*}
0 \lra W_{\mathrm{dR}}^+(D) \lra W_{\mathrm{dR}}^+(D') \lra B_{\mathrm{dR}}^+\otimes_{\Qp} E \lra 0
\end{equation*}
qui (en regardant la multiplication par $t\in B_{\mathrm{dR}}^+$) induit une suite exacte~:
\begin{equation*}
0 \lra W_{\mathrm{dR}}^+(D)/(t) \lra W_{\mathrm{dR}}^+(D')/(t) \lra B_{\mathrm{dR}}^+/(t)\otimes_{\Qp} E \lra 0.
\end{equation*}
On en d\'eduit un diagramme commutatif (en \'ecrivant $H^i(-)$ pour $H^i(\gp,\!-)$)~\!:
\begin{equation*}\begin{CD}
 0 @>>> H^0_{(\varphi,\Gamma)}(D) @>>> H^0_{(\varphi,\Gamma)}(D') @>>> E @>>> H^1_{(\varphi,\Gamma)}(D) \\
@. @VVV @VVV @| @VVV \\
0 @>>> H^0\big(\frac{W_{\mathrm{dR}}^+(D)}{(t)}\big) @>>> H^0\big(\frac{W_{\mathrm{dR}}^+(D')}{(t)}\big) @>>> E @>>>H^1\big(\frac{W_{\mathrm{dR}}^+(D)}{(t)}\big).
\end{CD}
\end{equation*}
Comme l'image de $E$ dans la fl\`eche du haut \`a droite est $E[D']$ et comme $H^0(\gp, t^i B_{\mathrm{dR}}^+/t^{i+1} B_{\mathrm{dR}}^+)=0$ si $i\ne 0$, on en d\'eduit facilement le lemme.
\end{proof}

On pose $k_1\=h_1-2$, $k_2\=h_2-1$ et $k_3\=h_3$, on a donc $k_1\geq k_2\geq k_3$ (et $k_i=-\lambda_i$ avec les notations du \S~\ref{locglob}). Soit $D_1$ de la forme $D_1\simeq \!\begin{xy}(-60,0)*+{\R(\varepsilon^2 x^{k_1})}="a";(-34,0)*+{\R(\varepsilon x^{k_2})}="b";{\ar@{-}"a";"b"}\end{xy}\!$ (i.e. une extension non scind\'ee). C'est un $(\varphi,\Gamma)$-module de de Rham, et m\^eme semi-stable. De plus, par \cite[(2.2)]{BD} le cup-produit induit un diagramme commutatif d'accouplements~:
\begin{equation}\label{hta}
\begin{CD}
\Ext^1_{(\varphi,\Gamma)}(\R(x^{k_3}), D_1) \ \ \ \ @. \times @. \Ext^1_{(\varphi,\Gamma)}(D_1, D_1) @> \cup>> E \\
 @| @. @V \kappa VV @| \\
\!\!\!\!\!\!\!\Ext^1_{(\varphi,\Gamma)}(\R(x^{k_3}), D_1) @. \times @. \ \ \ \Ext^1_{(\varphi,\Gamma)}(D_1, \R(\varepsilon x^{k_2}))@> \cup >> E
 \end{CD}
\end{equation}
o\`u l'application $\kappa$ est surjective et l'accouplement du bas est parfait (\cite[Prop.~2.3(2)]{BD}). Soit $k_2'\=k_2+(k_2+1-k_3)$, on a aussi un diagramme commutatif d'accouplements parfaits (noter que $k'_2>k_2$ et rappelons que $k_2+1>k_3$)~:
\begin{equation}\label{ht1}
\begin{CD}
\Ext^1_{(\varphi,\Gamma)}(\R(x^{k_3}), D_1) \ @. \times @. \ \ \Ext^1_{(\varphi,\Gamma)}(D_1, \R(\varepsilon x^{k_2}))@> \cup>> E \\
@V u VV @. @A j AA @| \\
\Ext^1_{(\varphi,\Gamma)}(\R(x^{k_2+1}), D_1) \ \ @. \times @. \ \ \Ext^1_{(\varphi,\Gamma)}(D_1, \R(\varepsilon x^{k_2'}))@> \cup >> E
\end{CD}
\end{equation}
qui se d\'eduit des accouplements parfaits (\cite[\S~5.2]{Li})~:
\begin{equation}\label{ht2}
\begin{CD}
\ H^1_{(\varphi, \Gamma)}( D_1(x^{-k_3})) \ @. \times @. \ \ H^1_{(\varphi,\Gamma)}(D_1^{\vee}(\varepsilon x^{k_2})) @> \cup>> H^2_{(\varphi, \Gamma)}(\R(\varepsilon x^{k_2-k_3})) \\
@V VV @. @AAA @| \\
H^1_{(\varphi, \Gamma)}( D_1(x^{-1-k_2})) \ \ @. \times @. \ \ H^1_{(\varphi,\Gamma)}(D_1^{\vee}(\varepsilon x^{k_2'})) @> \cup >> H^2_{(\varphi, \Gamma)}(\R(\varepsilon x^{k_2-k_3}))
\end{CD}
\end{equation}
o\`u $D_1^\vee$ est le $(\varphi,\Gamma)$-module dual de $D_1$ (notons que $D_1^{\vee}(\varepsilon x^{k_2'})\simeq D_1(x^{-1-k_2})^{\vee}(\varepsilon x^{k_2-k_3})$).

\begin{lem}\label{lem1}
(i) On a les \'egalit\'es~:
$$\dim_E \Ext^1_{(\varphi,\Gamma)}(\R(x^{k_2+1}), D_1)=\dim_E \Ext^1_{(\varphi,\Gamma)}(D_1, \R(\varepsilon x^{k_2'}))=2.$$
(ii) Dans le diagramme (\ref{ht1}) l'application $u$ est surjective et l'application $j$ injective.
\end{lem}
\begin{proof}
Le (i) se d\'eduit facilement de \cite[Prop.~2.15]{Na1} et \cite[Th.~5.11]{Na2} (qui incluent le cas $p=2$). Montrons le (ii). Soit $D\=D_1(x ^{-1-k_2})$ et $D_{\mathrm dR}(D)$ le $E$-espace vectoriel filtr\'e de dimension $2$ associ\'e au $(\varphi,\Gamma)$-module de de Rham $D$ (par exemple par \cite[Th.~A]{Be2}). On a ${\rm Fil}^i(D_{\mathrm dR}(D))=({\rm Fil}^i(W_{\rm dR}(D)))^{\gp}$ pour $i\in \Z$ (cf. \cite[Def.~1.17]{Na1}) o\`u $W_{\rm dR}(D)\=B_{\rm dR}\otimes_{B^+_{\rm dR}}W^+_{\rm dR}(D)=W^+_{\rm dR}(D)[1/t]$ avec $W^+_{\rm dR}(D)$ comme au \S~\ref{hodge} et ${\rm Fil}^i(W_{\rm dR}(D))=t^iW^+_{\rm dR}(D)$ (en particulier ${\rm Fil}^0(W_{\rm dR}(D))=W^+_{\rm dR}(D)$). La forme de $D$ montre que, pour $i\in \Z$, on a ${\rm Fil}^i(D_{\mathrm dR}(D))=D_{\mathrm dR}(D)$ si $i\leq -(k_1-k_2+1)$, ${\rm Fil}^i(D_{\mathrm dR}(D))$ est une droite si $-(k_1-k_2+1)< i\leq 0$ et ${\rm Fil}^i(D_{\mathrm dR}(D))=0$ si $0< i$. Comme $D$ est de de Rham, on a un isomorphisme compatible \`a $\gp$ et aux filtrations $B_{\rm dR}\otimes_{\Qp}D_{\mathrm dR}(D)\buildrel\sim\over\rightarrow W_{\rm dR}(D)$, d'o\`u on d\'eduit par ce qui pr\'ec\`ede~:
\begin{eqnarray}\label{filw}
\nonumber W^+_{\rm dR}(D)&\simeq &\sum_{i+j=0}t^jB^+_{\rm dR}\otimes_{\Qp}{\rm Fil}^i(D_{\mathrm dR}(D))\\
&\simeq & B^+_{\rm dR}\otimes_{\Qp}E\ \oplus \ t^{k_1-k_2+1}B^+_{\rm dR}\otimes_{\Qp}E.
\end{eqnarray}
Utilisant le Lemme \ref{h0t} avec (\ref{filw}) (et $H^0(\gp, t^j B_{\mathrm{dR}}^+/t^{j+1} B_{\mathrm{dR}}^+)=0$ si $j\ne 0$), on obtient $\dim_E H^0_{(\varphi,\Gamma)}(D/(t^{k_2-k_3+1})) =1$. On a par ailleurs une suite exacte~:
\begin{equation}\label{fil1}
0 \ra H^0_{(\varphi,\Gamma)}(D_1(x ^{-1-k_2})/t^{k_2-k_3+1}) \ra H^1_{(\varphi, \Gamma)}( D_1(x^{-k_3})) \ra H^1_{(\varphi, \Gamma)}( D_1(x^{-1-k_2})).
\end{equation}
Comparant (\ref{fil1}) et (\ref{ht2}), on d\'eduit de ce qui pr\'ec\`ede $\dim_E \ker(u)=1$. Par \cite[Lem. 2.2]{BD}, on a $\dim_E \Ext^1_{(\varphi,\Gamma)}(\R(x^{k_3}), D_1) =3$, ce qui implique finalement la surjectivit\'e de $u$ par (i). La preuve de l'injectivit\'e de $j$ est analogue et laiss\'ee au lecteur.
\end{proof}

On note $D_0$ l'unique $(\varphi,\Gamma)$-module de de Rham de la forme (cf. (\ref{equadifp}))~:
\begin{multline*}
\begin{xy}(-60,0)*+{\R(\varepsilon^2 x^{k_2-1})}="a";(-33,0)*+{\R(\varepsilon x^{k_2})}="b";(-8,0)*+{\R(x^{k_2+1})}="c";{\ar@{-}"a";"b"};{\ar@{-}"b";"c"}\end{xy}\\
\simeq \big(\!\begin{xy}(-60,0)*+{\R(\norm^{2})}="a";(-37,0)*+{\R(\norm)}="b";(-19,0)*+{\R}="c";{\ar@{-}"a";"b"};{\ar@{-}"b";"c"}\end{xy}\!\big)\otimes_{\R}\R\big(x^{k_2+1}\big).
\end{multline*}

\begin{prop}\label{lemfil}
Soit $D$ une extension de $\R(x^{k_3})$ par $D_1$ telle que $N^2\neq 0$ sur $D_{\mathrm dR}(D)=D_{\mathrm st}(D)$ (cf. \cite[\S~1]{Na1} pour les notations, cf. aussi la preuve du (ii) du Lemme \ref{lem1}) et notons $u(D)$ le ``pull-back'' de $D$ le long de $\R(x^{k_2+1})\hookrightarrow \R(x^{k_3})$. Alors le dual $u(D)^\vee$ est une extension (non scind\'ee) de $\R(\varepsilon^{-2}x^{-k_1})/(t^{k_1-k_2+1})$ par $D_0^{\vee}$.
\end{prop}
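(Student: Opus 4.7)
L'id\'ee est d'exploiter l'hypoth\`ese $N^2\ne 0$ pour donner une description explicite de $D$ en termes de modules de Deligne-Fontaine filtr\'es via l'\'equivalence de \cite[Th.~A]{Be2}, d'en d\'eduire celle de $u(D)$, puis de calculer le dual $u(D)^\vee$ par la dualit\'e de Tate pour les $(\varphi,\Gamma)$-modules g\'en\'eralis\'es (\cite[\S~5.2]{Li}, \cite{Na2}).

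Premi\`ere \'etape~: d\'ecrire $D$. Comme $D$ est de de Rham avec $D_{\mathrm dR}(D)=D_{\mathrm st}(D)$ et que $N^2\ne 0$ sur ce $E$-espace vectoriel de dimension $3$, on peut choisir une base $(g_1,g_2,g_3)$ de $D_{\mathrm st}(D)$ telle que $Ng_3=g_2$, $Ng_2=g_1$, $Ng_1=0$ et $\varphi(g_i)=cp^{i-3}g_i$ pour un $c\in E^\times$ convenable. Le module de Deligne-Fontaine $(D_{\mathrm st}(D),\varphi,N)$ est ainsi, \`a torsion pr\`es par un caract\`ere non ramifi\'e, le m\^eme que celui associ\'e \`a $D_0$ (cf. (\ref{fontaine}) et (\ref{equadifp})). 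Par \cite[Th.~A]{Be2}, $D$ et $D_0$ correspondent alors au m\^eme module de Deligne-Fontaine muni respectivement des filtrations de Hodge \`a sauts aux poids $-k_1,-k_2,-k_3$ et $-(k_2+1),-k_2,-(k_2-1)$, la droite ${\rm Fil}^{-k_2}(D_{\mathrm dR}(D))={\rm Fil}^{-k_2}(D_{\mathrm dR}(D_0))$ en degr\'e m\'edian \'etant canoniquement la m\^eme (elle correspond au sous-objet $D_1\subseteq D$ et au sous-objet analogue de $D_0$).

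Deuxi\`eme \'etape~: identifier $u(D)$. Par construction on a une suite exacte $0\ra u(D)\ra D\ra \R(x^{k_3})/(t^{k_2+1-k_3})\ra 0$ dans la cat\'egorie des $(\varphi,\Gamma)$-modules g\'en\'eralis\'es sur $\R$, donc $u(D)[1/t]=D[1/t]$. Le $(\varphi,\Gamma)$-module $u(D)$ correspond ainsi au m\^eme module de Deligne-Fontaine que $D$ mais muni d'une filtration de Hodge modifi\'ee~: le saut sup\'erieur en degr\'e $-k_3$ est d\'eplac\'e en $-(k_2+1)$, les autres crans restant inchang\'es. En particulier les filtrations de Hodge de $u(D)$ et de $D_0$ co\"\i ncident sur les crans $-(k_2+1)$ et $-k_2$, mais diff\`erent sur le cran le plus bas (saut en $-k_1$ pour $u(D)$ versus $-(k_2-1)$ pour $D_0$). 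On dualise ensuite la suite exacte ci-dessus~: utilisant ${\rm Ext}^1_\R(D,\R(\varepsilon))=0$ (puisque $D$ est libre), on obtient une suite exacte $0\ra D^\vee\ra u(D)^\vee\ra T\ra 0$ o\`u $T$ est un $(\varphi,\Gamma)$-module de torsion que l'on peut expliciter.

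L'obstacle principal sera le calcul pr\'ecis du conoyau et la r\'e\'ecriture de la suite exacte pour faire appara\^\i tre $D_0^\vee$ comme sous-objet. Concr\`etement, on doit comparer les deux filtrations de Hodge (celle de $u(D)$ et celle caract\'erisant $D_0$) sur le m\^eme module de Deligne-Fontaine $D_{\mathrm st}(D)$, et identifier leur diff\'erence (qui porte sur les crans inf\'erieurs, d'\'ecart $k_1-k_2+1$) avec le module de torsion $\R(\varepsilon^{-2}x^{-k_1})/(t^{k_1-k_2+1})$ apr\`es passage au dual de Tate. Pour cela, on utilisera le Lemme \ref{h0t} et l'isomorphisme (\ref{tor1}) pour relier ${\rm Ext}^1_{(\varphi,\Gamma)}$ et invariants galoisiens sur $W_{\mathrm dR}^+$, en exploitant la compatibilit\'e de la dualit\'e de Tate avec ces constructions. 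La non-scindage se d\'eduira alors de $N^2\ne 0$~: un scindage fournirait $u(D)^\vee\simeq D_0^\vee\oplus \R(\varepsilon^{-2}x^{-k_1})/(t^{k_1-k_2+1})$, donc dualement $u(D)$ aurait un facteur direct dual du module de torsion, ce qui contredit le fait que $u(D)\subseteq D$ est un $(\varphi,\Gamma)$-module libre de rang $3$ dont les trois gradu\'ees sont mixtes par la monodromie (en particulier via un argument analogue \`a l'\'Etape $2$ de la preuve du Th\'eor\`eme \ref{gl3loc}~(i)).
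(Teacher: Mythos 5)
Votre strat\'egie (tout traduire en modules de Deligne--Fontaine filtr\'es via \cite[Th.~A]{Be2} et comparer les r\'eseaux) est l\'egitime et diff\`ere de celle du texte, mais elle comporte une erreur de fond et une lacune, toutes deux au point essentiel. D'une part la comptabilit\'e des filtrations de Hodge est fausse~: vous lisez les exposants de $x$ des param\`etres de la triangulation comme des poids de Hodge--Tate, en oubliant la contribution de $\varepsilon$ (de poids $1$). Les sauts pour $D$ sont aux poids $k_3<k_2+1<k_1+2$ (i.e. $h_3<h_2<h_1$), et surtout $D_0$ a ses \emph{trois} poids de Hodge--Tate \'egaux \`a $k_2+1$~: c'est le tordu par $\R(x^{k_2+1})$ de l'\'equation diff\'erentielle $p$-adique associ\'ee \`a (\ref{fontaine}), dont la filtration n'a qu'un seul cran (cf. (\ref{equadifp}) et le paragraphe suivant (\ref{dalphaw})). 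Il n'existe donc pas de ``crans $-(k_2+1)$ et $-k_2$'' de $D_0$ avec lesquels comparer, et l'affirmation que le cran de dimension $1$ de la filtration de $D_{\dR}(D)$ ``correspond au sous-objet $D_1\subseteq D$'' (et \`a un analogue pour $D_0$) confond filtration de Hodge et triangulation~: cette droite est pr\'ecis\'ement la donn\'ee que la triangulation ne d\'etermine pas (cf. la Proposition \ref{lienfilmax}, o\`u elle est arbitraire). D'autre part, l'identification avec $D_0^\vee$ --- tout le contenu de la proposition --- est exactement ce que vous laissez comme ``obstacle principal'', avec pour seul plan un appel vague au Lemme \ref{h0t} et \`a la dualit\'e~; votre r\'edaction s'arr\^ete donc l\`a o\`u commence la difficult\'e. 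Le texte traite ce point en montrant d'abord, gr\^ace \`a $N^2\neq 0$ et \`a l'identification du noyau de $H^1_{(\varphi,\Gamma)}(\R(\varepsilon x^{k_2-k_3}))\rightarrow H^1_{(\varphi,\Gamma)}(\R(\varepsilon x^{-1}))$ avec les classes cristallines, que le quotient de rang $2$ de $u(D)$ (extension de $\R(x^{k_2+1})$ par $\R(\varepsilon x^{k_2})$) est non scind\'e, d'o\`u la forme (\ref{uD}), puis en utilisant, apr\`es dualisation et pull-back, le fait que $D_1^\vee$ n'est pas cristallin et l'unicit\'e du module de de Rham de forme triangul\'ee non scind\'ee prescrite, i.e. $D_0^\vee$.

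Votre approche par modules filtr\'es peut n\'eanmoins aboutir, mais par un autre biais que la comparaison de crans propos\'ee. Consid\'erez le $(\varphi,\Gamma)$-module $M'\subseteq u(D)[1/t]$ obtenu en poussant $u(D)$ le long de $\R(\varepsilon^2x^{k_1})\hookrightarrow \R(\varepsilon^2x^{k_2-1})$ (i.e. en agrandissant sa sous-droite)~: il est encore de de Rham et semi-stable (une modification ne change pas $D[1/t]$, donc ni $D_{\dR}$ ni $D_{\mathrm{st}}$), son $(\varphi,N)$-module est celui de (\ref{fontaine}) (c'est ici que sert $N^2\neq 0$), et tous ses poids de Hodge--Tate valent $k_2+1$ (sa sous-droite $\R(\varepsilon^2x^{k_2-1})$ et son quotient de rang $2$, qui est celui de $u(D)$, ont tous leurs poids \'egaux \`a $k_2+1$). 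Sa filtration est donc \`a un seul cran et \cite[Th.~A]{Be2} donne $M'\simeq D_0$~; en dualisant l'inclusion $u(D)\subseteq M'$ on obtient $D_0^\vee\simeq M'^\vee\hookrightarrow u(D)^\vee$ de conoyau $\R(\varepsilon^{-2}x^{-k_1})/(t^{k_1-k_2+1})$, et le non-scindage est imm\'ediat puisque $u(D)^\vee$ est sans torsion (l'appel \`a l'\'Etape $2$ de la preuve du Th\'eor\`eme \ref{gl3loc} est superflu). Mais tel qu'\'ecrit, votre argument ne d\'emontre pas la proposition.
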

\begin{proof}
Notons que par \cite[Lem. 3.2]{Di2} l'hypoth\`ese $N^2\neq 0$ sur $D_{\mathrm dR}(D)$ implique que le $(\varphi,\Gamma)$-module en sous-objet $D_1$ n'est pas cristallin. L'application $u$ en (\ref{ht1}) s'inscrit dans un diagramme commutatif~:
\begin{equation*}\small
\xymatrix{\Ext^1\!(\R(x^{k_3}), \R(\varepsilon^2x^k_1))\ar[r]\ar[d] & \Ext^1\!(\R(x^{k_3}), D_1)\ar[r] \ar[d]^{u} & \Ext^1\!(\R(x^{k_3}), \R(\varepsilon x^{k_2})) \ar[d]^{u_1}\\
\Ext^1\!(\R(x^{k_2+1}), \R(\varepsilon^2x^k_1)) \ar[r] & \Ext^1\!(\R(x^{k_2+1}), D_1) \ar[r] & \Ext^1\!(\R(x^{k_2+1}), \R(\varepsilon x^{k_2})).}
\end{equation*}
Via les isomorphismes $\Ext^1_{(\varphi,\Gamma)}(\R(x^{k_3}), \R(\varepsilon x^{k_2}))\simeq H^1_{(\varphi,\Gamma)}(\R(\varepsilon x^{k_2-k_3}))$ et $\Ext^1_{(\varphi,\Gamma)}(\R(x^{k_2+1}), \R(\varepsilon x^{k_2}))\simeq H^1_{(\varphi,\Gamma)}(\R(\varepsilon x^{-1}))$, $u_1$ est l'application naturelle~:
\begin{equation}\label{equ1}
u_1:H^1_{(\varphi,\Gamma)}(\R(\varepsilon x^{k_2-k_3})) \lra H^1_{(\varphi,\Gamma)}(\R(\varepsilon x^{-1}))
\end{equation}
o\`u $\dim_E H^1_{(\varphi,\Gamma)}(\R(\varepsilon x^{k_2-k_3}))=2$ et $\dim_E H^1_{(\varphi,\Gamma)}(\R(\varepsilon x^{-1}))=1$. Un argument comme dans la preuve du (ii) du Lemme \ref{lem1} (en plus simple) donne que $u_1$ est surjectif. Par ailleurs il suit facilement de \cite[Prop. 2.7]{Na1} que $\dim_EH^1_f(\R(\varepsilon x^{k_2-k_3}))=1$ et $H^1_f(\R(\varepsilon x^{-1}))=0$, de sorte que $H^1_f(\R(\varepsilon x^{k_2-k_3}))$ est inclus dans le noyau de (\ref{equ1}), et qu'il lui est m\^eme \'egal puisque $u_1$ est surjectif en comparant les dimensions. Donc $\ker(u_1)$ s'identifie aux extensions qui sont cristallines. Comme $N^2\neq 0$ sur $D_{\mathrm dR}(D)$, le quotient $\!\begin{xy}(-60,0)*+{\R(\varepsilon x^{k_2})}="a";(-36,0)*+{\R(\varepsilon x^{k_3})}="b";{\ar@{-}"a";"b"}\end{xy}\!$ de $D$ n'est pas cristallin. Par le diagramme commutatif et la discussion ci-dessus, on en d\'eduit que le quotient $\!\begin{xy}(-60,0)*+{\R(\varepsilon x^{k_2})}="a";(-34,0)*+{\R(\varepsilon x^{k_2+1})}="b";{\ar@{-}"a";"b"}\end{xy}\!$ de $u(D)$ est une extension {\it non scind\'ee}, i.e. $u(D)$ est de la forme $\!\begin{xy}(-60,0)*+{\R(\varepsilon^2 x^{k_1})}="a";(-34,0)*+{\R(\varepsilon x^{k_2})}="b";(-8,0)*+{\R(x^{k_2+1})}="c";{\ar@{-}"a";"b"};{\ar@{-}"b";"c"}\end{xy}\!$ et donc~:
\begin{equation}\label{uD}
u(D)^{\vee}\simeq \!\begin{xy}(-60,0)*+{\R(x^{-k_2-1})}="a";(-29,0)*+{\R(\varepsilon^{-1} x^{-k_2})}="b";(2.5,0)*+{\R(\varepsilon^{-2} x^{-k_1})}="c";{\ar@{-}"a";"b"};{\ar@{-}"b";"c"}\end{xy}\!.
\end{equation}
Comme $u(D)^{\vee}$ est de de Rham avec en quotient $D_1^{\vee}$ qui n'est pas cristallin, on en d\'eduit facilement que le ``pull-back'' de $u(D)^{\vee}$ le long de $\R(\varepsilon^{-2}x^{-k_2+1})\hookrightarrow \R(\varepsilon^{-2} x^{-k_1})$ est isomorphe \`a $D_0^{\vee}$, puis le r\'esultat.
\end{proof}

On note $\Ext^1_{\rm HT}(D_1,D_1)$ (resp. $\Ext^1_g(D_1,D_1)\subseteq \Ext^1_{\rm HT}(D_1,D_1)$) le sous-$E$-espace vectoriel de $\Ext^1_{(\varphi,\Gamma)}(D_1,D_1)$ des extensions qui sont de Hodge-Tate (resp. de de Rham). En voyant un \'el\'ement de $\Ext^1_{(\varphi,\Gamma)}(D_1,D_1)$ comme un $(\varphi,\Gamma)$-module $\widetilde{D_1}$ sur ${\mathcal R}_{E[\epsilon]/(\epsilon^2)}\=\R\otimes_EE[\epsilon]/(\epsilon^2)$ qui d\'eforme $D_1$, on d\'efinit $\Ext^1_Z(D_1, D_1)\subseteq \Ext^1_{(\varphi,\Gamma)}(D_1,D_1)$ comme le sous-espace des extensions telles que $\wedge^2_{{\mathcal R}_{E[\epsilon]/(\epsilon^2)}}\widetilde{D_1}\simeq \wedge^2_{{\mathcal R}_{E}}{D_1}\otimes_EE[\epsilon]/(\epsilon^2)$. Enfin on pose $\Ext^1_{{\rm HT},Z}(D_1, D_1)\=\Ext^1_{\rm HT}(D_1, D_1)\cap \Ext^1_{Z}(D_1, D_1)$.

\begin{lem}\label{HT1}
On a $\dim_E\Ext^1_{\rm HT}(D_1,D_1)=3$ et $\dim_E \Ext^1_{{\rm HT},Z}(D_1,D_1)=2$.
\end{lem}
\begin{proof}
Soit $\widetilde{D}_1$ une d\'eformation de $D_1$ sur ${\mathcal R}_{E[\epsilon]/(\epsilon^2)}$, les poids de Sen de $\widetilde{D}_1$ sont de la forme $(k_1+2+d_1\epsilon, k_2+1+d_2\epsilon)$ pour $(d_1,d_2)\in E^2$, ce qui donne une application $E$-lin\'eaire naturelle~:
\begin{equation}\label{sen}
\Ext^1_{(\varphi,\Gamma)}(D_1,D_1) \longrightarrow E^2, \ \ [\widetilde{D}_1] \mapsto (d_1, d_2).
\end{equation}
Montrons que cette application est surjective. Si $D_1$ est cristallin, cela suit facilement de \cite[Prop. 2.3.10]{BChe} appliqu\'e au raffinement $\!\begin{xy}(-60,0)*+{\R(\varepsilon x^{k_1+1})}="a";(-32,0)*+{\R(\varepsilon^2 x^{k_2-1})}="b";{\ar@{-}"a";"b"}\end{xy}\!$ de $D_1$. Si $D_1$ est (semi-stable) {\it non} cristallin, pour tout $(d_1,d_2)\in E^2$, en utilisant \cite[Th.~2.7]{BD} (appliqu\'e \`a $D_1$) il n'est pas difficile de trouver $\widetilde{D}_1\in \Ext^1_{(\varphi,\Gamma)}(D_1,D_1)$ triangulin de param\`etre $(\widetilde{\delta}_1, \widetilde{\delta}_2)$ tel que $(k_1+2+d_1\epsilon,k_2+1+d_2\epsilon)$ sont les poids de Sen respectifs de $\widetilde{\delta}_1,\widetilde{\delta}_2$ (la condition dans {\it loc.cit.} sur $\widetilde{\delta}_1,\widetilde{\delta}_2$ donne en fait ici une condition vide sur $d_1,d_2$). On a par ailleurs $[\widetilde{D}_1]\in \Ext^1_{\rm HT}(D_1,D_1)$ si et seulement si $d_1=d_2=0$~: utiliser par exemple que l'on peut toujours tordre $D_1$ par un caract\`ere non ramifi\'e pour le rendre \'etale, donc provenant d'une repr\'esentation galoisienne de Hodge-Tate. Comme $\dim_E \Ext^1_{(\varphi,\Gamma)}(D_1,D_1)=5$ (\cite[Lem. 3.5]{BD}), on obtient la premi\`ere \'egalit\'e. Pour la deuxi\`eme, notons que la restriction de (\ref{sen}) \`a $\Ext^1_Z(D_1,D_1)$ a pour image la droite $\{(x,-x),x\in E\} \hookrightarrow E^2$. Comme $\dim_E \Ext^1_Z(D_1,D_1)=3$ (\cite[Lem. 3.9]{BD}), on en d\'eduit $\dim_E \Ext^1_{{\rm HT},Z}(D_1,D_1)=2$.
\end{proof}

\begin{lem}\label{HT1bis}
On a $\dim_E \Ext^1_{\rm HT}(D_1, \R(\varepsilon x^{k_2}))=2$.
\end{lem}
\begin{proof}
L'isomorphisme $\Ext^1_{(\varphi,\Gamma)}(D_1, \R(\varepsilon x^{k_2}))\cong H^1_{(\varphi,\Gamma)}(D_1^{\vee}(\varepsilon x^{k_2}))$ induit un isomorphisme $\Ext^1_{\rm HT}(D_1, \R(\varepsilon x^{k_2}))\cong H^1_{\rm HT}(D_1^{\vee}(\varepsilon x^{k_2}))$. Par ailleurs une preuve analogue \`a celle de (\ref{filw}) donne $W_{\rm dR}^+(D_1^{\vee}(\varepsilon x^{k_2}))\cong (B_{\rm dR}^+ \oplus t^{-(k_1+1-k_2)} B_{\rm dR}^+)\otimes_{\Q_p} E$, qui implique en particulier (avec $H^1(\gp, t^j B_{\mathrm{dR}}^+/t^{j+1} B_{\mathrm{dR}}^+)=0$ si $j\ne 0$)~:
\begin{equation*}
\dim_E H^1\big(\gp, W_{\rm dR}^+(D_1^{\vee}(\varepsilon x^{k_2}))/(t)\big)=1.
\end{equation*}
Comme $\dim_E \Ext^1_{(\varphi,\Gamma)}(D_1, \R(\varepsilon x^{k_2}))=3$ (cf. \cite[Lem. 3.5]{BD}), on en d\'eduit $\dim_E \Ext^1_{\rm HT}(D_1, \R(\varepsilon x^{k_2}))\geq 2$ par la d\'efinition de $H^1_{\rm HT}(D_1^{\vee}(\varepsilon x^{k_2}))$. Par ailleurs, l'injection (venant de $D_1\twoheadrightarrow \R(\varepsilon x^{k_2})$)~:
\begin{equation*}
\Ext^1_{(\varphi,\Gamma)}(\R(\varepsilon x^{k_2}), \R(\varepsilon x^{k_2})) \hookrightarrow \Ext^1_{(\varphi,\Gamma)}(D_1, \R(\varepsilon x^{k_2}))
\end{equation*}
induit un isomorphisme (en utilisant que tout sous-$(\varphi,\Gamma)$-module d'un $(\varphi,\Gamma)$-module de Hodge-Tate tel que le conoyau n'a pas de torsion est encore de Hodge-Tate)~:
\begin{equation}\footnotesize\label{interHT}
\Ext^1_{\rm HT}(\R(\varepsilon x^{k_2}), \R(\varepsilon x^{k_2}))\cong \Ext^1_{\rm HT}(D_1, \R(\varepsilon x^{k_2}))\cap \Ext^1_{(\varphi,\Gamma)}(\R(\varepsilon x^{k_2}), \R(\varepsilon x^{k_2}))
\end{equation}
(le terme de droite est clairement inclus dans celui de gauche, qui a seulement dimension $1$, d'o\`u un isomorphisme). Comme par ailleurs~:
$$\Ext^1_{\rm HT}(\R(\varepsilon x^{k_2}), \R(\varepsilon x^{k_2}))\cong \Ext^1_g(\R(\varepsilon x^{k_2}), \R(\varepsilon x^{k_2}))\cong H^1_g(\R)\subsetneq H^1_{(\varphi,\Gamma)}(\R),$$
il existe des extensions de $\R(\varepsilon x^{k_2})$ par $\R(\varepsilon x^{k_2})$ qui ne sont pas de Hodge-Tate, d'o\`u on d\'eduit par (\ref{interHT})~:
\begin{equation*}
\Ext^1_{\rm HT}(D_1, \R(\varepsilon x^{k_2}))\subsetneq \Ext^1_{(\varphi,\Gamma)}(D_1, \R(\varepsilon x^{k_2})).
\end{equation*}
Cela implique $\dim_E \Ext^1_{\rm HT}(D_1, \R(\varepsilon x^{k_2}))\leq 2$ et termine la preuve.
\end{proof}

\begin{rem}
{\rm Si $D$ est un $(\varphi, \Gamma)$-module et $D'$ un sous-$(\varphi, \Gamma)$-module de m\^eme rang que $D$, alors $D'$ de Hodge-Tate {\it n}'implique {\it pas} $D$ de Hodge-Tate (contrairement au cas de Rham). Par exemple, si $D$ est une extension $\R$ par $\R$ qui n'est pas de Hodge-Tate, son ``pull-back'' le long de $t^n \R \hookrightarrow \R$ est de Hodge-Tate pour tout $n>0$.}
\end{rem}

Comme pour (\ref{interHT}), on v\'erifie que la surjection $\kappa$ en (\ref{hta}) induit une suite exacte~:
\begin{equation}\label{exa}
0 \lra \Ext^1_{\HT}(D_1, \R(\varepsilon^2 x^{k_1})) \lra \Ext^1_{\HT}(D_1,D_1) \buildrel {\kappa}\over\longrightarrow \Ext^1_{\HT}(D_1, \R(\varepsilon x^{k_2})).
\end{equation}

\begin{lem}\label{HTg}
(i) On a $\Ext^1_g(D_1, \R(\varepsilon^2 x^{k_1}))\buildrel\sim\over\rightarrow \Ext^1_{\HT}(D_1, \R(\varepsilon^2 x^{k_1}))$.\\
(ii) Supposons $D_1$ non cristallin, alors $\dim_E \Ext^1_{\HT}(D_1, \R(\varepsilon^2 x^{k_1}))=1$ et l'application $\kappa$ en (\ref{exa}) est surjective.
\end{lem}
\begin{proof}
(i) Notons d'abord que $\Ext^1_{*}(D_1, \R(\varepsilon^2 x^{k_1}))\cong H^1_{*}(D_1^{\vee}(\varepsilon^2 x^{k_1}))$ o\`u $*\in \{(\varphi,\Gamma), g, \HT\}$. Consid\'erons les applications naturelles~:
\begin{multline}\label{comp}
H^1_{(\varphi,\Gamma)}(D_1^{\vee}(\varepsilon^2 x^{k_1})) \lra H^1\big(\gp, W_{\dR}^+(D_1^{\vee}(\varepsilon^2 x^{k_1}))\big)\\
\lra H^1\big(\gp, W_{\dR}^+(D_1^{\vee}(\varepsilon^2 x^{k_1}))/(t)\big).
\end{multline}
Comme pour (\ref{filw}) on montre $W_{\dR}^+(D_1^{\vee}(\varepsilon^2 x^{k_1}))\cong (B_{\dR}^+ \oplus t^{k_1-k_2+1} B_{\dR}^+)\otimes_{\Q_p} E$, d'o\`u on d\'eduit que la seconde application en (\ref{comp}) est une bijection de $E$-espaces vectoriels de dimension $1$. Par \cite[Def.~2.4\ \&\ Lem. 2.6]{Na1} on en d\'eduit (i).\\
(ii) Supposons $D_1$ non cristallin, comme $D_1(\varepsilon^{-1} x^{-k_1})\simeq \!\begin{xy}(-60,0)*+{\R(\varepsilon)}="a";(-36,0)*+{\R(x^{k_2-k_1})}="b";{\ar@{-}"a";"b"}\end{xy}\!$ est aussi (semi-stable) non cristallin, on d\'eduit de \cite[Prop. 2.7]{Na1} que l'on a $H^1_e(D_1(\varepsilon^{-1} x^{-k_1}))=H^1_f(D_1(\varepsilon^{-1} x^{-k_1}))\simeq E$. Par ailleurs par \cite[Lem. 3.5]{BD} on a $\dim_E H^1_{(\varphi,\Gamma)}(D_1^{\vee}(\varepsilon^2 x^{k_1}))=2$. Il suit alors de \cite[Prop. 2.11]{Na1} (avec \cite[Th.~5.11]{Na2}) et $(D_1^{\vee}(\varepsilon^2 x^{k_1}))^{\vee} (\varepsilon) \cong D_1(\varepsilon^{-1} x^{-k_1})$ que l'on a $\dim_E H^1_{g}(D_1^{\vee}(\varepsilon^2 x^{k_1}))=1=\dim_E \Ext^1_g(D_1, \R(\varepsilon^2 x^{k_1}))$. On en d\'eduit le (ii) avec (i), le Lemme \ref{HT1} et le Lemme \ref{HT1bis}.
\end{proof}

\begin{rem}\label{noncris}
{\rm Dans le (ii) du Lemme \ref{HTg}, si $D_1$ est cristallin on peut montrer que $\dim_E \Ext^1_{\HT}(D_1, \R(\varepsilon^2 x^{k_1}))=2$, et dans ce cas $\kappa$ en (\ref{exa}) n'est donc pas surjective (par le Lemme \ref{HT1} et le Lemme \ref{HT1bis}).}
\end{rem}

\begin{lem}\label{imj}
On a $\im(j)= \Ext^1_{\HT}(D_1, \R(\varepsilon x^{k_2}))\subseteq \Ext^1_{(\varphi,\Gamma)}(D_1, \R(\varepsilon x^{k_2}))$ (voir (\ref{ht1}) pour $j$).
\end{lem}
\begin{proof}
Soit $k_2'$ comme en (\ref{ht1}), on a un diagramme commutatif (en notant \`a droite $H^1(-)$ pour $H^1(\gp,-)$)~:
\begin{equation*}
\begin{CD}
H^1_{(\varphi,\Gamma)}(D_1^{\vee}(\varepsilon x^{k_2'}))@>>> H^1\big(t^{k_2'-k_2} W_{\dR}^+(D_1^{\vee}(\varepsilon x^{k_2})\big)\\
@V j VV @V j' VV @. \\
H^1_{(\varphi,\Gamma)}(D_1^{\vee} (\varepsilon x^{k_2})) @>>> H^1\big(W_{\dR}^+(D_1^{\vee}(\varepsilon x^{k_2}))\big) @> v >> H^1\big(W_{\dR}^+(D_1^{\vee}(\varepsilon x^{k_2}))/(t)\big).
\end{CD}
\end{equation*}
Comme $k'_2-k_2\geq 1$, on a $v\circ j'=0$, donc $\im(j)\subseteq H^1_{\HT}(D_1^{\vee} (\varepsilon x^{k_2}))$. Mais par l'injectivit\'e de $j$ ((ii) du Lemme \ref{lem1}), le (i) du Lemme \ref{lem1} et le Lemme \ref{HT1bis}, ces deux espaces ont m\^eme dimension ($=2$), d'o\`u le r\'esultat.
\end{proof}

\begin{prop}\label{prop}\label{D1fin}
(i) L'accouplement du bas en (\ref{hta}) induit un accouplement parfait $\Ext^1_{(\varphi,\Gamma)}(\R(x^{k_2+1}), D_1) \times \Ext^1_{\HT}(D_1, \R(\varepsilon x^{k_2}))\buildrel{\cup}\over\longrightarrow E$.\\
(ii) Si $D_1$ est non cristallin, on a un diagramme commutatif d'accouplements compatible via la surjection $u$ avec le diagramme commutatif (\ref{hta})~:
\begin{equation}\label{ht00}
\begin{CD}
\Ext^1_{(\varphi,\Gamma)}(\R(x^{k_2+1}), D_1) \ \ \ \ @. \ \times @. \Ext^1_{\HT}(D_1, D_1) @> \cup>> E \\
@| @. @V \kappa VV @| \\
\Ext^1_{(\varphi,\Gamma)}(\R(x^{k_2+1}), D_1) \ \ @. \ \times @. \ \ \ \Ext^1_{\HT}(D_1, \R(\varepsilon x^{k_2}))@> \cup >> E.
\end{CD}
\end{equation}
\end{prop}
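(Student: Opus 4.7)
The plan is to reduce (i) to the perfect pairing at the bottom of (\ref{ht1}) by exploiting the compatibility of (\ref{ht1}) with (\ref{hta}), and then to deduce (ii) from (i) by composition with $\kappa$; the preceding lemmas supply all the necessary identifications of images and kernels, so the proof should reduce to a diagram chase.

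For (i), I would begin from the observation that the bottom pairing of (\ref{ht1}) is a perfect pairing between two-dimensional $E$-spaces, by Lemma \ref{lem1}(i) and \cite[Prop.~2.3(2)]{BD}. The injection $j$ of (\ref{ht1}) is injective by Lemma \ref{lem1}(ii), and by Lemma \ref{imj} its image is exactly $\Ext^1_{\HT}(D_1, \R(\varepsilon x^{k_2}))$; hence $j$ realises an isomorphism $\Ext^1_{(\varphi,\Gamma)}(D_1, \R(\varepsilon x^{k_2'})) \buildrel\sim\over\longrightarrow \Ext^1_{\HT}(D_1, \R(\varepsilon x^{k_2}))$. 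Given $x \in \Ext^1_{(\varphi,\Gamma)}(\R(x^{k_3}), D_1)$ and $y' \in \Ext^1_{\HT}(D_1, \R(\varepsilon x^{k_2}))$ written as $y' = j(y)$, the commutativity of (\ref{ht1}) yields $\langle x, y' \rangle = \langle x, j(y) \rangle = \langle u(x), y \rangle$, which shows that the bottom pairing of (\ref{hta}), restricted to $\Ext^1_{\HT}$ on the right, factors uniquely through the surjection $u$ of Lemma \ref{lem1}(ii). The induced pairing on $\Ext^1_{(\varphi,\Gamma)}(\R(x^{k_2+1}), D_1) \times \Ext^1_{\HT}(D_1, \R(\varepsilon x^{k_2}))$ then corresponds, via $u$ and $j^{-1}$, to the perfect pairing at the bottom of (\ref{ht1}), and is therefore itself perfect.

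For (ii), the non-crystalline hypothesis on $D_1$ enters only through Lemma \ref{HTg}(ii), which upgrades $\kappa$ in (\ref{exa}) to a surjection $\kappa : \Ext^1_{\HT}(D_1, D_1) \twoheadrightarrow \Ext^1_{\HT}(D_1, \R(\varepsilon x^{k_2}))$. I would define the top pairing of (\ref{ht00}) as the composition of the pairing of (i) with $\kappa$ on the right, so that the square (\ref{ht00}) commutes tautologically. The stated compatibility of (\ref{ht00}) with (\ref{hta}) via $u$ then amounts to a short computation: for $x \in \Ext^1_{(\varphi,\Gamma)}(\R(x^{k_3}), D_1)$ and $y \in \Ext^1_{\HT}(D_1, D_1)$, the commutativity of (\ref{hta}) gives $\langle x, y \rangle = \langle x, \kappa(y) \rangle$ (for the pairing in the bottom of (\ref{hta})), and the factorisation from (i) applied to $\kappa(y) \in \Ext^1_{\HT}(D_1, \R(\varepsilon x^{k_2}))$ yields $\langle x, \kappa(y) \rangle = \langle u(x), \kappa(y) \rangle$, which by definition is the pairing of $u(x)$ with $y$ in the top of (\ref{ht00}).

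The main technical input for the whole argument is thus concentrated in the preceding lemmas, most notably Lemma \ref{imj} (identifying $\im j$ with the Hodge--Tate subspace) and Lemma \ref{HTg}(ii) (surjectivity of $\kappa$ in the non-crystalline case); without these two inputs the diagram chase would not close, and indeed Remark \ref{noncris} shows that dropping the non-crystalline assumption would force one to work only with $\im\kappa$ on the right of (\ref{ht00}). Once these are in hand, the remaining work is purely formal.
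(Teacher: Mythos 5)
Your proposal is correct and follows essentially the same route as the paper. Both (i) and (ii) rest on the same ingredients in the same logical order: Lemma \ref{lem1}(ii) (injectivity of $j$) combined with Lemma \ref{imj} (identification of $\im j$ with the Hodge--Tate subspace) to transport the perfect pairing from the bottom of (\ref{ht1}); then Lemma \ref{HTg}(ii) to supply the surjectivity of $\kappa$ needed for (ii). The paper's own proof is terser — it phrases the definition of the top pairing of (\ref{ht00}) as ``decreeing that $\ker(\kappa)$ annihilates the left factor,'' which is precisely the composition with $\kappa$ you write out — but the diagram chase you make explicit is exactly what is being compressed there. The one phrase to tidy is ``via $u$ and $j^{-1}$'': the transport that establishes perfectness in (i) is via $j^{-1}$ alone (an isomorphism onto the $\HT$ subspace), while $u$ is what the ambient pairing of (\ref{hta}) factors through; this is clear from your surrounding computation but the shorthand could be misread.
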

\begin{proof}
Le Lemme \ref{imj} et l'injectivit\'e de $j$ ((ii) du Lemme \ref{lem1}) permettent de remplacer $\Ext^1_{(\varphi,\Gamma)}(D_1, \R(\varepsilon x^{k_2'}))$ par $\im(j)=\Ext^1_{\HT}(D_1, \R(\varepsilon x^{k_2}))$ dans l'accouplement parfait du bas en (\ref{ht1}), d'o\`u (i). L'accouplement du haut en (\ref{ht00}) est alors d\'efini en d\'ecr\'etant que $\ker(\kappa)$ annule $\Ext^1_{(\varphi,\Gamma)}(\R(x^{k_2+1}), D_1)$. Les derni\`eres assertions du (ii) d\'ecoulent du (ii) du Lemme \ref{HTg} et du (i).
\end{proof}

\begin{rem}
{\rm Dans le (ii) de la Proposition \ref{D1fin}, si $D_1$ est cristallin on a encore un diagramme commutatif comme en (\ref{ht00}), mais dans ce cas $\kappa$ n'est plus surjective (Remarque \ref{noncris}) et on ne peut utiliser $\Ext^1_{\HT}(D_1, D_1)$ pour ``caract\'eriser'' des vecteurs dans $\Ext^1_{(\varphi,\Gamma)}(\R(x^{k_2+1}), D_1)$.}
\end{rem}

En rempla\c cant $D_1$ par une extension (semi-stable) non cristalline $D_2\simeq \!\begin{xy}(-60,0)*+{\R(\varepsilon x^{k_2})}="a";(-36,0)*+{\R(x^{k_3})}="b";{\ar@{-}"a";"b"}\end{xy}\!$, on obtient la proposition suivante par des arguments similaires.

\begin{prop}\label{ht02}
(i) L'injection $\R(\varepsilon^2 x^{k_1})\hookrightarrow \R(\varepsilon^2 x^{k_2-1})$ induit une application surjective~:
\begin{equation*}
u': \Ext^1_{(\varphi,\Gamma)}(D_2, \R(\varepsilon^2 x^{k_1})) \twoheadrightarrow \Ext^1_{(\varphi,\Gamma)}(D_2, \R(\varepsilon^2 x^{k_2-1})).
\end{equation*}
(ii) Soit $D$ une extension de $D_2$ par $\R(\varepsilon^2 x^{k_1})$ avec $N^2\neq 0$ sur $D_{\dR}(D)=D_{\rm st}(D)$ et $u'(D)$ le ``push-forward'' de $D$ le long de $\R(\varepsilon^2x^{k_1})\hookrightarrow \R(\varepsilon^2 x^{k_2-1})$, alors $u'(D)$ est une extension (non scind\'ee) de $\R(x^{k_3})/t^{k_2+1-k_3}$ par $D_0$.\\
(iii) Le diagramme commutatif d'accouplements (cf. \cite[(2.8)]{BD})~:
\begin{equation*}
\begin{CD}
\Ext^1_{(\varphi,\Gamma)}(D_2, \R(\varepsilon^2 x^{k_1})) \ \ \ \ @. \times @. \!\!\!\Ext^1_{(\varphi,\Gamma)}(D_2, D_2) @> \cup>> E \\
@| @. @V \kappa VV @| \\
\Ext^1_{(\varphi,\Gamma)}(D_2, \R(\varepsilon^2 x^{k_1})) \ \ \ \ @. \times @. \ \ \ \ \Ext^1_{(\varphi,\Gamma)}(\R(\varepsilon x^{k_2}), D_2)@> \cup >> E
\end{CD}
\end{equation*}
induit un diagramme commutatif d'accouplements~:
\begin{equation*}
\begin{CD}
\Ext^1_{(\varphi,\Gamma)}(D_2, \R(\varepsilon^2 x^{k_2-1})) \ \ \ \ @. \times @. \Ext^1_{\HT}(D_2, D_2) @> \cup>> E \\
@| @. @V \kappa VV @| \\
\Ext^1_{(\varphi,\Gamma)}(D_2, \R(\varepsilon^2 x^{k_2-1})) \ \ \ \ @. \times @. \ \ \ \ \Ext^1_{\HT}(\R(\varepsilon x^{k_2}), D_2)@> \cup >> E
\end{CD}
\end{equation*}
o\`u l'application $\kappa$ est surjective et l'accouplement du bas est parfait.
\end{prop}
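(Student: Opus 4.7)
The plan is to mirror the arguments already developed in Lemma \ref{lem1}, Proposition \ref{lemfil} and Proposition \ref{D1fin} for $D_1$, but now applied to the non-cristalline extension $D_2$, exploiting the duality that swaps ``sub'' and ``quotient''.

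For (i), I would first compute $\dim_E \Ext^1_{(\varphi,\Gamma)}(D_2, \R(\varepsilon^2 x^{k_1}))$ using \cite[Lem.~3.5]{BD} (or the analog of the argument in the proof of Lemma \ref{lem1}(ii)), obtaining dimension $3$, and similarly $\dim_E \Ext^1_{(\varphi,\Gamma)}(D_2, \R(\varepsilon^2 x^{k_2-1}))=2$. Then the analog of (\ref{fil1}) gives a left exact sequence with kernel $H^0_{(\varphi,\Gamma)}(D_2^\vee(\varepsilon^2 x^{k_2-1})/t^{k_1-k_2+1})$, whose dimension I would compute by the Hodge--Tate filtration argument of (\ref{filw}) applied to $D_2^\vee(\varepsilon^2 x^{k_2-1})$; since $D_2$ is of de Rham with the expected Hodge numbers, this kernel is $1$-dimensional, which by dimension count forces $u'$ to be surjective.

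For (ii), I would replay the argument of Proposition \ref{lemfil} dually. Composing the surjection $u'$ with the quotient $\Ext^1_{(\varphi,\Gamma)}(D_2,-)\twoheadrightarrow \Ext^1_{(\varphi,\Gamma)}(\R(\varepsilon x^{k_2}),-)$ gives a surjection $H^1_{(\varphi,\Gamma)}(\R(\varepsilon x^{k_2-k_1}))\twoheadrightarrow H^1_{(\varphi,\Gamma)}(\R(\varepsilon x^{-1}))$ whose kernel is exactly $H^1_f$ by \cite[Prop.~2.7]{Na1}; the hypothesis $N^2\ne 0$ on $D_{\dR}(D)$ implies via \cite[Lem.~3.2]{Di2} that the relevant quotient of $D$ is not cristalline, so the pushed-forward sub-extension $\!\begin{xy}(-60,0)*+{\R(\varepsilon^2 x^{k_2-1})}="a";(-34,0)*+{\R(\varepsilon x^{k_2})}="b";{\ar@{-}"a";"b"}\end{xy}\!$ of $u'(D)$ is non-split. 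This forces $u'(D)$ to be of the shape $\!\begin{xy}(-60,0)*+{\R(\varepsilon^2 x^{k_2-1})}="a";(-34,0)*+{\R(\varepsilon x^{k_2})}="b";(-8,0)*+{\R(x^{k_3})}="c";{\ar@{-}"a";"b"};{\ar@{-}"b";"c"}\end{xy}\!$; since $u'(D)$ is still de Rham (as a quotient of a de Rham object followed by a push-out along a de Rham map, using the equivalence of categories \cite[Th.~A]{Be2} in the torsion-free part), the push-out along $\R(x^{k_3})\twoheadrightarrow \R(x^{k_3})/t^{k_2+1-k_3}$ is forced to be $D_0$ with its unique de Rham structure.

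For (iii), I would establish the dual analogs of Lemmas \ref{HT1}, \ref{HT1bis}, \ref{HTg} and \ref{imj}: namely $\dim_E\Ext^1_{\HT}(D_2,D_2)=3$, $\dim_E\Ext^1_{\HT}(\R(\varepsilon x^{k_2}),D_2)=2$, and $\dim_E\Ext^1_{\HT}(\R(x^{k_3}),D_2)=1$ (using that $D_2$ is non-cristalline to upgrade $\HT$ to $g$ via \cite[Prop.~2.11]{Na1}, with $\dim_EH^1_g=1$). The surjectivity of $\kappa$ on $\Ext^1_{\HT}$ follows from the dual Hodge--Tate sequence analogous to (\ref{exa}) and the vanishing of the leftmost term by the $g$-calculation. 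The image of the dual of $j$ equals $\Ext^1_{\HT}(\R(\varepsilon x^{k_2}), D_2)$ by the same $W_{\dR}^+$-vanishing argument as in Lemma \ref{imj} (the twist by a large enough $t$-power kills the map to $H^1(W_{\dR}^+/(t))$). Combining these with the perfect pairings from \cite[Prop.~2.3(2)]{BD} yields the claimed commutative diagram with perfect bottom pairing.

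The main obstacle is likely step (ii): ensuring that the push-forward $u'(D)$, which a priori is only a $(\varphi,\Gamma)$-module (not obviously de Rham on the full extension), really has the form $D_0$ rather than some twisted version. This requires a careful comparison of de Rham/semi-stable structures through the push-out, and specifically using the $N^2\ne 0$ hypothesis to pin down that the non-cristalline monodromy propagates correctly. The computations of the Hodge--Tate dimensions in (iii), while technical, are routine once one has the correct $W_{\dR}^+$-decomposition analogous to (\ref{filw}).
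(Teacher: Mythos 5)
This is precisely the paper's intended argument: the paper gives no proof beyond the sentence preceding the statement, which asks the reader to replace $D_1$ by the non-cristalline extension $D_2$ and apply ``des arguments similaires'', i.e.\ to dualize Lemma~\ref{lem1}, Proposition~\ref{lemfil}, Lemmas~\ref{HT1}--\ref{imj} and Proposition~\ref{D1fin} exactly as you do. Your write-up correctly identifies the required dual analogues and where each ingredient is used (modulo minor slips: $\R(\varepsilon x^{k_2-k_1})$ should read $\R(\varepsilon x^{k_1-k_2})$, and in (ii) it is the rank-two \emph{sub}object of $D$ --- not a quotient --- whose non-cristallinity, via \cite[Lem.~3.2]{Di2}, forces the non-splitness of the pushed-out subextension).
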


Par \cite[Lem. 3.29]{BD}, il existe $a\in E^{\times}$ et une repr\'esentation $\rho_1: \gp \ra \GL_2(E)$ tels que $D_1\cong D_{\rm rig}(\rho_1)\otimes_{\R} \R(\nr(a))$ (o\`u $D_{\rm rig}(\rho_1)$ est le $(\varphi,\Gamma)$-module sur $\R$ associ\'e \`a $\rho_1$). On note $\pi^{\an}(D_1) \=\pi^{\an}(\rho_1) \otimes \nr(a)\circ{\det}$ o\`u $\pi^{\an}(\rho_1)$ est la repr\'esentation localement analytique de $\GL_2(\Q_p)$ associ\'ee \`a $\rho_1$ via la correspondance de Langlands localement analytique pour $\GL_2(\Q_p)$ (\cite{Co2}, \cite{CD}). On suppose que $\rho_1$ admet un $\oE$-r\'eseau invariant dont la r\'eduction $\overline{\rho_1}$ satisfait \cite[(A.2)]{BD}. Par \cite[Prop. 3.30]{BD}, on a alors une bijection naturelle~:
\begin{equation}\label{pLL}
{\rm pLL}: \Ext^1_{(\varphi,\Gamma)}(D_1, D_1) \buildrel{\sim}\over\longrightarrow \Ext^1_{\GL_2(\Q_p)}(\pi^{\an}(D_1), \pi^{\an}(D_1)).
\end{equation}
Noter que la normalisation de la correspondance de Langlands localement analytique (ou $p$-adique) que l'on utilise ici est celle de \cite[\S~3.2.3]{BD} tordue par le caract\`ere $\varepsilon^{-1} \circ {\det}$. Explicitement, notons $\Hom(\Q_p^{\times}, E)$ les morphismes de groupes continus pour la structure additive sur $E$, et soit $0\neq \eta \in {\mathcal L}_{FM}(D_1: \R(\varepsilon^2 x^{k_1}))\subseteq \Hom(\Q_p^{\times}, E)$ (cf. \cite[Cor. 2.9]{BD} et noter que $D_1$ est d\'etermin\'e par $\varepsilon^2 x^{k_1}$, $\varepsilon x^{k_2}$ et $\eta$), alors on a $\pi^{\an}(D_1) \cong \pi(\nu_{1,2}, \eta) \otimes \varepsilon \circ {\det}$ o\`u $\nu_{1,2}\=(k_1, k_2)$ et la repr\'esentation $\pi(\nu_{1,2}, \eta)$ est la repr\'esentation localement analytique de longueur finie d\'efinie dans \cite[(3.26)]{BD} (not\'ee $\pi(\lambda,\psi)$ dans {\it loc.cit.}).

Si $V$, $W$ sont des repr\'esentations admissibles dans $\Rep(\GL_2(\Q_p))$ telles que le centre $Z({\mathfrak gl}_2)$ de $\text{U}({\mathfrak gl}_2)$ (resp. le centre $Z_{{\rm GL}_2}(\Q_p)$) agit sur $V$, $W$ par le m\^eme ca\-ract\`ere, on note $\Ext^1_{\inf}(W,V)$ (resp. $\Ext^1_Z(W,V)$, resp. $\Ext^1_{\inf,Z}(W,V)$) le sous-espace de $\Ext^1_{\GL_2(\Q_p)}(W,V)$ des extensions sur lesquelles $Z({\mathfrak gl}_2)$ (resp. $Z_{{\rm GL}_2}(\Q_p)$, resp. $Z({\mathfrak gl}_2)$ {\it et} $Z_{{\rm GL}_2}(\Q_p)$) agit par ce caract\`ere.

\begin{prop}\label{pLLht}
Avec les notations pr\'ec\'edentes, supposons de plus $\End_{\gp}(\overline{\rho_1})\cong k_E$, alors (\ref{pLL}) induit une bijection~:
\begin{equation}\label{htinf}
\Ext^1_{\HT}(D_1,D_1) \buildrel{\sim}\over\longrightarrow \Ext^1_{\inf}(\pi^{\an}(D_1),\pi^{\an}(D_1)).
\end{equation}
\end{prop}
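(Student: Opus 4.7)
L'id\'ee est d'identifier $\Ext^1_{\HT}(D_1,D_1)$ et $\Ext^1_{\inf}(\pi^{\an}(D_1),\pi^{\an}(D_1))$ comme noyaux d'applications ``de d\'eformation'' \`a valeurs dans $E^2$, et de montrer que la bijection ${\rm pLL}$ entrelace ces deux applications. D'un c\^ot\'e, par la preuve du Lemme \ref{HT1}, l'op\'erateur de Sen fournit une application $E$-lin\'eaire surjective $\sigma\colon \Ext^1_{(\varphi,\Gamma)}(D_1,D_1)\twoheadrightarrow E^2$ envoyant une d\'eformation $\widetilde{D}_1$ sur le couple $(d_1,d_2)$ tel que $(k_1+2+d_1\epsilon,k_2+1+d_2\epsilon)$ sont les poids de Sen de $\widetilde{D}_1$, et par d\'efinition $\Ext^1_{\HT}(D_1,D_1)=\ker(\sigma)$ (de dimension $3$). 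De l'autre c\^ot\'e, le centre $Z(\mathfrak{gl}_2)$ de $U(\mathfrak{gl}_2)$ agit sur $\pi^{\an}(D_1)$ par son caract\`ere infinit\'esimal $\chi$. Pour toute extension $\widetilde{\pi}\in \Ext^1_{\GL_2(\Qp)}(\pi^{\an}(D_1),\pi^{\an}(D_1))$, l'action de $Z(\mathfrak{gl}_2)$ sur $\widetilde{\pi}$ est telle que $z-\chi(z)$ est de carr\'e nul pour tout $z$, ce qui d\'efinit une $E$-d\'erivation $Z(\mathfrak{gl}_2)\rightarrow E_\chi$. Via l'isomorphisme de Harish-Chandra $Z(\mathfrak{gl}_2)\otimes_{\Qp}E\simeq E[a_1,a_2]^{S_2}$, et puisque les poids de Sen $(k_1+2,k_2+1)$ de $D_1$ sont distincts, l'espace de ces d\'erivations est de dimension $2$ sur $E$ et s'identifie canoniquement \`a $E^2$ via les tangents aux deux poids. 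On obtient ainsi une application $\tau\colon \Ext^1_{\GL_2(\Qp)}(\pi^{\an}(D_1),\pi^{\an}(D_1))\rightarrow E^2$ dont le noyau est par d\'efinition $\Ext^1_{\inf}(\pi^{\an}(D_1),\pi^{\an}(D_1))$.

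L'\'etape cruciale est de montrer que ${\rm pLL}$ entrelace $\sigma$ et $\tau$ (\`a isomorphisme canonique pr\`es des deux copies de $E^2$). Cela repose sur le r\'esultat, d\^u essentiellement \`a Colmez et Dospinescu (cf.\ \cite{Co2}, \cite{Do}, \cite{CD}), que pour la correspondance de Langlands locale $p$-adique de $\GL_2(\Qp)$, l'action de $Z(\mathfrak{gl}_2)$ sur $\pi^{\an}(\rho_1)$ est d\'etermin\'ee par les poids de Sen de $\rho_1$ via l'isomorphisme de Harish-Chandra, propri\'et\'e qui s'\'etend naturellement aux d\'eformations au premier ordre. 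Concr\`etement, la construction de ${\rm pLL}$ dans \cite[Prop.~3.30]{BD} \'etant fonctorielle au sens appropri\'e, une d\'eformation $\widetilde{D}_1$ avec poids de Sen d\'eform\'es $(k_1+2+d_1\epsilon,k_2+1+d_2\epsilon)$ doit \^etre envoy\'ee sur une $\widetilde{\pi}$ sur laquelle $Z(\mathfrak{gl}_2)$ agit avec caract\`ere infinit\'esimal d\'eform\'e correspondant via Harish-Chandra \`a la paire $(d_1,d_2)$. On \'etablit d'abord l'entrelacement pour $D_1$ cristallin (par un calcul explicite sur des s\'eries principales localement analytiques d\'eform\'ees, les deux c\^ot\'es \'etant alors contr\^ol\'es par les caract\`eres du tore), puis on l'\'etend au cas semi-stable non cristallin par un argument de sp\'ecialisation ou via la description triangulin\'ee de $D_1$ et les r\'esultats de \cite{Do} sur l'action du $D(\GL_2(\Qp),E)$-module $\pi^{\an}(D_1)^\vee$.

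L'entrelacement $\tau\circ {\rm pLL}=\sigma$ \'etant \'etabli, la surjectivit\'e de $\tau$ en d\'ecoule (puisque ${\rm pLL}$ est une bijection et $\sigma$ est surjective), ainsi que l'\'egalit\'e $\dim_E \Ext^1_{\inf}(\pi^{\an}(D_1),\pi^{\an}(D_1))=5-2=3=\dim_E \Ext^1_{\HT}(D_1,D_1)$, et finalement la restriction de ${\rm pLL}$ fournit la bijection $\Ext^1_{\HT}(D_1,D_1)\buildrel\sim\over\rightarrow \Ext^1_{\inf}(\pi^{\an}(D_1),\pi^{\an}(D_1))$ cherch\'ee. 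Le principal obstacle sera la v\'erification de l'entrelacement $\tau\circ{\rm pLL}=\sigma$ lui-m\^eme, qui demande de suivre en d\'etail la construction de ${\rm pLL}$ dans \cite{BD} et d'utiliser de mani\`ere cruciale les r\'esultats de \cite{Do} sur l'action de l'alg\`ebre enveloppante $U(\mathfrak{gl}_2)$ \`a travers la correspondance de Langlands localement analytique pour $\GL_2(\Qp)$.
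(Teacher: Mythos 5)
Your strategy --- realize both $\Ext^1$'s as kernels of two deformation-theoretic maps $\sigma$ (Sen weights) and $\tau$ (infinitesimal character via Harish-Chandra) landing in $E^2$, then show ${\rm pLL}$ intertwines them --- is a genuinely different route from the paper's, and conceptually cleaner. But you do not actually prove the key intertwining $\tau\circ{\rm pLL}=\sigma$; you yourself flag it as ``le principal obstacle'' and offer only a sketch (explicit computation in the crystalline case, then ``sp\'ecialisation''). That step is a real gap: ${\rm pLL}$ is constructed from the mod-$p$ correspondence and orthogonal deformations (cf.\ \cite[Prop.~3.30]{BD}), not in terms of Sen theory, and there is no reference in the literature giving the quantitative identification of the Sen map with the infinitesimal-character map through ${\rm pLL}$. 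What Dospinescu proves in \cite[Th.~3.3]{Do} is only the qualitative implication (Hodge-Tate $\Rightarrow$ existence of an infinitesimal character), not the finer matching of tangent vectors your argument requires.

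The paper's proof is engineered precisely to avoid needing that quantitative intertwining. It first handles the $\eta$-lisse case via \cite[Lem.~3.28]{BD}. In general, it passes to the fixed-central-character/determinant subspaces $\Ext^1_Z$: using $\End_{\gp}(\overline{\rho_1})\cong k_E$, the Colmez functor ${\bf V}$ is a bijection $\Ext^1_{Z}(\widehat{\pi}(\rho_1),\widehat{\pi}(\rho_1))\buildrel\sim\over\rightarrow \Ext^1_Z(\rho_1,\rho_1)$; then \cite[Th.~III.45]{CD} gives a surjection $\Pi_{\zeta\varepsilon^{-1}}(D(\widetilde{\rho}_1)_0)\twoheadrightarrow\widetilde{\pi}$, and \cite[Th.~3.3]{Do} gives the one-directional inclusion $\Ext^1_{{\rm HT},Z}\hookrightarrow \Ext^1_{\inf,Z}$. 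Combining this with the $g$-part (via (\ref{gHt}) and \cite[Lem.~3.28]{BD}) and the identity $\Ext^1_g(D_1,D_1)+\Ext^1_{{\rm HT},Z}(D_1,D_1)=\Ext^1_{\rm HT}(D_1,D_1)$, one obtains the injection $\Ext^1_{\HT}(D_1,D_1)\hookrightarrow\Ext^1_{\inf}(\pi^{\an}(D_1),\pi^{\an}(D_1))$, which is then a bijection by comparing dimensions (both are $3$, by Lemme \ref{HT1} and le (i) du Lemme \ref{inf3}). In short: the paper only needs the one-directional, qualitative Dospinescu result plus a dimension count, whereas your proposal would require a two-directional, quantitative statement that is not established.
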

\begin{proof}
Par (\ref{gHt}), la deuxi\`eme partie du (i) du Lemme \ref{inf3} (en tordant par $\varepsilon \circ {\det}$) et \cite[Lem. 3.25(2)]{BD}, on a si $\eta$ est lisse $\Ext^1_g(\pi^{\an}(D_1),\pi^{\an}(D_1))\cong \Ext^1_{\inf}(\pi^{\an}(D_1),\pi^{\an}(D_1))$. Si $\eta$ est lisse, par \cite[Lem. 3.11(3)]{BD} et le Lemme \ref{HT1} on a aussi $\Ext^1_g(D_1, D_1)\cong \Ext^1_{\HT}(D_1, D_1)$. La bijection en (\ref{htinf}) d\'ecoule alors de \cite[Lem. 3.28]{BD} dans ce cas.\\
Soit $\pi(\overline{\rho_1})$ la rep\'esentation de $\GL_2(\Q_p)$ sur $k_E$ associ\'ee \`a $\overline{\rho_1}$ via la correspondance de Langlands modulo $p$ (normalis\'ee de sorte que le caract\`ere central de $\pi(\overline{\rho_1})$ est $\overline{\varepsilon}^{-1} {\det}(\overline{\rho}_1)$), et $\widehat{\pi}(\rho_1)$ la rep\'esentation de Banach unitaire de $\GL_2(\Q_p)$ sur $E$ associ\'ee \`a $\rho_1$ via la correspondance de Langlands $p$-adique. On utilise les notations de \cite[\S~A.2]{BD}. Comme $\End_{\gp}(\overline{\rho_1})\cong k_E$ (le corps r\'esiduel de $E$), on a comme en \cite[(A.7)]{BD} un isomorphisme ${\rm Def}_{\pi(\overline{\rho_1}), {\rm ortho}} \xrightarrow{\sim} {\rm Def}_{\overline{\rho_1}}$ (noter que notre $\pi(\overline{\rho}_1)$ est celui de \emph{loc.cit.} tordu par $\overline{\varepsilon}^{-1} \circ {\det}$, ce qui est sans cons\'equence). Soit $\zeta\={\det}(\rho_1)$, alors $\zeta\varepsilon^{-1}$ est le caract\`ere central de $\widehat{\pi}(\rho_1)$. Comme en \cite[(A.9)]{BD} (on utilise ici aussi $\End_{\gp}(\overline{\rho_1})\cong k_E$), on a un isomorphisme ${\rm Def}_{\pi(\overline{\rho_1}), {\rm ortho}}^{\zeta\varepsilon^{-1}} \cong {\rm Def}_{\overline{\rho_1}}^{\zeta}$ o\`u $ {\rm Def}_{\pi(\overline{\rho_1}), {\rm ortho}}^{\zeta\varepsilon^{-1}} $ (resp. $ {\rm Def}_{\overline{\rho_1}}^{\zeta}$) d\'esigne le sous-foncteur de ${\rm Def}_{\pi(\overline{\rho_1}), {\rm ortho}}$ (resp. de ${\rm Def}_{\overline{\rho_1}}$) des d\'eformations avec caract\`ere central \'egal \`a $\zeta \varepsilon^{-1}$ (resp. avec d\'eterminant \'egal \`a $\zeta$). Par le m\^eme argument que dans la preuve de \cite[Cor. A.2]{BD}, on voit que le foncteur de Colmez $\bf V$ induit une bijection~:
\begin{equation}
{\bf V}: \Ext^1_{Z}(\widehat{\pi}(\rho_1), \widehat{\pi}(\rho_1)) \buildrel{\sim}\over\longrightarrow \Ext^1_Z(\rho_1, \rho_1)
\end{equation}
o\`u $\Ext^1_Z(\rho_1, \rho_1)$ d\'esigne les d\'eformations de $\rho_1$ sur $E[\epsilon]/(\epsilon^2)$ de d\'eterminant $\zeta$. Par la preuve de \cite[Prop. 3.30]{BD}, on en d\'eduit facilement que (\ref{pLL}) induit un isomorphisme~:
\begin{equation}\label{pLLz}
\Ext^1_{Z}(D_1, D_1)\buildrel{\sim}\over\longrightarrow \Ext^1_{\GL_2(\Q_p),Z}(\pi^{\an}(D_1), \pi^{\an}(D_1)).
\end{equation}
Soit $\widetilde{\rho}_1\in \Ext^1_Z(\rho_1,\rho_1)$, et $\widetilde{\pi}$ l'image r\'eciproque de $\widetilde{\rho}_1$ via $\bf V$. Par \cite[Th.~III.45]{CD}, on a un morphisme \'equivariant sous l'action de $\GL_2(\Q_p)$~:
\begin{equation}\label{pLLc}
\Pi_{\zeta \varepsilon^{-1}}(D(\widetilde{\rho}_1)_0) \longrightarrow \widetilde{\pi}/\widetilde{\pi}^{{\rm SL}_2(\Q_p)}
\end{equation}
dont les noyau et conoyau sont de dimension finie sur $E$, o\`u $D(\widetilde{\rho}_1)_0$ est le $(\varphi,\Gamma)$-module continu de $\widetilde{\rho}_1$ et le foncteur $\Pi_{\zeta \varepsilon^{-1}}$ est comme dans \emph{loc.cit.} Comme $\widetilde{\pi}$ est isomorphe \`a une extension de $\widehat{\pi}(\rho_1)$ par $\widehat{\pi}(\rho_1)$, on voit (par la structure de $\pi^{\an}(D_1)$) que $\widetilde{\pi}^{{\rm SL}_2(\Q_p)}=0$ et que $\widetilde{\pi}$ n'a pas de quotient de dimension finie \'equivariant sous l'action de $\GL_2(\Q_p)$. Donc (\ref{pLLc}) est une application surjective $\Pi_{\zeta \varepsilon^{-1}}(D(\widetilde{\rho}_1)_0) \twoheadrightarrow \widetilde{\pi}$. Par \cite[Th.~3.3]{Do} (qui fait l'hypoth\`ese que $\rho_1$ est absolument irr\'eductible, mais le m\^eme argument s'applique au cas o\`u $\End_{\Gal_{\Q_p}}(\rho_1)\cong E$), si $\widetilde{\rho}_1$ est de Hodge-Tate alors $\Pi_{\zeta \varepsilon^{-1}}(D(\widetilde{\rho}_1)_0)^{\an}$ a un caract\`ere infinit\'esimal, donc $\widetilde{\pi}^{\an}$ aussi par ce qui pr\'ec\`ede. On en d\'eduit que l'inverse de $\bf V$ induit une injection (o\`u $\pi^{\an}(\rho_1)=\widehat{\pi}(\rho_1)^{\an}$)~:
\begin{equation}
\Ext^1_{{\rm HT},Z}(\rho_1, \rho_1) \hookrightarrow \Ext^1_{\inf,Z}(\pi^{\an}(\rho_1), \pi^{\an}(\rho_1)).
\end{equation}
Donc (\ref{pLLz}) induit une injection~:
\begin{equation}\label{htz}
\Ext^1_{{\rm HT},Z} (D_1, D_1)\hookrightarrow \Ext^1_{\inf,Z}(\pi^{\an}(D_1),\pi^{\an}(D_1)).
\end{equation}
En outre, il est clair que $\Ext^1_g(D_1,D_1)$ n'est pas contenu dans $\Ext^1_Z(D_1,D_1)$ (cf. \cite[Th.~2.7~\&~Lem. 3.11(2)]{BD}). Par \cite[Lem. 3.11(3)]{BD} et le Lemme \ref{HT1}, on en d\'eduit~: 
\begin{equation*}
\Ext^1_g(D_1,D_1)+\Ext^1_{{\rm HT},Z}(D_1,D_1)=\Ext^1_{\rm HT}(D_1,D_1). 
\end{equation*}
L'injection (\ref{htz}) combin\'ee avec l'injection (\ref{gHt}) ci-dessous et le premier isomorphisme de \cite[Lem. 3.28]{BD} induisent alors une injection~:
\begin{equation}\label{hty}
\Ext^1_{\HT} (D_1, D_1)\hookrightarrow \Ext^1_{\inf}(\pi^{\an}(D_1),\pi^{\an}(D_1)).
\end{equation}
En comparant les dimensions (par le Lemme \ref{HT1} et le (i) du Lemme \ref{inf3}), on voit que (\ref{hty}) est une bijection.
\end{proof}

On note $\St_2(\nu_{1,2})\=L(\nu_{1,2})\otimes_E\St_2$ o\`u $L(\nu_{1,2})$ est la repr\'esentation alg\'ebrique de $\GL_2(\Q_p)$ sur $E$ de plus haut poids $\nu_{1,2}$ (par rapport au Borel sup\'erieur de $\GL_2(\Q_p)$), $\nu\=(k_1,k_2,k_3)=-\lambda$ et $v_{P_i^-}^{\infty}(\nu)\=L(\nu)\otimes_Ev_{P_i^-}^{\infty}$ o\`u $v_{P_i^-}^{\infty}\=(\Ind_{P_i^-(\Qp)}^{{\rm GL}_3(\Qp)}1)^\infty/1$ pour $i\in \{1,2\}$. Par \cite[(3.90)]{BD} on a une suite de morphismes~:
\begin{multline}\label{hL}
\Ext^1_{(\varphi,\Gamma)}(D_1, D_1) \buildrel{\sim}\over\longrightarrow \Ext^1_{\GL_2(\Q_p)}(\pi(\nu_{1,2},\eta),\pi(\nu_{1,2}, \eta))\\ \twoheadrightarrow \Ext^1_{\GL_2(\Q_p)}(\St_2(\nu_{1,2}), \pi(\nu_{1,2}, \eta)) \buildrel{\iota}\over\longrightarrow \Ext^1_{\GL_3(\Q_p)}(v_{P_2^-}^{\infty}(\nu), \Pi^1(\nu, \eta)^+)
\end{multline}
o\`u la premi\`ere application est (\ref{pLL}) (en tordant par $\varepsilon^{-1}\circ {\det}$ \`a droite), o\`u $\Pi^1(\nu, \eta)^+$ est la repr\'esentation d\'efinie au d\'ebut de \cite[\S~3.3.4]{BD} et o\`u l'on renvoie \`a \cite[(3.87)~\&~Lem.~3.42(1)]{BD} pour l'application $\iota$.

\begin{prop}\label{th1}
Supposons que $\eta$ ne soit pas lisse, alors l'accouplement parfait dans \cite[Th.~3.45]{BD} induit un diagramme commutatif d'accouplements parfaits (voir \cite[Rem. 3.41]{BD} pour $\Pi^1(\nu, \eta)$):
\begin{equation}\label{htaa}
\begin{CD}
\Ext^1_{(\varphi,\Gamma)}(\R(x^{k_3}), D_1) \ @. \times @. \ \ \Ext^1_{\GL_3(\Q_p)}(v_{P_2^-}^{\infty}(\nu), \Pi^1(\nu, \eta)^+) @> \cup>> E \\
@V u VV @. @AAA @| \\
\Ext^1_{(\varphi,\Gamma)}(\R(x^{k_2+1}), D_1) \ \ @. \times @. \ \Ext^1_{\GL_3(\Q_p)}(v_{P_2^-}^{\infty}(\nu), \Pi^1(\nu, \eta))@> \cup >> E.
 \end{CD}
\end{equation}
\end{prop}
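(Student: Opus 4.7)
Le plan est de d\'eduire cette proposition de l'accouplement parfait d\'ej\`a \'etabli dans \cite[Th.~3.45]{BD} (qui est pr\'ecis\'ement l'accouplement du bas de \eqref{htaa}) en combinant le diagramme commutatif d'accouplements \eqref{ht00} de la Proposition \ref{prop}(ii), la bijection entre extensions Hodge-Tate c\^ot\'e $(\varphi,\Gamma)$ et extensions \`a caract\`ere infinit\'esimal c\^ot\'e $\GL_2(\Q_p)$ (Proposition \ref{pLLht}), et la suite de morphismes \eqref{hL} qui transporte l'information c\^ot\'e $\GL_2(\Q_p)$ vers c\^ot\'e $\GL_3(\Q_p)$ via l'induction parabolique $\iota$.

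Je commencerais par identifier la fl\`eche verticale mont\'ee \`a droite de \eqref{htaa}. Elle proviendra d'un morphisme $\GL_3(\Q_p)$-\'equivariant naturel $\Pi^1(\nu,\eta) \hookrightarrow \Pi^1(\nu,\eta)^+$, qui par fonctorialit\'e de $\Ext^1$ induit l'application $\Ext^1_{\GL_3(\Q_p)}(v_{P_2^-}^{\infty}(\nu), \Pi^1(\nu,\eta)) \to \Ext^1_{\GL_3(\Q_p)}(v_{P_2^-}^{\infty}(\nu), \Pi^1(\nu,\eta)^+)$. La commutativit\'e du diagramme d'accouplements d\'ecoulera alors de la fonctorialit\'e du cup-produit, en tenant compte du fait que l'application $u$ est induite par l'inclusion $\R(x^{k_2+1}) \hookrightarrow \R(x^{k_3})$ donn\'ee par la multiplication par $t^{k_2+1-k_3}$.

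Pour la perfectness de l'accouplement du haut, je proc\'ederais par comptage de dimensions. L'espace $\Ext^1_{(\varphi,\Gamma)}(\R(x^{k_3}), D_1)$ est de dimension $3$ par \cite[Lem.~2.2]{BD}, et $\dim_E \Ext^1_{\HT}(D_1,D_1) = 3$ par le Lemme \ref{HT1}. La Proposition \ref{pLLht} fournit une bijection $\Ext^1_{\HT}(D_1,D_1) \simeq \Ext^1_{\inf}(\pi^{\an}(D_1),\pi^{\an}(D_1))$, l'hypoth\`ese $\eta$ non lisse garantissant ici que $D_1$ n'est pas cristallin, ce qui permet d'appliquer le (ii) du Lemme \ref{HTg} et d'\'eviter la situation d\'eg\'en\'er\'ee de la Remarque \ref{noncris}. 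La compos\'ee $\iota \circ$ \eqref{hL} envoie $\Ext^1_{\inf}(\pi^{\an}(D_1),\pi^{\an}(D_1))$ dans $\Ext^1_{\GL_3(\Q_p)}(v_{P_2^-}^{\infty}(\nu), \Pi^1(\nu,\eta)^+)$ ; je v\'erifierais que cette image est exactement l'espace d'arriv\'ee tout entier, de dimension $3$. La non-d\'eg\'en\'erescence r\'esultera de la perfectness du haut de \eqref{ht00} : un \'el\'ement non nul de $\ker(u)$ s'accouple non trivialement avec un vecteur de $\Ext^1_{\GL_3(\Q_p)}(v_{P_2^-}^{\infty}(\nu), \Pi^1(\nu,\eta)^+)$ qui ne provient pas de $\Ext^1_{\GL_3(\Q_p)}(v_{P_2^-}^{\infty}(\nu), \Pi^1(\nu,\eta))$, via son image dans $\Ext^1_{\HT}(D_1,D_1)$.

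La principale difficult\'e sera d'\'etablir la correspondance pr\'ecise entre $\Pi^1(\nu,\eta)^+$ c\^ot\'e $\GL_3(\Q_p)$ et le sous-espace Hodge-Tate de $\Ext^1_{(\varphi,\Gamma)}(D_1,D_1)$ c\^ot\'e $(\varphi,\Gamma)$, c'est-\`a-dire de contr\^oler le comportement du foncteur d'induction parabolique $\iota$ apparaissant en \eqref{hL} sur ces sous-espaces et de v\'erifier que la compos\'ee avec la Proposition \ref{pLLht} respecte bien les filtrations. L'hypoth\`ese que $\eta$ n'est pas lisse joue un r\^ole essentiel : elle assure que $\kappa$ est surjectif dans \eqref{ht00} (via le (ii) du Lemme \ref{HTg}), ce qui permet d'identifier l'accouplement du haut de \eqref{htaa} avec celui du haut de \eqref{ht00} \`a travers la traduction $(\varphi,\Gamma) \to \GL_2(\Q_p) \to \GL_3(\Q_p)$.
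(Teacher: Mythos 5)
Votre strat\'egie globale co\"\i ncide avec celle du texte~: partir de l'accouplement parfait de \cite[Th.~3.45]{BD}, utiliser la Proposition \ref{D1fin} (= \eqref{ht00}) pour passer du c\^ot\'e $(\varphi,\Gamma)$ au sous-espace $\Ext^1_{\HT}$, transporter via la Proposition \ref{pLLht} vers $\Ext^1_{\inf}$ c\^ot\'e $\GL_2(\Q_p)$, puis utiliser l'induction parabolique \eqref{hL} pour atteindre le c\^ot\'e $\GL_3(\Q_p)$, tout en identifiant correctement la fl\`eche verticale droite comme induite par $\Pi^1(\nu,\eta) \hookrightarrow \Pi^1(\nu,\eta)^+$ et en notant le r\^ole de l'hypoth\`ese $\eta$ non lisse pour assurer que $D_1$ n'est pas cristallin (Lemme \ref{HTg}(ii)).

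Cependant, votre proposition contient un v\'eritable trou. Vous signalez la difficult\'e~--- \emph{\`a savoir} montrer que le passage du sous-espace $\Ext^1_{\inf}$ c\^ot\'e $\GL_2$ correspond exactement, \`a travers $\iota$, au sous-espace $\Ext^1_{\GL_3(\Q_p)}(v_{P_2^-}^{\infty}(\nu), \Pi^1(\nu,\eta))$ de $\Ext^1_{\GL_3(\Q_p)}(v_{P_2^-}^{\infty}(\nu), \Pi^1(\nu,\eta)^+)$~--- mais vous la traitez comme une v\'erification de routine (\og~contr\^oler le comportement du foncteur d'induction parabolique~\fg). Or c'est pr\'ecis\'ement l'\'enonc\'e de la Proposition \ref{split}~:
\[
\Ext^1_{\inf}(\St_2(\nu_{1,2}), \pi(\nu_{1,2}, \eta)) \buildrel{\sim}\over\longrightarrow \Ext^1_{\GL_3(\Q_p)}(v_{P_2^-}^{\infty}(\nu), \Pi^1(\nu, \eta)),
\]
dont la preuve occupe tout l'appendice et repose sur des calculs d'invariants dans des modules de Verma g\'en\'eralis\'es pour ${\mathfrak gl}_3$ (Lemme \ref{ninv2}, Lemme \ref{key1}, Lemme \ref{key2}) et l'\'etude fine des repr\'esentations $I_{P_1^-}^{\GL_3}(-)$ d'Emerton. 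Sans invoquer ce r\'esultat (ou le red\'emontrer), la chasse au diagramme que vous esquissez ne ferme pas~: vous savez que la source a dimension $3$ et que le but a dimension $3$, mais cela ne dit pas que l'image de $\Ext^1_{\HT}$ tombe bien dans $\Ext^1_{\GL_3(\Q_p)}(v_{P_2^-}^{\infty}(\nu), \Pi^1(\nu,\eta))$, ni que la fl\`eche obtenue est surjective (le but $\Ext^1_{\GL_3(\Q_p)}(v_{P_2^-}^{\infty}(\nu), \Pi^1(\nu,\eta))$ est de dimension $2$, pas $3$, et c'est cette \'egalit\'e de dimensions avec $\Ext^1_{\HT}(D_1,\R(\varepsilon x^{k_2}))$ qui conclut la preuve).
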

\begin{proof}
Par la phrase qui suit \cite[(3.90)]{BD}, la compos\'ee en (\ref{hL}) se factorise \`a travers un isomorphisme~:
\begin{equation}\label{hL0}
\Ext^1_{(\varphi,\Gamma)}(D_1, \R(x^{k_2} \varepsilon)) \buildrel{\sim}\over\longrightarrow \Ext^1_{\GL_3(\Q_p)}\Ext^1_{\GL_3(\Q_p)}(v_{P_2^-}^{\infty}(\nu), \Pi^1(\nu, \eta)^+).
\end{equation}
Par la Proposition \ref{pLLht}, la surjection en (\ref{exb}) et la Proposition \ref{split}, (\ref{hL}) induit~:
\begin{multline}\label{hL2}
\Ext^1_{\HT}(D_1,D_1) \buildrel{\sim}\over\longrightarrow \Ext^1_{\inf}(\pi(\nu_{1,2},\eta),\pi(\nu_{1,2}, \eta))\\ \twoheadrightarrow \Ext^1_{\inf}(\St_2(\nu_{1,2}), \pi(\nu_{1,2}, \eta)) \buildrel{\iota\atop\sim}\over\longrightarrow \Ext^1_{\GL_3(\Q_p)}(v_{P_2^-}^{\infty}(\nu), \Pi^1(\nu, \eta)).
\end{multline}
Comme (\ref{hL}) se factorise \`a travers (\ref{hL0}), on voit que (\ref{hL2}) induit une surjection (via le (ii) du Lemme \ref{HTg})~:
\begin{equation*}
\Ext^1_{\HT}(D_1, \R(\varepsilon x^{k_2})) \twoheadrightarrow \Ext^1_{\GL_3(\Q_p)}(v_{P_2^-}^{\infty}(\nu), \Pi^1(\nu, \eta)),
\end{equation*}
qui est un fait un isomorphisme en comparant les dimensions (voir le Lemme 5.4.5 et la preuve de \cite[Prop. 3.49]{BD}). Par construction, on a donc un diagramme commutatif~:
\begin{equation*}
\begin{CD}
\Ext^1_{\HT}(D_1, \R(\varepsilon x^{k_2})) @>\sim >> \Ext^1_{\GL_3(\Q_p)}(v_{P_2^-}^{\infty}(\nu), \Pi^1(\nu, \eta)) \\
@VVV @VVV \\
\Ext^1_{(\varphi,\Gamma)}(D_1, \R(\varepsilon x^{k_2})) @>\sim >> \Ext^1_{\GL_3(\Q_p)}(v_{P_2^-}^{\infty}(\nu), \Pi^1(\nu, \eta)^+).
\end{CD}
\end{equation*}
La proposition en d\'ecoule avec la Proposition \ref{D1fin} et \cite[Th.~3.45]{BD}.
\end{proof}

Par le m\^eme argument en utilisant la Proposition \ref{ht02} et un analogue sym\'etrique de la Proposition \ref{split}, on obtient la proposition suivante.

\begin{prop}\label{th2}
Soit $0\neq \eta' \in {\mathcal L}_{FM}(D_2: \R(\varepsilon x^{k_2}))\subseteq \Hom(\Q_p^{\times}, E)$ (cf. \cite[Cor. 2.9]{BD}), alors l'accouplement parfait dans \cite[Th.~3.50]{BD} induit un diagramme commutatif d'accouplements parfaits~:
\begin{equation}\label{htbb}
\begin{CD}
\Ext^1_{(\varphi,\Gamma)}(D_2, \R(\varepsilon^2x^{k_1})) \ @. \times @. \ \ \Ext^1_{\GL_3(\Q_p)}(v_{P_1^-}^{\infty}(\nu), \Pi^2(\nu, \eta')^+) @> \cup>> E \\
@V u' VV @. @AAA @| \\
\Ext^1_{(\varphi,\Gamma)}(D_2, \R(\varepsilon^2 x^{k_2-1})) \ \ @. \times @. \ \Ext^1_{\GL_3(\Q_p)}(v_{P_1^-}^{\infty}(\nu), \Pi^2(\nu, \eta'))@> \cup >> E.
\end{CD}
\end{equation}
\end{prop}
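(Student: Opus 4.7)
The plan is to mirror the proof of Proposition \ref{th1}, exchanging the roles of $D_1$ and $D_2$, and replacing the parabolic $P_2^-$ by $P_1^-$ throughout. First I would set up the analogue of the sequence of morphisms (\ref{hL}). Under the genericity hypotheses (analogous to those on $\overline{\rho_1}$ before the statement of Proposition \ref{pLLht}) on the semi-stable non-crystalline rank $2$ $(\varphi,\Gamma)$-module $D_2\simeq\begin{xy}(-60,0)*+{\R(\varepsilon x^{k_2})}="a";(-36,0)*+{\R(x^{k_3})}="b";{\ar@{-}"a";"b"}\end{xy}$, one obtains a $\mathrm{pLL}$-type bijection
$$\Ext^1_{(\varphi,\Gamma)}(D_2,D_2)\xrightarrow{\sim}\Ext^1_{\GL_2(\Q_p)}(\pi^{\an}(D_2),\pi^{\an}(D_2))$$
by exactly the same argument as in \cite[Prop.~3.30]{BD}. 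Composing with the analogue of \cite[(3.90)]{BD} for the other parabolic $P_1^-$ yields a surjection onto $\Ext^1_{\GL_3(\Q_p)}(v_{P_1^-}^{\infty}(\nu),\Pi^2(\nu,\eta')^+)$ and, as in the first lines of the proof of Proposition \ref{th1}, this composition factors through an isomorphism
$$\Ext^1_{(\varphi,\Gamma)}(D_2,\R(\varepsilon^2 x^{k_1}))\xrightarrow{\sim}\Ext^1_{\GL_3(\Q_p)}(v_{P_1^-}^{\infty}(\nu),\Pi^2(\nu,\eta')^+),$$
which provides the top horizontal isomorphism in (\ref{htbb}).

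Next I would establish the symmetric analogue of Proposition \ref{pLLht}, namely a bijection
$$\Ext^1_{\HT}(D_2,D_2)\xrightarrow{\sim}\Ext^1_{\inf}(\pi^{\an}(D_2),\pi^{\an}(D_2)),$$
whose proof is formally identical (using the Colmez functor $\mathbf{V}$ and the results of Dospinescu on infinitesimal characters invoked in the proof of Proposition \ref{pLLht}, together with an analogue of Lemma \ref{HT1} for $D_2$ which follows from the same Sen-weight argument). The next ingredient is the symmetric analogue of Proposition \ref{split}, whose proof is relegated to the appendix; it should be obtained by the same Lie-algebra computations applied to the opposite parabolic $P_1^-$ and the representation $\pi(\nu_{2,3},\eta')$ in place of $\pi(\nu_{1,2},\eta)$. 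This analogue gives the surjectivity and exactness properties needed to pass from $\Ext^1_{\inf}(\pi^{\an}(D_2),\pi^{\an}(D_2))$ down to $\Ext^1_{\GL_3(\Q_p)}(v_{P_1^-}^{\infty}(\nu),\Pi^2(\nu,\eta'))$.

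Combining these ingredients as in the second half of the proof of Proposition \ref{th1}, and using the analogue of part (ii) of Lemma \ref{HTg} for $D_2$, one obtains a commutative diagram of isomorphisms
$$\begin{CD}
\Ext^1_{\HT}(D_2,\R(\varepsilon^2 x^{k_2-1})) @>\sim>> \Ext^1_{\GL_3(\Q_p)}(v_{P_1^-}^{\infty}(\nu),\Pi^2(\nu,\eta'))\\
@VVV @VVV\\
\Ext^1_{(\varphi,\Gamma)}(D_2,\R(\varepsilon^2 x^{k_2-1})) @>\sim>> \Ext^1_{\GL_3(\Q_p)}(v_{P_1^-}^{\infty}(\nu),\Pi^2(\nu,\eta')^+),
\end{CD}$$
where the dimension count matching the two sides uses the symmetric analogue of the computation in the proof of Proposition \ref{th1} (and corresponding statements in \cite{BD}). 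The perfect pairing at the bottom of (\ref{htbb}) then follows from this commutative diagram combined with the perfect pairing of Proposition \ref{ht02}(iii) and the perfect pairing of \cite[Th.~3.50]{BD}, exactly as Proposition \ref{th1} was deduced from Proposition \ref{D1fin} and \cite[Th.~3.45]{BD}; the perfectness of the top pairing then follows by a diagram chase using the surjectivity of $u'$ in Proposition \ref{ht02}(i).

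The main obstacle in this strategy is verifying that the symmetric analogue of Proposition \ref{split} actually holds with the precise formulation needed; because Proposition \ref{split} already requires a delicate $\GL_2(\Q_p)$-argument together with the construction of extensions admitting an infinitesimal character, the corresponding statement for $P_1^-$ and $\pi(\nu_{2,3},\eta')$ needs to be checked in detail. Once this is in hand, the remainder is a formal transposition of the proof of Proposition \ref{th1}.
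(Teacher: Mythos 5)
Your proposal matches the paper's approach: the paper's own proof is a single sentence stating that Proposition~\ref{th2} follows "par le même argument en utilisant la Proposition~\ref{ht02} et un analogue symétrique de la Proposition~\ref{split}." You have correctly unpacked what that means — the symmetric analogues of the pLL bijection, of Proposition~\ref{pLLht}, and of Proposition~\ref{split} (with $P_1^-$ and $\pi(\nu_{2,3},\eta')$ in place of $P_2^-$ and $\pi(\nu_{1,2},\eta)$), combined via Proposition~\ref{ht02} and the pairing of \cite[Th.~3.50]{BD} — and you are right to flag the symmetric analogue of Proposition~\ref{split} as the one step the paper asserts without a detailed verification.
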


On peut enfin montrer le (i) du Th\'eor\`eme \ref{gl3glob}. Quitte \`a tordre $\rho_p$ et les repr\'esentations de ${\rm GL}_3(\Qp)$ par $\nr(c^{-1})$, on peut supposer $c=1$. Soit $D_{\rm rig}(\rho_p)$ le $(\varphi,\Gamma)$-module de $\rho_p$ sur $\R$ et $D_1$ (resp. $D_2$) l'unique sous-$(\varphi,\Gamma)$-module (resp. l'unique quotient) de $D_{\rm rig}(\rho_p)$ de rang $2$, donc $E[D_{\rm rig}(\rho_p)]\subseteq \Ext^1_{(\varphi,\Gamma)}(\R(x^{k_3}),D_1) $ et aussi $E[D_{\rm rig}(\rho_p)]\subseteq \Ext^1_{(\varphi,\Gamma)}(D_2, \R(\varepsilon^2x^{k_1}))$. Notons $\pi^{e_1-e_2}(\rho_p)^-$ l'unique extension de $v_{P_2^-}^{\infty}(\nu)$ par $\Pi^1(\nu, \eta)$ (avec les notations pr\'ec\'edentes) telle que $E[\pi^{e_1-e_2}(\rho_p)^-]\subseteq \Ext^1_{\GL_3(\Q_p)}(v_{P_2^-}^{\infty}(\nu), \Pi^1(\nu, \eta))$ est l'orthogonal de $E[u(D_{\rm rig}(\rho_p))]$ via l'accouplement parfait du bas de (\ref{htaa}) (o\`u les espaces ont dimension $2$). Donc la repr\'esentation $\pi^{e_1-e_2}(\rho_p)^-$ est unique \`a isomorphisme pr\`es, et elle d\'etermine et ne d\'epend que de $u(D_{\rm rig}(\rho_p))$. On d\'efinit $\pi^{e_2-e_3}(\rho_p)^-$ de mani\`ere analogue en utilisant $D_2$ et l'accouplement parfait du bas de (\ref{htbb}). La repr\'esentation $\pi^{e_2-e_3}(\rho_p)^-$ d\'etermine et ne d\'epend que de $u'(D_{\rm rig}(\rho_p))$. Il suit facilement de la Proposition \ref{lemfil}, du (ii) de la Proposition \ref{ht02}, du (ii) de la Proposition \ref{lienfilmax} et de la fonctorialit\'e dans \cite[Th.~A]{Be2} que la repr\'esentation $\pi^{\alpha}(\rho_p)^-$ pour $\alpha\in S$ d\'etermine et ne d\'epend que $h_1$, $h_2$, $h_3$, $D$ et $\Fil^{\max}_{\alpha}(\rho_p)$ (noter que $u(D_{\rm rig}(\rho_p))^\vee\otimes_{\R}\R(\varepsilon^3x^{k_1+k_2+k_3})$ est isomorphe au $(\varphi,\Gamma)$-module qui correspond via \cite[Th.~A]{Be2} \`a $\wedge^2_ED$ muni de la filtration \`a un cran ${\Fil}^{-h_1-h_2}(\wedge^2_ED)=\wedge^2_ED$, ${\Fil}^{-h_2-h_3}(\wedge^2_ED)={\Fil}^{-h_2}(D)\wedge {\Fil}^{-h_3}(D)=\Fil^{\max}_{e_1-e_2}(\rho_p)$). Par la fin de \cite[\S ~3.3.4]{BD} et \cite[Rem. 4.6.3]{Br1}, la repr\'esentation $\pi^{\alpha}(\rho_p)$ contient $\pi^{\alpha}(\rho_p)^-$ et ces deux repr\'esentations se d\'eterminent l'une l'autre. Le (i) du Th\'eor\`eme \ref{gl3glob} en d\'ecoule.

\section{Appendice}

Le but de cet appendice est de montrer la Proposition \ref{split} ci-dessous, essentielle dans la preuve du (i) du Th\'eor\`eme \ref{gl3glob}.

\subsection{Calculs de Lie}

On commence par des calculs techniques d'invariants sous l'unipotent dans des modules de Verma g\'en\'eralis\'es pour ${\mathfrak gl}_3$. 

On a besoin de plusieurs notations~: $\fx\=\smat{0 & 1 \\ 0 & 0}$, $\fy\=\smat{0 & 0 \\ 1 & 0}$, $\fh\=\smat{1 & 0 \\ 0 & -1}$, $\fz\=\smat{1 & 0 \\ 0 & 1}$ et $\fc\=\fh^2-2\fh+4\fx \fy=\fh^2+2\fh+4\fy\fx\in \text{U}({\mathfrak gl}_2)$ l'\'el\'ement de Casimir. Rappelons que le centre $Z({\mathfrak gl}_2)$ de $\text{U}({\mathfrak gl}_2)$ est isomorphe \`a la $E$-alg\`ebre polynomiale $E[\fc,\fz]$. On note d\'esormais $P_1\=P_{e_1-e_2}=\smat{\GL_2 & * \\ 0& \GL_1 \\}\subset \GL_3$, $L_1\=L_{P_1}$, $\fp_1$ (resp. $\fl_1$) la $\Qp$-alg\`ebre de Lie de $P_1(\Qp)$ (resp. $L_1(\Qp)$), $N_1=N_{P_1}$, $\fn_1$ la $\Qp$-alg\`ebre de Lie de $N_1(\Qp)$, et on note avec un $-$ en exposant les oppos\'es~: $P_1^-$, ${\fp}_1^-$, ${\fn}^-_1$... On note aussi~:
$$\begin{array}{cccc}
\fx_1\=\smat{0 & 1 & 0 \\ 0 & 0 & 0\\ 0 & 0 & 0} & \fx_2\=\smat{0 & 0 & 0 \\ 0 & 0 & 1 \\ 0 & 0 & 0} & \fx_3\=\smat{0 & 0 & 1 \\ 0 & 0 & 0 \\ 0 & 0 & 0} & \fy_1\=\smat{0 & 0 & 0 \\ 1 & 0 & 0\\ 0 & 0 & 0}\\
\fy_2\=\smat{0 & 0 & 0 \\ 0 & 0 & 0 \\ 0 & 1 & 0} & \fy_3\=\smat{0 & 0 & 0 \\ 0 & 0 & 0 \\ 1 & 0 & 0} & \fh_1\=\smat{1 & 0 & 0 \\ 0 & -1 & 0 \\ 0 & 0 & 0} & \fh_2=\smat{0 & 0 &0 \\ 0 & 1 & 0 \\ 0 & 0 & -1}.
\end{array}$$
Nous utiliserons les relations de commutations suivantes entre ces \'el\'ements~:
\begin{equation}\label{form}
\begin{cases} \fx_2 \fy_2^{r_2}=&\fy_2^{r_2}\fx_2-r_2\fy_2^{r_2-1}((r_2-1)-\fh_2)\\
\fx_2 \fy_3^{r_3}=& \fy_3^{r_3}\fx_2+r_3\fy_3^{r_3-1} \fy_1\\
\fx_3 \fy_2^{r_2}=& \fy_2^{r_2}\fx_3+r_2 \fy_2^{r_2-1} \fx_1\\
\fx_3 \fy_3^{r_3}=&\fy_3^{r_3} \fx_3-r_3 \fy_3^{r_3-1}((r_3-1)-(\fh_1+\fh_2))\\
\fh_2 \fy_3^{r_3}=& \fy_3^{r_3} \fh_2+(-r_3) \fy_3^{r_3}\\
\fx_1 \fy_3^{r_3}=& \fy_3^{r_3} \fx_1-r_3 \fy_2\fy_3^{r_3-1}.
\end{cases}
\end{equation}

On fixe deux entiers $k_1, k_2\in \Z$ tels que $k_1\geq k_2\geq 0$ et $M$ un $\text{U}({\mathfrak gl}_2)\otimes_{\Qp}E$-module quelconque tel que $\fz(u)=(k_1+k_2)u$ pour tout $u\in M$. On voit $M$ comme $\text{U}(\fl_1)$-module via $\text{U}(\fl_1)\cong \text{U}({\mathfrak gl}_2)\otimes_E \text{U}({\mathfrak gl}_1)\twoheadrightarrow \text{U}({\mathfrak gl}_2)$, i.e. le facteur ${\mathfrak gl}_1$ de $\fl_1$ agit trivialement sur $M$, puis comme $\text{U}(\fp_1)$-module via $\fp_1 \twoheadrightarrow \fl_1$.

\begin{lem}\label{ninv}
(i) Le $E$-espace vectoriel $H^0(\fn_1, \text{U}({\mathfrak gl}_3) \otimes_{\text{U}(\fp_1)} M)=(\text{U}({\mathfrak gl}_3) \otimes_{\text{U}(\fp_1)} M)[\fn_1]$ est engendr\'e par les \'el\'ements de $M$ et les vecteurs de la forme $v=\sum_{i=0}^{r+1} \fy_2^i \fy_3^{r+1-i} \otimes v_i$ pour $r\in \Z_{\geq 0}$ et $v_i\in M$ v\'erifiant (avec $v_{i-1}=0$ si $i=0$)~:
\begin{equation}\label{dieq}\begin{cases}
i(\fh+2r-(k_1+k_2)) v_i =2(r+2-i)\fy v_{i-1}\\
(r+2-i)(\fh-2r+(k_1+k_2)) v_{i-1} =-2i\fx v_i.
\end{cases}
\end{equation}
(ii) Supposons qu'il existe $s\in \Z_{\geq 1}$ tel que $(\fc-(k_1-k_2)(k_1-k_2+2))^s(u)=0$ pour tout $u\in M$, alors pour $v\neq 0$ comme dans le (i) on a $r=k_2$ ou $r=k_1+1$ et $\fc(v_i)=(k_1-k_2)(k_1-k_2+2)v_i$ pour tout $v_i$.
\end{lem}
\begin{proof}
(i) Tout \'el\'ement $v\in \text{U}({\mathfrak gl}_3)\otimes_{\text{U}(\fp_1)} M \cong \text{U}({\fn}_1^-) \otimes_E M$ s'\'ecrit de mani\`ere unique~:
\begin{equation}\label{exp}
v=\sum_{r_2, r_3\in \Z_{\geq 0}} \fy_2^{r_2} \fy_3^{r_3} \otimes v_{r_2,r_3}.
\end{equation}
Un calcul utilisant (\ref{form}) donne~:
\begin{eqnarray*}
\fx_2 (\fy_2^{r_2} \fy_3^{r_3}\! \otimes \!v_{r_2,r_3})\!&\!\!=\!\!&\!\fy_2^{r_2-1} \fy_3^{r_3} \! \otimes \! \big(r_2 (\fh_2-r_2-r_3+1)v_{r_2,r_3}\big) + \fy_2^{r_2} \fy_3^{r_3-1}\! \otimes\! r_3 \fy_1 v_{r_2,r_3}\\
\fx_3(\fy_2^{r_2} \fy_3^{r_3}\!\otimes \!v_{r_2,r_3})\!&\!\!=\!\!&\!\fy_2^{r_2} \fy_3^{r_3-1}\! \otimes\!\big(r_3(\fh_1+\fh_2-r_2-r_3+1) v_{r_2,r_3}\big)+\fy_2^{r_2-1} \fy_3^{r_3} \!\otimes \! r_2 \fx_1 v_{r_2,r_3}.
\end{eqnarray*}
En utilisant (\ref{exp}) on en d\'eduit $\fx_2 v=\fx_3 v=0$ si et seulement si pour tout $r_2$, $r_3\in \Z_{\geq 0}$~:
\begin{equation}\label{equri}
\begin{cases}
(r_2+1)(\fh_2-r_2-r_3) v_{r_2+1,r_3}+(r_3+1) \fy_1 v_{r_2,r_3+1}=0\\
(r_3+1)(\fh_1+\fh_2-r_2-r_3) v_{r_2,r_3+1} + (r_2+1) \fx_1 v_{r_2+1, r_3}=0.
\end{cases}
\end{equation}
Via l'action de $\fl_1$ sur $M$ comme ci-dessus et l'hypoth\`ese sur l'action de $\mathfrak z$, on voit que $\fh_2$ agit comme $-\frac{\fh-k_1-k_2}{2}$ et $(\fh_1+\fh_2)$ comme $\frac{\fh+k_1+k_2}{2}$, et (\ref{equri}) est \'equivalent \`a~:
\begin{equation*}
\begin{cases}
(r_2+1)\big(-\frac{\fh-k_1-k_2}{2}-r_2-r_3\big)v_{r_2+1,r_3}+(r_3+1) \fy v_{r_2,r_3+1}=0\\
(r_3+1)\big(\frac{\fh+k_1+k_2}{2}-r_2-r_3\big) v_{r_2,r_3+1} + (r_2+1) \fx v_{r_2+1, r_3}=0.
\end{cases}
\end{equation*}
On en d\'eduit (i) en remarquant que $v-\sum_{r\in \Z_{\geq 0}}\big(\sum_{i=0}^{r+1} \fy_2^i \fy_3^{r+1-i} \!\otimes \! v_{i,r+1-i}\big)\in M$.\\
(ii) En appliquant $\fx$ \`a la premi\`ere \'equation en (\ref{dieq}) on a (avec $\fx\fh=\fh\fx-2\fx$)~:
\begin{equation*}
i(\fh+2r-2-(k_1+k_2)) \fx v_i\\= \fx i(\fh+2r-(k_1+k_2)) v_i= 2(r+2-i) \fx \fy v_{i-1}
\end{equation*}
et en appliquant $\fh+2r-2-(k_1+k_2)$ \`a la seconde~:
\begin{equation*}
-2i(\fh+2r-2-(k_1+k_2))\fx v_i = (r+2-i)(\fh+2r-2-(k_1+k_2))(\fh-2r+(k_1+k_2)) v_{i-1}.
\end{equation*}
Ces deux \'egalit\'es impliquent pour $i\in \{0,\dots,r+1\}$ (donc $r+2-i\ne 0$)~:
\begin{equation*}
(\fh^2-2\fh+4\fx \fy) v_{i-1}=\fc v_{i-1}=(k_1+k_2-2r)(k_1+k_2+2-2r) v_{i-1}.
\end{equation*}
De plus, par l'hypoth\`ese en (ii) on en d\'eduit pour $i\in \{0,\dots,r+1\}$ et $v_{i-1}\neq 0$~:
\begin{equation*}
(k_1+k_2-2r)(k_1+k_2+2-2r)=(k_1-k_2)(k_1-k_2+2),
\end{equation*}
d'o\`u $r=k_2$ ou $r=k_1+1$.
De m\^eme on obtient $\fc(v_{i})=(k_1-k_2)(k_1-k_2+2) v_i$ pour $i\in \{0,\dots,r+1\}$ en appliquant $\fh-2r+2+(k_1+k_2)$ \`a la premi\`ere \'equation en (\ref{dieq}) et $\fy$ \`a la seconde, d'o\`u (ii).
\end{proof}

Le lemme technique suivant sera important dans la suite.

\begin{lem}\label{ninv2}
Soit $u\in M$ tel que $\fc(u)=(k_1-k_2)(k_1-k_2+2)u$, alors on a~:
$$\sum_{i=0}^{k_2+1} \fy_2^i \fy_3^{k_2+1-i} \!\otimes \! u_i\in H^0\big(\fn_1, \text{U}({\mathfrak gl}_3)\otimes_{\text{U}(\fp_1)} M\big)$$
o\`u $u_0\=\fx^{k_2+1} u$ et $\displaystyle{u_i\=\Big(\prod_{j=1}^i \big(-\frac{k_2+2-j}{2j}\big)\Big)\Big(\prod_{j=1}^i \big(\fh+(k_1-k_2)+2j\big)\Big) \fx^{k_2+1-i} u}$.
\end{lem}
\begin{proof}
Posons $a_0\=1$ et, pour $i\in \{1,\dots,k_2+1\}$, $a_i\=\prod_{j=1}^i \big(-\frac{k_2+2-j}{2j}\big)$, alors $(-2i) a_i=(k_2+2-i) a_{i-1}$. On a (avec $\fh\fx=\fx\fh+2\fx$)~:
\begin{align*}
\MoveEqLeft[5.5] (k_2+2-i)(\fh+k_1-k_2) u_{i-1} & \\
={} & (k_2+2-i)(\fh+k_1-k_2)a_{i-1} \Big(\prod_{j=1}^{i-1} \big(\fh+(k_1-k_2)+2j\big)\Big)\fx^{k_2+2-i} u \\
={} & (k_2+2-i)a_{i-1}\fx(\fh+k_1-k_2+2)\Big(\prod_{j=1}^{i-1}\big(\fh+k_1-k_2+2j+2\big)\Big) \fx^{k_2+1-i} u\\
={} & -2i \fx u_i.
\end{align*}
Avec $\fc(u)=(k_1-k_2)(k_1-k_2+2)u$, on a aussi par ailleurs~:
\begin{align*}
\MoveEqLeft[3.5] i(\fh-(k_1-k_2)) u_i & \\
={} & i a_i \big(\fh^2+2\fh-(k_1-k_2)(k_1-k_2+2)\big) \Big(\prod_{j=2}^i \big(\fh+(k_1-k_2)+2j\big)\Big) \fx^{k_2+1-i} u\\
={} & i a_i (-4\fy\fx) \Big(\prod_{j=2}^i \big(\fh+(k_1-k_2)+2j\big)\Big) \fx^{k_2+1-i} u\\
={} & 2(k_2+2-i) a_{i-1} \fy \Big(\prod_{j=1}^{i-1} \big(\fh+(k_1-k_2)+2j\big)\Big) \fx^{k_2+2-i} u=2(k_2+2-i) \fy u_{i-1}
\end{align*}
en utilisant encore $\fx\fh=\fh\fx-2\fx$ pour l'avant-derni\`ere \'egalit\'e. Le lemme suit alors du Lemme \ref{ninv} appliqu\'e avec $r=k_2$.
\end{proof}

\subsection{Extensions avec un caract\`ere infinit\'esimal}

On d\'emontre la Proposition \ref{split}.

On conserve les notations du paragraphe pr\'ec\'edent et celles du \S~\ref{filmaxbis}. On rappelle que $\nu_{1,2}=(k_1,k_2)$, $\eta:\Q_p^{\times}\rightarrow E$ est un morphisme continu de groupes (pour la structure additive \`a droite), $\St_2(\nu_{1,2})=L(\nu_{1,2})\otimes_E\St_2$ et que l'on dispose de la repr\'esentation localement analytique de longueur finie $\pi(\nu_{1,2}, \eta)$ (\cite[(3.26)]{BD}). On suppose dans la suite que $\pi(\nu_{1,2}, \eta)$ v\'erifie \cite[Hyp.~3.19]{BD}. C'est une hypoth\`ese faible. En effet, par la discussion qui suit \cite[Lem.~3.29]{BD}, quitte \`a tordre $\pi(\nu_{1,2}, \eta)$ par un caract\`ere non ramifi\'e on peut supposer $\pi(\nu_{1,2}, \eta)\simeq \pi^{\an}(\rho)$ o\`u $\rho:\gp\rightarrow {\rm GL}_2(E)$ et $\pi^{\an}(\rho)$ correspond \`a $\rho$ via la correspondance de Langlands localement analytique pour $\GL_2(\Q_p)$ (\cite{Co2}, \cite{CD}). Si $\rho$ admet un $\oE$-r\'eseau invariant dont la r\'eduction $\overline{\rho}$ satisfait \cite[(A.2)]{BD}, alors \cite[Hyp.~3.19]{BD} est v\'erifi\'ee par \cite[Prop.~3.30]{BD}.

\begin{lem}\label{gl2}
(i) Le centre $Z({\mathfrak gl}_2)$ agit sur $\pi(\nu_{1,2},\eta)$ par l'unique caract\`ere $\xi$ tel que $\xi(\fz)=k_1+k_2$ et $\xi(\fc)=(k_1-k_2)(k_1-k_2+2)$.\\
(ii) On a $\dim_E \Ext^1_{\inf}(\St_2(\nu_{1,2}), \pi(\nu_{1,2},\eta))=2$.\\
(iii) On a $\dim_E \Ext^1_{\inf,Z}(\St_2(\nu_{1,2}),\pi(\nu_{1,2},\eta))=1$.
\end{lem}
\begin{proof}
(i) Il est clair que $Z({\mathfrak gl}_2)$ agit sur $\St_2(\nu_{1,2})$ par $\xi$. Par \cite[Prop. 3.7]{ST1}, pour $\nabla\in Z({\mathfrak gl}_2)$ on a un morphisme $\GL_2(\Q_p)$-\'equivariant $\nabla-\xi(\nabla): \pi(\nu_{1,2}, \eta) \ra \pi(\nu_{1,2}, \eta)$, qui induit donc un morphisme $\pi(\nu_{1,2},\eta)/\St_2(\nu_{1,2}) \ra \pi(\nu_{1,2}, \eta)$. Comme le socle de $\pi(\nu_{1,2}, \eta)$ est $\St_2(\nu_{1,2})$, on en d\'eduit (i).\\
(ii) Comme $\dim_E \Ext^1_{\GL_2(\Q_p)}(\St_2(\nu_{1,2}), \pi(\nu_{1,2}, \eta))=4$ (\cite[Lem. 3.17(ii)]{BD}), on peut associer \`a cet espace une extension $\widetilde{\pi}(\nu_{1,2}, \eta)$ de $\St_2(\nu_{1,2})^{\oplus 4}$ par $\pi(\nu_{1,2}, \eta)$. Soit $\nabla \in Z({\mathfrak gl}_2)$, le morphisme $\GL_2(\Q_p)$-\'equivariant $\nabla-\xi(\nabla): \widetilde{\pi}(\nu_{1,2}, \eta) \lra \widetilde{\pi}(\nu_{1,2}, \eta)$ est nul sur $\pi(\nu_{1,2},\eta)$ par (i), donc se factorise comme suit~:
\begin{equation*}
\widetilde{\pi}(\nu_{1,2}, \eta) \twoheadrightarrow \St_2(\nu_{1,2})^{\oplus 4} \lra \St_2(\nu_{1,2})\hookrightarrow \widetilde{\pi}(\nu_{1,2}, \eta).
\end{equation*}
On en d\'eduit que le sous-espace de $\Ext^1_{\GL_2(\Q_p)}(\St_2(\nu_{1,2}), \pi(\nu_{1,2}, \eta))$ des extensions sur lesquelles $\nabla$ agit par $\xi(\nabla)$ est au moins de dimension $3$. Appliquant ceci \`a $\nabla=\fc$ et $\nabla=\fz$, on en d\'eduit $\dim_E \Ext^1_{\inf}(\St_2(\nu_{1,2}), \pi(\nu_{1,2},\eta))\geq 2$ puisque $Z(\ug)\cong E[\fc, \fz]$. Soit $\pi(\nu_{1,2},\eta)^-$ la sous-repr\'esentation de $\pi(\nu_{1,2},\eta)$ d\'efinie en \cite[(3.23)]{BD}, par \cite[(3.29)]{BD} on a une injection~:
\begin{equation}\label{depi-api}
\Ext^1_{\GL_2(\Q_p)}(\St_2(\nu_{1,2}), \pi(\nu_{1,2}, \eta)^{-})\hookrightarrow \Ext^1_{\GL_2(\Q_p)}(\St_2(\nu_{1,2}), \pi(\nu_{1,2}, \eta)),
\end{equation}
et il suit de \cite[Lem.~3.17(2)~\&~(4)]{BD} que $\dim_E \Ext^1_{\GL_2(\Q_p)}(\St_2(\nu_{1,2}), \pi(\nu_{1,2}, \eta)^{-})=3$. De plus, par \cite[Lem. 3.20(2)]{BD}, le foncteur de Jacquet-Emerton (par rapport au Borel sup\'erieur) induit une bijection (avec les notations de {\it loc.cit.} \`a droite)~:
\begin{equation}\small\label{jac}
\Ext^1_{\GL_2(\Q_p)}(\St_2(\nu_{1,2}), \pi(\nu_{1,2}, \eta)^{-}) \buildrel {\sim}\over \longrightarrow \Ext^1_{T(\Q_p)}\big(\delta_{\nu_{1,2}}(|\cdot|\otimes |\cdot|^{-1}), \delta_{\nu_{1,2}}(|\cdot|\otimes |\cdot|^{-1})\big)_{\eta}.
\end{equation}
On voit une extension \`a droite dans (\ref{jac}) comme un caract\`ere du tore $T(\Qp)$ \`a valeurs dans $(E[\epsilon]/(\epsilon^2))^\times$ et on la note $\delta_{\nu_{1,2}}(|\cdot| \otimes |\cdot|^{-1}) (1+\Psi \epsilon)$ avec $\Psi=(\psi_1, \psi_2)$ o\`u $\psi_i:\Qp^\times\rightarrow E$ (par d\'efinition de l'espace \`a droite on a $\psi_1-\psi_2 \in E \eta$). Si $V$ est l'extension de $\St_2(\nu_{1,2})$ par $ \pi(\nu_{1,2},\eta)^-$ associ\'ee \`a $\delta_{\nu_{1,2}}(|\cdot| \otimes |\cdot|^{-1}) (1+\Psi \epsilon)$ par (\ref{jac}), il suit de \cite[Rem.~3.21]{BD} et de la d\'efinition du foncteur de Jacquet-Emerton que l'on a une injection $\mathfrak b$-\'equivariante~:
\begin{equation}\label{jacquet}
\delta_{\nu_{1,2}}(|\cdot| \otimes |\cdot|^{-1}) (1+\Psi \epsilon) \hookrightarrow V
\end{equation}
o\`u $\mathfrak b$ est la $\Qp$-alg\`ebre de Lie de $B(\Qp)$ et l'action de $\fn$ \`a gauche est triviale. Un examen de l'action de $\fz$ et $\fc=\fh^2+2\fh+4\fy\fx$ sur l'image de (\ref{jacquet}) (en utilisant que $\fx$ annule cette image) donne que, si $V$ a un caract\`ere infinit\'esimal et si $\eta$ n'est pas lisse, alors $\psi_1=\psi_2\in E\val$. Par (\ref{jac}) cela implique $\dim_E \Ext^1_{\rm inf}(\St_2(\nu_{1,2}), \pi(\nu_{1,2}, \eta)^{-})\leq 1$, i.e. $\dim_E\big(\Ext^1_{\GL_2(\Q_p)}(\St_2(\nu_{1,2}), \pi(\nu_{1,2}, \eta)^{-}) \cap \Ext^1_{\rm inf}(\St_2(\nu_{1,2}),\pi(\nu_{1,2}, \eta))\big)\leq 1$ qui implique $\dim_E \Ext^1_{\inf}(\St_2(\nu_{1,2}), \pi(\nu_{1,2},\eta))\leq 2$ (par les dimensions de ces deux espaces). On en d\'eduit (ii) dans le cas $\eta$ non lisse. Si $\eta$ est lisse, l'injection $\widetilde{I}(\nu_{1,2})\hookrightarrow \pi(\nu_{1,2},\eta)$ (cf. \cite[\S~3.2.2]{BD} pour $ \widetilde{I}(\nu_{1,2})$) induit un isomorphisme~:
\begin{equation*}
\Ext^1_{\GL_2(\Q_p)}(\St_2(\nu_{1,2}), \widetilde{I}(\nu_{1,2}))\buildrel {\sim}\over \longrightarrow \Ext^1_{\GL_2(\Q_p)}(\St_2(\nu_{1,2}), \pi(\nu_{1,2},\eta)).
\end{equation*}
Par une variante facile de la preuve de \cite[Lem. 3.20]{BD}, le foncteur de Jacquet-Emerton induit un isomorphisme~:
\begin{equation*}
\Ext^1_{\GL_2(\Q_p)}(\St_2(\nu_{1,2}), \widetilde{I}(\nu_{1,2})) \buildrel {\sim}\over \longrightarrow \Ext^1_{T(\Q_p)}(\delta_{\nu_{1,2}}, \delta_{\nu_{1,2}})
\end{equation*}
d'o\`u on d\'eduit comme pr\'ec\'edemment $\dim_E \Ext^1_{\inf}(\St_2(\nu_{1,2}), \pi(\nu_{1,2},\eta))\leq 2$.\\
(iii) Par \cite[Lem. 3.17]{BD} on a $\dim_E\Ext^1_{Z}(\St_2(\nu_{1,2}), \pi(\nu_{1,2}, \eta))=2$. Comme $\Ext^1_Z(\St_2(\nu_{1,2}), \St_2(\nu_{1,2}))=0$ alors que $\Ext^1_{\inf}(\St_2(\nu_{1,2}), \pi(\nu_{1,2},\eta))$ contient clairement (via $\St_2(\nu_{1,2})\hookrightarrow \pi(\nu_{1,2},\eta)$) l'unique extension localement alg\'ebrique non scind\'ee de $\St_2(\nu_{1,2})$ par lui-m\^eme, on en d\'eduit $\dim_E \Ext^1_{\inf,Z}(\St_2(\nu_{1,2}),\pi(\nu_{1,2},\eta))\leq 1$. Soit $\Ext^1_{\fz}(\St_2(\nu_{1,2}), \pi(\nu_{1,2}, \eta))$ le sous-espace des extensions sur lesquelles $\fz$ agit par $k_1+k_2$. On a clairement une inclusion~:
\begin{equation*}
\Ext^1_{Z}(\St_2(\nu_{1,2}), \pi(\nu_{1,2}, \eta)) + \Ext^1_{\inf}(\St_2(\nu_{1,2}), \pi(\nu_{1,2}, \eta)) \subseteq \Ext^1_{\fz}(\St_2(\nu_{1,2}), \pi(\nu_{1,2}, \eta)).
\end{equation*}
Comme $\Ext^1_{\fz}(\St_2(\nu_{1,2}), \pi(\nu_{1,2}, \eta)) \neq \Ext^1_{\GL_2(\Q_p)}(\St_2(\nu_{1,2}), \pi(\nu_{1,2}, \eta))$ (par exemple par (\ref{jac}) et (\ref{depi-api})), on en d\'eduit $\dim_E \Ext^1_{\inf,Z}(\St_2(\nu_{1,2}),\pi(\nu_{1,2},\eta))\geq 1$ en comparant les dimensions. Cela termine la preuve de (iii).
\end{proof}

Le lemme suivant est utilis\'e au \S~\ref{filmaxbis}.

\begin{lem}\label{inf3}
(i) On a une suite exacte courte~:
\begin{multline}\label{exb}
0 \ra \Ext^1_{\GL_2(\Q_p)}(\pi(\nu_{1,2}, \eta)/\St_2(\nu_{1,2}), \pi(\nu_{1,2},\eta)) \ra \Ext^1_{\inf}(\pi(\nu_{1,2},\eta), \pi(\nu_{1,2}, \eta))\\
\ra \Ext^1_{\inf}(\St_2(\nu_{1,2}), \pi(\nu_{1,2}, \eta)) \ra 0.
\end{multline}
En particulier, $\dim_E \Ext^1_{\inf}(\pi(\nu_{1,2}, \eta), \pi(\nu_{1,2}, \eta))=3$.\\
(ii) Soit $\Ext^1_g(\pi(\nu_{1,2},\eta), \pi(\nu_{1,2}, \eta))$ le sous-espace engendr\'e par les extensions $\widetilde \pi$ telles que les vecteurs localement alg\'ebriques de $\widetilde\pi$ contiennent {\rm strictement} $\St_2(\nu_{1,2})$ (cf. la discussion avant \cite[Lem.~3.25]{BD}). On a une injection naturelle~:
\begin{equation}\label{gHt}
\Ext^1_g(\pi(\nu_{1,2}, \eta), \pi(\nu_{1,2}, \eta))\hookrightarrow \Ext^1_{\inf}(\pi(\nu_{1,2}, \eta),\pi(\nu_{1,2}, \eta)).
\end{equation}
\end{lem}
\begin{proof}
(i) Par \cite[(3.28)]{BD}, on a une suite exacte comme en (\ref{exb}) avec $\Ext^1_{\inf}$ remplac\'e par $\Ext^1_{\GL_2(\Q_p)}$. Il suffit donc de montrer que toute extension $\widetilde{\pi}$ de $\pi(\nu_{1,2},\eta)$ par $\pi(\nu_{1,2},\eta)$ admet un caract\`ere infinit\'esimal si et seulement si sa sous-repr\'esentation $\widetilde{\pi}_0$ donn\'ee par ``pull-back" le long de $\St_2(\nu_{1,2}) \hookrightarrow \pi(\nu_{1,2},\eta)$ admet un carat\`ere infinit\'esimal. Le sens ``seulement si" est clair. Si $\widetilde{\pi}_0$ admet un caract\`ere infinit\'esimal $\xi$, alors pour tout $\nabla\in Z({\mathfrak gl}_2)$, le morphisme $\nabla-\xi(\nabla): \widetilde{\pi} \lra \widetilde{\pi}$ se factorise \`a travers $\widetilde{\pi}/\widetilde{\pi}_0\cong \pi(\nu_{1,2}, \eta)/\St_2(\nu_{1,2})) \lra \widetilde{\pi}$. Comme le socle $\soc_{\GL_2(\Q_p)} \pi(\nu_{1,2}, \eta)$ de $\pi(\nu_{1,2}, \eta)$ est $\St_2(\nu_{1,2})$, le morphisme $\nabla-\xi(\nabla)$ est nul. Le sens ``si" en d\'ecoule. La seconde partie de (i) d\'ecoule alors de \cite[Lem. 3.17(2)]{BD} et du (ii) du Lemme 6.2.1.\\
(ii) Si $\widetilde{\pi}\in \Ext^1_g(\pi(\nu_{1,2},\eta),\pi(\nu_{1,2},\eta))$, on voit facilement que $\St_2(\nu_{1,2})$ a multiplicit\'e $2$ dans la sous-repr\'esentation localement alg\'ebrique $\widetilde{\pi}^{\alg}$ de $\widetilde{\pi}$. Pour tout $\nabla\in Z({\mathfrak gl}_2)$, $\nabla-\xi(\nabla)$ annule $\widetilde{\pi}^{\alg}$ (o\`u $\xi$ est le caract\`ere infinit\'esimal). Utilisant (encore) $\soc_{\GL_2(\Q_p)} \pi(\nu_{1,2}, \eta)\cong \St_2(\nu_{1,2})$, on en d\'eduit que $\nabla-\xi(\nabla)$ annule tout $\widetilde{\pi}$.
\end{proof}

On note $\pi(\nu_{1,2},\eta)^{\rm inf}$ l'unique extension non scind\'ee de $\St_2(\nu_{1,2})$ par $\pi(\nu_{1,2}, \eta)$ (\`a isomorphisme pr\`es) avec un caract\`ere central et un caract\`ere infinit\'esimal donn\'ee par le (iii) du Lemme \ref{gl2}. La repr\'esentation localement analytique $\pi(\nu_{1,2},\eta)^{\rm inf}\boxtimes 1$ de $P_1(\Qp)$ est en particulier un $U(\fp_1)$-module.

\begin{lem}\label{key1}
Pour tout $u\in \St_2(\nu_{1,2})$, il existe~:
\begin{equation*}
\widetilde{u}\ \=\ \sum_{i=0}^{k_2+1} \fy_2^i \fy_3^{k_2+1-i} \!\otimes \widetilde{u}_i\in H^0\big(\fn_1, \text{U}({\mathfrak gl}_3)\otimes_{\text{U}(\fp_1)} (\pi(\nu_{1,2},\eta)^{\rm inf}\boxtimes 1)\big)
\end{equation*}
tel que $\widetilde{u}_{k_2+1}$ a pour image $u$ via la surjection naturelle $\pi(\nu_{1,2},\eta)^{\rm inf}\twoheadrightarrow \St_2(\nu_{1,2})$.
\end{lem}
\begin{proof}
Soit $v\in \St_2(\nu_{1,2})$ tel que $\big(\prod_{j=1}^i (-\frac{k_2+2-j}{2j})\big)\big(\prod_{j=1}^{k_2+1} (\fh+(k_1-k_2)+2j)\big) v=u$ (en notant que $\fh+k_1-k_2+2j$ est une bijection sur $\St_2(\nu_{1,2})$ pour $1\leq j\leq k_2+1$). Soit $\widetilde{v}$ un relev\'e arbitraire de $v$ dans $\pi(\nu_{1,2},\eta)^{\rm inf}$ via $\pi(\nu_{1,2},\eta)^{\rm inf}\twoheadrightarrow \St_2(\nu_{1,2})$. Le r\'esultat suit du Lemme \ref{ninv2} appliqu\'e \`a $u=\widetilde{v}$.
\end{proof}

Soit ${\rm Rep}^{\an,z}_E(L_{1}(\Q_p))$ la sous-cat\'egorie pleine de ${\rm Rep}^{\an}_E(L_{1}(\Q_p))$ des repr\'esentations qui sont unions croissantes de $BH$-sous-espaces stables sous $Z_{L_{1}}(\Q_p)$ (cf. \cite{Em2}). Pour $V$ dans ${\rm Rep}^{\an,z}_E(L_{1}(\Q_p))$, suivant \cite{Em2} on note $I_{{P}_1^-}^{\GL_3} (V)$ la sous-repr\'esentation ferm\'ee de $(\Ind_{{P}_1^-(\Q_p)}^{\GL_3(\Q_p)} V)^{\an}$ engendr\'ee par l'image de $V\otimes_E \delta_{P_1}\hookrightarrow J_{P_1}((\Ind_{{P}^-_1(\Q_p)}^{\GL_3(\Q_p)} V)^{\an})$ (\cite[Lem.~0.3]{Em2}) dans $(\Ind_{{P}^-_1(\Q_p)}^{\GL_3(\Q_p)} V)^{\an}$ via le rel\`evement canonique~:
\begin{equation}\label{lifting}
J_{P_1}\big((\Ind_{{P}^-_1(\Q_p)}^{\GL_3(\Q_p)} V)^{\an}\big)\longrightarrow (\Ind_{{P}^-_1(\Q_p)}^{\GL_3(\Q_p)} V)^{\an}
\end{equation}
o\`u $\delta_{P_1}$ est le caract\`ere module de $P_1(\Q_p)$ et $J_{P_1}$ le foncteur de Jacquet-Emerton relativement \`a $P_1$. Rappelons que l'application (\ref{lifting}) d\'epend du choix d'un sous-groupe ouvert compact de $N_1(\Qp)=N_{P_1}(\Q_p)$ mais pas la repr\'esentation $I_{{P}^-_1}^{\GL_3}(V)$. Le lemme facile suivant sera utile.

\begin{lem}\label{jacquetII}
Soit $0 \rightarrow V_1 \rightarrow V_2 \rightarrow V_3 \rightarrow 0$ une suite exacte courte dans ${\rm Rep}^{\an,z}_E(L_{1}(\Q_p))$.\\
(i) L'injection naturelle $(\Ind_{P_1^-(\Q_p)}^{\GL_3(\Q_p)} V_1)^{\an} \hookrightarrow (\Ind_{P_1^-(\Q_p)}^{\GL_3(\Q_p)} V_2)^{\an}$ induit une injection $I_{P_1^-}^{\GL_3} (V_1) \hookrightarrow I_{P_1^-}^{\GL_3} (V_2)$.\\
(ii) La surjection naturelle $(\Ind_{P_1^-(\Q_p)}^{\GL_3(\Q_p)} V_2)^{\an} \twoheadrightarrow (\Ind_{P_1^-(\Q_p)}^{\GL_3(\Q_p)} V_3)^{\an}$ induit une surjection $I_{P_1^-}^{\GL_3} (V_2) \twoheadrightarrow I_{P_1^-}^{\GL_3} (V_3)$.
\end{lem}
\begin{proof}
Le lemme suit facilement des d\'efinitions et du diagramme commutatif de suites exactes~:
\begin{equation*}
\begin{CD}
0 @>>> V_1\otimes_E \delta_{P_1} @>>> V_2\otimes_E \delta_{P_1} @>>> V_3 \otimes_E\delta_{P_1} @>>>0 \\
@. @VVV @VVV @VVV@. \\
0 @>>> (\Ind_{P_1^-(\Q_p)}^{\GL_3(\Q_p)} V_1)^{\an} @>>> (\Ind_{P_1^-(\Q_p)}^{\GL_3(\Q_p)} V_2)^{\an} @>>> (\Ind_{P_1^-(\Q_p)}^{\GL_3(\Q_p)} V_3)^{\an} @>>> 0.
\end{CD}
\end{equation*}
\end{proof}

Soit $\nu\=(k_1, k_2,k_3)$ vu comme poids dominant (par rapport au Borel sup\'erieur de ${\rm GL}_3$), on utilise dans la suite les notations de \cite[\S\S~3.2.2,~3.3.1~\&~3.3.3]{BD}, sauf que l'on remplace $\lambda$, $\lambda_{1,2}$ par $\nu$, $\nu_{1,2}$ et que la ``dot action'' est ici par rapport au Borel inf\'erieur (cf. \S~\ref{notabene}) alors que dans {\it loc.cit.} elle est par rapport au Borel sup\'erieur. Ainsi par exemple la repr\'esentation $I(s\cdot \lambda_{1,2})$ de \cite[\S~3.2.2]{BD} devient ici $I(-s\cdot (-\nu_{1,2}))$, le $U({\mathfrak gl}_3)$-module $\overline{L}(-s_1\cdot \lambda)$ de \cite[\S~3.3.1]{BD} devient ${L}^-(s_1\cdot (-\nu))$, etc. Pour $i\in \{1,3,5\}$ on note aussi $C_{1,i}\=C_{e_1-e_2,i}$ et $C_{2,i}\=C_{e_2-e_3,i}$ (cf. (\ref{calphai})).

On note $\cV_1\=L(\nu)\otimes_E(\Ind_{{P}^-_1(\Q_p)}^{\GL_3(\Q_p)} \St_2\boxtimes 1)^{\infty}$ (cf. \cite[(53)]{Br1}), $\cV_2$ l'unique sous-repr\'esentation de $(\Ind_{{P}^-_1(\Q_p)}^{\GL_3(\Q_p)} I(-s\cdot (-\nu_{1,2})) \boxtimes x^{k_3})^{\an}$ donn\'ee par $\cF_{{P}^-_2}^{\GL_3}({L}^-(s_1\cdot (-\nu)), 1\boxtimes (\Ind_{B_2^-(\Qp)}^{{\rm GL}_2(\Qp)}1)^\infty)$ (une extension non scind\'ee de $C_{1,1}=\cF_{{P}^-_2}^{\GL_3}({L}^-(s_1\cdot (-\nu)), 1\boxtimes \St_2)$ par $\cF_{{P}^-_2}^{\GL_3}({L}^-(s_1\cdot (-\nu)), 1)$), $\cV_3\=L(\nu)\otimes_E (\Ind_{{P}^-_1(\Q_p)}^{\GL_3(\Q_p)} 1)^{\infty}$ et~:
\begin{multline*}
\cV_4\=C_{1,3}= \cF_{{P}^-_2}^{\GL_3}\big({L}^-(s_1\cdot (-\nu)), |\cdot|^{-1} \boxtimes (\Ind_{{B}^-_2(\Q_p)}^{\GL_2(\Q_p)} |\cdot|\otimes 1)^{\infty}\big)\\ \cong \soc_{\GL_3(\Q_p)} \big(\Ind_{{P}^-_1(\Q_p)}^{\GL_3(\Q_p)} \widetilde{I}(-s\cdot (-\nu_{1,2})) \boxtimes x^{k_3}\big)^{\an}
\end{multline*}
o\`u $B_2^-(\Qp)$ est le Borel des matrices triangulaires inf\'erieures dans $\GL_2(\Q_p)$.

\begin{lem}\label{ipg1}
On a $\cV_1\cong I_{{P}^-_1}^{\GL_3}(\St_2(\nu_{1,2})\boxtimes x^{k_3})$, $\cV_2 \cong I_{{P}^-_1}^{\GL_3}(I(-s\cdot (-\nu_{1,2}))\boxtimes x^{k_3})$, $\cV_3 \cong I_{{P}^-_1}^{\GL_3}(L(\nu_{1,2})\boxtimes x^{k_3})$ et $\cV_4\cong I_{{P}^-_1}^{\GL_3}(\widetilde{I}(-s\cdot (-\nu_{1,2})) \boxtimes x^{k_3})$.
\end{lem}
\begin{proof}
On montre l'\'enonc\'e pour $\cV_2$, les autres cas \'etant analogues ou plus simples. Pour tout constituant irr\'eductible $W$ de $(\Ind_{{P}^-_1(\Q_p)}^{\GL_3(\Q_p)} I(-s\cdot (-\nu_{1,2})) \boxtimes x^{k_3})^{\an}/\cV_2$ (cf. la liste \cite[(3.70)]{BD}), on montre que $(I(-s\cdot (-\nu_{1,2}))\boxtimes x^{k_3})\otimes_E \delta_{P_1}$ n'est pas une sous-repr\'esentation de $J_{P_1}(W)$. En effet, si l'on a $(I(-s\cdot (-\nu_{1,2})) \boxtimes x^{k_3})\otimes_E \delta_{P_1} \hookrightarrow J_{P_1}(W)$, alors en appliquant $J_{B\cap L_1}$ puis \cite[Th.~5.3(2)]{HL}, on en d\'eduit une injection $T(\Q_p)$-\'equivariante~:
\begin{equation}\label{jac1}
\delta_{-(s_1\cdot (-\nu))}(|\cdot|^2 \otimes 1 \otimes |\cdot|^{-2}) \hookrightarrow J_B(W).
\end{equation}
Mais on d\'eduit facilement de \cite[Th.~4.3]{Br4} et \cite[Cor.~4.25]{OS2} (et de la structure des modules de Verma pour ${\mathfrak gl}_3$) que (\ref{jac1}) ne peut exister. Cela implique que l'image de $(I(-s\cdot (-\nu_{1,2}))\boxtimes x^{k_3})\otimes_E \delta_{P_1}\hookrightarrow J_{P_1}( (\Ind_{{P}^-_1(\Q_p)}^{\GL_3(\Q_p)} I(-s\cdot (-\nu_{1,2})) \boxtimes x^{k_3})^{\an})$ tombe dans $J_{P_1}(\cV_2)$, et donc par d\'efinition $I_{{P}^-_1}^{\GL_3}( I(-s\cdot (-\nu_{1,2})) \boxtimes x^{k_3}) \hookrightarrow \cV_2$. Un argument analogue montre aussi que $(I(-s\cdot (-\nu_{1,2}))\boxtimes x^{k_3})\otimes_E \delta_{P_1}$ ne peut se plonger dans $J_{P_1}(\cF_{{P}^-_2}^{\GL_3}({L}^-(s_1\cdot (-\nu)),1))$. On en d\'eduit $I_{{P}^-_1}^{\GL_3}( I(-s\cdot (-\nu_{1,2})) \boxtimes x^{k_3}) \cong \cV_2$.
\end{proof}

Soit $\cW\=\ker\big((\Ind_{{P}^-_1(\Q_p)}^{\GL_3(\Q_p)} \pi(\nu_{1,2}, \eta)\boxtimes x^{k_3})^{\an} \twoheadrightarrow \widetilde{\Pi}^1(\nu, \eta)\big)$ (cf. la discussion qui suit \cite[Rem. 3.41]{BD}), alors $\cW$ est isomorphe \`a une extension de $L(\nu)$ par $v_{{P}^-_2}^{\an}(\nu)$. On note $\cV$ l'unique sous-repr\'esentation de $(\Ind_{{P}^-_1(\Q_p)}^{\GL_3(\Q_p)} \pi(\nu_{1,2}, \eta)\boxtimes x^{k_3})^{\an}$ qui est une extension de $\Pi^1(\nu, \eta)$ par $\cW$ (cf. \cite[Lem. 3.40~\&~Rem. 3.41]{BD} pour $\Pi^1(\nu, \eta)$).

\begin{lem}\label{ipg2}
On a une injection $I_{{P}^-_1}^{\GL_3} (\pi(\nu_{1,2}, \eta)\boxtimes x^{k_3})\hookrightarrow \cV$ qui est un isomorphisme si $\eta$ n'est pas lisse.
\end{lem}
\begin{proof}
Par le m\^eme argument que celui dans la preuve du Lemme \ref{ipg1}, on montre que, pour tout constituant irr\'eductible $W$ de $(\Ind_{{P}^-_1(\Q_p)}^{\GL_3(\Q_p)} \pi(\nu_{1,2}, \eta)\boxtimes x^{k_3})^{\an}/\cV$, aucun des constituants irr\'eductibles de $\pi(\nu_{1,2}, \eta)\boxtimes x^{k_3}$ ne peut s'injecter dans $J_{P_1}(W)$. On en d\'eduit que l'injection $ \pi(\nu_{1,2}, \eta)\boxtimes x^{k_3}\hookrightarrow J_{P_1}((\Ind_{{P}^-_1(\Q_p)}^{\GL_3(\Q_p)} \pi(\nu_{1,2}, \eta)\boxtimes x^{k_3})^{\an})$ se factorise par $J_{P_1}(\cV)$ et donc $I_{{P}^-_1}^{\GL_3} (\pi(\nu_{1,2}, \eta)\boxtimes x^{k_3})\subseteq \cV $. Par le Lemme \ref{jacquetII} et le Lemme \ref{ipg1}, on montre facilement que tous les $\cV_i$ pour $i=1, \dots, 4$ apparaissent comme sous-quotients de $I_{{P}^-_1}^{\GL_3} (\pi(\nu_{1,2}, \eta)\boxtimes x^{k_3})$. Lorsque $\eta$ n'est pas lisse, par la structure de $\cV$ (cf. \cite[\S~3.3.3]{BD}, cf. aussi \cite[Rem. 3.41]{BD} avec \cite[Lem.~4.31~\&~Lem.~4.34]{Qi} pour la structure de $\Pi^1(\nu, \eta)$), il n'est pas difficile de montrer que $\cV$ est la plus petite sous-repr\'esentation de $(\Ind_{{P}^-_1(\Q_p)}^{\GL_3(\Q_p)} \pi(\nu_{1,2}, \eta)\boxtimes x^{k_3})^{\an}$ contenant le constituant $\cV_4$. Cela termine la preuve.
\end{proof}

Notons $\cV'$ l'unique sous-repr\'esentation de $(\Ind_{{P}^-_1(\Q_p)}^{\GL_3(\Q_p)} \pi(\nu_{1,2}, \eta)\boxtimes x^{k_3})^{\an}$ qui est une extension de $\Pi^1(\nu, \eta)^+$ par $\cW$ (cf. \cite[\S~3.3.4]{BD} pour $\Pi^1(\nu, \eta)^+$). La repr\'esentation $\cV'$ est donc aussi une extension de $C_{2,1}=\cF_{{P}^-_1}^{\GL_3}({L}^-(s_2\cdot (-\nu)), \St_2\boxtimes 1)$ par $\cV$. Par \cite[Cor. 4.9,~(4.41),~(4.42)~\&~(4.44)]{Sc1}, il n'est pas difficile de v\'erifier que l'on a~:
\begin{equation}\label{nul2}
\Ext^1_{\GL_3(\Q_p)}(\St_3(\nu), C_{2,1})=\Ext^1_{\GL_3(\Q_p)}(\St_3(\nu), \cF_{{P}^-_1}^{\GL_3}({L}^-(s_2\cdot (-\nu),1))=0.
\end{equation}
La deuxi\`eme nullit\'e avec \cite[Lem. 2.24]{Di2} impliquent~: 
\begin{equation}\label{nul1}
\Ext^1_{\GL_3(\Q_p)}\big(\St_3(\nu), \widetilde{\Pi}^1(\nu, \eta)/\Pi^1(\nu, \eta)\big)=0.
\end{equation}
Par \cite[(3.87)~\&~(3.77)]{BD} on a une application naturelle~:
\begin{equation*}
\iota: \Ext^1_{\GL_2(\Q_p)}\big(\St_2(\nu_{1,2}), \pi(\nu_{1,2}, \eta)\big) \twoheadrightarrow \Ext^1_{\GL_3(\Q_p)}\big(v_{{P}^-_2}^{\infty} (\nu), \Pi^1(\nu, \eta)^+\big)
\end{equation*}
telle que, pour $V\in \Ext^1_{\GL_2(\Q_p)}(\St_2(\nu_{1,2}), \pi(\nu_{1,2}, \eta))$, la repr\'esentation $\iota(V)$ est un sous-quotient de $(\Ind_{{P}^-_1(\Q_p)}^{\GL_3(\Q_p)} V\boxtimes x^{k_3})^{\an}$. Il suit de (\ref{nul1}) qu'il existe une unique sous-repr\'esentation $\iota(V)^+$ de $(\Ind_{{P}^-_1(\Q_p)}^{\GL_3(\Q_p)} V\boxtimes x^{k_3})^{\an}$ qui est une extension de $\cV_1= L(\nu)\otimes_E(\Ind_{{P}^-_1(\Q_p)}^{\GL_3(\Q_p)}\St_2\boxtimes 1)^{\infty}$ par $\cV'$ telle que $\iota(V)$ est un sous-quotient de $\iota(V)^+$. On peut visualiser la repr\'esentation $\iota(V)^+$ comme suit (son socle $v_{P_2^-}^{\infty}(\nu)$ \'etant \`a gauche)~:
\begin{equation*}\footnotesize
\begindc{\commdiag}[200]
\obj(0,4)[a]{$v_{P_2^-}^{\infty}(\nu)$}
\obj(4,4)[b]{$\St_3(\nu)$}
\obj(8,4)[c]{$C_{2,1}$}
\obj(2,2)[d]{$\cF_{P_2^-}^{\GL_3}(L^-(s_1\!\cdot \!(-\nu)),1)$}
\obj(6,2)[e]{$C_{1,1}$}
\obj(10,2)[f]{$\widetilde{C}_{1,2}$}
\obj(4,0)[g]{$L(\nu)$}
\obj(8,0)[h]{$v_{P_1^-}^{\infty}(\nu)$}
\obj(12,0)[i]{$C_{1,3}$}
\obj(16,0)[j]{$v_{P_2^-}^{\infty}(\nu)$}
\obj(20,0)[k]{$\St_3(\nu)$}
\mor{a}{b}{}[+1,\solidline]
\mor{b}{c}{}[+1,\solidline]
\mor{a}{d}{}[+1,\solidline]
\mor{b}{e}{}[+1,\solidline]
\mor{d}{e}{}[+1,\solidline]
\mor{e}{f}{}[+1,\solidline]
\mor{d}{g}{}[+1,\solidline]
\mor{e}{h}{}[+1,\solidline]
\mor{f}{i}{}[+1,\solidline]
\mor{g}{h}{}[+1,\solidline]
\mor{h}{i}{}[+1,\solidline]
\mor{i}{j}{}[+1,\solidline]
\mor{j}{k}{}[+1,\solidline]
\mor{c}{j}{}[+1,\dashline]
\enddc
\end{equation*} 
o\`u $\widetilde{C}_{1,2}\=\cF_{P_1^-}^{\GL_3}(L^-(s_2s_1\cdot (-\nu)),1)$. On voit que la repr\'esentation $\iota(V)^+$ est de la forme $\cW-\iota(V)-\St_3(\nu)$, et aussi de la forme $\begindc{\commdiag}[32]
\obj(14,10)[b]{$\cV$}
\obj(26,18)[d]{$C_{2,1}$}
\obj(38, 10)[e]{$\cV_1$.}
\mor{b}{d}{}[+1,\solidline]
\mor{d}{e}{}[+1,\dashline]
\mor{b}{e}{}[+1,\solidline]
\enddc$

\begin{lem}\label{equivalent}
Soit $V$ une extension de $\St_2(\nu_{1,2})$ par $\pi(\nu_{1,2}, \eta)$. On a une injection $I_{{P}^-_1}^{\GL_3}(V \boxtimes x^{k_3}) \hookrightarrow \iota(V)^+$. De plus, si $\eta$ n'est pas lisse, les assertions suivantes sont \'equivalentes~:
\begin{enumerate}
\item[(i)]$I_{{P}^-_1}^{\GL_3}(V \boxtimes x^{k_3}) \subsetneq \iota(V)^+$;
\item[(ii)]$C_{2,1}$ n'est pas un constituant de $I_{{P}^-_1}^{\GL_3}(V \boxtimes x^{k_3})$;
\item[(iii)]le sous-quotient $\!\begin{xy}(-60,0)*+{C_{2,1}}="a";(-42,0)*+{v_{{P}^-_2}^{\infty}(\nu)}="b";{\ar@{--}"a";"b"}\end{xy}\!$ de $\iota(V)$ est scind\'e.
\end{enumerate}
\end{lem}
\begin{proof}
La premi\`ere assertion se d\'emontre par le m\^eme argument que pour la premi\`ere assertion du Lemme \ref{ipg2}. Supposons maintenant $\eta$ non lisse. L'injection $\pi(\nu_{1,2},\eta)\hookrightarrow V$ et le Lemme \ref{jacquetII} donnent une injection~:
\begin{equation*}
\cV\cong I_{{P}^-_1}^{\GL_3} (\pi(\nu_{1,2}, \eta)\boxtimes x^{k_3})\hookrightarrow I_{{P}^-_1}^{\GL_3}(V \boxtimes x^{k_3}).
\end{equation*} 
Par le Lemme \ref{jacquetII} et le Lemme \ref{ipg1}, on sait aussi que $I_{{P}^-_1}^{\GL_3}(V \boxtimes x^{k_3})$ contient deux copies de $\cV_1$. Par la discussion pr\'ec\'edant ce Lemme \ref{equivalent}, on en d\'eduit facilement l'\'equivalence entre (i) et (ii). Si le sous-quotient $\!\begin{xy}(-60,0)*+{C_{2,1}}="a";(-43,0)*+{v_{{P}^-_2}^{\infty}(\nu)}="b";{\ar@{--}"a";"b"}\end{xy}\!$ de $\iota(V)$ n'est pas scind\'e, on obtient facilement que $\iota(V)^+$ est la plus petite sous-repr\'esentation ferm\'ee de $(\Ind_{{P}^-_1(\Q_p)}^{\GL_3(\Q_p)} V\boxtimes x^{k_3})^{\an}$ qui contient (en sous-quotient) $\cV_4$ et deux copies de $\cV_1$, d'o\`u $I_{{P}^-_1}^{\GL_3}(V \boxtimes x^{k_3}) \cong \iota(V)^+$. Cela montre (i) $\Rightarrow$ (ii) (et aussi (ii) $\Rightarrow $ (iii)). R\'eciproquement, supposons ce sous-quotient de $\iota(V)$ scind\'e. Avec (\ref{nul2}) on en d\'eduit $\iota(V)^+/\cV\cong \cV_1 \oplus C_{2,1}$ et en particulier que $\iota(V)^+$ contient une sous-repr\'esentation de la forme $\cV-\cV_1$, qui doit contenir $I_{{P}^-_1}^{\GL_3}(V \boxtimes x^{k_3})$ par le m\^eme argument que pour la premi\`ere assertion du Lemme \ref{ipg2}. Cela montre (iii) $\Rightarrow$ (ii) et termine la preuve.
\end{proof}

\begin{rem}
{\rm Si $\eta$ est lisse, on peut encore montrer par des arguments similaires que les (ii) et (iii) du Lemme \ref{equivalent} sont \'equivalents.}
\end{rem}

\begin{lem}\label{key2}
Si $\eta$ n'est pas lisse, alors $C_{2,1}$ n'est pas un constituant de $I_{{P}^-_1}^{\GL_3}(\pi(\nu_{1,2},\eta)^{\rm inf} \boxtimes x^{k_3})$ (cf. avant le Lemme \ref{key1} pour $\pi(\nu_{1,2},\eta)^{\rm inf}$).
\end{lem}
\begin{proof}
En tordant par $x^{-k_3}\circ {\det}$, on peut supposer $k_3=0$ (et $k_1\geq k_2\geq 0$). Les injections naturelles (cf. par exemple (\ref{deIaC}) pour la seconde)~:
\begin{multline*}
C^{\infty}_c(N_{1}(\Q_p), \pi(\nu_{1,2},\eta)^{\rm inf}\boxtimes 1) \hookrightarrow C^{\an}_c(N_{1}(\Q_p), \pi(\nu_{1,2},\eta)^{\rm inf}\boxtimes 1)\\
\hookrightarrow \big(\Ind_{{P}^-_1(\Q_p)}^{\GL_3(\Q_p)} \pi(\nu_{1,2},\eta)^{\rm inf}\boxtimes 1\big)^{\an}
\end{multline*}
induisent par \cite[(2.8.7)~\&~(2.5.27)]{Em2} un morphisme $(\text{U}({\mathfrak gl}_3)$, $P_1(\Q_p))$-\'equivariant~: 
\begin{equation}\small\label{adj00}
\iota_1: \text{U}({\mathfrak gl}_3) \otimes_{\text{U}(\fp_1)} C^{\infty}_c(N_1(\Q_p), \pi(\nu_{1,2},\eta)^{\rm inf} \boxtimes 1) \lra \big(\Ind_{{P}^-_1(\Q_p)}^{\GL_3(\Q_p)} \pi(\nu_{1,2},\eta)^{\rm inf}\boxtimes 1\big)^{\an}
\end{equation}
dont l'image tombe dans $I_{{P}^-_1}^{\GL_3}(\pi(\nu_{1,2}, \eta)^{\rm inf} \boxtimes 1)$. De plus par \cite[(2.8.7)]{Em2} le morphisme $\iota_1$ se factorise comme suit (cf. \cite[Def. 2.5.21]{Em2} pour $C^{\rm lp}_c$)~:
\begin{multline}
\label{adj01} \text{U}({\mathfrak gl}_3) \otimes_{\text{U}(\fp_1)} C^{\infty}_c(N_1(\Q_p), \pi(\nu_{1,2},\eta)^{\rm inf} \boxtimes 1) \lra C^{\rm lp}_c(N_1(\Q_p), \pi(\nu_{1,2},\eta)^{\rm inf}\boxtimes 1) \\ \hookrightarrow C^{\an}_c(N_1(\Q_p), E) \otimes_E (\pi(\nu_{1,2},\eta)^{\rm inf}\boxtimes 1),
\end{multline}
o\`u $\fn_1$ agit sur $C^{\an}_c(N_1(\Q_p), E) \otimes_E (\pi(\nu_{1,2},\eta)^{\rm inf}\boxtimes 1)$ via l'action diagonale avec l'action triviale sur le deuxi\`eme facteur. Comme $H^0(\fn_1, C^{\an}_c(N_1(\Q_p),E))\cong C^{\infty}_c(N_1(\Q_p),E)$ on a~:
\begin{multline*}
H^0\big(\fn_1, C^{\an}_c(N_1(\Q_p), E) \otimes_E (\pi(\nu_{1,2},\eta)^{\rm inf}\boxtimes 1)\big)\cong C^{\infty}_c(N_1(\Q_p), E)\otimes_E (\pi(\nu_{1,2},\eta)^{\rm inf}\boxtimes 1)\\ 
\cong C^{\infty}_c(N_1(\Q_p), \pi(\nu_{1,2},\eta)^{\rm inf}\boxtimes 1).
\end{multline*}
Appliquant $H^0(\fn_1,-)$ \`a (\ref{adj01}) et comme $\fn_1$ annule $C^{\infty}_c(N_1(\Q_p), \pi(\nu_{1,2},\eta)^{\rm inf} \boxtimes 1)$, on d\'eduit que (\ref{adj00}) induit un morphisme encore not\'e $\iota_1$~:
\begin{multline*}
\iota_1\! : \! H^0\big(\fn_1, \text{U}({\mathfrak gl}_3) \otimes_{\text{U}(\fp_1)} C^{\infty}_c(N_1(\Q_p), \pi(\nu_{1,2},\eta)^{\rm inf} \boxtimes 1)\big) \twoheadrightarrow C^{\infty}_c(N_1(\Q_p), \pi(\nu_{1,2},\eta)^{\rm inf}\boxtimes 1)
 \\
 \hookrightarrow I_{{P}^-_1}^{\GL_3}(\pi(\nu_{1,2}, \eta)^{\rm inf} \boxtimes 1).
\end{multline*}
L'\'enonc\'e du Lemme \ref{key1} est encore valable, par la m\^eme preuve, en rempla\c cant $\St_2(\nu_{1,2})$ par $C_c^{\infty}(N_1(\Q_p), \St_2(\nu_{1,2}) \boxtimes 1)$ et le $\fp_1$-module $\pi(\nu_{1,2},\eta)^{\rm inf}\boxtimes 1$ par $C_c^{\infty}(N_1(\Q_p), \pi(\nu_{1,2},\eta)^{\rm inf} \boxtimes 1)$ (en remarquant que le facteur en plus $C_c^{\infty}(N_1(\Q_p), E)$ n'affecte pas l'action de $\fp_1$). Donc, pour tout $u\in C_c^{\infty}(N_1(\Q_p), \St_2(\nu_{1,2}) \boxtimes 1)$, il existe $\widetilde{u}$ dans la source de $\iota_1$ v\'erifiant les conditions du Lemme \ref{key1}. Soit $v=\iota_1(\widetilde{u}) \in C^{\infty}_c(N_1(\Q_p), \pi(\nu_{1,2},\eta)^{\rm inf} \boxtimes 1) $, on a donc $\iota_1(\widetilde{u}-1\otimes v)=0$. On a par ailleurs un diagramme commutatif~:
\begin{equation*}\tiny
\begin{CD}
0 @>>> (\pi(\nu_{1,2}, \eta)\boxtimes 1)\otimes_E \delta_{P_1} @>>> (\pi(\nu_{1,2}, \eta)^{\rm inf} \boxtimes 1)\otimes_E \delta_{P_1} @>>> (\St_2(\nu_{1,2}) \boxtimes 1)\otimes_E \delta_{P_1} \\
@. @VVV @VVV @VVV \\
0 @>>> J_{P_1}(\cV) @>>> J_{P_1}(I_{{P}^-_1}^{\GL_3}(\pi(\nu_{1,2}, \eta)^{\rm inf} \boxtimes 1 )) @>>> J_{P_1}\big(I_{{P}^-_1}^{\GL_3}(\pi(\nu_{1,2}, \eta)^{\rm inf} \boxtimes 1 )/\cV\big) \\
@. @VVV @VVV @VVV \\
0 @>>> \cV @>>> I_{{P}^-_1}^{\GL_3}(\pi(\nu_{1,2}, \eta)^{\rm inf} \boxtimes 1 ) @>>> I_{{P}^-_1}^{\GL_3}(\pi(\nu_{1,2}, \eta)^{\rm inf} \boxtimes 1 )/\cV
\end{CD}
\end{equation*}
qui induit un diagramme commutatif avec fl\`eches horizontales surjectives~:
\begin{equation*}\footnotesize
\begin{CD}
\text{U}({\mathfrak gl}_3) \otimes_{\text{U}(\fp_1)} C^{\infty}_c(N_1(\Q_p),\pi(\nu_{1,2}, \eta)^{\rm inf}\boxtimes1) @> {\rm pr} >> \text{U}({\mathfrak gl}_3) \otimes_{\text{U}(\fp_1)} C^{\infty}_c(N_1(\Q_p), \St_2(\nu_{1,2})\boxtimes 1) \\
@V \iota_1 VV @V \iota_2 VV \\
I_{{P}^-_1}^{\GL_3}(\pi(\nu_{1,2}, \eta)^{\rm inf} \boxtimes 1 ) @>>> I_{{P}^-_1}^{\GL_3}(\pi(\nu_{1,2}, \eta)^{\rm inf} \boxtimes 1 )/\cV
\end{CD}
\end{equation*}
o\`u $\iota_2$ est induit par la compos\'ee verticale de droite du diagramme juste avant. Soit $0\neq u\in C_c^{\infty}(N_1(\Q_p), \St_2(\nu_{1,2}) \boxtimes 1)$, comme $\iota_1(\widetilde{u}-1\otimes v)=0$ on a $\iota_2({\rm pr}(\widetilde{u}-1\otimes v))=0$. Mais ${\rm pr}(\widetilde{u}-1\otimes v)\neq 0$ puisque, par construction (cf. Lemme \ref{key1}), il contient le terme non nul $\fy_2^{k_2+1} \otimes u$. On en d\'eduit que l'application $\iota_2$ n'est pas injective.\\
Supposons maintenant que $C_{2,1}$ apparaisse dans $I_{{P}^-_1}^{\GL_3}(\pi(\nu_{1,2},\eta)^{\rm inf} \boxtimes 1)$. Par l'\'equivalence entre (ii) et (iii) dans le Lemme \ref{equivalent} et le fait que l'unique extension non scind\'ee de $\cV_1$ par $C_{2,1}$ est isomorphe \`a $\cF_{{P}^-_1}^{\GL_3}((\text{U}({\mathfrak gl}_3)\otimes_{\text{U}({\fp}^-_1)} L^-(-\nu)_{P_1})^{\vee}, \St_2\boxtimes 1)$ o\`u $(-)^\vee$ est la dualit\'e en \cite[\S~3.2]{Hu} (car d'une part on a $\dim_E \Ext^1_{\GL_3(\Q_p)}(\cV_1, C_{2,1})\leq 1$ par \cite[Lem. 3.42(2)]{BD} et (\ref{nul2}), d'autre part $\cF_{{P}^-_1}^{\GL_3}((\text{U}({\mathfrak gl}_3)\otimes_{\text{U}({\fp}^-_1)} L^-(-\nu)_{P_1})^{\vee}, \St_2\boxtimes 1)$ est une telle extension non scind\'ee par \cite[Cor.~4.25]{OS2}), on obtient~:
\begin{equation*}
I_{{P}^-_1}^{\GL_3}(\pi(\nu_{1,2}, \eta)^{\rm inf} \boxtimes 1)/\cV \cong \cF_{{P}^-_1}^{\GL_3}\big((\text{U}({\mathfrak gl}_3)\otimes_{\text{U}({\fp}^-_1)} L^-(-\nu)_{P_1})^{\vee}, \St_2\boxtimes 1\big).
\end{equation*}
Mais alors il suit de \cite[Prop. 3.4]{Br4} que l'application $\iota_2$ doit \^etre injective, ce qui est une contradiction.
\end{proof}

\begin{prop}\label{split}
Si $\eta$ n'est pas lisse, alors la compos\'ee~:
\begin{multline*}
\Ext^1_{\inf}(\St_2(\nu_{1,2}), \pi(\nu_{1,2}, \eta)) \hookrightarrow \Ext^1_{\GL_2(\Q_p)}(\St_2(\nu_{1,2}), \pi(\nu_{1,2}, \eta)) \\ \buildrel\iota\over \longrightarrow \Ext^1_{\GL_3(\Q_p)}(v_{{P}^-_2}^{\infty}(\nu), \Pi^1(\nu, \eta)^+)
\end{multline*}
se factorise en un isomorphisme~:
\begin{equation}\label{film0}
\Ext^1_{\inf}(\St_2(\nu_{1,2}), \pi(\nu_{1,2}, \eta)) \buildrel{\sim}\over\longrightarrow \Ext^1_{\GL_3(\Q_p)}(v_{{P}^-_2}^{\infty}(\nu), \Pi^1(\nu, \eta)).
\end{equation}
\end{prop}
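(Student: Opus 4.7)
The approach is a dimension count. By Lemma \ref{gl2}(ii), the source has dimension $2$. The target $\Ext^1_{\GL_3(\Q_p)}(v_{P_2^-}^{\infty}(\nu), \Pi^1(\nu,\eta))$ also has dimension $2$, which can be extracted from the long exact sequence associated to $0\to \Pi^1(\nu,\eta)\to \Pi^1(\nu,\eta)^+\to C_{2,1}\to 0$, together with $\dim_E \Ext^1_{\GL_3(\Q_p)}(v_{P_2^-}^{\infty}(\nu), \Pi^1(\nu,\eta)^+)$ (from \cite[Th.~3.45]{BD}) and the vanishing $\Hom_{\GL_3(\Q_p)}(v_{P_2^-}^{\infty}(\nu), C_{2,1})=0$. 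It therefore suffices to establish (a) the composition factors through the subspace $\Ext^1_{\GL_3(\Q_p)}(v_{P_2^-}^{\infty}(\nu), \Pi^1(\nu,\eta))$ of $\Ext^1_{\GL_3(\Q_p)}(v_{P_2^-}^{\infty}(\nu), \Pi^1(\nu,\eta)^+)$, and (b) the induced map is injective.

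For (a), I use the equivalence (i)$\Leftrightarrow$(iii) in Lemma \ref{equivalent}, which reduces the problem to showing that $C_{2,1}$ is not a constituent of $I_{P_1^-}^{\GL_3}(V\boxtimes x^{k_3})$ for every $V\in\Ext^1_{\inf}(\St_2(\nu_{1,2}), \pi(\nu_{1,2},\eta))$. By Lemma \ref{gl2}(iii), a basis is given by $V_1=\pi(\nu_{1,2},\eta)^{\rm inf}$ (which lies in $\Ext^1_{\inf,Z}$) together with a class $V_2$ outside $\Ext^1_{\inf,Z}$; a natural choice of $V_2$ is the locally algebraic extension obtained by pushing forward the unique non-split smooth self-extension of $\St_2(\nu_{1,2})$ along $\St_2(\nu_{1,2})\hookrightarrow \pi(\nu_{1,2},\eta)$. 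For $V_1$, Lemma \ref{key2} applies directly. For $V_2$, I adapt the Lie-algebra computation of the proof of Lemma \ref{key2}: the decisive observation is that since $V_2$ is locally algebraic, the analog of the elements $\widetilde{u}_i$ built in Lemma \ref{key1} are forced to factor through $V_2^{\rm alg}$, whose constituents are $\St_2(\nu_{1,2})$ with multiplicity two, so cannot produce $C_{2,1}$ as a constituent of the induction.

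For (b), suppose $V\in \Ext^1_{\inf}(\St_2(\nu_{1,2}), \pi(\nu_{1,2},\eta))$ maps to zero; then $V$ lies in the kernel of the surjection $\iota:\Ext^1_{\GL_2(\Q_p)}(\St_2(\nu_{1,2}), \pi(\nu_{1,2},\eta))\twoheadrightarrow \Ext^1_{\GL_3(\Q_p)}(v_{P_2^-}^{\infty}(\nu), \Pi^1(\nu,\eta)^+)$, which can be described via the Jacquet functor computation in \cite[\S 3.3.3]{BD} and the isomorphism (\ref{jac}). Comparing this explicit description with the characterization of $\Ext^1_{\inf}$ obtained in the proof of Lemma \ref{gl2}(ii) (in particular using that $\eta$ is non-smooth to force $\psi_1=\psi_2$ on the infinitesimal-character subspace), one checks that the intersection of $\ker(\iota)$ with $\Ext^1_{\inf}$ is trivial.

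The main obstacle will be the locally algebraic case in (a): adapting the Verma-module and Jacquet-functor computations of Lemma \ref{key2} to $V_2$ requires carefully tracking the interplay between the common infinitesimal character (guaranteed by $V_2\in\Ext^1_{\inf}$) and the non-trivial action of the center $Z_{\GL_2}(\Q_p)$ on $V_2$ (as $V_2\notin \Ext^1_{\inf,Z}$), since the $\fz$-action now has a non-trivial Jordan block. The hypothesis that $\eta$ is non-smooth, used crucially in Lemma \ref{key2} to derive a contradiction via \cite[Prop.~3.4]{Br4}, should still suffice here but requires repeating the construction of $\iota_1$, $\iota_2$ with the locally algebraic coefficients and verifying that a non-vanishing $\fy_2^{k_2+1}\otimes u$ term still obstructs injectivity of the analog of $\iota_2$.
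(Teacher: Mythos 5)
Your skeleton is the same as the paper's: check, on the basis of $\Ext^1_{\inf}(\St_2(\nu_{1,2}),\pi(\nu_{1,2},\eta))$ formed by $\pi(\nu_{1,2},\eta)^{\rm inf}$ and the push-forward of the non-split locally algebraic self-extension of $\St_2(\nu_{1,2})$, that the subquotient extension of $v_{{P}^-_2}^{\infty}(\nu)$ by $C_{2,1}$ inside $\iota(V)$ splits, then conclude by injectivity and a dimension count; and your treatment of $\pi(\nu_{1,2},\eta)^{\rm inf}$ (Lemme \ref{key2} combined with Lemme \ref{equivalent}) is exactly the paper's. The genuine gap is the second basis vector, which you flag as the main obstacle but do not resolve. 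Your diagnosis of the difficulty is incorrect: the push-forward of that self-extension along $\St_2(\nu_{1,2})\hookrightarrow\pi(\nu_{1,2},\eta)$ lies in $\Ext^1_{\inf}$, so all of $Z({\mathfrak gl}_2)$, in particular $\fz$ and $\fc$, acts on it by the character $\xi$ of Lemme \ref{gl2}(i); the non-semisimple action concerns the group centre $Z_{\GL_2}(\Q_p)$ only, since the self-extension is a twist by a smooth character whose derivative vanishes, so there is no Jordan block for $\fz$ and no obstruction to the inputs of Lemmes \ref{ninv}, \ref{ninv2}, \ref{key1}. More seriously, your positive argument --- that the analogues of the $\widetilde u_i$ ``factor through $V_2^{\rm alg}$'' and therefore $C_{2,1}$ cannot occur in $I_{P_1^-}^{\GL_3}(V\boxtimes x^{k_3})$ --- proves nothing: the occurrence of $C_{2,1}$ in the induction is not controlled by where particular $\fn_1$-invariant lifts land, and the contradiction in Lemme \ref{key2} comes from the non-injectivity of $\iota_2$ confronted with \cite[Prop.~3.4]{Br4}, an argument you would have to rerun in full (it does go through, precisely because the push-forward has an infinitesimal character, so Lemme \ref{key1} applies verbatim). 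The paper avoids this entirely for that basis vector: by \cite[Lem.~3.44(2)]{BD} and \cite[Prop.~3.35]{BD}, $\iota(V)$ is then the push-forward along $\St_3(\nu)\hookrightarrow\Pi^1(\nu,\eta)^+$ of an extension of $v_{{P}^-_2}^{\infty}(\nu)$ by $\St_3(\nu)$, so the $C_{2,1}$-piece splits with no further Lie-algebra computation.

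Two secondary points. Your claim that $\dim_E\Ext^1_{\GL_3(\Q_p)}(v_{{P}^-_2}^{\infty}(\nu),\Pi^1(\nu,\eta))=2$ follows from the long exact sequence, $\dim_E\Ext^1_{\GL_3(\Q_p)}(v_{{P}^-_2}^{\infty}(\nu),\Pi^1(\nu,\eta)^+)=3$ and $\Hom_{\GL_3(\Q_p)}(v_{{P}^-_2}^{\infty}(\nu),C_{2,1})=0$ is incomplete: these facts only give an injection into a $3$-dimensional space, and you still need the rank of the map towards $\Ext^1_{\GL_3(\Q_p)}(v_{{P}^-_2}^{\infty}(\nu),C_{2,1})$; the paper simply quotes the proof of \cite[Prop.~3.49]{BD}. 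Finally, your plan for injectivity (identify the one-dimensional $\ker(\iota)$ via the Jacquet-module description and check, using that $\eta$ is not smooth, that it misses $\Ext^1_{\inf}$) is the right idea and matches in spirit the paper's appeal to \cite[Lem.~3.44(2)]{BD} and the proof of Lemme \ref{gl2}, but as written it is only a sketch.
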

\begin{proof}
On montre d'abord que pour $V\in \Ext^1_{\inf}(\St_2(\nu_{1,2}), \pi(\nu_{1,2}, \eta))$, le sous-quotient $\!\begin{xy}(-60,0)*+{C_{2,1}}="a";(-43,0)*+{v_{{P}^-_2}^{\infty}(\nu)}="b";{\ar@{--}"a";"b"}\end{xy}\!$ de $\iota(V)$ est scind\'e. Si $V\in \Ext^1_{\inf,Z}(\St_2(\nu_{1,2}), \pi(\nu_{1,2}, \eta))$, i.e. $V\simeq \pi(\nu_{1,2},\eta)^{\rm inf}$, cela suit du Lemme \ref{key2} et de l'\'equivalence entre (ii) et (iii) du Lemme \ref{equivalent}. Si $V$ est isomorphe au ``push-forward'' (le long de $\St_2(\nu_{1,2})\hookrightarrow \pi(\nu_{1,2}, \eta)$) de l'unique extension localement alg\'ebrique non scind\'ee de $\St_2(\nu_{1,2})$ par $\St_2(\nu_{1,2})$, en utilisant l'argument dans la preuve de \cite[Lem. 3.44(2)]{BD} et \cite[Prop. 3.35]{BD} il n'est pas difficile de montrer que $\iota(V)$ est isomorphe au ``push-forward'' (le long de $\St_3(\nu)\hookrightarrow \Pi^1(\nu, \eta)^+$) d'une extension de $v_{{P}^-_2}^{\infty}(\nu)$ par $\St_3(\nu)$, et donc $\!\begin{xy}(-60,0)*+{C_{2,1}}="a";(-42,0)*+{v_{{P}^-_2}^{\infty}(\nu)}="b";{\ar@{--}"a";"b"}\end{xy}\!$ est encore scind\'e dans $\iota(V)$. Par le (ii) du Lemme \ref{gl2}, ces deux extensions $V$ forment une base de $\Ext^1_{\inf}(\St_2(\nu_{1,2}), \pi(\nu_{1,2}, \eta))$. L'application en (\ref{film0}) s'obtient donc en ``oubliant le constituant $C_{2,1}$" dans $\iota(V)$ (ce que l'on peut faire car $\!\begin{xy}(-60,0)*+{C_{2,1}}="a";(-42,0)*+{v_{{P}^-_2}^{\infty}(\nu)}="b";{\ar@{--}"a";"b"}\end{xy}\!$ est scind\'e). Comme $\eta$ n'est pas lisse, par \cite[Lem. 3.44(2)]{BD} et la preuve du Lemme \ref{gl2}, on voit que l'application (\ref{film0}) est injective. Comme $\dim_E \Ext^1_{\GL_3(\Q_p)}(v_{{P}^-_2}^{\infty}(\nu), \Pi^1(\nu, \eta))=2$ (cf. la preuve de \cite[Prop. 3.49]{BD}), elle est bijective en comparant les dimensions.
\end{proof}

\begin{rem}
{\rm Si $\eta$ est lisse, l'\'enonc\'e du Lemme \ref{key2} est encore vrai (et la preuve est bien plus facile). Par contre l'application (\ref{film0}) n'est plus injective.}
\end{rem}


\begin{thebibliography}{999}

\bibitem{Am} Amice Y., {\it Duals}, Proc. Conf. on $p$-adic Analysis, Nijmegen, 1978, 1-15.

\bibitem{BChe} Bella\"\i che J., Chenevier G., {\it Families of Galois representations and Selmer groups}, Ast\'erisque 324, 2009.

\bibitem{Be1} Berger L., {\it Repr\'esentations $p$-adiques et \'equations diff\'erentielles}, Invent. Math. 148, 2002, 219-284.

\bibitem{Be2} Berger L., {\it \'Equations diff\'erentielles $p$-adiques et $(\varphi,N)$-modules filtr\'es}, Ast\'erisque 319, 2008, 13-38.

\bibitem{Be3} Berger L., {\it Constructions de $(\varphi,\Gamma)$-modules : repr\'esentations $p$-adiques et ${\rm B}$-paires}, Algebra Number Theory 2, 2008, 91-120.

\bibitem{BSX} Berger L., Schneider P., Xie B., {\it Rigid character groups, Lubin-Tate theory, and $(\varphi,\Gamma)$-modules}, \`a para\^\i tre \`a Memoirs Amer. Math. Soc.

\bibitem{BZ} Bernstein J., Zelevinsky A., {\it Induced representations of $\mathfrak p$-adic reductive groups I}, Ann. Scient. \'E. N. S. 10, 1977, 441-472.

\bibitem{Bo1} Bourbaki N., {\it Alg\`ebre commutative Chapitres 1 \`a 4}, Springer, 2006.

\bibitem{Bo2} Bourbaki N., {\it Espaces vectoriels topologiques Chapitres 1 \`a 5}, Springer, 2007.

\bibitem{Br1} Breuil C., {\it Ext${}^1$ localement analytique et compatibilit\'e local-global}, \`a para\^\i tre \`a Amer. J. of Math.

\bibitem{Br2} Breuil C., {\it Induction parabolique et $(\varphi,\Gamma)$-modules}, Algebra and Number Theory 9, 2015, 2241-2291.

\bibitem{Br3} Breuil C., {\it Vers le socle localement analytique I}, Ann. Institut Fourier 66, 2016, 633-685.

\bibitem{Br4} Breuil C., {\it Vers le socle localement analytique II}, Math. Annalen 361, 2015, 741-785.

\bibitem{BD} Breuil C., Ding Y., {\it Higher $\mathcal L$-invariants for ${\rm GL}_3(\Qp)$ and local-global compatibility}, pr\'epublication 2018 (arXiv:1803.10498v2).

\bibitem{BH} Breuil C., Herzig F., {\it Ordinary representations of $G(\Qp)$ and fundamental algebraic representations}, Duke Math. J. 164, 2015, 1271-1352.

\bibitem{BH2} Breuil C., Herzig F., {\it Towards the finite slope part for ${\rm GL}_n$}, \`a para\^\i tre \`a Int. Math. Res. Not.

\bibitem{BS} Breuil C., Schneider P., {\it First steps towards $p$-adic Langlands funtoriality}, J. Reine Angew. Math. 610, 2007, 149-180.

\bibitem{BHS3} Breuil C., Hellmann E., Schraen B., {\it A local model for the trianguline variety and applications}, pr\'epublication 2017 (arXiv:1702.02192v1).

\bibitem{BHe} Bushnell C., Henniart G., {\it The local Langlands Conjecture for ${\rm GL}(2)$}, Grundlehren der mathematischen Wissenschaften 335, Springer, 2006.

\bibitem{Ca} Caraiani A., {\it Monodromy and local-global compatibility for $\ell = p$}, Algebra and Number Theory 8, 2014, 1597-1646.

\bibitem{CEGGPS} Caraiani A., Emerton M., Gee T., Geraghty D., Pa\v sk$\bar {\rm u}$nas V., Shin S. W., {\it Patching and the $p$-adic local Langlands correspondence}, Cambridge J. Math. 4, 2016, 197-287.

\bibitem{CS1} Chojecki P., Sorensen C., {\it Weak local-global compatibility in the $p$-adic Langlands program for $U(2)$}, Rend. Sem. Mat. Univ. Padova. 137, 2017, 101-133.

\bibitem{CS2} Chojecki P., Sorensen C., {\it Strong local-global compatibility in the $p$-adic Langlands program for $U(2)$}, Rend. Sem. Mat. Univ. Padova. 137, 2017, 135-153.

\bibitem{Co} Colmez P., {\it Repr\'esentations du mirabolique de ${\rm GL}_2(\Qp)$}, Ast\'erisque 330, 2010, 61-153.

\bibitem{Co2} Colmez P., {\it Repr\'esentations de ${\rm GL}_2(\Qp)$ et $(\varphi,\Gamma)$-modules}, Ast\'erisque 330, 2010, 281-509.

\bibitem{CD} Colmez P., Dospinescu G., {\it Compl\'et\'es universels de repr\'esentations de ${\rm GL}_2(\Qp)$}, Algebra and Number Theory 8, 2014, 1447-1519.

\bibitem{CDP} Colmez P., Dospinescu G., Pa\v{s}k${\bar{\rm u}}$nas V., {\it The $p$-adic local Langlands correspondence for ${\rm GL}_2(\Qp)$}, Cambridge J. Math. 2, 2014, 1-47.

\bibitem{Di2} Ding Y., {\it Simple $\mathcal{L}$-invariants for $\mathrm{GL}_n$}, \`a para\^\i tre \`a Transactions Amer. Math. Soc.

\bibitem{Do} Dospinescu G., {\it Extensions de repr\'esentations de de Rham et vecteurs localement alg\'ebriques}, Comp. Math. 151, 2015, 1462-1498.

\bibitem{DLB} Dospinescu G., Le Bras A.-C., {\it Rev\^etements du demi-plan de Drinfeld et correspondance de Langlands $p$-adique}, \`a para\^\i tre \`a Annals of Math.

\bibitem{Em} Emerton M., {\it Locally analytic vectors in representations of locally analytic $p$-adic groups}, Memoirs Amer. Math. Soc. 248, 2017.

\bibitem{Em1} Emerton M., {\it Jacquet modules of locally analytic representations of $p$-adic reductive groups I. Construction and first properties}, Ann. Scient. \'E. N. S. 39, 2006, 775-839.

\bibitem{Em2} Emerton M., {\it Jacquet modules of locally analytic representations of $p$-adic reductive groups II. The relation to parabolic induction}, \`a para\^\i tre \`a J. Inst. Math. Jussieu.

\bibitem{Em3} Emerton M., {\it Local-global compatibility in the $p$-adic Langlands programme for ${{\rm GL}_2}_{/{\bf Q}}$}, pr\'epublication 2011.

\bibitem{EZ} Erd\'elyi M., Z\'abr\'adi G., {\it Links between generalized Montr\'eal functors}, Math. Zeitschrift 286, 2017, 1227-1275.

\bibitem{FdL} F\'eaux de Lacroix C. T., {\it Einige Resultate \"uber die topologischen Darstellungen $p$-adischer Liegruppen auf unendlich dimensionalen Vektorr\"aumen \"uber einem $p$-adischen K\"orper}, Schriftenreihe des Mathematischen Instituts der Universit\"at M\"unster 23, 1999.

\bibitem{Fo} Fontaine J.-M., {\it Arithm\'etique des repr\'esentations galoisiennes $p$-adiques}, Ast\'erisque 295, 2004, 1-115.

\bibitem{Gl} Glaz S., {\it Commutative coherent rings}, Lecture Notes in Math. 1371, 1989.

\bibitem{Ha} Hauseux J., {\it Extensions entre s\'eries principales $p$-adiques et modulo $p$ de $G(F)$}, J. Inst. Math. Jussieu 15, 225-270, 2016.

\bibitem{HL} Hill R., Loeffler D., {\it Emerton's Jacquet functors for non-Borel parabolic subgroups}, Documenta Math. 16, 1-31, 2011.

\bibitem{Hu} Humphreys J., {\it Representations of Semisimple Lie Algebras in the BGG Category $\mathcal O$}, Graduate Studies in Math. 94, 2008.

\bibitem{Ir} Irving R. S., {\it Singular blocks of the category $\mathcal O$}, Math. Z. 204, 1990, 209-224.

\bibitem{Ja} Jantzen C. J., {\it Representations of algebraic groups}, Second Edition, Math. Surveys and Monographs 107, Amer. Math. Soc., 2003.

\bibitem{KPX} Kedlaya K., Pottharst J., Xiao L., {\it Cohomology of arithmetic families of $(\varphi,\Gamma)$-modules}, J. Amer. Math. Soc. 27, 2014, 1043-1115.

\bibitem{Ko1} Kohlhaase J., {\it Invariant distributions on $p$-adic analytic groups}, Duke Math. J. 137, 2007, 19-62.

\bibitem{Ko2} Kohlhaase J., {\it The cohomology of locally analytic representations}, J. Reine Angew. Math. 651, 2011, 187-240.

\bibitem{Li} Liu R., {\it Cohomology and duality for $(\varphi,\Gamma)$-modules over the Robba ring}, Int. Math. Res. Not. 3, 2008.

\bibitem{Li2} Liu R., {\it Locally analytic vectors of some crystabelian representations of ${\rm GL}_2(\Qp)$}, Comp. Math. 148, 2012, 28-64.

\bibitem{LXZ} Liu R., Bingyong X., Zhang Y., {\it Locally analytic vectors of unitary principal series of ${\rm GL}_2(\Qp)$}, Ann. Scient. \'E.N.S. 45, 2012, 167-190.

\bibitem{ML} Mac Lane S., {\it Categories for the working mathematician}, Graduate Text in Math. 5, 1971.

\bibitem{Na1} Nakamura K., {\it Classication of two-dimensional split trianguline representations of $p$-adic fields}, Comp. Math. 145, 2009, 865-914.

\bibitem{Na2} Nakamura K., {\it Deformations of trianguline $B$-pairs and Zariski density of two dimensional crystalline representations}, J. Math. Sci. Univ. Tokyo 20, 2013, 461-568.

\bibitem{OS} Orlik S., Strauch M., {\it On Jordan-H\"older series of some locally analytic representations}, J. Amer. Math. Soc. 28, 2015, 99-157.

\bibitem{OS2} Orlik S., Strauch M., {\it On some properties of the functor ${\mathcal F}_P^G$ from Lie algebra to locally analytic representations}, pr\'epublication 2018 (arXiv:1802.07514v1).

\bibitem{Py} Pyvovarov A., {\it On the Breuil-Schneider conjecture: Generic case}, pr\'epublication 2018 (arXiv:1803.01610v2).

\bibitem{Qi} Qian Z., {\it Dilogarithm and higher $\mathcal L$-invariants for ${\rm GL}_3(\Qp)$}, pr\'epublication 2019 (arXiv:1902.00699v1).

\bibitem{Ro} Rodier F., {\it Whittaker models for admissible representations of reductive $p$-adic split groups}, Proc. Sympos. Pure Math. 26, 1973, 425-430.

\bibitem{Sch} Schneider P., {\it Nonarchimedean Functional Analysis}, Springer Monographs in Math., 2002.

\bibitem{Sch2} Schneider P., {\it $p$-adic Lie groups}, Grundlehren der mathematischen Wissenschaften 344, Springer, 2011.

\bibitem{ST1} Schneider P., Teitelbaum J., {\it Locally analytic distributions and $p$-adic representation theory, with applications to $GL_2$}, J. Amer. Math. Soc. 15, 2001, 443-468.

\bibitem{ST2} Schneider P., Teitelbaum J., {\it Algebras of $p$-adic distributions and admissible representations}, Invent. Math. 153, 2003, 145-196.

\bibitem{ST3} Schneider P., Teitelbaum J., {\it Duality for admissible locally analytic representations}, Repres. Theory 9, 2005, 297-326.

\bibitem{ST4} Schneider P., Teitelbaum J., {\it $U(\mg)$-finite locally analytic representations}, Repres. Theory 5, 2001, 111-128.

\bibitem{SV} Schneider P., Vign\'eras M.-F., {\it A functor from smooth $\mathcal O$-torsion representations to $(\varphi, \Gamma)$-modules}, Clay Math. Proceedings 13, 525-601, 2011.

\bibitem{Sc1} Schraen B., {\it Repr\'esentations localement analytiques de ${\rm GL}_3(\Qp)$}, Ann. Scient. \'E. N. S. 44, 2011, 43-145.

\bibitem{We} Weibel C., {\it An introduction to homological algebra}, Cambridge Univ. Press, 1994.

\bibitem{Za} Z\'abr\'adi G., {\it Multivariable $(\varphi,\Gamma)$-modules and smooth $\mathcal O$-torsion representations}, Selecta Math. 24, 2018, 935-995.


\end{thebibliography}
\end{document}